\documentclass[reqno]{amsart}
\usepackage[top=1.1in, bottom=1in, left=1in, right=1in]{geometry}
\usepackage{amsfonts}
\usepackage{amssymb}
\usepackage{extarrows}
\usepackage{amsthm}
\usepackage{amsmath}
\usepackage{mathtools}
\usepackage{mathrsfs}
\usepackage{color}
\usepackage{enumerate}
\usepackage[numbers,sort&compress]{natbib}
\usepackage{pdfsync}
\usepackage{esint}
\usepackage{float}
\usepackage{caption}
\usepackage{subfigure}
\usepackage{enumitem}
\usepackage{appendix}
\usepackage{bbm}
\usepackage{graphicx}
\usepackage{array}
\usepackage{booktabs}

\allowdisplaybreaks

\usepackage{tikz} 
\usetikzlibrary{calc}
\usetikzlibrary{intersections}
\usetikzlibrary{patterns}

\usepackage[colorlinks,
linkcolor=black,       
anchorcolor=blue,      
citecolor=blue,        
]{hyperref}

\makeatother

\numberwithin{equation}{section}

\newtheorem{theorem}{Theorem}[section]
\newtheorem{definition}[theorem]{Definition}

\newtheorem{lemma}[theorem]{Lemma}

\newtheorem{proposition}[theorem]{Proposition}
\newtheorem{corollary}[theorem]{Corollary}
\numberwithin{equation}{section}

\theoremstyle{remark}
\newtheorem{remark}[theorem]{Remark}

\newcommand{\R}{{\mathbb R}}

\def\wt{\widetilde}
\def\wh{\widehat}
\def\9{{\infty}}

\def\na{{\nabla}}

\def\bbr{{\mathbb{R}}}

\def\bbp{{\mathbb{P}}}
\def\({\left(}
\def\){\right)}
\def\<{\left<}
\def\>{\right>}

\newcommand{\imu}{\mathrm{i}}
\newcommand{\dd}{\,\mathrm{d}}
\renewcommand{\epsilon}{\varepsilon}

\renewcommand{\Im}{\operatorname{Im}}
\renewcommand{\Re}{\operatorname{Re}}

\newcommand{\N}{{\mathbb{N}}}

\newcommand{\C}{{\mathbb{C}}}
\newcommand{\Z}{{\mathbb{Z}}}

\newcommand{\PP}{{\mathbb{P}}}
\newcommand{\G}{{\mathbb{G}}}

\newcommand{\bS}{{\mathbb{S}}}

\newcommand{\X}{{\mathbb{X}}}
\newcommand{\Y}{{\mathbb{Y}}}

\newcommand{\cF}{{\mathcal{F}}}

\newcommand{\cI}{{\mathcal{I}}}
\newcommand{\cJ}{{\mathcal{J}}}

\newcommand{\cP}{{\mathcal{P}}}

\newcommand{\cT}{{\mathcal{T}}}
\newcommand{\cU}{{\mathcal{U}}}

\newcommand{\Sp}{{\mathbb S}}
\newcommand{\vece}{{\textnormal{\textbf{e}}}}

\newcommand{\ModN}{C_{\leq (\frac{\lambda}{2^8})^2}}

\newcommand{\TempN}{P^{(t)}_{\leq (\frac{\lambda}{2^8})^2}}

\newcommand{\SOne}{S^{s,a,0}}
\newcommand{\Ssa}{S^{s,a,0}}
\newcommand{\NOne}{N^{s,a,0}}
\newcommand{\Nsa}{N^{s,a,0}}
\newcommand{\XS}{\mathbb{X}^{s,a}}
\newcommand{\Xs}{\mathbb{X}^{s,a}}
\newcommand{\GS}{\mathbb{G}^{s,a}}
\newcommand{\Gs}{\mathbb{G}^{s,a}}
\newcommand{\tS}{\tilde{S}^{s,a,b}}
\usepackage{setspace}
\AddToHook{begindocument}{%
	\setlength{\jot}{1pt} 
	\setlength{\abovedisplayskip}{2pt plus 2pt minus 2pt}%
	\setlength{\belowdisplayskip}{1pt plus 2pt minus 2pt}%
	\setlength{\abovedisplayshortskip}{0pt plus 1pt minus 1pt}%
	\setlength{\belowdisplayshortskip}{0pt plus 1pt minus 1pt}%
}
\makeatletter
\def\thm@space@setup{%
	\thm@preskip=6pt %
	\thm@postskip=3pt %
}
\makeatother

\usepackage{etoolbox}
\makeatletter
\patchcmd\subsection
{.5\linespacing\@plus.7\linespacing} 
{.3\linespacing\@plus.5\linespacing}             
{}{}
\patchcmd\subsubsection
{.3\linespacing\@plus.5\linespacing} 
{0.2\linespacing\@plus.3\linespacing}            
{}{}

\patchcmd\section
{.7\linespacing\@plus\linespacing} 
{.3\linespacing\@plus0.5\linespacing}                   
{}{}

\patchcmd\section
{.5\linespacing}                    
{.3\linespacing}
\makeatother

\makeatletter
\renewenvironment{proof}[1][\proofname]{\par
	\pushQED{\qed}%
	\normalfont \topsep2pt\@plus1pt\@minus1pt\relax   %
	\trivlist
	\item[\hskip\labelsep\itshape#1\@addpunct{.}]\ignorespaces
}{%
	\popQED\endtrivlist\@endpefalse
}
\makeatother

\begin{document}
	
	\title[]{The stochastic Zakharov system in dimension $ d \geq 4$: Local well-posedness and regularization by noise for scattering}
	
	\author{Martin Spitz}
	\address[Martin Spitz]{Fakult\"at f\"ur Mathematik,
		Universit\"at Bielefeld, D-33501 Bielefeld, Germany}
	\email{mspitz@math.uni-bielefeld.de}
	\thanks{}
	
	\author{Deng Zhang}
	\address[Deng Zhang]{School of Mathematical Sciences, MOE-LSC,
		CMA-Shanghai, Shanghai Jiao Tong University, China.}
	\email{dzhang@sjtu.edu.cn}
	\thanks{}
	
	\author{Zhenqi Zhao}
	\address[Zhenqi Zhao]{School of Mathematical Sciences, Shanghai Jiao Tong University, China.}
	\email{semimartingale@sjtu.edu.cn}
	\thanks{}

	\keywords{}
	%
	
	\begin{abstract}
		In this paper, we develop the well-posedness theory and uncover the noise-regularization effect on scattering for the stochastic Zakharov system in dimensions $d \geq 4$ and beyond the energy space. Our focus is particularly directed at the large data regime, where the global existence and long-time dynamics of the deterministic Zakharov system remain largely open.
We prove the local well-posedness of the stochastic system in the full deterministic regularity regime and establish a blow-up alternative at the endpoint regularity, which implies the persistence of regularity in the full well-posedness regime. Furthermore, we prove that for any large initial data, with high probability, non-conservative noise yields global and scattering solutions.
 
		Our proof introduces a tailored functional framework. To establish local well-posedness, we employ a refinement of adapted Fourier restriction and lateral Strichartz spaces, which allows us to control both the nonlinear interactions and the critical first-order derivative perturbations arising from rescaling transforms. To achieve the noise-regularization effect, we augment this setting with maximal function spaces. We derive new trilinear estimates for the stochastic wave nonlinearity that are crucial for the global dynamics by fully exploiting the temporal regularity of geometric Brownian motions in scaling-(sub)critical Besov spaces.

	\end{abstract}
	
	\maketitle

	\tableofcontents

	
	\section{Introduction}   \label{Sec:Intro}
	
	We study the local well-posedness and the noise-regularization effect of the stochastic Zakharov system
	\begin{equation}   \label{eq:IntroStoZak}
		\left\{\aligned
		\imu \dd X + \Delta X \dd t &= \Re(Y) X \dd t
		- \imu \mu X \dd t + \imu X \dd W_1(t),  \\
		\frac{1}{\alpha} \imu \dd Y + |\na|Y \dd t &= -|\na| |X|^2 \dd t + \dd W_2(t), 
		\endaligned
		\right.
	\end{equation}
	where $W_1$ and $W_2$ denote the driving noise, and $\mu$ represents an associated drift correction.
	The deterministic Zakharov system, i.e.~\eqref{eq:IntroStoZak} with $W_1 = W_2 = 0$ and $\mu = 0$, was  introduced by Zakharov~\cite{Za72} in 1972 to model rapid oscillations of the electric field in a non- or weakly magnetized plasma. 
	
	One of the intriguing features of the Zakharov system is
	its close connection to the focusing cubic nonlinear Schr{\"o}dinger equation (NLS)
	\begin{equation}
		\label{eq:NLS}
		\imu \partial_t u + \Delta u = - |u|^2 u,
	\end{equation}
	which arises as the subsonic limit $\alpha \rightarrow \infty$ of the deterministic Zakharov system.
	We refer to~\cite{KPV95, MN08, OT92, SW86} for rigorous results in this direction.
	
	The quadratic
	coupling between
	the Schr{\"o}dinger and the wave equation in the Zakharov system
	has led to a rich local and global well-posedness theory,
	requiring sophisticated methods from harmonic analysis and the theory of dispersive equations.
	We refer to~\cite{BC96, GTV97, BGHN15, CHN23} for some milestones in the deterministic local well-posedness theory and
	the references therein.
	In particular,
	the optimal regularity range of $(s,l) \in \R \times \R$
	for the local well-posedness theory
	in $H^s(\R^d) \times H^l(\R^d)$ has recently been established
	 in~\cite{CHN23}.
	
	In recent years,
	tremendous progress towards the understanding of the long-time dynamics of the  Zakharov system
	in the energy space
	$H^1(\R^d)\times L^2(\R^d)$ has been achieved.
	In the energy space,
	the Zakharov system
	conserves mass and energy
	\begin{align*}
		m(u(t)) = \frac12 \int_{\R^d} |u(t)|^2 \dd x, \qquad e_Z(u(t), v(t)) = \int_{\R^d} \frac12 |\nabla u(t)|^2  + \frac14 |v(t)|^2 + \frac12 \Re(v(t)) |u(t)|^2 \dd x.
	\end{align*}
	Although it does not possess a scaling invariance, it is called energy critical in dimension $d = 4$, as the closely related
	cubic NLS~\eqref{eq:NLS}
	is energy critical in this dimension. This close connection is further evidenced by the fact that the ground state solution $Q$ of the cubic NLS~\eqref{eq:NLS} gives rise to a stationary, non-dispersing solution $(Q, -Q^2)$ of the Zakharov system. In fact, it is conjectured that the NLS ground state $Q$ plays the same role for the dynamic dichotomy of the Zakharov system as for the focusing cubic NLS in the energy critical case:
	On the one hand,
	it is conjectured that all solutions originating below the ground state exist globally and scatter. 
	This conjecture was proved in the radial case by Guo and Nakanishi~\cite{GN21}. 
	In the non-radial case, global well-posedness below the ground state was established by Candy, Herr, and Nakanishi~\cite{CHN21},
	and the first step of the concentration compactness--rigidity method was performed by Candy~\cite{Ca24}. 
	Completing the second step and, consequently, proving scattering below the ground state, remains an open problem.
	On the other hand,
	slightly above the ground state, singularities can form and finite-time blow-up solutions were constructed 
	by Krieger and Schmid~\cite{KS24a, KS24b}.
	We also refer to~\cite{GM94a, GM94b, Me96, BC96, CHT08, GNW13, HPS13, GLNW14,  Sp22} for global well-posedness, scattering, and singularity formation in lower dimensions.
	
	In the {\it large data} regime,
	the dynamics of the Zakharov system are largely open.
	Moreover, much less is known outside the energy space. In~\cite{CHN23}, global existence and scattering were proved for initial data from the well-posedness regime in dimensions $d \geq 4$, provided the Schr{\"o}dinger data is sufficiently small. Beyond this small data result, however, no mechanism yielding global solutions for large data in this regime is known.

	\vspace*{4pt plus 2pt minus 2pt}%
	It is widely believed 
	in the community of stochastic partial differential equations that 
	noise is able to improve
	the well-posedness
	and dynamics of deterministic equations.
	This kind of phenomenon is commonly referred to as {\it regularization by noise}.
	It has been
	investigated in several settings, in particular for various stochastic fluid models.
	For example, transport noise can
	improve the uniqueness for transport equations \cite{FGP10} and can
	prevent blowup
	for the 3D stochastic vorticity
	Navier-Stokes equations \cite{FL21}. 
	
	\vspace*{4pt plus 2pt minus 2pt}%
	The stochastic Zakharov system~\eqref{eq:IntroStoZak} arises naturally
	if one accounts for model uncertainty and random fluctuations in the ion
	density and the temperature of the plasma.
	In dimension one,
	global well-posedness of the stochastic Zakharov system
	was proved by Tsutsumi~\cite{Ts22},
	and the subsonic limit  $\alpha\to \infty$
	of the stochastic Zakharov system to a stochastic cubic NLS was proved by Barru\'e, de Bouard, and Debussche~\cite{BBD24}.
	In the energy space,
	the global well-posedness below the  ground state was addressed
	in dimensions three and four 
	by Herr, R\"ockner, and the first two authors~\cite{HRSZ25, HRSZ24}. 
	A crucial phenomenon uncovered in~\cite{HRSZ24} is 
	that non-conservative
	noise has the ability to prevent blow-up
	and even lead to scattering of solutions to the stochastic Zakharov system. This result ensures global existence and scattering with high probability for any data from the energy space, particularly in regimes where the deterministic counterpart either exhibits finite-time blow-up (above the ground state) or where scattering remains a conjecture (below the ground state).
	This intriguing regularization-by-noise effect
	thus sheds some light
	on the long-time dynamics of the
	Zakharov system in the energy space,
	particularly
	in the large data regime,
	where the long-time dynamics of the deterministic Zakharov system are largely open. 
	
	\vspace*{4pt plus 2pt minus 2pt}%
	The objective of the present work is to develop 
	the well-posedness theory 
	and explore the noise-regularization effect 
	for the stochastic Zakharov system 
	in dimensions $d\geq 4$ 
	and beyond the energy space. 
	The main results of this paper are summarized as follows:  
	\begin{enumerate}
		\item A comprehensive local well-posedness theory for the stochastic Zakharov system in dimensions $d \geq 4$.
		The local well-posedness in
		$H_x^s \times H_x^l$
		is proved in the full deterministic regularity regime.
		This result is optimal in the sense that it contains the deterministic result by taking $W_1 = W_2 = 0$, for which the range is sharp \cite{CHN23}.
		
		\item
		A blow-up alternative in terms of the endpoint regularity. This result particularly implies the  persistence of regularity in the full local well-posedness regime.
		
		\item
		The noise-regularization effect on the long-time behavior of solutions for a large subset of the local well-posedness regime.
		We show that for any initial data -- including large data where deterministic global dynamics are entirely unknown --
		with high probability sufficiently strong non-conservative noise yields global and scattering solutions.
		This result is proved in the well-posedness regime up to
		a borderline
		dictated by the regularity threshold of the noise. 
	\end{enumerate}			
	
	We emphasize that these are the first results for the stochastic Zakharov system in high dimensions and beyond
	the energy space, 
	which in particular 
	extend the local well-posedness and regularization-by-noise results and sharpen the blow-up alternative from the 4D energy-critical case in~\cite{HRSZ24}.
	
	Indeed, the approach to local well-posedness in~\cite{HRSZ24}
	relied on a specific class of Fourier restriction spaces tailored to the energy space $H^1(\R^4) \times L^2(\R^4)$.
	The previous methods extend straightforwardly to certain limited regularities, but face a barrier when applied to the full well-posedness regime. Specifically, the previously employed adapted Fourier restriction spaces are too restrictive to accommodate the noise terms in this broader regime. Overcoming this structural limitation is the motivation for the new functional framework we develop for the local well-posedness theory in this paper.
	
	Establishing the regularization-by-noise effect in the broad regularity regime considered in this paper requires a substantial departure from the techniques developed for the energy-critical case~\cite{HRSZ24}. To achieve this, we establish a new set of trilinear estimates for the stochastic wave nonlinearity, which crucially rely on novel global bounds for geometric Brownian motions in scaling-(sub)critical Besov spaces derived in this work.

	\subsection{Formulation of main results}

	Consider
	the stochastic Zakharov system in dimension $d\geq 4$
	\begin{equation}   \label{eq:StoZak}
		\left\{\aligned
		\imu \dd X + \Delta X \dd t &= \Re(Y) X \dd t
		- \imu \mu X \dd t + \imu X \dd W_1(t),  \\
		\frac{1}{\alpha} \imu \dd Y + |\na|Y \dd t &= -|\na| |X|^2 \dd t + \dd W_2(t), \\ 
		(X(0), Y(0)) &= (X_0, Y_0) \in H_x^s \times H_x^l. 
		\endaligned
		\right.
	\end{equation} 
	Here $H_x^s$ and $H_x^l$ denote the standard $L_x^2$-based Sobolev spaces.
	The stochastic Zakharov system is a model from plasma physics,
	the unknown variable $X \colon \R \times \R^d \rightarrow \C$ denotes the complex amplitude of the electric field, $Y \colon \R \times \R^d \rightarrow \C$ describes the fluctuation of the ion density, and $\alpha>0$ represents the ion sound speed.
	The noise $W_1$ and $W_2$
	are independent Wiener processes
	\begin{align} \label{Noise}
			W_1(t,x) = \sum\limits_{k=1}^\infty \imu \phi^{(1)}_k(x) \beta^{(1)}_k(t),\quad W_2(t,x) = \sum\limits_{k=1}^\infty  \phi^{(2)}_k(x) \beta^{(2)}_k(t)
		\end{align}
		for $(t,x)\in \R_+\times \R^d$, where
	$\{\phi^{(1)}_k\}_k \subseteq H_x^{\frac{d}{2}+2+(s-1)_+}$
	and $\{\phi^{(2)}_k\}_k \subseteq H_x^{\frac{d}{2}+s-1}$ are real-valued functions,
	$\{\beta^{(j)}_k\}$ are real-valued independent Brownian motions
	on a stochastic basis $(\Omega, \mathscr{F}, \{\mathscr{F}_t\}, \mathbb{P})$, and $r_+ = \max\{r,0\}$ denotes the positive part for any real number $r$.
	The multiplicative noise $X \dd W_1(t)$ is taken in the sense of It{\^o},
	and
	\begin{align*}
		\mu = \frac 12 \sum\limits_{k=1}^\infty
		|\phi_k^{(1)}|^2 <\infty.
	\end{align*}
	For real-valued $\{\phi^{(1)}_k\}_k$,
	$\mu$ is exactly the Stratonovich correction term,
	and $- \imu \mu X \dd t + \imu X \dd W_1(t)$ is the Stratonovich differential $\imu X \circ \dd W_1(t)$,
	so that the  mass of the Schr{\"o}dinger component is conserved.
	In the plasma model,
	$W_1$ and $W_2$ model
	fluctuations in the ion density and the
	temperature of the plasma, respectively.
	We refer to Subsection~2.1 in~\cite{HRSZ25} for a heuristic derivation of the stochastic model~\eqref{eq:StoZak}.
	
	Without loss of generality, we take $\alpha=1$ in the following.

	\vspace*{4pt plus 2pt minus 2pt}%
	
	Throughout this paper the spatial coefficients of the noise \eqref{Noise}
	satisfy the following hypothesis.
	
	\vspace*{4pt plus 2pt minus 2pt}%
	\paragraph{\bf Hypothesis (H)}
	The spatial functions $\{\phi^{(j)}_k\}$, $j=1,2$, satisfy
	the summability conditions
	\begin{align} \label{phik-condition}
		\sum\limits_{k=1}^\infty
		\|\phi^{(1)}_k\|_{H_x^{\frac{d}{2}+2+(s-1)_+}}^2 +
		\sum\limits_{l=1}^d  \sum\limits_{k=1}^\infty \int  \sup_{y\in \cP_{\vece_l}} |\nabla \phi^{(1)}_k(r \vece_l+y)|\dd r  <\infty, \ \
		\sum\limits_{k=1}^\infty
		\|\phi^{(2)}_k\|_{H_x^{\frac{d}{2}+s-1}}^2 <\infty,
	\end{align}
	where $\vece_1, \vece_2, \dots, \vece_{d}$ denote the standard orthonormal basis of $\bbr^d$ and $\cP_{\vece} = \{\xi \in \R^d \colon \xi \cdot \vece = 0\}$.

	\vspace*{4pt plus 2pt minus 2pt}%
	We understand solutions to~\eqref{eq:StoZak}
	 in the following sense. 
	
	\begin{definition}   \label{def:Solution}
		Let $(s,l)\in \R^2$ and $T\in (0,\infty)$. 
		We say that $(X,Y)$ is a probabilistic strong solution to \eqref{eq:StoZak}
		on $[0,\tau]$,
		where $\tau\in(0,T]$ is an $\{\mathscr{F}_t\}$-stopping time,
		if $(X,Y)$ is an $H_x^s\times H_x^l$-valued
		$\{\mathscr{F}_t\}$-adapted process which belongs to $C([0,\tau],H_x^s \times H_x^l)$
		and satisfies $\mathbb{P}$-a.s.
		for any $t\in [0,\tau]$,
		\begin{equation}   \label{equa-stoZak-def}
			\left\{\aligned
			X(t) &= X_0 + \int_0^t \imu \Delta X \dd s - \int_0^t  \imu \Re(Y) X \dd s - \int_0^t \mu X \dd s + \int_0^t X \dd W_1(s),    \\
			Y(t) &= Y_0 + \int_0^t \imu |\na|Y \dd s + \int_0^t \imu |\na| |X|^2 \dd s - \imu W_2(t),
			\endaligned
			\right.
		\end{equation}
		as equations in $H_x^{s-2} \times H_x^{l-1}$.
		
		For any given $\{\mathscr{F}_t\}$-stopping time $\tau^*$, we also call $(X,Y)$ a probabilistic strong solution to \eqref{eq:StoZak}
		on $[0,\tau^*)$ if $(X,Y)$ is an $\{\mathscr{F}_t\}$-adapted process belonging to $C([0,\tau^*),H_x^s \times H_x^l)$ such that for any $T \in (0,\infty)$ and any $\{\mathscr{F}_t\}$-stopping time $\tau < \tau^*$, $(X,Y)$ is a probabilistic strong solution to \eqref{eq:StoZak} on $[0,\tau \wedge T]$.
	\end{definition}

	\vspace*{4pt plus 2pt minus 2pt}%
	\paragraph{\bf I. 
		Local well-posedness and blow-up alternative}
	The following conditions on the regularities $(s,l) \in \R^2$ determine the regime of local well-posedness, see Figure~\ref{picture1.1}:
	\begin{align}\label{IniReg-condition}
		l\geq \frac{d}{2}-2,\quad \max \Big\{l-1, \frac{l}{2}+\frac{d-2}{4} \Big\}\leq s\leq l+2,\quad (s,l)\neq \Big(\frac{d}{2}, \frac{d}{2}-2\Big), \Big(\frac{d}{2}, \frac{d}{2}+1\Big),
	\end{align}
	where $d\geq 4$. We note that the lowest regularities satisfying these conditions, i.e. the lower left corner of the well-posedness regime in Figure~\ref{picture1.1}, is given by $(s,l) = (\frac{d-3}{2}, \frac{d-4}{2})$. We refer to this regularity as the
	{\it endpoint regularity}.
	As we shall see later,
	the endpoint regularity plays a distinctive role in the blow-up alternative for solutions of the stochastic Zakharov system.

	\begin{figure}[ht]
		\centering
		\includegraphics[scale= 0.5]{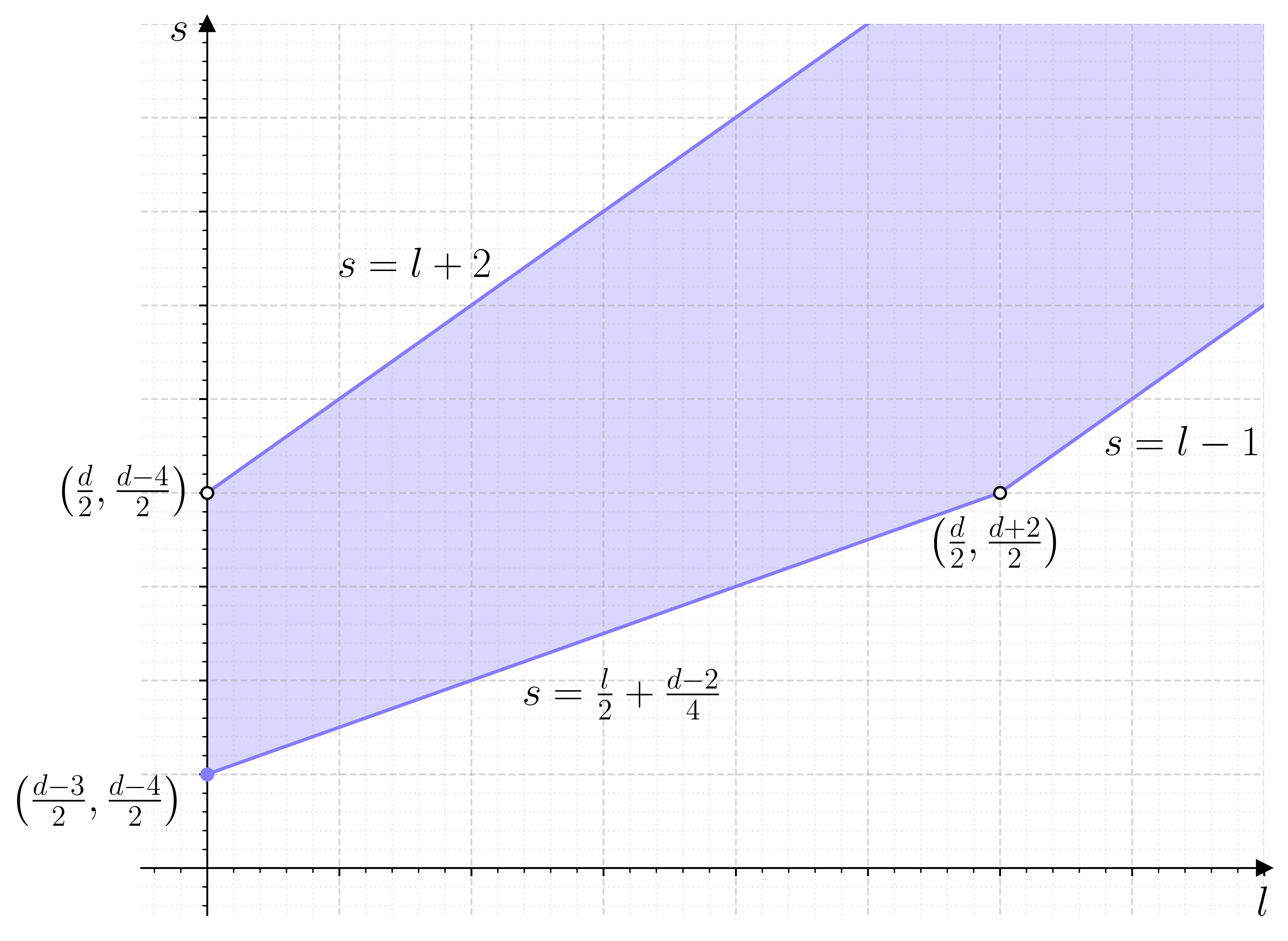}
		\vspace{-10pt}
		\caption{Regularity regime 
			for local well-posedness in $d=4$.}
		\label{picture1.1}
	\end{figure}

	The first main result of the present work
	is the local well-posedness and blow-up alternative
	of the stochastic Zakharov system in dimensions $d\geq 4$ in the above regularity regime.

	\begin{theorem}  [Local well-posedness and blow-up alternative] \label{Thm-LWPNEP}
		\label{thm:LocalWP}
		Let $d \geq 4$ and $(s,l)$ satisfy \eqref{IniReg-condition}. 
		Assume Hypothesis (H).
		Then, for any deterministic initial data  $(X_0,Y_0) \in H_x^s \times H_x^l$,
		there exists a stopping time \linebreak $\tau^* > 0$ such that the
		stochastic Zakharov system~\eqref{eq:StoZak} has a unique $\{\cF_t\}$-adapted solution $(X,Y)$ in $C([0,\tau^*), H_x^s \times H_x^l)$,
		$\mathbb{P}$-a.s.
		Moreover, $\PP$-a.s. if $\tau^* <\infty$,  then
		\begin{align*}
			(i)\quad  \limsup_{t \rightarrow \tau^*} (\|X(t)\|_{H_x^{\frac{d-3}{2}}} + \|Y(t)\|_{H_x^{\frac{d-4}{2}}}) = \infty, 
		\end{align*} 
		or 
		\begin{align*}  
			(ii)\quad \|X\|_{L^2 ([0,\tau^*) ; W_x^{\frac{d-3}{2},\frac{2d}{d-2}})} = \infty.
		\end{align*}
	\end{theorem}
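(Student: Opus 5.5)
The plan is to reduce \eqref{eq:StoZak} to a random deterministic system by a rescaling transform and then run a pathwise fixed-point argument. For the Schrödinger component I set
\[
X = e^{W_1} u ,
\]
which turns the Stratonovich multiplicative noise into random lower-order coefficients. Writing $W_1 = \imu \sum_k \phi^{(1)}_k \beta^{(1)}_k$, $u$ solves
\[
\imu \partial_t u + \Delta u + b\cdot\nabla u + c\, u = \Re(Y) u ,
\qquad
b = 2\nabla W_1 , \quad c = (\nabla W_1)^2 + \Delta W_1 - \imu\mu + \dots .
\]
The coefficients are continuous in time and, by Hypothesis (H), smooth in space ($H^{d/2+2+(s-1)_+}$).

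For the wave component I subtract the free stochastic convolution $Z(t) = -\imu\int_0^t e^{\imu(t-r)|\nabla|}\dd W_2(r)$. Setting $v = Y - Z$ gives a deterministic wave equation with forcing $-|\nabla||e^{W_1}u|^2 = -|\nabla||u|^2$, since $|e^{W_1}|=1$. The term $Z$ lies pathwise in $C_tH^{d/2+s-1}_x$, which is enough for the products $\Re(Z)u$ by Sobolev embedding.

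Next I solve the random system by contraction on a small random interval. I use the Candy--Herr--Nakanishi adapted Fourier restriction spaces combined with lateral Strichartz spaces, presumably the $S^{s,a,0}$, $N^{s,a,0}$, $\mathbb X^{s,a}$ spaces whose macros the paper defines. The needed ingredients are:
\begin{itemize}
\item[(a)] the deterministic bilinear estimates for $\Re(v)u$ and $|\nabla||u|^2$ in the whole range \eqref{IniReg-condition};
\item[(b)] a linear estimate for the perturbed Schrödinger operator with the first-order term $b\cdot\nabla u$;
\item[(c)] estimates for $\Re(Z)u$.
\end{itemize}
The main obstacle is (b), because $b\cdot\nabla$ is critical and cannot be treated perturbatively in Bourgain spaces without loss of derivatives. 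I would handle it with local smoothing and maximal function (lateral) estimates. The second condition in (H), the integrability of $\sup_{y\in\cP_{\vece_l}}|\nabla\phi^{(1)}_k|$ along each direction $\vece_l$, is exactly what places $b$ in $L^1_{x_l}L^\infty$. This allows the Kato smoothing bound $\|\nabla u\|_{L^\infty_{x_l}L^2_{t,x'}}$ to absorb $b\cdot\nabla u$ on short time intervals.

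The local existence time is chosen as a stopping time determined by the size of the Brownian paths, so the solution is adapted. I then glue solutions across successive intervals and define $\tau^*$ as the supremum of existence times. Uniqueness follows from the contraction together with the usual unconditional argument in the solution class.

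For the blow-up alternative, I prove a persistence statement: if on $[0,\tau^*)$ both the endpoint norm $\|X\|_{H^{(d-3)/2}}+\|Y\|_{H^{(d-4)/2}}$ and $\|X\|_{L^2W^{(d-3)/2,2d/(d-2)}}$ stay finite, then the $H^s\times H^l$ norm does not blow up. This rests on bilinear estimates that are tame in the high norm, linear in $(s,l)$ and controlled by endpoint norms, namely the endpoint Strichartz $L^2_tW^{(d-3)/2,2d/(d-2)}$ for $u$ and the energy norm for $v$. I then subdivide $[0,\tau^*)$ into finitely many intervals on which the endpoint Strichartz norm is small, and close the high-regularity bound by a Gronwall-type iteration. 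The finite number of intervals comes from the finiteness of the $L^2_t$ norm, and the wave part is handled through the energy-type bound. Finally, the local time depends only on the endpoint-level norms, so the solution extends past $\tau^*$, which is a contradiction. Undoing the transform, using that $e^{W_1}$ is a bounded multiplier on the relevant spaces, gives statements (i) and (ii).
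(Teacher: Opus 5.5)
Your proposal has two genuine gaps: the local theory does not reach the regime $l>s$, and the blow-up argument relies on estimates and a lifespan claim that fail in this critical setting.

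\textbf{The regime $l>s$.} The framework you describe ($S^{s,a,0}$ and $\X^{s,a}$ with lateral Strichartz norms, plus the CHN bilinear estimates) does not close there. In that regime the wave estimate (Lemma~\ref{le:Bilinearnablauv}) needs the Schrödinger space to carry a gain $\lambda^{b^*}$ with $a-b^*\le s-l$, where $b^*=b^*(s,l)=\frac12(l-s)+\frac12>\frac12$. The noise term, however, is incompatible with $S^{s,0,b^*}$. When $u_\lambda$ has low modulation and the coefficient of $b\cdot\nabla u$ has low temporal frequency, the product stays at low modulation. Even with local smoothing, the component $\lambda^{s-1+b^*}\|P_\lambda(b\cdot\nabla u)\|_{L^2_{t,x}}$ of $\|\cI_0[b\cdot\nabla u]\|_{S^{s,0,b^*}}$ is then only bounded by $\lambda^{b^*-\frac12}\|u_\lambda\|_{\X^{s,0}_\lambda}$, which is a loss.

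The paper's key new step avoids this. It first solves by contraction in $\X^{s,a}$ at the lower wave regularity $\tilde{l}=\min\{s-\frac12,l\}$. It then upgrades $v$ to $H^l$ a posteriori (Lemma~\ref{le:PersisReg}) through the weaker norm $\tilde{S}^{s,0,b^*}$, which asks for the $\lambda^{b^*}$-gain only at modulation $\gtrsim\lambda^2$. There a low-modulation $u_\lambda$ forces the noise to temporal frequency $\gtrsim\lambda^2$, and the $C^{\frac14+}$ time regularity of the Brownian paths supplies $\lambda^{-\frac12-}$. Lemma~\ref{le:Bilinearnablauv} shows $\tilde{S}^{s,0,b^*}$ still controls the wave nonlinearity.

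\textbf{Further points in the local theory.} The same Hölder regularity is needed even when $b^*=0$. The lateral integrability in (H) only handles the low-modulation part of $b\cdot\nabla u$. The piece with $u$ at high modulation but temporal frequency $\ll\lambda^2$, and the noise at temporal frequency $\gtrsim\lambda^2$, costs $\lambda^{a-\frac12}$ (Lemma~\ref{lem:BilinearLowerOrder}), and $a$ reaches $1$ in \eqref{IniReg-condition}. Also, the noise norms used are sup-in-time, so $2\nabla W_1$ is not small on short intervals away from $t=0$. Continuing the solution therefore requires re-centering the noise at each restart via $W_1(\sigma+\cdot)-W_1(\sigma)$ (the refined rescaling), not just gluing. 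Finally, the uniqueness obtained is conditional (in $\X^{\frac{d-3}{2},0}$), not unconditional.

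\textbf{The blow-up alternative.} First, tame bounds with the lower factor measured only at the endpoint level are not available in general. Consider the piece of $\Re(v)u$ where $v_\mu$ has low temporal frequency and $u_\nu$, $\nu\ll\mu$, carries temporal frequency $\sim\mu^2$. One gains $\mu^{-2}$ from the equation for $u_\nu$. With $u$ only at regularity $\frac{d-3}{2}$, however, one pays $\sum_{\nu\ll\mu}\nu^{\frac{d-2}{2}}\|(\imu\partial_t+\Delta)u_\nu\|_{L^2_{t,x}}$, of size $\mu^{\frac32}$, leaving $\mu^{s-l-\frac12}$. Putting the endpoint norm on $v_\mu$ instead leaves $\mu^{(s-\frac d2)_+}$. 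Both are unbounded, for example, at $d=4$, $(s,l)=(3,\frac32)$. The paper instead proves a preliminary criterion at $(s,\tilde{l})$. It then raises the regularity in small steps (Lemmas~\ref{lem:PersRegSchr} and~\ref{lem:PersRegWave}): Schrödinger steps $<\frac18$ at fixed wave regularity, and wave steps $<\frac12$ with $l'\le s'-\frac12$. Each step uses the level already reached as the low norm.

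Second, the claim that the local time depends only on endpoint-level norms is false here. The lifespan depends on the profile, through the Strichartz norm of $e^{\imu t\Delta}u_0$ (which is why (ii) appears) and through the interval on which $e^{\imu t|\nabla|}v_0$ is small in $W^{l,a,\beta}+L^2W^{s,d}$. To reach a contradiction you must show that, along $\sigma_n\to\tau^*$, three quantities all become small: the noise increments, the free Strichartz norm of $u(\sigma_n)$ on the next interval, and the free-wave norm of $v(\sigma_n)$. The paper gets the last two from the uniform $\X^{s,0}$-bound and the continuous extension of $v$ to $[0,\tau^*]$ in Lemma~\ref{le:unibouep}; compactness of $\{v(t)\}$ then allows a uniform mollification. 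Your Gronwall scheme provides none of this.
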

	
	\begin{remark}
		\label{rem:Uniqueness}
		The uniqueness in the statement of Theorem~\ref{Thm-LWPNEP} means that for any $T \in (0,\infty)$ and any $\{\cF_t\}$-adapted stopping time $\tau < \tau^*$ the process $(X,Y)$ is the unique solution of~\eqref{eq:StoZak} in the sense of Definition~\ref{def:Solution} satisfying
		\begin{align*}
			(X,Y) \in C([0,\tau],H_x^s \times H_x^l), \qquad X \in \X^{\frac{d-3}{2},0}([0, \tau \wedge T]),
		\end{align*}
		where the space $\X^{\frac{d-3}{2},0}$ is introduced in~\eqref{eq:DefRestrictionNorm} below.
	\end{remark}
	
	\begin{remark} \label{rem:LWP}
		\begin{enumerate}
			\item
			Theorem \ref{Thm-LWPNEP} provides  the first local well-posedness result for the stochastic Zakharov system in dimensions $d > 4$ and in $d = 4$ beyond the energy space.
			It is proved in the same regularity regime \eqref{IniReg-condition} as in the deterministic case.
			The regularity regime for local well-posedness is known to be optimal in the deterministic case, as the flow map fails to be $C^2$ outside of it~\cite{CHN23}.

			\item
			We emphasize that the above blow-up alternative 
			for a solution in $H_x^s \times H_x^l$ only depends on 
			the endpoint regularity and is independent of $s$ and $l$.
			Consequently, it implies the persistence of regularity
			in the full well-posedness regime~\eqref{IniReg-condition}:
			if the initial datum possesses higher regularity, i.e., it belongs to $H_x^{s'} \times H_x^{l'}$ with $s' \geq s$ and $l' \geq l$, then the solution remains in this more regular space throughout its interval of existence.   
			In contrast to the 4D energy-critical case~\cite{HRSZ24}, where the blow-up criterion was tied to the energy norm, our result establishes a universal endpoint criterion. Our proof introduces a {\it bootstrap type argument}
			for the persistence of regularity for
			the Schr\"odinger and wave components, 
			see Subsection \ref{Subsec:ImproBlowUpCon} below. 
			
			\item
			The state space of stochastic solutions combines a refinement of the adapted Fourier restriction spaces from~\cite{CHN23} 
			(see \eqref{eq:DefTildeSsablambda} below)
			and the lateral Strichartz spaces
			employed in the context of Schr\"odinger maps~\cite{BIKT11}. 
			The latter are used to control the problematic first order derivative term arising from the noise via the rescaling transform (see equation \eqref{eq:RanZakbc} below). Crucially, the two types of function spaces are compatible to form a suitable functional framework for the stochastic Zakharov system in the full well-posedness regime.
			As a result, we sharpen the control of the nonlinearity
			in the weaker Fourier restriction spaces.
			See 
			Subsection \ref{subsec:IdeaProof} for more explanations. 
		\end{enumerate}
	\end{remark}

	\paragraph{\bf II.
		Noise-regularization-effect on scattering}
	
	Our second main result shows the regularization effect of the noise on global well-posedness and scattering	for general initial data. We consider the following conditions on the regularities $(s,l) \in \R^2$:
	\begin{align}\label{IniReg-conditionNoiseReg}
		l\geq \frac{d}{2}-2,\quad  s>l-\frac{1}{2},\quad l+2\geq s\geq \frac{l}{2}+\frac{d-2}{4},\quad (s,l)\neq \Big(\frac{d}{2}, \frac{d}{2}-2\Big),
	\end{align}
	where $d\geq 4$. We further classify this noise-regularization regime into three subregimes, see Table~\ref{table:regularity-regimes} below. Each subregime presents distinct analytical challenges, requiring tailored techniques. In particular, we derive different trilinear estimates for the stochastic wave nonlinearity in each of them, see Theorem~\ref{Prop-trilinear}.
	
	Figure~\ref{picture1.2}
	illustrates the noise-regularization regime and the three subregimes:

	\begin{figure}[h]
		\centering
		\includegraphics[scale= 0.5]{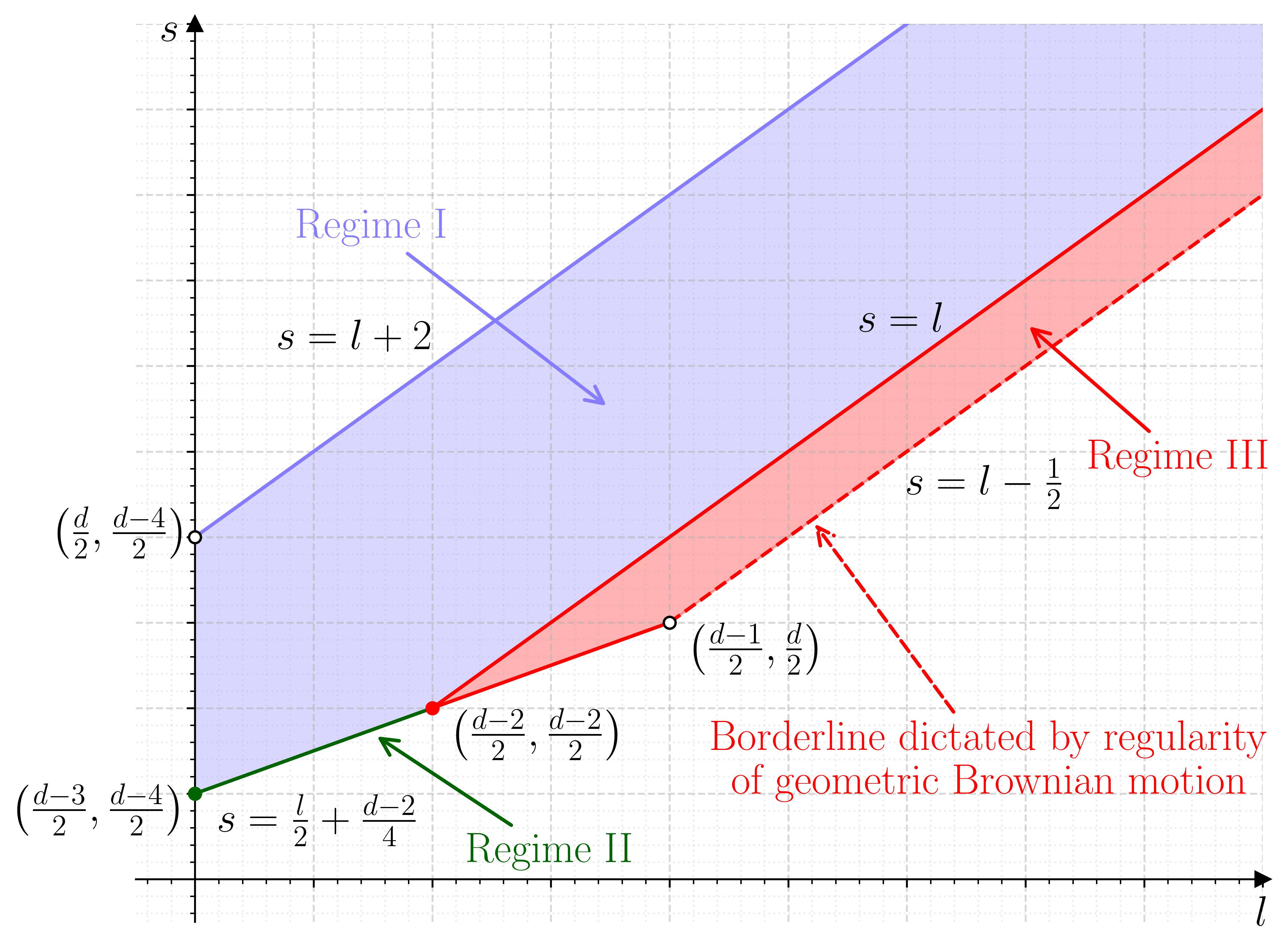}
		\vspace{-10pt}
		\caption{
			Regularity regime  
			for noise-regularization on GWP and scattering in $d=4$.}
		\label{picture1.2}
	\end{figure}

	\noindent The three different  
	noise-regularization regimes  
	are as follows:
	\begin{table}[h]
		\centering
		\renewcommand{\arraystretch}{1.2}
		\scalebox{0.8}{
			\begin{tabular}{c|c}
				\hline
				& Range of  $(s,l)$  \\
				\hline
				Noise-regularization regime I &  $l\geq \tfrac{d}{2}-2,\quad l+2\geq s>\max\{ l, \tfrac{l}{2}+\tfrac{d-2}{4} \},\quad (s,l)\neq (\tfrac{d}{2}, \tfrac{d}{2}-2)$\\
				Noise-regularization regime  II &  $\tfrac{d}{2}-1>l\geq \tfrac{d}{2}-2,\quad s= \tfrac{l}{2}+\tfrac{d-2}{4}$\\
				Noise-regularization regime  III &  $l\geq s>l-\tfrac{1}{2},\quad s\geq \tfrac{l}{2}+\tfrac{d-2}{4}$\\
				\hline
		\end{tabular}}
		\caption{Noise-regularization regimes I, II, and III.}
		\label{table:regularity-regimes}
	\end{table}

	\begin{theorem}[Global well-posedness and scattering via regularization by noise]
		\label{thm:RegNoise}
		Consider the stochastic Zakharov system \eqref{eq:StoZak}
		with  an infinite dimensional
		Wiener process $W_1$ 
		as in \eqref{Noise} with spatially constant, complex-valued coefficients
		$\{\phi_k^{(1)}\}\subseteq l^2(\mathbb{C})$
		satisfying 
		$(\sum_{k=1}^\infty |\Im(\phi_k^{(1)})|^2)^{\frac12} > 0$, 
		and $W_2 \equiv 0$. Set $c_k:= \Im \phi_k^{(1)}$ and $\mathbf{c}:=\{c_k\}\in l^2(\mathbb{R})$.
		Let $d \geq 4$ and $(s,l)$ satisfy \eqref{IniReg-conditionNoiseReg}.
		Then, for any deterministic initial data
		$(X_0, Y_0) \in H^s_x \times H^l_x$,
		we have
		\begin{align}\label{prop:Scatter}
			\bbp ((X(t), Y(t)) \text{ scatters as } t \rightarrow \infty) \longrightarrow 1,\ \ \text{as} \ \|\mathbf{c}\|_{l^2}\to \infty,
		\end{align}
		where $(X,Y)$ denotes the solution of~\eqref{eq:StoZak},
		and ``scatters'' means that there exists $(z_+,v_+)\in H^s_x \times H^l_x$ such that
		\begin{align}  \label{scatter-XY}
			\lim_{t \rightarrow \infty} \|e^{-\imu t \Delta} e^{\wh \mu t - W_1(t)} X(t) - z_+\|_{H_x^s} = 0\ \  \text{and} \ \ \lim_{t \rightarrow \infty} \|e^{-\imu t |\nabla|} Y(t) - v_+\|_{H_x^l} = 0,
		\end{align}
		where
		\begin{align} \label{def-whmu}
			\wh \mu := \frac{1}{2} \Big(\sum_{k=1}^\infty |\phi_k^{(1)}|^2 - \sum_{k=1}^\infty (\phi_k^{(1)})^2\Big).
		\end{align}
	\end{theorem}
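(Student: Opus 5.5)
Our plan is to reduce \eqref{eq:StoZak} to a random deterministic system via the rescaling transform and then treat the lower-order effect of the noise as a perturbation on short random intervals. Since $\phi^{(1)}_k$ are spatially constant and $W_2\equiv 0$, we set $X = e^{W_1(t)-\wh\mu t} u$ and $Y=v$. Itô's formula then removes the stochastic integral, and $(u,v)$ solves, pathwise,
\begin{align*}
	\imu \partial_t u + \Delta u = e^{-2\sum_k c_k\beta_k(t)}\cdots\,\Re(v)u, \qquad \imu\partial_t v + |\nabla| v = -|\nabla|\big(h(t)|u|^2\big),
\end{align*}
where $h(t) = |e^{W_1(t)-\wh\mu t}|^2 = \exp\big(-2\sum_k c_k\beta_k(t) - \|\mathbf c\|_{l^2}^2 t\big)$. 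Here we use $\Re W_1 = -\sum_k c_k\beta_k$ and the identity $\Re\wh\mu = \sum_k c_k^2$. The Schrödinger equation is then free of noise. Only the wave nonlinearity carries the geometric Brownian motion $h$, and $h$ decays like $e^{-\|\mathbf c\|^2 t/2}$ on an event of high probability. Scattering of $(X,Y)$ in the sense of \eqref{scatter-XY} is equivalent to scattering of $(u,v)$ in $H^s_x\times H^l_x$.

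Next we run a global fixed point for $(u,v)$ on $[0,\infty)$ in the adapted restriction / lateral Strichartz spaces used for the local theory, augmented by maximal function spaces. We write the Duhamel formula around the free evolution $(e^{\imu t\Delta}X_0, e^{\imu t|\nabla|}Y_0)$, which is large. The key estimates are the trilinear bounds of Theorem~\ref{Prop-trilinear} for the stochastic wave nonlinearity $|\nabla|(h|u|^2)$, applied separately in regimes I, II and III. They should take the form
\begin{align*}
	\Big\|\int_0^t e^{\imu (t-s)|\nabla|}|\nabla|(h|u|^2)\,\dd s\Big\|_{\text{wave}} \lesssim \mathcal{H}(h)\,\|u\|_{\text{Schr}}^2,
\end{align*}
where $\mathcal H(h)$ is a global Besov-in-time norm of $h$ in the scaling-(sub)critical range, for instance $\|h\|_{\dot B^{\sigma}_{p,q}(\R_+)}$ plus $\|h\|_{L^1\cap L^\infty}$.

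The main obstacle is that $u$ is not small: the Schrödinger equation contains $\Re(v)u$ with large $v$. We handle this through smallness in the coupling. First, we show that $\mathcal H(h)\to 0$ in probability as $\|\mathbf c\|_{l^2}\to\infty$. This holds because the drift $-\|\mathbf c\|^2 t$ dominates, and the temporal Hölder/Besov regularity of $\sum c_k\beta_k$ at scale $\|\mathbf c\|^{-2}$ contributes only bounded factors under Brownian scaling. This probabilistic estimate, together with the trilinear bound in regime III (where $s\le l$ forces one to exploit the time regularity of $h$ through a modulation decomposition), is the step we expect to be hardest. Second, the wave component is then $v = e^{\imu t|\nabla|}Y_0 + (\text{small})$. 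We treat the Schrödinger equation with the potential $\Re(e^{\imu t|\nabla|}Y_0)$ as a linear problem and split $[0,\infty)$ into finitely many intervals on which the free-wave potential is small in the relevant lateral/maximal norm, counting them by the global finiteness of that norm. This gives a linear bound $\|u\|_{\text{Schr}}\le C(X_0,Y_0)$. Closing the contraction with $\mathcal H(h)$ small then produces a global solution, and finiteness of the global norms yields the scattering limits $z_+, v_+$. Uniqueness and the match with the solution of Theorem~\ref{Thm-LWPNEP} follow by restricting to bounded intervals. This gives \eqref{prop:Scatter}.
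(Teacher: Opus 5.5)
Your rescaling and your treatment of the linear Schr\"odinger flow with the free wave potential $\Re(e^{\imu t|\nabla|}Y_0)$ agree with the paper: you split time into finitely many intervals on which the potential is small and obtain a constant $C(v_L)$. There are two small slips. The Schr\"odinger equation is $\imu\partial_t u+\Delta u=\Re(v)u$ with no noise factor, as your text says but your display does not. Also, $h_{\mathbf c}=|e^{W_1-\wh\mu t}|^2=\exp\big(-2\sum_k c_k\beta_k(t)-2\|\mathbf c\|^2t\big)$, with a factor $2$ in the drift.

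The genuine gap is your claim that a global norm $\mathcal H(h_{\mathbf c})$ over $\R_+$ tends to zero in probability. Since $h_{\mathbf c}(0)=1$, the $L^\infty$ part of your $\mathcal H$, which also appears in every estimate of Theorem~\ref{Prop-trilinear}, is $\ge 1$ for every $\mathbf c$. Moreover, Brownian scaling gives $h_{\mathbf c}\overset{d}{=}h_1(\|\mathbf c\|^2\,\cdot)$, hence $\|h_{\mathbf c}\|_{\dot B^{\sigma}_{p,\infty}(\R_+)}\overset{d}{=}\|\mathbf c\|^{2\sigma-2/p}\|h_1\|_{\dot B^{\sigma}_{p,\infty}(\R_+)}$. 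Your own scaling heuristic therefore yields boundedness, not smallness. For the scale-critical norm $B^{1/p}_{p,\infty}$, which regimes II and III use (in particular on the borderline $s=\frac l2+\frac{d-2}{4}$ and at the endpoint $(\frac{d-3}{2},\frac{d-4}{2})$), the law does not depend on $\mathbf c$ at all. The drift only acts after times $\gg\|\mathbf c\|^{-2}$. On the initial layer $h_{\mathbf c}$ is of order one and the system is the deterministic Zakharov system with large data. So a contraction on $[0,\infty)$ cannot be closed by smallness of the coupling, and your bound $\|u\|\le C(X_0,Y_0)$ does not make the cubic term small.

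What is missing is the two-temporal-regimes argument.
\begin{itemize}
\item \emph{Short times.} On $[0,2\|\mathbf c\|^{-1}]$ you may only use that $\|h_{\mathbf c}\|_{B^\sigma_{p,\infty}\cap L^\infty([0,\infty))}$ is bounded in probability uniformly in $\|\mathbf c\|\ge 1$ (Lemma~\ref{prop:GeoBMFastDecay}(ii), valid exactly when $\sigma\le 1/p$). Smallness must then come from the short time. This uses the trilinear estimates \eqref{prop:triest2} and \eqref{prop:triestiend}, which carry a positive power of the dispersive norm, and a fixed-point ball in which that norm is at most $\delta$. It also uses $\|e^{\imu t\Delta}X_0\|_{L^2(0,2\|\mathbf c\|^{-1};L^{2^*}_x)}\to 0$.
\item \emph{Long times.} Only on $[\|\mathbf c\|^{-1},\infty)$ is $\|h_{\mathbf c}\|_{B^\sigma_{p,\infty}\cap L^\infty}$ small with high probability (Lemma~\ref{prop:GeoBMFastDecay}(i)). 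There your small-coupling contraction does work.
\item \emph{Junction.} Restarting with the free wave $e^{\imu(t-\|\mathbf c\|^{-1})|\nabla|}v(\|\mathbf c\|^{-1})$ would make the constant in the linear potential estimate depend on $\mathbf c$ in an uncontrolled way. The paper instead keeps $v_L=e^{\imu t|\nabla|}Y_0$ and puts the difference into a potential $V_{\mathbf c}$. This $V_{\mathbf c}$ is small in the wave norm by the trilinear estimate on the short interval.
\end{itemize}

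Finally, wave scattering does not follow from finiteness of global norms alone. You need $\|\cJ_0[h_{\mathbf c}|\nabla||u|^2]\|_{W^{l,a,\beta}([t_1,t_2])}\to 0$. The paper obtains this from the dispersive factor in regimes I and II. In regime III it uses the exponential decay of $\|h_{\mathbf c}\|_{B^{1/p}_{p,\infty}\cap L^\infty([t_1,\infty))}$ (Lemma~\ref{prop:GeoBMFastDecay}(iii)).
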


	\begin{remark} \label{rem:RegNoise}
		\begin{enumerate}
			\item
			Theorem~\ref{thm:RegNoise} provides the first global existence and scattering results for large data in the energy-supercritical setting $d>4$ and in the energy-critical setting $d = 4$ with data outside the energy space. 
			We note that in these regimes, 
			the global existence and long-time dynamics are largely open in the deterministic case. 
			In the 4D energy-critical case,
			the noise-regularization-effect on scattering for the Zakharov system with data in the energy space  $(s,l)=(1,0)$ was first proved in~\cite{HRSZ24}.

			\item
			The noise-regularization effect 
				in Theorem~\ref{thm:RegNoise} 
				is proved up to the local well-posedness borderlines
				(i.e., $s=l+2$,
				$l = \frac{d-4}{2}$, and
				$s=\frac l2 + \frac{d-2}{4}$), 
				as well as the borderline 
				$s=l-\frac 12$ 
				dictated by the regularity threshold of the noise.

			\item 
				The noise-regularization regime in Theorem \ref{thm:RegNoise} 
				is divided into three subregimes (see Figure \ref{picture1.2}), 
				reflecting the distinct analytical challenges and tailored techniques required for each. 
				In order to reach the borderlines,
				we fully exploit the global-in-time $B^s_{p,\infty}$-Besov regularity of geometric Brownian motions derived in this work.  
				Moreover, exploiting the fast decay of geometric Brownian motions in these norms, 	we use the two-temporal-regimes argument very recently developed in the context of critical stochastic NLS~\cite{SZZ25} to analyze the solution trajectory in the
				small and large time regimes separately. This approach is a major departure from the energy space in dimension four~\cite{HRSZ24}, but necessary to push the regularization-by-noise regime to~\eqref{IniReg-conditionNoiseReg}. 
				We refer to  Subsection~\ref{subsec:RegNoise}
				for more details.
		\end{enumerate}	
	\end{remark}

	The following subsections provide the necessary background, contextualize our main results, and outline the primary difficulties and our proof strategy.
	We treat the local well-posedness
	and noise-regularization effect separately
	in Subsections \ref{subsec:lwp}
	and \ref{subsec:RegNoise},
	respectively.

	\subsection{Local well-posedness and blow-up alternative}
	\label{subsec:lwp}
	
	\subsubsection{Deterministic Zakharov system}
	\label{subsubsec:DetZak}
	
	We concentrate on the development of the local well-posedness theory in this paragraph. 
	
	In \cite{BC96},
	Bourgain and Colliander proved the local well-posedness in the energy space in dimensions two and three. 
	To obtain a local well-posedness result in such a low regularity setting, it is crucial to exploit the resonance structure of the nonlinear interaction,
	for which the $X^{s,b}$-spaces were used in \cite{BC96}.
	This approach was extended by Ginibre, Tsutsumi, and Velo \cite{GTV97} in all space dimensions,
	where the admissible range  for the local well-posedness
	in $H_x^s \times H_x^l$
	was studied.
	The local well-posedness regime there was significantly extended by Bejenaru, Guo, Herr, and Nakanishi~\cite{BGHN15},
	by using a normal form approach to control the nonlinear interactions. However, in part due to boundary terms arising from the normal form transform,  which are difficult to control in a low regularity setting, the local well-posedness regime
	in \cite{BGHN15} was still not sharp.
	
	The sharp range of exponents $(s,l) \in \R^2$ for which local well-posedness holds in dimension $d \geq 4$ was finally determined by Candy, Herr, and Nakanishi in~\cite{CHN23}.
	It was proved that the deterministic Zakharov system is locally well-posed with analytic flow map if $(s,l)$ satisfies~\eqref{IniReg-condition}, while the flow map is not $C^2$ otherwise. The key for obtaining the sharp local well-posedness regime
	is a new functional framework including adapted Fourier restriction spaces $S^{s,a,b}$ which contain additional weighted temporal derivatives.

	Considering a high-low wave-Schr{\"o}dinger interaction in the small temporal frequency regime, it was realized in~\cite{CHN23} that it seems unlikely that solutions to the Zakharov system can be constructed in the full well-posedness regime by iterating in the endpoint Strichartz space. For an insightful explanation of this idea, see the introduction of~\cite{CHN23}. The solution to this problem was to introduce suitable weights of power $a$ in space-time frequency, where $a$ measures a loss of regularity in the small temporal frequency regime, roughly speaking. To close the nonlinear estimates, a second parameter $b$ was necessary, which allows to gain regularity in the high modulation regime. See Section~\ref{sec:FunctFramework} below for the precise definition of the function spaces with the parameters $a$ and $b$.

	\subsubsection{Stochastic Zakharov system}
	The Zakharov system
	in the stochastic setting is far less studied than its deterministic counterpart.
	Tsutsumi~\cite{Ts22}
	first considered the Zakharov system with additive noise in
	dimension one and proved global well-posedness. His approach relied on $X^{s,b}$-spaces with $b < \frac12$, a difference to the deterministic case enforced by the limited $C^{\frac{1}{2}-}$-H{\"o}lder regularity of Wiener noise.
	Barru{\'e}, de Bouard, and Debussche \cite{BBD24} addressed the question of the subsonic limit from the stochastic Zakharov system to the stochastic NLS.
	It was proved that
	the Zakharov system with damping and additive noise in the wave equation converges to the stochastic cubic NLS with multiplicative noise \cite{BBD24}.
	
	In dimension three,
	the stochastic Zakharov system
	was studied in the energy space
	by
	Herr, R{\"o}ckner, and the first two authors~\cite{HRSZ25}.
	The proof of local well-posedness there relied on the normal form approach and  refined rescaling transforms.
	The normal form approach fails in the energy space in dimension four.
	The 4D energy-critical case
	was recently addressed in \cite{HRSZ24},
	by replacing the normal form approach with
	the adapted Fourier restriction spaces
	$S^{s,a,b}$ from~\cite{CHN23} with  parameters $a = \frac14$ and $b = 0$.

	As indicated above,
	the proof of \cite{HRSZ24} can be extended straightforwardly to other regularities where $b = 0$ is admissible. However, this functional framework breaks down in the case $b \neq 0$, 
	as the adapted spaces $S^{s,0,b}$ are too restrictive to absorb the noise-induced perturbations. This limitation prevents the previous methods from yielding the full well-posedness regime~\eqref{IniReg-condition}.

	\subsubsection{Idea of the proof}
	\label{subsec:IdeaProof}
	The main difficulty in obtaining local well-posedness results in the sharp regime~\eqref{IniReg-condition}
	including $b\not =0$
	is to control the nonlinear interactions
	between the Schr\"odinger component, the wave component, and the noise in the Zakharov system.

	To be precise, 
	in a first step,	
	we reduce the stochastic Zakharov system~\eqref{eq:StoZak} to a random system by the rescaling (or Doss-Sussman type) transforms
	\begin{align}
		& u(t):= e^{-W_1(t)}X(t),   \label{rescal.1}  \\
		& v(t):= Y(t) - \cT_{t}(W_2) \ \qquad
		\text{with} \ \cT_t(W_2):= - \imu \int_0^t e^{\imu (t-s)|\na|} \dd W_2(s).   \label{rescal.2}
	\end{align}
	The resulting random system reads
	\begin{equation}   \label{eq:RanZakW1W2}
		\left\{\aligned
		\imu \partial_t u + e^{-W_1}\Delta (e^{W_1} u) &= \Re(v) u + \Re(\cT_t(W_2)) u,  \\
		\imu \partial_t v + |\na |v  &= - |\na||u|^2, \\
		(u(0), v(0)) &= (X_0, Y_0),
		\endaligned
		\right.
	\end{equation}
	or equivalently,
	\begin{equation}   \label{eq:RanZakbc}
		\left\{\aligned
		\imu \partial_t u + \Delta u
		&= \Re(v) u - b\cdot \na u - cu + \Re(\cT_t(W_2)) u,  \\
		\imu \partial_t v + |\na |v  &= - |\na||u|^2, \\
		(u(0), v(0)) &= (X_0, Y_0),
		\endaligned
		\right.
	\end{equation}
	where the coefficients $b$\footnote{To maintain consistency with the established literature, we use the letter $b$ to denote both the noise coefficient and the parameter in the adapted spaces $S^{s,a,b}$. There is no ambiguity, see Remark~\ref{rem:Notationb} for details.} and $c$ are of the form
	\begin{align}
		b &= 2 \na W_1 = 2 \imu \sum\limits_{k=1}^\infty \na \phi^{(1)}_k \beta^{(1)}_k,  \label{eq:Defb}  \\
		c &= -|\na W_1|^2 + \Delta W_1
		= - \sum\limits_{j=1}^d \Big(\sum\limits_{k=1}^\infty
		\partial_j \phi_k^{(1)} \beta^{(1)}_k \Big)^2
		+ \imu \sum\limits_{k=1}^\infty \Delta \phi^{(1)}_k \beta^{(1)}_k.   \label{eq:Defc}
	\end{align}
	The equivalence between systems \eqref{eq:StoZak} and~\eqref{eq:RanZakbc}
	was first proved in Theorem~3.1 of~\cite{HRSZ25} in dimension three and for $(s,l)=(1,0)$.
	The arguments there apply to the case $d \geq 4$ and general $(s,l)$.  
	
	The rescaling transform builds on earlier applications in the theory of stochastic NLS (see, e.g., \cite{BRZ14, BRZ16, BRZ17a}).
	It 
	allows us to analyze the solution trajectories
	in a sharp pathwise manner,
	by using
	advanced techniques from harmonic analysis and dispersive equations that are not directly applicable to stochastic convolutions. 
	
	However, 
	the rescaling transform also comes at a cost:
	it generates the problematic {\it first-order derivative} term 
	$b\cdot \na u$
	in the Schr{\"o}dinger part of the system with coefficients depending on the noise.
	The delicate point is that
	the first-order derivative term is
	at the {\it critical regularity} for  perturbative analysis,
	and thus introduces substantial difficulties absent from the deterministic case.

	\vspace*{4pt plus 2pt minus 2pt}%
	\paragraph{\bf $\bullet$  Refinement of functional framework} 
	
	To overcome the above problem,
	a functional framework was developed in \cite{HRSZ24} 
	that combines the adapted Fourier restriction spaces from~\cite{CHN23} 
	with lateral Strichartz spaces as used in~\cite{BIKT11} for the Schr{\"o}dinger  maps problem. 
	The functional framework 
	yields control of both the nonlinear interactions and the first-order  derivative perturbations. 
	The key to this combination is the {\it compatibility}
	between these two types of spaces, which is established by proving linear estimates  between each space and the dual of the other,
	see Lemma~\ref{lem:StrichartzLocalSmooth}.
	Another crucial ingredient is the compatibility between these function spaces and  rescaling transforms,
	as proved in~\cite{HRSZ24}.

	As mentioned above,
	the method of \cite{HRSZ24} extends straightforwardly
	to adapted spaces with $b=0$ but
	fails in the case $b \neq 0$.
	One of the new contributions here is a refinement of the adapted Fourier restriction spaces 
		in which the original $S^{s,0,b}$-norm controlling
		the Schr{\"o}dinger component is replaced by a weaker $\wt{S}^{s,0,b}$-norm.
	This refined Fourier restriction norm
	is strong enough to control the nonlinear interactions (see Lemma~\ref{le:Bilinearnablauv}) but weak enough to contain the Schr{\"o}dinger evolution of the noise terms (see Lemma~\ref{le:PersisReg}).

	This new functional framework 
	enables us to construct unique local solutions by fixed point arguments 
	both at non-endpoint and endpoint regularities. The endpoint case, i.e. the lowest admissible regularity, requires a different argument than the non-endpoint case as in the deterministic setting~\cite{CHN23}.

	\vspace*{4pt plus 2pt minus 2pt}%
	\paragraph{\bf  $\bullet$  Blow-up alternative and persistence of regularity}
	
		To extend a local solution to its maximal existence time and provide a blow-up alternative, we employ the refined rescaling transforms developed in \cite{Zh20, HRSZ25, HRSZ24} in order to exploit the smallness of the increments of the noise.	
	
	In this work, 
	we derive a blow-up condition that only involves norms at the endpoint regularity. This implies in particular persistence of regularity in the full well-posedness regime, see Remark~\ref{rem:LWP}.
	
	In order to obtain the blow-up alternative
	with optimal regularity,
	we use a bootstrap type argument
	for both the regularity
	of the Schr\"odinger and the wave component.
	We refer to Subsection~\ref{Subsec:ImproBlowUpCon} for more detailed explanations.

	\subsection{Regularization by noise}
	\label{subsec:RegNoise}

	\subsubsection{Background and motivation}
	
	The phenomenon of regularization by noise describes how
	stochastic noise can lead to solutions that are better behaved than their deterministic counterparts.  
	This effect is well-known for finite-dimensional stochastic differential equations (SDEs), 
	see, e.g., \cite{KR05}. 
	In the infinite dimensional setting, 
	it has been investigated for various models, including  
	infinite-dimensional SDEs~\cite{DFRV16}, transport equations~\cite{FGP10}, 
	Navier-Stokes equations~\cite{FL21}, 
	and stochastic Hamilton-Jacobi equations \cite{GG19}.  
	We also mention the vast literature 
	on dispersive equations 
	where random initial data improve the well-posedness, see e.g.
	\cite{Bo96, DNY24, OST22, BT08, BDNY24}.
	
	Concerning stochastic dispersive models,
	it was first observed by
	numerical experiments
	in~\cite{DM02a, DM02b}
	that multiplicative noise can delay blowup
	for stochastic NLS (SNLS),
	while white noise can even prevent blowup.
	This phenomenon was explored rigorously
	for non-conservative noise.
	In the energy-subcritical SNLS setting, it was initially shown that non-conservative noise prevents finite-time blowup with high probability~\cite{BRZ17a}. Afterwards,
	it was sharpened in~\cite{HRZ19}
	to guarantee the existence of global and scattering solutions
	to subcritical SNLS, again with high probability.
	The more difficult critical case was very recently studied in \cite{SZZ25}. 
	Recently, 
	it was proved 
	in \cite{BFMZ24} that 
	adding suitable superlinear noise 
	leads to global well-posedness 
	and hence prevents blowup 
	for NLS with polynomial type nonlinearity. 
	See also \cite{DT11, Ro24} and the references therein
	for the regularization effect of random dispersion.

	The study of regularization by noise
	for the stochastic Zakharov system~\eqref{eq:StoZak} began with~\cite{HRSZ25}, 
	which demonstrated that non-conservative noise can prevent blowup on bounded time intervals with high probability.
	Afterwards, 
	a much stronger conclusion for the energy-critical case was proved in \cite{HRSZ24}: non-conservative noise guarantees global existence and scattering with high probability.
	We will discuss this result in more detail along with the strategy to prove Theorem~\ref{thm:RegNoise} in the next subsection.

	\subsubsection{Idea of the proof}
	
	The analysis of the non-conservative case is structurally different from the conservative one and requires a further rescaling transform. We now define
	\begin{align}
		\label{Rescal-noncons}
		z := e^{\wh \mu t - W_1(t)} X,  \qquad	 v := Y
	\end{align}
	with $\wh \mu$ given by \eqref{def-whmu}.
	The key distinction lies in the parameter $\wh \mu$: in the non-conservative setting of Theorem~\ref{thm:RegNoise}, its real part is positive
	\begin{align}  \label{wtmu-Imphi2}
		\Re \wh \mu
		=  \sum_{k=1}^\infty (\Im \phi_k^{(1)})^2 >0,
	\end{align}
	while $\Re \wh \mu = 0$ in the conservative case considered in Theorem~\ref{thm:LocalWP}.
	The transform~\eqref{Rescal-noncons} converts the stochastic Zakharov system~\eqref{eq:StoZak} into the equivalent random system
	\begin{equation}   \label{eq:RanZakNoncons-intro}
		\left\{\aligned
		\imu \partial_t z + \Delta z &= \Re(v)z,  \\
		\imu \partial_t v + |\na |v  &= - h_{\mathbf{c}}|\na||z|^2, \\
		(z(0), v(0)) &= (X_0, Y_0) 
		\in H_x^s \times H_x^l,
		\endaligned
		\right.
	\end{equation}
	where $h_{\mathbf{c}}$ is the geometric Brownian motion
	\begin{align}  \label{h-W1-def}
		h_{\mathbf{c}}(t):= e^{2 \Re (W_1(t)- \wh \mu t)}
		= e^{-2 \sum_{k=1}^\infty \Im \phi_k^{(1)} \beta^{(1)}_k(t) - 2 \|\mathbf{c}\|^2 t}
	\end{align}
	with $\|\mathbf{c}\|^2 = \sum_{k=1}^\infty |\Im\phi_k^{(1)}|^2$.
	Note that,
	by the law of the iterated logarithm of Brownian motions,
	the geometric Brownian motion $h_{\mathbf{c}}$ decays exponentially fast as $t \rightarrow \infty$. Heuristically, this decay is expected to weaken the nonlinearity in the wave equation and thereby stabilize the system.
	
	\vspace*{4pt plus 2pt minus 2pt}%
	To streamline the description of our proof strategy, we will assume $c_k=0$ for all $2 \leq k < \infty$ and set $c= c_1= \Im(\phi^{(1)}_1)$ in the following discussion, thereby easing the notation. The strategy readily extends to the general case in Theorem~\ref{thm:RegNoise}.
	
	\vspace*{4pt plus 2pt minus 2pt}%
	\paragraph{\bf  $\bullet$ Two-temporal-regimes argument}
	
	The above heuristic arguments can be worked out
	directly on $\R_+$ 
	for subcritical SNLS, see \cite{BRZ17a,HRZ19},
	and for the energy-critical Zakharov 
	system in the energy space~\cite{HRSZ24}. 
	However, 
	the arguments there are insufficient to 
	achieve the full noise-regularizagion regime 
	in Theorem \ref{thm:RegNoise}. 
	
	Instead,  
	we use the two-temporal-regimes argument
	very recently introduced by the authors \cite{SZZ25} for critical SNLS. 
	A key observation is that  
	the geometric Brownian motion decays rapidly and uniformly after the short time $c^{-1}$ with high probability. This allows us to solve~\eqref{eq:RanZakNoncons-intro} in a two-step procedure. 
	Intuitively, 
	in the short time regime $[0,c^{-1})$ 
	one can solve  \eqref{eq:RanZakNoncons-intro} 
	by using the local well-posedness theory, while in the large time regime $[c^{-1},\infty)$ 
	the geometric Brownian motion 
	is very small, and so, 
	the system shall behave close to the linear propagation 
	which exists globally and scatters at infinity.
	
	In order to work out the above strategy, 
	we fully exploit the temporal regularity of  geometric Brownian motions. 
	One delicate point is to ensure the 
	uniform bound with respect to $c$ for the noise term. 
	This issue relates to the 
	scaling-(sub)criticality of geometric Brownian motions.

	\vspace*{4pt plus 2pt minus 2pt}%
	\paragraph{\bf  $\bullet$ Scaling issue}
	
	A naive choice for the space to control the noise 
	is the usual H\"older space  $C^{\frac{1}{2}-}(\R_+)$. 
	However, 
	it is scaling-supercritical with respect to $c$, 
	that is, 
	\begin{align}\label{intro:ScaHolder}
		\|h_c\|_{\dot{C}^{\alpha}([0,c^{-1}])} \overset{d}{=} c^{2\alpha} \|h_1\|_{\dot{C}^{\alpha}([0,c])}\longrightarrow +\infty\quad \text{ as } c\to\infty,
	\end{align} 
	which amplifies the nonlinear effect and derails the scattering of solutions.  
	Here, $\overset{d}{=}$ means equality in probability distribution.
	
	Proceeding differently, 
	we measure the regularity of 
	geometric Brownian motions 
	in the Besov spaces
	\begin{align}\label{intro:ScaBesov} 
		\|h_c\|_{\dot{B}^{s}_{p,\infty}([0,c^{-1}])} \overset{d}{=} c^{2s-\frac{2}{p}} \|h_1\|_{\dot{B}^{s}_{p,\infty}([0,c])},
	\end{align}
	which yields 
	the desired uniform bound of the noise term (with respect to $c$)
	if $s\leq \frac{1}{p}$. 
	
	In the 4D paper \cite{HRSZ24}, the regularization-by-noise result relied on control of the $L_t^6$- and $B_{6,\infty}^{\frac{1}{8}}$-norm of the noise, which are scaling-subcritical from this view point. The global $B_{6,\infty}^{\frac{1}{8}}$-bound was obtained by interpolation and global $V^p$-control. 
	
	Here, we directly prove and exploit global control of geometric Brownian motions in the full range of scaling-subcritical  $B^{\frac{1}{2}-}_{2,\infty}$-norms and in the scaling-critical $B^{\frac{1}{p}}_{p,\infty}$-norms
		with $p>2$ close to $2$ to establish global well-posedness and scattering for the full regime of regularities $(s,l)$ in \eqref{IniReg-conditionNoiseReg}.
	We further prove and use that these norms decay rapidly and uniformly after a short time, see Lemma \ref{prop:GeoBMFastDecay}.

	\vspace*{4pt plus 2pt minus 2pt}%
	\paragraph{\bf  $\bullet$  Trilinear estimates}
	
	The crucial step
	to explore the noise-regularization-effect
	is to prove a suitable {\it trilinear estimate} for the wave nonlinearity in~\eqref{eq:RanZakNoncons-intro}. In contrast to earlier work on SNLS and to the
	3D Zakharov system \cite{HRSZ25}, it is not possible to estimate the geometric Brownian motion simply by an $L^p$-norm in time. Although the noise is spatially independent, the Fourier restriction type norms yield true trilinear interactions involving the geometric Brownian motion. We also point out that these Fourier restriction type norms were tailor made to control the bilinear interactions in the Zakharov nonlinearity in the full regularity regime~\eqref{IniReg-condition}. With the additional interaction of the geometric Brownian motion, 
	it is a priori unclear which regularities one can recover in the trilinear estimate. Even the energy-space case in dimension four~\cite{HRSZ24} is highly nontrivial.
	
	The key observation in~\cite{HRSZ24} was that,
	in the most problematic interaction,
	there is a subtle {\it non-resonance identity} which allows to recover
	{\it spatial regularity} from the
	{\it temporal regularity}
	of the geometric Brownian motion. To exploit this, a global $V^p$-bound (where $V^p$ is the space of bounded $p$-variation) was derived in~\cite{HRSZ24},
	which was combined with its Besov embedding and interpolation with a suitable $L^q$-space.
	
	While this approach was sufficient for the energy-critical case, it is insufficient to obtain the
	noise-regularization-effect in the 
	full 
	noise-regularization regime. 
	In the proof of Theorem~\ref{thm:RegNoise}, we thus extend the strategy from~\cite{HRSZ24} in several directions.

	As mentioned above, we classify the noise-regularization regime in three subregimes with distinct analytical challenges.

	\vspace*{4pt plus 2pt minus 2pt}%
	{\bf Noise-regularization regime I: 
		scaling-subcritical Besov regularity 
		and non-resonance identity.} 
		The primary difficulty in this regime arises from high-low Schr{\"o}dinger-Schr{\"o}dinger frequency interactions in the stochastic wave nonlinearity $- h_c |\nabla| |z|^2$ where the high-frequency component has low modulation. Here we exploit subtle \emph{non-resonance identities} to uncover lower bounds for the temporal frequency either for the low-frequency Schr{\"o}dinger component or the geometric Brownian motion $h_c$ (see e.g.~\eqref{HLLM-decom} in Subsection~\ref{subsec:TrilinearRegimeI}). In the first case, we can recover two spatial derivatives from the Schr{\"o}dinger solution. In the second one, we translate the temporal regularity of the geometric Brownian motion to $1- = 2 \cdot \frac{1}{2}-$ spatial derivatives. In view of the derivative in the nonlinearity, this explains the condition $s > l$ in regime I. Similarly, one misses the borderline $s = \frac{l}{2} + \frac{d-2}{4}$ in the low wave-regularity regime. This heuristic discussion also shows that the noise is the regularity-restricting term -- in the deterministic setting, the second case does not appear and one immediately gains two spatial derivatives.

	\vspace*{4pt plus 2pt minus 2pt}%
	{\bf Noise-regularization regime II: scaling-critical Besov regularity 
		and interpolation argument.} 
		To push the theory to the borderline $s = \frac{l}{2} + \frac{d-2}{4}$, the inherent regularity loss in the scaling-subcritical Besov spaces $B^{\frac{1}{2}-}_{2,\infty}$ is insufficient. Instead, we employ the \emph{scaling-critical} spaces $B^{\frac{1}{p}}_{p,\infty}$ for $p > 2$. The switch in the temporal integrability index combined with an intricate interpolation argument relying on the algebraic structure of regime II allows us to prove trilinear estimates sharp enough to recover the missing fractional derivative that was lost in regime I (see Subsection~\ref{subsec:TrilinRegimeII}). We note that this scale-invariant argument applies to the endpoint regularity $(\frac{d-3}{2}, \frac{d-4}{2})$, which is contained in regime II.

	\vspace*{4pt plus 2pt minus 2pt}%
	{\bf Noise-regularization regime III: 
		local smoothing and maximal function estimates.} 
	In order to treat the noise-regularization regime III up to the borderline $s>l-\frac{1}{2}$, 
	we further make use of the local smoothing effect 
	of the Schr\"odinger component to gain 
	an extra $\frac{1}{2}$ derivative.  
	This in turn requires augmenting our functional framework with maximal function norms as in the context of the Schr\"odinger maps problem~\cite{BIKT11}. 
	
	More precisely, 
	we introduce a new functional framework 
	combining the adapted Fourier restriction norm, 
	the local smoothing norm, 
	and the maximal function norm. 
	The latter two spaces 
	are chosen in the non-endpoint case 
	$L^{p, q}_{\vece_j}$ and $L^{q, p}_{\vece_j}$, 
	with $(p, q)$ close to $(\infty, 2)$, 
	see Section \ref{Sec-MaximalFunctionEstimate} below for the precise definitions. 
	The local smoothing component enables us to gain almost $\frac{1}{2}$ derivatives. Moreover,  
	avoiding the endpoint in the 
	maximal function component allows us to invoke a variant of the Christ-Kiselev lemma in order to bound the Duhamel integral in the maximal function norm by the dual endpoint Strichartz norm, see Lemma~\ref{lem:MaxFctDualEndptStrichartz} below. 
	We note that, similar to the heuristic argument in regime I above, the restriction $s > l - \frac{1}{2}$ in Theorem~\ref{thm:RegNoise} is already dictated by the regularity of the noise. Consequently, moving away from the endpoint in the local smoothing and maximal function components does not further constrain the theorem's validity.

	\section{Functional framework: adapted and lateral Strichartz spaces} \label{sec:FunctFramework}
	
	In this section we introduce the functional framework to solve
	the stochastic Zakharov system. We also provide the essential estimates for the linear flows of the Schr{\"o}dinger and the wave equation in this framework.
	
	\subsection{Notation} We begin by introducing some notation we will use throughout this article. In particular, we use the symbols of a standard Littlewood-Paley decomposition in (spatial) frequency space to introduce decomposition operators in temporal frequency and modulation. The latter is taken with respect to the Schr{\"o}dinger evolution, i.e., we restrict the space-time Fourier transform with respect to the distance to the paraboloid.
	
	\paragraph{\textbf{$\bullet$ Decomposition in frequency and modulation:}}\label{notation:LPdecomposition}
	Take an even function $\eta_0 \in C_c^\infty(\R)$ such that $0 \leq \eta_0 \leq 1$, $\eta_0(r) = 1$ for $|r| \leq \frac{5}{4}$, and $\eta_0(r) = 0$ for $|r| \geq \frac{8}{5}$. For every dyadic number $\lambda \in 2^\Z$ we set
	\begin{align*}
		\eta_\lambda(r) = \eta_0(r/\lambda) - \eta_0(2r/\lambda), \qquad \eta_{\leq \lambda}(r) = \eta_0(r/\lambda)
	\end{align*}
	for all $r \in \R$. The standard Littlewood-Paley projectors are defined as the spatial Fourier multipliers
	\begin{align*}
		P_\lambda = \eta_\lambda(|\nabla|) \quad \text{if } \lambda > 1, \qquad P_1 = \eta_{\leq 1}(|\nabla|).
	\end{align*}
	Hence, $P_\lambda$ localizes the spatial Fourier support to the set $\{\lambda/2 < |\xi| < 2 \lambda\}$ if $\lambda > 1$,
	and to the set $\{|\xi| < 2\}$ if $\lambda = 1$.
	
	Further, we define the temporal frequency and modulation projectors by
	\begin{align*}
		P^{(t)}_\lambda = \eta_\lambda(|\partial_t|), \qquad C_\lambda = \eta_\lambda(| \imu \partial_t + \Delta|), \qquad  \lambda \in 2^\Z.
	\end{align*}
	Consequently, $P^{(t)}_\lambda$ localizes temporal frequencies around $\lambda$,
	while $C_\lambda$ localizes the space-time Fourier support to distances of size $\lambda$ from the paraboloid. We also set
	\begin{align*}
		P_{\leq \lambda} = \eta_{\leq \lambda}(|\nabla|), \qquad P^{(t)}_{\leq \lambda} = \eta_{\leq \lambda}(|\partial_t|), \qquad C_{\leq \lambda} = \eta_{\leq \lambda}(| \imu \partial_t + \Delta|),
	\end{align*}
	as well as $P_{> \lambda} = I - P_{\leq \lambda}$, $P^{(t)}_{> \lambda} = I - P^{(t)}_{\leq \lambda}$, and $C_{> \lambda} = I - C_{\leq \lambda}$.
	Let
	\begin{align*}
		\tilde{P}_\lambda = P_{\frac{\lambda}{2}} + P_\lambda + P_{2 \lambda}
	\end{align*}
	denote the fattened Littlewood-Paley projectors, and correspondingly for the temporal frequency and the modulation projectors. Sometimes, we also use the shorthand $f_\lambda = P_\lambda f$ for the sake of brevity.
	
	\paragraph{\textbf{$\bullet$ Sobolev and Besov spaces:}}
	We will employ the standard Besov and Sobolev spaces,
	which are defined as the sets of tempered distributions such that the following norms are finite.
	The inhomogeneous and homogeneous Sobolev spaces $W^{s,p}$ and $\dot{W}^{s,p}$
	are defined, respectively, via the norms
	\begin{align*}
		\| f \|_{W^{s,p}} = \| \langle \nabla \rangle^s f \|_{L^p} \qquad \text{and} \qquad \| f \|_{\dot{W}^{s,p}} = \| | \nabla |^s f \|_{L^p}.
	\end{align*}
	The inhomogeneous and homogeneous Besov spaces $B^{s}_{p, q}$ and $\dot{B}^{s}_{p,q}$ are
	defined, respectively, via the norms
	\begin{align*}
		\| f \|_{B^s_{p,q}} = \Big(\sum_{\lambda \in 2^{\N_0}} \lambda^{s q} \| P_\lambda f\|_{L^p}^q \Big)^{\frac{1}{q}} \qquad \text{and} \qquad
		\| f \|_{\dot{B}^s_{p,q}} = \Big(\sum_{\lambda \in 2^{\Z}} \lambda^{s q} \| \dot{P}_\lambda f\|_{L^p}^q \Big)^{\frac{1}{q}},
	\end{align*}
	where  $\dot{P}_\lambda = \eta_\lambda(|\nabla|)$ $(\lambda \in 2^\Z)$ denote the homogeneous Littlewood-Paley projectors. As usual we write $H^s = W^{s,2}$ and $\dot{H}^s = \dot{W}^{s,2}$.
	
	We also note that $C_\lambda P_\lambda$, $C_{\leq \lambda} P_\lambda$, etc.\ are convolution operators with kernels bounded in $L^1( \R^{1+d})$
	independent of $\lambda$, and thus these operators are bounded on all $L^q_t L^p_x$, $L^q_t W^{s,p}_x$, and $L^q_t B^s_{p,r}$ spaces uniformly in $\lambda$.
	
	\paragraph{\textbf{$\bullet$ Paraproduct decomposition:}}
	To distinguish different frequency interactions, we use the standard paraproduct decomposition
	\begin{align*}
		f g = (f g)_{LH} + (f g)_{HH} + (f g)_{HL},
	\end{align*}
	where the low-high, high-high, and high-low interactions are defined, respectively, as
	\begin{align*}
		(f g)_{LH} = \sum_{\lambda} P_{\leq \frac{\lambda}{2^8}} f P_\lambda g, \qquad
		(f g)_{HH} = \sum_{\lambda_1 \sim \lambda_2} P_{\lambda_1} f P_{\lambda_2} g, \qquad
		(f g)_{HL} = (g f)_{LH}.
	\end{align*}
	Here, we sum over $\lambda \in 2^{\N}$ with $\lambda \geq 2^8$ in the first sum and over all $\lambda_1, \lambda_2 \in 2^{\N_0}$ with $| \log(\lambda_1/\lambda_2) | \leq 7$ in the second sum. 
	
	We will also use the suggestive notation
		\begin{align*}
			P_{\ll \lambda}: = P_{\leq \frac{\lambda}{2^8}},\quad P_{\ll \lambda^2} := P_{\leq (\frac{\lambda}{2^8})^2},
		\end{align*}
		and $P_{\gtrsim \lambda}$ (resp. $P_{\lesssim \lambda}$), where the latter denotes the corresponding localization operator with respect to the Fourier support $\{\xi \in \R^d \colon |\xi| \geq c\lambda\}$ (resp. $\{\xi \in \R^d \colon |\xi|\leq c\lambda\}$) for some $c>0$. Moreover, $P_{\sim \lambda}:= P_{\gtrsim \lambda}P_{\lesssim \lambda}$.
	
	\paragraph{\textbf{$\bullet$ Homogeneous and inhomogeneous Schr{\"o}dinger and wave propagators:}}
	We write $\cI_0[g]$ for the solution of the inhomogeneous Schr\"odinger equation
	\begin{align*}
		(\imu \partial_t + \Delta) u = g, \qquad u(t_0) = 0,
	\end{align*}
	and $\cJ_0[h]$ for the solution of the inhomogeneous wave equation
	\begin{align*}
		(\imu \partial_t + |\nabla|) v = h, \qquad v(t_0) = 0,
	\end{align*}
	i.e., in the Duhamel form
	\begin{equation}
		\label{eq:DefPropOp}
		\cI_0[g](t) = -\imu \int_{t_0}^t e^{\imu (t-s) \Delta} g(s) \dd s, \qquad
		\cJ_0[h](t) = -\imu \int_{t_0}^t e^{\imu (t-s)|\nabla|} h(s) \dd s.
	\end{equation}
	We further consider the Schrödinger equation with potential $V$
	\begin{align}
		\label{eq:SchrPotentialIntro}
		(\imu \partial_t + \Delta - V) u = g, \qquad u(t_0) = f.
	\end{align}
	If unique solutions to this equation exist, we will denote the homogeneous and inhomogeneous propagators by $\cU_V[f]$ and $\cI_V[g]$, respectively, i.e., the former denotes the solution of~\eqref{eq:SchrPotentialIntro} with $g = 0$ and the latter the solution of~\eqref{eq:SchrPotentialIntro} with $f = 0$.
	We omit the dependence on $t_0$ in the labeling of these operators. The considered $t_0$ will always be clear from the context.

	\subsection{Function spaces for the Schr\"odinger component} \label{Subsec:FunctFrame}
	The function spaces for the Schr\"odinger component
	in the Zakharov system consists of two parts:
	the lateral Strichartz spaces
	and the adapted spaces. The latter are Fourier restriction spaces adapted to the structure of the Zakharov system.

	\vspace*{4pt plus 2pt minus 2pt}%
	\paragraph{\bf $\bullet$ Lateral Strichartz spaces}
	We first introduce the lateral Strichartz spaces in dimension $d\geq 4$.
	The lateral Strichartz spaces capture the local smoothing effect of the Schr{\"o}dinger flow and are crucial to control the problematic first-order perturbation caused by the noise in~\eqref{eq:RanZakbc}.
	
	Let  $\textbf{e} \in \Sp^{d-1}$ and $\mathcal{P}_{\vece}=\{\xi \in \R^{d} \, |\, \xi \cdot \textbf{e}=0 \}$
	with the induced Euclidean measure.
	Set
	\begin{equation} \label{Lepq-def}
		\| f \|_{L_{\textbf{e}}^{p,q}(I \times \R^d)}
		: = \Bigl( \int_{\R} \Bigl( \int_{I \times \mathcal{P}_{\textbf{e}}} |f(t, r \textbf{e} + y)|^q \dd t \dd y \Bigr)^{\frac{p}{q}} \dd r \Bigr)^{\frac{1}{p}},
	\end{equation}
	where $p, q \in [1,\infty]$,
	with the usual adaptions if $p = \infty$ or $q = \infty$.
	
	Let $\phi \in C_c^\infty(\R)$
	be a nonnegative and symmetric function
	such that $\phi(r)=0$ if $|r|\leq \frac{1}{3\sqrt{d}}$ or $|r|\geq 3$ and $\phi(r)=1$ if $\frac{1}{2\sqrt{d}} \leq |r| \leq 2$, and set $\phi_N(r)=\phi(r/N)$.
	Then,
	\begin{equation} \label{eq:DecompositionIdentity}
		\prod_{j=1}^{d}(1-\phi_N(\xi_j))=0
	\end{equation}
	for all $\xi \in \R^d$ with $N/2 < |\xi| < 2 N$.
	Set  $P_{N,\vece} :=\cF_{x}^{-1} \phi_N(\xi \cdot \vece) \cF_{x}$.
	By \eqref{eq:DecompositionIdentity},
	we have the decomposition
	\begin{equation}   \label{eq:DecompositionAngular}
		P_N f = \sum_{j=1}^{d} P_{N,\vece_j} \Big[\prod_{l=1}^{j-1} (1-P_{N,\vece_l})\Big]P_N f,
	\end{equation}
	where $\vece_1, \ldots, \vece_d$ denotes the standard orthonormal basis of $\R^d$.

	\begin{remark}
		The local smoothing estimates for the Schr{\"o}dinger flow in lateral Strichartz spaces are collected in Lemma~\ref{lem:StrichartzLocalSmooth} along with the Strichartz estimates. In particular, we note that for the linear inhomogeneous problem the solution can be controlled in a lateral Strichartz space by the right-hand side in a dual Strichartz space and vice versa.
	\end{remark}

	\paragraph{\bf $\bullet$ Adapted spaces}
	We will use the following adapted function spaces to control the Schr\"odinger-wave interactions:
	for
	$s, a, b \in \R$, $0\leq a,b\leq 1$, and $\lambda \in 2^{\N_0}$ we define
	\begin{align}
		{\|u\|_{\tilde{S}^{s,a,b}_\lambda}}& := \lambda^s \|u\|_{L^\infty_t L^2_x} + \lambda^{s - 2a} \|(\lambda + |\partial_t|)^a u\|_{L^2_t L^{2^*}_x}\nonumber \\
		&\qquad + \lambda^{s-1+b}\|C_{\geq \frac{\lambda^2}{2^3}} (\imu \partial_t + \Delta) u\|_{L_{t,x}^2} + \lambda^{s-1} \Big\| \Big( \frac{\lambda + |\partial_t|}{\lambda^2 + |\partial_t|}\Big)^a (\imu \partial_t + \Delta) u \Big\|_{L^2_{t,x}}\label{eq:DefTildeSsablambda}
	\end{align}
	and
	\begin{align}
		\label{eq:DefNsablambda}
		\|F\|_{N^{s,a,b}_\lambda} := \lambda^{s-2} \|P_{\leq(\frac{\lambda}{2^8})^2}^{(t)} F \|_{L^\infty_t L^2_x} + \lambda^s \|C_{\leq (\frac{\lambda}{2^8})^2} F \|_{L^2_t L^{2_*}_x} + \lambda^{s-1+b} \Big\| \Big( \frac{\lambda + |\partial_t|}{\lambda^2 + |\partial_t|}\Big)^a F \Big\|_{L^2_{t,x}},
	\end{align}
	where $2^*=\frac{2d}{d-2}$, $2_*=\frac{2d}{d+2}$, and $d\geq 4$.

	We note that the space $\tilde{S}^{s,a,b}_{\lambda}$ is inspired by the space $S^{s,a,b}_{\lambda}$ introduced in \cite{CHN23}
	\begin{align}
		\label{eq:DefSsablambda}
		\|u\|_{S^{s,a,b}_\lambda} &:= \lambda^s \|u\|_{L^\infty_t L^2_x} + \lambda^{s - 2a} \|(\lambda + |\partial_t|)^a u\|_{L^2_t L^{2^*}_x} + \lambda^{s-1 + b} \Big\| \Big( \frac{\lambda + |\partial_t|}{\lambda^2 + |\partial_t|}\Big)^a (\imu \partial_t + \Delta) u \Big\|_{L^2_{t,x}}.
	\end{align}

	The corresponding $\tilde{S}^{s,a,b}$- and $N^{s,a,b}$-norms are defined by the $l^2$-sum of the dyadic pieces
	$\|P_\lambda u\|_{\tilde{S}^{s,a,b}_\lambda}$ and $\|P_\lambda F\|_{N^{s,a,b}_\lambda}$, respectively, i.e.
	\begin{align*}
		\|u\|_{\tilde{S}^{s,a,b}}:= \Big( \sum_{\lambda\in 2^{\N_0}} \|u_\lambda\|_{\tilde{S}^{s,a,b}_\lambda}^2\Big)^{\frac12},\quad
		\|F\|_{N^{s,a,b}}:= \Big( \sum_{\lambda\in 2^{\N_0}} \|F_\lambda\|_{N^{s,a,b}_\lambda}^2 \Big)^{\frac12}.
	\end{align*}
	Finally, we set
	\begin{align*}
		\tilde{S}^{s,a,b}(\R) := \{u \in C(\R, H_x^s) \colon \|u\|_{\tilde{S}^{s,a,b}} < \infty\}, \qquad 
		S^{s,a,b}(\R) := \{u \in C(\R, H_x^s) \colon \|u\|_{S^{s,a,b}} < \infty\},
	\end{align*}
	while $N^{s,a,b}(\R)$ is the space of tempered distributions with finite $\| \cdot \|_{N^{s,a,b}}$-norm.
	
	As in \cite{CHN23}, we set
	\begin{align}
		a=a^*(s,l) := \begin{cases}
			\frac{3}{4}(s-l)-\frac{1}{2}& \text{if } s-l\geq 1,\\
			0& \text{if } s-l<1,
		\end{cases}
		\qquad
		b= b^*(s,l):=\begin{cases}
			0& \text{if } s-l>0,\\
			\frac{1}{2}(l-s)+\frac{1}{2}& \text{if } s-l\leq 0.
		\end{cases}\label{con:ab}
	\end{align}
	
	\begin{remark} \label{rem:Notationb}
		We employ the letter $b$ to denote both the parameter in the adapted spaces and the noise term $b(t,x)$ defined in~\eqref{eq:Defb}. We adopt this abuse of notation to maintain consistency with standard conventions in the literature: see~\cite{CHN23} for the adapted spaces and~\cite{HRSZ25, HRSZ24, BRZ14, BRZ16} for the rescaling transform. Given the distinct nature of these objects, there is no risk of ambiguity, and the meaning will be clear from the context.
	\end{remark}
	
	In the case where $a\in [0,1]$,
	an application of Bernstein's inequality yields
	\begin{align}
		\label{eq:CharSsablambda}
		\|u_\lambda\|_{S^{s,a,b}_\lambda} \sim \lambda^s(\|u_\lambda\|_{L^\infty_t L^2_x} + \|C_{\leq (\frac{\lambda}{2^8})^2} u_\lambda\|_{L^2_t L^{2^*}_x}) + \lambda^{s-1+b}\Big\| \Big( \frac{\lambda + |\partial_t|}{\lambda^2 + |\partial_t|}\Big)^a (\imu \partial_t + \Delta) u_\lambda \Big\|_{L^2_{t,x}},
	\end{align}	
	see Remark~2.1 in~\cite{CHN23}.
	
	The relationship between the $S^{s,a,b}$- and $\tilde{S}^{s,a,b}$-norms in the special cases $b = 0$ and $a = 0$ is given respectively by the inequalities
	\begin{align}\label{prop:RelationBetweenTwoS}
		\|u\|_{\tilde{S}^{s,0,b}}\lesssim \|u\|_{S^{s,0,b}}, \quad
		\|u\|_{S^{s,a,0}}\lesssim \|u\|_{\tilde{S}^{s,a,0}}.
	\end{align}
	Both estimates immediately follow from the definition of the norms.

	\begin{remark}
		$(i)$  	
		In the 4D energy-critical case \cite{HRSZ24} only the energy space $(s,l)=(1,0)$ is considered, where $b=0$.
		Here we consider the sharp regularity regime given by \eqref{IniReg-condition}, which in particular requires to treat regularities with $b \neq 0$.
		
		$(ii)$
		Compared to the $S^{s,a,b}_\lambda$-norm, the $\tilde{S}_{\lambda}^{s,a,b}$-norm splits the $L^2_{t,x}$-contribution. The part gaining regularity via the $b$-parameter contains an additional restriction to high modulation. This restriction allows us to control the most difficult low-high interaction of $b\cdot\nabla u$ by translating temporal regularity of the noise to spatial regularity in the case $a = 0$ and $b \neq 0$, see Lemma~\ref{le:PersisReg}, which ultimately results in an improvement of the wave regularity.
		 
		More precisely,
		since the linear estimate in Lemma~\ref{lem:LinEstimates} only applies in the case $b = 0$,
		we first solve the Zakharov system \eqref{eq:RanZakbc}
		in a suitable space $\X^{s,a}$
		and then apply the improvement of regularity
		result from Section~\ref{Sec-Persi} to upgrade the local well-posedness to regularities with $b\neq0$.

	\end{remark}

	\begin{remark}  \label{rem:NormComp}
		We note that
		\begin{align*}
			\|u_{\lambda}\|_{L_t^2 L_x^{2^*}}\lesssim \lambda^{a-s}\lambda^{s-2a}\|(\lambda+|\partial_t|)^a u_{\lambda}\|_{L_t^2 L_x^{2^*}}\lesssim \lambda^{a-s} \|u_{\lambda}\|_{S_{\lambda}^{s,a,0}}
		\end{align*}
		and
		\begin{align*}
			\lambda^{\frac{d-3}{2}}\|u_{\lambda}\|_{L_t^2 L_x^{2^*}}\lesssim \lambda^{\frac{d-3}{2}-s} \|u_{\lambda}\|_{S_{\lambda}^{\frac{d-3}{2},0,0}},
		\end{align*}	
		together with the fact $s>a$ and the Littlewood-Paley theorem imply that
		\begin{align}
			\label{eq:DispersiveBdAdapted}
			\|u\|_{L_t^2 L_x^{2^*}}\lesssim \|u\|_{S^{s,a,0}},\quad \|u\|_{L_t^2 W_x^{\frac{d-3}{2},2^*}}\lesssim \|u\|_{S^{\frac{d-3}{2},0,0}}.
		\end{align}
		We will employ these two inequalities throughout this paper.
	\end{remark}
	
	Finally, we will also use the variant
		\begin{align}
			&\|u\|_{S_w^{s,0,0}}:= \|u\|_{L_t^\infty H_x^s} + \|u\|_{L_t^2 W_x^{s,2^*}} + \|(\imu \partial_t +\Delta) u\|_{L_t^2 H^{s-1}_x} \label{eq:WeakNormS}
		\end{align}
		of the $S^{s,a,b}$-norm, which has slightly weaker summability properties. This weaker norm is needed to close the bilinear estimates in the endpoint case, see Lemma~\ref{lem:BilinearEstimates}~\ref{it:BilinearEstEndpoint1} and~\ref{it:BilinearEstEndpoint2} below.
	
	\paragraph{\bf $\bullet$ Space for the Schr\"odinger component}
	We control the Schr{\"o}dinger component of the stochastic Zakharov system
	in the following $\Xs$ space, combining the adapted and lateral Strichartz spaces
	\begin{align*}
		& \|u\|_{\Xs_\lambda} := \|u\|_{\Ssa_\lambda} + \sum_{j = 1}^d \lambda^{s + \frac{1}{2}} \|P_{\lambda, \vece_j} \ModN u \|_{L^{\infty,2}_{\vece_j}} \quad \text{if } \lambda > 1, \qquad \|u\|_{\Xs_\lambda} := \|u\|_{\Ssa_\lambda} \quad \text{if } \lambda = 1,
	\end{align*}
	and
	\begin{align*}
		\|u\|_{\Xs} := \Big( \sum_{\lambda \in 2^{\N_0}} \|u_\lambda\|_{\Xs_\lambda}^2\Big)^{\frac{1}{2}}.
	\end{align*}
	The right-hand side of the Schr{\"o}dinger equation is controlled via
	\begin{align*}
		\|F\|_{\Gs} := \Big(\| P_1 F\|_{\Nsa_1}^2 + \inf_{F = F_1 + F_2} \Big(\sum_{\lambda \in 2^\N} \|P_\lambda F_1\|_{\Nsa_\lambda}^2 + \sum_{j = 1}^d \sum_{\lambda \in 2^{\N}} \lambda^{2s - 1} \|P_\lambda F_2\|_{L^{1,2}_{\vece_j}}^2 \Big)\Big)^{\frac{1}{2}}.
	\end{align*}
	Let 
	\begin{align*}
		\Xs(\R) := \{ u \in C(\R, H_x^s) \colon \| u \|_{\Xs} < \infty\},
	\end{align*}
	and $\Gs(\R)$ be the set of tempered distributions with finite $\| \cdot \|_{\Gs}$-norm.
	
	We also localize the function spaces above to intervals $I \subseteq \R$ via restriction. For example, we define
	\begin{equation}
		\label{eq:DefRestrictionNorm}
		\|u\|_{\Xs(I)} = \inf_{u' \in \Xs(\R), u'_{|I} = u} \|u'\|_{\Xs(\R)}.
	\end{equation}

	\begin{remark}
		We distinguish between high and low frequencies in the $\Xs_\lambda$-norm
		since the local smoothing estimate is only available for high frequencies,
		which is sufficient to control the problematic derivative terms caused by the noise in~\eqref{eq:RanZakbc}.
	\end{remark}

	The following lemma provides the crucial compatibility-type estimates
	between the lateral Strichartz and adapted spaces, and allows us to control the linear Schr{\"o}dinger flow in the $\Xs$-space. The proof follows analogous arguments as in the case $(s,a) = (1,\frac14)$ treated in \cite[Lemma~3.3]{HRSZ24}
	and is thus omitted.
	
	\begin{lemma}[Control of linear Schr\"odinger flows in $\Xs$-spaces]
		\label{lem:LinEstimates}
		Let $s\in \R$, $0\leq a\leq 1$, $f \in H_x^s$, $g \in \Gs$, and $u$ solve the linear Schr{\"o}dinger equation
		\begin{align*}
			\imu \partial_t u + \Delta u = g, \qquad u(t_0) = f.
		\end{align*}
		Then
		\begin{align*}
			\|u\|_{\Xs} \lesssim \|f\|_{H_x^s} +  \|g \|_{\Gs}.
		\end{align*}
	\end{lemma}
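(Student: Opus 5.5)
Since every norm involved is an $\ell^2$-sum over dyadic frequencies and $P_\lambda$ commutes with the linear Schr\"odinger flow, I will reduce the statement to a frequency-localized estimate
\begin{align*}
\|u_\lambda\|_{\Xs_\lambda} \lesssim \|f_\lambda\|_{H^s_x} + \|g_\lambda\|_{\text{piece}},
\end{align*}
uniform in $\lambda$. Here $\|g_\lambda\|_{\text{piece}}$ is either $\|P_\lambda F_1\|_{\Nsa_\lambda}$ or $\lambda^{s-\frac12}\|P_\lambda F_2\|_{L^{1,2}_{\vece_j}}$. Square-summing then gives the claim, after taking the infimum over decompositions $F=F_1+F_2$. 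By linearity, $u = e^{\imu(t-t_0)\Delta}f + \cI_0[F_1] + \cI_0[F_2]$, so there are three contributions. For each of them I need both the $\Ssa_\lambda$-part and the lateral part $\lambda^{s+\frac12}\|P_{\lambda,\vece_j}\ModN u_\lambda\|_{L^{\infty,2}_{\vece_j}}$. Throughout I use \eqref{eq:CharSsablambda}, which reduces the $\Ssa_\lambda$-norm to $L^\infty_tL^2_x$, low-modulation $L^2_tL^{2^*}_x$, and the weighted $L^2_{t,x}$ norm of $(\imu\partial_t+\Delta)u_\lambda$.

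\emph{Free evolution.} The term $(\imu\partial_t+\Delta)u=0$ vanishes, apart from a time cutoff if one localizes to intervals. The energy and endpoint Strichartz estimates give the $L^\infty L^2$ and $L^2L^{2^*}$ pieces, and the weight $(\lambda+|\partial_t|)^a$ on $C_{\le(\lambda/2^8)^2}u_\lambda$ is harmless because temporal frequencies there are $\sim\lambda^2$. The lateral part follows from the Kato local smoothing estimate $\|P_{\lambda,\vece}e^{\imu t\Delta}f\|_{L^{\infty,2}_{\vece}}\lesssim\lambda^{-\frac12}\|f\|_{L^2}$, which is valid since $|\xi\cdot\vece|\sim\lambda$ on the support of $\phi_\lambda(\xi\cdot\vece)$. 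The modulation cutoff $\ModN$ is bounded on $L^{\infty,2}_{\vece}$ because it is a time-frequency multiplier in the Fourier variables with an $L^1$ kernel after conjugation by the flow.

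\emph{The $N^{s,a,0}$ forcing.} This is the adapted-space linear estimate from \cite{CHN23}, combined with the compatibility estimate of the lateral part with the dual Strichartz norm. I split $F_1 = P^{(t)}_{\le(\lambda/2^8)^2}F_1 + P^{(t)}_{>(\lambda/2^8)^2}F_1$ and also separate $\ModN F_1$ from high modulation. High modulation is handled by dividing by the symbol, $u = (\imu\partial_t+\Delta)^{-1}F$, using the weighted $L^2$ norm. The low-modulation part is handled by the inhomogeneous endpoint Strichartz estimate from $L^2L^{2_*}$, while the small-temporal-frequency piece in $L^\infty L^2$ has $\lambda^{s-2}$ and the symbol $\sim\lambda^2$. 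For the lateral norm of $\ModN\cI_0[F_1]$, I apply Lemma~\ref{lem:StrichartzLocalSmooth}: the Duhamel integral of an $L^2L^{2_*}$ forcing is bounded in $L^{\infty,2}_{\vece}$ with gain $\lambda^{-\frac12}$. The high-modulation parts of $F_1$ do not contribute after applying $\ModN$, up to the fattened modulation region, where I again use the symbol division and $L^2_{t,x}\to L^{\infty,2}_{\vece}$ via Bernstein in the $\vece$ direction, at cost $\lambda^{\frac12}$ against a gain $\lambda^{-2}$.

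\emph{The lateral forcing $F_2\in L^{1,2}_{\vece_j}$.} This is the main obstacle. Lemma~\ref{lem:StrichartzLocalSmooth} gives the $L^\infty L^2$, $L^2L^{2^*}$, and $L^{\infty,2}_{\vece_j}$ bounds with the gain $\lambda^{-\frac12}$ (inhomogeneous local smoothing in the same direction, and the dual estimate for different directions). However, I must also control $\lambda^{s-1}\|(\frac{\lambda+|\partial_t|}{\lambda^2+|\partial_t|})^a(\imu\partial_t+\Delta)u_\lambda\|_{L^2}$, where $(\imu\partial_t+\Delta)u=F_2$ is only in $L^{1,2}$. The plan is to decompose $u = \ModN u + C_{>(\lambda/2^8)^2}u$. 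For the low-modulation part, \eqref{eq:CharSsablambda} only requires the Strichartz pieces, since its weighted $L^2$ term can be bounded via the dual local smoothing estimate $\|\ModN F_2\|\to\lambda^{\frac12}$ in $L^2_{t,x}$ with weight $\lesssim\lambda^{-a}\le1$. For the high-modulation part I use Bernstein in the $\vece_j$ variable, $\|P_\lambda F_2\|_{L^2_{t,x}}\lesssim\lambda^{\frac12}\|F_2\|_{L^{1,2}_{\vece_j}}$, which exactly matches $\lambda^{s-1}\cdot\lambda^{\frac12}=\lambda^{s-\frac12}$. I then recover the Strichartz pieces from high modulation by Bernstein and the $(\lambda^2)^{-1}$ symbol gain. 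Where $a=\frac14$ (the case of \cite[Lemma~3.3]{HRSZ24}), the argument is identical. For general $a\in[0,1]$, the weight is $\le1$ on high modulation, so it only helps, and I expect no new difficulty.
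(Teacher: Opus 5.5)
Your overall architecture is the one behind the paper's omitted proof (via \cite[Lemma~3.3]{HRSZ24}): Lemma~\ref{lem:LinFlowAdaptedSpaces} for the adapted component, Lemma~\ref{lem:StrichartzLocalSmooth} for the lateral component and for the $L^{1,2}_{\vece_j}$-forcing, and Bernstein in the $\vece_j$-variable for the weighted $L^2_{t,x}$ term of $\cI_0[F_2]$. That last term needs no modulation splitting, since it is just $\lambda^{s-1}\|P_\lambda F_2\|_{L^2_{t,x}}\lesssim\lambda^{s-\frac12}\|P_\lambda F_2\|_{L^{1,2}_{\vece_j}}$.

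There is, however, a genuine gap in the lateral component of $\cI_0[F_1]$. On high modulation you divide by the symbol, and you conclude that after applying $\ModN$ the high-modulation part of $F_1$ contributes only in the overlap region of the cutoffs. But $\cI_0$ carries the initial condition at $t_0$: for $G=C_{>\mu}P_\lambda F_1$,
\[
\cI_0[G]=(\imu\partial_t+\Delta)^{-1}G-e^{\imu(t-t_0)\Delta}\big[(\imu\partial_t+\Delta)^{-1}G\big](t_0),
\]
and the last term is a free wave. It therefore has modulation zero and is left unchanged by $\ModN$.

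This matters. The piece $\TempN P_\lambda F_1$ has modulation $\sim\lambda^2$, so $\ModN(\imu\partial_t+\Delta)^{-1}$ kills it, and its entire contribution to $\ModN\cI_0[F_1]$ comes through this boundary term. The $\Nsa_\lambda$-norm controls this piece essentially only through $\lambda^{s-2}\|\TempN P_\lambda F_1\|_{L^\infty_tL^2_x}$: at temporal frequencies $|\tau|\lesssim\lambda$ the weighted $L^2_{t,x}$ term yields only $\lambda^{s-1-a}\|\cdot\|_{L^2_{t,x}}$. Hence neither tool you propose works. The dual-Strichartz-to-local-smoothing estimate does not apply, since this piece is not controlled in $L^2_tL^{2_*}_x$. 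Bernstein from $L^2_{t,x}$ loses $\lambda^a$. You also cannot remove the boundary term by cutting $F_1$ sharply at $t_0$, as you may for the $L^2_tL^{2_*}_x$ and $L^{1,2}_{\vece_j}$ forcings, because time cutoffs destroy the modulation and temporal-frequency localization the $\Nsa_\lambda$-norm relies on.

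The missing step is the one the paper uses for \eqref{eq:IHMLLT3} and in Lemma~\ref{lem:LinSchroedingerZnorm}. Split $P_\lambda F_1$ at $\mu=(\frac{\lambda}{2^{10}})^2$ and treat three pieces:
\begin{enumerate}
\item $C_{\le\mu}P_\lambda F_1$ is controlled by $\|\ModN P_\lambda F_1\|_{L^2_tL^{2_*}_x}$ and handled by Lemma~\ref{lem:StrichartzLocalSmooth}~\ref{it:InhomLocalSmoothStrichartz}.
\item $\ModN(\imu\partial_t+\Delta)^{-1}C_{>\mu}P_\lambda F_1$ lives at temporal frequencies $\sim\lambda^2$, where the weight is $\sim1$, so your Bernstein argument applies.
\item The free wave with data $\phi=[(\imu\partial_t+\Delta)^{-1}C_{>\mu}P_\lambda F_1](t_0)$ is bounded by homogeneous local smoothing, $\lambda^{s+\frac12}\|P_{\lambda,\vece_j}e^{\imu(t-t_0)\Delta}\phi\|_{L^{\infty,2}_{\vece_j}}\lesssim\lambda^{s}\|\phi\|_{L^2_x}$, together with
\[
\|\phi\|_{L^2_x}\le\big\|(\imu\partial_t+\Delta)^{-1}C_{>\mu}P_\lambda F_1\big\|_{L^\infty_tL^2_x}\lesssim\lambda^{-2}\|\TempN P_\lambda F_1\|_{L^\infty_tL^2_x}+\lambda^{-1}\Big\|\Big(\frac{\lambda+|\partial_t|}{\lambda^2+|\partial_t|}\Big)^aP_\lambda F_1\Big\|_{L^2_{t,x}}.
\]
Here the high temporal frequencies are treated by Bernstein in modulation after conjugating with the flow.
\end{enumerate}

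Two smaller points.
\begin{enumerate}
\item The norm $\|\cdot\|_{\Gs}$ measures the same $F_2$ in $L^{1,2}_{\vece_j}$ for every $j$, so you only ever need same-direction estimates. Pair the output direction $\vece_j$ with the input norm in $\vece_j$, and use \eqref{eq:DecompositionAngular} for the Strichartz parts. A truncated cross-direction bound $L^{1,2}_{\vece_k}\to L^{\infty,2}_{\vece_j}$ is not supplied by Lemma~\ref{lem:StrichartzLocalSmooth}: composing item~\ref{it:HomLocalSmooth} with its dual only gives the untruncated operator.
\item $\ModN P_\lambda$ is bounded on $L^{\infty,2}_{\vece}$ because it is a space-time convolution with kernel bounded in $L^1(\R^{1+d})$ uniformly in $\lambda$. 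Conjugation by the flow does not give this, since $e^{\imu t\Delta}$ is not bounded on lateral spaces.
\end{enumerate}
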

	
	\vspace*{4pt plus 2pt minus 2pt}%
	At last,
	Lemma \ref{lem:DecompX} below shows the decomposability of the $\XS$-space,
	which is used in the gluing procedure
	when extending local solutions to the maximal existence time.
	\begin{lemma}[Decomposability]
		\label{lem:DecompX}
		Let $I_1, I_2 \subseteq \R$ be open intervals such that $I_1 \cap I_2 \neq \emptyset$. If $u$ belongs to $\XS(I_1) \cap \XS(I_2)$, then $u \in \XS(I_1 \cup I_2)$ and
		\begin{align*}
			\|u\|_{\XS(I_1 \cup I_2)} \lesssim (1 + |I_1 \cap I_2|^{-\frac{1}{2}}) (\|u\|_{\XS(I_1)} + \|u\|_{\XS(I_2)}).
		\end{align*}
	\end{lemma}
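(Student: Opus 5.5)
We glue extensions using a smooth partition of unity in time. Write $I_1=(a_1,b_1)$ and $I_2=(a_2,b_2)$; without loss of generality $a_1\le a_2<b_1\le b_2$. If one interval contains the other, the claim is trivial, so assume $J:=I_1\cap I_2=(a_2,b_1)$ has length $\delta:=|J|>0$. Choose $\chi\in C^\infty(\R)$ with $\chi=1$ on $(-\infty,a_2+\delta/3]$, $\chi=0$ on $[b_1-\delta/3,\infty)$, and $\|\chi'\|_{L^\infty}\lesssim\delta^{-1}$. Let $u_1,u_2\in\XS(\R)$ be extensions of $u|_{I_1}$, $u|_{I_2}$ with norms at most twice the restriction norms. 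Then $w:=\chi u_1+(1-\chi)u_2$ coincides with $u$ on $I_1\cup I_2$. It therefore suffices to show
\begin{align*}
\|\chi u_j\|_{\XS}\lesssim (1+\delta^{-\frac12})\|u_j\|_{\XS}, \qquad j=1,2,
\end{align*}
and the same bound with $1-\chi$ in place of $\chi$. Here one should restrict to the case $\delta$ bounded, or replace $\chi$ by a cutoff whose transition length is $\min\{\delta,1\}$, which gives the factor $1+\delta^{-1/2}$.

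To prove the multiplier bound, we avoid checking each component of the norm directly, since time cutoffs do not commute with the modulation projectors. Instead we use Lemma~\ref{lem:LinEstimates}. The function $\chi u_j$ solves
\begin{align*}
(\imu\partial_t+\Delta)(\chi u_j)=\chi(\imu\partial_t+\Delta)u_j+\imu\chi' u_j .
\end{align*}
Take $t_0$ in the region where $\chi=1$; the datum is $u_j(t_0)$, bounded by $\|u_j\|_{L^\infty_tH^s_x}\le\|u_j\|_{\XS}$. This reduces matters to two estimates.
\begin{enumerate}
\item The commutator term: $\|\chi' u_j\|_{\Gs}\lesssim\delta^{-1/2}\|u_j\|_{\XS}$. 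We place it in the $N^{s,a,0}$ part of $\Gs$ and use its $L^2_{t,x}$ component. Since $\big(\frac{\lambda+|\partial_t|}{\lambda^2+|\partial_t|}\big)^a\le 1$, the weight is harmless, and
\begin{align*}
\lambda^{s-1}\|\chi' u_\lambda\|_{L^2_{t,x}}\le \lambda^{s-1}\|\chi'\|_{L^2_t}\|u_\lambda\|_{L^\infty_tL^2_x}\lesssim\delta^{-1/2}\lambda^{-1}\|u_\lambda\|_{\XS_\lambda}.
\end{align*}
A problem remains: the $N$-norm also contains the $L^\infty_tL^2_x$ piece at temporal frequencies $\lesssim\lambda^2$ and the $L^2_tL^{2_*}_x$ piece at low modulation. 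The first is handled by $\lambda^{s-2}\|P^{(t)}_{\le}(\chi'u_\lambda)\|_{L^\infty L^2}\lesssim \lambda^{s-2}\delta^{-1}\|u_\lambda\|_{L^\infty L^2}$, which is acceptable once $\delta^{-1}\lambda^{-2}\lesssim\delta^{-1/2}$; for $\lambda^2\gtrsim\delta^{-1/2}$ this holds trivially. For small $\lambda$ we instead use the $L^2_tL^{2_*}_x$ slot: by H\"older on the compact frequency support and Bernstein, we bound by $\|\chi'\|_{L^2_t}$ times the $L^\infty L^2$ norm. Alternatively one can treat the low frequencies directly, since $P_1$ carries only the $N_1$ norm.
\item The main term: $\|\chi F\|_{\Gs}\lesssim\|F\|_{\Gs}$ for $F=(\imu\partial_t+\Delta)u_j$. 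The lateral parts $L^{1,2}_{\vece_j}$ are clearly stable under multiplication by a bounded function of time.
\end{enumerate}

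The main obstacle is that the $N^{s,a,0}$ pieces involve the projectors $P^{(t)}_{\le}$, $C_{\le}$ and the temporal weight, which do not commute with $\chi$. Moreover, $F$ itself is not directly controlled by $\|u\|_{\XS}$: for the lateral Strichartz part, $(\imu\partial_t+\Delta)u$ need not lie in $\Gs$. I would therefore avoid the Duhamel route for the pieces not captured by $\Gs$ and instead bound the multiplier directly on each component of $\XS_\lambda$, splitting $u_\lambda = C_{\le \lambda^2/2^8}u_\lambda+C_{>}u_\lambda$. For the lateral norm $L^{\infty,2}_{\vece_j}$ and the $L^\infty_tL^2_x$ norm, multiplication by $\chi$ is bounded. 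For the weighted pieces, we decompose $\chi=P^{(t)}_{\le\mu}\chi+P^{(t)}_{>\mu}\chi$ with $\mu\sim\lambda^2$. The low-frequency part essentially preserves modulation and temporal-frequency localizations up to fattening, by support considerations. The high-frequency part is rapidly decaying in $\mu/\delta^{-1}$ and is absorbed using $\|P^{(t)}_{>\mu}\chi\|_{L^2_t}$ together with the $L^\infty_tL^2_x$ bound. This is the step where the $\delta^{-1/2}$ loss is generated, via $\|\chi'\|_{L^2}\sim\delta^{-1/2}$, and where the computations are most delicate. I expect it to mirror the analogous decomposability argument in \cite{HRSZ24} for $(s,a)=(1,\frac14)$.
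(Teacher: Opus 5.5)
Your reduction (glue near-optimal extensions with a time cutoff $\chi$ whose transition lies in $I_1\cap I_2$, then prove $\|\chi v\|_{\XS}\lesssim(1+\delta^{-1/2})\|v\|_{\XS}$, with the loss coming from $\|\chi'\|_{L^2_t}\sim\delta^{-1/2}$) is the natural one; the paper itself only refers to \cite[Lemma~A.1]{HRSZ24}. But that multiplier bound is the whole content of the lemma, and the steps you give for it do not work.

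The Duhamel route fails already for the commutator term, not only for the main term. All three pieces of $\|\cdot\|_{N^{s,a,0}_\lambda}$ must be bounded; you cannot choose a slot. The low-modulation piece $\lambda^s\|C_{\le(\lambda/2^8)^2}(\chi'u_\lambda)\|_{L^2_tL^{2_*}_x}$ is not controlled by $\XS$, because Bernstein never lowers the integrability exponent from $2$ to $2_*<2$. Concretely, take $u=e^{\imu t\Delta}f_\lambda\in\XS$ with $f_\lambda\in L^2_x\setminus L^{2_*}_x$. Then $C_{\le\nu}(\chi'u)=(P^{(t)}_{\le\nu}\chi')\,u\notin L^2_tL^{2_*}_x$. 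So drop the Duhamel detour entirely. In a direct estimate, $\imu\chi'u_\lambda$ only enters the weighted $L^2_{t,x}$ term, where the weight is at most $1$, and there $\lambda^{s-1}\|\chi'\|_{L^2_t}\|u_\lambda\|_{L^\infty_tL^2_x}$ suffices.

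In the direct approach, two of your claims fail as stated.

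First, the lateral term $\lambda^{s+\frac12}\|P_{\lambda,\vece_j}C_{\le\nu}(\chi u_\lambda)\|_{L^{\infty,2}_{\vece_j}}$, with $\nu=(\lambda/2^8)^2$. Boundedness of multiplication by $\chi$ on $L^{\infty,2}_{\vece_j}$ is beside the point: $C_{\le\nu}$ does not commute with $\chi$, so $C_{>\nu}u_\lambda$, which $\XS$ does not control laterally, leaks into $C_{\le\nu}(\chi u_\lambda)$. This is exactly the component not covered by the $S^{s,a,b}$ theory. Your temporal-frequency split does handle it, but it must be carried out here, with $\mu=\nu/2$:
\begin{itemize}
\item $C_{\le\nu}(P^{(t)}_{\le\mu}\chi\,u_\lambda)=C_{\le\nu}(P^{(t)}_{\le\mu}\chi\,C_{\le2\nu}u_\lambda)$.
\item The $C_{\le\nu}u_\lambda$ part is fine, since $P_{\lambda,\vece_j}$ commutes with $\chi$ and $C_{\le\nu}\tilde P_\lambda$ has a uniformly $L^1$ kernel.
\item $(C_{\le2\nu}-C_{\le\nu})u_\lambda$ lives at $|\tau|\sim\lambda^2$, where the weight is $\sim1$. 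Directional Bernstein, $\|P_{\lambda,\vece_j}g\|_{L^{\infty,2}_{\vece_j}}\lesssim\lambda^{1/2}\|g\|_{L^2_{t,x}}$, reduces it to the weighted $L^2$ term.
\item $P^{(t)}_{>\mu}\chi$ is paid for with $\|P^{(t)}_{>\mu}\chi\|_{L^2_t}\lesssim\mu^{-1}\|\chi'\|_{L^2_t}$. There is no rapid decay when $\mu\lesssim\delta^{-1}$, and none is needed.
\end{itemize}
The $(\lambda+|\partial_t|)^a$-weighted Strichartz term is handled the same way after passing to $\lambda^s\|C_{\le\nu}u_\lambda\|_{L^2_tL^{2^*}_x}$ via \eqref{eq:CharSsablambda}.

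Second, for the weighted $L^2_{t,x}$ term, your claim that $P^{(t)}_{\le\mu}\chi$ with $\mu\sim\lambda^2$ preserves the localization is false. The weight $\big(\frac{\lambda+|\tau|}{\lambda^2+|\tau|}\big)^a$ varies by a factor $\lambda^a$ over $\lambda\lesssim|\tau|\lesssim\lambda^2$. When $\delta\lesssim\lambda^{-2}$, the blocks $P^{(t)}_\kappa\chi$ with $\lambda\ll\kappa\ll\lambda^2$ are of size one in $L^\infty_t$, so a support argument loses $(\kappa/\lambda)^a$. The split must be at $c\lambda$, below which shifts leave the weight comparable. The remainder $P^{(t)}_{>c\lambda}\chi\cdot(\imu\partial_t+\Delta)u_\lambda$ is then bounded with weight $\le1$, using:
\begin{itemize}
\item at low temporal frequencies, $\|P^{(t)}_{>c\lambda}\chi\|_{L^2_t}\lesssim\lambda^{-1}\delta^{-1/2}$ together with $\|P^{(t)}_{\ll\lambda^2}(\imu\partial_t+\Delta)u_\lambda\|_{L^\infty_tL^2_x}\lesssim\lambda^2\|u_\lambda\|_{L^\infty_tL^2_x}$;
\item for $|\tau|\gtrsim\lambda^2$, where the weight is $\sim1$, simply $\|\chi\|_{L^\infty}$.
\end{itemize}
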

	The lemma follows in the same way as the special case $(s,a) = (1,\frac14)$ treated in~\cite[Lemma~A.1]{HRSZ24}.

	\subsection{Function space for the wave component}
	The wave component is controlled with the norm
	\begin{align*}
		& \|v\|_{W^{l,\alpha,\beta}_\lambda} = \lambda^l \|v\|_{L^\infty_t L^2_x}
		+ \lambda^{l - \alpha} \|(\lambda + |\partial_t|)^{\alpha} \TempN v\|_{L^\infty_t L^2_x}
		+ \lambda^{\beta - 1} \|(\imu \partial_t + |\nabla|) v\|_{L^2_{t,x}}.
	\end{align*}
	Eventually, we will choose $\alpha = a$ and $\beta = s - \frac{1}{2}$ as in the deterministic case~\cite{CHN23}.
	Let
	\begin{align*}
		W^{l,\alpha,\beta}(\R) := \{v \in C(\R, H_x^l) \colon \| v \|_{W^{l,\alpha, \beta}} < \infty\}.
	\end{align*}
	
	One has the following bound for the linear half-wave flow in the $W^{l,\alpha,\beta}$-space.
	
	\begin{lemma}   [Control of linear wave flows in $W_\lambda^{l,\alpha,\beta}$-space,
		\cite{CHN23} Lemma~2.6]
		\label{lem:LinearEstimateHalfWave}
		Let $0\leq \alpha \leq 1$, $\beta,l\in \R$. Then,
		\begin{align*}
			\|e^{\imu t |\nabla|} g_\lambda \|_{W_\lambda^{l,\alpha,\beta}} \lesssim \lambda^l\|g_\lambda\|_{L^2}
		\end{align*}
		for all $\lambda \in 2^{\N_0}$ and $g\in L^2(\R^d)$.
	\end{lemma}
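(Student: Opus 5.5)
The bound reduces to one half-wave solution localized at frequency $\lambda$, so I will estimate the three terms of the $W^{l,\alpha,\beta}_\lambda$-norm for $v(t) = e^{\imu t|\nabla|} g_\lambda$ one at a time. The space-time Fourier transform of $v$ is supported on the cone $\{\tau = -|\xi|\}$, with $|\xi| \sim \lambda$ (or $|\xi| \lesssim 1$ if $\lambda = 1$). This support fact drives every step.

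First, the propagator is unitary on $L^2_x$, so $\lambda^l \|v\|_{L^\infty_t L^2_x} = \lambda^l \|g_\lambda\|_{L^2}$. Second, $v$ solves the homogeneous half-wave equation, so $(\imu\partial_t + |\nabla|) v = 0$ and the third term vanishes. This is why $\beta$ plays no role and may be arbitrary.

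The remaining and only nontrivial term is
\[
\lambda^{l-\alpha}\|(\lambda+|\partial_t|)^\alpha P^{(t)}_{\leq (\frac{\lambda}{2^8})^2} v\|_{L^\infty_t L^2_x}.
\]
Since $\partial_t v = \imu|\nabla| v$, the temporal multipliers act on $v$ as spatial multipliers. Concretely,
\[
(\lambda+|\partial_t|)^\alpha P^{(t)}_{\leq (\frac{\lambda}{2^8})^2} v = e^{\imu t|\nabla|}\, m(|\nabla|) g_\lambda, \qquad m(r) = (\lambda + r)^\alpha \eta_{\leq (\lambda/2^8)^2}(r).
\]
Because $g_\lambda$ has Fourier support in $|\xi| \lesssim \lambda$, we get $|m(|\xi|)| \lesssim (\lambda + 2\lambda)^\alpha \lesssim \lambda^\alpha$ there, uniformly in $\lambda$ for $0 \leq \alpha \leq 1$. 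Plancherel and unitarity then give the bound $\lambda^{l-\alpha}\cdot\lambda^\alpha\|g_\lambda\|_{L^2} = \lambda^l \|g_\lambda\|_{L^2}$.

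The main obstacle, though only a mild one, is justifying that temporal Fourier multipliers may be replaced by spatial ones for a function that is merely bounded in time, not $L^2$ in time. I would handle this by computing the space-time Fourier transform distributionally: for each fixed $\xi$, the transform of $v$ in $t$ is $\hat g_\lambda(\xi)\,\delta(\tau + |\xi|)$. Any bounded continuous multiplier $\sigma(\tau)$ then acts as multiplication by $\sigma(-|\xi|)$, and the evenness of $\eta_0$ together with $|\tau| = |\xi|$ gives the formula above. Summing the three contributions yields the claimed estimate.
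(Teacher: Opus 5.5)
Your proof is correct: unitarity gives the first term, $(\imu\partial_t+|\nabla|)e^{\imu t|\nabla|}g_\lambda=0$ kills the third, and since the space-time Fourier support lies on $|\tau|=|\xi|<2\lambda$, the operator $(\lambda+|\partial_t|)^\alpha P^{(t)}_{\leq (\frac{\lambda}{2^8})^2}$ acts on $e^{\imu t|\nabla|}g_\lambda$ as a spatial multiplier bounded by $3^\alpha\lambda^\alpha$. The paper gives no proof and only cites Lemma~2.6 of \cite{CHN23}; your argument is the standard one for this homogeneous bound. Your justification of the temporal multiplier is also fine, for instance because the symbol $(\lambda+|\tau|)^\alpha\eta_{\leq(\lambda/2^8)^2}(|\tau|)$ is compactly supported and Lipschitz, hence has an $L^1$ kernel in $t$.
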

	
	Also for the wave component we need the weaker variant
	\begin{align}
		\label{eq:WeakNormW}
		&\|v\|_{W_w^{l,0,\beta}}:= \|v\|_{L_t^\infty H_x^l} + \|(\imu \partial_t + |\nabla|)v\|_{L_t^2 H_x^{\beta-1}}
	\end{align}
	of the $W^{l,0,\beta}$-norm to close the nonlinear estimates in the endpoint case, cf.~\eqref{eq:WeakNormS}.

	We conclude this section with the continuity in the endpoint of the time interval of the adapted norm restricted to said interval. This continuity property will be used in the proof of the local well-posedness in Theorem \ref{Thm-LWPNEP}. The proof follows from an analogous argument as in the proof of~\cite[Lemma~C.1]{HRSZ24}.
	\begin{lemma}[Continuity of adapted space ]
		\label{it:ContSumY}
		Let $T>0$, $(s,l)$ satisfy \eqref{IniReg-condition},  $a$ be as in \eqref{con:ab}, and $ \beta =s-\frac12$. If $v\in W^{l,a,\beta}([0, T))$, then the map
		\begin{align*}
			t\mapsto \|v\|_{W^{l,a,\beta}([0,t])+L^2 ([0,t]; W_x^{s,d})}
		\end{align*} is continuous on $(0,T)$.
	\end{lemma}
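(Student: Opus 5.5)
The plan is to prove continuity of the function $F(t):=\|v\|_{W^{l,a,\beta}([0,t])+L^2([0,t];W_x^{s,d})}$ on $(0,T)$ through its two one-sided limits. First, $F$ is nondecreasing in $t$, because every extension admissible on $[0,t']$ is also admissible on $[0,t]$ when $t\le t'$. It therefore suffices to prove $\lim_{t'\downarrow t}F(t')\le F(t)$ and $\lim_{t'\uparrow t}F(t')\ge F(t)$. The $L^2([0,t];W^{s,d}_x)$ part is harmless: for a fixed function $g$, the map $t\mapsto\|g\|_{L^2([0,t];W^{s,d}_x)}$ is continuous by dominated convergence. The difficulty lies entirely in the restricted adapted norm $W^{l,a,\beta}$, which contains nonlocal time operators: $P^{(t)}_{\le(\lambda/2^8)^2}$, the weight $(\lambda+|\partial_t|)^a$, and $(\imu\partial_t+|\nabla|)$ in $L^2$. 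I would follow the strategy of \cite[Lemma~C.1]{HRSZ24}.

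\textbf{Right continuity.} Fix $t$ and $\varepsilon>0$. Choose a decomposition $v=v_1+v_2$ on $[0,t]$ with extensions $\tilde v_1\in W^{l,a,\beta}(\R)$ and $\tilde v_2\in L^2(\R;W^{s,d}_x)$ whose norm sum is at most $F(t)+\varepsilon$. On $[0,t']$ with $t'>t$, I use the following competitor:
\begin{itemize}
\item $\tilde v_1$, modified on $(t,t']$ by adding the half-wave propagation correction $\chi(\cdot)\,e^{\imu(\cdot-t)|\nabla|}\bigl(v(t)-\tilde v_1(t)\bigr)$, or more simply by matching through a smooth cutoff;
\item the remainder $v-\tilde v_1$ on $(t,t']$, placed into the $L^2W^{s,d}$ component.
\end{itemize}
A cleaner choice is to write, on $[0,t']$,
\[
v=\tilde v_1+\mathbbm{1}_{[0,t]}\tilde v_2+\mathbbm{1}_{(t,t']}(v-\tilde v_1).
\]
The last two pieces are controlled in $L^2([0,t'];W^{s,d}_x)$ provided $v-\tilde v_1\in L^2_{\mathrm{loc}}W^{s,d}_x$ near $t$. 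If that fails, I instead use the difference $v-\tilde v_1$ and show that its restricted $W^{l,a,\beta}((t,t+\delta))$-norm tends to $0$ as $\delta\to0$. Doing so requires the fact that $W^{l,a,\beta}(I)$-norms of a fixed function vanish as $|I|\to0$, after subtracting the free wave evolution of its value at the left endpoint, which has norm bounded by Lemma~\ref{lem:LinearEstimateHalfWave} times $\|v(t)-\tilde v_1(t)\|_{H^l}=0$ at matching. The decomposability estimate, the wave analogue of Lemma~\ref{lem:DecompX}, then glues the pieces.

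\textbf{Left continuity and the main obstacle.} For $t'\uparrow t$, take near-optimal extensions $w^{(t')}$ for the intervals $[0,t']$ and let $w^{(t')}_1$ denote their $W^{l,a,\beta}$ components. I would use a rescaling in time, $w^{(t')}_1(\tfrac{t'}{t}\,\cdot)$. This maps $[0,t]$ onto $[0,t']$, but it does not restrict to $v$. I therefore prefer the converse: start from a near-optimal extension $\tilde v_1$ on $[0,t]$ and compare. By monotonicity $F(t')\le F(t)$, and what remains is $\liminf_{t'\uparrow t}F(t')\ge F(t)$. To prove this, I take near-minimizers for $[0,t']$ and extend them to $[0,t]$ by the same gluing with the half-wave flow on $[t',t]$. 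The error is $\|v-\mathcal{U}(\cdot-t')v(t')\|$ on the short interval $[t',t]$, which tends to $0$. The key estimate, and the step I expect to be the main obstacle, is to show that this short-interval error vanishes in the nonlocal norms. The components $(\lambda+|\partial_t|)^aP^{(t)}_{\le(\lambda/2^8)^2}$ and $\lambda^{\beta-1}\|(\imu\partial_t+|\nabla|)\cdot\|_{L^2}$ do not localize sharply in time. I would handle this with smooth time cutoffs $\chi((\cdot-t')/\delta)$ and commutator bounds. The bound $\|[(\lambda+|\partial_t|)^a,\chi_\delta]\|\lesssim \delta^{-a}$ is balanced by the $L^\infty_t$ smallness of $v-\mathcal{U}v(t')$ on the interval; after dyadic summation, dominated convergence in $\lambda$ gives the limit, using that the tail frequencies are uniformly small in $\ell^2$.
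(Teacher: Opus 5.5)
Reducing to the two one-sided limits via monotonicity is fine, but both limit arguments rely on a matching condition that is false in general.

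In the right-continuity step you assume that the $W^{l,a,\beta}$-component $\tilde v_1$ of a near-optimal splitting on $[0,t]$ agrees with $v$ at $t$ (``$\|v(t)-\tilde v_1(t)\|_{H^l}=0$ at matching''). Nothing enforces this. The difference $v-\tilde v_1\in C(\R;H^l_x)$ coincides with $\tilde v_2$ on $[0,t]$, and $\tilde v_2\in L^2W^{s,d}_x$ puts no constraint on its value at $t$. For example, for a smooth free wave $v$ and small $t$, every near-optimal splitting has $\tilde v_1\approx 0$, so $e:=v(t)-\tilde v_1(t)\approx v(t)$.

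Once $e\neq 0$, none of your competitors works:
\begin{itemize}
\item $\mathbbm{1}_{(t,t']}(v-\tilde v_1)$ lies only in $L^\infty_tH^l_x$, not in $L^2_tW^{s,d}_x$.
\item Its restricted $W^{l,a,\beta}$-norm on $(t,t+\delta)$ stays $\gtrsim\|e\|_{H^l_x}$ rather than tending to $0$.
\item Adding it to the first component creates a jump of size $e$ at $t$.
\item The correction $\chi\, e^{\imu(\cdot-t)|\nabla|}e$ costs $\|e\|_{H^l_x}$. When $s>l+\frac32$, which \eqref{IniReg-condition} allows, it is not even in $W^{l,a,\beta}$: then $\beta-1>l$, while $(\imu\partial_t+|\nabla|)$ of it equals $\imu\chi' e^{\imu(\cdot-t)|\nabla|}e$, which is only in $L^2_tH^l_x$.
\end{itemize}
The same mismatch $v(t')-w_1^{(t')}(t')$ appears in your left-continuity gluing. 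Your error term $v-e^{\imu(\cdot-t')|\nabla|}v(t')$ omits it, and the commutator bounds cannot absorb an $O(1)$ free-wave piece. There you would also need every bound to be uniform in $t'$, because the near-minimizers change with $t'$.

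Both steps close without time cutoffs. For right continuity, let $V\in W^{l,a,\beta}(\R)$ extend $v$ and let $(\tilde v_1,\tilde v_2)$ be near optimal on $[0,t]$. On $[0,t']$ use the splitting $w_1=\tilde v_1+P_{>N}(V-\tilde v_1)$, $w_2=P_{\le N}(V-\tilde v_1)$.
\begin{itemize}
\item The extra $W^{l,a,\beta}$-cost is a dyadic tail of $V-\tilde v_1\in W^{l,a,\beta}(\R)$, so it tends to $0$ as $N\to\infty$.
\item On $[0,t]$ one has $w_2=P_{\le N}\tilde v_2\to \tilde v_2$ in $L^2W^{s,d}_x$.
\item On $(t,t']$, Bernstein gives $\|w_2\|_{L^2((t,t'];W^{s,d}_x)}\lesssim_N |t'-t|^{1/2}\|V-\tilde v_1\|_{L^\infty_tH^l_x}$.
\end{itemize}
Choose $N$ first, then $t'$.

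For left continuity, replace gluing by compactness. Near-minimizers on $[0,t_n]$ with $t_n\uparrow t$ are bounded in $W^{l,a,\beta}(\R)\times L^2(\R;W^{s,d}_x)$. A subsequence converges weakly-$*$ in $L^\infty_tH^l_x$ and weakly in $L^2_tW^{s,d}_x$, respectively, and every term of both norms is lower semicontinuous under this convergence. The limits satisfy $w_1+w_2=v$ a.e.\ on $[0,t)$, hence on $[0,t]$, since the $L^2$-component makes the endpoint irrelevant. This gives $F(t)\le\liminf_n F(t_n)$.
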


	\section{Control of nonlinearity and noise}\label{Sec-Esti-Nonl-Noise}

	\subsection{Control of the wave nonlinearity}  \label{Subsec-Esti-Nonl}

	The next lemma shows that the wave nonlinearity is controlled in the adapted space $W^{l,a,\beta}$ by the Schr{\"o}dinger component in the $\tilde{S}^{s,a,b}$-norm. To obtain local well-posedness for regularities from~\eqref{IniReg-condition} with $l \geq s$, i.e., where the parameters $b \neq 0$ and $a = 0$, it is crucial to control the nonlinearity by the weaker $\tilde{S}^{s,0,b}$-norm (compared to the $S^{s,0,b}$-norm in the deterministic setting) as the stochastic terms only allow us to bound the Schr{\"o}dinger component in the $\tilde{S}^{s,0,b}$-norm.
	
	\begin{lemma}\label{le:Bilinearnablauv}
		Let $d\geq 4, s,l,\beta\geq 0$ and $0\leq a, b \leq 1$ satisfy
		\begin{align*}
			\beta\leq \min \{s, 2s-\tfrac{d-2}{2}-a \},\quad 2a\leq 2s-l-\tfrac{d-2}{2},\quad a-b\leq s-l
		\end{align*}
		and
		\begin{align*}
			(s,l)\neq (\tfrac{d}{2}, \tfrac{d+2}{2}), (\tfrac{d-2}{2}+a, \tfrac{d-2}{2}+b),\quad (s,\beta)\neq (\tfrac{d-2}{2}+a, \tfrac{d-2}{2}+a).
		\end{align*}
		Then for any $\varphi, \psi\in \tilde{S}^{s,a,b}$, we have
		\begin{align}\label{eq:Bilinearnablauv}
			\|\cJ_0(|\nabla|(\overline{\varphi}\psi))\|_{W^{l,a,\beta}}\lesssim \|\varphi\|_{\tilde{S}^{s,a,b}}\|\psi\|_{\tilde{S}^{s,a,b}}.
		\end{align}
		
	\end{lemma}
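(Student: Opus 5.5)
We will follow the deterministic argument of Candy, Herr, and Nakanishi \cite{CHN23} for the wave bilinear estimate in $W^{l,a,\beta}$. We will track exactly where their proof uses the full $S^{s,a,b}$-norm, and check that in each such place only the portion retained in $\tilde{S}^{s,a,b}$ is needed. The two norms share the $L^\infty_tL^2_x$ part, the $(\lambda+|\partial_t|)^a$-weighted $L^2_tL^{2^*}_x$ part, and the $a$-weighted $L^2_{t,x}$ bound on $(\imu\partial_t+\Delta)u$ without the gain $\lambda^b$. The only difference is that the gain $\lambda^{b}$ on $(\imu\partial_t+\Delta)u$ is available only at high modulation $C_{\geq \lambda^2/2^3}$. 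Hence the plan is to show that every use of the $b$-gain in the deterministic proof occurs at modulation $\gtrsim \lambda^2$ for the relevant Schr\"odinger factor. At low modulation the $a$-weighted term together with the $L^2_tL^{2^*}_x$ and energy components must suffice.

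\textbf{Step 1: reduction.} We first apply the linear wave estimates (Lemma~\ref{lem:LinearEstimateHalfWave} and the energy-type inequality for $\cJ_0$) to bound $\|\cJ_0 F\|_{W^{l,a,\beta}}$. The bound is by the dyadic quantities $\mu^{l-1}\|F_\mu\|_{L^1_tL^2_x}$-type terms (or an $L^\infty_t$-dual control of the temporally low part $P^{(t)}_{\ll\mu^2}$), plus $\mu^{\beta-1}\|F_\mu\|_{L^2_{t,x}}$, with $F=|\nabla|(\overline{\varphi}\psi)$. We then split $\overline{\varphi}\psi$ into the paraproduct pieces $LH$, $HL$, and $HH$.

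\textbf{Step 2: high--high.} For $\lambda_1\sim\lambda_2\gtrsim\mu$ we use H\"older with both factors in $L^2_tL^{2^*}_x$ or $L^\infty_tL^2_x$, together with Bernstein in the output frequency $\mu$. This requires no $b$-gain and holds under $\beta\le 2s-\frac{d-2}{2}-a$ and $2a\le 2s-l-\frac{d-2}{2}$. The excluded endpoint pairs enter here, because there the dyadic sums become logarithmically divergent.

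\textbf{Step 3: high--low, the main obstacle.} Let $\varphi_{\lambda}$ be high frequency and $\psi_{\ll\lambda}$ low frequency, so that the output frequency is $\mu\sim\lambda$. We decompose $\varphi_\lambda=C_{\geq \lambda^2/2^3}\varphi_\lambda + C_{<\lambda^2/2^3}\varphi_\lambda$, and likewise for $\psi$.
\begin{itemize}
\item If the high-frequency factor has high modulation, we use the new term $\lambda^{s-1+b}\|C_{\geq\lambda^2/2^3}(\imu\partial_t+\Delta)\varphi_\lambda\|_{L^2}$, which controls $\lambda^{s+1+b}\|C_{\geq\lambda^2/2^3}\varphi_\lambda\|_{L^2_{t,x}}$. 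We put the low factor in $L^\infty_tL^\infty_x$ via Bernstein. This recovers exactly the gain used deterministically, and the condition $a-b\le s-l$ enters here.
\item If both factors have modulation $\ll\lambda^2$, the product has space-time Fourier support at distance $\sim \lambda^2$ from the wave cone, since $|\tau|\approx\lambda^2\gg\lambda$. It is therefore non-resonant for the wave operator, and $(\imu\partial_t+|\nabla|)^{-1}$ gains $\lambda^{-2}$.
\end{itemize}
In the non-resonant case we write $\cJ_0$ via the identity $v=(\imu\partial_t+|\nabla|)^{-1}F$. We use only the $L^\infty_tL^2_x$ and $L^2_tL^{2^*}_x$ norms, without $b$. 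We expect the $\lambda^{-2}$ gain to cover the loss from $|\nabla|$ together with the $(\lambda+|\partial_t|)^a$ weight.

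The delicate point is the temporal-frequency weight $(\lambda+|\partial_t|)^{a}$ in $W^{l,a,\beta}$. For it we distribute $|\partial_t|$ onto the factors and use the $a$-weighted pieces of $\tilde{S}^{s,a,b}$, which are unchanged from $S^{s,a,b}$. If a residual term in the deterministic proof uses the $b$-gain at intermediate modulation $\lambda^2/2^8\lesssim\cdot\lesssim\lambda^2/2^3$, we will absorb it by the fattened projection: on that range $(\imu\partial_t+\Delta)$ is comparable to $\lambda^2$, so both portions of the norm are equivalent up to constants.
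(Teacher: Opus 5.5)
Your overall plan is the paper's. You reduce by the energy inequality of \cite{CHN23}, note that the high--high part and the $L^\infty_tL^2_x$ and $L^2_{t,x}$ components only need $S^{s,a,0}$ (controlled by $\tilde S^{s,a,0}$), and isolate the $b$-gain in the high--low part of the $P^{(t)}_{\ll\lambda^2}$-localized $L^1_tL^2_x$ term \eqref{prop:temesti4}. But your Step~3 case split is incomplete, and the omitted case is the essential one. You treat (a) $\varphi_\lambda$ at modulation $\geq\lambda^2/2^3$, and (b) both factors at modulation $\ll\lambda^2$. In case (b) the product has temporal frequency $\sim\lambda^2$, so $P^{(t)}_{\ll\lambda^2}$ simply annihilates it and nothing is proved there. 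The missing case is $\overline{C_{<\lambda^2/2^3}\varphi_\lambda}$ against a low-frequency factor $\psi_{\ll\lambda}$ whose modulation, hence temporal frequency, is $\sim\lambda^2$. There the two temporal frequencies cancel and the product can sit at $|\tau|\ll\lambda^2$ near the cone, so there is no $(\imu\partial_t+|\nabla|)^{-1}$ gain; this is exactly what survives the projection. The paper handles it by the non-resonance identity $P^{(t)}_{\ll\lambda^2}(\overline{C_{<\mu^2/2^3}\varphi_\mu}\,\psi_{\ll\mu})=P^{(t)}_{\ll\lambda^2}(\overline{C_{<\mu^2/2^3}\varphi_\mu}\,P^{(t)}_{\sim\mu^2}\psi_{\ll\mu})$. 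It puts $\varphi_\mu$ in the weighted $L^2_tL^{2^*}_x$ component and uses $\|P^{(t)}_{\sim\mu^2}\psi_{\ll\mu}\|_{L^2_tL^d_x}\lesssim\mu^{-2}\|(\imu\partial_t+\Delta)P^{(t)}_{\sim\mu^2}\psi_{\ll\mu}\|_{L^2_tH^{\frac{d-2}{2}}_x}$. Since $(\frac{\nu+|\partial_t|}{\nu^2+|\partial_t|})^a\sim 1$ at $|\partial_t|\sim\mu^2\gg\nu^2$, the $b$-free part of the $\tilde S$-norm suffices. This gives $\mu^{l-s-1+a+(\frac d2-s)_+}\|\varphi_\mu\|_{\tilde S^{s,a,0}_\mu}\|\psi\|_{\tilde S^{s,a,0}}$, which is $\ell^2$-summable because $s-l\geq a-1$ and $2s-l-\frac{d-2}{2}\geq a$.

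Two further steps fail as written. First, in your high-modulation bullet, putting $\psi_{\ll\lambda}$ in $L^\infty_tL^\infty_x$ only bounds the product in $L^2_t$, whereas you need a global-in-time $L^1_tL^2_x$ bound. Use $L^2_tL^\infty_x$ instead, via Bernstein and the weighted $L^2_tL^{2^*}_x$ component: $\|P_{\leq\mu/2^8}\psi\|_{L^2_tL^\infty_x}\lesssim\sum_{\nu\ll\mu}\nu^{\frac{d-2}{2}+a-s}\|\psi_\nu\|_{\tilde S^{s,a,b}_\nu}$. This yields $\mu^{l-s-b+a}\|\varphi_\mu\|_{\tilde S^{s,a,b}_\mu}\sum_{\nu\ll\mu}\nu^{\frac{d-2}{2}+a-s}\|\psi_\nu\|_{\tilde S^{s,a,b}_\nu}$. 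It is here, not in the high--high step, that $a-b\leq s-l$ and the exclusion $(s,l)\neq(\frac{d-2}{2}+a,\frac{d-2}{2}+b)$ enter. Second, your closing remark is wrong. On modulations between $\lambda^2/2^8$ and $\lambda^2/2^3$, the $\tilde S$-norm controls $(\imu\partial_t+\Delta)u_\lambda$ only with weight $\lambda^{s-1}$, so it is not equivalent to the $\lambda^{s-1+b}$-term up to constants; you lose $\lambda^b$. The remedy is to split the high-frequency factor exactly at $C_{\geq\lambda^2/2^3}$ from the outset. The non-resonance identity above still holds with this threshold, so no intermediate modulation range ever needs the $b$-gain.
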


	\begin{remark}
		As explained above, control by the $\tilde{S}^{s,a,b}$-norm is key to treat the regularity regime where $b \neq 0$. In this regime, the $\tilde{S}^{s,a,b}$-norm is weaker than the $S^{s,a,b}$-norm so that Lemma~\ref{le:Bilinearnablauv} and~\eqref{prop:RelationBetweenTwoS} yield
		\begin{align*}
			\|\cJ_0(|\nabla|(\overline{\varphi}\psi))\|_{W^{l,a,\beta}}\lesssim \|\varphi\|_{{S}^{s,a,b}}\|\psi\|_{{S}^{s,a,b}},
		\end{align*}
		i.e., the statement of Theorem~4.1 of \cite{CHN23}.
		Lemma~\ref{le:Bilinearnablauv} thus sharpens Theorem $4.1$ of \cite{CHN23} in the regime $b \not = 0$.
	\end{remark}

	\begin{proof}[Proof of Lemma \ref{le:Bilinearnablauv}]		
		Analogous to the proof of Theorem~4.1 in~\cite{CHN23}, we first apply the energy inequality in Lemma~2.6 of~\cite{CHN23} to reduce the assertion to showing the bounds
		\begin{align}
			\|P_{\leq 2^{16}}(\overline{\varphi}\psi)\|_{L_t^1 L_x^2} \lesssim \|\varphi\|_{\tS}\|\psi\|_{\tS}\label{prop:temesti1},\\
			\Big( \sum_{\lambda\in 2^\N} \lambda^{2(l-1)} \|P_\lambda(\overline{\varphi}\psi)\|_{L_t^\infty L_x^2}^2 \Big)^{\frac12} \lesssim \|\varphi\|_{\tS}\|\psi\|_{\tS}\label{prop:temesti2},\\
			\Big( \sum_{\lambda\in 2^\N} \lambda^{2\beta} \|P_\lambda(\overline{\varphi}\psi)\|_{L_{t,x}^2}^2 \Big)^{\frac12} \lesssim \|\varphi\|_{\tS}\|\psi\|_{\tS}\label{prop:temesti3},\\
			\Big( \sum_{\lambda\in 2^\N} \lambda^{2(l-a+1)} \|(\lambda+|\partial_t|)^a P_\lambda P^{(t)}_{\ll \lambda^2}(\overline{\varphi}\psi )\|_{L_t^1 L_x^2}^2 \Big)^{\frac12} \lesssim \|\varphi\|_{\tS}\|\psi\|_{\tS}\label{prop:temesti4}.
		\end{align}
		
		For \eqref{prop:temesti1}, a simple application of Bernstein's and H{\"o}lder's inequalities yields
		\begin{align*}
			\|P_{\leq 2^{16}}(\overline{\varphi}\psi)\|_{L_t^1 L_x^2} \lesssim \|\overline{\varphi}\psi\|_{L_t^1 L_x^{\frac{d}{d-2}}}\lesssim \|\varphi\|_{L_t^2 L_x^{2^*}} \|\psi\|_{L_t^2 L_x^{2^*}}.
		\end{align*}
		Since $s>a$, the estimate
		\begin{align*}
			\|\varphi_\lambda\|_{L_t^2 L_x^{2^*}} \lesssim \lambda^{-a} \|(\lambda+|\partial_t|)^a \varphi_\lambda\|_{L_t^2 L_x^{2^*}} \lesssim \lambda^{a-s} \|\varphi_\lambda\|_{\tS_\lambda}
		\end{align*}
		implies \eqref{prop:temesti1}.
		
		Moreover, the proof of Theorem $4.1$ in \cite{CHN23} gives that
		\begin{align*}
			\Big( \sum_{\lambda\in 2^\N} \lambda^{2(l-1)} \|P_\lambda(\overline{\varphi}\psi)\|_{L_t^\infty L_x^2}^2\Big)^{\frac12} + \Big( \sum_{\lambda\in 2^\N} \lambda^{2\beta} \|P_\lambda(\overline{\varphi}\psi)\|_{L_{t,x}^2}^2\Big)^{\frac12}  \lesssim \|\varphi\|_{S^{s,a,0}}\|\psi\|_{S^{s,a,0}},
		\end{align*}
		which along with the embedding $\tilde{S}^{s,a,0}\hookrightarrow S^{s,a,0}$ from~\eqref{prop:RelationBetweenTwoS}
		yields \eqref{prop:temesti2} and \eqref{prop:temesti3}.
		
		It remains to show \eqref{prop:temesti4}.
		For this purpose, we decompose
		\begin{align*}
			\overline{\varphi}\psi= (\overline{\varphi}\psi)_{HL} + (\overline{\varphi} \psi)_{HH} + (\overline{\varphi}\psi)_{LH}.
		\end{align*}
		For the $HL$ part, we have $\lambda\gg 1$. We further decompose by modulation to obtain
		\begin{align*}
			P^{(t)}_{\ll \lambda^2} P_{\lambda}(\overline{\varphi}\psi)_{HL} &= \sum_{\mu\sim\lambda} P^{(t)}_{\ll \lambda^2} (P_\mu \overline{\varphi} P_{\leq \frac{\mu}{2^8}}\psi)\\
			& = \sum_{\mu\sim\lambda} P^{(t)}_{\ll \lambda^2} (\overline{C_{<\frac{\mu^2}{2^3}}P_{\mu}\varphi} \, P^{(t)}_{\sim\mu^2} P_{\leq \frac{\mu}{2^8}}\psi)+ \sum_{\mu\sim\lambda} P^{(t)}_{\ll \lambda^2} (\overline{C_{\geq\frac{\mu^2}{2^3}}P_{\mu}\varphi} \, P_{\leq \frac{\mu}{2^8}}\psi) =: A^{HL}_{LM}+ A^{HL}_{HM},
		\end{align*}
		In the $A^{HL}_{LM}$-contribution we exploited an important non-resonant identity: Since $C_{<\frac{\mu^2}{2^3}}P_{\mu}\overline{\varphi}$ has temporal frequency of size $\mu^2$, the temporal frequency of the second factor $P_{\leq \frac{\mu}{2^8}}\psi$ must also be of size $\mu^2$ so that the product can have temproral frequency much smaller than $\lambda^2$.
		
		For the low modulation term $A^{HL}_{LM}$, we estimate
		\begin{align*}
			&\sum_{\mu\sim\lambda} \lambda^{l+1-a} \|(\lambda +|\partial_t|)^a P_{\ll \lambda^2}^{(t)}(\overline{C_{<{\frac{\mu^2}{2^3}}} P_\mu \varphi}P^{(t)}_{\sim \lambda^2} P_{\leq \frac{\mu}{2^8}}\psi) \|_{L_t^1 L_x^2}\\
			&\lesssim \sum_{\mu\sim\lambda} \mu^{l+1+a} \|C_{<{\frac{\mu^2}{2^3}}} P_\mu \varphi \|_{L_t^2 L_x^{2^*}} \|P^{(t)}_{\sim \lambda^2} P_{\leq \frac{\mu}{2^8}}\psi\|_{L_t^2 L_x^d}\\
			&\lesssim \sum_{\mu\sim\lambda} \mu^{l+1+a}\mu^{-2a} \|(\mu+|\partial_t|)^a \varphi_{\mu }\|_{L_t^2 L_x^{2^*}} \mu^{-2} \|(\imu\partial_t+\Delta) P^{(t)}_{\sim \lambda^2} P_{\leq \frac{\mu}{2^8}}\psi\|_{L_t^2 H_x^{\frac{d-2}{2}}}\\
			&\lesssim \sum_{\mu\sim\lambda} \mu^{l-s-1+a+(\frac{d}{2}-s)_+} \|\varphi_\mu\|_{\tilde{S}^{s,a,0}} \|\psi\|_{\tilde{S}^{s,a,0}},
		\end{align*}
		which is $l^2$-summable over $\lambda\gg 1$ since $s-l\geq a-1$ and $2s-l-\frac{d-2}{2}\geq a$.
		Moreover, we infer for the high modulation term $A^{HL}_{HM}$
		\begin{align*}
			&\sum_{\mu\sim\lambda} \lambda^{l+1-a} \|(\lambda+|\partial_t|)^a P^{(t)}_{\ll\lambda^2} (\overline{C_{\geq \frac{\mu^2}{2^3}}P_{\mu}\varphi} P_{\leq \frac{\mu}{2^8}}\psi) \|_{L_t^1 L_x^2}\\
			&\lesssim \sum_{\mu\sim\lambda} \mu^{l+1+a} \| C_{\geq \frac{\mu^2}{2^3}}\varphi_{ \mu} \|_{L_{t,x}^2} \| P_{\leq \frac{\mu}{2^8}} \psi \|_{L_t^2 L_x^\infty}\\
			& \lesssim \sum_{\mu\sim\lambda} \mu^{l-1+a} \Big\|  (\imu\partial_t+ \Delta) C_{\geq \frac{\mu^2}{2^3}} \varphi_{\mu} \Big\|_{L_{t,x}^2} \sum_{\nu\leq \frac{\mu}{2^8}} \nu^{-a} \nu^{\frac{d-2}{2}} \|(\nu+|\partial_t|)^a \psi_{\nu}\|_{L_t^2 L_x^{2^*}}\\
			&\lesssim \sum_{\mu\sim\lambda}  \mu^{l-s-b+a} \|\varphi_\mu\|_{\tilde{S}^{s,a,b}} \sum_{\nu \leq \frac{\mu}{2^8}} \nu^{\frac{d-2}{2}+a-s}\|\psi_{\nu}\|_{\tS_{\nu}},
		\end{align*}
		which is $l^2$-summable over $\lambda \gg 1$ since $s-l\geq a-b, 2s-l-\frac{d-2}{2}\geq 2a-b$ and $(s,l)\neq (\frac{d-2}{2}+a,\frac{d-2}{2}+b)$.
		Then, summing up over $\lambda \gg 1$, we obtain
		\begin{align*}
			\Big( \sum_{\lambda\in 2^\N} \lambda^{2(l-a+1)} \|(\lambda+|\partial_t|)^a P_\lambda P^{(t)}_{\ll \lambda^2}(\overline{\varphi}\psi )_{HL}\|_{L_t^1 L_x^2}^2\Big)^{\frac12} \lesssim \|\varphi\|_{\tilde{S}^{s,a,b}}\|\psi\|_{\tilde{S}^{s,a,b}}.
		\end{align*}
		
		The estimate for low-high interaction $LH$ follows
		similarly as the high-low interaction $HL$. The estimate for the remaining high-high part $HH$ is contained in the proof of in \cite[Theorem 4.1]{CHN23}, which shows that
		\begin{align*}
			\Big( \sum_{\lambda \gg 1} \lambda^{2(l-a+1)} \|(\lambda+|\partial_t|)^a P_\lambda P^{(t)}_{\ll \lambda^2}(\overline{\varphi}\psi )_{HH}\|_{L_t^1 L_x^2}^2\Big)^{\frac12} \lesssim \|\varphi\|_{S^{s,a,0}}\|\psi\|_{S^{s,a,0}}.
		\end{align*}
		The embedding $\tilde{S}^{s,a,0}\hookrightarrow S^{s,a,0}$ from~\eqref{prop:RelationBetweenTwoS} thus yields the bound for the high-high component.
		Therefore, combining the above estimates, we obtain \eqref{prop:temesti4}
		and complete the proof of \eqref{eq:Bilinearnablauv}.	
	\end{proof}

	\subsection{Control of the noise terms}  \label{Subsec-Esti-Noise}
	
	We first record H{\"o}lder continuity properties of the noise terms. In contrast to the energy space case in~\cite{HRSZ24}, we have to exploit the H{\"o}lder continuity of the noise in order to control the contributions from the noise in the full well-posedness regime in Lemma~\ref{lem:BilinearLowerOrder} below.
	
	\begin{lemma} \label{lem:PropNoise}
		Let $T\in (0,\infty)$ and $\kappa\in (0,\frac{1}{2})$.
		Then, $W_1$ is $C^\kappa$-H\"older continuous in $H_x^{\frac{d}{2}+2+(s-1)_+}$,
		$W_2$ and the process $t \mapsto \int_0^t e^{-\imu s|\nabla|} \dd W_2(s)$ are $C^\kappa$-H\"older
		continuous in $H_x^{\frac{d}{2}+s-1}$.
		Moreover, for every $j \in \{1, \ldots, d\}$ and
		for $\bbp$-a.e. $\omega\in \Omega$,
		there exists a sequence $(n_l(\omega))_{l \in \N}$ in $\N$ with $n_l(\omega) \rightarrow \infty$ as $l \rightarrow \infty$ such that
		\begin{align}  \label{eq:limphi1kbeta1k}
			\sum\limits_{k = n_l}^\infty & \int  \sup_{y\in \R^{d-1}} |\nabla \phi^{(1)}_k(r \vece_j+y)| \dd r  \| \beta_k^{(1)}(t,\omega)\|_{C^{\frac14} ([0, T])}  
			\longrightarrow 0,\ \ \text{as}\ l \to \infty.
		\end{align}
	\end{lemma}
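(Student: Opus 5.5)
The plan is to handle each claim by reducing it to scalar Brownian motions and Kolmogorov's continuity theorem (equivalently, Gaussian moment bounds), combined with the summability in Hypothesis (H).

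\textbf{Hölder continuity of $W_1$ and $W_2$.} Set $\sigma:=\frac d2+2+(s-1)_+$. For $0\le t'<t\le T$ the increment $W_1(t)-W_1(t')=\imu\sum_k\phi^{(1)}_k(\beta^{(1)}_k(t)-\beta^{(1)}_k(t'))$ is a centered Gaussian random variable in the Hilbert space $H^\sigma_x$, with
\begin{align*}
\mathbb{E}\|W_1(t)-W_1(t')\|_{H^\sigma_x}^2=|t-t'|\sum_k\|\phi_k^{(1)}\|_{H^\sigma_x}^2 .
\end{align*}
Gaussian hypercontractivity upgrades this to
\begin{align*}
\mathbb{E}\|W_1(t)-W_1(t')\|_{H^\sigma_x}^{2m}\lesssim_m |t-t'|^m
\end{align*}
for every $m\in\N$. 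Kolmogorov's criterion then gives a modification that is $C^\kappa$ for every $\kappa<\frac{m-1}{2m}$. Letting $m\to\infty$ covers all $\kappa\in(0,\frac12)$. The same argument applies verbatim to $W_2$ in $H^{\frac d2+s-1}_x$.

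\textbf{The stochastic convolution.} For $M(t)=\int_0^te^{-\imu r|\nabla|}\dd W_2(r)=\sum_k\int_0^te^{-\imu r|\nabla|}\phi^{(2)}_k\dd\beta^{(2)}_k(r)$, the increment is again a Gaussian (Wiener) integral, since the integrand is deterministic. Because $e^{-\imu r|\nabla|}$ is unitary on every $H^\rho_x$, the Itô isometry gives
\begin{align*}
\mathbb{E}\|M(t)-M(t')\|_{H^{\frac d2+s-1}_x}^2=|t-t'|\sum_k\|\phi^{(2)}_k\|_{H^{\frac d2+s-1}_x}^2 .
\end{align*}
Hypercontractivity (or the Burkholder--Davis--Gundy inequality) and Kolmogorov's criterion then conclude exactly as before.

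\textbf{The tail statement \eqref{eq:limphi1kbeta1k}.} Write $a_k:=\int\sup_{y}|\nabla\phi^{(1)}_k(r\vece_j+y)|\dd r$. By Hypothesis (H), $\sum_ka_k<\infty$. Fix $\kappa=\frac14$. By Kolmogorov's theorem, or Fernique's theorem applied to the Gaussian norm, there is a constant $C_T$ with $\mathbb{E}\|\beta\|_{C^{1/4}([0,T])}\le C_T$ for a standard Brownian motion $\beta$, and this constant is the same for all $k$. Hence
\begin{align*}
\mathbb{E}\sum_k a_k\|\beta^{(1)}_k\|_{C^{1/4}([0,T])}\le C_T\sum_ka_k<\infty .
\end{align*}
So the series converges $\PP$-a.s. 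Its tails $\sum_{k\ge n}$ then tend to $0$ almost surely as $n\to\infty$, for each $j$. Intersecting these full-measure events over $j=1,\dots,d$ lets us take $n_l=l$ along the full sequence, so a subsequence is not even needed.

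\textbf{Main obstacle.} The only delicate point is uniformity. Hypercontractivity and the moment bound for the Hölder norm must hold with $k$-independent constants, and the full range $\kappa<\frac12$ is reached only by taking $m$ arbitrarily large. Both are standard properties of Gaussian measures.
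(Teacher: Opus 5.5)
Your proposal is correct and follows the route the paper indicates. The paper omits the proof, citing Kolmogorov's continuity criterion as in \cite[Lemma~6.1]{HRSZ25}, and the tail statement rests on exactly the bound $\mathbb{E}\sum_k a_k\|\beta^{(1)}_k\|_{C^{1/4}([0,T])}\lesssim \sup_k\mathbb{E}\|\beta^{(1)}_k\|_{C^{1/4}([0,T])}\sum_k a_k<\infty$, which the paper records in the remark after Lemma~\ref{lem:BilinearLowerOrder}. Your observation that the summands are nonnegative, so the tails vanish almost surely along the full sequence (so $n_l=l$ works), is also correct and slightly sharpens the stated subsequence formulation.
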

	The proof uses standard arguments based on the Kolmogorov continuity criterion, cf.~\cite[Lemma~6.1]{HRSZ25}, and is thus omitted here.
	
	The following estimates control the terms arising from the noise in our functional framework.
	\begin{lemma} [Control of noise terms I]  \label{lem:BilinearLowerOrder}
		Let $s\in \R, 0\leq a \leq 1$, and $I \subseteq \R$ be a finite interval.
		\label{it:ControlNoiseG1}
		Then the following estimates hold.
		\begin{align}
			\Big\| \int_{t_0}^t e^{\imu (t-t')\Delta} (b\cdot\nabla u)(t') \dd t'\Big\|_{L^2 (I; L_x^{2^*})}
			&\lesssim |I|^{\frac12}\Big( \|b\|_{L_{t,x}^\infty}^{\frac12}\sum_{j=1}^d \|b\|_{L_{\vece_j}^{1,\infty}}^{\frac12} +\|b\|_{L_t^\infty H_x^{\frac{d}{2}}}\Big)\|u\|_{S^{\frac{1}{2},0,0}}\label{eq:BlowUpEstNEP},\\
			\Big\| \int_{t_0}^{t} e^{\imu (t-t')\Delta} (b\cdot \nabla u)(t')\dd t' \Big\|_{\X^{s,a}(I)}
			&\lesssim \sum_{j=1}^d \sum_{k=1}^\infty  \|\beta^{(1)}_k\|_{C_t^{\frac14}}  \|\nabla \phi_k^{(1)}\|_{L_{\tilde{\vece}_j}^{1,\infty}}  \|u\|_{\XS(I)} + \|b\|_{C^\frac14_t L^\infty_x} \|u\|_{\XS(I)}\notag\\
			&\qquad + |I|^{\frac{1}{2}} \|b\|_{L^\infty_t H^{\frac{d}{2}+(s-1)_+}_x} \|u\|_{\XS(I)}, \label{eq:Bilinbnablau}
		\end{align}
		where $c_+$ means the positive part of $c$, i.e., $c_+ = \max\{c,0\}$,  $\|\cdot\|_{C^\kappa_t L^\infty_x}$ denotes the norm of the space $C^\kappa(I; L_x^\infty)$,
		and the $L_{\tilde{\vece}_j}^{1,\infty}$-norm is defined as
		\begin{align*}
			\|f\|_{L_{\tilde{\vece}}^{1,\infty}(\R^d)}:= \int_{\R} \sup_{y\in \mathcal{P}_\vece} |f(r \vece+y)|\dd r
		\end{align*}
		for measurable function $f: \R^d\to\R$, cf.~\eqref{Lepq-def}.
	\end{lemma}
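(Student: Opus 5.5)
We treat the two estimates separately. Throughout, $b$ is spatially smooth with the regularity from Hypothesis (H), and the Duhamel integral is $\cI_0[b\cdot\nabla u]$.

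\textbf{Estimate~\eqref{eq:BlowUpEstNEP}.} We use the paraproduct decomposition $b\cdot\nabla u=(b\cdot\nabla u)_{LH}+(b\cdot\nabla u)_{HH}+(b\cdot\nabla u)_{HL}$.

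For the high--high and high--low parts, the derivative falls on a frequency $\lesssim$ that of $b$. We first apply the dual endpoint Strichartz estimate, i.e.\ Duhamel from $L^2_tL^{2_*}_x$ to $L^2_tL^{2^*}_x$ (contained in Lemma~\ref{lem:StrichartzLocalSmooth}), or alternatively the cruder $L^1_tL^2_x$ input. We then use Hölder in time to produce $|I|^{\frac12}$. This reduces matters to bounding $\|b\|_{L^\infty_tH^{d/2}_x}\|u\|_{L^\infty_tH^{1/2}_x}$ through Sobolev embedding and Bernstein. Putting $\frac12$ derivative on $b$ and $\frac12$ on $u$ gives the $\|b\|_{L^\infty_tH^{d/2}_x}$ term.

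For the low--high part $P_{\ll\lambda}b\cdot\nabla u_\lambda$, we use the lateral decomposition~\eqref{eq:DecompositionAngular}. In direction $\vece_j$ we use the local smoothing estimate: the Duhamel term from $L^{1,2}_{\vece_j}$ to Strichartz costs $\lambda^{-1/2}$. We bound
\[
\|b\,\nabla P_{\lambda,\vece_j}u_\lambda\|_{L^{1,2}_{\vece_j}}\le \|b\|_{L^{1,\infty}_{\vece_j}}\,\|\nabla P_{\lambda,\vece_j}u_\lambda\|_{L^{\infty,2}_{\vece_j}}.
\]
The $L^{\infty,2}_{\vece_j}$ maximal-type factor is not controlled by $S^{\frac12,0,0}$ directly. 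We therefore interpolate. One side is the trivial bound $\|b\nabla u_\lambda\|_{L^1_tL^2_x}\le |I|^{\frac12}\|b\|_{L^\infty}\lambda\|u_\lambda\|_{L^2_tL^2_x}$. The other side is the local smoothing/dual bound with $\|b\|_{L^{1,\infty}_{\vece_j}}$ and the $L^{2}_{\vece_j}$-smoothing norm of $u_\lambda$. Taking the geometric mean of these produces $\|b\|_{L^\infty}^{1/2}\|b\|_{L^{1,\infty}}^{1/2}$ with exactly $\frac12$ derivative on $u$. Square summing in $\lambda$ then gives the bound by $\|u\|_{S^{1/2,0,0}}$.

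\textbf{Estimate~\eqref{eq:Bilinbnablau}.} By Lemma~\ref{lem:LinEstimates} it suffices to bound $\|b\cdot\nabla u\|_{\Gs(I)}$, using the splitting $F=F_1+F_2$ in the definition of $\Gs$.

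For the high--high and high--low pieces we place $F$ in $N^{s,a,0}$ via its $L^2_tL^{2_*}_x$ or $L^\infty_tL^2_x$ components. This gives $|I|^{1/2}\|b\|_{L^\infty_tH^{d/2+(s-1)_+}_x}\|u\|_{\Xs}$, where the $(s-1)_+$ absorbs the derivative when the frequency of $b$ dominates.

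For the low--high piece we split $u_\lambda=\ModN u_\lambda+C_{>(\lambda/2^8)^2}u_\lambda$.
\begin{itemize}
\item \emph{Low modulation.} This part goes into $F_2$. With the angular decomposition, $\lambda^{s-\frac12}\|b\cdot\nabla P_{\lambda,\vece_j}\ModN u_\lambda\|_{L^{1,2}_{\vece_j}}\lesssim\|b\|_{L^{1,\infty}_{\tilde\vece_j}}\lambda^{s+\frac12}\|P_{\lambda,\vece_j}\ModN u_\lambda\|_{L^{\infty,2}_{\vece_j}}$. Expanding $b=2\imu\sum_k\nabla\phi_k^{(1)}\beta_k^{(1)}$ gives the first term. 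We need the $C^{1/4}$ norm of $\beta_k$ rather than $L^\infty$ because of the time-localization to $I$: we extend $b$ off $I$ and use a smooth time cutoff. The commutator with the modulation projection, which fails to commute with multiplication by $\beta_k(t)$, is handled by Hölder continuity.
\item \emph{High modulation.} This part goes into $F_1$ and is measured in the $L^2_{t,x}$ component of $N^{s,a,0}$. The modulation $\gtrsim\lambda^2$ gains $\lambda^{-1}$ through $(\imu\partial_t+\Delta)$, so the derivative is paid for.
\end{itemize}

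\textbf{Main obstacle.} The hard step is the time-frequency leakage. Multiplication by the rough-in-time $b$ moves low-modulation output into high modulation, where the $N^{s,a,b}$ weights $\big(\frac{\lambda+|\partial_t|}{\lambda^2+|\partial_t|}\big)^a$ and the $P^{(t)}_{\le(\lambda/2^8)^2}$ component must be controlled. I would decompose $b$ in temporal frequency, $b=P^{(t)}_{\ll\lambda^2}b+P^{(t)}_{\gtrsim\lambda^2}b$. The high part is bounded with $C^{1/4}_t$: $\|P^{(t)}_{\gtrsim\lambda^2}b\|_{L^\infty}\lesssim\lambda^{-1/2}\|b\|_{C^{1/4}_tL^\infty_x}$, which offsets the half derivative lost. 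This yields the term $\|b\|_{C^{1/4}_tL^\infty_x}\|u\|_{\Xs}$.
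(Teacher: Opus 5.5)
Your high--low/high--high arguments and the low-modulation step for \eqref{eq:Bilinbnablau} are essentially the paper's, but two steps do not close as written. In the low--high part of \eqref{eq:BlowUpEstNEP}, a geometric mean of the trivial bound (involving $\lambda\|u_\lambda\|_{L^2_{t,x}}$) and the local-smoothing bound (involving $\|P_{\lambda,\vece_j}u_\lambda\|_{L^{\infty,2}_{\vece_j}}$) still carries $\|P_{\lambda,\vece_j}u_\lambda\|_{L^{\infty,2}_{\vece_j}}^{1/2}$. As you note yourself, $S^{\frac12,0,0}$ does not control this factor. The missing idea is to keep $u$ out of the lateral norm and split only $b$. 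After the dual estimate of Lemma~\ref{lem:StrichartzLocalSmooth} (Strichartz from $L^{1,2}_{\vece_j}$ with gain $\lambda^{-1/2}$), Hölder in $r$ and then in $(t,x)$ gives, for $v=P_\mu\nabla u$,
\[
\big\|P_{\le\frac{\mu}{2^8}}b\,v\big\|_{L^{1,2}_{\vece_j}}\le\big\||P_{\le\frac{\mu}{2^8}}b|^{\frac12}\big\|_{L^{2,\infty}_{\vece_j}}\big\||P_{\le\frac{\mu}{2^8}}b|^{\frac12}\big\|_{L^2_tL^\infty_x}\|v\|_{L^\infty_tL^2_x}\lesssim|I|^{\frac12}\|b\|_{L^{1,\infty}_{\vece_j}}^{\frac12}\|b\|_{L^\infty_{t,x}}^{\frac12}\|v\|_{L^\infty_tL^2_x}.
\]
Thus $u$ enters only through $\mu^{\frac12}\|P_\mu u\|_{L^\infty_tL^2_x}\lesssim\|P_\mu u\|_{S^{\frac12,0,0}_\mu}$.

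In \eqref{eq:Bilinbnablau}, the high-modulation piece cannot simply be placed in $N^{s,a,0}$ via its $L^2_{t,x}$ component. $N^{s,a,0}_\mu$ is a sum of three norms, and once $b$ or $u$ carries temporal frequency $\gtrsim\mu^2$, the product can return to modulation $\le(\mu/2^8)^2$. There $\mu^s\|C_{\le(\mu/2^8)^2}F\|_{L^2_tL^{2_*}_x}$ is not controlled by $L^2_{t,x}$ information. Hölder with $\|b\|_{L^\infty_tL^d_x}$ instead loses $\mu^a$ when $u$ has low temporal frequency, and in any case produces a norm without $|I|^{\frac12}$ that is not on the right of \eqref{eq:Bilinbnablau}. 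The paper therefore splits $u$, not only $b$, in temporal frequency.

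For $C_{>(\mu/2^8)^2}P^{(t)}_{\gtrsim\mu^2}u_\mu$, the forcing goes into the lateral $L^{1,2}_{\vece_j}$ part of $\Gs$ with the same splitting of $b$, gaining $\mu^{-\frac12}$. For $C_{>(\mu/2^8)^2}P^{(t)}_{\ll\mu^2}u_\mu$ multiplied by $P^{(t)}_{\gtrsim\mu^2}b$, the same route loses $\mu^{a-\frac12}$, since the weight $\big(\frac{\mu+|\partial_t|}{\mu^2+|\partial_t|}\big)^a$ can be as small as $\mu^{-a}$ there. This loss is offset by
\[
\|P^{(t)}_{\gtrsim\mu^2}b\|_{L^\infty_{t,x}}+\|P^{(t)}_{\gtrsim\mu^2}b\|_{L^{1,\infty}_{\vece_j}}\lesssim\mu^{-\frac12}\Big(\sum_k\|\beta^{(1)}_k\|_{C^{1/4}_t}\|\nabla\phi^{(1)}_k\|_{L^{1,\infty}_{\tilde{\vece}_j}}+\|b\|_{C^{1/4}_tL^\infty_x}\Big).
\]
This is where the $C^{1/4}$ norms of $\beta^{(1)}_k$ enter. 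They do not come from a commutator in the low-modulation piece: there the modulation cutoff sits on $u$, nothing fails to commute, and $\sup_t|\beta^{(1)}_k|$ suffices.

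When both factors have temporal frequency $\ll\mu^2$, the product has modulation $\gtrsim(\mu/2^6)^2$. The low-modulation part of its Duhamel term is then the free solution $-e^{\imu t\Delta}H(t_0)$ with $\|H\|_{L^\infty_tL^2_x}\lesssim\mu^{-2}\|F\|_{L^\infty_tL^2_x}$, handled by Strichartz and homogeneous local smoothing.

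Finally, $\Gs$ measures $F_2$ in $L^{1,2}_{\vece_j}$ for every $j$, while $\Xs$ controls $P_{\mu,\vece_l}C_{\le(\mu/2^8)^2}u$ only in $L^{\infty,2}_{\vece_l}$. Your low-modulation bound therefore also needs the split $\||b|^{\frac12}\|_{L^{2,\infty}_{\vece_j}}\||b|^{\frac12}\|_{L^{2,\infty}_{\vece_l}}$ to pass between directions.
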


	\begin{remark}
		On the finite interval $I$, the hypothesis~\eqref{phik-condition} guarantees 
		\begin{align*}
			\sum_{k=1}^\infty  \|\beta^{(1)}_k\|_{C_t^{\frac14}}  \|\nabla \phi_k^{(1)}\|_{L_{\tilde{\vece}_j}^{1,\infty}} < \infty \quad \PP\text{-}a.s..
		\end{align*} 
		In fact, due to the finiteness of $\sup_k \mathbb{E} \|\beta^{(1)}_k\|_{C_t^{\frac14}}$ and condition~\eqref{phik-condition}, we have
		\begin{align*}
			\mathbb{E} \sum_{k=1}^\infty  \|\beta^{(1)}_k\|_{C_t^{\frac14}}  \|\nabla \phi_k^{(1)}\|_{L_{\tilde{\vece}_j}^{1,\infty}} \lesssim \sup_k \mathbb{E}\|\beta_k^{(1)}\|_{C_t^{\frac{1}{4}}} \sum_{k=1}^\infty \|\nabla \phi_k^{(1)}\|_{L_{\tilde{\vece}_j}^{1,\infty}}<\infty.
		\end{align*}
		Moreover, by 
		Sobolev's embedding $H_x^{\frac{d}{2}+1}\hookrightarrow L_x^\infty$ 
		and  $b = 2 \nabla W_1$, Lemma~\ref{lem:PropNoise}  yields $\|b\|_{C_t^{\frac14} L^\infty_x} < \infty$ $\PP$-a.s..
	\end{remark}
	
	\begin{proof}	
		We first show \eqref{eq:BlowUpEstNEP}. Write
		\begin{align*}
			b \cdot \nabla u = (b \cdot \nabla u)_{HL} + (b \cdot \nabla u)_{HH} + (b \cdot \nabla u)_{LH}
		\end{align*}
		and extend $b$ and $u$ by $0$ from $\R_+$ respectively $I$ to $\R$.
		Using Strichartz estimates,
		we infer
		\begin{align*}
			&\Big\| \int_{t_0}^t e^{\imu (t-t')\Delta} P_\lambda(b\cdot\nabla u)_{HL}(t') \dd t' \Big\|_{L_t^2 L_x^{2^*}}\lesssim \sum_{\mu\sim \lambda}  \|P_\mu b P_{\leq \frac{\mu}{2^8} }(\nabla u)\|_{L_t^1 L_x^2}\\
			&\lesssim \sum_{\mu\sim \lambda} \|P_\mu b\|_{L_t^2 L_x^d} \mu\|u\|_{L_t^2 L_x^{2^*}}\lesssim \sum_{\mu\sim\lambda} \mu^{\frac{d}{2}} \|P_\mu b \|_{L_t^2 L_x^2} \|u\|_{S^{\frac{1}{2},0,0}}.
		\end{align*}
		Summing over $\lambda\in 2^\N$
		and using the usual adaption for the $HH$-part, we obtain
		\begin{align}
			\Big(\sum_{\lambda\in 2^{\N}}\Big\| \int_{t_0}^t e^{\imu (t-t')\Delta} P_\lambda(b\cdot\nabla u)_{HL+HH}(t') \dd t' \Big\|^2_{L_t^2 L_x^{2^*}}   \Big)^{\frac{1}{2}}	\lesssim \|b\|_{L_t^2 H_x^{\frac{d}{2}}} \|u\|_{S^{\frac{1}{2},0,0}} \lesssim |I|^{\frac12}\|b\|_{L_t^\infty H_x^{\frac{d}{2}}} \|u\|_{S^{\frac{1}{2},0,0}}. \label{eq:Duhamelbnablau1}
		\end{align}
		For the remaining low-high interaction, we use the decomposition~\eqref{eq:DecompositionAngular} and Lemma~\ref{lem:StrichartzLocalSmooth}~\ref{it:InhomStrichartzLocalSmooth} to estimate
		\begin{align}
			&\Big\| \int_{t_0}^t e^{\imu (t-t')\Delta} P_\lambda(b\cdot\nabla u)_{LH}(t') \dd t' \Big\|_{L_t^2 L_x^{2^*}}\lesssim \sum_{j=1}^d \Big\| \int_{t_0}^t e^{\imu (t-t')\Delta} P_{\lambda,\vece_j}P_\lambda(b\cdot\nabla u)_{LH}(t') \dd t' \Big\|_{L_t^2 L_x^{2^*}}\notag\\
			&\lesssim \sum_{j=1}^d\lambda^{-\frac12} \|P_\lambda (b\cdot \nabla u)_{LH}\|_{L_{\vece_j}^{1,2}}\lesssim  \sum_{j=1}^d \sum_{\mu\sim\lambda} \mu^{\frac12}\| P_{\leq \frac{\mu}{2^8}} b P_\mu u\|_{L_{\vece_j}^{1,2}}\notag\\
			&\lesssim \sum_{j=1}^d \sum_{\mu\sim\lambda} \mu^{\frac12} \||P_{\leq \frac{\mu}{2^8}}b|^{\frac12}\|_{L_{\vece_j}^{2,\infty}} \||P_{\leq \frac{\mu}{2^8}}b|^{\frac12}\|_{L_t^2 L_x^\infty} \|P_\mu u\|_{L_t^\infty L_x^2}\notag\\
			&\lesssim |I|^{\frac12} \|b\|_{L_{t,x}^\infty}^{\frac12} \sum_{j=1}^d \|b\|_{L_{\vece_j}^{1,\infty}}^{\frac12} \sum_{\mu\sim\lambda} \|P_\mu u\|_{S^{\frac{1}{2},0,0}_\mu},
		\end{align}
		which yields that
		\begin{align}
			\Big(\sum_{\lambda\in 2^{\N}}\Big\| \int_{t_0}^t e^{\imu (t-t')\Delta} P_\lambda(b\cdot\nabla u)_{LH}(t') \dd t' \Big\|^2_{L_t^2 L_x^{2^*}}\Big)^{\frac{1}{2}}  \lesssim |I|^{\frac12} \|b\|_{L_{t,x}^\infty}^{\frac12} \sum_{j=1}^d \|b\|_{L_{\vece_j}^{1,\infty}}^{\frac12} \|u\|_{S^{\frac{1}{2},0,0}}.\label{eq:Duhamelbnablau2}
		\end{align}
		Hence, combining \eqref{eq:Duhamelbnablau1} and \eqref{eq:Duhamelbnablau2}, by the Littlewood-Paley theorem and Minkowski's inequality, we obtain~\eqref{eq:BlowUpEstNEP}.

		Next, we show the most difficult estimate \eqref{eq:Bilinbnablau}.
		For the \textit{HL} and \textit{HH} parts we use Lemma \ref{lem:LinEstimates} and the definition of the $\G^{s,a}$ norm to get
		
		\begin{align}\label{eq:estimateG1}
			&\Big\| \int_{t_0}^{t} e^{\imu (t-t')\Delta} (b\cdot \nabla u)_{HL+HH}(t')\dd t' \Big\|_{\X^{s,a}(I)}\lesssim \| (b\cdot \nabla u)_{HL+HH} \|_{\G^{s,a}(I)}\nonumber\\
			&\lesssim \| P_1(b \cdot \nabla u)_{HL+HH}\|_{\NOne_1} + \Big(\sum_{\lambda \in 2^\N} \|P_\lambda (b \cdot \nabla u)_{HL + HH}\|_{\NOne_\lambda}^2\Big)^{\frac{1}{2}}.
		\end{align}
		Since by Bernstein's inequality,
		\begin{align*}
			\|P_\lambda g\|_{N^{s,a,0}_\lambda} \lesssim \lambda^{s} \|P_\lambda g\|_{L^2_t L^{2_*}_x},
		\end{align*}
		we obtain
		\begin{align*}
			\|P_\lambda (b \cdot \nabla u)_{HL}\|_{\NOne_\lambda}
			&\lesssim \sum_{\mu \sim \lambda} \mu^s \| P_\mu b\|_{L^2_t L^d_x} \|P_{\leq \frac{\mu}{2^8}} \nabla u\|_{L^\infty_t L^2_x} \\
			&\lesssim \|u\|_{\XS} \sum_{\mu \sim \lambda} \mu^{(s-1)_+ +1} \|P_\mu b\|_{L^2_t L^d_x}\\
			&\lesssim \|u\|_{\XS} \sum_{\mu \sim \lambda} \mu^{\frac{d}{2}+(s-1)_+ } \|P_\mu b\|_{L^2_t L^2_x}.
		\end{align*}
		The usual adaptions yield the same estimate for the $HH$-component of $b \cdot \nabla u$. We also obtain this estimate for the first term on the right-hand side of~\eqref{eq:estimateG1} in this way as $P_1 (b \cdot \nabla u) = P_1 (b \cdot \nabla u)_{HL + HH} = P_1 (b \cdot \nabla u)_{HH}$. Summing up over $\lambda\in 2^\N$ we get
		\begin{align}
			\Big(\sum_{\lambda \in 2^\N} \|P_\lambda (b \cdot \nabla u)_{HL+HH}\|_{\NOne_\lambda}^2\Big)^{\frac{1}{2}}
			\lesssim |I|^{\frac{1}{2}} \|b\|_{L^\infty_t H^{\frac{d}{2}+(s-1)_+}_x} \|u\|_{\XS}. \label{eq:bnablauNl}
		\end{align}

		For the remaining \textit{LH} part, we write $P_\lambda(b\cdot \nabla u)_{LH}= P_\lambda \sum_{\mu\sim\lambda} P_{\leq \frac{\mu}{2^8}} b P_\mu \nabla u$ and further decompose by modulation
		\begin{align*}
			P_{\leq \frac{\mu}{2^8}} b P_\mu \nabla u &=  	P_{\leq \frac{\mu}{2^8}} b C_{\leq (\frac{\mu}{2^8})^2}P_\mu \nabla u+ P_{\leq \frac{\mu}{2^8}} b C_{> (\frac{\mu}{2^8})^2}P_\mu \nabla u =: I_{LM} + I_{HM}.
		\end{align*}
		
		For the low modulation part $I_{LM}$, we apply Lemma~\ref{lem:LinEstimates} and then bound the $\G^{s,a}$ norm by the lateral Strichartz norm to obtain
		\begin{align}
			\label{eq:ControlNoiseLHHM}
			\Big\| \int_{t_0}^{t} e^{\imu (t-t')\Delta} P_\lambda I_{LM}(t')\dd t' \Big\|_{\XS_\lambda(I)}
			&\lesssim \Big(\sum_{j = 1}^d \sum_{\nu \in 2^\N} \nu^{2s - 1} \| P_\nu P_\lambda I_{LM}\|_{L^{1,2}_{\vece_j}}^2 \Big)^{\frac12} \nonumber\\
			&\lesssim \lambda^{s-\frac{1}{2}} \sum_{j = 1}^d \sum_{\mu \sim \lambda}\|P_{\leq \frac{\mu}{2^8}} b C_{\leq (\frac{\mu}{2^8})^2} P_\mu \nabla u\|_{L^{1,2}_{\vece_j}}.
		\end{align}
		
		Using the decomposition~\eqref{eq:DecompositionAngular}, we infer
		\begin{align}
			&\lambda^{s-\frac{1}{2}} \sum_{\mu \sim \lambda}\|P_{\leq \frac{\mu}{2^8}} b C_{\leq (\frac{\mu}{2^8})^2} P_\mu \nabla u\|_{L^{1,2}_{\vece_j}} \notag \\
			&\lesssim \|b\|_{L^{1,\infty}_{\vece_j}}^{\frac{1}{2}} \sum_{\mu \sim \lambda} \mu^{s-\frac{1}{2}} \||P_{\leq \frac{\mu}{2^8}} b|^{\frac{1}{2}} |C_{\leq (\frac{\mu}{2^8})^2}P_\mu \nabla u| \|_{L^2_{t,x}}  \notag  \\
			&\lesssim \|b\|_{L^{1,\infty}_{\vece_j}}^{\frac{1}{2}} \sum_{\mu \sim \lambda} \sum_{l = 1}^d \mu^{s-\frac{1}{2}} \Big\||P_{\leq \frac{\mu}{2^8}} b|^{\frac{1}{2}} \Big|P_{\mu, \vece_l} \Big[\prod_{k = 1}^{l-1} (I - P_{\mu, \vece_k})\Big] C_{\leq (\frac{\mu}{2^8})^2}P_\mu \nabla u \Big| \Big\|_{L^2_{t,x}} \nonumber\\
			&\lesssim \|b\|_{L^{1,\infty}_{\vece_j}}^{\frac{1}{2}} \sum_{\mu \sim \lambda} \sum_{l = 1}^d \mu^{s-\frac{1}{2}} \||P_{\leq \frac{\mu}{2^8}} b|^{\frac{1}{2}}\|_{L^{2,\infty}_{\vece_l}} \|P_{\mu, \vece_l} C_{\leq (\frac{\mu}{2^8})^2}P_\mu \nabla u \|_{L^{\infty,2}_{\vece_l}} \nonumber\\
			&\lesssim \max_{j = 1, \ldots, d} \|b\|_{L^{1,\infty}_{\vece_j}}  \sum_{\mu \sim \lambda} \sum_{l = 1}^d \mu^{s+\frac{1}{2}} \|P_{\mu, \vece_l} C_{\leq (\frac{\mu}{2^8})^2}P_\mu u \|_{L^{\infty,2}_{\vece_l}}
			\lesssim \sum_{j = 1}^d \|b\|_{L^{1,\infty}_{\vece_j}} \sum_{\mu \sim \lambda} \|u_\mu\|_{\XS_\mu}. \label{eq:bnablauLS1}
		\end{align}
		
		For the high modulation part $I_{HM}$, we further decompose via the temporal frequency to get
		\begin{align*}
			I_{HM} = P_{\leq \frac{\mu}{2^8}} b C_{> (\frac{\mu}{2^8})^2} P^{(t)}_{\ll \mu^2} P_\mu \nabla u+ P_{\leq \frac{\mu}{2^8}} b C_{> (\frac{\mu}{2^8})^2} P^{(t)}_{\gtrsim \mu^2} P_\mu \nabla u := I_{HM}^{LT} + I_{HM}^{HT}.
		\end{align*}
		To estimate the $I_{HM}^{HT}$ contribution, arguing as in~\eqref{eq:ControlNoiseLHHM} and~\eqref{eq:bnablauLS1}, it is sufficient to bound
		\begin{align}
			&\|b\|_{L^{1,\infty}_{\vece_j}}^{\frac{1}{2}} \sum_{\mu \sim \lambda} \mu^{s-\frac{1}{2}}  \||P_{\leq \frac{\mu}{2^8}} b|^{\frac{1}{2}} |C_{> (\frac{\mu}{2^8})^2} P^{(t)}_{\gtrsim \mu^2} P_\mu \nabla u| \|_{L^2_{t,x}}   \notag \\
			&\lesssim \|b\|_{L^{1,\infty}_{\vece_j}}^{\frac{1}{2}} \|b\|_{L^{\infty}_{t,x}}^{\frac{1}{2}} \sum_{\mu \sim \lambda} \mu^{s+\frac{1}{2}}  \|C_{> (\frac{\mu}{2^8})^2} P^{(t)}_{\gtrsim \mu^2} P_\mu  u \|_{L^2_{t,x}} \nonumber\\
			&\lesssim \|b\|_{L^{1,\infty}_{\vece_j}}^{\frac{1}{2}} \|b\|_{L^{\infty}_{t,x}}^{\frac{1}{2}} \sum_{\mu \sim \lambda}   \mu^{s-\frac32}\Big\|\Big(\frac{\mu + |\partial_t|}{\mu^2 + |\partial_t|}\Big)^{a}(\imu \partial_t + \Delta) P^{(t)}_{\gtrsim \mu^2} P_\mu  u \Big\|_{L^2_{t,x}}   \notag \\
			&\lesssim \|b\|_{L^{1,\infty}_{\vece_j}}^{\frac{1}{2}} \|b\|_{L^{\infty}_{t,x}}^{\frac{1}{2}} \sum_{\mu \sim \lambda}   \|u_\mu\|_{\XS_\mu} \label{eq:estimateLIHT2}.
		\end{align}
		
		Now we turn to the most delicate part $I_{HM}^{LT}$, where we further decompose the noise term depending on its temporal frequency to get
		\begin{align*}
			I_{HM}^{LT} = P^{(t)}_{\ll \mu^2} P_{\leq \frac{\mu}{2^8}} b C_{> (\frac{\mu}{2^8})^2} P^{(t)}_{\ll \mu^2} P_\mu \nabla u + P^{(t)}_{\gtrsim \mu^2} P_{\leq \frac{\mu}{2^8}} b C_{> (\frac{\mu}{2^8})^2} P^{(t)}_{\ll \mu^2} P_\mu \nabla u := I_{HM}^{LLT} + I_{HM}^{HLT}.
		\end{align*}
		
		In order to estimate $I_{HM}^{HLT}$,
		arguing as in~\eqref{eq:ControlNoiseLHHM}, \eqref{eq:bnablauLS1}, and~\eqref{eq:estimateLIHT2}, it suffices to bound
		\begin{align}
			&\sum_{\mu\sim \lambda} \|P^{(t)}_{\gtrsim \mu^2} P_{\leq \frac{\mu}{2^8}} b\|_{L_{\vece_j}^{1,\infty}}^{\frac12}  \mu^{s-\frac12} \| |P^{(t)}_{\gtrsim \mu^2} P_{\leq \frac{\mu}{2^8}} b|^{\frac12} C_{> (\frac{\mu}{2^8})^2} P^{(t)}_{\ll \mu^2} P_\mu \nabla u \|_{L_{t,x}^2}\notag\\
			&\lesssim  \sum_{\mu\sim \lambda} \|P^{(t)}_{\gtrsim \mu^2} P_{\leq \frac{\mu}{2^8}} b\|_{L_{\vece_j}^{1,\infty}}^{\frac12} \|P^{(t)}_{\gtrsim \mu^2} P_{\leq \frac{\mu}{2^8}} b\|_{L_{t,x}^\infty}^{\frac12}   \mu^{a-\frac12} \mu^{s-1} \Big\| \Big(\frac{\mu+|\partial_t|}{\mu^2+|\partial_t|}\Big)^a (\imu\partial_t+\Delta)  C_{> (\frac{\mu}{2^8})^2} P^{(t)}_{\ll \mu^2} P_\mu u \Big\|_{L_{t,x}^2}\notag\\
			&\lesssim \sum_{\mu \sim \lambda}  \|P^{(t)}_{\gtrsim \mu^2} P_{\leq \frac{\mu}{2^8}}b\|_{L^{1,\infty}_{\vece_j}}^{\frac{1}{2}} \|P^{(t)}_{\gtrsim \mu^2}P_{\leq \frac{\mu}{2^8}} b\|_{L^{\infty}_{t,x}}^{\frac{1}{2}}   \mu^{a-\frac12}\|u_\mu\|_{\XS_\mu}.\label{eq:targetIHMHLT}
		\end{align}
		We point out that we need to gain some regularity here if we want to avoid a restriction on the parameter $a$, which would ultimately prohibit us from proving local well-posedness in the full regularity regime~\eqref{IniReg-condition}. For that purpose, we exploit the temporal regularity of Brownian motions. By \eqref{eq:Defb} we have 
		\begin{align*}
			P^{(t)}_{\gtrsim \mu^2} P_{\leq \frac{\mu}{2^8}}b = 2\imu \sum_{k=1}^\infty 	P^{(t)}_{\gtrsim \mu^2} \beta^{(1)}_k(t)  P_{\leq \frac{\mu}{2^8}} \nabla \phi_k^{(1)}(x).
		\end{align*}
		Note that, by the temporal regularity of Brownian motions, we have
		\begin{align}
			\| P^{(t)}_{\gtrsim \mu^2} \beta^{(1)}_k\|_{L_t^\infty} \lesssim \sum_{\nu\gtrsim \mu^2} \|P_\nu^{(t)} \beta^{(1)}_k\|_{L_t^\infty}
			\lesssim \sum_{\nu\gtrsim \mu^2} \nu^{-\frac14} \sup_{\nu\in 2^{\N}} \nu^{\frac14} \|P_\nu^{(t) }\beta^{(1)}_k\|_{L_t^\infty} \lesssim \mu^{-\frac12} \|\beta^{(1)}_k\|_{C_t^{\frac14}},   \label{eq:estimatetemporalBrown}
		\end{align}
		where we used that $B^{\frac14}_{\infty,\infty}(\R) = C^{\frac14}(\R)$. 
		As this characterization is also valid in the Banach space-valued case, see e.g.~\cite[Corollary~14.4.26]{HvNVW23}, we use the equivalence $B^{\frac14}_{\infty,\infty}(\R; L_x^\infty) =  C^{\frac14}(\R; L_x^\infty)$
		to infer that
		\begin{align*}
			\| P^{(t)}_{\gtrsim \mu^2} b\|_{L_t^\infty L^\infty_x} 
			\lesssim \sum_{\nu\gtrsim \mu^2} \nu^{-\frac14} \sup_{\nu\in 2^{\N}} \nu^{\frac14} \|P_\nu^{(t) } b\|_{L_t^\infty L^\infty_x} \lesssim \mu^{-\frac12} \|b\|_{C_t^{\frac14} L^\infty_x}.
		\end{align*}
		Hence, we have
		\begin{align}
			\label{eq:EstbwHoeldReg}
			\|P^{(t)}_{\gtrsim \mu^2} P_{\leq \frac{\mu}{2^8}}b\|_{L^{1,\infty}_{\vece_j}}^{\frac12}  \|P^{(t)}_{\gtrsim \mu^2}P_{\leq \frac{\mu}{2^8}} b\|_{L^{\infty}_{t,x}}^{\frac{1}{2}} 
			&\lesssim  \|P^{(t)}_{\gtrsim \mu^2} P_{\leq \frac{\mu}{2^8}}b\|_{L^{1,\infty}_{\vece_j}} + \|P^{(t)}_{\gtrsim \mu^2}P_{\leq \frac{\mu}{2^8}} b\|_{L^{\infty}_{t,x}} \nonumber\\
			&\lesssim  \sum_{k=1}^\infty  \| P^{(t)}_{\gtrsim \mu^2}\beta^{(1)}_k\|_{L_t^{\infty}}  \| \nabla \phi_k^{(1)} \|_{L_{\tilde{\vece}_j}^{1,\infty}} + \mu^{-\frac12} \|b\|_{C_t^{\frac14}L_x^\infty} \nonumber\\
			&\lesssim  \mu^{-\frac12} \Big(\sum_{k=1}^\infty  \|\beta^{(1)}_k\|_{C_t^{\frac14}}  \| \nabla \phi_k^{(1)} \|_{L_{\tilde{\vece}_j}^{1,\infty}} + \|b\|_{C_t^{\frac14}L_x^\infty}\Big).
		\end{align}
		Plugging this estimate into~\eqref{eq:targetIHMHLT}, the gain of $\mu^{-\frac12}$ from the regularity of Brownian motions and the fact that $a\leq 1$ yield
		\begin{align*}
			\text{R.H.S. of } \eqref{eq:targetIHMHLT} \lesssim \Big(\sum_{k=1}^\infty  \|\beta^{(1)}_k\|_{C_t^{\frac14}}  \| \nabla \phi_k^{(1)} \|_{L_{\tilde{\vece}_j}^{1,\infty}} + \|b\|_{C_t^{\frac14}L_x^\infty} \Big)
			\sum_{\mu\sim\lambda}\|u_\mu\|_{\X^{s,a}}.
		\end{align*}
		
		\vspace{-2pt}
		
		Finally, it remains to estimate $I_{HM}^{LLT}$. Note that one has the identity $I_{HM}^{LLT} = C_{> (\frac{\mu}{2^6})^2} I_{HM}^{LLT}$.
		By the definition of the $\X^{s,a}$-norm and \eqref{eq:CharSsablambda}, it suffices to show the following three estimates:
		\begin{align}
			\mu^{s-1} \Big\| \Big( \frac{\mu+|\partial_t|}{\mu^2+|\partial_t|} \Big)^a (\imu \partial_t+ \Delta) \cI_0 (C_{>(\frac{\mu}{2^6})^2} I_{HM}^{LLT}) \Big\|_{L_{t,x}^2}
			\lesssim& \|b\|_{L_{t,x}^\infty} \|u\|_{\X_{\mu}^{s,a}}, \label{eq:IHMLLT1}\\
			\mu^s \| C_{>(\frac{\mu}{2^8})^2} \cI_0(C_{>(\frac{\mu}{2^6})^2} I_{HM}^{LLT} ) \|_{L_t^\infty L_x^2}
			\lesssim&  \|b\|_{L_{t,x}^\infty} \|u\|_{\X_{\mu}^{s,a}}, \label{eq:IHMLLT2}\\
			\mu^s \| C_{\leq (\frac{\mu}{2^8})^2} \cI_0 (C_{> (\frac{\mu}{2^6})^2} I_{HM}^{LLT}) \|_{L_t^\infty L_x^2 \cap L_t^2 L_x^{2^*}}
			+ \mu^{s+\frac12} \|  P_{\lambda,\vece_j} C_{\leq (\frac{\mu}{2^8})^2}
			&  \cI_0 (C_{> (\frac{\mu}{2^6})^2} I_{HM}^{LLT}) \|_{L_{\vece_j}^{\infty, 2}} \lesssim  \|b\|_{L_{t,x}^\infty} \|u\|_{\X_{\mu}^{s,a}},  \label{eq:IHMLLT3}
		\end{align}
		for every $j \in \{1, \ldots, d\}$, where $\cI_0$ denotes the Schr\"odinger Duhamel integral in~\eqref{eq:DefPropOp} and the $L_t^\infty L_x^2 \cap L_t^2 L_x^{2^*}$-norm is defined as the sum of the $L_t^\infty L_x^2 $-norm and $L_t^2 L_x^{2^*}$-norm.
		
		For the estimate \eqref{eq:IHMLLT1},
		we have
		\begin{align*}
			{\rm L.H.S.\ of\ } \eqref{eq:IHMLLT1}
			&\lesssim \mu^{s-1} \| C_{>(\frac{\mu}{2^6})^2} I_{HM}^{LLT} \|_{L_{t,x}^2}\\
			&\lesssim \|b\|_{L_{t,x}^\infty} \mu^s \| C_{> (\frac{\mu}{2^8})^2} P^{(t)}_{\ll \mu^2} P_\mu  u\|_{L_{t,x}^2}\\
			&\lesssim \|b\|_{L_{t,x}^\infty} \mu^{s-2+a} \Big\| \Big(\frac{\mu+|\partial_t|}{\mu^2+|\partial_t|}\Big)^a (\imu\partial_t+\Delta) P_\mu u\Big\|_{L_{t,x}^2}\lesssim \|b\|_{L_{t,x}^\infty} \|u\|_{\X_{\mu}^{s,a}}.
		\end{align*}
		Moreover, we have
		\begin{align*}
			{\rm L.H.S.\ of\ }  \eqref{eq:IHMLLT2}
			&\lesssim \mu^{s-2} \| (\imu\partial_t+\Delta) C_{>(\frac{\mu}{2^8})^2} \cI_0(C_{>(\frac{\mu}{2^6})^2} I_{HM}^{LLT} ) \|_{L_t^\infty L_x^2}\\
			&\lesssim \mu^{s-2} \| P^{(t)}_{\ll \mu^2} P_{\leq \frac{\mu}{2^8}} b C_{> (\frac{\mu}{2^8})^2} P^{(t)}_{\ll \mu^2} P_\mu (\nabla u) \|_{L_{t}^\infty L_x^2}\\
			&\lesssim \|b\|_{L_{t,x}^\infty} \mu^{s-1} \|u_\mu\|_{L_t^\infty L_x^2} \lesssim \|b\|_{L_{t,x}^\infty} \|u\|_{\X_{\mu}^{s,a}}.
		\end{align*}
		Regarding estimate \eqref{eq:IHMLLT3}, we use the commutation relation
		\begin{align*}
			e^{-\imu t \Delta} C_{>(\frac{\mu}{2^8})^2 } = P^{(t)}_{>(\frac{\mu}{2^8})^2 } e^{-\imu t \Delta}
		\end{align*}
		to write
		\begin{align*}
			C_{\leq (\frac{\mu}{2^8})^2} \cI_0 (C_{>(\frac{\mu}{2^6})^2 }I_{HM}^{LLT}) = e^{\imu (\cdot) \Delta} P^{(t)}_{\leq (\frac{\mu}{2^8})^2} (H(\cdot)- H(t_0)),
		\end{align*}
		where $H(t):= \partial_t^{-1} P^{(t)}_{>(\frac{\mu}{2^6})^2} (e^{-\imu (\cdot) \Delta} I_{HM}^{LLT})$, cf.~(3.24) in~\cite{HRSZ24}.
		Since $P^{(t)}_{\leq (\frac{\mu}{2^8})^2} H = 0$, we have
		\begin{align*}
			C_{\leq(\frac{\mu}{2^8})^2} \cI_0 (C_{>(\frac{\mu}{2^6})^2 }I_{HM}^{LLT}) = -e^{\imu (\cdot) \Delta} H(t_0).
		\end{align*}
		Thus, by Strichartz estimates and the homogeneous local smoothing estimate in Lemma~\ref{lem:StrichartzLocalSmooth}, we have
		\begin{align*}
			{\rm L.H.S.\ of\ }  \eqref{eq:IHMLLT3}
			&\lesssim \mu^s\|H(t_0)\|_{L_x^2}\lesssim \mu^s\|H\|_{L_t^\infty L_x^2}\\
			&\lesssim \mu^{s-2} \| e^{-\imu (\cdot) \Delta} I_{HM}^{LLT}\|_{L_t^\infty L_x^2 }\\
			& \lesssim \mu^{s-1} \| P^{(t)}_{\ll \mu^2} P_{\leq \frac{\mu}{2^8}} b C_{> (\frac{\mu}{2^8})^2} P^{(t)}_{\ll \mu^2} P_\mu \nabla u\|_{L_t^\infty L_x^2}
			\lesssim  \|b\|_{L_{t,x}^\infty} \|u\|_{\X_{\mu}^{s,a}}.
		\end{align*}
		Therefore, combining the above estimates and employing~\eqref{eq:Defb} once more, we obtain~\eqref{eq:Bilinbnablau}.
	\end{proof}
	
	\begin{lemma} [Control of noise terms II] \label{lem:BilinearLowerOrder2}
		Let $s\in \R, 0\leq a \leq 1$, and $I \subseteq \R$ be a finite interval. Then, we have the estimates
		\begin{align}
			\Big\| \int_{t_0}^t e^{\imu (t-s)\Delta} (c u) \dd s\Big\|_{L^2 (I; L_x^{2^*})}
			&\lesssim |I|^{\frac{1}{2}} \|c\|_{L_t^\infty H_x^{\frac{d-3}{2}}}\|u\|_{S^{\frac{1}{2},0,0}}\label{eq:BlowUpEstNEP2},\\
			\Big\| \int_{t_0}^t e^{\imu (t-s)\Delta} ( \cT_{\cdot}(W_2) u) \dd s\Big\|_{L^2 (I; L_x^{2^*})}
			&\lesssim |I|^{\frac{1}{2}} \|\cT_{\cdot}(W_2)\|_{L_t^\infty H_x^{\frac{d-3}{2}}}\|u\|_{S^{\frac{1}{2},0,0}}\label{eq:BlowUpEstNEP3},\\
			\| c u\|_{\GS(I)}
			&\lesssim |I|^{\frac{1}{2}} \|c\|_{L^\infty_t H^{\frac{d}{2}+s-1}_x} \|u\|_{\XS(I)}, \label{eq:Bilincu} \\
			\| \cT_{\cdot}(W_2) u \|_{\GS(I)}
			&\lesssim |I|^{\frac{1}{2}} \|\cT_{\cdot}(W_2)\|_{L^\infty_t H^{\frac{d}{2}+s-1}_x} \|u\|_{\XS(I)}\label{eq:BilincTu}.
		\end{align}
	\end{lemma}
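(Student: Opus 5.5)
The plan mirrors the proof of Lemma~\ref{lem:BilinearLowerOrder}. The zeroth-order potentials $c$ and $\cT_\cdot(W_2)$ carry no derivative, so no lateral Strichartz (local smoothing) component is needed. Every estimate follows from Strichartz estimates (for the first two bounds), or from Lemma~\ref{lem:LinEstimates} together with the $N^{s,a,0}_\lambda$-part of the $\Gs$-norm (for the last two). In each case we use the dual endpoint space $L^2_tL^{2_*}_x$ and place the potential in $L^2_t$ in time, which produces the factor $|I|^{\frac12}$ after bounding $L^2_t$ by $|I|^{\frac12}L^\infty_t$. We extend $u$ from $I$ and $c$, $\cT_\cdot(W_2)$ from $\R_+$ to $\R$ by zero. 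Since \eqref{eq:BlowUpEstNEP3} and \eqref{eq:BilincTu} are verbatim \eqref{eq:BlowUpEstNEP2} and \eqref{eq:Bilincu} with $c$ replaced by $V:=\cT_\cdot(W_2)$, we treat a generic potential $V$.

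For \eqref{eq:BlowUpEstNEP2}, the inhomogeneous Strichartz estimate reduces the claim to bounding $\|Vu\|_{L^2_tL^{2_*}_x}$. We use the paraproduct decomposition $Vu=(Vu)_{HL}+(Vu)_{HH}+(Vu)_{LH}$ and the Littlewood–Paley square function on the output.
\begin{itemize}
\item For the $LH$ and $HH$ pieces we use H\"older: $\|P_{\lesssim\mu}V\, u_\mu\|_{L^2_tL^{2_*}_x}\le \|V\|_{L^2_tL^{d/2}_x}\|u_\mu\|_{L^\infty_tL^2_x}$, where by Sobolev $H^{\frac{d-3}{2}}_x\hookrightarrow L^{d/2}_x$ would need $\frac{d}{2}-2\le \frac{d-3}{2}$, which holds. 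This gives $\|V\|_{L^2_tH^{\frac{d-3}{2}}_x}\mu^{-\frac12}\|u_\mu\|_{S^{\frac12,0,0}_\mu}$, which is summable.
\item For the $HL$ piece we estimate $\|V_\mu\|_{L^\infty_tL^{p}_x}\|P_{\ll\mu}u\|_{L^2_tL^{2^*}_x}$ with $\frac1p+\frac{d-2}{2d}=\frac{d+2}{2d}$, i.e.\ $p=\frac d2$. This is again controlled by $\|V\|_{H^{\frac{d-3}{2}}}$ (with room of half a derivative), while $\|u\|_{L^2_tL^{2^*}_x}\lesssim\|u\|_{S^{\frac12,0,0}}$ by \eqref{eq:DispersiveBdAdapted}. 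Here we use the $L^2_t$ integrability on $u$ and $L^\infty_t$ on $V$. To get $|I|^{\frac12}$ in this configuration, we instead place $u$ in $L^\infty_tL^2_x$ and $V_\mu$ in $L^2_tL^{d}_x$, and use Bernstein $\|V_\mu\|_{L^d}\lesssim\mu^{\frac{d}{2}-1}\|V_\mu\|_{L^2}$ together with $\frac d2-1\le \frac{d-3}{2}+\frac12$ and the half derivative from $\|P_{\ll\mu}u\|_{L^\infty L^{2}}$ in $S^{\frac12}$. Bernstein on the low frequency factor $u$ absorbs the remaining mismatch.
\end{itemize}

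For \eqref{eq:Bilincu}, Lemma~\ref{lem:LinEstimates} is not even needed: we simply bound $\|Vu\|_{\Gs}$ by taking $F_2=0$. By Bernstein, as in the proof of Lemma~\ref{lem:BilinearLowerOrder}, $\|P_\lambda g\|_{N^{s,a,0}_\lambda}\lesssim \lambda^s\|P_\lambda g\|_{L^2_tL^{2_*}_x}$, which holds since $a\ge0$ and the first two terms of $N^{s,a,0}_\lambda$ are dominated by this quantity. So it suffices to show $\big(\sum_\lambda\lambda^{2s}\|P_\lambda(Vu)\|_{L^2_tL^{2_*}_x}^2\big)^{\frac12}\lesssim\|V\|_{L^2_tH^{\frac d2+s-1}_x}\|u\|_{L^\infty_tH^s_x}$, and we bound $\|u\|_{L^\infty_tH^s_x}\le\|u\|_{\Xs}$.
\begin{itemize}
\item In the $LH$ part we use $\|P_{\ll\mu}V\|_{L^d_x}\lesssim\|V\|_{H^{\frac d2-1}}$. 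This is admissible for $s\ge0$; for $s<0$ the low frequency Bernstein loss is compensated, since $\frac d2+s-1$ still dominates the needed $\frac d2-1+$ derivatives on $V$ only when $s\ge 0$. If $s<0$ we instead write $\mu^s\|P_{\ll\mu}V\|_{L^d}\le\|V\|_{L^d}$-type bounds combined with dyadic summation against the $\ell^2$ of $u$.
\item In the $HL$ and $HH$ parts we put the $\lambda^s$ on $V$: $\lambda^s\|V_\lambda\|_{L^2_tL^d_x}\lesssim\lambda^{\frac d2+s-1}\|V_\lambda\|_{L^2_{t,x}}$, and $\|P_{\lesssim\lambda}u\|_{L^\infty_tL^2_x}$ is handled with $(s)_-$ bookkeeping. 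Negative $s$ makes the $HH$ sum converge better.
\end{itemize}
Finally $L^2_t\le|I|^{\frac12}L^\infty_t$ on the finite interval, and the restriction norms \eqref{eq:DefRestrictionNorm} are handled by choosing near-optimal extensions.

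The \textbf{main obstacle} is the exponent bookkeeping in the low-frequency-potential ($LH$) case for \eqref{eq:Bilincu} when $s$ is small or negative. The regularity $\frac d2+s-1$ assumed on $V$ is exactly critical for $L^{d}_x$ via Bernstein/Sobolev, so the dyadic sums must be organized (e.g.\ by Cauchy–Schwarz in the frequency of $V$ with the extra $\mu^{s}$ decay) to avoid a logarithmic loss. The first two estimates have half a derivative of room and are routine.
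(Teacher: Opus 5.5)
Your treatment of \eqref{eq:BlowUpEstNEP2}--\eqref{eq:BlowUpEstNEP3} has a genuine gap. The exponent bookkeeping in the paraproduct pieces is wrong, and it rests on the false premise that there is ``half a derivative of room''. In fact $\frac{d-3}{2}+\frac12=\frac{d-2}{2}$ is exactly the scaling count for $H^{\sigma_1}_x\cdot H^{\sigma_2}_x\to L^{2_*}_x$, so there is no slack at high frequencies.

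Concretely, the steps fail as follows.
\begin{enumerate}
\item In the low-high/high-high piece, the H\"older step $\|P_{\lesssim\mu}V\,u_\mu\|_{L^{2_*}_x}\le\|V\|_{L^{d/2}_x}\|u_\mu\|_{L^2_x}$ is false, since $\frac2d+\frac12=\frac{d+4}{2d}\neq\frac{d+2}{2d}$. The correct partner of $L^2_x$ is $L^d_x$. Bernstein from $L^{\frac{2d}{3}}_x$ to $L^d_x$ on $P_{\ll\mu}V$ costs exactly the $\mu^{\frac12}$ you hoped to gain from $u_\mu$. So the piece is merely bounded; the claimed $\mu^{-\frac12}$ decay does not exist.
\item In the high-low piece, your final configuration puts $\mu^{\frac d2-1}=\mu^{\frac12}\mu^{\frac{d-3}{2}}$ on $V_\mu$. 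The surplus $\mu^{\frac12}$ cannot be absorbed by $P_{\ll\mu}u$: that factor contains frequencies of size $1$, so $\|P_{\ll\mu}u\|_{L^2_x}$ is not $O(\mu^{-\frac12}\|u\|_{H^{\frac12}_x})$. Bernstein on a low-frequency factor only loses powers of $\mu$.
\item Separately, $\ell^2$-summing dyadic outputs directly in $L^{2_*}_x$ with $2_*<2$ is not automatic. You would have to sum after Strichartz in $L^2_tL^{2^*}_x$, as in Lemma~\ref{lem:BilinearLowerOrder}.
\end{enumerate}

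No frequency decomposition is needed at all. After Strichartz, split the Lebesgue exponents as in the paper:
\begin{align*}
\|cu\|_{L^2_tL^{2_*}_x}\le\|c\|_{L^2_tL^{\frac{2d}{3}}_x}\|u\|_{L^\infty_tL^{\frac{2d}{d-1}}_x}\lesssim|I|^{\frac12}\|c\|_{L^\infty_tH^{\frac{d-3}{2}}_x}\|u\|_{L^\infty_tH^{\frac12}_x}.
\end{align*}
This uses $H^{\frac{d-3}{2}}_x\hookrightarrow L^{\frac{2d}{3}}_x$ and $H^{\frac12}_x\hookrightarrow L^{\frac{2d}{d-1}}_x$, and you conclude with $S^{\frac12,0,0}\hookrightarrow L^\infty_tH^{\frac12}_x$.

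For \eqref{eq:Bilincu}--\eqref{eq:BilincTu}, your route matches the paper's reference to \eqref{eq:bnablauNl}: take $F_2=0$, use $\|P_\lambda g\|_{N^{s,a,0}_\lambda}\lesssim\lambda^s\|P_\lambda g\|_{L^2_tL^{2_*}_x}$, and apply H\"older in $L^2_tL^d_x\times L^\infty_tL^2_x$ with Bernstein/Sobolev on $V$. It closes for $s\ge0$, which covers every use of the lemma (there $s\ge\frac{d-3}{2}$).

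The low-high piece is not the obstacle you describe. $P_{\ll\mu}$ is bounded on $L^d_x$ and $H^{\frac d2-1}_x\hookrightarrow L^d_x$, so there is no logarithmic loss.

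Your $s<0$ remarks, however, do not close. With $V$ in $L^2_t$ (needed for $|I|^{\frac12}$) and $u$ in $L^\infty_tL^2_x$, both the low-high and the high-low pieces need $\frac d2-1$ derivatives on $V$. The space $H^{\frac d2+s-1}_x$ does not supply these when $s<0$, so you should simply restrict to $s\ge0$.
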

	
	\begin{proof}
		The estimates~\eqref{eq:Bilincu} and~\eqref{eq:BilincTu} can be proved as \eqref{eq:bnablauNl}. It suffices to show the estimate \eqref{eq:BlowUpEstNEP2} since the same argument applies to the estimate \eqref{eq:BlowUpEstNEP3}.
		
		We extend $c$ and $u$ by $0$ from $I$ to $\R$. Then Strichartz estimates yield
		\begin{align*}
			\Big\| \int_{t_0}^t e^{\imu (t-s)\Delta} (c u) \dd s\Big\|_{L^2 (I; L_x^{2^*})}
			\lesssim \|cu\|_{L_t^2 L_x^{2_*}} \lesssim \|c\|_{L_t^2 L_x^{\frac{2d}{3}}} \|u\|_{L_t^\infty L_x^{\frac{2d}{d-1}}} \lesssim |I|^{\frac{1}{2}} \|c\|_{L_t^\infty H_x^{\frac{d-3}{2}}} \|u\|_{L_t^\infty H_x^{\frac{1}{2}}}.
		\end{align*}
		Then \eqref{eq:BlowUpEstNEP2} follows from the embedding $S^{\frac{1}{2},0,0} \hookrightarrow L_t^\infty H_x^{\frac{1}{2}}$.
	\end{proof}

	\section{Improved regularity for solutions}\label{Sec-Persi}
	
	In this section, we establish two results concerning the improved regularity of solutions to the stochastic Zakharov system. Both results are key components in the proof of local well-posedness, particularly for characterizing the maximal existence time.
	
	Furthermore, the analysis in this section is conducted pathwise. Although the solutions and coefficients depend on the sample path $\omega$, we suppress this explicit dependence to simplify the exposition. Consequently, all results presented herein should be understood to hold almost surely.

	\subsection{Improved regularity of the wave component}
	This subsection is devoted to establishing improved regularity for the wave component under the condition $l > s - \frac12$. This regularity refinement is a crucial step in the proof of local well-posedness in the non-endpoint case presented in Subsection~\ref{Subsec-Max-Exist} as it allows us to transition from spaces with $b = 0$ to spaces with $b \neq 0$. The main result is stated in the following lemma.

	\begin{lemma}[Improved wave regularity]\label{le:PersisReg}
		Assume that $(s,l)$ satisfies \eqref{IniReg-condition} and $\tilde{l}=\min\{s-\frac12,l\}$.
		Let $(u, v)\in C(I,H_x^s \times H_x^{\tilde{l}})$ be a solution
		to the stochastic Zakharov system $\eqref{eq:RanZakbc}$ with
		$(u(0), v(0))= (u_0, v_0) \in H_x^s \times H_x^l$ where  $I\subseteq \R_+$ is a finite interval. If  $u\in \mathbb{X}^{s,0}(I)$ and $l> s-\frac{1}{2}$, then $(u,v)\in \tilde{S}^{s,0,b}(I)\times W^{l,0,s-\frac12}(I)$ with $b$ as in \eqref{con:ab}. In particular,
		$(u,v)\in C(I, H_x^s \times H_x^l)$.	
	\end{lemma}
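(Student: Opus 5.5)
The plan is to split according to the sign of $s-l$, as dictated by \eqref{con:ab}. In both cases we use the wave Duhamel formula $v(t)=e^{\imu t|\nabla|}v_0+\cJ_0[-|\nabla||u|^2]$ together with Lemma~\ref{lem:LinearEstimateHalfWave} for the free part, so the task reduces to controlling $\cJ_0(|\nabla|(\overline{u}u))$ in $W^{l,0,s-\frac12}$ via Lemma~\ref{le:Bilinearnablauv} with $a=0$ and $\beta=s-\frac12$. We work with extensions of $u$ from $I$ to $\R$, as in the definition~\eqref{eq:DefRestrictionNorm} of the restricted norms. Note that $l>s-\frac12$ forces $s-l<\frac12<1$, so $a^*=0$ and $\X^{s,0}$ is the correct space.

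\emph{Case $s>l$.} Here $b=0$. By comparing definitions, $\|u\|_{\tilde S^{s,0,0}}\lesssim\|u\|_{S^{s,0,0}}\le\|u\|_{\X^{s,0}}$, because the high-modulation term in $\tilde S^{s,0,0}_\lambda$ is dominated by the full $L^2_{t,x}$ term. The hypotheses of Lemma~\ref{le:Bilinearnablauv} with $a=b=0$ and $\beta=s-\frac12$ follow from \eqref{IniReg-condition}:
\begin{itemize}
\item $\beta\le\min\{s,2s-\frac{d-2}{2}\}$ comes from $s\ge\frac l2+\frac{d-2}{4}$ and $l\ge\frac d2-2$;
\item $0\le s-l$ holds by assumption;
\item the excluded pairs are handled by \eqref{IniReg-condition}.
\end{itemize}
Hence $v\in W^{l,0,s-\frac12}(I)$, and in particular $v\in C(I,H^l_x)$.

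\emph{Case $s\le l$.} Now $b=\frac{l-s+1}{2}\in(\frac14,1]$ is nonzero. I would run a bootstrap in the wave regularity. Start with $l_0=\tilde l=s-\frac12$ and set $b_k=\frac{l_k-s+1}{2}$.

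The first step of each iteration, which I expect to be the main obstacle, is to prove $u\in\tilde S^{s,0,b_k}(I)$ from $v\in W^{l_k,0,s-\frac12}(I)$. Since $u\in\X^{s,0}$, only the high-modulation term
\[
\lambda^{s-1+b_k}\|C_{\ge\lambda^2/2^3}(\imu\partial_t+\Delta)u_\lambda\|_{L^2_{t,x}}
\]
is new. We substitute the right-hand side of \eqref{eq:RanZakbc} and treat its four pieces as follows.
\begin{itemize}
\item \emph{The terms $cu$ and $\Re(\cT_t(W_2))u$.} These are smooth in $x$, and the paraproduct argument of \eqref{eq:bnablauNl} gives the bound directly.
\item \emph{The term $b\cdot\nabla u$, high-low and high-high parts.} These are handled the same way as in \eqref{eq:bnablauNl}.
\item \emph{The term $b\cdot\nabla u$, low-high part with noise at temporal frequency $\ll\lambda^2$.} The output has modulation $\gtrsim\lambda^2$, which forces $u_\lambda$ itself to have modulation $\gtrsim\lambda^2$. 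Then
\[
\lambda^{s+b_k}\|C_{\gtrsim\lambda^2}u_\lambda\|_{L^2}\lesssim\lambda^{s-2+b_k}\|(\imu\partial_t+\Delta)u_\lambda\|_{L^2},
\]
which is controlled by $\|u\|_{\X^{s,0}}$ since $b_k\le1$.
\item \emph{The term $b\cdot\nabla u$, low-high part with noise at temporal frequency $\gtrsim\lambda^2$.} Here we use Lemma~\ref{lem:PropNoise} with $C^{\kappa}_t$, $\kappa<\frac12$ close to $\frac12$, exactly as in \eqref{eq:estimatetemporalBrown}. This yields a factor $\lambda^{-2\kappa}$, which absorbs $\lambda^{b_k}$ since $b_k\le1$ and $\kappa$ can be taken close to $\frac12$; the remaining factor $\lambda^{s}\|u_\lambda\|_{L^2_{t,x}}$ is bounded using the finiteness of $I$.
\item \emph{The term $\Re(v)u$.} This is the deterministic Schrödinger nonlinearity. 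I would estimate it in $L^2_{t,x}$ with weight $\lambda^{s-1+b_k}$ by mimicking the bilinear estimates of \cite{CHN23}, Theorem~4.1, for the Schrödinger nonlinearity. The inputs are $v\in W^{l_k,0,s-\frac12}$ and $u\in\X^{s,0}$, and in the high-low wave–Schrödinger case the wave regularity $l_k$ supplies the gain $b_k=\frac{l_k-s+1}{2}$. The difficulty is making sure this bound only uses the high-modulation output and the weaker $\X^{s,0}$ information on $u$.
\end{itemize}

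The second step of each iteration feeds $u\in\tilde S^{s,0,b_k}$ back into Lemma~\ref{le:Bilinearnablauv} with $a=0$. The relevant condition is $-b_k\le s-l'$, so the lemma gives $v\in W^{l_{k+1},0,s-\frac12}$ with
\[
l_{k+1}=\min\{l,\;s+b_k\}=\min\Bigl\{l,\;s+\tfrac{l_k-s+1}{2}\Bigr\}.
\]
The map $l_k\mapsto s+\frac{l_k-s+1}{2}$ has fixed point $s+1$. Since $l\le s+1$ by \eqref{IniReg-condition}, the sequence reaches $l$ after finitely many steps. The excluded pairs in Lemma~\ref{le:Bilinearnablauv} are avoided by perturbing the intermediate $l_k$ slightly if necessary. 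At the final stage $l_k=l$ and $b_k=b^*$, which gives $(u,v)\in\tilde S^{s,0,b}(I)\times W^{l,0,s-\frac12}(I)$. Continuity in $H^l_x$ then follows from the $L^\infty_tH^l_x$ control combined with continuity of the Duhamel integral.
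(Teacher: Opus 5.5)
Your argument has a genuine gap on the edge $l=s+1$ of \eqref{IniReg-condition}. This edge is admissible away from $(\frac d2,\frac d2+1)$, e.g.\ $(s,l)=(\frac d2+1,\frac d2+2)$, and there $b=b^*(s,l)=1$. So the statement requires $u\in\tilde S^{s,0,1}(I)$, and Lemma~\ref{le:Bilinearnablauv} with $a=0$ only places $v$ in $W^{l,0,s-\frac12}$ if $b\ge l-s=1$. Two things go wrong.

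First, your recursion gives $s+1-l_{k+1}=\frac12(s+1-l_k)$. Hence $l_k<s+1$ for every $k$ and the iteration never reaches $l$; finite termination holds only for $l<s+1$.

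Second, and this is the real obstruction, your estimate for the low--high part of $b\cdot\nabla u$ with noise at temporal frequency $\gtrsim\lambda^2$ cannot be closed at $b=1$. With the noise in $L^\infty_{t,x}$ via $C^\kappa_t$ and $u_\lambda$ in $L^2_{t,x}$, you are left with $\lambda^{b-2\kappa}\cdot\lambda^s\|u_\lambda\|_{L^2_{t,x}}$. Brownian paths only allow $2\kappa<1$, so at $b=1$ the factor $\lambda^{1-2\kappa}$ destroys $\ell^2$-summability. Your remark that $\lambda^{-2\kappa}$ ``absorbs $\lambda^{b_k}$ since $b_k\le1$'' is true only for $b_k<1$. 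Enlarging $b_k$ would repair termination but not this estimate; the condition $s-l_k\le 1-b_k$ in Lemma~\ref{lem:BilinearEstimates}~\ref{it:BilinearEstNonendpointbNzero} does allow more than $b^*(s,l_k)$.

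The missing ingredient is the local smoothing component of the $\X^{s,0}$-norm, which you have but never use. The paper splits the low--high term according to the modulation of $u_\lambda$. The part with $C_{>(\frac{\lambda}{2^8})^2}u_\lambda$ is treated as you do. For $C_{\le(\frac{\lambda}{2^8})^2}u_\lambda$, the high output modulation forces the noise to temporal frequency $\gtrsim\lambda^2$, and one estimates
\[
\lambda^{s+b}\,\|P^{(t)}_{\gtrsim\lambda^2}P_{\le\frac{\lambda}{2^8}}b\|_{L^{2,\infty}_{\vece_j}}\,\|P_{\lambda,\vece_j}C_{\le(\frac{\lambda}{2^8})^2}u_\lambda\|_{L^{\infty,2}_{\vece_j}} .
\]
Local smoothing supplies $\lambda^{-s-\frac12}$. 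For the noise, bound the $L^{2,\infty}_{\vece_j}$-norm by the geometric mean of the $L^{1,\infty}_{\vece_j}$- and $L^\infty_{t,x}$-norms; this is where the lateral summability in Hypothesis (H) enters. With only $C^{\frac14+\varepsilon}_t$ regularity, as in \eqref{eq:EstbwHoeldReg}, this supplies $\lambda^{-\frac12-2\varepsilon}$. The total $\lambda^{b-1-2\varepsilon}$ is summable for all $b\le1$.

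With this in hand no iteration is needed. The paper takes $\kappa=s-\frac12(1-b)\le s$ and gets $v\in W^{\kappa,0,s-\frac12}(I)$ from Lemma~\ref{le:Bilinearnablauv} with $b=0$, using only $u\in S^{s,0,0}$. Since $s-\kappa\le 1-b$, Lemma~\ref{lem:BilinearEstimates}~\ref{it:BilinearEstNonendpointbNzero} then bounds $\Re(v)u$ directly in $N^{s,0,b}$ with only $u\in S^{s,0,0}$ on the right. This also settles the difficulty you flagged for the $\Re(v)u$ term. For $l<s+1$ your scheme does go through.
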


	\begin{proof}
		We first write
		\begin{align}\label{eq:DuhamelSch}
			u(t)=e^{\imu t\Delta}u_0 + \mathcal{I}_0[\Re(v)u - b\cdot \nabla u -cu +\Re(\cT_t(W_2))u]
		\end{align}
		and
		\begin{align*}
			v(t)= e^{\imu t|\nabla|}v_0 - \cJ_0[|\nabla||u|^2].
		\end{align*}
		It suffices to prove that
		\begin{align}  \label{u-wtS}
			\|u\|_{\tilde{S}^{s,0,b}(I)}<\infty.
		\end{align}
		Then an application of Lemma~\ref{lem:LinearEstimateHalfWave} and Lemma~\ref{le:Bilinearnablauv} gives the regularity of the wave component
		\begin{align*}
			\|v\|_{W^{l,0,s-\frac12}(I)}\lesssim \|v_0\|_{H_x^l} + \|u\|_{\tilde{S}^{s,0,b}(I)}^2<\infty.
		\end{align*}
		
		Now we focus on the proof of \eqref{u-wtS}.
		Let $\kappa= l $ if $s>l$
		and $\kappa = s-\frac{1}{2} (1-b)$ if $s\leq l$.
		Applying Lemma~\ref{lem:LinFlowAdaptedSpaces}, \eqref{prop:RelationBetweenTwoS}, Lemma~\ref{lem:BilinearEstimates}~\ref{it:BilinearEstNonendpointbNzero}, and Lemma~\ref{le:Bilinearnablauv}, we obtain
		\begin{align}
			\|e^{\imu t\Delta}u_0\|_{\tilde{S}^{s,0,b}(I)}&\lesssim \|u_0\|_{H_x^s}<\infty,  \\
			\|\mathcal{I}_0[\Re(v)u]\|_{\tilde{S}^{s,0,b}(I)}&\lesssim \|\mathcal{I}_0[\Re(v)u]\|_{{S}^{s,0,b}(I)} \lesssim \|v\|_{W^{\kappa, 0, s-\frac12}(I)} \|u\|_{S^{s,0,0}(I)}\notag\\
			&\lesssim \|v_0\|_{H_x^{\kappa}} \|u\|_{S^{s,0,0}(I)}+ \|u\|^3_{S^{s,0,0}(I)}<\infty,
		\end{align}
		where we also used the embedding $\X^{s,0}\hookrightarrow S^{s,0,0}$ and $\|u\|_{\mathbb{X}^{s,0}(I)}<\infty$.
		
		Concerning the noise terms,
		by the definition of $\tS$, it suffices to consider the estimate for the high modulation part as the other terms can be bounded by the $S^{s,0,0}(I)$-norm as follows
		\begin{align*}
			&\|\cI_0[b\cdot \nabla u+cu - \Re(\cT_t(W_2))u]\|_{S^{s,0,0}(I)}= \|u-e^{\imu t \Delta}u_0 - \cI_0[\Re(v)u] \|_{S^{s,0,0}(I)} \\
			&\lesssim \|u\|_{\X^{s,0}(I)} + \|e^{\imu t\Delta}u_0\|_{\tilde{S}^{s,0,b}(I)}+ \|\cI_0[\Re(v)u]\|_{\tilde{S}^{s,0,b}(I)}<\infty.
		\end{align*}
		
		For that purpose,
		we extend $b$ and $u$ by $0$ from $\R_+$ respectively $I$ to $\R$ and decompose
		\begin{align*}
			b\cdot \nabla u =\sum_{\lambda } P_\lambda b P_{\leq \frac{\lambda}{2^8}}(\nabla u) + \sum_{\lambda} \sum_{\mu\sim\lambda} P_\lambda b P_\mu (\nabla u) +\sum_{\lambda} P_{\leq \frac{\lambda}{2^8}} b P_\lambda (\nabla u).
		\end{align*}
		We first bound the high-low interaction part by
		\begin{align*}
			\lambda^{s-1+b} \|C_{\geq \frac{\lambda^2}{2^3}} (P_\lambda b\cdot P_{\leq \frac{\lambda}{2^8}} \nabla u)\|_{L_{t,x}^2}
			&\lesssim \lambda^{s+b+1} \|P_\lambda b\cdot P_{\leq \frac{\lambda}{2^8}}  u\|_{L_t^2 L_x^{2_*}}  \\
			&\lesssim \lambda^{s+b+1} \|P_\lambda b\|_{L_t^2 L_x^d} \|P_{\leq \frac{\lambda}{2^8}}  u\|_{L^\infty_t L_x^2}    \\
			&\lesssim  \lambda^{s+b+\frac{d}{2}} \|P_\lambda b\|_{L_{t,x}^2 } \|u\|_{\mathbb{X}^{s,0}}.
		\end{align*}
		The usual adaption applies to the high-high interaction part.
		For the low-high interaction part, we further decompose by modulation to get
		\begin{align}\label{eq:PersistenceDecom}
			&\lambda^{s-1+b}\|C_{\geq \frac{\lambda^2}{2^3}} (P_{\leq \frac{\lambda}{2^8}} b\cdot  P_\lambda \nabla u)\|_{L_{t,x}^2}\notag\\
			&\lesssim \lambda^{s+b} \|C_{\geq \frac{\lambda^2}{2^3} } (P_{\leq \frac{\lambda}{2^8}} b\cdot  C_{>(\frac{\lambda}{2^8})^2}P_\lambda  u)\|_{L_{t,x}^2} + \lambda^{s+b} \|C_{\geq \frac{\lambda^2}{2^3} } (P^{(t)}_{\gtrsim \lambda^2}P_{\leq \frac{\lambda}{2^8}} b\cdot  C_{\leq(\frac{\lambda}{2^8})^2}P_\lambda  u)\|_{L_{t,x}^2} \notag \\
			&=: I^{LH}_{HM} + I^{LH}_{LM}.
		\end{align}		
		For the $I^{LH}_{HM}$ contribution, we estimate
		\begin{align*}
			I^{LH}_{HM}&\lesssim \lambda^{s+b} \|P_{\leq \frac{\lambda}{2^8}} b\|_{L_{t,x}^\infty} \|C_{>(\frac{\lambda}{2^8})^2}P_\lambda  u\|_{L_{t,x}^2}\\
			&\lesssim \lambda^{s+b-2} \|P_{\leq \frac{\lambda}{2^8}} b\|_{L_{t,x}^\infty}  \|  (\imu \partial_t +\Delta)P_\lambda u \|_{L_{t,x}^2}
			\lesssim \|b\|_{L_t^\infty H^{\frac{d}{2}+1}_x} \|u_\lambda\|_{\X^{s,0}_\lambda}.
		\end{align*}
		Moreover, for the $I^{LH}_{LM}$ contribution, we have
		\begin{align*}
			I^{LH}_{LM}&\lesssim \lambda^{s+b} \|P^{(t)}_{\gtrsim \lambda^2}P_{\leq \frac{\lambda}{2^8}} b\cdot  C_{\leq(\frac{\lambda}{2^8})^2}P_\lambda  u\|_{L_{t,x}^2}\\
			&\lesssim \sum_{j=1}^d \lambda^{s+b} \|P^{(t)}_{\gtrsim \lambda^2}P_{\leq \frac{\lambda}{2^8}} b\|_{L_{\vece_j}^{2,\infty}} \|C_{\leq(\frac{\lambda}{2^8})^2} P_{\lambda,\vece_j}  u_\lambda\|_{L_{\vece_j}^{\infty,2}}\\
			&\lesssim \sum_{j=1}^d\lambda^{s+\frac12} \|C_{\leq(\frac{\lambda}{2^8})^2}P_{\lambda,\vece_j}  u_\lambda\|_{L_{\vece_j}^{\infty,2}} \lambda^{b-\frac12} \|P^{(t)}_{\gtrsim \lambda^2}P_{\leq \frac{\lambda}{2^8}} b\|_{L_{\vece_j}^{1,\infty}}^{\frac12} \|P^{(t)}_{\gtrsim \lambda^2}P_{\leq \frac{\lambda}{2^8}} b\|_{L^\infty_{t,x}}^{\frac12} \\
			&\lesssim \|u\|_{\X^{s,0}} \lambda^{b - 1 -2\varepsilon}  \Big(\sum_{j=1}^d \sum_{k=1}^\infty \|\beta_k^{(1)}\|_{C_t^{\frac{1}{4}+\varepsilon}} \|\nabla \phi_k^{(1)}\|_{L_{\tilde{\vece}_j}^{1,\infty}} + \|b\|_{C_t^{\frac14+\varepsilon} L_x^\infty} \Big),
		\end{align*}
		where we applied~\eqref{eq:EstbwHoeldReg} with temporal regularity $\frac14 + \varepsilon$ for a small constant $\varepsilon>0$ in the last step.
		Hence, summing over $\lambda\in 2^{\N_0}$ and using that $b \leq 1$, the Hypothesis~\eqref{phik-condition}, and Lemma~\ref{lem:PropNoise}, we obtain
		\begin{align*}
			\sum_{\lambda} \lambda^{2(s-1+b)} \|(\imu \partial_t+\Delta) \cI_0(b\cdot \nabla u) \|_{L_{t,x}^2}<\infty.
		\end{align*}
		Analogous arguments also apply to the other two noise terms of \eqref{eq:DuhamelSch}. Therefore, combining the estimates above, we obtain \eqref{u-wtS} and finish the proof.
	\end{proof}

	\subsection{Improved properties of the Schr{\"o}dinger component} \label{Sec:UniformBoundContinuousExtension}
	
	In this subsection, we establish a uniform bound for the $\X^{s,a}$-norm of Schrödinger component. We show that if a solution to the Zakharov system is bounded in Strichartz spaces (up to its maximal existence time) and its Schrödinger component belongs to $\X^{s,0}$ on every compact subinterval, then the $\X^{s,0}$-norms are uniformly bounded over those subintervals. Consequently, the wave component can be continuously extended beyond the interval of maximal existence. This result, while technical, is essential in deriving the blow-up condition for the local well-posedness theorem.
	
	\begin{lemma}[Uniform bound and continuous extension]
		\label{le:unibouep}
		Suppose that $(s,l)$ satisfies \eqref{IniReg-condition} and $s=l+\frac12$. Let $\{\sigma_n\}\subseteq \R_+$ converge to $\tau^*\in (0,\infty)$ and $(u,v)\in C([0,\tau^*);H_x^s\times H_x^l)$ be a solution of system \eqref{eq:RanZakbc} such that $u\in \X^{s,0}([0,\sigma_n])$ for any $n\in \N $. If 
		\begin{align}\label{prop:UniBonCon}
			\|u\|_{L^\infty ([0,\tau^*); H_x^s)}+\|u\|_{L^2 ([0,\tau^*); W_x^{s, 2^*})}+\|v\|_{L^\infty ([0,\tau^*); H_x^l)}< \infty,
		\end{align}
		then we have the uniform bound
		\begin{align}\label{prop:unibouep}
			\sup_{n\in \N} \|u\|_{\mathbb{X}^{s,0}([0,\sigma_n])}<\infty.
		\end{align}
		Moreover, $v$ can be continuously extended to $C([0,\tau^*] , H_x^l)$.
	\end{lemma}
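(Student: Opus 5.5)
The plan is a \emph{divide-and-conquer over a partition of $[0,\tau^*)$} argument. I will exploit the fact that, under \eqref{prop:UniBonCon}, all the quantities that drive the nonlinear and noise estimates can be made small on short intervals.

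\textbf{Step 1: partition.} Since $\|u\|_{L^2([0,\tau^*);W_x^{s,2^*})}<\infty$ and the noise coefficients are continuous on $[0,\tau^*]$, I fix a small $\eta>0$ and choose finitely many consecutive intervals $I_1,\dots,I_K$ covering $[0,\tau^*)$, with $K$ independent of $n$. On each $I_k$ I require:
\begin{itemize}
\item the Strichartz norm $\|u\|_{L^2(I_k;W_x^{s,2^*})}$ is at most $\eta$;
\item the quantities on the right of \eqref{eq:Bilinbnablau}, \eqref{eq:Bilincu}, \eqref{eq:BilincTu} are at most $\eta$.
\end{itemize}
For the $b$-terms, the first two summands of \eqref{eq:Bilinbnablau} are not small by shrinking $|I|$. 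I will use the refined rescaling from \cite{HRSZ24}: on each $I_k$, re-center $W_1$ at the left endpoint, $W_1(t)-W_1(t_k)$. The $C^{1/4}$ seminorms are unchanged, but $\|b\|_{L^\infty}$ on $I_k$ becomes $O(|I_k|^{1/4})$. The tail $\sum_{k\ge n_l}$ is handled by \eqref{eq:limphi1kbeta1k}. The head (finitely many $k$) is handled by choosing $I_k$ short.

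\textbf{Step 2: estimate on each piece.} On $I_k\cap[0,\sigma_n]$ I write the Duhamel formula \eqref{eq:DuhamelSch}. Lemma~\ref{lem:LinEstimates} gives
\[
\|u\|_{\X^{s,0}(I_k\cap[0,\sigma_n])}\lesssim \|u(t_k)\|_{H^s_x}+\|\Re(v)u\|_{\G^{s,0}}+\eta\|u\|_{\X^{s,0}}.
\]
For the Zakharov term, I want a bilinear bound of the form
\[
\|\Re(v)u\|_{\G^{s,0}}\lesssim \|v\|_{W^{l,0,s-\frac12}}\Big(\|u\|_{L^2W^{s,2^*}}+\|u\|_{L^\infty H^s}\Big)^{\theta}\|u\|_{\X^{s,0}}^{1-\theta},
\]
as in Lemma~\ref{lem:BilinearEstimates}. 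Alternatively, I will split so that at least one factor of $u$ is measured in the small Strichartz norm. The wave norm comes from Lemma~\ref{lem:LinearEstimateHalfWave} and Lemma~\ref{le:Bilinearnablauv} with $a=b=0$, where $s=l+\frac12$ gives $\beta=s-\frac12=l$:
\[
\|v\|_{W^{l,0,l}(I_k)}\lesssim \|v\|_{L^\infty H^l}+\|u\|_{\X^{s,0}}^2 .
\]
Here the term $\|(\imu\partial_t+|\nabla|)v\|_{L^2H^{l-1}}$ is controlled by $\|\nabla|u|^2\|_{L^2H^{l-1}}$. This is bounded directly via Hölder by $\|u\|_{L^\infty H^s}\|u\|_{L^2W^{s,2^*}}$, a quantity that is small and controlled only by \eqref{prop:UniBonCon}. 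Writing $A_k=\|u\|_{\X^{s,0}(I_k\cap[0,\sigma_n])}$, this yields
\[
A_k\le C M+C\eta A_k+C\eta A_k^2
\]
with $M$ the bound in \eqref{prop:UniBonCon}. The map $t\mapsto$ norm is continuous (as in Lemma~\ref{it:ContSumY}) and vanishes as the interval shrinks. A continuity argument then gives $A_k\le 2CM$, uniformly in $n$.

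\textbf{Step 3: gluing and extension.} Lemma~\ref{lem:DecompX}, applied $K$ times with overlaps of fixed length, yields \eqref{prop:unibouep}. For the extension, Lemma~\ref{le:Bilinearnablauv} and the uniform bound give $|\nabla||u|^2\in L^2([0,\tau^*);H^{l-1}_x)$ together with the $L^1$ low-frequency piece. Hence the Duhamel integral $\int_0^t e^{\imu(t-t')|\nabla|}|\nabla||u|^2\,dt'$ is Cauchy in $H^l_x$ as $t\to\tau^*$, via the energy inequality (Lemma~2.6 in \cite{CHN23}) applied on $[\sigma_n,\sigma_m]$ with tails tending to zero. Therefore $v$ extends continuously to $[0,\tau^*]$.

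\textbf{Main obstacle.} I expect the hardest step to be the bilinear term $\Re(v)u$ in Step 2, namely obtaining smallness from the Strichartz norm rather than the full $\X^{s,0}$ norm. The critical case is the low-modulation high-high / high-low wave--Schr\"odinger interaction. My first attempt is a dyadic splitting: the factor $u$ at low modulation goes in $L^2W^{s,2^*}$, and the high-modulation part uses the $L^2_{t,x}$ wave control. If that fails, I would interpolate so that a power of the small norm appears.
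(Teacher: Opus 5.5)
The step that fails as written is the continuous extension of $v$. Applying Lemma~\ref{le:Bilinearnablauv} on $[\sigma_n,\sigma_m]$ bounds $\|\cJ_0[|\nabla||u|^2]\|_{W^{l,0,l}([\sigma_n,\sigma_m])}$ by $\|u\|_{\tilde S^{s,0,0}([\sigma_n,\sigma_m])}^2$. That restriction norm is bounded below by $\|u\|_{L^\infty([\sigma_n,\sigma_m];H^s_x)}$, which has no reason to tend to zero. Nor can you read the energy inequality as a sum of tails of time integrals. Its low-temporal-frequency piece (the analogue of \eqref{prop:temesti4}) is controlled only through the modulation structure of an extension of $u$, so ``tails tending to zero'' does not follow from that lemma. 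What you need is a bilinear bound carrying a factor that vanishes on short intervals near $\tau^*$. The paper uses Lemma~\ref{lem:BilinearEstimates}~\ref{it:BilinearEstEndpoint2}, valid for $s=l+\frac12$ by Remark~\ref{re:extendsl}. It gives the bound $\lesssim \|u\|_{L^2(t_1,t_2;W^{s,2^*}_x)}\|u\|_{S_w^{s,0,0}([t_1,t_2])}$, which tends to $0$ by \eqref{prop:UniBonCon} and the uniform bound.

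Step~2 also misses the idea that makes the paper's proof short, and it sits exactly at your ``main obstacle''. Your displayed estimate puts $\|u\|_{L^\infty H^s_x}$ inside the supposedly small factor, so it gives no smallness. Bounding $v$ through Lemma~\ref{le:Bilinearnablauv} by $M+A_k^2$ makes the inequality superlinear. That forces a continuity argument for $t\mapsto\|u\|_{\X^{s,0}([t_k,t])}$, which is not established here; moreover this norm does not vanish as the interval shrinks, being bounded below by $\|u(t_k)\|_{H^s_x}$.

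The paper instead uses Lemma~\ref{lem:BilinearEstimates}~\ref{it:BilinearEstEndpoint1}: $\|\Re(v)u\|_{N^{s,0,0}}\lesssim \|v\|_{W_w^{l,0,l}}\|u\|_{L^2W^{s,2^*}_x}^{1/2}\|u\|_{S_w^{s,0,0}}^{1/2}$. It then observes that \emph{every} norm on the right is a priori bounded by \eqref{prop:UniBonCon}. The $S_w^{s,0,0}$ norm is bounded because $(\imu\partial_t+\Delta)u\in L^2_tH^{s-1}_x$, by product estimates for $\Re(v)u$ and for the noise terms. The wave norm is bounded by your own H\"older observation for $(\imu\partial_t+|\nabla|)v$, or, as in the paper, via \ref{it:BilinearEstEndpoint2}.

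Hence $\Re(v)u$ is merely a bounded forcing in $\G^{s,0}$. No smallness, no partition of $[0,\tau^*)$ and no bootstrap are needed; the hypothesis $u\in\X^{s,0}([0,\sigma_N])$ already covers everything away from $\tau^*$. Only the noise contribution is absorbed, on $[\sigma_N,t)$ after recentring at some $\sigma_N$ close to $\tau^*$, using that $\|u\|_{\X^{s,0}([\sigma_N,t))}<\infty$ for each $t<\tau^*$. Lemma~\ref{lem:DecompX} then yields \eqref{prop:unibouep}.
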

	
	\begin{remark}
		In the 4D energy-critical case, a continuous extension of the wave component at the regularity $(s,l) = (\frac12,0)$ was previously established in Lemma~B.1 of~\cite{HRSZ24}, but it relied on a uniform a priori bound on the $H_x^1$-norm of the Schr{\"o}dinger component. This approach is inapplicable in the current setting since 
			the endpoint regularity $s=\frac{d-3}{2}$ is strictly less than one when $d=4$. Lemma~\ref{le:unibouep} overcomes this by providing the required uniform bound and the resulting continuous extension without a loss of regularity.
		
		To overcome the reduced regularity assumption, we leverage the local smoothing component of the $\X^{s,0}$-norm. The rescaled Brownian motion $b_{\sigma_n}$ provides the necessary smallness to control this component for sufficiently large $n$. These results -- both the uniform bound and the continuous extension without loss of regularity -- are crucial for establishing the blow-up condition at the endpoint regularity, see Subsection~\ref{Subsec-Blowup}.
	\end{remark}

	\begin{proof} [Proof of Lemma \ref{le:unibouep}]
		Let $b_{\sigma_n}, c_{\sigma_n}$ and $\cT_{\sigma_n+\cdot,\sigma_n}(W_2)$
		be the rescaled noise terms from ~\eqref{eq:Defbsigmacsigma} and~\eqref{eq:DefTsigmaW2}.
		Since $\sigma_n\to \tau^*$ as $n\to \infty$,  the H\"older continuity of the noise in Lemma \ref{lem:PropNoise} yields that for any $\varepsilon > 0$, there exists $N\geq 1$ such that for any $n\geq N$,
		\begin{align}
			\sup_{n\geq N}\|b_{\sigma_n}\|_{L^{1,\infty}_{\vece_j}([0,\tau^*-\sigma_n))}+ \sup_{n\geq N} (\|b_{\sigma_n}\|_{L^\infty ([0,\tau^*-\sigma_n);H_x^{\frac{d}{2}+1+(s-1)_+})} \leq \varepsilon\label{prop:holdernoise},
		\end{align}	
		and
		\begin{align}
			\|c_{\sigma_n}\|_{L^\infty ([0,\tau^*-\sigma_n);H_x^{\frac{d}{2}+s-1})}
			+\|\cT_{\sigma_n+\cdot, \sigma_n}(W_2)\|_{L^\infty ([0,\tau^*-\sigma_n);H_x^{\frac{d}{2}+s-1})}  \leq \varepsilon .\label{prop:holdernoiseb2}
		\end{align}
		In the sequel we consider $n\geq N$ and assume that $\tau^*-\sigma_n<1$ without loss of generality.

		We rewrite the rescaled system \eqref{eq:RanZakbsigmacsigma} as
		\begin{align}  \label{usign-Duhamel}
			u_{\sigma_n}(t)= e^{\imu t \Delta}u(\sigma_n)+\mathcal{I}_0[\Re(v_{\sigma_n})u_{\sigma_n}-b_{\sigma_n}\cdot \nabla u_{\sigma_n}-c_{\sigma_n}u_{\sigma_n}+\Re(\cT_{\sigma_n+\cdot, \sigma_n}(W_2))u_{\sigma_n}],
		\end{align}
		where $u_{\sigma_n}(t) = e^{W_1(\sigma_n)}u(\sigma_n+t)$ is the rescaled solution given by \eqref{eq:u-sigma-v-sigma-rescal.1}.
		We first show that
		\begin{align} \label{u-Sw-finit}
			\sup_{n\geq N}\|u_{\sigma_n}\|_{S_w^{s,0,0}([0, \tau^*-\sigma_n))}<\infty.
		\end{align}
		
		To this end, by the definition of the weak norm $S_w^{s,0,0}$ in~\eqref{eq:WeakNormS},
		\begin{align*}
			\|u_{\sigma_n}\|_{S_w^{s,0,0}([0,\tau^*-\sigma_n))}
			&\lesssim \|u_{\sigma_n}\|_{L^\infty([0,\tau^*-\sigma_n); H_x^s)} + \|u_{\sigma_n}\|_{L^2 (0,\tau^*-\sigma_n; W_x^{s,2^*})}\\
			&+ \|\Re(v_{\sigma_n})u_{\sigma_n}-b_{\sigma_n}\cdot \nabla u_{\sigma_n}-c_{\sigma_n}u_{\sigma_n}+\Re(\cT_{\sigma_n+\cdot, \sigma_n}(W_2))u_{\sigma_n} \|_{L^2 (0,\tau^*-\sigma_n; H_x^{s-1})}.
		\end{align*}
		It is clear that
		\begin{align}
				\sup_{n\geq N}(\|u_{\sigma_n}\|_{L^\infty ([0,\tau^*-\sigma_n); H_x^s)}+ \|u_{\sigma_n}\|_{L^2 (0,\tau^*-\sigma_n; W_x^{s,2^*})})
				&\lesssim \|u\|_{L^\infty ([0,\tau^*); H_x^s)}+ \|u\|_{L^2 (0,\tau^*; W_x^{s,2^*})}<\infty.\label{es:usigman1}
		\end{align}
		Moreover, by classical product estimates and the Sobolev embedding $W_x^{l,\frac{2d}{d+1}} \hookrightarrow H_x^{s-1}$, we estimate
		\begin{align}\label{es:usigman4}
			\|\Re(v_{\sigma_n})u_{\sigma_n}\|_{L^2 (0,\tau^*-\sigma_n; H_x^{s-1} )}\notag
			&\lesssim   \|u_{\sigma_n}\|_{L^2 (0,\tau^*-\sigma_n; W_x^{l,\frac{2d}{d-3}} )}\|v_{\sigma_n}\|_{L^\infty ([0,\tau^*-\sigma_n); L_x^{\frac{d}{2}} )}\notag\\
			&\qquad + \|u_{\sigma_n}\|_{L^2 (0,\tau^*-\sigma_n; L_x^{2d})} \|v_{\sigma_n}\|_{L^\infty ([0,\tau^*-\sigma_n); H_x^l )}  \nonumber\\
			&\lesssim \|v\|_{L^\infty ([0,\tau^*); H_x^l )} \|u\|_{L^2 (0,\tau^*; W_x^{s, 2^*} )}<\infty.
		\end{align}
		Hence, $\Re(v_{\sigma_n})u_{\sigma_n}$ is uniformly bounded in $L_t^2 H_x^{s-1}$.	
		
		In order to estimate the delicate derivative term caused by the noise,
		we decompose
		\begin{align*}
			b_{\sigma_n}\cdot \nabla u_{\sigma_n} = \sum_\lambda P_\lambda b_{\sigma_n} P_{\leq \frac{\lambda}{2^8}} (\nabla u_{\sigma_n}) + \sum_{\lambda_1\sim\lambda_2} P_{\lambda_1}b_{\sigma_n} P_{\lambda_2} (\nabla u_{\sigma_n}) + \sum_\lambda  P_{\leq \frac{\lambda}{2^8}} b_{\sigma_n} P_\lambda  (\nabla u_{\sigma_n}).
		\end{align*} 	
		For the high-low part,
		\begin{align*}
			&\lambda^{s-1}\|P_{\lambda}b_{\sigma_n} P_{\leq \frac{\lambda}{2^8}} (\nabla u_{\sigma_n})\|_{L^2(0,\tau^*-\sigma_n; L_x^2)}\\
			&\lesssim \lambda^{s-1} \|P_{\lambda}b_{\sigma_n}\|_{L^2 (0,\tau^*-\sigma_n; L_x^\infty)} \| P_{\leq \frac{\lambda}{2^8}} (\nabla u_{\sigma_n})\|_{L^\infty ([0,\tau^*-\sigma_n); L_x^2)}\\
			&\lesssim  \lambda^{\frac{d}{2}+(s-1)_+} \|P_{\lambda}b_{\sigma_n}\|_{L^2(0,\tau^*-\sigma_n; L_x^2)} \|u_{\sigma_n}\|_{L^\infty  ([0,\tau^*-\sigma_n); H_x^s)}.
		\end{align*}
		The usual adaption to the high-high part and summing over $\lambda\in 2^{\N_0}$ implies
		\begin{align}
			\label{eq:EstbsigmanHLHH}
			\|(b_{\sigma_n}\cdot \nabla u_{\sigma_n})_{HL+HH} \|_{L^2 (0,\tau^*-\sigma_n; H_x^{s-1})}
			\lesssim& \|b_{\sigma_n}\|_{L^2 (0,\tau^*-\sigma_n; H_x^{\frac{d}{2}+(s-1)_+})} \|u_{\sigma_n}\|_{L^\infty ([0,\tau^*-\sigma_n); H_x^s)}\nonumber\\
			\lesssim& \varepsilon \|u\|_{L^\infty ([0,\tau^*); H_x^s )},
		\end{align}
		where we also used H\"older's inequality and $\tau^*-\sigma_n<1$.
		
		For the low-high part, for $\mu \sim \lambda$,  we have
		\begin{align*}
			&\mu^{s-1} \|P_{\leq \frac{\mu}{2^8}} b_{\sigma_n} P_\mu(\nabla u_{\sigma_n})\|_{L_{t,x}^2}\lesssim \|b_{\sigma_n}\|_{L_{t,x}^\infty} \mu^{s}\|P_\mu u_{\sigma_n}\|_{L_{t,x}^2}
			\lesssim \|b_{\sigma_n}\|_{L_{t}^\infty H^{\frac{d+2}{2}}_x} \|P_\mu u_{\sigma_n}\|_{L_{t}^2 H_x^s}.
		\end{align*}
		Summing up over $\lambda$ and combining with the previous estimate, we thus obtain
		\begin{align}\label{es:usigma22}
			\|b_{\sigma_n}\cdot \nabla u_{\sigma_n} \|_{L^2 (0,\tau^*-\sigma_n; H_x^{s-1})}
			\lesssim \varepsilon\|u\|_{L^\infty ([0,\tau^*); H_x^s )}.
		\end{align}
		
		The other noise terms are easily controlled by
		\begin{align}\label{es:usigman3}
			&\| c_{\sigma_n}u_{\sigma_n} - \Re(\cT_{\sigma_n+\cdot, \sigma_n}(W_2))u_{\sigma_n} \|_{L^2 (0,\tau^*-\sigma_n; H_x^{s-1})}\nonumber\\
			&\lesssim (\tau^*-\sigma_n)^{\frac12}
			[\|c_{\sigma_n}\|_{L^\infty ([0,\tau^*-\sigma_n); H_x^{\frac{d}{2}+s-1})}
			+ \|\cT_{\sigma_n+\cdot, \sigma_n}(W_2)\|_{L^\infty ([0,\tau^*-\sigma_n); H_x^{\frac{d}{2}+s-1})}] \|u_{\sigma_n}\|_{L^\infty ([0,\tau^*-\sigma_n); H_x^s)}\notag \\
			&\lesssim \varepsilon\|u\|_{L^\infty ([0,\tau^*); H_x^s )}.
		\end{align}
		Thus, combining \eqref{es:usigman1} to \eqref{es:usigman3}, we obtain~\eqref{u-Sw-finit}, as claimed.
		
		\vspace*{4pt plus 2pt minus 2pt}%
		Next, we show the uniform bound
		\begin{align}\label{prop:boundedvu}
			\sup_{n\geq N} \|\Re(v_{\sigma_n})u_{\sigma_n}\|_{N^{s,0,0}([0,\tau^*-\sigma_n))}<\infty.
		\end{align}
		We write
		\begin{align*}
			v_{\sigma_n}(t)=e^{\imu t |\nabla|}v(\sigma_n)-\mathcal{J}_0[|\nabla||u_{\sigma_n}|^2].
		\end{align*}
		Note that by Lemma~\ref{lem:BilinearEstimates}~\ref{it:BilinearEstEndpoint2} and Remark~\ref{re:extendsl} in the Appendix, we have
		\begin{align}\label{prop:boundedv}
			\|v_{\sigma_n}\|_{W^{l,0,l}([0,\tau^*-\sigma_n))}
			& \lesssim\|v({\sigma_n})\|_{H_x^l} + \|u_{\sigma_n}\|_{L^2 (0,\tau^*-\sigma_n; W_x^{s,2^*})}\|u_{\sigma_n}\|_{S_w^{s,0,0}([0,\tau^*-\sigma_n))}\nonumber\\
			&\lesssim \|v\|_{L^\infty ([0,\tau^*); H_x^l )}+ \|u\|_{L^2 ([0,\tau^*); W_x^{s,2^*} )}\sup_{n\geq N}\|u_{\sigma_n}\|_{S_w^{s,0,0}([0,\tau^*-\sigma_n))}.
		\end{align}
		Moreover, by Lemma~\ref{lem:BilinearEstimates}~\ref{it:BilinearEstEndpoint1}, we have
		\begin{align}\label{prop:boundedvu1}
			\|\Re(v_{\sigma_n})u_{\sigma_n}\|_{N^{s,0,0}([0,\tau^*-\sigma_n))}\notag
			&\lesssim \|v_{\sigma_n}\|_{W^{l,0,l}([0,\tau^*-\sigma_n))} \|u_{\sigma_n}\|^{\frac12}_{L^2 (0,\tau^*-\sigma_n; W_x^{s,2^*})}\|u_{\sigma_n}\|^{\frac12}_{S_w^{s,0,0}([0,\tau^*-\sigma_n))}\notag\\
			&\lesssim \|v_{\sigma_n}\|_{W^{l,0,l}([0,\tau^*-\sigma_n))}\|u\|_{L^2 (0,\tau^*;  W_x^{s,2^*} )}^{\frac12}\sup_{n\geq N}\|u_{\sigma_n}\|^{\frac12}_{S_w^{s,0,0}([0,\tau^*-\sigma_n))}.
		\end{align}
		Thus, \eqref{prop:boundedvu} follows from \eqref{u-Sw-finit}, \eqref{prop:boundedv} and \eqref{prop:boundedvu1}.
		
		\vspace*{4pt plus 2pt minus 2pt}%
		Now we turn to the proof of the uniform bound~\eqref{prop:unibouep}. Let $\sigma_n<t<\tau^*$. The Bernstein inequality implies $\|F_\lambda\|_{N_\lambda^{s,0,0}} \lesssim \lambda^{s} \|F_\lambda\|_{L_t^2 L_x^{2_*}}$. By the definition of $\G^{s,0}$ and \eqref{prop:holdernoiseb2}, arguing as in~\eqref{eq:EstbsigmanHLHH} and~\eqref{es:usigman3}, we obtain
		\begin{align}\label{prop:GnormNoise}
			&\| (b_{\sigma_n}\cdot \nabla u_{\sigma_n})_{HL+HH}+c_{\sigma_n}u_{\sigma_n}-\Re(\cT_{\sigma_n+\cdot, \sigma_n}(W_2))u_{\sigma_n}\|_{\G^{s,0}([0,t-\sigma_n))}\nonumber\\
			&\lesssim (\tau^*-\sigma_n)^{\frac12} \big(\|b_{\sigma_n}\|_{L^\infty(0,t-\sigma_n;  H_x^{\frac{d}{2}+1+(s-1)_+})}
			+\|c_{\sigma_n}\|_{L^\infty(0,t-\sigma_n; H_x^{\frac{d}{2}+s-1})}
			\notag \\
			&\qquad
			+ \|\cT_{\sigma_n+\cdot, \sigma_n}(W_2)\|_{L^\infty(0,t-\sigma_n; H_x^{\frac{d}{2}+s-1})} \big) \|u_{\sigma_n}\|_{L^\infty ([0,\tau^*); H_x^s)} \nonumber\\
			&\lesssim \varepsilon \|u\|_{L^\infty ([0,\tau^*); H_x^s)}<\infty.
		\end{align}
		
		Regarding the $LH$-part of $(b_{\sigma_n}\cdot \nabla u_{\sigma_n})$, we decompose by modulation into
		\begin{align*}
			P_{\leq \frac{\lambda}{2^8}}b_{\sigma_n} P_\lambda(\nabla u_{\sigma_n}) = P_{\leq \frac{\lambda}{2^8}}b_{\sigma_n} C_{\leq (\frac{\lambda}{2^8})^2}P_\lambda(\nabla u_{\sigma_n}) + P_{\leq \frac{\lambda}{2^8}}b_{\sigma_n} C_{> (\frac{\lambda}{2^8})^2}P_\lambda(\nabla u_{\sigma_n}):= I_{LM} + I_{HM}.
		\end{align*}
		The $\G^{s,0}_\lambda$ norm of $I_{HM}$ is bounded by its $N^{s,0,0}_\lambda$ norm and hence is controlled by
		\begin{align}
			\lambda^s \| P_{\leq \frac{\lambda}{2^8}}b_{\sigma_n} C_{> (\frac{\lambda}{2^8})^2} P_{\lambda}(\nabla u_{\sigma_n})\|_{L_t^2 L_x^{2_*}}\notag
			&\lesssim  \|b_{\sigma_n}\|_{L_t^\infty L_x^d} \, \lambda^{s-1}\|(\imu \partial_t +\Delta) C_{> (\frac{\lambda}{2^8})^2} P_{\lambda}u_{\sigma_n}\|_{L_{t,x}^2}\notag\\
			&\lesssim  \|b_{\sigma_n}\|_{L_t^\infty H_x^{\frac{d}{2}-1}} \|P_{\lambda}u_{\sigma_n}\|_{\X_{\lambda}^{s,0}}\lesssim \varepsilon \|P_{\lambda}u_{\sigma_n}\|_{\X_{\lambda}^{s,0}}.\label{prop:NoiseHMestimate}
		\end{align}
		For the $I_{LM}$ part, we control its $\G^{s,0}_\lambda$-norm via the local smoothing component to infer
		\begin{align}
			&\lambda^{s-\frac12} \| P_{\leq \frac{\lambda}{2^8}}b_{\sigma_n} C_{\leq (\frac{\lambda}{2^8})^2}P_\lambda(\nabla u_{\sigma_n}) \|_{L_{\vece_j}^{1,2}}\notag\\
			&\lesssim \lambda^{s+\frac12} \||P_{\leq \frac{\lambda}{2^8}}b_{\sigma_n}|^{\frac12}\|_{L_{\vece_j}^{2,\infty}} \||P_{\leq \frac{\lambda}{2^8}}b_{\sigma_n}|^{\frac12} C_{\leq (\frac{\lambda}{2^8})^2} P_{\lambda}u_{\sigma_n}\|_{L_{t,x}^2}\notag\\
			&\lesssim \|b_{\sigma_n}\|^{\frac12}_{L_{\vece_j}^{1,\infty}} \lambda^{s+\frac12} \sum_{j=1}^d \||P_{\leq \frac{\lambda}{2^8}}b_{\sigma_n}|^{\frac12}\|_{L_{\vece_j}^{2,\infty}} \|P_{\lambda,\vece_j} C_{\leq (\frac{\lambda}{2^8})^2} P_{\lambda}u_{\sigma_n}\|_{L_{\vece_j}^{\infty,2}}\notag\\
			&\lesssim \sum_{j=1}^d \|b_{\sigma_n}\|_{L_{\vece_j}^{1,\infty}}  \sum_{j=1}^d \lambda^{s+\frac12} \| P_{\lambda,\vece_j} C_{\leq (\frac{\lambda}{2^8})^2} P_{\lambda}u_{\sigma_n}\|_{L_{\vece_j}^{\infty,2}}\lesssim \varepsilon  \|P_{\lambda} u_{\sigma_n}\|_{\X_{\lambda}^{s,0}}. \label{prop:NoiseLMestimate}
		\end{align}
		
		Combining the estimates~\eqref{prop:NoiseHMestimate} and~\eqref{prop:NoiseLMestimate}, the Duhamel expression~\eqref{usign-Duhamel} with Lemma~\ref{lem:LinEstimates}, we arrive at
		\begin{align*}
			\|P_{\lambda} u_{\sigma_n}\|_{\X_\lambda^{s,0}([0,t-\sigma_n])} 
			&\lesssim \lambda^{s}\|P_\lambda (u(\sigma_n))\|_{L_x^2} + \|P_{\lambda}(\Re(v_{\sigma_n})u_{\sigma_n})\|_{N_\lambda^{s,0,0}([0,\tau^*-\sigma_n))} + \sum_{\mu\sim\lambda} \varepsilon \|P_\mu u_{\sigma_n}\|_{\X_\mu^{s,0}([0,t-\sigma_n))} \nonumber\\
			&\qquad +\|P_{\lambda}[ (b_{\sigma_n}\cdot \nabla u_{\sigma_n})_{HL+HH}+c_{\sigma_n}u_{\sigma_n}-\Re(\cT_{\sigma_n+\cdot, \sigma_n}(W_2))u_{\sigma_n}]\|_{\G_\lambda^{s,0}([0,\tau^*-\sigma_n))} ,
		\end{align*}
		which, by summing over $\lambda\in 2^{\N_0}$, yields that
		\begin{align}
			\|u_{\sigma_n}\|_{\mathbb{X}^{s,0}([0,t-\sigma_n])}\nonumber
			&\lesssim \| u\|_{L^\infty ([0,\tau^*);H_x^s)} + \varepsilon \|u_{\sigma_n}\|_{\mathbb{X}^{s,0}([0,t-\sigma_n))}+ \|\Re(v_{\sigma_n})u_{\sigma_n}\|_{N^{s,0,0}([0,\tau^*-\sigma_n))},
		\end{align}
		where we also used~\eqref{prop:GnormNoise}. We point out that the implicit constant is independent of $n$ and $\varepsilon$.
		Since $t<\tau^*$ we can take $m$ large enough such that $[0,t) \subseteq [0,\sigma_m]$ and $u\in \mathbb{X}^{s,0}([0,\sigma_m])$, thus, $u_{\sigma_n} \in \mathbb{X}^{s,0}([0,t-\sigma_n))$.
		Hence, taking $\varepsilon$ small enough, we infer from the above estimate that
		\begin{align*}
			\|u_{\sigma_n}\|_{\mathbb{X}^{s,0}([0,t-\sigma_n])}
			&\lesssim \| u\|_{L^\infty ([0,\tau^*);H_x^s)} + \sup_{n\geq N}\|\Re(v_{\sigma_n})u_{\sigma_n}\|_{N^{s,0,0}([0,\tau^*-\sigma_n))}.
		\end{align*}
		Combining Lemma~\ref{lem:ProductNoiseInX} and Lemma~\ref{lem:DecompX}, we conclude that
		\begin{align*}
			\sup_{n\geq N}\sup_{\sigma_{n}<t<\tau^*}\|u\|_{\mathbb{X}^{s,0}([\sigma_{n},t))}<\infty,
		\end{align*}
		which then immediately yields a uniform $\mathbb{X}^{s,0}$-bound on $[0,\tau^*]$ and hence \eqref{prop:unibouep} follows.

		Finally, we turn to the continuous extension of $v$. For any $t_1<t_2<\tau^*$, we have
		\begin{align*}
			&\|e^{-\imu t_1|\nabla|}v(t_1)-e^{-\imu t_2|\nabla|}v(t_2)\|_{H_x^{l}}=\Big\| \int_{t_1}^{t_2} e^{-\imu s|\nabla|} (|\nabla||u|^2)(s)d s  \Big\|_{W^{l,0,l}([t_1,t_2])}\\
			&\lesssim \|u\|_{L^2 (t_1,t_2; W_x^{s,2^*})} \|u\|_{S_w^{s,0,0}([t_1,t_2])}\lesssim \|u\|_{L^2 (t_1,t_2; W_x^{s,2^*})} \sup_n\|u\|_{\mathbb{X}^{s,0}([0,\sigma_n])}\rightarrow 0 \quad \text{as}\quad  t_1,t_2\rightarrow \tau^*,
		\end{align*}
		where we exploited the uniform bound \eqref{prop:UniBonCon} and \eqref{prop:unibouep}.
		Hence, $\{e^{-\imu t |\nabla|}v(t)\}$ is a Cauchy sequence in $H_x^l$,
		and converges to some $v_* \in H^l_x$ as $t\to \tau^*$.
		Letting
		\begin{align*}
			v^\prime(t):=\mathbbm{1}_{(-\infty,0)}(t)e^{\imu t|\nabla|}v_0 + \mathbbm{1}_{[0,\tau^*)}(t)v(t) +\mathbbm{1}_{(\tau^*,\infty)}(t)e^{\imu (t-\tau^*)|\nabla|}v_*
		\end{align*}
		we see that $v^\prime\in C(\R, H_x^l)$ and $v$ can be continuously extended to $[0,\tau^*]$.
		The proof is complete.
	\end{proof}

	\section{LWP and blow-up alternative: non-endpoint case and persistence of regularity}
	\label{Sec-LWP}
	
	In this section we prove Theorem \ref{Thm-LWPNEP}, i.e. local well-posedness and the blow-up alternative, in the non-endpoint case $(s,l)\neq (\frac{d-3}{2},\frac{d-4}{2})$.
	
	First we establish the well-posedness of the linear Schr\"odinger equation with potentials in $W^{l,a,\beta}$, where the potential is a perturbation of a free wave. With the estimates in Section~\ref{sec:FunctFramework} and Section~\ref{Sec-Esti-Nonl-Noise},
		the argument follows along the same lines as the proof of~\cite[Theorem~7.1]{CHN23}.
	
	\begin{lemma}[Linear Schr\"odinger estimate perturbed by free wave I]
		\label{lem:LinSchrPotSmallTime}
		Let $0\leq s\leq l+2$ and $l\geq \frac{d}{2}-2$ with $(s,l)\neq (\frac{d}{2},\frac{d}{2}-2)$, $\beta=s-\frac12$ and $a=a^*(s,l)$
		given by \eqref{con:ab}.
		Then, there exist $\varepsilon$ and $C > 0$ such that for any interval $I \subset \R$, $t_0 \in I$, $f \in H_x^s$,  $F\in L_t^2 L_x^{2_*}$ with $\cI_0[F] \in \X^{s,a}(I)$, and $V \in W^{l,a,\beta}(I)$ satisfying
		\begin{align*}
			\|V\|_{W^{l,a,\beta}(I) + L^2 (I ; W_x^{s,d})} < \varepsilon,
		\end{align*}
		the Cauchy problem
		\begin{align}
			\label{eq:SchroedingerPotential}
			(\imu \partial_t  + \Delta - \Re(V))u = F, \qquad u(t_0) = f
		\end{align}
		has a unique solution $u \in C(I; H_x^s) \cap L^2 (I ; L_x^{2^*})$, which satisfies
		\begin{align*}
			\|u\|_{\XS(I)} \leq C(\|f\|_{H_x^s} + \|\cI_0[F]\|_{\X^{s,a}(I)}),
		\end{align*}
		and
		\begin{align*}
			\|u\|_{L^2 (I ; L_x^{2^*})} \leq C( \|f\|_{L_x^2} + \|F\|_{L^2 (I ; L_x^{2_*})}).
		\end{align*}
	\end{lemma}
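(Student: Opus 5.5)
We solve \eqref{eq:SchroedingerPotential} by a contraction argument built on its Duhamel formulation, mirroring \cite[Theorem~7.1]{CHN23}. A solution satisfies
\begin{align*}
u = e^{\imu (t-t_0)\Delta} f + \cI_0[F] + \cI_0[\Re(V) u],
\end{align*}
so we consider the map $\Phi(u) := e^{\imu (t-t_0)\Delta} f + \cI_0[F] + \cI_0[\Re(V)u]$ on $\XS(I)$. Lemma~\ref{lem:LinEstimates} gives $\|e^{\imu(t-t_0)\Delta}f\|_{\XS(I)} \lesssim \|f\|_{H^s_x}$, and $\cI_0[F]\in \X^{s,a}(I)$ by hypothesis. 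It therefore suffices to show the bilinear potential bound
\begin{align*}
\|\cI_0[\Re(V) u]\|_{\XS(I)} \lesssim \|V\|_{W^{l,a,\beta}(I) + L^2(I;W^{s,d}_x)} \|u\|_{\XS(I)}.
\end{align*}
Once this holds, the smallness $\varepsilon$ makes $\Phi$ a contraction on $\XS(I)$, and we obtain existence, uniqueness and the first estimate with $C$ independent of $I$.

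The bilinear bound splits according to the decomposition $V = V_1 + V_2$, with $V_1 \in W^{l,a,\beta}$ and $V_2 \in L^2 W^{s,d}_x$ nearly optimal. For $V_1$ we invoke the deterministic Schrödinger bilinear estimate of \cite{CHN23} (Lemma~\ref{lem:BilinearEstimates} in the appendix), $\|\Re(V_1)u\|_{N^{s,a,0}} \lesssim \|V_1\|_{W^{l,a,\beta}} \|u\|_{S^{s,a,0}}$, valid in the stated range $0\le s\le l+2$, $l\ge \frac d2-2$, $(s,l)\ne(\frac d2,\frac d2-2)$ with $a=a^*$ and $\beta=s-\frac12$. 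Since $\|\cdot\|_{\Gs}\le\|\cdot\|_{N^{s,a,0}}$ by taking $F_2=0$ in the infimum, Lemma~\ref{lem:LinEstimates} converts this into an $\XS$ bound, and $\XS\hookrightarrow S^{s,a,0}$ closes the estimate. For $V_2$ we use the Bernstein bound $\|P_\lambda G\|_{N^{s,a,0}_\lambda}\lesssim \lambda^s\|P_\lambda G\|_{L^2_tL^{2_*}_x}$ together with a paraproduct and Hölder argument: in the $LH$ and $HH$ pieces we use $\|V_2\|_{L^2L^d}$ against $u\in L^\infty H^s$, and in the $HL$ piece we use $\|V_2\|_{L^2 W^{s,d}}$ against $u\in L^\infty L^2$. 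This gives a bound by $\|V_2\|_{L^2W^{s,d}_x}\|u\|_{L^\infty H^s_x}$.

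For the second estimate we repeat the fixed point in $L^2(I;L^{2^*}_x)\cap L^\infty L^2_x$ using the endpoint Strichartz estimate (Lemma~\ref{lem:StrichartzLocalSmooth}):
\begin{align*}
\|\cI_0[\Re(V) u]\|_{L^2L^{2^*}} \lesssim \|Vu\|_{L^2L^{2_*}} \le \|V\|_{L^\infty L^{d/2}+L^2L^d}\,\|u\|_{L^2L^{2^*}\cap L^\infty L^2}.
\end{align*}
Since $l\ge\frac d2-2$, Sobolev embedding gives $L^\infty H^l\hookrightarrow L^\infty L^{d/2}$ and $W^{s,d}\hookrightarrow L^d$, so smallness again yields a contraction, now at the $L^2$ level. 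Uniqueness in the larger class $C(I;H^s_x)\cap L^2L^{2^*}_x$ follows from this low-regularity contraction, and the two fixed points coincide.

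The main obstacle is the $V_1$ bilinear estimate in the full regime, in particular the small-temporal-frequency high-low wave–Schrödinger interactions where the weight $a$ is essential. We plan to take this directly from \cite{CHN23} rather than reprove it. The only new point is checking that the $\Gs$/$\XS$ framework, with its added lateral Strichartz component, is compatible, and Lemma~\ref{lem:LinEstimates} supplies exactly that.
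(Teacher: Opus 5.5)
Your argument is essentially the paper's. The paper defers to the proof of \cite[Theorem~7.1]{CHN23}: a Duhamel/contraction argument in $\X^{s,a}(I)$, driven by the bilinear bound in the sum space $W^{l,a,\beta}+L^2_tW^{s,d}_x$ (Lemma~\ref{it:BilinearEstNonendpoint}, which is exactly your $V_1$/$V_2$ split) together with Lemma~\ref{lem:LinEstimates}, plus a Strichartz-level argument for the $L^2_tL^{2^*}_x$ bound and for uniqueness.

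One caveat: the cited bilinear estimate in Lemma~\ref{lem:BilinearEstimates} needs $\beta=s-\frac12\ge\frac{d-4}{2}$, i.e.\ $s\ge\frac{d-3}{2}$. This holds in the regime \eqref{IniReg-condition} where the lemma is applied, but it does not follow from $0\le s\le l+2$, $l\ge\frac d2-2$ alone. So your claim that the estimate is valid ``in the stated range'' should be restricted to $s\ge\frac{d-3}{2}$; the paper's statement carries the same implicit restriction.
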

	
	If the potential is a linear wave,
	we recall
	from Lemma 7.5 of \cite{CHN23} that we can decompose $\R$ into a finite collection of intervals such that the smallness condition in Lemma~\ref{lem:LinSchrPotSmallTime} is satisfied on each of these intervals.
	
	\begin{lemma}[\cite{CHN23}, Lemma~7.5]
		\label{lem:SmallnessWavePotential}
		Let $l, s, a \geq 0, \varepsilon >0$, and $V_L=e^{\imu t |\nabla|}g\in L_t^\infty H_x^l$. Then there
		exists a finite collection of open intervals $\{I_j\}_{j=1}^N$,
		such that $\R =\cup_{j=1}^N I_j$ with $\min|I_j\cap I_{j+1}|>0$, and
		\begin{align*}
			\sup_{j=1,\dots,N} \|V_L\|_{W^{l,a,\beta}(I_j)+L^2 (I_j; W_x^{s,d})}<\varepsilon
		\end{align*}
	\end{lemma}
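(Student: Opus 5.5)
The plan is to follow the argument of \cite[Lemma~7.5]{CHN23}: cut $\R$ into finitely many intervals on which the linear wave $V_L=e^{\imu t|\nabla|}g$ is small, by splitting $V_L$ into a high-frequency part and a low-frequency part. Fix a large dyadic $M$ and write $g=P_{\leq M}g+P_{>M}g$.

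\textbf{High frequencies.} The high-frequency part is made small in $W^{l,a,\beta}(\R)$ globally, with no time cut-off. By Lemma~\ref{lem:LinearEstimateHalfWave}, square-summed over $\lambda>M$,
\begin{align*}
\|e^{\imu t|\nabla|}P_{>M}g\|_{W^{l,a,\beta}(\R)}\lesssim \|P_{>M}g\|_{H^l_x}.
\end{align*}
This tends to $0$ as $M\to\infty$ by dominated convergence. Choose $M$ so that this quantity is below $\varepsilon/2$.

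\textbf{Low frequencies.} For the finitely many remaining frequencies $\lambda\leq M$, the piece $e^{\imu t|\nabla|}P_{\leq M}g$ is smooth and bounded in $x$. It is handled in the $L^2(I;W^{s,d}_x)$ component of the sum norm. By Bernstein, since $P_{\leq M}g\in L^2$ and the dimension is $d\geq 4$, the dispersive decay of the half-wave propagator at bounded frequency gives
\begin{align*}
\|e^{\imu t|\nabla|}P_{\leq M}g\|_{W^{s,d}_x}\lesssim_M \|g\|_{L^2}.
\end{align*}
Moreover $t\mapsto \|e^{\imu t|\nabla|}P_{\leq M}g\|_{W^{s,d}_x}^2$ lies in $L^1(\R)$. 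To see this, take the wave Strichartz pair $(q,r)=(2,d)$ with $d\geq4$: the sharp admissibility condition $\frac1q+\frac{d-1}{2r}\leq\frac{d-1}{4}$ holds for $d\geq 4$, with $d=4$ the endpoint case. This yields $e^{\imu t|\nabla|}P_{\leq M}g\in L^2_tW^{s,d}_x(\R)$ with bound $C_M\|g\|_{L^2}$. The main thing to check here is the precise admissibility and derivative count; any frequency loss is harmless because $M$ is fixed.

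\textbf{Partition.} By absolute continuity of $t\mapsto\int|\cdot|^2$, there is a finite partition $\R=\bigcup_{j=1}^N I_j$ into open, overlapping intervals (two unbounded ones at the ends) such that
\begin{align*}
\|e^{\imu t|\nabla|}P_{\leq M}g\|_{L^2(I_j;W^{s,d}_x)}<\varepsilon/2 \quad\text{for every } j.
\end{align*}
The intervals can be slightly enlarged so that consecutive ones overlap in sets of positive length. Restriction to $I_j$ only decreases the $W^{l,a,\beta}$ norm, which is defined by infimum over extensions. Summing the two contributions in the sum space gives $\|V_L\|_{W^{l,a,\beta}(I_j)+L^2(I_j;W^{s,d}_x)}<\varepsilon$ for every $j$.

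The only genuine obstacle is the Strichartz step for the low-frequency part. If the endpoint $(2,d)$ estimate at $d=4$ is unavailable, I would instead use frequency localization together with the decay $|t|^{-(d-1)/2}$ of the kernel of $e^{\imu t|\nabla|}P_{\leq M}$ in $L^1\to L^\infty$. Interpolating with $L^2$ gives $L^2\to L^d$ decay like $|t|^{-(d-1)(\frac12-\frac1d)}$, which is square integrable at infinity when $d\geq4$. This decay estimate needs $g\in L^{d'}$, so it applies to a dense class of $g$, and a density argument then passes to general $g\in L^2$.
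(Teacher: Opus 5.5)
Your architecture is the right one. You write $V_L$ as a piece that is small in $W^{l,a,\beta}(\R)$ plus a piece lying in $L^2(\R;W^{s,d}_x)$, and then cut $\R$ into finitely many overlapping intervals using integrability of $t\mapsto\|\cdot\|_{W^{s,d}_x}^2$. The restriction and sum-space bookkeeping are correct, and for $d\geq5$ your frequency truncation works as written: the $q=2$ endpoint exponent $\frac{2(d-1)}{d-3}$ is $\leq d$, so Bernstein reaches $L^d_x$, and dyadic pieces $\mu\leq1$ carry the factor $\mu^{\frac{d-3}{2}}$.

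\textbf{The gap is $d=4$,} which the lemma must cover. There $(2,4)$ is not wave admissible. With $q=2$ the condition $\frac1q+\frac{d-1}{2r}\leq\frac{d-1}{4}$ forces $r\geq6$; indeed $\frac12+\frac38=\frac78>\frac34$. So $(2,6)$ is the endpoint, and Bernstein cannot lower the exponent to $4$. This is a real failure, not a missing citation. A Knapp packet with $\widehat g$ supported in $\{\xi_1\in[\frac14,\frac12],\,|\xi'|\leq\delta\}$ gives $\|e^{\imu t|\nabla|}g\|_{L^2_tL^4_x}/\|g\|_{L^2_x}\gtrsim\delta^{-1/4}$. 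The closed graph theorem then yields $g\in L^2(\R^4)$ with $P_{\leq M}g=g$ for all $M\geq1$ and $e^{\imu t|\nabla|}g\notin L^2(\R;L^4_x)$. This wave is bounded in $L^4_x$ uniformly in $t$, so the divergence sits at $|t|\to\infty$. Hence one of the two unbounded intervals of any finite cover carries infinite $L^2(I_j;W^{s,4}_x)$-norm of your low-frequency part, whatever $M$ is.

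\textbf{Your fallback does not repair this.} The decay $|t|^{-(d-1)(\frac12-\frac1d)}$ is an $L^{d'}\to L^d$ bound, not $L^2\to L^d$. It does place $e^{\imu t|\nabla|}P_{\leq M}g$ in $L^2_tL^d_x$ for $g$ in a dense class. But passing membership in $L^2(\R;W^{s,d}_x)$ to $L^2$-limits requires a bound uniform in $\|g\|_{L^2_x}$, which is exactly the failed Strichartz estimate. By the above, the conclusion is false for general $g$.

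\textbf{The missing idea} is to do the density approximation inside the decomposition and let the sum space absorb the error. Pick $g'$ with $\widehat{g'}\in C_c^\infty(\R^d)$ and $\|g-g'\|_{H^l_x}$ small. Place $e^{\imu t|\nabla|}(g-g')$ in the $W^{l,a,\beta}(\R)$-summand, where Lemma~\ref{lem:LinearEstimateHalfWave} gives the uniform bound $\lesssim\|g-g'\|_{H^l_x}<\frac{\varepsilon}{2}$. Put only the fixed smooth piece into $L^2_tW^{s,d}_x$. Interpolating the $L^1\to L^\infty$ decay with $L^2$-unitarity gives $\|e^{\imu t|\nabla|}g'\|_{W^{s,d}_x}\lesssim_{g'}\langle t\rangle^{-(d-1)(\frac12-\frac1d)}$. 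This is square integrable because $(d-1)(1-\frac2d)\geq\frac32>1$ for $d\geq4$. With this replacement, which also makes the high/low frequency split unnecessary, your partition step goes through unchanged.
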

	
	\begin{remark}
		\label{rem:PropagOPWellDef}
		The combination of Lemma~\ref{lem:LinSchrPotSmallTime}, Lemma~\ref{lem:SmallnessWavePotential}, and Lemma~\ref{lem:DecompX} shows that~\eqref{eq:SchroedingerPotential} has a unique solution in $C(I; H_x^s) \cap L^2(I; L_x^{2^*})$ on any interval $I$ with $t_0 \in I$, cf. the proof of Theorem~7.1 in~\cite{CHN23}. This solution also satisfies the estimates in Lemma~\ref{lem:LinSchrPotSmallTime}. In particular, the homogeneous and inhomogeneous propagation operators $\cU_{v_L}$ and $\cI_{v_L}$ of~\eqref{eq:SchroedingerPotential} are well-defined and satisfy the estimates in Lemma~\ref{lem:LinSchrPotSmallTime} for free wave potentials $v_L$.
	\end{remark}
	
	We divide the proof of Theorem \ref{thm:LocalWP} for the non-endpoint case $(s,l) \neq (\frac{d-3}{2},\frac{d-4}{2})$ into four steps, presented in Subsections~\ref{Subsec-LWPNEP} through~\ref{Subsec:ImproBlowUpCon}.
	
	First, utilizing the rescaling transform, we address the initial value problem for~\eqref{eq:RanZakbc} with data in $H_x^s \times H_x^l$. In Subsection~\ref{Subsec-LWPNEP}, we construct a unique local solution in the auxiliary space $H_x^s \times H_x^{\tilde{l}}$, where $\tilde{l} := \min\{s-\frac12,l\}$. Next, we employ the refined rescaling transforms (see Appendix~\ref{Sec-Rescaling}) to extend this solution to its maximal existence time in Subsection~\ref{Subsec-Max-Exist}.
	
	If $l + \frac12 \leq s$ (implying $\tilde{l} = l$), this immediately yields the unique maximal solution in $H_x^s \times H_x^l$. However, if $l > s - \frac12$, we invoke the regularity improvement result from Lemma~\ref{le:PersisReg} to upgrade the wave regularity from $H_x^{\tilde{l}}$ to $H_x^l$. Consequently, we obtain unique maximal solutions in the sense of Definition~\ref{def:Solution}.
	
	Finally, in Subsection~\ref{Subsec-Blowup}, we characterize a finite maximal existence time via a blow-up condition at regularity $(s, \tilde{l})$. We then refine this to the endpoint regularity condition stated in Theorem~\ref{thm:LocalWP} using two persistence of regularity results in Subsection~\ref{Subsec:ImproBlowUpCon}.

	\subsection{Local well-posedness}  \label{Subsec-LWPNEP}
	
	In this subsection, we first prove that system \eqref{eq:StoZak} is locally well-posed up to some stopping time.
	In view of the rescaling approach, see Subsection~\ref{subsec:IdeaProof},
	it is equivalent to solve
	the following Zakharov system with random coefficients
	\begin{equation}  \label{eq:RanZakWavePot}
		\left\{\aligned
		(\imu \partial_t + \Delta - \Re(v_L)) u &= \Re(\rho) u - b\cdot \na u - cu + \Re(\cT_t(W_2)) u, \qquad &u(0) &= u_0, \\
		(\imu \partial_t + |\nabla|) \rho &= - |\nabla| |u|^2, &\rho(0) &= 0,
		\endaligned
		\right.
	\end{equation}
	with 	
	\begin{align*}
		u_0  = X_0, \ \ \rho := v - v_L,
		\ \ {\rm and }\quad  v_L :=  e^{\imu t |\nabla|} v_0.
	\end{align*}
	Recall that $\cU_{v_L}[f]$ and $\cI_{v_L}[g]$ are the homogeneous and inhomogeneous propagation operators of the Schr\"odinger equation with potential $\Re(v_L)$,
	\begin{align*}
		(\imu \partial_t +\Delta - \Re(v_L))u = g, \qquad u(t_0) = f,
	\end{align*}
	respectively, and $\cJ_0[h]$ is the inhomogeneous propagation operator of the wave equation
	\begin{align*}
		(\imu \partial_t + |\nabla|) v = h, \qquad v(t_0) = 0.
	\end{align*}
	Then $\rho = - \cJ_0[|\nabla| |u|^2]$ and
	it suffices to solve the equation
	\begin{align}
		u(t) = \cU_{v_L} [u_0](t) - \cI_{v_L}[\Re(\cJ_0[|\nabla| |u|^2])u](t) - \cI_{v_L}[b \cdot \nabla u + c u - \Re(\cT_{\cdot}(W_2)) u](t).
		\label{eq:LWPFixedPoint}
	\end{align}
	
	For this purpose,
	we define the fixed-point operator $\Phi(u_0, v_0; u)$ by the right-hand side of~\eqref{eq:LWPFixedPoint},
	and we will prove that it is a contractive self-mapping on the ball
	\begin{align*}
		B_{R, \delta}(\tau) := \{u \in \XS([0,\tau]) \colon \
		\|u\|_{L^2 (0,\tau ; L_x^{2^*})} \leq \delta, \  \|u\|_{\XS([0,\tau])} \leq R\}
	\end{align*}	
	equipped with the metric induced by $\| \cdot \|_{\XS([0,\tau])}$,
	for some $\delta, R > 0$
	and some positive stopping time $\tau$.
	Note that $B_{R, \delta}(\tau)$ is a complete metric space.
	
	We sometimes drop the domain of the space-time norms below if it is clear from the context.

	\vspace*{4pt plus 2pt minus 2pt}%
	\paragraph{\bf $\bullet$ Self-mapping}

	Let $\epsilon > 0$ be as in Lemma~\ref{lem:LinSchrPotSmallTime}, $T > 0$, and
	set
	\begin{equation}
		\label{eq:DefTauTilde}
		\tau_0 := \inf\{t \in [0,T] \colon \|v_L\|_{W^{\tilde{l},a,s-\frac{1}{2}}([0,t]) + L^2 (0,t ; W_x^{s,d})}
		\geq \varepsilon\} \wedge \min\{2,T\},
	\end{equation}
	which is strictly positive, due to Lemma~\ref{lem:SmallnessWavePotential}.
	In the following, let $0 \leq \tau \leq \tau_0$ be a stopping time.
	
	By Lemma~\ref{lem:LinSchrPotSmallTime} and Lemma~\ref{lem:LinEstimates}, we have
	\begin{align}
		\| \Phi(u_0, v_0; u) \|_{\XS([0,\tau])}
		&\leq C \|u_0\|_{H_x^s} + C \|\Re(\cJ_0[|\nabla| |u|^2])u\|_{N^{s,a,0}([0,\tau])} + C \|\cI_0[b \cdot \nabla u]\|_{\X^{s,a}([0,\tau])}\nonumber\\
		&\qquad + C \|c u\|_{\GS([0,\tau])}  + C \|\Re(\cT_{\cdot}(W_2)) u\|_{\GS([0,\tau])}. \label{eq:EstFixedPointOp1}
	\end{align}
	To estimate the right-hand side above,
	we note that, by  Lemma~\ref{lem:BilinearEstimates}~\ref{it:BilinearEstNonendpointbNzero} and \ref{it:BilinearEstNonendpointW},
	\begin{align}
		\| \Re(\cJ_0[|\nabla| |u|^2])u\|_{\NOne([0,\tau])}   & \lesssim \| \cJ_0[|\nabla| |u|^2] \|_{W^{\tilde{l},a,s-\frac{1}{2}}([0,\tau])} \|u\|_{\SOne([0,\tau])} \nonumber\\
		&\lesssim \|u\|_{L^2_t L^{2^*}_x}^{2 \theta} \|u\|_{\XS([0,\tau])}^{3 - 2 \theta} \label{eq:FixedPointOpNonlin}
	\end{align}
	for some $\theta \in (0,1)$. Let
	\begin{align}
		\label{eq:DefWstar}
		W_*(t)&:=\| W_1(t)\|_{H_x^{\frac{d}{2}+2+(s-1)_+}} + \sum_{j = 1}^d \sum_{k = 1}^\infty \int \sup_{y \in \R^{d-1}} |\nabla \phi^{(1)}_k(r \vece_j + y)| \dd r \|\beta^{(1)}_k\|_{C^{\frac14}([0,t])}  \nonumber\\
		& \qquad +\|\cT_t(W_2)\|_{{H_x^{\frac{d}{2}+s-1}}} + \|\nabla W_1(t)\|_{C^{\frac14}([0,t]; L_x^\infty)} + \|W_1(t)\|_{{H_x^{\frac{d}{2}+s+1}}}^2,
	\end{align}
	and then $W^*(t) := \sup_{s\in [0,t]} W_*(s)$. By Lemma~\ref{lem:BilinearLowerOrder}, Lemma~\ref{lem:BilinearLowerOrder2}, and the identities~\eqref{eq:Defb} and~\eqref{eq:Defc}, the noise terms in \eqref{eq:EstFixedPointOp1} can be controlled by $(1+\tau^{\frac12})W^*(\tau) \|u\|_{\XS([0,\tau])}$.
	Thus, we infer
	\begin{align}
		\| \Phi(u_0, v_0; u) \|_{\XS([0,\tau])}
		\leq C \|u_0\|_{H_x^s} +  C \|u\|_{L^2_t L^{2^*}_x}^{2 \theta} \|u\|_{\XS([0,\tau])}^{3 - 2 \theta}
		+ C (1+\tau^{\frac{1}{2}})W^*(\tau) \|u\|_{\XS([0,\tau])}. \label{eq:EstFixedPointOp2}
	\end{align}
	
	Concerning the estimate of the $L^2_tL^{2^*}_x$-norm,
	we write the linear propagator $\cU_{v_L}$ as
	\begin{align}  \label{UvL-express}
		\cU_{v_L}[u_0](t) = e^{\imu t \Delta} u_0 + \cI_{v_L}[\Re(v_L) e^{\imu (\cdot) \Delta} u_0](t).
	\end{align}
	By Lemma~\ref{lem:LinSchrPotSmallTime}, Sobolev embedding, and the fact $l\geq \frac{d}{2}-2$, we estimate
	\begin{align*}
		\|\cU_{v_L}[u_0] \|_{L^2_t L^{2^*}_x} &\lesssim \| e^{\imu t \Delta} u_0 \|_{L^2_t L^{2^*}_x} + \|\Re(v_L) e^{\imu t \Delta} u_0\|_{L^2_t L^{2_*}_x} \notag \\
		& \lesssim \| e^{\imu t \Delta} u_0 \|_{L^2_t L^{2^*}_x} + \|v_L\|_{L^\infty_t L^{\frac{d}{2}}_x} \| e^{\imu t \Delta} u_0 \|_{L^2_t L^{2^*}_x} \\
		&\lesssim (1 + \|v_0\|_{H_x^l}) \| e^{\imu t \Delta} u_0 \|_{L^2_t L^{2^*}_x}.
	\end{align*}
	Moreover, Lemma~\ref{lem:LinSchrPotSmallTime}, Lemma~\ref{lem:BilinearEstimates} \ref{it:BilinearEstNonendpointW} and the embedding $\X^{s,a}\hookrightarrow S^{s,a,0}, H_x^{\tilde{l}}\hookrightarrow L_x^{\frac{d}{2}}$ yield
	\begin{align*}
		\| \cI_{v_L}[\cJ_0[|\nabla| |u|^2] u] \|_{L^2_t L^{2^*}_x}
		& \lesssim \|\cJ_0[|\nabla| |u|^2]\|_{L^\infty_t L^{\frac{d}{2}}_x} \|u\|_{L^2_t L^{2^*}_x} \\
		& \lesssim  \|\cJ_0[|\nabla| |u|^2]\|_{W^{\tilde{l},a,s-\frac{1}{2}}([0,\tau])} \|u\|_{L^2_t L^{2^*}_x} \\
		&\lesssim  \|u\|_{\XS([0,\tau])}^{2(1-\theta)} \|u\|_{L^2_t L^{2^*}_x}^{2\theta + 1}.
	\end{align*}
	Taking into account Lemma~\ref{lem:BilinearLowerOrder} we thus arrive at
	\begin{align}
		\|\Phi(u_0, v_0; u)\|_{L^2_t L^{2^*}_x}
		\leq C  \| e^{\imu t \Delta} u_0 \|_{L^2_t L^{2^*}_x} +  C \|u\|_{\XS([0,\tau])}^{2(1-\theta)} \|u\|_{L^2_t L^{2^*}_x}^{2\theta + 1} + C (1+\tau^{\frac{1}{2}})W^*(\tau) \|u\|_{\XS([0,\tau])}. \label{eq:EstFixedPointOp3}
	\end{align}
	
	Fixing now $C = C(\|v_0\|_{H_x^l})$ as the maximum of the generic constants in~\eqref{eq:EstFixedPointOp2} and~\eqref{eq:EstFixedPointOp3} and setting $R = 2 C \|u_0\|_{H_x^s}$, we derive from~\eqref{eq:EstFixedPointOp2} and~\eqref{eq:EstFixedPointOp3} that
	for any  $u \in B_{R, \delta}(\tau)$,
	\begin{align*}
		\| \Phi(u_0, v_0; u) \|_{\XS([0,\tau])}
		&\leq \frac{R}{2} + C( R^{2(1 -  \theta)} \delta^{2 \theta} R  + (1+\tau^{\frac{1}{2}}) R W^*(\tau) ),\\
		\| \Phi(u_0, v_0; u) \|_{L^2_t L^{2^*}_x}
		& \leq   C( \| e^{\imu t \Delta} u_0 \|_{L^2_t L^{2^*}_x} +   R^{2(1-\theta)} \delta^{2\theta+1}   + (1+\tau^{\frac{1}{2}}) R W^*(\tau) ).
	\end{align*}
	These  estimates yield that $\Phi(u_0, v_0; \cdot)$
	is self-mapping on $B_{R,\delta}(\tilde{\tau})$,
	where $\delta \in (0,R)$ is chosen small enough such that
	$8 C R^{2(1 - \theta)} \delta^{2 \theta} \leq 1$ and
	$\tilde{\tau}$ is the stopping time defined by
	\begin{align}
		\label{eq:DefFixedPointStoppingTimeSelfmap}
		\tilde{\tau} := \inf\Big\{t \in [0,T] \colon C  \| e^{\imu (\cdot) \Delta} u_0 \|_{L^2 (0,t ; L_x^{2^*})} +	2 C R  W^*(t) \geq \frac{\delta}{4}\Big\} \wedge \tau_0.
	\end{align}
	Note that $\tilde{\tau} > 0$, $\PP$-a.s.,
	since $\lim_{t \rightarrow 0^+} W^*(t) = 0$, $\PP$-a.s.

	\vspace*{4pt plus 2pt minus 2pt}%
	\paragraph{\bf $\bullet$ Contraction}
	To show that $\Phi(u_0, v_0; \cdot)$ is a contraction, we estimate as in~\eqref{eq:FixedPointOpNonlin} to infer
	\begin{align*}
		&\|\Re(\cJ_0[|\nabla| |u|^2]) u - \Re(\cJ_0[|\nabla| |w|^2]) w\|_{\GS([0,\tau])} \\
		&= \|\Re(\cJ_0[|\nabla|((\overline{u} - \overline{w})u)]) u + \Re(\cJ_0[|\nabla| (\overline{w}(u - w))]) u  + \Re(\cJ_0[|\nabla| |w|^2]) (u - w)\|_{\GS([0,\tau])} \\
		&\lesssim (\|u-w\|_{\XS([0,\tau])} \|u\|_{\XS([0,\tau])} )^{1-\theta} (\|u-w\|_{L^2_t L^{2^*}_x} \|u\|_{L^2_t L^{2^*}_x})^\theta \|u\|_{\XS([0,\tau])}   \\
		&\qquad + (\|w\|_{\XS([0,\tau])} \|u-w\|_{\XS([0,\tau])} )^{1-\theta} (\|w\|_{L^2_t L^{2^*}_x} \|u-w\|_{L^2_t L^{2^*}_x})^\theta \|u\|_{\XS([0,\tau])}  \\
		& \qquad +\|w\|_{L^2_t L^{2^*}_x}^{2\theta} \|w\|_{\XS([0,\tau])}^{2(1-\theta)} \|u - w\|_{\XS([0,\tau])} \\
		&\lesssim \delta^{\theta} R^{2 - \theta} \|u-w\|_{\XS([0,\tau])} 
	\end{align*}
	for all $u,w \in B_{R,\delta}(\tau)$, where we also used Remark~\ref{rem:NormComp} and $\delta \in (0,R)$ in the last inequality. Along the same lines as deriving~\eqref{eq:EstFixedPointOp2}, we get
	\begin{align}
		\|\Phi(u_0, v_0; u) - \Phi(u_0, v_0; w)\|_{\XS([0,\tau])}
		\leq  C  (\delta^{\theta} R^{2 - \theta} +  \delta^{2 \theta} R^{2(1 - \theta)}  +  (1+\tau^{\frac{1}{2}})W^*(\tau) )\|u-w\|_{\XS([0,\tau])}.  \label{eq:EstFixedPointOpDiff}
	\end{align}
	
	This immediately yields that
	$\Phi(u_0, v_0; \cdot)$ is a contractive self-mapping on $B_{R,\delta}(\tilde{\tau}_1)$,
	by taking $\delta > 0$ possibly smaller such that additionally
	\begin{align*}
		C \delta^\theta R^{2- \theta} \leq \frac{1}{4},
	\end{align*}
	and by choosing the stopping time $\tilde{\tau}_1$ defined by
	\begin{align*}
		\tilde{\tau}_1 := \inf\Big\{t \in [0,T]: C W^*(t) \geq \frac{1}{4}\Big\} \wedge \tilde{\tau}.
	\end{align*}			
	Note that $\tilde{\tau}_1 > 0$ $\PP$-a.s.
	
	Since
	the constant $C$ and the radius $R$ are increasing in $\|u_0\|_{H_x^s}$ and $\|v_0\|_{H_x^l}$,
	we can choose a small constant $\delta_*(\|u_0\|_{H_x^s}, \|v_0\|_{H_x^l}) > 0$, which is decreasing in both its arguments, such that
	\begin{align}
		\label{eq:FixedPointDefStoppingTimeFirstIntervalExt}
		\tilde{\tau}_1 = \inf\{t \in [0, T] \colon  \|e^{\imu (\cdot) \Delta} u_0\|_{L^2 (0,t ; L_x^{2^*})} + W^*(t) \geq 4 \delta_*(\|u_0\|_{H_x^s}, \|v_0\|_{H_x^l})\} \wedge \tau_0.
	\end{align}
	Moreover, for the gluing procedure in Subsection~\ref{Subsec-Max-Exist} below
	(see also Proposition~\ref{prop:GluingSolutions} and Lemma~\ref{lem:DecompX}),
	we define another (smaller) stopping time by
	\begin{align}
		\label{eq:FixedPointDefStoppingTimeFirstInterval}
		\tau_1 &= \inf\{t \in [0, T] \colon  \|e^{\imu (\cdot) \Delta} u_0\|_{L^2 (0,t ; L_x^{2^*})} + W^*(t) \geq 2 \delta_*(\|u_0\|_{H_x^s}, \|v_0\|_{H_x^l})\} \nonumber\\
		&\qquad \wedge \inf\Big\{t \in [0,T] \colon \|v_L\|_{W^{\tilde{l},a,s-\frac{1}{2}}([0,t]) + L^2 (0,t ; W_x^{s,d})} \geq  \frac{\epsilon}{2}\Big\} \wedge \min\{1,T\}.
	\end{align}
	Note that, $\tau_1 > 0$ $\PP$-a.s. due to Lemma~\ref{lem:LinSchrPotSmallTime}.
	Moreover, $\tau_1 < \tilde{\tau}_1$ or $\tau_1 = T$ $\PP$-a.s.,
	due to the continuity property in Lemma~\ref{it:ContSumY}.

	\vspace*{4pt plus 2pt minus 2pt}%
	Now, by virtue of the self-mapping and contraction of the map $\Phi(u_0, v_0; \cdot)$
	on $B_{R,\delta}(\tilde{\tau}_1)$,
	we infer from Banach's fixed point theorem that there exists a solution $\tilde{u}_1 \in B_{R,\delta}(\tilde{\tau}_1)$ to~\eqref{eq:LWPFixedPoint}, which is unique in $\XS([0,\tilde{\tau}_1])$ by standard arguments. 
	We then set
	\begin{align*}
		\tilde{v}_1(t) := v_L(t) - \cJ_0[|\nabla| |\tilde{u}_1|^2](t) = e^{\imu t |\nabla|} v_0  - \cJ_0[|\nabla| |\tilde{u}_1|^2](t), \ \
		t\in [0,\tilde{\tau}_1],
	\end{align*}
	which belongs to $W^{\tilde{l},a,s-\frac{1}{2}}([0,\tilde{\tau}_1])$
	due to Lemma~\ref{lem:BilinearEstimates} \ref{it:BilinearEstNonendpointW}. 
	Hence, $(\tilde{u}_1, \tilde{v}_1)$ is the solution to~\eqref{eq:RanZakbc}, where the uniqueness holds in $\XS([0,\tilde{\tau}_1]) \times L^\infty ([0,\tilde{\tau}_1] ; H_x^{\tilde{l}})$.
	Let
	\begin{align*}
		(u_1,v_1)(t) := (\tilde{u}_1(t \wedge \tilde{\tau}_1), \tilde{v}_1(t \wedge \tilde{\tau}_1)),\ \ t\in [0,T].
	\end{align*}
	Then, $(u_1, v_1)$ is an $\{\cF_t\}$-adapted process in $C([0,T], H_x^s \times H_x^{\tilde{l}})$
	and solves~\eqref{eq:RanZakbc} on $[0,\tilde{\tau}_1]$.

	\subsection{Extension to maximal existence time}  \label{Subsec-Max-Exist}
	In this step,
	we extend the solution from Step~1 to its maximal existence time.
	The proof utilizes the inductive application of refined rescaling transforms and the gluing procedure.
	As it is analogous to that in \cite{HRSZ24},
	we briefly sketch the main arguments below.

	\vspace*{4pt plus 2pt minus 2pt}%
	\paragraph{\bf $\bullet$ Inductive arguments}
	For $n \in \N$, let $(u_n, v_n)$ be an $\{\cF_t\}$-adapted continuous process
		in $H_x^s \times H_x^{\tilde{l}}$ and $\sigma_n \leq \tilde{\sigma}_n$ be $\{\cF_t\}$-stopping times,
		such that $\sigma_n < \tilde{\sigma}_n$ or $\sigma_n = T$, $\PP$-a.s.,
		and $(u_n, v_n)$ be the unique solution of~\eqref{eq:RanZakbc} on $[0,\tilde{\sigma}_n]$ in $\XS([0,\tilde{\sigma}_n]) \times W^{\tilde{l},a,s-\frac{1}{2}}([0,\tilde{\sigma}_n])$
		satisfying $(u_n, v_n) \equiv (u_n(\tilde{\sigma}_n), v_n(\tilde{\sigma}_n))$ on $[\tilde{\sigma}_n, T]$.
	
	In view of Propositions \ref{prop:RefinedRescaling} and \ref{prop:GluingSolutions},
	we consider the system
	\begin{equation}   \label{eq:RanZakbsigmacsigmaInProof}
		\left\{\aligned
		\imu \partial_t u_\sigma + \Delta u_\sigma
		&= \Re(v_\sigma) u_\sigma - b_\sigma \cdot \nabla u_\sigma - c_\sigma u_\sigma + \Re(\cT_{\sigma + \cdot, \sigma}(W_2)) u_\sigma,  \\
		\imu \partial_t v_\sigma + |\nabla |v_\sigma  &= - |\nabla||u_\sigma|^2, \\
		(u_\sigma(0), v_\sigma(0)) &= (u_{0,n}, v_{0,n})
		\endaligned
		\right.
	\end{equation}
	with the initial data
	\begin{align}
		\label{eq:DefInitialDataRescaled}
		(u_{0,n}, v_{0,n}) = (e^{W_1(\sigma_n)} u_n(\sigma_n), v_n(\sigma_n) + \cT_{\sigma_n}(W_2)),
	\end{align}
	where
	\begin{align}
		\label{eq:DefbsigmacsigmaInProof}
		&b_\sigma = 2 \nabla W_{1,\sigma}, \quad c_\sigma = |\nabla W_{1,\sigma}|^2 + \Delta W_{1,\sigma}, \quad W_{1,\sigma}(t)= W_1(\sigma+t) - W_1(\sigma), \qquad \text{and } \nonumber\\
		&\cT_{\sigma+t, \sigma} (W_2)
		= -\imu \int_\sigma^{\sigma+t} e^{\imu (\sigma+t-s)|\na|} \dd W_2(s), \quad t \in [0,T].
	\end{align}	
	
	Proceeding as in Subsection~\ref{Subsec-LWPNEP},
	we define the fixed point operator 
	\begin{align}\label{eq:FixedPointusigma}
		&\Phi_\sigma(u_{0,n}, v_{0,n}; u_\sigma)\notag\\
		&:= \cU_{v_{L,n}}[u_{0,n}](t) - \cI_{v_{L,n}}[\Re(\cJ_0[|\nabla| |u_\sigma|^2])u_\sigma](t) - \cI_{v_{L,n}}[b_\sigma \cdot \nabla u_\sigma + c_\sigma u_\sigma - \Re(\cT_{\sigma + \cdot,\sigma}(W_2)) u_\sigma](t),
	\end{align}
	where $v_{L,n}(t) := e^{\imu t |\nabla|} v_{0,n}$,
	and define for the increment of the noise
	\begin{align}
		\label{eq:DefWStarsigman}
		W_{*,\sigma_n}(t) &:= \| W_{1,\sigma_n}(t)\|_{H_x^{\frac{d}{2}+2+(s-1)_+}} + \sum_{j = 1}^d \sum_{k = 1}^\infty \int \sup_{y \in \R^{d-1}} |\nabla \phi^{(1)}_k(r \vece_j + y)| \dd r \|\beta^{(1)}_k (~\cdot+\sigma_n)-\beta^{(1)}_k(\sigma_n) \|_{C^{\frac14}([0,t])} \nonumber\\
		&\qquad + \|\nabla W_{1,\sigma_n}(t)\|_{C^{\frac14}([0,t]; L_x^\infty)}+ \|W_{1,\sigma_n}(t)\|_{{H_x^{\frac{d}{2}+s+1}}}^2 + \|\cT_{\sigma_n + t,\sigma_n}(W_2)\|_{{H_x^{\frac{d}{2}+s-1}}},
	\end{align}
	$W^*_{\sigma_n}(t): = \sup_{s\in [0,t]} W_{*,\sigma_n}(s)$, and
	two $\{\cF_{\sigma_n + t}\}$-stopping times
	\begin{align*}
		\tilde{\tau}_{n+1} &=\inf\{t \in [0, T] \colon  \|e^{\imu (\cdot) \Delta} u_{0,n}\|_{L^2 (0,t ; L_x^{2^*})} + W_{\sigma_n}^*(t) \geq 4 \delta_*(\|u_{0,n}\|_{H_x^s}, \|v_{0,n}\|_{H_x^{\tilde{l}}})\} \\
		&\qquad \wedge \inf\{t \in [0,T] \colon \|v_{L,n}\|_{W^{\tilde{l},a,s-\frac{1}{2}}([0,t]) + L^2 (0,t ; W_x^{s,d})} \geq \epsilon\} \wedge \min\{2,T - \sigma_n\}, \\
		\tau_{n+1} &= \inf\{t \in [0, T] \colon  \|e^{\imu (\cdot) \Delta} u_{0,n}\|_{L^2 (0,t ; L_x^{2^*})} + W_{\sigma_n}^*(t) \geq 2 \delta_*(\|u_{0,n}\|_{H_x^s}, \|v_{0,n}\|_{H_x^{\tilde{l}}})\} \\
		&\qquad \wedge \inf\Big\{t \in [0,T] \colon \|v_{L,n}\|_{W^{\tilde{l},a,s-\frac{1}{2}}([0,t]) + L^2 (0,t ; W_x^{s,d})} \geq  \frac{\epsilon}{2}\Big\} \wedge \min\{1,T - \sigma_n\}
	\end{align*}
	with $\delta_*$ as in Subsection \ref{Subsec-LWPNEP},
	and two $\{\cF_t\}$-stopping times
	\begin{align*}
		\sigma_{n + 1} := \sigma_n + \tau_{n+1}, \qquad \tilde{\sigma}_{n+1} := \sigma_n + \tilde{\tau}_{n+1}.
	\end{align*}
	Note that $\tilde{\tau}_{n+1}$ and $\tau_{n+1}$ are $\{\cF_{\sigma_n + t}\}$-stopping times, since the mapping
	\begin{align*}
		t \mapsto \|v_{L,n}\|_{W^{\tilde{l},a,s-\frac{1}{2}}([0,t]) + L^2 (0,t ; W_x^{s,d})}
	\end{align*}
	is continuous by Lemma~\ref{it:ContSumY}.
	Moreover,  $\sigma_{n+1} < \tilde{\sigma}_{n+1}$,
	or $\sigma_{n+1} = \tilde{\sigma}_{n+1} = T$.
	
	As in the proof of
	\eqref{eq:EstFixedPointOp2}, \eqref{eq:EstFixedPointOp3} and~\eqref{eq:EstFixedPointOpDiff},
	we infer that $\Phi_\sigma(u_{0,n}, v_{0,n}; \cdot)$
	is a contractive self-map on a closed subset of $\XS([0,\tilde{\tau}_{n+1}])$,
	and thus, there exists a unique solution $\tilde{u}_{\sigma_{n+1}}$ of~\eqref{eq:FixedPointusigma}.
	Setting $$\tilde{v}_{\sigma_{n+1}} := v_{L,n} - \cJ_0[|\nabla| |\tilde{u}_{\sigma_{n+1}}|^2],$$
	we thus obtain a solution $(\tilde{u}_{\sigma_{n+1}}, \tilde{v}_{\sigma_{n+1}})$
	of~\eqref{eq:RanZakbsigmacsigmaInProof} in $\XS([0,\tilde{\tau}_{n+1}]) \times W^{\tilde{l},a,s-\frac{1}{2}}([0,\tilde{\tau}_{n+1}])$, which is unique in $\XS([0,\tilde{\tau}_{n+1}]) \times L^\infty ([0,\tilde{\tau}_{n+1}] ; H_x^{\tilde{l}})$. Then
	$$(u_{\sigma_{n+1}}(t), v_{\sigma_{n+1}}(t))
	:= (\tilde{u}_{\sigma_{n+1}}(t \wedge \tilde{\tau}_{n+1}), \tilde{v}_{\sigma_{n+1}}(t \wedge \tilde{\tau}_{n+1})),\quad t \in [0,T]
	$$
	is an $\{\cF_{\sigma_{n} + t}\}$-adapted continuous process in $H_x^s \times H_x^{\tilde{l}}$ and solves~\eqref{eq:RanZakbsigmacsigmaInProof} on $[0,\tilde{\tau}_{n+1}]$.
	
	Finally, we apply Proposition \ref{prop:GluingSolutions} to glue two solutions together
	\begin{align*}
		u_{n+1}(t) &:= u_n(t) \chi_{[0,\sigma_n)}(t) + e^{-W_1(\sigma_n)}u_{\sigma_{n+1}}((t-\sigma_{n}) \wedge \tilde{\tau}_{n+1})\chi_{[\sigma_n, T]}(t), \\
		v_{n+1}(t) &:= v_n(t) \chi_{[0,\sigma_n)}(t) + \Big(v_{\sigma_{n+1}}((t-\sigma_n) \wedge \tilde{\tau}_{n+1}) - e^{\imu ((t - \sigma_n) \wedge \tilde{\tau}_{n+1}) |\nabla|} \cT_{\sigma_n}(W_2)\Big) \chi_{[\sigma_n, T]}(t)
	\end{align*}
	for all $t \in [0,T]$ and derive that $(u_{n+1}, v_{n+1})$ solves~\eqref{eq:RanZakbc} on $[0, \tilde{\sigma}_{n+1}]$.
	Moreover, we have
	\begin{align}\label{prop:ClaimNEP}
		(u_{n+1}, v_{n+1})\in \XS([0,\tilde{\sigma}_{n+1}]) \times W^{\tilde{l},a,s-\frac{1}{2}}([0,\tilde{\sigma}_{n+1}]).
	\end{align}
	We omit the proof of this regularity statement which is derived analogously as in the 4D energy-space case~\cite{HRSZ24}.

	By combining the uniqueness properties of both $(u_n, v_n)$ and $(u_{\sigma_{n+1}}, v_{\sigma_{n+1}})$, together with Proposition~\ref{prop:RefinedRescaling} and Corollary~\ref{cor:RefinedRescalingInX}, we conclude that $(u_{n+1}, v_{n+1})$ is the unique solution to~\eqref{eq:RanZakbc} within the space $\XS([0,\tilde{\sigma}_{n+1}]) \times L^\infty ([0,\tilde{\sigma}_{n+1}] ; H_x^{\tilde{l}})$.
	Furthermore, $(u_{n+1}, v_{n+1})$ is an $\{\cF_t\}$-adapted continuous process in $H_x^s \times H_x^l$ (see, e.g., \cite{BRZ14}),
	coincides with $(u_n, v_n)$ on the interval $[0,\sigma_n]$, and $(u_{n+1}(t), v_{n+1}(t)) = (u_{n+1}(\tilde{\sigma}_{n+1}), v_{n+1}(\tilde{\sigma}_{n+1}))$ for $t \in [\tilde{\sigma}_{n+1}, T]$.

	Thus, by induction arguments,
	we construct an increasing sequence of $\{\cF_t\}$-adapted stopping times $\{\sigma_n\}$
	and corresponding $\{\cF_t\}$-adapted processes $\{(u_n, v_n)\}$,
	such that $(u_n, v_n)$ is the solution of~\eqref{eq:RanZakbc}
	in $\XS([0,\sigma_n]) \times W^{\tilde{l},a,s-\frac{1}{2}}([0,\sigma_n])$, unique in $\XS([0,\tilde{\sigma}_{n+1}]) \times L^\infty ([0,\tilde{\sigma}_{n+1}] ; H_x^{\tilde{l}})$,
	and $(u_{n+1}, v_{n+1})$ coincides with $(u_n, v_n)$ on $[0,\sigma_n], n \geq 1$.
	
	\vspace*{4pt plus 2pt minus 2pt}%
	\paragraph{\bf $\bullet$ Maximal interval of existence}
	Now, let
	\begin{align*}
		\tau_T^* := \lim_{n \rightarrow \infty} \sigma_n \qquad \text{and} \qquad (u^T, v^T) := \lim_{n \rightarrow \infty} (u_n \chi_{[0, \tau_T^*)}, v_n \chi_{[0, \tau_T^*)}).
	\end{align*}
	Then, $(u^T, v^T)$ is an $\{\cF_t\}$-adapted process that solves \eqref{eq:RanZakbc} uniquely
	up to the $\{\cF_t\}$ stopping time $\tau_T^*$.
	Since $\{\tau_T^*\}$ is increasing in $T$, and for $T' > T$,
	$(u^{T'}, v^{T'})$ coincides with $(u^T, v^T)$ on $[0,\tau_T^*)$ by uniqueness,
	we can define
	\begin{align*}
		\tau^* := \lim_{T \rightarrow \infty} \tau_T^* \qquad \text{and} \qquad (u,v) := \lim_{T \rightarrow \infty} (u^T \chi_{[0,\tau^*)}, v^T \chi_{[0,\tau^*)}).
	\end{align*}
	The resulting process $(u,v)\in C([0,\tau^*); H_x^s \times H_x^{\tilde{l}})$ is thus $\{\cF_t\}$-adapted
		and uniquely solves \eqref{eq:RanZakbc}.
	
	If $s \geq l + \frac{1}{2}$, then $\tilde{l}=l$ and hence $(u,v) \in C([0,\tau^*), H_x^s \times H_x^l)$. In the case $l>s-\frac{1}{2}$, Lemma~\ref{le:PersisReg} implies that $(u,v)\in \tilde{S}^{s,0,b}([0,\tau^*)) \times W^{l,0,s-\frac{1}{2}}([0,\tau^*))$, in particular $(u,v) \in C([0,\tau^*), H_x^s \times H_x^l)$. In both cases we thus conclude $(u,v) \in C([0,\tau^*), H_x^s \times H_x^l)$.
	
	Finally, in view of the rescaling transformation in Subsection~\ref{subsec:IdeaProof},
	we infer that
	\begin{align*}
		(X,Y) := (e^{W_1} u, v + \cT_{\cdot}(W_2))
	\end{align*}
	is the unique solution to \eqref{eq:StoZak} on $[0,\tau^*)$ in the sense of Definition~\ref{def:Solution}.

	\subsection{Preliminary blow-up alternative}\label{Subsec-Blowup}
	Before proving the blow-up alternative in Theorem \ref{thm:LocalWP},
	we first provide the following
	more direct preliminary blow-up criterion,
	that is,
	$\PP$-a.s. if $\tau^* <\infty$,  then
	\begin{align} \label{blowup-nonend-preli}
		(i)\quad  \limsup_{t \rightarrow \tau^*} (\|X(t)\|_{H_x^s} + \|Y(t)\|_{H_x^{\tilde{l}}}) = \infty \qquad \text{or}\qquad (ii)\quad \|X\|_{L^2 (0,\tau^* ; W_x^{\tilde{l}+\frac 12,2^*})} = \infty.
	\end{align}
	
	We prove this alternative by contradiction.
	Suppose that it is not true,
	then by Lemma~\ref{lem:PropNoise},
	there exists a set $\Omega'$ of positive measure such that for every $\omega \in \Omega'$ the limit \eqref{eq:limphi1kbeta1k} is satisfied
	and the following holds:
	\begin{enumerate}
		\item \label{it:BlowUpFiniteTime2}$\tau^*(\omega) < \infty$,
		\item \label{it:BlowUpSobNorm2}$\limsup_{t \rightarrow \tau^*(\omega)} (\|X(t,\omega)\|_{H_x^s} + \|Y(t,\omega)\|_{H_x^{\tilde{l}}}) < \infty$,
		\item \label{it:BlowUpDispersiveNorm2}$\|X(\cdot, \omega)\|_{L^2 (0,\tau^*(\omega) ; W_x^{{\tilde{l}}+\frac{1}{2},2^*})} < \infty$.
	\end{enumerate}
	In what follows,
	we fix this $\omega$ and omit the dependence on $\omega$ for simplicity.
	We also use the notations from Subsection \ref{Subsec-Max-Exist}
	and let $T \in (0,\infty)$ be such that $T > \tau^*$.
	
	Since by \eqref{eq:DefInitialDataRescaled},
	$(u_{0,n}, v_{0,n}) = (X(\sigma_n), Y(\sigma_n))$,
	the above condition \ref{it:BlowUpSobNorm2} implies that
	\begin{align*}
		\limsup_{n \rightarrow \infty}(\|u_{0,n}\|_{H_x^s} + \|v_{0,n}\|_{H_x^{\tilde{l}}}) < \infty.
	\end{align*}
	In particular, there exists $r > 0$ such that
	\begin{equation}
		\label{eq:BoundInitialDataRescaled}
		\sup\limits_{n\in \N} (\|u_{0,n}\|_{H_x^s} + \|v_{0,n}\|_{H_x^{\tilde{l}}}) \leq r.
	\end{equation}
	Moreover, since $\tau^* < \infty$, we have
	$\lim_{n \rightarrow \infty} \tau_n = 0$. Without loss of generality, 
	we further assume that $T - \sigma_n < 1$ for all $n \in \N$.
	Since $T - \sigma_n \geq T - (\tau^* - \tau_{n+1}) > \tau_{n+1}$,
	we infer that
	for all $n \in \N$,
	\begin{align}
		\label{eq:StoppingTimeBlowUp}
		\tau_{n+1} &= \inf\{t \in [0, T] \colon  \|e^{\imu (\cdot) \Delta} u_{0,n}\|_{L^2 (0,t ; L_x^{2^*})} + W_{\sigma_n}^*(t) \geq 2 \delta_*(\|u_{0,n}\|_{H_x^s}, \|v_{0,n}\|_{H_x^{\tilde{l}}})\} \nonumber\\
		&\qquad \wedge \inf\Big\{t \in [0,T] \colon \|v_{L,n}\|_{W^{\tilde{l},a,s-\frac12}([0,t]) + L^2 (0,t ; W_x^{s,d})} \geq \frac{\epsilon}{2}\Big\}.
	\end{align}
	Recall that $\delta_*$ is decreasing in both its arguments. Setting $\delta = \delta_*(r,r) > 0$ we obtain from~\eqref{eq:BoundInitialDataRescaled} that
	\begin{align*}
		\delta_*(\|u_{0,n}\|_{H_x^s}, \|v_{0,n}\|_{H_x^{\tilde{l}}}) \geq \delta_*(r,r) = \delta >0
	\end{align*}
	for all $n \in \N$.
	Taking into account the continuity statement in Lemma~\ref{it:ContSumY},
	from~\eqref{eq:StoppingTimeBlowUp} we thus infer that
	\begin{align}
		\label{eq:AlternativeTaunplus1}
		& W_{\sigma_n}^*(\tau_{n+1}) \geq \delta
		\quad \text{or} \quad
		\|e^{\imu (\cdot) \Delta} u_{0,n}\|_{L^2 (0,\tau_{n+1} ; L_x^{2^*})}  \geq \delta  \notag\\
		\text{or} \quad & \|v_{L,n}\|_{W^{\tilde{l},a,s-\frac12}([0,\tau_{n+1}]) + L^2 (0,\tau_{n+1} ; W_x^{s,d})} \geq \frac{\epsilon}{2}\quad \text{for all $n \in \N$}.
	\end{align}
	
	Below, we will show that none of the above three cases can occur
	and thus reach the contradiction.
	
	\vspace*{4pt plus 2pt minus 2pt}%
	For this purpose,
	we first note that
	\begin{equation}
		\label{eq:WStarsigmanLimZero}
		\lim_{n \rightarrow \infty} \sup_{t \in [0, \tau^* - \sigma_n]} W_{\sigma_n}^*(t) = 0.
	\end{equation}
	The proof of \eqref{eq:WStarsigmanLimZero} is analogous to that of~(5.26) in~\cite{HRSZ24} and is therefore omitted. This excludes the first alternative in~\eqref{eq:AlternativeTaunplus1} for large enough $n$.
	
	\vspace*{4pt plus 2pt minus 2pt}%
	We next show that the second alternative in~\eqref{eq:AlternativeTaunplus1} is not satisfied by proving
	\begin{align}\label{eq:SmallFreeWaveNEP}
		\|e^{\imu t \Delta} u_{0,n}\|_{L^2 (0,\tau_{n+1} ; L_x^{2^*})} \longrightarrow 0,\quad {\rm as} \quad  n \rightarrow \infty.
	\end{align}
	
	We recall that
	\begin{align}  \label{usigman-vsigman}
		(u_{\sigma_n}(t), v_{\sigma_n}(t)) = (e^{W_1(\sigma_n)} u(\sigma_n + t), v(\sigma_n + t) + e^{\imu t |\nabla|} \cT_{\sigma_n}(W_2)),   \quad  t \in [0, \tau^* - \sigma_n).
	\end{align}
	By Proposition~\ref{prop:RefinedRescaling},
	$(u_{\sigma_n}, v_{\sigma_n})$ solves~\eqref{eq:RanZakbsigmacsigmaInProof} on $[0, \tau^* - \sigma_n)$ with initial data $(u_{0,n}, v_{0,n})$,
	and hence,
	\begin{align}\label{eq:estimateu0nNEP}
		\|e^{\imu t\Delta}u_{0,n}\|_{L^2 (0,\tau_{n+1}; L_x^{2^*})}
		&\lesssim \|u_{\sigma_n}\|_{L^2 (0,\tau_{n+1}; L_x^{2^*})}+\Big\| \cI_0(\Re(v_{\sigma_n})u_{\sigma_n}) \Big\|_{L^2 (0,\tau_{n+1}; L_x^{2^*})}\notag \\
		&\qquad + \Big\| \cI_0(b_{\sigma_n}\cdot \nabla u_{\sigma_n}+c_{\sigma_n}u_{\sigma_n}-\Re(\cT_{\sigma_n+\cdot,\sigma_n})u_{\sigma_n}) \Big\|_{L^2 (0,\tau_{n+1}; L_x^{2^*})},
	\end{align}
	where $\cI_0$ is the inhomogeneous Schr\"odinger operator with no potential.
	
	From condition~\ref{it:BlowUpDispersiveNorm2} we have the estimate
	\begin{align*}
		\|u_{\sigma_n}\|_{L^2 (0,\tau_{n+1}; L_x^{2^*})} \lesssim \|u\|_{L^2 (\sigma_n, \sigma_n+\tau_{n+1}; L_x^{2^*})}\lesssim \|X\|_{L^2(\sigma_n, \tau^*; W_x^{\tilde{l}+\frac12, 2^*})}<\infty,
	\end{align*}
	which implies that the first term in \eqref{eq:estimateu0nNEP} vanishes as $n\to\infty$ by using the dominated convergence theorem.

	For the second term in \eqref{eq:estimateu0nNEP}, note that by Strichartz estimates,
	\begin{align*}
		\Big\| \cI_0(\Re(v_{\sigma_n})u_{\sigma_n}) \Big\|_{L^2 (0,\tau_{n+1}; L_x^{2^*})}
		&\lesssim \|\Re(v_{\sigma_n})u_{\sigma_n} \|_{L^2 (0,\tau_{n+1}; L_x^{2_*})}\\
		&\lesssim \|v_{\sigma_n}\|_{L^\infty([0,\tau_{n+1}]; H_x^{\tilde{l}})} \|u_{\sigma_n}\|_{L^2 (0,\tau_{n+1}; L_x^{2^*})}\\
		&\lesssim (\|Y\|_{L^\infty  ([\sigma_n, \tau^*); H_x^{\tilde{l}}) } + \|\cT_{\cdot}(W_2)\|_{L^\infty  ([\sigma_n, \tau^*); H_x^{\tilde{l}})}  )\|X\|_{L^2 (\sigma_n, \tau^*; L_x^{2^*}) }.
	\end{align*}
	Hence, this term tends to zero by condition \ref{it:BlowUpSobNorm2}, \ref{it:BlowUpDispersiveNorm2}, \eqref{eq:WStarsigmanLimZero}, and the dominated convergence theorem.
	
	Moreover, for the third term on the right-hand side of \eqref{eq:estimateu0nNEP}, by Strichartz estimates, Lemma \ref{lem:BilinearLowerOrder}, i.e. \eqref{eq:BlowUpEstNEP}, Lemma \ref{lem:BilinearLowerOrder2}, i.e. \eqref{eq:BlowUpEstNEP2} and~\eqref{eq:BlowUpEstNEP3}, and Sobolev embedding, we have
	\begin{align}
		&\Big\| \cI_0 (b_{\sigma_n}\cdot \nabla u_{\sigma_n}+c_{\sigma_n}u_{\sigma_n}-\Re(\cT_{\sigma_n+\cdot,\sigma_n})u_{\sigma_n}) \Big\|_{L^2 (0,\tau_{n+1}; L_x^{2^*})}\notag\\
		&\lesssim (\tau^*-\sigma_n)^{\frac12} \Big( \|c_{\sigma_n}\|_{L_t^\infty H_x^{\frac{d-3}{2}}}  + \|\cT_{\sigma_n+\cdot,\sigma_n}\|_{L_t^\infty H_x^{\frac{d-3}{2}}} \nonumber \\
		&\qquad + \|b_{\sigma_n}\|_{L_t^\infty L^\infty_x}^{\frac12}\sum_{j=1}^d \|b_{\sigma_n}\|_{L_{\vece_j}^{1,\infty}}^{\frac12}+\|b_{\sigma_n}\|_{L^\infty_t H^{\frac{d}{2}}_x}\Big) \|u_{\sigma_n}\|_{S^{\frac{1}{2},0,0}([0,\tau_{n+1}])}\notag \\
		&\lesssim  (\tau^*-\sigma_n)^{\frac12} \sup_{n\in \N}\|u\|_{\X^{\tilde{l} + \frac12,0}_{([0,\sigma_n])}}\sup_{t\in [0,\tau^*-\sigma_n)}W_{\sigma_n}^*(t) ,\label{prop:blowupnoise}
	\end{align}
	where we also used $\tau^*-\sigma_n< T-\sigma_n<1$. 
	Moreover, we note from conditions \ref{it:BlowUpSobNorm2} and~\ref{it:BlowUpDispersiveNorm2} that
	the assumptions of Lemma \ref{le:unibouep} are satisfied provided $\X^{s,a}\hookrightarrow \X^{\tilde{l}+\frac12,0}$ as $u \in \X^{s,a}([0,\sigma_n])$ for all $n \in \N$ by construction.
	
	The embedding $\X^{s,a}\hookrightarrow \X^{\tilde{l}+\frac12,0}$ is trivially true in the case $a=0$ since $s\geq \tilde{l}+\frac12$. If $a>0$, by our choice of the parameter in~\eqref{con:ab}, we have $s-l\geq 1, a = \frac{3}{4}(s-l)-\frac12$ and $\tilde{l}+\frac12 = l+\frac12$. The embedding then follows from~\eqref{eq:CharSsablambda}, the definition of $\X^{s,a}$, and
	\begin{align*}
		\lambda^{l-\frac12} \| (\imu \partial_t +\Delta) u_\lambda \|_{L_{t,x}^2} &\lesssim \lambda^{l-\frac12 + \frac34(s-l)-\frac12} \Big\| \Big(\frac{\lambda+|\partial_t|}{\lambda^2+|\partial_t|}\Big)^a(\imu \partial_t +\Delta) u_\lambda \Big\|_{L_{t,x}^2}\\
		&= \lambda^{\frac14(l-s)}\lambda^{s-1}\Big\| \Big(\frac{\lambda+|\partial_t|}{\lambda^2+|\partial_t|}\Big)^a(\imu \partial_t +\Delta) u_\lambda \Big\|_{L_{t,x}^2}\\
		&\lesssim \lambda^{s-1}\Big\| \Big(\frac{\lambda+|\partial_t|}{\lambda^2+|\partial_t|}\Big)^a(\imu \partial_t +\Delta) u_\lambda \Big\|_{L_{t,x}^2}.
	\end{align*}
	
	Thus, Lemma \ref{le:unibouep} implies the uniform bound $\sup_{n\in \N}\|u\|_{\X^{\tilde{s},0}([0,\sigma_n])}<\infty$, and hence the right-hand side of \eqref{prop:blowupnoise} tends to $0$ as $n\to \infty$, which yields \eqref{eq:SmallFreeWaveNEP}.

	\vspace*{4pt plus 2pt minus 2pt}%
	It remains to show  that the third alternative in~\eqref{eq:AlternativeTaunplus1} cannot hold, i.e.,
	\begin{equation}
		\label{eq:BlowUpAltLinWavePotSmall}
		\|v_{L,n}\|_{W^{{\tilde{l}},a,s-\frac12}([0,\tau_{n+1}]) + L^2 (0,\tau_{n+1} ; W_x^{s,d})} < \frac{\epsilon}{2}
	\end{equation}
	if $n \in \N$ is large enough.
	
	To that purpose we first recall from the construction in Subsection \ref{Subsec-Max-Exist} that
	\begin{align}
		v_{L,n}(t) = e^{\imu t |\nabla|} v_{0,n} = e^{\imu t |\nabla|}(v(\sigma_n) + \cT_{\sigma_n}(W_2)).\label{eq:ReWritreVLNNEP}
	\end{align}
	By Sobolev's embedding we get
	\begin{align}
		\|e^{\imu t |\nabla|}\cT_{\sigma_n}(W_2)\|_{W^{{\tilde{l}},a,s-\frac12}([0,\tau_{n+1}]) + L^2 (0,\tau_{n+1} ; W_x^{s,d})} 
		&\lesssim \|e^{\imu t |\nabla|}\cT_{\sigma_n}(W_2)\|_{L^2 (0,\tau_{n+1} ; W_x^{s,d})} \nonumber\\
		&\lesssim (\tau^* - \sigma_n)^{\frac{1}{2}} \sup_{t \in [0,\tau*]} \|\cT_t(W_2)\|_{H_x^{s+\frac{d-2}{2}}}. \label{eq:BlowUpWaveProfileSecondSummand}
	\end{align}

	Using Lemma~\ref{le:unibouep} once more, we can extend $v$ to a function in $C([0,\tau^*], H_x^{\tilde{l}})$.
	Let $\phi \in C_c^\infty(\R^d)$ with $0 \leq \phi \leq 1$, $\phi = 1$ on $B_1(0)$, $\phi = 0$ on $B_2(0)^c$, and $\|\phi\|_{L^1} = 1$,
	and set $\phi_\nu(x) = \nu^{-d} \phi(\frac{x}{\nu})$ for $x \in \R^d$ and $\nu > 0$.
	Since  $v$ is continuous on the compact interval $[0,\tau^*]$, we have
	\begin{align*}
		\|v - v \ast \phi_\nu\|_{L^\infty ([0,\tau^*] ; H_x^{\tilde{l}})} \longrightarrow 0,\quad \text{as } \nu \rightarrow 0.
	\end{align*}
	Hence, we can fix $\nu > 0$ such that $\|v - v \ast \phi_\nu\|_{L^\infty ([0,\tau^*] ; H_x^{{\tilde{l}}})} < \frac{\epsilon}{4C'}$,
	where $C'$ is the implicit constant in  Lemma~\ref{lem:LinearEstimateHalfWave}.
	Note that
	\begin{align}
		&\|v(\sigma_n)\|_{L_x^2} \lesssim \|v\|_{L^\infty  ([0,\tau^*); L_x^2)} \lesssim \|Y\|_{L^\infty  ([0,\tau^*); L_x^2)} +  \|\cT_{\cdot} (W_2)\|_{L_t^\infty L_x^2}=: R<\infty. \label{eq:BlowUpBoundv}
	\end{align}
	Then we use Lemma~\ref{lem:LinearEstimateHalfWave} and Sobolev's embedding again as in~\eqref{eq:BlowUpWaveProfileSecondSummand} to infer that
	\begin{align} 
		\label{eq:BlowUpWaveProfileFirstSummand}
		&\|e^{\imu t |\nabla|}v(\sigma_n)\|_{W^{{\tilde{l}},a,s-\frac12}([0,\tau_{n+1}]) + L^2 (0,\tau_{n+1} ; W_x^{s,d})} \nonumber\\
		& \leq \|e^{\imu t |\nabla|}(v(\sigma_n) - v \ast \phi_\nu(\sigma_n))\|_{W^{{\tilde{l}},a,s-\frac12}([0,\tau_{n+1}])} + \|e^{\imu t |\nabla|}(v \ast \phi_\nu(\sigma_n))\|_{L^2 (0,\tau_{n+1} ; W_x^{s,d})} \nonumber \\
		&\leq C'\|v(\sigma_n) - v \ast \phi_\nu(\sigma_n)\|_{H_x^{\tilde{l}}} + C\|e^{\imu t |\nabla|}(v(\sigma_n) \ast \phi_\nu)\|_{L^2 (0,\tau_{n+1} ; H_x^{\frac{d}{2}+s-1})} \nonumber \\
		&\leq C'\|v - v \ast \phi_\nu\|_{L^\infty ([0,\tau^*] ; H_x^{\tilde{l}})} + C\tau_{n+1}^{\frac{1}{2}} \|v(\sigma_n) \ast \phi_\nu\|_{H^{\frac{d}{2}+s-1}_x}  \nonumber \\
		&\leq \frac{\epsilon}{4} + C(\tau^* - \sigma_n)^{\frac{1}{2}} \nu^{-(s+\frac{d}{2}-1)} \|v(\sigma_n)\|_{L^2_x} \nonumber \\
		&\leq \frac{\epsilon}{4} + C(\tau^* - \sigma_n)^{\frac{1}{2}} \nu^{-(s+\frac{d}{2}-1)} R.
	\end{align}
	
	Therefore, combining \eqref{eq:BlowUpWaveProfileSecondSummand} and \eqref{eq:BlowUpWaveProfileFirstSummand} and using $\lim_{n \rightarrow \infty} \sigma_n = \tau^*$, we obtain \eqref{eq:BlowUpAltLinWavePotSmall} for all large enough~$n$. This finishes the proof of the blow-up alternative in~\eqref{blowup-nonend-preli}.
	\hfill \qed
	
	\subsection{Bootstrapping  regularity}\label{Subsec:ImproBlowUpCon}
	
	We finally prove the blow-up condition in Theorem~\ref{thm:LocalWP}. We argue by contradiction and assume that the blow-up alternative is not true. Then there exists $\Omega' \subseteq \Omega$ with positive measure such that  \eqref{eq:limphi1kbeta1k} holds and
	\begin{enumerate}
		\item \label{it:BlowUpFiniteTime} $\tau^*(\omega) < \infty$
		\item \label{it:BlowUpSobNorm} $\limsup_{t \rightarrow \tau^*(\omega)} (\|u(t,\omega)\|_{H_x^{\frac{d-3}{2}}} + \|v(t,\omega)\|_{H_x^{\frac{d-4}{2}}}) < \infty$
		\item \label{it:BlowUpDispersiveNorm} $\|u(\cdot, \omega)\|_{L^2 (0,\tau^*(\omega) ; W_x^{\frac{d-3}{2},2^*})} < \infty$
	\end{enumerate}
	for all $\omega \in \Omega'$. Due to~(i) and the previous step, we can also assume that
	\begin{align}\label{prop:ImBlowUpOriCon}
		\limsup_{t \rightarrow \tau^*(\omega)} (\|u(t,\omega)\|_{H_x^s} + \|v(t,\omega)\|_{H_x^{\tilde{l}}}) = \infty \qquad \text{or}\qquad  \|u(\cdot, \omega)\|_{L^2 (0,\tau^*(\omega) ; W_x^{\tilde{l}+\frac{1}{2},2^*})} = \infty
	\end{align}
	holds for every $\omega \in \Omega'$, where $\tilde{l}=\min \{s-\frac{1}{2}, l \}$. 
	Below we fix such an $\omega \in \Omega'$ and drop it for simplicity.

	Define
	\begin{equation}
		\label{eq:Defl0}
		l_w:= \sup\Big\{\frac{d-4}{2} \leq l' \leq \tilde{l}\colon \|v\|_{L^\infty ([0,\tau^*); H_x^{l'})} < \infty \Big\}.
	\end{equation}
	Note that due to~\ref{it:BlowUpSobNorm}
	the above set is not empty.
	For any $l' \geq \frac{d-4}{2}$, we now define the parameter sets
	\begin{align*}
		A_{l'}&:= \Big\{s' \in [l'+\tfrac12, s] \colon (s',l') \text{ satisfies } \eqref{IniReg-condition} \Big\},\\
		S_{l'}&:= \Big\{s'\in A_{l'}: \|u\|_{L^\infty ([0,\tau^*); H_x^{s'})} + \|u\|_{L^2 (0,\tau^*; W_x^{s'-a',2^*})} < \infty\Big\},
	\end{align*}
	where $a'=a^*(s',l')$ from~\eqref{con:ab}.
	
		The proof of the blow-up condition relies on two key lemmas establishing the persistence of regularity: Lemma~\ref{lem:PersRegSchr} for the Schr{\"o}dinger component and Lemma~\ref{lem:PersRegWave} for the wave component.

	The core idea exploits the flexibility in distributing regularities within the nonlinear estimates (Lemma~\ref{le:Bilinearnablauv} and Lemma~\ref{lem:BilinearEstimates}). This ultimately allows us to control the solution at a higher regularity $(s,l)$ using controlling norms at lower regularity. We demonstrate this regularity improvement separately for the Schr{\"o}dinger component (at fixed wave regularity) and the wave component (at fixed Schr{\"o}dinger regularity). Intuitively, these results can be applied iteratively to control the solution at any admissible regularity in~\eqref{IniReg-condition} via a norm at lower regularity. More precisely, we establish a self-improving property of the parameter sets $A_{l'}$ and $S_{l'}$, analogous to a bootstrap argument.
	
	\begin{lemma}[Persistence of Schr{\"o}dinger regularity]
		\label{lem:PersRegSchr}
		Let $\frac{d-4}{2} \leq l' \leq \tilde{l}$ such that
		\begin{align*}
			\|v\|_{L^\infty([0,\tau^*); H_x^{l'})} < \infty.
		\end{align*}
		If $S_{l'} \neq \emptyset$, then $S_{l'} = A_{l'}$.
	\end{lemma}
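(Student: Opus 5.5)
We argue by a continuity (bootstrap) argument in the Schrödinger regularity at fixed wave regularity $l'$. The set $A_{l'}$ is an interval $[s_{\min}, s]$ (intersection of $[l'+\frac12,s]$ with the convex region \eqref{IniReg-condition}, possibly minus an excluded corner point that does not disconnect it). Moreover $S_{l'}$ is downward closed in $A_{l'}$: if $s'\in S_{l'}$ and $s''\le s'$ in $A_{l'}$, then $a^*(s'',l')\ge a^*(s',l')-\frac34(s'-s'')$ and $s''-a''\le s'-a'$, so the $L^\infty H^{s''}$ and $L^2W^{s''-a'',2^*}$ norms are controlled by the $s'$ ones. It therefore suffices to show that there is a fixed increment $\eta>0$, depending only on $(d,l',s)$, such that $s'\in S_{l'}$ implies $\min\{s'+\eta,s\}\in S_{l'}$. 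Finitely many steps then give $s\in S_{l'}$, hence $S_{l'}=A_{l'}$.

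For the step, fix $s'\in S_{l'}$ and $s''=\min\{s'+\eta,s\}$. Using the rescaled pieces $(u_{\sigma_n},v_{\sigma_n})$ from Subsection~\ref{Subsec-Max-Exist}, we work on the short intervals $[\sigma_n,\tau^*)$, where \eqref{prop:holdernoise}--\eqref{prop:holdernoiseb2} make the noise increments small. We aim for a uniform bound $\sup_n\|u\|_{\X^{s'',a''}([0,\sigma_n])}<\infty$, which gives both the $L^\infty H^{s''}$ bound and, via \eqref{eq:DispersiveBdAdapted}, the $L^2W^{s''-a'',2^*}$ bound.

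First we control the wave component. By Lemma~\ref{le:Bilinearnablauv}, or rather its refined form from Lemma~\ref{lem:BilinearEstimates} which splits $|\nabla||u|^2$ asymmetrically, we place $s''$ on one factor measured in $\X^{s'',a''}$ and $s'$ on the other measured in the already bounded $L^2W^{s'-a',2^*}$-type norm. This bounds $\|\cJ_0(|\nabla||u|^2)\|_{W^{l',a'',s''-\frac12}}$ by $C\,\|u\|_{s'\text{-norms}}\,\|u\|_{\X^{s'',a''}}$. Together with $\|v\|_{L^\infty H^{l'}}<\infty$ and Lemma~\ref{lem:LinearEstimateHalfWave} (plus the smoothing argument with $v\ast\phi_\nu$ from Subsection~\ref{Subsec-Blowup} for the free part), this controls $v$ linearly in $\|u\|_{\X^{s'',a''}}$.

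Second we control the Schrödinger Duhamel formula \eqref{usign-Duhamel} in $\X^{s'',a''}$ via Lemma~\ref{lem:LinEstimates}. The noise terms are handled by Lemmas~\ref{lem:BilinearLowerOrder} and~\ref{lem:BilinearLowerOrder2}; they are bounded by $\varepsilon\|u\|_{\X^{s'',a''}}$ on short intervals, since Hypothesis (H) is stated at the top regularity $s\ge s''$. The product term $\Re(v)u$ is estimated in $N^{s'',a'',0}$ via Lemma~\ref{lem:BilinearEstimates}, with the wave at regularity $l'$. The aim is an estimate of the form
\[
\|u\|_{\X^{s'',a''}(J)}\lesssim \|u(t_J)\|_{H^{s''}}+\big(\|v\|_{W^{l',a'',s''-\frac12}(J)}+\varepsilon\big)\|u\|_{\X^{s'',a''}(J)}.
\]
Here the wave factor is made small by partitioning $[0,\tau^*)$ into finitely many intervals on which the relevant norms (the free wave part, and the $L^2_t$ Strichartz norms at level $s'$ that appear in front of the nonlinear contribution) are small, as in Lemma~\ref{lem:SmallnessWavePotential}. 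We then glue the intervals with Lemma~\ref{lem:DecompX}, exactly as in the proof of Lemma~\ref{le:unibouep}.

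The main obstacle is the bilinear step. We must check that, whenever $(s',l')$ and $(s'',l')$ both satisfy \eqref{IniReg-condition} with $|s''-s'|\le\eta$, the estimates of Lemma~\ref{lem:BilinearEstimates} and Lemma~\ref{le:Bilinearnablauv} admit the asymmetric regularity splitting. In that splitting the excess derivative $s''-s'$ falls on the factor in the strong $\X^{s'',a''}$ norm, while the other factor is measured only in the $s'$-level $L^\infty H\cap L^2W$ norms, which are finite but not small. The $L^2_t$-type norm is what supplies smallness on short subintervals, and it must appear at least to a positive power $\theta$, as in \eqref{eq:FixedPointOpNonlin}. We would first verify this at the level of the dyadic estimates \eqref{prop:temesti1}--\eqref{prop:temesti4}, where the conditions on $(s,l,a,\beta)$ are open, so a small $\eta$ is admissible. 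Separate care is needed near the excluded corner points and near the switch $s-l'=1$, where $a^*$ changes formula.
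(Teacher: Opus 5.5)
Your overall architecture matches the paper: small upward steps in Schr\"odinger regularity at fixed $l'$, rescaled pieces near $\tau^*$ to make the noise small, absorption in $\X$, and gluing by Lemma~\ref{lem:DecompX}. However, the central estimate as you set it up does not close, and a key input is missing. I use your notation throughout: $s'\in S_{l'}$ is the known level and $s''=\min\{s'+\eta,s\}$ is the target.

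\textbf{The estimate is quadratic, not linear.} You measure the wave in $W^{l',a'',s''-\frac12}$, which forces the target norm into the wave bound: $\|\cJ_0(|\nabla||u|^2)\|\lesssim \|u\|_{s'}\|u\|_{\X^{s'',a''}}$. Inserting this into $\Re(v)u\in N^{s'',a'',0}$ produces a term $\epsilon\|u\|^2_{\X^{s'',a''}(J)}$, not the linear inequality you display. A quadratic inequality only closes if $\epsilon\,\|u(t_J)\|_{H^{s''}_x}\ll 1$. But the $H^{s''}_x$-norms at the interval endpoints are exactly what is not yet known to be bounded. A partition fixed in advance does not give this, and an adaptive partition can accumulate before $\tau^*$.

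The paper avoids this entirely. The product estimate (Lemma~\ref{it:BilinearEstNonendpoint}) only requires $\beta''=\max\{\frac{d-4}{2},s''-1\}$. At this low $\beta$, (8.1) of \cite{CHN23} controls the wave nonlinearity by lower-level norms alone: $\|\cJ_0[|\nabla||u|^2]\|_{W^{l',a'',\beta''}}\lesssim\|u\|^{\theta}_{L^2_tW^{\frac{d-3}{2},2^*}_x}\|u\|^{2-\theta}_{S^{s',a',0}}$. This is where the step restriction $s''-s'<\frac18$ enters. The target norm then appears only linearly. Smallness comes from the tail $\|u\|_{L^2(\sigma_N,\tau^*;W^{\frac{d-3}{2},2^*}_x)}\to 0$ together with $W^*_{\sigma_N}\to 0$. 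This gives $\|u\|_{\X^{s'',a''}([\sigma_N,\sigma_M])}\lesssim\|u(\sigma_N)\|_{H^{s''}_x}$ uniformly in $M$ for one fixed $N$, so no partition is needed. The absorption is legitimate because $u\in\X^{s'',a''}([0,\sigma_M])$ for each $M$, as $s''\le s$.

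\textbf{A missing input at the lower level.} Membership $s'\in S_{l'}$ only gives $L^\infty_tH^{s'}_x\cap L^2_tW^{s'-a',2^*}_x$. The adapted wave estimates need the full $S^{s',a',0}$-norm on $[0,\tau^*)$, since the non-resonant high--low terms use $(\imu\partial_t+\Delta)$ of one factor. The $v\ast\phi_\nu$ argument for the free wave also needs $v\in C([0,\tau^*];H^{l'}_x)$. Neither follows from what you list.

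The missing idea is to apply the already proved preliminary blow-up alternative \eqref{blowup-nonend-preli} at regularity $(s',l')$. Since $l'+\frac12\le s'-a'$, all the norms in that alternative stay bounded on $[0,\tau^*)$. Hence the $H^{s'}_x\times H^{l'}_x$ solution extends beyond $\tau^*$, so $u\in\X^{s',a'}([0,\tau^*])$. An argument as in Lemma~\ref{le:unibouep} then gives the continuous extension of $v$ to $\tau^*$.

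\textbf{A minor point on downward closedness.} $S_{l'}$ need not be downward closed as you claim. The quantity $s-a^*(s,l')$ jumps down at $s=l'+1$, from the left limit $l'+1$ to $l'+\frac34$, and your inequality for $a^*$ fails there. You do not need closedness, though. It suffices that $l'+\frac12\in S_{l'}$, which holds because $s_0-a^*(s_0,l')\ge l'+\frac12$ for every $s_0\in A_{l'}$. Start the upward induction from $l'+\frac12$.
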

	
	\begin{proof}
		We argue by contradiction. Suppose that $S_{l'} \neq \emptyset$, but $S_{l'} \neq A_{l'}$. Then there exists $\frac{d-3}{2} \leq s'' \leq s'$ such that $s'' \in S_{l'}$, $s' \in A_{l'} \setminus S_{l'}$, and $s' < s'' + \frac{1}{8}$. We will show that
		\begin{align}\label{prop:ImBlowUpU3}
			\sup_{n\in \N} \|u\|_{\X^{s', a'}([0,\sigma_n])}<\infty,
		\end{align}
		where $a' = a^*(s', l')$ and $\{\sigma_n\}$ is a sequence that converges to $\tau^*$. Provided~\eqref{prop:ImBlowUpU3} is true, we get
		\begin{align*}
			\|u\|_{L^\infty (0,\tau^*; H_x^{s'})} + \|u\|_{L^2 (0,\tau^*; W_x^{s'-a',2^*})} &= \sup_{n\in \N} (\|u\|_{L^\infty ([0,\sigma_n]; H_x^{s'})} + \|u\|_{L^2 ([0,\sigma_n]; W_x^{s'-a',2^*})}) \\
			&\leq \sup_{n\in \N}\|u\|_{\X^{s', a'}([0,\sigma_n])}<\infty,
		\end{align*}
		and hence $s'\in S_{l'}$, which contradicts the fact that $s' \notin S_{l'}$.
		
		So it remains to show \eqref{prop:ImBlowUpU3}. For that purpose, recall the definition of $(u_{\sigma_n}, v_{\sigma_n})$ from~\eqref{eq:u-sigma-v-sigma-rescal.1} and \eqref{eq:u-sigma-v-sigma-rescal.2}, which solves the system \eqref{eq:RanZakbsigmacsigma} with coefficients $b_{\sigma_n}, c_{\sigma_n}$ on the interval $[0,\tau^*-\sigma_n)$. For $N\in\N$ to be chosen below, we write
		\begin{align*}
			u_{\sigma_N}(t) = e^{\imu t\Delta}u(\sigma_N) + \cI_0[\Re(v_{\sigma_N})u_{\sigma_N}] - \cI_0[b_{\sigma_N}\cdot \nabla u_{\sigma_N} + c_{\sigma_N}u_{\sigma_N} - \Re(\cT_{\sigma_N+\cdot, \sigma_N}(W_2))u_{\sigma_N}].
		\end{align*}
		
		For $M>N$, we estimate by Lemma \ref{lem:BilinearLowerOrder} and Lemma~\ref{lem:BilinearLowerOrder2}
		\begin{align*}
			\|u_{\sigma_N}\|_{\X^{s',a'}([0,\sigma_M-\sigma_N])}
			&\lesssim  \|u(\sigma_N)\|_{H_x^{s'}} + \|\cI_0[\Re(v_{\sigma_N})u_{\sigma_N}]\|_{\X^{s',a'}([0,\sigma_M-\sigma_N))} \\
			&\qquad + \sup_{t\in [0,\tau^*-\sigma_N)} (1 + t^{\frac12}) W^*_{\sigma_N}(t)  \|u_{\sigma_N}\|_{\X^{s',a'}([0,\sigma_M-\sigma_N])},
		\end{align*}
		cf.~\eqref{eq:EstFixedPointOp2}.
		As $\sup_{t\in [0,\tau^*-\sigma_N)} W^*_{\sigma_N}(t)$ tends to zero as $N\to \infty$, we find some $N_0\in \N$ such that for any $N\geq N_0$,
		\begin{align}
			\|u_{\sigma_N}\|_{\X^{s',a'}([0,\sigma_M-\sigma_N])}\lesssim \|u(\sigma_N)\|_{H_x^{s'}} + \|\cI_0[\Re(v_{\sigma_N})u_{\sigma_N}]\|_{\X^{s',a'}([0,\sigma_M-\sigma_N])}.
		\end{align}
		Here we also exploited that $u_{\sigma_N}\in \X^{s',a'}([0,\sigma_M-\sigma_N])$ as $u\in \X^{s',a'}([0,\sigma_M])$ by the definition of $\tau^*$ and the fact that $s' \leq s$.
		
		Next, by Lemma~\ref{lem:LinEstimates}, Lemma~\ref{it:BilinearEstNonendpoint} and the embedding $\X^{s',a'}\hookrightarrow S^{s',a',0}$, we have
		\begin{align*}
			\|\cI_0[\Re(v_{\sigma_N})u_{\sigma_N}]\|_{\X^{s',a'}([0,\sigma_M-\sigma_N])}
			\lesssim \|v_{\sigma_N}\|_{W^{l',a',\beta'}+ L_t^2 W_x^{s',d}([0,\sigma_M-\sigma_N])} \|u_{\sigma_N}\|_{\X^{s',a'}([0,\sigma_M-\sigma_N])},
		\end{align*}
		where we set $\beta':= \max\{\frac{d-4}{2}, s'-1\}$.
		
		We write
		\begin{align*}
			v_{\sigma_N}(t) = e^{\imu t|\nabla|} v(\sigma_{N}) - \cJ_0[|\nabla||u_{\sigma_n}|^2]
		\end{align*}
		and estimate for any $M > N \geq N_0$
		\begin{align*}
			&\|v_{\sigma_N}\|_{W^{l',a',\beta'}+ L_t^2  W_x^{s',d} ([0,\sigma_M-\sigma_N])}\\
			&\leq \|e^{\imu t|\nabla|} v(\sigma_{N})\|_{W^{l',a',\beta'}+ L_t^2  W_x^{s',d}([0,\sigma_M-\sigma_N])}+ \|\cJ_0[|\nabla||u_{\sigma_n}|^2]\|_{W^{l',a',\beta'}([0,\sigma_M-\sigma_N])}.
		\end{align*}
		Set $a'' = a^*(s'',l')$. Since $s''\in S_{l'}$ and $l' + \frac12 \leq s'' - a''$, we get
		\begin{align*}
			\|u\|_{L^2 (0,\tau^*; W_x^{l'+\frac12,2^*})}\leq \|u\|_{L^2 (0,\tau^*; W_x^{s''-a'',2^*})}<\infty,
		\end{align*}
		so that the blow-up condition
		\eqref{blowup-nonend-preli} we already established implies that $\tau^{**}>\tau^*$, where $\tau^{**}$ is the maximal existence time of the $H_x^{s''}\times H_x^{l'}$ solution to the system~\eqref{eq:RanZakbc}. In particular, $u\in \X^{s'',a''}([0,\tau^*])$ and then an analogous argument as for Lemma~\ref{le:unibouep} implies that $v$ can be continuously extended in $C([0,\tau^*],H_x^{l'})$. As in the proof of~\eqref{eq:BlowUpAltLinWavePotSmall} we then infer
		\begin{align*}
			\|e^{\imu t|\nabla|} v(\sigma_N)\|_{W^{l',a',\beta'}+ L_t^2  W_x^{s',d} ([0,\tau^*-\sigma_N])} \longrightarrow 0
		\end{align*}
		as $N\to\infty$. Consequently, we find some positive integer $N_1\geq N_0$ such that for any $M>N\geq N_1$
		\begin{align*}
			\|u_{\sigma_N}\|_{\X^{s',a'}([0,\sigma_M-\sigma_N])}\lesssim \|u(\sigma_N)\|_{H_x^{s'}} + \|\cJ_0[|\nabla||u_{\sigma_N}|^2]\|_{W^{l',a',\beta'}([0,\sigma_M-\sigma_N])} \|u_{\sigma_N}\|_{\X^{s',a'}([0,\sigma_M-\sigma_N])}.
		\end{align*}
		
		For the remaining wave nonlinearity, we employ (8.1) from~\cite{CHN23}, where we exploit that $s' - s'' < \frac18$. This estimate shows that there exists some $\theta>0$ such that
		\begin{align}\label{prop:JEMS8_1}
			\|\cJ_0[|\nabla||u_{\sigma_N}|^2]\|_{W^{l',a',\beta'}([0,\sigma_M-\sigma_N])}\lesssim \|u_{\sigma_N}\|^\theta_{L^2 (0,\sigma_M-\sigma_N; W_x^{\frac{d-3}{2},2^*})} \|u_{\sigma_N}\|^{2-\theta}_{S^{s'',a'',0}([0,\sigma_M-\sigma_N])}.
		\end{align}
		Using Lemma~\ref{lem:ProductNoiseInX}, we note that
		\begin{align*}
			\|u_{\sigma_N}\|_{S^{s'',a'',0}([0,\sigma_M-\sigma_N])} \lesssim \|u\|_{\X^{s'',a''}([0,\tau^*])}.
		\end{align*}
		Moreover, we already showed that $\|u\|_{\X^{s'',a''}([0,\tau^*])}$ is finite and by~\ref{it:BlowUpDispersiveNorm}, we get
		\begin{align*}
			\|u_{\sigma_N}\|_{L^2 (0,\sigma_M-\sigma_N; W_x^{\frac{d-3}{2},2^*})} \lesssim \|u\|_{L^2 (\sigma_N,\tau^*; W_x^{\frac{d-3}{2}, 2^*})}\longrightarrow 0
		\end{align*}
		as $N\to\infty$. Combining this with the above, we conclude that there exists $N_2 \geq N_1$ that for any $M>N\geq N_2$,
		\begin{align*}
			\|u\|_{\X^{s',a'}([\sigma_N,\sigma_M])}\lesssim \|u(\sigma_{N})\|_{H_x^{s'}},
		\end{align*}
		where the implicit constant is independent of $M$ and $N$. Now we fix some $N \geq N_2$. Since $u$ belongs to $\X^{s',a'}([0,\sigma_N+\frac{\tau^*-\sigma_N}{2}])$, Lemma~\ref{lem:DecompX} implies that there exists $R>0$ such that
		\begin{align*}
			\|u\|_{\X^{s',a'}([0,\sigma_M])} \leq R
		\end{align*}
		for all $M \in \N$, which yields \eqref{prop:ImBlowUpU3}, thus finishing the proof of the lemma.
	\end{proof}
	
	We continue with the lemma on the persistence of regularity for the wave component. 
	\begin{lemma}[Persistence of wave regularity]
		\label{lem:PersRegWave}
		Let $l'' \geq \frac{d-4}{2}$ such that $\|v\|_{L^\infty([0,\tau^*); H_x^{l''})} < \infty$ and $s' \in S_{l''}$ with $s' > \frac{d-3}{2}$. For every $l' \in (l'', \tilde{l}]$ satisfying $l' \leq s' - \frac{1}{2}$ and $l' < l'' + \frac{1}{2}$, we then have
		\begin{align*}
			\|v\|_{L^\infty([0,\tau^*); H_x^{l'})} < \infty.
		\end{align*}
	\end{lemma}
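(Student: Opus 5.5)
Because the wave equation in \eqref{eq:RanZakbc} is linear with a forcing that depends only on $u$, I would not iterate at all. The plan is to bound $v$ in $H_x^{l'}$ straight from the Duhamel formula and a bilinear estimate in which $u$ enters only through norms we already control. First, write on $[0,\tau^*)$
\begin{align*}
v(t) = e^{\imu t|\nabla|}v_0 - \cJ_0[|\nabla||u|^2](t).
\end{align*}
Since $v_0\in H_x^l$ and $l'\le \tilde l\le l$, the free part is bounded in $L^\infty_tH_x^{l'}$. It then suffices to show
\begin{align*}
\sup_{n}\|\cJ_0[|\nabla||u|^2]\|_{L^\infty([0,\sigma_n];H_x^{l'})}<\infty
\end{align*}
along a sequence $\sigma_n\uparrow\tau^*$.

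The key input is $s'\in S_{l''}$. By the argument used in the proof of Lemma~\ref{lem:PersRegSchr}, $s'$ controls the solution: $\|u\|_{L^\infty H^{s'}_x}+\|u\|_{L^2W^{s'-a',2^*}_x}<\infty$ with $a'=a^*(s',l'')$. The preliminary alternative \eqref{blowup-nonend-preli} at regularity $(s',l'')$ then shows that the $H^{s'}\times H^{l''}$ solution lives beyond $\tau^*$. Its Schr\"odinger part coincides with $u$, so $u\in\X^{s',a'}([0,\tau^*])$ and in particular $u\in S^{s',a',0}([0,\tau^*])$. Note that $(s',l')$ satisfies \eqref{IniReg-condition}: indeed $s'\ge l'+\frac12\ge l'-1$, $s'\le s\le l+2$, and since $l'>l''$ the lower bound $s'\ge\frac{l''}{2}+\frac{d-2}{4}$ needs to be rechecked for $l'$. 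It does hold, because $s'\ge l'+\frac12$ together with $l'\ge\frac{d-4}{2}$ gives $s'\ge\frac{l'}{2}+\frac{d-2}{4}$. Moreover $s'>l'$, so $b^*(s',l')=0$ and the energy part of the wave bound only needs $S^{s',a,0}$-norms.

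Next, apply the bilinear estimate in Lemma~\ref{le:Bilinearnablauv} (or Lemma~\ref{lem:BilinearEstimates}, item \ref{it:BilinearEstNonendpointW}) with Schr\"odinger regularity $s'$, wave regularity $l'$, $a=a^*(s',l')$ and $\beta=s'-\frac12$. This gives
\begin{align*}
\|\cJ_0(|\nabla||u|^2)\|_{W^{l',a,\beta}([0,\sigma_n])}\lesssim\|u\|_{S^{s',a,0}([0,\sigma_n])}^2,
\end{align*}
and the right-hand side is uniform in $n$. The conditions of Lemma~\ref{le:Bilinearnablauv} need to be verified here; the one that matters is $2a\le 2s'-l'-\frac{d-2}{2}$. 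One also has to compare $a^*(s',l')$ with $a^*(s',l'')$: since $l'>l''$, $a^*(s',l')\le a^*(s',l'')$, so $u\in S^{s',a^*(s',l''),0}$ embeds into $S^{s',a^*(s',l'),0}$ by \eqref{eq:CharSsablambda}. The restriction $l'<l''+\frac12$ enters exactly here, because it lets one move between these parameter pairs without crossing a borderline such as $s'-l'=1$ in a way that breaks the comparison. If the direct bound fails near the endpoint, I would instead use the interpolated form \eqref{prop:JEMS8_1}, as in the proof of Lemma~\ref{lem:PersRegSchr}. That version puts part of the norm on $\|u\|_{L^2W^{\frac{d-3}{2},2^*}_x}$, which is finite by \ref{it:BlowUpDispersiveNorm}.

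The main obstacle is the previous step: checking that the bilinear estimate holds in the range $l'\in(l'',\min\{\tilde l,s'-\frac12,l''+\frac12\})$, using only norms controlled at $(s',l'')$. The same bound also gives the finiteness of the $W^{l',a,\beta}$-norm on $[0,\tau^*)$. Since $\|v\|_{L^\infty H^{l'}_x}$ is dominated by that norm, the claim follows.
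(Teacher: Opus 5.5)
Your architecture matches the paper's: the Duhamel formula, then $u\in\X^{s',a''}([0,\tau^*])$ with $a''=a^*(s',l'')$ obtained from the preliminary alternative \eqref{blowup-nonend-preli} at regularity $(s',l'')$, then one bilinear wave estimate. The step that transfers the Schr\"odinger bound fails, however. You run the bilinear estimate with $a'=a^*(s',l')\le a''$ and claim $S^{s',a'',0}\hookrightarrow S^{s',a',0}$ by \eqref{eq:CharSsablambda}. That embedding goes the other way. By \eqref{eq:CharSsablambda}, the only $a$-dependence sits in the modulation term. Its weight $\big(\frac{\lambda+|\partial_t|}{\lambda^2+|\partial_t|}\big)^a\in[\lambda^{-a},1]$ decreases in $a$, so $\|u\|_{S^{s,a'',0}}\lesssim\|u\|_{S^{s,a',0}}$. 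Passing from $a''$ down to $a'$ costs a factor $\lambda^{a''-a'}$; compare the loss in the embedding $\X^{s,a}\hookrightarrow\X^{\tilde l+\frac12,0}$ in Subsection~\ref{Subsec-Blowup}. Now $a''>a'$ whenever $s'-l''\ge 1$, and getting $u\in\X^{s',a'}$ directly would need the $H^{s'}_x\times H^{l'}_x$ theory, which is essentially the bound you are proving. So your argument does not close in that case. Your account of where $l'<l''+\frac12$ enters is also off: it has nothing to do with comparing $a$-parameters across a borderline.

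The fix, which is what the paper does, is to keep $a''$ in the wave norm. Estimate $v$ in $W^{l',a'',s'-\frac12}([0,\tau^*))$, which still dominates $L^\infty_tH^{l'}_x$. Use Lemma~\ref{lem:LinearEstimateHalfWave} and Lemma~\ref{lem:BilinearEstimates}~\ref{it:BilinearEstNonendpointW} with parameters $(s',l',a'',\beta=s'-\frac12)$; use this lemma rather than Lemma~\ref{le:Bilinearnablauv}, which is phrased in $\tilde{S}$-norms. This gives the bound $\|v_0\|_{H^{l'}_x}+\|u\|_{L^2_tL^{2^*}_x}^{\theta}\|u\|_{S^{s',a'',0}}^{2-\theta}\lesssim\|v_0\|_{H^{l'}_x}+\|u\|^2_{\X^{s',a''}([0,\tau^*])}$. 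The hypotheses of that lemma are now genuinely nontrivial, and this is where $l'<l''+\frac12$ is used. If $a''=\frac34(s'-l'')-\frac12\neq 0$, then
\begin{align*}
s'-l'&=a''+\tfrac12+\tfrac14(s'-l')+\tfrac34(l''-l'),\\
2s'-l'-\tfrac{d-2}{2}&=2a''+l''-\tfrac{d-4}{2}+\tfrac12(s'-l')+\tfrac12(l''-l').
\end{align*}
So $s'-l'\ge\frac12$ and $l''-l'>-\frac12$ give $a''<s'-l'$ and $2a''<2s'-l'-\frac{d-2}{2}$. The remaining condition $\beta<2s'-\frac{d-2}{2}-a''$ follows from $s'-a''>\frac{d-3}{2}$. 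If $a''=0$, all three conditions are immediate from $s'-l'\ge\frac12$ and $s'>\frac{d-3}{2}$.
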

	
	\begin{proof}
		In the following, we will denote $a'=a^*(s',l')$ and $a''=a^*(s',l'')$ with $a^*$ from~\eqref{con:ab}. Since $s'\in S_{l''}$ and
		\begin{align*}
			\|u\|_{L^2 (0,\tau^*; W_x^{l''+\frac12,2^*})} \leq \|u\|_{L^2 (0,\tau^*; W_x^{s'-a'',2^*})},
		\end{align*}
		we have 
		\begin{align*}
			\|u\|_{L^\infty ([0,\tau^*); H_x^{s'})} + \|u\|_{L^2 (0,\tau^*; W_x^{l''+\frac12,2^*})} + \|v\|_{L^\infty ([0,\tau^*); H_x^{l''})} < \infty.
		\end{align*}
		By~\eqref{prop:ImBlowUpOriCon} and the blow-up condition we already proved, the maximal existence time $\tau^{**}$ of the $H_x^{s'} \times H_x^{l''}$ solution to the system is strictly larger than $\tau^*$. Hence, we infer that $u\in \X^{s',a''}([0,\tau^*])$. Note that because of $l'>l''$, we have $a' \leq a''$. To check the conditions of Lemma~\ref{lem:BilinearEstimates}~\ref{it:BilinearEstNonendpointW}, we note that since $s' - a' > \frac{d-3}{2}$,
		\begin{align*}
			2s'-\frac{d-2}{2}-a' >s'-\frac12.
		\end{align*}
		If $a''=0$, then $s'-l' \geq \frac12 > a''$, and if $a''\neq 0$ then
		\begin{align*}
			s'-l' = \frac{3}{4}(s'-l'')+\frac{1}{4}(s'-l')+\frac{3}{4}(l''-l') = a''+\frac12+ \frac{1}{4}(s'-l') + \frac34 (l''-l') > a'',
		\end{align*}
		where we used that $s' - l' \geq \frac{1}{2}$ and $l'' - l' > -\frac{1}{2}$ by assumption.
		Moreover, if $a'' = 0$, then
		\begin{align*}
			2s'-l'-\frac{d-2}{2} \geq s'-\frac{d-3}{2}>0=2a''
		\end{align*}
		as $s' - l' \geq \frac{1}{2}$,
		and if $a''\neq 0$, then
		\begin{align*}
			2s'-l'-\frac{d-2}{2}
			&= \frac{3}{2}(s' - l'') + \frac{1}{2}(s' - l'') + l'' - l' + l'' - \frac{d-2}{2} \\
			&= 2(a''+\frac12) + \frac12 (s' - l') + \frac12 (l'' - l') +l''-\frac{d-2}{2}>2a'' + l'' - \frac{d-4}{2} \geq 2 a''.
		\end{align*}
		Hence, we can apply Lemma~\ref{lem:BilinearEstimates}~\ref{it:BilinearEstNonendpointW} with parameters $s'$, $l'$, $a'', \beta = s' - \frac12$ and obtain some $\theta>0$ such that
		\begin{align*}
			\|v\|_{W^{l',a'',s'-\frac12}([0,\tau^*))}
			&\lesssim  \|e^{\imu t|\nabla|} v_0\|_{W^{l',a'',s'-\frac12}([0,\tau^*))} + \|\cJ_0[|\nabla||u|^2]\|_{W^{l',a'',s'-\frac12}([0,\tau^*))}\\
			&\lesssim \|v_0\|_{H_x^{l'}} + \|u\|_{L^2 (0,\tau^*; L_x^{2^*})}^\theta \|u\|_{S^{s',a'',0}([0,\tau^*))}^{2-\theta}\\
			&\lesssim  \|v_0\|_{H_x^{l'}} + \|u\|^2_{\X^{s',a''}([0,\tau^*])}<\infty.
		\end{align*}
		Then we complete the proof by the estimate
		\begin{align*}
			\|v\|_{L^\infty ([0,\tau^*); H_x^{l'})}\lesssim \|v\|_{W^{l',a'',s'-\frac12}([0,\tau^*))}<\infty.
		\end{align*}
	\end{proof}

	{\bf Proof of blow-up alternative in Theorem~\ref{thm:LocalWP}.}
	We now return to the proof of the blow-up alternative in Theorem~\ref{thm:LocalWP}. 
	Recall that $l_w$ was defined in~\eqref{eq:Defl0} and that $\frac{d-4}{2} \leq l_w \leq \tilde{l}$.
	We set
	\begin{align*}
		l_s:= \sup\Big\{ \frac{d-4}{2}\leq l' \leq l_w\colon S_{l'} \neq \emptyset \Big\}.
	\end{align*}
	Note that by~\ref{it:BlowUpSobNorm} and~\ref{it:BlowUpDispersiveNorm}, we have $S_{\frac{d-4}{2}} \neq \emptyset$ and hence $l_s \geq \frac{d-4}{2}$.
	
	\vspace*{4pt plus 2pt minus 2pt}%
	We first claim that  $l_s = l_w$.
	Suppose that $l_s<l_w$. We can then choose a small positive constant $0<\varepsilon \ll 1$ such that $l_s+\varepsilon< l_w$. Take some $l' \in [\frac{d-4}{2}, l_s]$ satisfying $l_s-l'<\varepsilon$ and $S_{l'} \neq \emptyset$. By Lemma~\ref{lem:PersRegSchr} we have $S_{l'} = A_{l'}$. In particular, $s':= l'+\frac12 + \varepsilon \in A_{l'} = S_{l'}$, i.e.,
	\begin{align}\label{prop:ImBlowUpU1}
		\|u\|_{L^\infty  ([0,\tau^*); H_x^{s'})} + \|u\|_{L^2 (0,\tau^*; W_x^{s',2^*})}<\infty,
	\end{align}
	where we used that $a' = a^*(s', l') = 0$ and that $s' < \tilde{l} + \frac12 \leq s$. Hence, we also have $s' \in A_{l' + \varepsilon}$, which shows $s' \in S_{l' + \varepsilon}$ in view of~\eqref{prop:ImBlowUpU1}.
	In particular, we infer $S_{l'+\varepsilon}\neq\emptyset$. But this contradicts the definition of $l_s$ since $l_s < l' + \varepsilon < l_w$,
	thereby yielding $l_s = l_w$,
	as claimed.
	
	\vspace*{4pt plus 2pt minus 2pt}%
	Next we aim to show
	that $l_w = \tilde{l}$
	and
	\begin{equation}
		\label{eq:BlowUpBoundWave}
		\|v\|_{L^\infty ([0,\tau^*); H_x^{\tilde{l}})}<\infty.
	\end{equation}
	
	If $\tilde{l}=\frac{d-4}{2}$, the conclusion follows from \ref{it:BlowUpSobNorm}
	and the definition of $l_w$.
	Otherwise, let $0< \varepsilon \ll 1$. We pick some $l' \geq \frac{d-4}{2}$ with $l' \leq l_w$ and $l_w - l' < \varepsilon$ such that $\|v\|_{L^\infty ([0,\tau^*); H_x^{l'})}<\infty$ and $S_{l'} \neq \emptyset$. Here we used the definitions of $l_w$ and $l_s$ and that $l_w = l_s$ (respectively~\ref{it:BlowUpSobNorm} and~\ref{it:BlowUpDispersiveNorm} in the case $l_w = \frac{d-4}{2}$). By Lemma~\ref{lem:PersRegSchr}, we have $S_{l'} = A_{l'}$. Since $s' := \min\{l' + \frac12 + \varepsilon, s\} \in A_{l'}$, we thus get $s' \in S_{l'}$.
	
	As $s' > \frac{d-3}{2}$, we can now apply Lemma~\ref{lem:PersRegWave} with $l'' = \min\{l' + \varepsilon, \tilde{l}\}$ (where the roles of $l'$ and $l''$ are reversed), which yields $\|v\|_{L^\infty ([0,\tau^*); H_x^{l''})}<\infty$. If $l_w < \tilde{l}$, this contradicts the definition of $l_w$. Hence, we have $l_w = \tilde{l}$.
	Consequently,
	$l'' = \tilde{l}$
	and \eqref{eq:BlowUpBoundWave} follows.
	
	\vspace*{4pt plus 2pt minus 2pt}%
	At last,
	we prove that
	\begin{align} \label{u-LHs-L2Wl-finite}
		\|u\|_{L^\infty ([0,\tau^*); H_x^s)}
		+ \|u\|_{L^2 (0,\tau^*; W_x^{\tilde{l} + \frac12,2^*})} <\infty.
	\end{align}
	This together with \eqref{eq:BlowUpBoundWave}
	will contradict \eqref{prop:ImBlowUpOriCon},
	and thus lead to the blow-up alternative result.
	
	In order to prove \eqref{u-LHs-L2Wl-finite},
	we note that if $\tilde{l} = \frac{d-4}{2}$,
	then $S_{\tilde{l}} \neq \emptyset$ by~\ref{it:BlowUpSobNorm} and~\ref{it:BlowUpDispersiveNorm},
	and thus,
	$A_{\tilde{l}} = S_{\tilde{l}}$ by Lemma~\ref{lem:PersRegSchr}. In particular, we have $s \in A_{\tilde{l}} = S_{\tilde{l}}$. If $\tilde{l} > \frac{d-4}{2}$, we choose $0 < \varepsilon \ll 1$ such that $\tilde{l} - \varepsilon > \frac{d-4}{2}$. Since $l_s = l_w = \tilde{l}$, we have $S_{\tilde{l} - \varepsilon} = A_{\tilde{l} - \varepsilon}$ by Lemma~\ref{lem:PersRegSchr} again. In particular, we have $\tilde{l} + \frac{1}{2} \in A_{\tilde{l} - \varepsilon} = S_{\tilde{l} - \varepsilon}$. Noting that $a^*(\tilde{l} + \frac12, \tilde{l} - \varepsilon) = a^*(\tilde{l} + \frac12, \tilde{l}) = 0$, this implies that $\tilde{l} + \frac12 \in S_{\tilde{l}}$. Employing Lemma~\ref{lem:PersRegSchr} once more, we infer $A_{\tilde{l}} = S_{\tilde{l}}$ so that we conclude $s \in A_{\tilde{l}} = S_{\tilde{l}}$ again. Combining the definition of $S_{\tilde{l}}$ with~\eqref{eq:BlowUpBoundWave}, we arrive at
	\begin{align}\label{eq:ImBlowUpU2}
		\|u\|_{L^\infty ([0,\tau^*); H_x^s)} + \|u\|_{L^2 (0,\tau^*; W_x^{s-a,2^*})}+ \|v\|_{L^\infty ([0,\tau^*); H_x^{\tilde{l}})}<\infty,
	\end{align}
	where $a = a^*(s, \tilde{l})$.
	Note that
	if $a=0$, then $s-a = s \geq \tilde{l}+\frac12$.
	Moreover, if $a\neq 0$,
	then $a +\frac{1}{2}= \frac{3}{4}(s-\tilde{l}) < (s-\tilde{l})$, i.e., $\tilde{l}+\frac12 < s- a$. Consequently,
	\eqref{u-LHs-L2Wl-finite}
	follows from \eqref{eq:ImBlowUpU2}.
	The proof of Theorem~\ref{thm:LocalWP}
	is complete.
	\qed

	\section{LWP and blow-up alternative: endpoint case} \label{Sec-LWPEP}
	
	In this section, we turn to the proof of Theorem \ref{thm:LocalWP}
	in the endpoint case $(s,l)=(\tfrac{d-3}{2},\tfrac{d-4}{2})$.
	Unlike in the non-endpoint setting,
	the nonlinear estimates here do not yield the $L_t^2 L_x^{2^*}$-component on the right-hand side leveraged to obtain smallness in the fixed point argument before.
	Instead, we rely on the more delicate estimates~\eqref{eq:Bilinnablauu} and~\eqref{eq:BilinvuEndpoint}, which utilize the weaker $S_w^{s,0,0}$- and $W_w^{l,0,l}$-norms defined in~\eqref{eq:WeakNormS} and~\eqref{eq:WeakNormW}.

	\subsection{Local well-posedness}  \label{Subsec-LWP-Endpt}
	Consider the fixed-point operator
	\begin{align*}
		\Phi(u_0, v_0; u) := \cU_{v_L} [u_0](t) - \cI_{v_L}[\Re(\cJ_0[|\nabla| |u|^2])u](t) - \cI_{v_L}[b \cdot \nabla u + c u - \Re(\cT_{\cdot}(W_2)) u](t),
	\end{align*}
	again, but on the complete metric space
	\begin{align*}
		B_{R, \delta}(\tau) = \{u \in \mathbb{X}^{s,0}([0,\tau]) \colon \|u\|_{L^2 (0,\tau ; W_x^{s,2^*})} \leq \delta, \, \|u\|_{\mathbb{X}^{s,0}([0,\tau])} \leq R\}
	\end{align*}	
	equipped with the metric induced by $\| \cdot \|_{\mathbb{X}^{s,0}([0,\tau])}$,
	where  $\delta, R > 0$ and $\tau > 0$ is a stopping time to be chosen later.
	We will show that $\Phi(u_0, v_0; \cdot)$ is a contractice self-map on $B_{R, \delta}(\tau)$.
	
	\vspace*{4pt plus 2pt minus 2pt}%
	For this purpose, we fix $\epsilon > 0$ as in Lemma~\ref{lem:LinSchrPotSmallTime},
	$T > 0$, and take the stopping time
	\begin{equation}
		\label{eq:DefTauTildeEP}
		\tau_0 := \inf\{t \in [0,T] \colon \|v_L\|_{W^{l,0,l}([0,t]) + L^2 (0,t ; W_x^{s,d})}
		\geq \gamma\} \wedge \min\{2,T\},
	\end{equation}
	where $0<\gamma\leq \varepsilon$ is a small constant to be chosen later.
	Note that $\tau_0 > 0$ due to Lemma~\ref{lem:SmallnessWavePotential}.
	Again we may assume $\tau \leq \tau_0$ by considering $\tau_0\wedge \tau$.

	\vspace*{4pt plus 2pt minus 2pt}%
	We start with the estimates of the $\mathbb{X}^{s,0}([0,\tau])$-norm.
	By
	Lemma~\ref{lem:LinSchrPotSmallTime} and Lemma~\ref{lem:LinEstimates},  we have
	\begin{align}
		\| \Phi(u_0, v_0; u) \|_{\mathbb{X}^{s,0}([0,\tau])}
		&\leq C \|u_0\|_{H_x^s} + C \|\Re(\cJ_0[|\nabla| |u|^2])u\|_{\mathbb{G}^{s,0}([0,\tau])} + C \|\mathcal{I}_0[b \cdot \nabla u]\|_{\mathbb{X}^{s,0}([0,\tau])}  \nonumber\\
		&\qquad + C \|c u\|_{\mathbb{G}^{s,0}([0,\tau])} + C \|\Re(\cT_{\cdot}(W_2)) u\|_{\mathbb{G}^{s,0}([0,\tau])}. \label{eq:EstFixedPointOp1EP}
	\end{align}
	Then, by Lemma~\ref{lem:BilinearEstimates} \ref{it:BilinearEstEndpoint1}, \ref{it:BilinearEstEndpoint2} and the embedding $W^{l,0,l}\hookrightarrow W_w^{l,0,l}$, we have
	\begin{align}
		\|\Re(\cJ_0[|\nabla| |u|^2])u\|_{\mathbb{G}^{s,0}([0,\tau])}
		&\lesssim \| \Re(\cJ_0[|\nabla| |u|^2])u\|_{N^{s,0,0}([0,\tau])}    \nonumber\\
		& \lesssim \| \cJ_0[|\nabla| |u|^2] \|_{W_w^{l,0,l}([0,\tau])} \|u\|^{\frac12}_{L_t^2W_x^{s,2^*}}\|u\|^{\frac12}_{S_w^{s,0,0}([0,\tau])} \nonumber\\
		&\lesssim \|u\|_{L^2_t W^{s,2^*}_x}^{\frac32} \|u\|_{\mathbb{X}^{s,0}([0,\tau])}^{\frac32}. \label{eq:FixedPointOpNonlinEP}
	\end{align}
	Plugging this into~\eqref{eq:EstFixedPointOp1EP} and applying Lemma~\ref{lem:BilinearLowerOrder} and Lemma~\ref{lem:BilinearLowerOrder2}, we get
	\begin{align}
		\| \Phi(u_0, v_0; u) \|_{\mathbb{X}^{s,0}([0,\tau])}
		&\leq C ( \|u_0\|_{H_x^s} +  \|u\|_{L^2_t W^{s,2^*}_x}^{\frac32} \|u\|_{\mathbb{X}^{s,0}([0,\tau])}^{\frac32}
		+ (1+\tau^{\frac{1}{2}}) W^*(\tau) \|u\|_{\mathbb{X}^{s,0}([0,\tau])} ). \label{eq:EstFixedPointOp2EP}
	\end{align}
	
	Regarding the $L^2_tW^{s,2^*}_x$-norm,
	using \eqref{UvL-express}, the embedding $S^{s,0,0}  \hookrightarrow L_t^2W_x^{s,2^*}$, Lemma~\ref{lem:LinSchrPotSmallTime}, Lemma~\ref{lem:LinEstimates}, Lemma~\ref{lem:BilinearEstimates}, and Lemma~\ref{it:BilinearEstNonendpoint}, we derive
	\begin{align*}
		\|\cU_{v_L}[u_0] \|_{L^2_t W_x^{s,2^*}} &\leq \| e^{\imu t \Delta} u_0 \|_{L^2_t W^{s,2^*}_x} + \|\cI_{v_L}[\Re(v_L)\cU_0[u_0]]\|_{\X^{s,0}([0,\tau])} \notag \\
		&\leq \| e^{\imu t \Delta} u_0 \|_{L^2_t W^{s,2^*}_x} + C\|\cI_0[\Re(v_L)\cU_0[u_0]]\|_{\X^{s,0}([0,\tau])} \notag \\
		& \leq \| e^{\imu t \Delta} u_0 \|_{L^2_t W^{s,2^*}_x} + C\|v_L\|_{W^{l,0,l}([0,\tau])+L^2 (0,\tau; W_x^{s,d})}\|\cU_0[u_0]\|_{S^{s,0,0}([0,\tau])}\\
		& \leq \| e^{\imu t \Delta} u_0 \|_{L^2_t W^{s,2^*}_x} + C\|v_L\|_{W^{l,0,l}([0,\tau])+L^2(0,\tau; W_x^{s,d})}\|u_0\|_{H_x^s}
	\end{align*}
	The embedding $ \mathbb{X}^{s,0} \hookrightarrow L_t^2 W_x^{s,2^*}$, Lemma~\ref{lem:LinSchrPotSmallTime},  Lemma~\ref{lem:LinEstimates}, and Lemma \ref{lem:BilinearEstimates} \ref{it:BilinearEstEndpoint1}, \ref{it:BilinearEstEndpoint2}  also show that
	\begin{align*}
		\| \cI_{v_L}[\Re(\cJ_0[|\nabla| |u|^2]) u] \|_{L^2_t W^{s,2^*}_x}
		&\lesssim \| \cI_0[\Re(\cJ_0[|\nabla| |u|^2]) u] \|_{\X^{s,0}([0,\tau])} \lesssim \|\Re(\cJ_0[|\nabla||u|^2])u\|_{N^{s,0,0}([0,\tau])} \\
		& \lesssim  \|\cJ_0[|\nabla| |u|^2]\|_{W^{l,0,l}([0,\tau])} \|u\|^{\frac12}_{L^2_t W^{s,2^*}_x} \|u\|^{\frac12}_{\mathbb{X}^{s,0}([0,\tau])}
		\lesssim  \|u\|^{\frac32}_{L^2_t W^{s,2^*}_x} \|u\|^{\frac32}_{\mathbb{X}^{s,0}([0,\tau])}.
	\end{align*}
	Taking into account Lemma~\ref{lem:BilinearLowerOrder}, Lemma~\ref{lem:BilinearLowerOrder2} and Lemma~\ref{it:BilinearEstNonendpoint}, we thus arrive at
	\begin{align}
		\|\Phi(u_0, v_0; u)\|_{L^2_t W^{s,2^*}_x} &\leq  \| e^{\imu t \Delta} u_0 \|_{L^2_t W^{s,2^*}_x} +  C\|v_L\|_{W^{l,0,l}([0,\tau])+L^2 (0,\tau; W_x^{s,d})}\|u_0\|_{H_x^s}\nonumber\\
		&\qquad + C\|u\|^{\frac32}_{L^2_t W^{s,2^*}_x} \|u\|^{\frac32}_{\mathbb{X}^{s,0}([0,\tau])} + C (1+\tau^{\frac{1}{2}})W^*(\tau) \|u\|_{\mathbb{X}^{s,0}([0,\tau])}
		\label{eq:EstFixedPointOp3EP}
	\end{align}
	
	Now, let $C$ be the maximum of the generic constants in~\eqref{eq:EstFixedPointOp2EP} and~\eqref{eq:EstFixedPointOp3EP}
	and set $R = 2 C \|u_0\|_{H_x^s}$.
	We conclude from \eqref{eq:EstFixedPointOp2EP} and~\eqref{eq:EstFixedPointOp3EP} that
	for any $u \in B_{R, \delta}(\tau)$,
	\begin{align*}
		&\| \Phi(u_0, v_0; u) \|_{\mathbb{X}^{s,0}([0,\tau])} \leq \frac{R}{2}
		+ C (\delta^{\frac32}R^{\frac32}
		+ (1+\tau^{\frac{1}{2}}) R W^*(\tau) ), \\
		&\| \Phi(u_0, v_0; u) \|_{L^2_t W^{s,2^*}_x}
		\leq  \| e^{\imu t \Delta} u_0 \|_{L^2_t W^{s,2^*}_x([0,\tau])}
		+  C  ( \gamma \frac{R}2 + \delta^{\frac32}R^{\frac32}
		+  (1+ \tau^{\frac{1}{2}})  R W^*(\tau) ).
	\end{align*}
	It follows that
	\begin{align*}
		\Phi(u_0, v_0; \cdot) B_{R,\delta}(\tilde{\tau}) \subseteq  B_{R,\delta}(\tilde{\tau}),
	\end{align*}
	where
	$\delta \in (0,R)$ and $\gamma \in (0,\varepsilon)$
	are small enough such that $8C\delta^{\frac32}R^{\frac12}\leq 1$  and $C\gamma \frac{R}{2}+C\delta^{\frac{3}{2}}R^{\frac{3}{2}} \leq \frac{3\delta}{8}$,
	and $\tilde{\tau}$ is the strictly positive stopping time defined by
	\begin{align}
		\label{eq:DefFixedPointStoppingTimeSelfmapEP}
		\tilde{\tau} := \inf\Big\{t \in [0,T] \colon  \| e^{\imu (\cdot) \Delta} u_0 \|_{L^2 (0,t ; W_x^{s,2^*})} +	2 C R  W^*(t) \geq \frac{\delta}{4}\Big\} \wedge \tau_0.
	\end{align}

	\vspace*{4pt plus 2pt minus 2pt}%
	Regarding the contraction of $\Phi(u_0, v_0; \cdot)$, we first argue as in~\eqref{eq:FixedPointOpNonlinEP} to derive
	\begin{align*}
		&\|\Re(\cJ_0[|\nabla| |u|^2]) u - \Re(\cJ_0[|\nabla| |w|^2]) w\|_{\mathbb{G}^{s,0}([0,\tau])} \\
		&\leq \|\Re(\cJ_0[|\nabla|((\overline{u} - \overline{w})u)]) u + \Re(\cJ_0[|\nabla| (\overline{w}(u - w))]) u  + \Re(\cJ_0[|\nabla| |w|^2]) (u - w)\|_{N^{s,0,0}([0,\tau])} \\
		&\lesssim \|u-w\|_{L_t^2 W_x^{s,2^*}}^{\frac12} \|u-w\|_{\mathbb{X}^{s,0}([0,\tau])}^{\frac12} \|u\|_{L_t^2 W_x^{s,2^*}}\|u\|_{\mathbb{X}^{s,0}} \\
		&\qquad + \|w\|^{\frac12}_{L_t^2 W_x^{s,2^*}}\|w\|^{\frac12}_{\mathbb{X}^{s,0}([0,\tau])} \|u-w\|^{\frac12}_{L_t^2 W_x^{s,2^*}} \|u-w\|^{\frac12}_{\mathbb{X}^{s,0}([0,\tau])} \|u\|^{\frac12}_{L_t^2 W_x^{s,2^*}}\|u\|^{\frac12}_{\mathbb{X}^{s,0}([0,\tau])}\\
		&\qquad + \|w\|_{L_t^2 W_x^{s,2^*}}\|w\|_{\mathbb{X}^{s,0}([0,\tau])} \|u-w\|_{L_t^2 W_x^{s,2^*}}^{\frac12} \|u-w\|_{\mathbb{X}^{s,0}([0,\tau])}^{\frac12} \lesssim \delta R \|u-w\|_{\mathbb{X}^{s,0}([0,\tau])}
	\end{align*}
	for all $u,w \in B_{R,\delta}(\tau)$,
	where we also used the embedding $ \mathbb{X}^{s,0} \hookrightarrow L_t^2 W_x^{s,2^*}$ once more in the last step.
	Arguing as in the proof of \eqref{eq:EstFixedPointOp2EP} we get
	\begin{align}
		&\|\Phi(u_0, v_0; u) - \Phi(u_0, v_0; w)\|_{\mathbb{X}^{s,0}([0,\tau])}
		\leq C (\delta R
		+ (1+\tau^{\frac{1}{2}})W^*(\tau)) \|u-w\|_{\mathbb{X}^{s,0}([0,\tau])}.  \label{eq:EstFixedPointOpDiffEP}
	\end{align}
	
	It follows that $\Phi(u_0, v_0; \cdot)$ is a contraction  on $B_{R,\delta}(\tilde{\tau}_1)$
	by taking  $\delta > 0$ possibly smaller such that additionally $C\delta R \leq \frac{1}{4}$  and
	\begin{align*}
		\tilde{\tau}_1 := \inf\Big\{t \in [0,T]: CW^*(t) \geq \frac{1}{4}\Big\} \wedge \tilde{\tau}.
	\end{align*}			
	
	An application of Banach's fixed-point theorem thus
	yields that there exists a unique solution $\tilde{u}_1 \in B_{R,\delta}(\tilde{\tau}_1)$ to~\eqref{eq:LWPFixedPoint}, which is unique in $\mathbb{X}^{s,0}([0,\tilde{\tau}_1])$ by standard arguments.
	Setting
	\begin{align*}
		\tilde{v}_1(t) := v_L(t) - \cJ_0[|\nabla| |\tilde{u}_1|^2](t) = e^{\imu t |\nabla|} v_0  - \cJ_0[|\nabla| |\tilde{u}_1|^2](t), \ \
		t\in [0,\tilde{\tau}_1],
	\end{align*}
	which belongs to $W^{l,0,l}([0,\tilde{\tau}_1])$ due to Lemma~\ref{lem:BilinearEstimates} \ref{it:BilinearEstEndpoint2},
	we obtain that
	$(\tilde{u}_1, \tilde{v}_1)$ is the unique solution to \eqref{eq:RanZakbc}, where the uniqueness holds in $\mathbb{X}^{s,0}([0,\tilde{\tau}_1]) \times L^\infty ([0,\tilde{\tau}_1] ; H_x^l)$.
	Finally,
	\begin{align*}
		(u_1,v_1)(t) := (\tilde{u}_1(t \wedge \tilde{\tau}_1), \tilde{v}_1(t \wedge \tilde{\tau}_1)),\ \ t\in [0,T],
	\end{align*}
	is an $\{\cF_t\}$-adapted process in $C([0,T], H_x^s \times H_x^l)$
	and solves~\eqref{eq:RanZakbc} on $[0,\tilde{\tau}_1]$.

	\subsection{Extension to maximal time}\label{Subsec-Max-ExistEP}
	Employing the refined rescaling strategy, we extend the local solution to its maximal existence time. This procedure is analogous to the one detailed in Subsection~\ref{Subsec-Max-Exist}, but employing the estimates from Subsection~\ref{Subsec-LWP-Endpt}. We therefore limit our exposition to a brief sketch.
	
	Consider system \eqref{eq:RanZakbsigmacsigmaInProof}
	with the initial data $(u_{0,n}, v_{0,n})$ given by \eqref{eq:DefInitialDataRescaled} and the operator $\Phi_\sigma(u_{0,n}, v_{0,n}; u_\sigma)$ as in \eqref{eq:FixedPointusigma}. 
	Define the $\{\cF_{\sigma_n + t}\}$-stopping times by
	\begin{align*}
		\tilde{\tau}_{n+1} &=\inf\{t \in [0, T] \colon  \|e^{\imu (\cdot) \Delta} u_{0,n}\|_{L^2 (0,t ; W_x^{s,2^*})} + W_{\sigma_n}^*(t) \geq 4 \delta_*(\|u_{0,n}\|_{H_x^s})\} \\
		&\qquad \wedge \inf\{t \in [0,T] \colon \|v_{L,n}\|_{W^{l,0,l}([0,t]) + L^2 (0,t ; W_x^{s,d})} \geq \gamma\} \wedge \min\{2,T - \sigma_n\}, \\
		\tau_{n+1} &= \inf\{t \in [0, T] \colon  \|e^{\imu (\cdot) \Delta} u_{0,n}\|_{L^2 (0,t ; W_x^{s,2^*})} + W_{\sigma_n}^*(t) \geq 2 \delta_*(\|u_{0,n}\|_{H_x^s})\} \\
		&\qquad \wedge \inf\Big\{t \in [0,T] \colon \|v_{L,n}\|_{W^{l,0,l}([0,t]) + L^2 (0,t ; W_x^{s,d})} \geq  \frac{\gamma}{2}\Big\} \wedge \min\{1,T - \sigma_n\},
	\end{align*}
	where $\delta^*$ is a decreasing strictly positive function,
	and two $\{\cF_t\}$-stopping times by
	\begin{align*}
		\sigma_{n + 1} := \sigma_n + \tau_{n+1}, \qquad \tilde{\sigma}_{n+1} := \sigma_n + \tilde{\tau}_{n+1}.
	\end{align*}

	Arguing as in Subsection~\ref{Subsec-LWP-Endpt}
	we infer that
	$\Phi_\sigma(u_{0,n}, v_{0,n}; \cdot)$ is a contraction on a closed ball of $\mathbb{X}^{s,0}([0,\tilde{\tau}_{n+1}])$, which leads to a fixed point $u_{\sigma_{n+1}}$
	of $\Phi_\sigma(u_{0,n}, v_{0,n}; \cdot)$.
	Setting $\tilde{v}_{\sigma_{n+1}} := e^{\imu t |\nabla|}v_{0,n} - \cJ_0[|\nabla| |\tilde{u}_{\sigma_{n+1}}|^2]$,
	we thus obtain a solution $(\tilde{u}_{\sigma_{n+1}}, \tilde{v}_{\sigma_{n+1}})$
	of~\eqref{eq:RanZakbsigmacsigmaInProof} in $\mathbb{X}^{s,0}([0,\tilde{\tau}_{n+1}]) \times W^{l,0,l}([0,\tilde{\tau}_{n+1}])$, which is unique in $\mathbb{X}^{s,0}([0,\tilde{\tau}_{n+1}]) \times L^\infty ([0,\tilde{\tau}_{n+1}] ; H_x^l)$.

	Next, following the procedure of Subsection \ref{Subsec-Max-Exist}
	we obtain a sequence of solutions $(u_{\sigma_{n+1}}(t), v_{\sigma_{n+1}}(t))$,
	$n \geq 1$, and then
	glue them to have the unique solution $(u_{n+1}, v_{n+1})\in \mathbb{X}^{s,0}([0,\tilde{\sigma}_{n+1}]) \times W^{l,0,l}([0,\tilde{\sigma}_{n+1}])$ to \eqref{eq:RanZakbc}, where the uniqueness holds in $\mathbb{X}^{s,0}([0,\tilde{\sigma}_{n+1}]) \times L^\infty ([0,\tilde{\sigma}_{n+1}] ; H_x^l)$.
	Moreover, $(u_{n+1}, v_{n+1})$ is $\{\cF_t\}$-adapted and continuous in $H_x^s \times H_x^l$.
	
	We set
	\begin{align*}
		\tau_T^* := \lim_{n \rightarrow \infty} \sigma_n \qquad \text{and} \qquad (u^T, v^T) := \lim_{n \rightarrow \infty} (u_n \chi_{[0, \tau_T^*)}, v_n \chi_{[0, \tau_T^*)}),
	\end{align*}
	and then
	\begin{align*}
		\tau^* := \lim_{T \rightarrow \infty} \tau_T^* \qquad \text{and} \qquad (u,v) := \lim_{T \rightarrow \infty} (u^T \chi_{[0,\tau^*)}, v^T \chi_{[0,\tau^*)}).
	\end{align*}
	The resulting process $(u,v)$ is thus $\{\cF_t\}$-adapted and
	continuous in $H_x^s \times H_x^l$,
	and uniquely solves \eqref{eq:RanZakbc} on $[0,\tau^*)$.
	Now we transfer back via the rescaling
	and obtain that
	\begin{align*}
		(X,Y) := (e^{W_1} u, v + \cT_{\cdot}(W_2)).
	\end{align*}
	is the unique solution of~\eqref{eq:StoZak} on $[0,\tau^*)$ in the sense of Definition~\ref{def:Solution}. Here, we also exploited Lemma~\ref{lem:ProductNoiseInX}, which ensures that on any compact interval $X$ belongs to $\X^{s,0}$ if and only if $u$ does.

	\subsection{Blow-up alternative}
	Suppose that the blow-up alternative in Theorem~\ref{thm:LocalWP}
	fails.
	Then, invoking Lemma~\ref{lem:PropNoise},
	there exists a set of positive measure on which:
	\begin{enumerate}
		\item \label{it:blowupconEP1} $\tau^*(\omega) < \infty$,
		\item \label{it:blowupconEP2} $\limsup_{t \rightarrow \tau^*(\omega)} (\|X(t,\omega)\|_{H_x^s} + \|Y(t,\omega)\|_{H_x^l}) < \infty$,
		\item \label{it:blowupconEP3} $\|X(\cdot, \omega)\|_{L^2 (0,\tau^*(\omega) ; W_x^{s,2^*})} < \infty$,
	\end{enumerate}
	and where the convergence \eqref{eq:limphi1kbeta1k}
	holds.
	In the following
	we fix an $\omega$
	from this set and let $T \in (0,\infty)$ such that $T > \tau^*(\omega)$.
	The dependence of $\omega$ is omitted below.

	As in Subsection \ref{Subsec-Blowup},
	it follows from the above three conditions \ref{it:blowupconEP1}-\ref{it:blowupconEP3} that there exists $\delta>0$ such that for $n$ large enough,
	\begin{align}
		\label{eq:AlternativeTaunplus1EP}
		&W_{\sigma_n}^*(\tau_{n+1}) \geq \delta  \quad
		\text{or} \quad
		\|e^{\imu (\cdot) \Delta} u_{0,n}\|_{L^2 (0,\tau_{n+1} ; W_x^{s,2^*})}  \geq \delta \quad \notag\\
		\text{or} \quad &\|v_{L,n}\|_{W^{l,0,l}([0,\tau_{n+1}]) + L^2 (0,\tau_{n+1} ; W_x^{s,d})} \geq \frac{\gamma}{2},
	\end{align}
	where $\{\sigma_n, \tau_n\}$ is constructed as in Subsection \ref{Subsec-Max-ExistEP}, $W^*_{\sigma_n}$ is as in \eqref{eq:DefWStarsigman}, $v_{L,n}$ denotes the linear wave defined immediately after~\eqref{eq:FixedPointusigma},
	and $T - \sigma_n < 1$ for all $n \in \N$.
	But as the noise terms in the endpoint case are the same as in the non-endpoint case,
	the arguments in Subsection \ref{Subsec-Blowup}
	also show that the first alternative in~\eqref{eq:AlternativeTaunplus1EP} is not valid
	for $n$ large enough.
	
	Next, we show that
	\begin{align}\label{prop:blowupsecEP}
		\|e^{\imu t \Delta} u_{0,n}\|_{L^2 (0,\tau_{n+1} ; W_x^{s,2^*})} \longrightarrow 0\quad \text{as } n\rightarrow \infty,
	\end{align}
	which excludes the second alternative in \eqref{eq:AlternativeTaunplus1EP}.
	
	To that purpose,
	using \eqref{usigman-vsigman} and \eqref{eq:RanZakbsigmacsigmaInProof},
	we derive analogously to~\eqref{eq:estimateu0nNEP}
	\begin{align}\label{eq:estimateu0n}
		\|e^{\imu t\Delta}u_{0,n}\|_{L^2 (0,\tau_{n+1}; W_x^{s,2^*})}
		&\lesssim \|u_{\sigma_n}\|_{L^2 (0,\tau_{n+1}; W_x^{s,2^*})}+\Big\| \cI_0(\Re(v_{\sigma_n})u_{\sigma_n}) \Big\|_{L^2 (0,\tau_{n+1}; W_x^{s,2^*})}\notag \\
		&\qquad + \Big\| \cI_0(b_{\sigma_n}\cdot \nabla u_{\sigma_n}+c_{\sigma_n}u_{\sigma_n}-\Re(\cT_{\sigma_n+\cdot,\sigma_n})u_{\sigma_n}) \Big\|_{L^2 (0,\tau_{n+1}; W_x^{s,2^*})}.
	\end{align}
	The third term on the right-hand side of  \eqref{eq:estimateu0n} can be bounded via Lemma \ref{lem:BilinearLowerOrder} by
	\begin{align}
		\Big\| \cI_0(b_{\sigma_n}\cdot \nabla u_{\sigma_n}+c_{\sigma_n}u_{\sigma_n}-\Re(\cT_{\sigma_n+\cdot,\sigma_n})u_{\sigma_n}) \Big\|_{L^2 (0,\tau_{n+1}; W_x^{s,2^*})}
		\lesssim W^*_{\sigma_n}(t)\sup_n \|u\|_{\X^{s,0}([0,\sigma_n])}.\label{prop:endponitblowupDuhamel}
	\end{align}
	Moreover, for the second term in \eqref{eq:estimateu0n}, we estimate
	\begin{align}
		&\Big\| \cI_0(\Re(v_{\sigma_n})u_{\sigma_n}) \Big\|_{L^2 (0,\tau_{n+1}; W_x^{s,2^*})}\lesssim\Big\|\cI_0(\Re(v_{\sigma_n})u_{\sigma_n}) \Big\|_{S^{s,0,0}([0,\tau_{n+1}])}\notag\\
		&\lesssim \| \Re(v_{\sigma_n})u_{\sigma_n} \|_{N^{s,0,0}([0,\tau_{n+1}])}\lesssim \|v_{\sigma_n}\|_{W_w^{l,0,l}([0,\tau_{n+1}])} \|u_{\sigma_n}\|_{L^2 (([0,\tau_{n+1}]); W_x^{s,2^*})}^{\frac12} \sup_n \|u\|_{\mathbb{X}^{s,0}([0,\sigma_n])}^{\frac12}\notag\\
		&\lesssim   \Big(\|v\|_{L^\infty (0,\tau^*; H_x^l)} + \|\cT_{\sigma_n}(W_2)\|_{L^\infty([0,\tau_{n+1}]; H_x^l)}\Big) \|u\|_{L^2 (\sigma_n,\tau^*; W_x^{s,2^*})}^{\frac12} \sup_n \|u\|_{\mathbb{X}^{s,0}([0,\sigma_n])}^{\frac12} \notag\\
		&\qquad +\|u\|^{\frac32}_{L^2 (\sigma_n,\tau^*; W_x^{s,2^*})} \sup_n \|u\|^{\frac32}_{\mathbb{X}^{s,0}([0,\sigma_n])}. \label{prop:endpointblowupSchrDuhaml}
	\end{align}
	In the last step, we employed $v_{\sigma_n}= e^{\imu t|\nabla|} (v(\sigma_n)+ \mathcal{T}_{\sigma_n}(W_2)) - \cJ_0[|\nabla||u_{\sigma_n}|^2]$, Lemma~\ref{lem:LinearEstimateHalfWave}, Lemma~\ref{lem:BilinearEstimates}~\ref{it:BilinearEstEndpoint2}, and the embeddings $W^{l,0,l}\hookrightarrow W_w^{l,0,l}$ and $\X^{s,0}\hookrightarrow S_w^{s,0,0}$ to infer 
	\begin{align*}
		\|v_{\sigma_n}\|_{W_w^{l,0,l}([0,\tau_{n+1}])}&\lesssim \|(v(\sigma_n)+ \mathcal{T}_{\sigma_n}(W_2))\|_{H_x^l} + \|u_{\sigma_n}\|_{L^2(0,\tau_{n+1}; W_x^{s,2^*})} \|u_{\sigma_n}\|_{S_w^{s,0,0}([0,\tau_{n+1}])}\\
		&\lesssim \|v\|_{L^\infty (0,\tau^*; H_x^l)} + \|\cT_{\sigma_n}(W_2)\|_{L^\infty([0,\tau_{n+1}]; H_x^l)} + \|u\|_{L^2(\sigma_n,\tau^*; W_x^{s,2^*})} \sup_n \|u\|_{\X^{s,0}([0,\sigma_n])}.
	\end{align*}
	
	Note that in view of~\ref{it:blowupconEP2} and~\ref{it:blowupconEP3}, Lemma~\ref{le:unibouep} implies the uniform bound $\sup_n \|u\|_{\X^{s,0}([0,\sigma_n])}<\infty$. The noise term $\|\cT_{\sigma_n}(W_2)\|_{L^\infty([0,\tau_{n+1}]; H_x^l)}\lesssim W^*_{\sigma_n}(\tau_{n+1})$ converges to zero. The condition \ref{it:blowupconEP2} and the dominated convergence theorem imply that $\|u\|_{L^2(\sigma_n,\tau^*; W_x^{s,2^*})}\to 0$ as $n\to \infty$. Thus, combing these properties and the estimate \eqref{prop:endponitblowupDuhamel}, \eqref{prop:endpointblowupSchrDuhaml}, we complete the proof of \eqref{prop:blowupsecEP}.
	
	Finally, we have that
	\begin{equation}
		\label{eq:BlowUpAltLinWavePotSmallEP}
		\|v_{L,n}\|_{W^{l,0,l}([0,\tau_{n+1}]) + L^2 (0,\tau_{n+1} ; W_x^{s,d})} < \frac{\gamma}{2}
	\end{equation}
	if $n$ is large enough.
	The proof follows along the same lines as in the proof of \eqref{eq:BlowUpAltLinWavePotSmall}
	with $l$ replacing $\tilde{l}$.
	This excludes the third alternative in~\eqref{eq:AlternativeTaunplus1EP}.
	
	Therefore, the combination of~\eqref{eq:AlternativeTaunplus1EP}, \eqref{prop:blowupsecEP}, and~\eqref{eq:BlowUpAltLinWavePotSmallEP}
	contradicts \eqref{eq:AlternativeTaunplus1EP},
	and thus proves the blow-up alternative in Theorem~\ref{thm:LocalWP}
	for the endpoint case. \hfill \qed
	
	Finally, observe that for any $(s,l)$ in the local well-posedness regime~\eqref{IniReg-condition}, we have the embedding \linebreak $H^s_x \times H^l_x \hookrightarrow H^{\frac{d-3}{2}}_x \times H^{\frac{d-4}{2}}_x$. Consequently, any solution $(X,Y) \in C([0,\tau^*), H^s_x \times H^l_x)$ that satisfies the auxiliary condition $X \in \X^{\frac{d-3}{2}}([0,\tau \wedge T])$ for any $T > 0$ and $\{ \cF_t \}$-adapted stopping time $\tau < \tau^*$, is a solution at the endpoint regularity $(\frac{d-3}{2}, \frac{d-4}{2})$. The uniqueness established at the endpoint thus immediately applies to this higher-regularity solution. This concludes the proof of uniqueness for Theorem~\ref{thm:LocalWP} in the sense of Remark~\ref{rem:Uniqueness}.

	\section{Extending the functional framework for noise-regularization: maximal function spaces  }\label{Sec-MaximalFunctionEstimate}
	
	In this section we introduce a new functional framework tailored to the 
	noise-regularization problem on scattering. In addition to the Fourier restriction and local smoothing spaces, 
	the new functional framework 
	incorporates maximal function spaces, 
	which is inspired by the analysis of Schr\"odinger maps in~\cite{BIKT11}.
	The inclusion of these maximal function norms is crucial to reach the borderline of the noise-regularization regime dictated by the noise. 
	
	As explained in Subsection \ref{subsec:IdeaProof} before, 
	we shall use the non-endpoint local smoothing 
	and maximal function spaces. 
	Precisely, 
	let $0 < \epsilon \ll 1$, 
	and set $\delta = \delta(\epsilon) = \frac{4}{d} \epsilon$. We then define the exponents $(p_\epsilon, q_\epsilon)$ by
	\begin{align*}
		p_\epsilon = \frac{4}{\delta} = \frac{d}{\epsilon} \qquad \text{and} \qquad q_\epsilon = \frac{4}{2-\delta} = \frac{4 d}{2 d - 4 \epsilon}.
	\end{align*}
	Note that
	\begin{align*}
		\frac{1}{p_\epsilon} + \frac{1}{q_\epsilon} = \frac12.
	\end{align*}
	
	We define the controlling norm $\| \cdot \|_{\Z^{s,b}_\epsilon}$ for the Schr{\"o}dinger component by
	\begin{align}\label{def:Z-norm}
		\| z \|_{\Z^{s,b}_{\epsilon,\lambda}} = \| z \|_{S^{s,0,b}_\lambda} + \sum_{j = 1}^d \lambda^{s + \frac12 - \epsilon} \| P_{\lambda, \vece_j} C_{\leq(\frac{\lambda}{2^8})^2} z \|_{L^{p_\epsilon, q_\epsilon}_{\vece_j}} + \sum_{j = 1}^d \lambda^{s - \frac{d-1}{2} + \epsilon} \| z \|_{L^{q_\epsilon, p_\epsilon}_{\vece_j}}
	\end{align}
	if $\lambda > 1$, $\| z \|_{\Z^{s,b}_{\epsilon,\lambda}} = \| z \|_{S^{s,0,b}}$ if $\lambda = 1$, and
	\begin{align*}
		\| z \|_{\Z^{s,b}_\epsilon} = \Big(\sum_{\lambda \in 2^{\N_0}} \| P_\lambda z \|_{\Z^{s,b}_{\epsilon,\lambda}}^2 \Big)^{\frac{1}{2}}.
	\end{align*}
	
	Let us first recall the maximal function estimate for the linear Schr{\"o}dinger evolution in lateral Strichartz spaces, see Lemma~7.1 in~\cite{BIKT11} or Lemma~4.1 in~\cite{IK07}.
	
	\begin{lemma}[Endpoint homogeneous maximal function estimate]
		\label{lem:MaximalFunctionEstimate}
		Let $\lambda \in 2^{\N}$ and $\vece \in \bS^{d-1}$. Then
		\begin{align*}
			\| e^{\imu t \Delta} P_\lambda f \|_{L^{2,\infty}_{\vece}} \lesssim \lambda^{\frac{d-1}{2}} \| P_\lambda f \|_{L^2_x}.
		\end{align*}
	\end{lemma}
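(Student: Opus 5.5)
The plan is to reduce by symmetry to the case $\vece = \vece_1$, and then to a one-dimensional oscillatory integral in the $x_1$ variable with the transversal variables handled by Plancherel. Since the $L^{2,\infty}_{\vece}$-norm is invariant under rotations of the transversal variables, and $e^{\imu t\Delta}$ commutes with rotations, we may assume $\vece=\vece_1$. Write $x=(x_1,x')$ and $\xi=(\xi_1,\xi')$. The norm to bound is
\begin{align*}
\Big(\int_{\R}\sup_{(t,x')}|e^{\imu t\Delta}P_\lambda f(t,x_1,x')|^2\dd x_1\Big)^{\frac12}.
\end{align*}
By a $TT^*$ argument it suffices to show that the operator $T f = e^{\imu t\Delta}P_\lambda f$ satisfies $\|TT^* F\|_{L^{2,\infty}_{\vece_1}}\lesssim \lambda^{d-1}\|F\|_{L^{2,1}_{\vece_1}}$. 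The operator $TT^*$ is convolution in $(t,x)$ with the kernel
\begin{align*}
K_\lambda(t,x)=\int e^{\imu(x\cdot\xi - t|\xi|^2)}\tilde\eta_\lambda(\xi)^2\dd\xi.
\end{align*}
Hence, by Young's/Hölder's inequality in the mixed norm (sup in $(t,x')$, $L^1$ in $(t,x')$), the claim reduces to the one-dimensional convolution bound in $x_1$
\begin{align*}
\Big\|\int \sup_{t,x'}|K_\lambda(t,x_1-y_1,x')|\,g(y_1)\dd y_1\Big\|_{L^2_{x_1}}\lesssim\lambda^{d-1}\|g\|_{L^2_{y_1}}.
\end{align*}

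The key step, and the main obstacle, is a pointwise kernel bound
\begin{align*}
\sup_{t,x'}|K_\lambda(t,x_1,x')|\lesssim \lambda^{d}\langle\lambda x_1\rangle^{-N}+\lambda^{d-1}|x_1|^{-1}\mathbbm{1}_{\lambda|x_1|\ge 1},
\end{align*}
or some variant of it; but the $|x_1|^{-1}$ tail is not integrable, so a naive Young's inequality fails. This is precisely why the estimate is a genuine endpoint. To overcome it, I will instead follow Ionescu--Kenig: decompose $P_\lambda f$ further into pieces with $\xi_1$ localized (via $P_{\lambda,\vece_l}$ and angular pieces). For the pieces where $|\xi_1|\ll\lambda$, use the trivial bound by Bernstein in $x'$ (sup over $x'$ costs $\lambda^{(d-1)/2}$) combined with a one-dimensional maximal estimate. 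The core is then the one-dimensional estimate $\|\sup_t|e^{\imu t\partial_1^2}g|\|_{L^2_{x_1}}$ restricted to frequencies $\sim\lambda$, after freezing $\xi'$, and this fails in $L^2$ globally.

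So instead I would apply the sup over $(t,x')$ jointly: for fixed $x_1$, the function $(t,x')\mapsto u(t,x_1,x')$ has space-time Fourier support on a $d$-dimensional piece of the paraboloid of volume $\sim\lambda^{d}$ projected. Writing $u(t,x_1,x')=\int e^{\imu x_1\xi_1}\,G(\xi_1;t,x')\dd\xi_1$ with $\|G(\xi_1;\cdot)\|_{L^\infty_{t,x'}}\le\int|\hat f(\xi_1,\xi')|\dd\xi'$, I will split $\hat f$ into the regions $|\xi_1|\sim\lambda$ and $|\xi_1|\ll\lambda$. In the region $|\xi_1|\sim\lambda$, I change variables $\tau=|\xi|^2$ to express $u$ as a Fourier integral in $(\tau,\xi')$ with $x_1$ as the evolution parameter, i.e. a ``lateral'' propagator. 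The estimate then becomes a Strichartz-type $L^2_{x_1}L^\infty_{t,x'}$ bound for this propagator with Jacobian $|\xi_1|^{-1}\sim\lambda^{-1}$. That bound follows from the dispersive decay of the lateral kernel in dimension $d$ (time-space), which is $|x_1|^{-d/2}$ times $\lambda$-factors; since $d\ge4$ gives $d/2>1$, integrability holds and the $TT^*$/Hardy--Littlewood--Sobolev argument yields the loss $\lambda^{(d-1)/2}$. The remaining region $|\xi_1|\ll\lambda$ is reduced to the previous case in a different direction $\vece_l$ via the decomposition \eqref{eq:DecompositionAngular}, combined with the rotation invariance already used. Checking the exact power $\lambda^{\frac{d-1}{2}}$ through scaling $\lambda=1$ is routine.
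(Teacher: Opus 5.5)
Your $TT^*$ reduction in the first paragraph is the right setup, but the ``main obstacle'' you identify does not exist, and the workaround you adopt instead has a real hole.

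\textbf{The kernel bound.} The bound with a $\lambda^{d-1}|x_1|^{-1}$ tail is wrong for $d\geq 3$. Write $K_\lambda(t,x)=\lambda^d K_1(\lambda^2 t,\lambda x)$. The dispersive estimate gives $|K_1(t,x)|\lesssim \min\{1,|t|^{-d/2}\}$. For $|x|\geq 8|t|$, the phase $x\cdot\xi-t|\xi|^2$ has gradient $|x-2t\xi|\geq |x|/2$ on the support $|\xi|\le 2$, so integration by parts gives $|K_1(t,x)|\lesssim_N\langle x\rangle^{-N}$. For fixed $x_1$, either $|x|\geq 8|t|$ or $|t|\geq |x_1|/8$. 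Hence
\begin{align*}
\sup_{t,x'}|K_\lambda(t,x_1,x')|\lesssim \lambda^d\langle \lambda x_1\rangle^{-\frac d2}, \qquad \int_\R \sup_{t,x'}|K_\lambda(t,x_1,x')|\dd x_1\lesssim \lambda^{d-1}\int_\R\langle y\rangle^{-\frac d2}\dd y\lesssim\lambda^{d-1}.
\end{align*}
So Young's inequality in $x_1$ closes your $TT^*$ argument immediately. It gives $TT^*\colon L^{2,1}_{\vece_1}\to L^{2,\infty}_{\vece_1}$ with norm $\lesssim\lambda^{d-1}$, uniformly in the direction of the frequencies. A decay rate $|x_1|^{-1}$ would correspond to $d=2$, not to $d\geq 4$. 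This is the standard Ionescu--Kenig argument behind \cite{BIKT11,IK07}, which the paper cites instead of giving a proof.

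\textbf{The alternative route.} As written, it does not prove the lemma. The region $|\xi_1|\ll\lambda$ cannot be ``reduced to the previous case in a different direction $\vece_l$''.
\begin{itemize}
\item The norm $L^{2,\infty}_{\vece_1}$ has a prescribed direction. A rotation moves the norm together with the frequency support, and there is no inequality between $L^{2,\infty}_{\vece_1}$ and $L^{2,\infty}_{\vece_l}$.
\item The decomposition \eqref{eq:DecompositionAngular} works for the local smoothing component precisely because there the norm direction is matched to the localization $P_{\lambda,\vece_j}$. Here it is not.
\item Your lateral-propagator picture degenerates exactly in this region, since the change of variables $\tau=|\xi|^2$ has Jacobian $(2\xi_1)^{-1}$.
\end{itemize}
In the region $|\xi_1|\sim\lambda$ you in fact use $|x_1|^{-d/2}$ decay of the same kernel, which contradicts the $|x_1|^{-1}$ tail you claimed earlier. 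The fix is to drop the splitting altogether and apply the kernel bound above to all of $P_\lambda f$.
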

	
	Next we show the maximal function estimate in $L^{q_\epsilon, p_\epsilon}_{\vece}$, both for the linear flow and the Duhamel integral.
	
	\begin{lemma}[Near-endpoint maximal function estimates]
		\label{lem:MaxFctDualEndptStrichartz}
		Let $\lambda \in 2^{\N}$, $\vece \in \bS^{d-1}, t_0\in \R$, and $0 < \epsilon \ll 1$. Then the following estimates hold
		\begin{align}
			&\| e^{\imu t \Delta} P_\lambda f \|_{L^{q_\epsilon, p_\epsilon}_{\vece}} \lesssim \lambda^{\frac{d-1}{2} - \epsilon} \| P_\lambda f \|_{L^2_x}, \label{eq:EstMaxFctHomInterpol}\\
			&\Big\| \int_{t_0}^t e^{\imu(t-s) \Delta} P_\lambda g(s) \dd s \Big\|_{L^{q_\epsilon, p_\epsilon}_{\vece}} \lesssim \lambda^{\frac{d-1}{2} - \epsilon} \| P_\lambda g \|_{L^2_t L^{2_*}_x}. \label{eq:EstMaxFctInhomInterpol}
		\end{align}
	\end{lemma}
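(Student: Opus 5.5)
The plan is to derive \eqref{eq:EstMaxFctHomInterpol} by interpolating two estimates for the free flow, and then \eqref{eq:EstMaxFctInhomInterpol} from it by a $TT^*$ argument combined with the Christ--Kiselev lemma.

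\emph{The homogeneous estimate.} We interpolate between two bounds for $e^{\imu t\Delta}P_\lambda f$:
\begin{itemize}
\item Lemma~\ref{lem:MaximalFunctionEstimate}, which gives $L^{2,\infty}_{\vece}$ with constant $\lambda^{\frac{d-1}{2}}$;
\item a bound in $L^{4,4}_{\vece} = L^4_{t,x}$.
\end{itemize}
The $L^4_{t,x}$ bound follows from the Strichartz pair $(4, \frac{2d}{d-1})$ together with Bernstein in $x$: $\|e^{\imu t\Delta}P_\lambda f\|_{L^4_{t,x}} \lesssim \lambda^{d(\frac{d-1}{2d}-\frac14)}\|f\|_{L^2} = \lambda^{\frac{d-2}{4}}\|f\|_{L^2}$.

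Both are mixed-norm spaces $L^{p,q}_{\vece}$, so complex interpolation applies; here $\vece$ plays the role of the outer variable and $(t,y)$ of the inner variables. With $\theta = \delta = \frac{4}{d}\epsilon$ we get
\[
\frac{1-\theta}{2}+\frac{\theta}{4} = \frac{2-\delta}{4} = \frac{1}{q_\epsilon},
\qquad \frac{\theta}{4} = \frac{\delta}{4} = \frac{1}{p_\epsilon},
\]
which is exactly $L^{q_\epsilon,p_\epsilon}_{\vece}$. The resulting power of $\lambda$ is
\[
(1-\delta)\frac{d-1}{2}+\delta\frac{d-2}{4} = \frac{d-1}{2}-\delta\,\frac{d}{4} = \frac{d-1}{2}-\epsilon .
\]
This is where the relation $\delta = \frac{4}{d}\epsilon$ comes from, and it proves \eqref{eq:EstMaxFctHomInterpol}.

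\emph{The inhomogeneous estimate.} Let $T:L^2_x\to L^{q_\epsilon,p_\epsilon}_{\vece}$ be $Tf = e^{\imu t\Delta}P_\lambda f$. By duality, \eqref{eq:EstMaxFctHomInterpol} says $T^*: L^{q_\epsilon',p_\epsilon'}_{\vece}\to L^2$ is bounded. The endpoint Strichartz estimate says the map $f\mapsto e^{\imu t\Delta}f$ is bounded $L^2\to L^2_tL^{2^*}_x$, so its adjoint $g\mapsto\int e^{-\imu s\Delta}g(s)\dd s$ is bounded $L^2_tL^{2_*}_x\to L^2_x$.

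Composing the two gives the untruncated bound
\[
\Big\|\int_\R e^{\imu(t-s)\Delta}P_\lambda g(s)\dd s\Big\|_{L^{q_\epsilon,p_\epsilon}_{\vece}} \lesssim \lambda^{\frac{d-1}{2}-\epsilon}\|g\|_{L^2_tL^{2_*}_x}.
\]

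\emph{Passing to the truncated integral.} To replace $\int_\R$ by $\int_{t_0}^t$ we use Christ--Kiselev. The kernel is supported on $s<t$ (or between $t_0$ and $t$), and we need the target norm to have time integrated inside with exponent strictly larger than the source time exponent. This is the main obstacle. The $L^{q,p}_{\vece}$ norm has time innermost, but time is coupled with $y$ in an $L^{p_\epsilon}$ norm and sits under the outer $L^{q_\epsilon}_{r}$.

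The way around this is to write the target norm as $L^{q_\epsilon}_r L^{p_\epsilon}_{t,y}$ and treat it by Minkowski as a Banach-space-valued $L^{p_\epsilon}_t$ norm. Since $q_\epsilon \le p_\epsilon$, Minkowski gives
\[
\|F\|_{L^{q_\epsilon}_rL^{p_\epsilon}_{t,y}} \le \big\|\,\|F(t)\|_{L^{q_\epsilon}_rL^{p_\epsilon}_y}\big\|_{L^{p_\epsilon}_t}.
\]
However, the variant of Christ--Kiselev for an operator $L^2_t(B_1)\to X$ only needs $X$ to be a lattice norm in which time appears with exponent $p_\epsilon>2$. I would cite the version of Christ--Kiselev valid for mixed-norm targets where the time variable appears with exponent $p_\epsilon$ inside, as used in \cite{BIKT11} or for lateral spaces. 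The point of the non-endpoint choice is exactly that $p_\epsilon > 2$, the source time exponent; this would fail at the endpoint $L^{2,\infty}$, where time has exponent $\infty$ in the wrong position relative to $r$. The truncation to $[t_0,t]$ is handled by splitting at $t_0$. The case $\lambda$-localization passes through trivially.
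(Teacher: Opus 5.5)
Your homogeneous bound (interpolating Lemma~\ref{lem:MaximalFunctionEstimate} with the $L^4_{t,x}$ estimate, $\theta=\delta$) matches the paper. So does your untruncated Duhamel bound: the homogeneous estimate composed with dual endpoint Strichartz, applied on an arbitrary time interval, together with the reduction from $\int_{t_0}^t$ to $\int_{-\infty}^t$. The gap is the truncation step, which is the only nontrivial part of \eqref{eq:EstMaxFctInhomInterpol}.

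First, your displayed Minkowski inequality is reversed. Since $q_\epsilon\le p_\epsilon$, putting the larger exponent outside gives the \emph{smaller} norm:
\[
\big\|\,\|F(t)\|_{L^{q_\epsilon}_rL^{p_\epsilon}_y}\big\|_{L^{p_\epsilon}_t}\le\|F\|_{L^{q_\epsilon}_rL^{p_\epsilon}_{t,y}} .
\]
So the target is the stronger norm. The classical Christ--Kiselev lemma in $L^{p_\epsilon}_t(B)$ would only give the truncated bound in the weaker norm, not in $L^{q_\epsilon,p_\epsilon}_{\vece}$.

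Second, the criterion you then invoke (``time appears with exponent $p_\epsilon>2$'') is the wrong one. It is equally satisfied by the endpoint $L^{2,\infty}_{\vece}$, where the time exponent is $\infty$, yet you say yourself that the endpoint must fail. No cited result supplies the variant you need.

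What the mechanism actually needs is an upper estimate for pieces with pairwise disjoint time supports. For such $F_k$, the inner $L^{p_\epsilon}_{t,y}$-norm splits into an $\ell^{p_\epsilon}$-sum. Since $\ell^{q_\epsilon}\hookrightarrow\ell^{p_\epsilon}$, you can then pull the sum out of the $r$-integral:
\[
\Big\|\sum_k F_k\Big\|_{L^{q_\epsilon,p_\epsilon}_{\vece}}\le\Big(\sum_k\|F_k\|_{L^{q_\epsilon,p_\epsilon}_{\vece}}^{q_\epsilon}\Big)^{1/q_\epsilon}.
\]
The paper proceeds as follows:
\begin{itemize}
\item It normalizes $\lambda^{\frac{d-1}{2}-\epsilon}\|g_\lambda\|_{L^2_tL^{2_*}_x}=1$, after perturbing so that $g_\lambda(t)\neq0$ a.e.
\item It sets $G(t)=\lambda^{d-1-2\epsilon}\int_{-\infty}^t\|g_\lambda(s)\|_{L^{2_*}_x}^2\dd s$ and Whitney-decomposes $\{G(s)<G(t)\}$ into the sets $G^{-1}(I)\times G^{-1}(J)$ with $I\sim J$.
\item At level $|I|=2^{-n}$ it applies the disjoint-support estimate above together with the untruncated bound on $G^{-1}(J)$. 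Each of the $\lesssim 2^n$ pieces is $\lesssim 2^{-n/2}$, so level $n$ contributes $\lesssim 2^{-n(\frac12-\frac1{q_\epsilon})}$.
\end{itemize}
This series is summable precisely because the \emph{outer} exponent $q_\epsilon=\frac{4}{2-\delta}$ exceeds $2$; in general one needs $\min\{q,p\}>2$. At the endpoint the outer exponent is $2$, each level contributes $O(1)$, and the sum diverges. That, not $p_\epsilon>2$, is the reason for the near-endpoint choice, and this is the argument your proposal is missing.
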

	
	\begin{proof}
		Recall that $\delta = \frac{4}{d} \epsilon$. With $\theta = 1 - \delta$ we have
		\begin{align}
			\label{eq:ParameterInterpol}
			\frac{1}{p_\epsilon} = \frac{\theta}{\infty} + \frac{1-\theta}{4} \qquad \text{and} \qquad \frac{1}{q_\epsilon} = \frac{\theta}{2} + \frac{1-\theta}{4}.
		\end{align}
		By interpolation, we thus obtain
		\begin{equation}
			\label{eq:EstInterpolMaxFunctL4}
			\| e^{\imu t \Delta} f_\lambda \|_{L^{q_\epsilon, p_\epsilon}_{\vece}} \lesssim \| e^{\imu t \Delta} f_\lambda \|_{L^{2,\infty}_{\vece}}^\theta\| e^{\imu t \Delta} f_\lambda \|_{L^{4,4}_{\vece}}^{1-\theta}.
		\end{equation}
		Using Bernstein estimates and that $(4, \frac{2d}{d-1})$ is Schr{\"o}dinger admissible we infer
		\begin{align}
			\| e^{\imu t \Delta} f_\lambda \|_{L^{4,4}_{\vece}} = \| e^{\imu t \Delta} f_\lambda \|_{L^4_t L^4_x} \lesssim \lambda^{\frac{d-2}{4}} \| e^{\imu t \Delta} f_\lambda \|_{L^4_t L^{\frac{2d}{d-1}}_x} \lesssim \lambda^{\frac{d-2}{4}} \| f_\lambda \|_{L^2_x}. \label{eq:EstHomL44}
		\end{align}
		Combining this estimate and Lemma~\ref{lem:MaximalFunctionEstimate} with~\eqref{eq:EstInterpolMaxFunctL4}, we arrive at
		\begin{align*}
			\| e^{\imu t \Delta} f_\lambda \|_{L^{q_\epsilon, p_\epsilon}_{\vece}} \lesssim \lambda^{\frac{d-1}{2} (1 - \delta)} \lambda^{\frac{d-2}{4} \delta} \|f_\lambda\|_{L^2_x} \lesssim \lambda^{\frac{d-1}{2} - \frac{d}{4} \delta}  \|f_\lambda\|_{L^2_x} \lesssim \lambda^{\frac{d-1}{2} - \epsilon}  \|f_\lambda\|_{L^2_x},
		\end{align*}
		thereby proving \eqref{eq:EstMaxFctHomInterpol}.
		
		To prove~\eqref{eq:EstMaxFctInhomInterpol}, we first note that~\eqref{eq:EstMaxFctHomInterpol} and the dual endpoint Strichartz estimate (cf. Lemma~\ref{lem:StrichartzLocalSmooth}) imply that for every time interval $J \subseteq \R$
		\begin{align}
			\Big\| \int_J e^{\imu(t-s) \Delta} g_\lambda(s) \dd s \Big\|_{L^{q_\epsilon, p_\epsilon}_{\vece}} \lesssim \lambda^{\frac{d-1}{2} - \epsilon} \Big\| \int_J e^{-\imu s \Delta} g_\lambda(s) \dd s \Big\|_{L^2_x} \lesssim  \lambda^{\frac{d-1}{2} - \epsilon} \| g_\lambda \|_{L^2(J; L_x^{2_*})}. \label{eq:EstMaxFctIntJ}
		\end{align}
		We next apply a variant of the Christ-Kiselev lemma. We first note that due to~\eqref{eq:EstMaxFctIntJ}, showing~\eqref{eq:EstMaxFctInhomInterpol} is equivalent to proving
		\begin{equation}
			\label{eq:EstMaxFctMinusInfty}
			\Big\| \int_{-\infty}^t e^{\imu(t-s) \Delta} g_\lambda(s) \dd s \Big\|_{L^{q_\epsilon, p_\epsilon}_{\vece}} \lesssim \lambda^{\frac{d-1}{2} - \epsilon} \| g_\lambda \|_{L^2_t L^{2_*}_x}.
		\end{equation} 
		We can assume in the following that $g_\lambda(t) \neq 0$ for almost all $t \in \R$.  
		Actually, 
		given $g_\lambda \in L^2_t L^{2_*}_x(\R \times \R^d)$, choose a function $h \colon \R \rightarrow L_x^{2_*}(\R^d)$ such that $\|h(t)\|_{L_x^{2_*}} = 1$ and $h(t) \notin \operatorname{span}\{g_\lambda(t)\}$ for every $t \in \R$. If we prove~\eqref{eq:EstMaxFctMinusInfty} with $g_\lambda$ replaced by $\tilde{g}(t) := g_\lambda(t) + \nu e^{- t^{2}}h(t)$ for every $\nu > 0$, then \eqref{eq:EstMaxFctMinusInfty} follows by taking the limit $\nu \rightarrow 0$. 	After normalization, we can further assume that $\lambda^{\frac{d-1}{2} - \epsilon} \| g_\lambda \|_{L^2_t L^{2_*}_x} = 1$.  Define
		\begin{align*}
			G \colon \R \rightarrow [0,1], \qquad G(t) = \int_{-\infty}^t \lambda^{d-1 - 2 \epsilon} \|g_\lambda(s)\|_{L^{2_*}_x}^2 \dd s.
		\end{align*}
		Then $G$ is continuous and strictly monotonically increasing.
		
		\vspace*{4pt plus 2pt minus 2pt}%
		Let us now introduce a Whitney decomposition. We partition $[0,1]$ into dyadic intervals $[(j-1) 2^{-n}, j 2^{-n}]$ for $j \in \{1, \ldots, 2^n\}$ for every $n \in \N$ and define a relationship $I \sim J$ on these intervals in the following way: We say that $I \sim J$ if and only if $I$ and $J$ have the same size, are not adjacent but have adjacent parents, and $\sup J < \inf I$. It is then easy to check that for almost all $x,y \in [0,1]$ with $x < y$ there is exactly one pair of dyadic intervals $I,J$ such that $x \in J$, $y \in I$, and $I \sim J$. Consequently, we have
		\begin{align*}
			\chi_{x < y}(x,y) = \sum_{I,J \colon I \sim J} \chi_I(y) \chi_J(x)
		\end{align*}
		for almost all $x,y \in [0,1]$. Applying this identity with $x = G(s)$ and $y = G(t)$ and using that $G$ is strictly monotonically increasing, we obtain
		\begin{equation}
			\label{eq:WhitneyG}
			\chi_{G(s) < G(t)}(s,t) = \sum_{I,J \colon I \sim J} \chi_{G^{-1}(I)}(t) \chi_{G^{-1}(J)}(s)
		\end{equation}
		for almost all $s,t \in \R^2$.
		Since $G$ is strictly  increasing, $s < t$ if and only if $G(s) < G(t)$. Hence,
		\begin{align*}
			\int_{-\infty}^t  e^{\imu(t-s) \Delta} g_\lambda(s) \dd s = \int_\R \chi_{G(s) < G(t)}(s,t)  e^{\imu(t-s) \Delta} g_\lambda(s) \dd s 
			= \sum_{I,J \colon I \sim J} \chi_{G^{-1}(I)}(t) \int_{G^{-1}(J)} e^{\imu(t-s) \Delta} g_\lambda(s) \dd s
		\end{align*}
		for almost all $t$, where we employed~\eqref{eq:WhitneyG} in the last step. 
		
		For fixed $n$, all the dyadic intervals $I$ with $|I| = 2^{-n}$ have disjoint interior so that also the intervals $G^{-1}(I)$ have disjoint interior. Writing $\sum_{I,J \colon I \sim J} = \sum_{n \in \N} \sum_{I,J \colon I \sim J, |I| = 2^{-n}}$ and using the above identity we infer
		\begin{align*}
			&\Big\| \int_{-\infty}^t  e^{\imu(t-s) \Delta} g_\lambda(s) \dd s \Big\|_{L^{q_\epsilon, p_\epsilon}_{\vece}}
			\leq \sum_{n \in \N} \Big\| \sum_{I,J \colon I \sim J, |I| = 2^{-n}} \chi_{G^{-1}(I)}(t) \int_{G^{-1}(J)}  e^{\imu(t-s) \Delta} g_\lambda(s) \dd s \Big\|_{L^{q_\epsilon, p_\epsilon}_{\vece}} \\
			&= \sum_{n \in \N} \Big( \int_\R \Big(\sum_{I,J \colon I \sim J, |I| = 2^{-n}} \int_{G^{-1}(I)} \int_{\cP_\vece} \Big| \int_{G^{-1}(J)} e^{\imu(t-s) \Delta} g_\lambda(s) \dd s \Big|^{p_\epsilon} \dd y \dd t \Big)^{\frac{q_\epsilon}{p_\epsilon}} \dd r \Big)^{\frac{1}{q_\epsilon}} \\
			&= \sum_{n \in \N} \Big( \int_\R \Big(\sum_{I,J \colon I \sim J, |I| = 2^{-n}} \Big\| \int_{G^{-1}(J)} e^{\imu(t-s) \Delta} g_\lambda(s) \dd s \Big\|_{L^{p_\epsilon}_{t,y}(G^{-1}(I) \times \cP_\vece)}^{p_\epsilon}\Big)^{\frac{q_\epsilon}{p_\epsilon}} \dd r \Big)^{\frac{1}{q_\epsilon}},
		\end{align*}
		where $\dd y$ denotes the induced Euclidean measure on the hyperplane $\cP_\vece$ perpendicular to $\vece$ and $\dd r$ denotes integration in direction $\vece$, cf.~\eqref{Lepq-def}. Since $q_\varepsilon< p_\varepsilon$, using the embedding $l^{q_\epsilon} \hookrightarrow l^{p_\epsilon}$, we thus deduce
		\begin{align*}
			\Big\| \int_{-\infty}^t  e^{\imu(t-s) \Delta} g_\lambda(s) \dd s \Big\|_{L^{q_\epsilon, p_\epsilon}_{\vece}}
			&\leq \sum_{n \in \N} \Big( \int_\R \sum_{I,J \colon I \sim J, |I| = 2^{-n}} \Big\| \int_{G^{-1}(J)} e^{\imu(t-s) \Delta} g_\lambda(s) \dd s \Big\|_{L^{p_\epsilon}_{t,y}(G^{-1}(I) \times \cP_\vece)}^{q_\epsilon} \dd r \Big)^{\frac{1}{q_\epsilon}} \\
			&\leq \sum_{n \in \N} \Big(\sum_{I,J \colon I \sim J, |I| = 2^{-n}} \Big\| \int_{G^{-1}(J)} e^{\imu(t-s) \Delta} g_\lambda(s) \dd s \Big\|_{L^{q_\epsilon, p_\epsilon}_\vece}^{q_\epsilon} \Big)^{\frac{1}{q_\epsilon}}.
		\end{align*}
		For every dyadic interval $J$ with $|J| = 2^{-n}$ there are at most two dyadic intervals $I$ with $I \sim J$. Combining this with estimate~\eqref{eq:EstMaxFctIntJ}, we obtain
		\begin{align*}
			\Big\| \int_{-\infty}^t  e^{\imu(t-s) \Delta} g_\lambda(s) \dd s \Big\|_{L^{q_\epsilon, p_\epsilon}_{\vece}} 
			\lesssim \sum_{n \in \N} \Big(\sum_{J \colon |J| = 2^{-n}} (\lambda^{\frac{d-1}{2} - \epsilon} \| g_\lambda \|_{L^2(G^{-1}(J); L_x^{2_*})})^{q_\epsilon}\Big)^{\frac{1}{q_\epsilon}}.
		\end{align*}
		Noting that by construction
		\begin{align*}
			\lambda^{\frac{d-1}{2} - \epsilon} \| g_\lambda \|_{L^2_t L^{2_*}_x(G^{-1}(J) \times \R^d)} = 2^{-\frac{n}{2}}
		\end{align*}
		for every dyadic interval $J$ with $|J| = 2^{-n}$, we finally arrive at
		\begin{align*}
			\Big\| \int_{-\infty}^t  e^{\imu(t-s) \Delta} g_\lambda(s) \dd s \Big\|_{L^{q_\epsilon, p_\epsilon}_{\vece}} 
			\lesssim \sum_{n \in \N} \Big( 2^n 2^{-\frac{n}{2} q_\epsilon}\Big)^{\frac{1}{q_\epsilon}} \lesssim \sum_{n \in \N} 2^{-n(\frac{1}{2} - \frac{1}{q_\epsilon})} \lesssim 1,
		\end{align*}
		as $q_\epsilon > 2$. This concludes the proof of~\eqref{eq:EstMaxFctInhomInterpol}.
	\end{proof}
	
	As a consequence, 
	we obtain the following estimate for the Schr{\"o}dinger flow in the $\| \cdot \|_{\Z^{s,b}_\epsilon}$-norm.
	
	\begin{lemma}[Compatibility 
		between the maximal framework 
		and adapted spaces]
		\label{lem:LinSchroedingerZnorm}
		Let $0 < \epsilon \ll 1$ and $t_0\in \R$. Then for any $s\in \R$ and $0\leq b\leq 1$ we have
		\begin{align*}
			&\| e^{\imu t \Delta} f \|_{\Z^{s,b}_\epsilon} \lesssim \| f \|_{H^s_x} \qquad \text{and} \qquad \Big\| \int_{t_0}^t e^{\imu (t-t') \Delta} g(t') \dd t' \Big\|_{\Z^{s,b}_\epsilon} \lesssim \| g \|_{N^{s,0,b}}.
		\end{align*}
	\end{lemma}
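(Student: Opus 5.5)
The norm $\|\cdot\|_{\Z^{s,b}_\epsilon}$ has three pieces: the adapted $S^{s,0,b}$-norm, the near-endpoint local smoothing piece $\lambda^{s+\frac12-\epsilon}\|P_{\lambda,\vece_j}C_{\leq(\frac{\lambda}{2^8})^2}z\|_{L^{p_\epsilon,q_\epsilon}_{\vece_j}}$, and the maximal function piece $\lambda^{s-\frac{d-1}{2}+\epsilon}\|z\|_{L^{q_\epsilon,p_\epsilon}_{\vece_j}}$. I will bound each separately at a fixed dyadic frequency $\lambda$ and then take the $l^2$-sum over $\lambda$. Since the Schrödinger flow commutes with $P_\lambda$, it suffices to treat $f_\lambda$ and $g_\lambda$.

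\emph{The $S^{s,0,b}$ part.} I expect the bounds $\|e^{\imu t\Delta}f\|_{S^{s,0,b}}\lesssim\|f\|_{H^s_x}$ and $\|\cI_0[g]\|_{S^{s,0,b}}\lesssim\|g\|_{N^{s,0,b}}$ to be exactly the linear estimates of \cite{CHN23} with $a=0$ (the analogue of Lemma~\ref{lem:LinFlowAdaptedSpaces}). I will quote them directly; the $L^2_tL^{2^*}_x$ component comes from the endpoint Strichartz estimate.

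\emph{The local smoothing part.} Here the plan is to interpolate between two estimates. The first is the endpoint local smoothing estimate in $L^{\infty,2}_{\vece}$ from Lemma~\ref{lem:StrichartzLocalSmooth}, which gains $\lambda^{-\frac12}$. The second is the $L^{4,4}_{\vece}=L^4_{t,x}$ Strichartz bound \eqref{eq:EstHomL44}, which loses $\lambda^{\frac{d-2}{4}}$. Using the same parameter choice as in \eqref{eq:ParameterInterpol}, with $p$ and $q$ swapped, the exponents combine to $\lambda^{-\frac12+\epsilon}$ up to admissible constants. This gives the homogeneous estimate. For the Duhamel term I need the inhomogeneous version with the low-modulation projector $C_{\leq(\lambda/2^8)^2}$ and a right-hand side in $N^{s,0,b}_\lambda$. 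I plan to split $g_\lambda$ into its $P^{(t)}_{\leq(\lambda/2^8)^2}$ part, its $C_{\leq(\lambda/2^8)^2}$ part, and the remainder.
\begin{itemize}
\item For the $C_{\leq(\lambda/2^8)^2}$ part (bounded in $L^2_tL^{2_*}_x$), I will use the dual endpoint Strichartz estimate combined with the homogeneous bound, together with a Christ--Kiselev argument exactly as in Lemma~\ref{lem:MaxFctDualEndptStrichartz}. That argument works because $p_\epsilon>2$.
\item For the high-modulation part measured in $L^2_{t,x}$ with the weight $\lambda^{s-1+b}$, I will reproduce the argument of \eqref{eq:IHMLLT3}. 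After applying $C_{\leq(\lambda/2^8)^2}$, the Duhamel term becomes a free solution $e^{\imu t\Delta}H(t_0)$ with $\|H\|_{L^\infty_tL^2_x}\lesssim\lambda^{-2}\|\cdot\|_{L^\infty_tL^2_x}$. In the pure $L^2_{t,x}$ case I will instead use $L^2_t$ bounds of the primitive. Since $b\geq0$, this part is controlled by the $N^{s,0,b}_\lambda$-norm.
\item The temporal low-frequency part in $L^\infty_tL^2_x$ is treated the same way.
\end{itemize}

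\emph{The maximal function part.} The homogeneous bound is \eqref{eq:EstMaxFctHomInterpol}, and the Duhamel bound with $L^2_tL^{2_*}_x$ input is \eqref{eq:EstMaxFctInhomInterpol}. Multiplying by $\lambda^{s-\frac{d-1}{2}+\epsilon}$ produces exactly $\lambda^s$. For the other two components of $N^{s,0,b}_\lambda$, I will not use the maximal estimate directly. Instead I will write $\cI_0[g_\lambda]=C_{\leq(\lambda/2^8)^2}\cI_0[g_\lambda]+C_{>(\lambda/2^8)^2}\cI_0[g_\lambda]$. The low-modulation piece is a free wave plus a controlled part, handled as in the previous paragraph. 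For the high-modulation piece I will use Sobolev/Bernstein in the lateral variables, together with $q_\epsilon\ge2$ and a bound of the form $\|C_{>(\lambda/2^8)^2}z_\lambda\|_{L^{q_\epsilon,p_\epsilon}}\lesssim\lambda^{\frac{d-1}{2}-\epsilon}\cdot\lambda^{-2}\|(\imu\partial_t+\Delta)z_\lambda\|_{\cdots}$, obtained via Bernstein in time.

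\textbf{Main obstacle.} I expect the hardest step to be controlling the high-modulation $L^2_{t,x}$ component of $N^{s,0,b}$ in the two lateral norms, where only $L^2$ integrability in $t$ is available. My first attempt will be the modulation splitting just described, placing the high-modulation part into $L^2_t$-based norms by Bernstein at frequency $\lambda$. If this loses too much, I will fall back on the $L^2_{t,x}\to$ Duhamel bound through the $(\imu\partial_t+\Delta)^{-1}$ gain of $\lambda^{-2}$ on modulation $\gtrsim\lambda^2$.
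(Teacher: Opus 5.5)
Your proposal is correct in outline. For the $S^{s,0,b}$ part, both homogeneous estimates, and the Duhamel maximal-function component, it is essentially the paper's argument:
\begin{itemize}
\item Lemma~\ref{lem:LinFlowAdaptedSpaces} for the adapted part;
\item interpolation of the endpoint $L^{\infty,2}_{\vece}$ (resp.\ $L^{2,\infty}_{\vece}$) bound with the $L^{4,4}_{\vece}$ Strichartz bound for the homogeneous lateral norms;
\item Lemma~\ref{lem:MaxFctDualEndptStrichartz} for $C_{\leq(\frac{\lambda}{2^8})^2}g_\lambda$;
\item for $C_{>(\frac{\lambda}{2^8})^2}g_\lambda$, a split of the output modulation into a free wave $-e^{\imu t\Delta}H(t_0)$ and a piece treated by Bernstein in $(t,r,y)$ with the $\lambda^{-2}$ resp.\ $\nu^{-1}$ gain. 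This is exactly your first attempt at the ``main obstacle''.
\end{itemize}

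The genuine difference is the Duhamel local-smoothing component, where the paper does no new analysis. It interpolates $\|P_{\lambda,\vece_j}C_{\leq(\frac{\lambda}{2^8})^2}\cI_0[g_\lambda]\|_{L^{p_\epsilon,q_\epsilon}_{\vece_j}}$ between the endpoint $L^{\infty,2}_{\vece_j}$ norm and $\lambda^{\frac{d-2}{4}}\|C_{\leq(\frac{\lambda}{2^8})^2}\cI_0[g_\lambda]\|_{L^4_tL^{2d/(d-1)}_x}$. Both are controlled by $\|\cI_0[g_\lambda]\|_{\X^{s,0}_\lambda}$; the second follows by interpolating the $L^\infty_tL^2_x$ and $L^2_tL^{2^*}_x$ parts and using \eqref{eq:CharSsablambda}. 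The paper then applies Lemma~\ref{lem:LinEstimates} with $\|g\|_{\G^{s,0}}\leq\|g\|_{N^{s,0,0}}\leq\|g\|_{N^{s,0,b}}$. So the endpoint inhomogeneous local smoothing already built into $\X^{s,0}$ does all the work, and no Christ--Kiselev argument is needed for this component.

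Your direct route (Christ--Kiselev for low-modulation input, free-wave reduction plus Bernstein for high-modulation input) is more self-contained. It relies only on the near-endpoint homogeneous estimate rather than on the endpoint inhomogeneous machinery behind Lemma~\ref{lem:LinEstimates}. The price is repeating the modulation analysis you already do for the maximal component.

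If you carry it out, a few details need adjusting.
\begin{itemize}
\item In $L^{p_\epsilon,q_\epsilon}_{\vece}$ the time variable sits in the inner exponent. Christ--Kiselev therefore closes because $q_\epsilon>2$ (the level-$n$ sum is $2^{-n(\frac12-\frac1{q_\epsilon})}$), not because $p_\epsilon>2$.
\item The term $C_{\leq(\frac{\lambda}{2^8})^2}\cI_0[C_{>(\frac{\lambda}{2^8})^2}g_\lambda]$ is not exactly a free wave, since the smooth cutoffs overlap. As the paper does for the maximal component, first apply $C_{\leq(\frac{\lambda}{2^9})^2}$, which does give $-e^{\imu t\Delta}H(t_0)$. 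Treat the band $C_{>(\frac{\lambda}{2^9})^2}$ by Bernstein; going from $L^2_{t,x}$ into $L^{p_\epsilon,q_\epsilon}_{\vece}$ costs $\lambda^{\frac12+\epsilon}$, which closes against the $\lambda^{-2}$ gain.
\item For $H(t_0)$ you need an $L^\infty_tL^2_x$ bound, not an $L^2_t$ bound. It comes from the $L^2_{t,x}$ input via Bernstein on dyadic temporal pieces and gives $\lambda^{-1}\|g_\lambda\|_{L^2_{t,x}}$.
\item The $P^{(t)}_{\leq(\frac{\lambda}{2^8})^2}$/$L^\infty_tL^2_x$ component of $N^{s,0,b}_\lambda$ is not needed. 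With $a=0$ the norm contains $\lambda^{s-1+b}\|g_\lambda\|_{L^2_{t,x}}\geq\lambda^{s-1}\|g_\lambda\|_{L^2_{t,x}}$, which controls all of $C_{>(\frac{\lambda}{2^8})^2}g_\lambda$. You must use this component for the Bernstein pieces anyway, since $L^\infty_t$ control gives no $L^{q_\epsilon}_t$ control on $\R$.
\end{itemize}
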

	\begin{proof}
		We start with the homogeneous estimate. Lemma~\ref{lem:LinFlowAdaptedSpaces} and Lemma~\ref{lem:MaxFctDualEndptStrichartz} already show that
		\begin{align*}
			\| e^{\imu t \Delta} f_\lambda \|_{S^{s,0,b}_\lambda} \lesssim \lambda^s \| f_\lambda\|_{L^2_x} \quad (\lambda \geq 1) \qquad \text{and} \qquad  \sum_{j = 1}^d \lambda^{s -\frac{d-1}{2} + \epsilon} \| e^{\imu t \Delta} f_\lambda \|_{L^{q_\epsilon, p_\epsilon}_{\vece_j}} \lesssim \lambda^s \| f_\lambda\|_{L^2_x} \quad (\lambda > 1).
		\end{align*} 
		Hence, it suffices to prove 
		for the local smoothing component that 
		\begin{align} \label{max-L2-homo}
			\sum_{j = 1}^d \lambda^{s + \frac{1}{2} - \epsilon} \| P_{\lambda, \vece_j} C_{\leq (\frac{\lambda}{2^8})^2} e^{\imu t \Delta} f_\lambda \|_{L^{p_\epsilon, q_\epsilon}_{\vece_j}} \lesssim \lambda^s \|f_\lambda\|_{L^2_x}.
		\end{align}
		To this end, 
		we argue similarly as in the proof of~\eqref{eq:EstMaxFctHomInterpol} by interpolation with $L^{4,4}_\vece$. Let $\lambda \in 2^\N$. Recalling the identities~\eqref{eq:ParameterInterpol} with $\theta = 1 - \delta$, interpolation yields
		\begin{align}
			\| P_{\lambda, \vece_j} C_{\leq (\frac{\lambda}{2^8})^2} e^{\imu t \Delta} f_\lambda \|_{L^{p_\epsilon, q_\epsilon}_{\vece_j}} \lesssim \| P_{\lambda, \vece_j} C_{\leq (\frac{\lambda}{2^8})^2} e^{\imu t \Delta} f_\lambda \|_{L^{\infty, 2}_{\vece_j}}^\theta \| P_{\lambda, \vece_j} C_{\leq (\frac{\lambda}{2^8})^2} e^{\imu t \Delta} f_\lambda \|_{L^{4, 4}_{\vece_j}}^{1-\theta} \label{eq:EstInterpolLocalSmoothing}
		\end{align}
		for every $j \in \{1, \ldots, d\}$. Combining Lemma~\ref{lem:StrichartzLocalSmooth} and~\eqref{eq:EstHomL44}, we infer
		\begin{align*}
			\| P_{\lambda, \vece_j} C_{\leq (\frac{\lambda}{2^8})^2} e^{\imu t \Delta} f_\lambda \|_{L^{p_\epsilon, q_\epsilon}_{\vece_j}} \lesssim \lambda^{-\frac{1}{2} (1 - \delta)} \lambda^{\frac{d-2}{4} \delta} \| f_\lambda\|_{L^2_x} \lesssim \lambda^{-\frac{1}{2} + \frac{d}{4} \delta}  \| f_\lambda\|_{L^2_x} \lesssim \lambda^{-\frac{1}{2} + \epsilon} \| f_\lambda\|_{L^2_x}, 
		\end{align*} 
		which implies \eqref{max-L2-homo}, 
		thereby yielding the homogeneous estimate in the  $\Z^{s,b}_\epsilon$-norm.

		We now turn to the estimate of the Duhamel integral $\cI_0[g]$. For $\lambda \in 2^{\N_0}$ and the adapted space component we again apply Lemma~\ref{lem:LinFlowAdaptedSpaces} which yields
		\begin{align}
			\| \cI_0[g_\lambda] \|_{S^{s,0,b}} \lesssim \| g_\lambda\|_{N^{s,0,b}}. \label{eq:EstDuhamelZnormAdpatedComp}
		\end{align}
		Let $\lambda > 1$ in the following. For the local smoothing component, we proceed as in~\eqref{eq:EstInterpolLocalSmoothing}  to deduce
		\begin{align}
			\| P_{\lambda, \vece_j} C_{\leq (\frac{\lambda}{2^8})^2} \cI_0[g_\lambda] \|_{L^{p_\epsilon, q_\epsilon}_{\vece_j}} 
			&\lesssim \| P_{\lambda, \vece_j} C_{\leq (\frac{\lambda}{2^8})^2} \cI_0[g_\lambda] \|_{L^{\infty, 2}_{\vece_j}}^\theta \| P_{\lambda, \vece_j} C_{\leq (\frac{\lambda}{2^8})^2} \cI_0[g_\lambda] \|_{L^{4, 4}_{\vece_j}}^{1-\theta} \nonumber \\
			&\lesssim \| P_{\lambda, \vece_j} C_{\leq (\frac{\lambda}{2^8})^2} \cI_0[g_\lambda] \|_{L^{\infty, 2}_{\vece_j}}^\theta (\lambda^{\frac{d-2}{4}} \|C_{\leq (\frac{\lambda}{2^8})^2} \cI_0[g_\lambda] \|_{L^4_t L^{\frac{2d}{d-1}}_x})^{1 - \theta},
		\end{align}
		where we also used Bernstein estimates as in~\eqref{eq:EstInterpolMaxFunctL4} in the last step. As $(4, \frac{2d}{d-1})$ is Schr{\"o}dinger admissible, interpolation between the admissible pairs $(\infty,2)$ and $(2, 2^*)$ shows that the above right-hand side is controlled by the $\X^{s,0}$-norm, where we also used~\eqref{eq:CharSsablambda}. More precisely, we have
		\begin{align*}
			\| P_{\lambda, \vece_j} C_{\leq (\frac{\lambda}{2^8})^2} \cI_0[g_\lambda] \|_{L^{p_\epsilon, q_\epsilon}_{\vece_j}} 
			&\lesssim \lambda^{-(s + \frac{1}{2}) \theta} \| \cI_0[g_\lambda] \|_{\X^{s,0}_\lambda}^\theta \lambda^{\frac{d-2}{4}(1-\theta)} \lambda^{-s (1-\theta)} \| \cI_0[g_\lambda]\|_{\X^{s,0}_\lambda}^{1-\theta} 
			\lesssim \lambda^{-s - \frac{1}{2} + \epsilon } \| \cI_0[g_\lambda] \|_{\X^{s,0}_\lambda},
		\end{align*}
		i.e.,
		\begin{equation}
			\label{eq:EstDuhamelZnormLocalSmoothingComp}
			\sum_{j = 1}^d \lambda^{s + \frac{1}{2}  - \epsilon} \| P_{\lambda, \vece_j} C_{\leq (\frac{\lambda}{2^8})^2} \cI_0[g_\lambda] \|_{L^{p_\epsilon, q_\epsilon}_{\vece_j}} \lesssim \| \cI_0[g_\lambda] \|_{\X^{s,0}_\lambda}.
		\end{equation}
		
		It remains to estimate the maximal function component of the $\Z^{s,b}_\epsilon$-norm. To that purpose, we first split into low and high modulation  of $g_\lambda$
		\begin{align*}
			\cI_0[g_\lambda] = \cI_0[C_{\leq (\frac{\lambda}{2^8})^2} g_\lambda] + \cI_0[C_{> (\frac{\lambda}{2^8})^2}g_\lambda].
		\end{align*}
		For the low modulation contribution, an application of Lemma~\ref{lem:MaxFctDualEndptStrichartz} directly yields
		\begin{align}
			\lambda^{s - \frac{d-1}{2} + \epsilon} \| \cI_0[C_{\leq (\frac{\lambda}{2^8})^2} g_\lambda] \|_{L^{q_\epsilon, p_\epsilon}_{\vece_j}} \lesssim \lambda^s \| C_{\leq (\frac{\lambda}{2^8})^2} g_\lambda \|_{L^2_t L^{2_*}_x} \lesssim \| g_\lambda \|_{N^{s,0,b}_\lambda}. \label{eq:EstMaxFctLowMod}
		\end{align}
		For the high modulation, we further split
		\begin{align*}
			\cI_0[C_{> (\frac{\lambda}{2^8})^2}g_\lambda] = C_{\leq (\frac{\lambda}{2^9})^2}\cI_0[C_{> (\frac{\lambda}{2^8})^2}g_\lambda]  + C_{> (\frac{\lambda}{2^9})^2} \cI_0[C_{> (\frac{\lambda}{2^8})^2}g_\lambda]
		\end{align*}
		and
		\begin{align*}
			C_{> (\frac{\lambda}{2^9})^2} \cI_0[C_{> (\frac{\lambda}{2^8})^2}g_\lambda] = P^{(t)}_{\lesssim \lambda^2}  C_{> (\frac{\lambda}{2^9})^2} \cI_0[C_{> (\frac{\lambda}{2^8})^2}g_\lambda]
			+ \sum_{\nu \gg \lambda^2} P^{(t)}_\nu  C_{> (\frac{\lambda}{2^9})^2} \cI_0[C_{> (\frac{\lambda}{2^8})^2}g_\lambda].
		\end{align*}
		Using the Bernstein estimates we derive
		\begin{align*}
			&\| P^{(t)}_{\lesssim \lambda^2}  C_{> (\frac{\lambda}{2^9})^2} \cI_0[C_{> (\frac{\lambda}{2^8})^2}g_\lambda] \|_{L^{q_\epsilon, p_\epsilon}_{\vece_j}} \lesssim \lambda^{\frac12 - \frac{1}{q_\epsilon}} \lambda^{2(\frac{1}{2} - \frac{1}{p_\epsilon})} \lambda^{(d-1)(\frac{1}{2} - \frac{1}{p_\epsilon})} \| P^{(t)}_{\lesssim \lambda^2}  C_{> (\frac{\lambda}{2^9})^2} \cI_0[C_{> (\frac{\lambda}{2^8})^2}g_\lambda] \|_{L^{2,2}_{\vece_j}} \\
			&\lesssim \lambda^{\frac{\delta}{4} + 1 - \frac{\delta}{2} + \frac{d-1}{2} - (d-1)\frac{\delta}{4}} \| P^{(t)}_{\lesssim \lambda^2}  C_{> (\frac{\lambda}{2^9})^2} \cI_0[C_{> (\frac{\lambda}{2^8})^2}g_\lambda] \|_{L^2_t L^2_x}
			\lesssim \lambda^{\frac{d-1}{2} - \frac{d}{4} \delta + 1} \lambda^{-2} \| C_{> (\frac{\lambda}{2^8})^2}g_\lambda \|_{L^2_t L^2_x} \\
			&\lesssim \lambda^{\frac{d-1}{2} - \epsilon - 1} \| g_\lambda \|_{L^2_t L^2_x},
		\end{align*}
		and, similarly, 
		\begin{align*}
			&\sum_{\nu \gg \lambda^2} \| P^{(t)}_{\nu}  C_{> (\frac{\lambda}{2^9})^2} \cI_0[C_{> (\frac{\lambda}{2^8})^2}g_\lambda] \|_{L^{q_\epsilon, p_\epsilon}_{\vece_j}} 
			\lesssim \sum_{\nu \gg \lambda^2} \lambda^{\frac12 - \frac{1}{q_\epsilon}} \nu^{\frac{1}{2} - \frac{1}{p_\epsilon}} \lambda^{(d-1)(\frac{1}{2} - \frac{1}{p_\epsilon})} \| P^{(t)}_{\nu}  C_{> (\frac{\lambda}{2^9})^2} \cI_0[C_{> (\frac{\lambda}{2^8})^2}g_\lambda] \|_{L^{2,2}_{\vece_j}} \\
			&\lesssim \lambda^{\frac{\delta}{4} + \frac{d-1}{2} - \frac{d-1}{4} \delta} \sum_{\nu \gg \lambda^2} \nu^{\frac{1}{2} - \frac{\delta}{4}} \nu^{-1} \| C_{> (\frac{\lambda}{2^8})^2}g_\lambda \|_{L^2_t L^2_x}
			\lesssim \lambda^{\frac{d-1}{2} - \frac{d}{4} \delta + \frac{\delta}{2}} \lambda^{2(-\frac12 - \frac{\delta}{4})} \| g_\lambda \|_{L^2_t L^2_x} \lesssim \lambda^{\frac{d-1}{2} - \epsilon - 1} \| g_\lambda \|_{L^2_t L^2_x}.
		\end{align*}
		Combining the last two estimates we arrive at 
		\begin{align}
			\sum_{j = 1}^d \lambda^{s - \frac{d-1}{2} + \epsilon} \| C_{> (\frac{\lambda}{2^9})^2} \cI_0[C_{> (\frac{\lambda}{2^8})^2}g_\lambda] \|_{L^{q_\epsilon, p_\epsilon}_{\vece_j}} \lesssim \lambda^{s-1} \|g_\lambda \|_{L^2_t L^2_x} \lesssim \| g_\lambda \|_{N^{s,0,b}_{\lambda}}. \label{eq:EstMaxFctCompHighHighMod}
		\end{align}
		To treat the remaining contribution, we first claim that
		\begin{align}
			\| C_{\leq (\frac{\lambda}{2^9})^2} \cI_0[C_{> (\frac{\lambda}{2^8})^2} g_\lambda] \|_{L^{q_\epsilon, p_\epsilon}_{\vece_j}} &\lesssim \lambda^{\frac{d-1}{2} - \epsilon - 2} \|  g_\lambda \|_{L^\infty_t L^2_x}, \label{eq:EstMaxFctLoHighModLowTemp}\\
			\| C_{\leq (\frac{\lambda}{2^9})^2} \cI_0[C_{\sim \nu} C_{> (\frac{\lambda}{2^8})^2} g_\lambda] \|_{L^{q_\epsilon, p_\epsilon}_{\vece_j}} &\lesssim \lambda^{\frac{d-1}{2} - \epsilon} \nu^{-1} \|g_\lambda \|_{L^\infty_t L^2_x}, \label{eq:EstMaxFctLoHighModHiTemp}
		\end{align}
		for any $\nu \gg \lambda$. Employing these two estimates and Bernstein estimates, we infer
		\begin{align*}
			&\| C_{\leq (\frac{\lambda}{2^9})^2} \cI_0[C_{> (\frac{\lambda}{2^8})^2} g_\lambda] \|_{L^{q_\epsilon, p_\epsilon}_{\vece_j}} 
			\lesssim \| C_{\leq (\frac{\lambda}{2^9})^2} \cI_0[C_{> (\frac{\lambda}{2^8})^2} P^{(t)}_{\lesssim \lambda^2} g_\lambda] \|_{L^{q_\epsilon, p_\epsilon}_{\vece_j}} + \sum_{\nu \gg \lambda^2} \| C_{\leq (\frac{\lambda}{2^9})^2} \cI_0[C_{> (\frac{\lambda}{2^8})^2} P^{(t)}_\nu g_\lambda] \|_{L^{q_\epsilon, p_\epsilon}_{\vece_j}} \\
			&\lesssim \lambda^{\frac{d-1}{2} - \epsilon - 2} \| P^{(t)}_{\lesssim \lambda^2} g_\lambda \|_{L^\infty_t L^2_x} + \sum_{\nu \gg \lambda^2} \lambda^{\frac{d-1}{2} - \epsilon} \nu^{-1} \| P^{(t)}_\nu g_\lambda \|_{L^\infty_t L^2_x} \\
			&\lesssim \lambda^{\frac{d-1}{2} - \epsilon - 1}  \| P^{(t)}_{\lesssim \lambda^2} g_\lambda \|_{L^2_t L^2_x} + \sum_{\nu \gg \lambda^2} \lambda^{\frac{d-1}{2} - \epsilon} \nu^{-\frac{1}{2}} \| P^{(t)}_\nu g_\lambda \|_{L^2_t L^2_x}
			\lesssim \lambda^{\frac{d-1}{2} - \epsilon - 1} \| g_\lambda \|_{L^2_t L^2_x}.
		\end{align*}
		We conclude that
		\begin{align}
			\sum_{j = 1}^d \lambda^{s - \frac{d-1}{2} + \epsilon} \| C_{\leq (\frac{\lambda}{2^9})^2} \cI_0[C_{> (\frac{\lambda}{2^8})^2}g_\lambda] \|_{L^{q_\epsilon, p_\epsilon}_{\vece_j}} \lesssim \lambda^{s-1} \|g_\lambda \|_{L^2_t L^2_x} \lesssim \| g_\lambda \|_{N^{s,0,b}_{\lambda}}.
			\label{eq:EstMaxFctCompLowHighMod}
		\end{align}
		Combining~\eqref{eq:EstDuhamelZnormAdpatedComp}, \eqref{eq:EstDuhamelZnormLocalSmoothingComp}-\eqref{eq:EstMaxFctCompHighHighMod},  and~\eqref{eq:EstMaxFctCompLowHighMod}, taking the square sum  and applying Lemma~\ref{lem:LinEstimates}, we arrive at
		\begin{align*}
			\|\cI_0[g]\|_{\Z^{s,b}_\epsilon} \lesssim \| g \|_{N^{s,0,b}},
		\end{align*}
		proving the second assertion of the lemma.
		
		It only remains to show \eqref{eq:EstMaxFctLoHighModLowTemp} and~\eqref{eq:EstMaxFctLoHighModHiTemp}. Using the commutation relation
		\begin{align*}
			e^{- \imu (\cdot) \Delta} C_{> (\frac{\lambda}{2^8})^2} = P^{(t)}_{> (\frac{\lambda}{2^8})^2} e^{-\imu (\cdot) \Delta},
		\end{align*}
		we write
		\begin{align*}
			C_{\leq (\frac{\lambda}{2^9})^2} \int_{t_0}^t e^{\imu (t - t') \Delta} C_{>(\frac{\lambda}{2^8})^2} g_\lambda \dd t' 
			= e^{\imu t \Delta} P^{(t)}_{\leq (\frac{\lambda}{2^9})^2} \int_{t_0}^t \partial_{t'} \partial_{t'}^{-1} P^{(t')}_{> (\frac{\lambda}{2^8})^2} (e^{-\imu t' \Delta} g_\lambda) \dd t' 
			= e^{\imu t \Delta} P^{(t)}_{\leq (\frac{\lambda}{2^9})^2} (H(t) - H(t_0)),
		\end{align*}
		where $H(t) = \partial_t^{-1} P^{(t)}_{> (\frac{\lambda}{2^8})^2}(e^{-\imu (\cdot) \Delta} g_\lambda)$. Since
		\begin{align*}
			P^{(t)}_{\leq (\frac{\lambda}{2^9})^2} H(t) = \partial_t^{-1} P^{(t)}_{\leq (\frac{\lambda}{2^9})^2} P^{(t)}_{> (\frac{\lambda}{2^8})^2}(e^{-\imu (\cdot) \Delta} g_\lambda) = 0,
		\end{align*}
		the above Duhamel term reduces to the linear Schr{\"o}dinger evolution $-e^{\imu t \Delta} H(t_0)$. An application of the homogeneous maximal function estimate from Lemma~\ref{lem:MaxFctDualEndptStrichartz} yields
		\begin{align}
			\| C_{\leq (\frac{\lambda}{2^9})^2} \cI_0[C_{> (\frac{\lambda}{2^8})^2} g_\lambda] \|_{L^{q_\epsilon, p_\epsilon}_{\vece_j}}
			\lesssim \| e^{\imu t \Delta} H(t_0) \|_{L^{q_\epsilon, p_\epsilon}_{\vece_j}} \lesssim \lambda^{\frac{d-1}{2} - \epsilon} \|H(t_0)\|_{L^2_x} \lesssim \lambda^{\frac{d-1}{2} - \epsilon} \|H\|_{L^\infty_t L^2_x}. \label{eq:EstDuhamelMaxFctLowHighModH}
		\end{align}
		Finally, a computation yields the estimate
		\begin{align*}
			\| H \|_{L^\infty_t L^2_x} \lesssim \lambda^{-2} \| e^{- \imu t  \Delta} C_{>(\frac{\lambda}{2^8})^2} g_\lambda \|_{L^\infty_t L^2_x} \lesssim \lambda^{-2} \| g_\lambda \|_{L^\infty_t L^2_x}.
		\end{align*}
		In combination with~\eqref{eq:EstDuhamelMaxFctLowHighModH}, we conclude~\eqref{eq:EstMaxFctLoHighModLowTemp}. Estimate~\eqref{eq:EstMaxFctLoHighModHiTemp} follows analogously.
	\end{proof}

	We end this section with the following 
	estimates in the $\Z^{s,0}_\epsilon$-spaces
	for the linear Schr\"odinger flow with free wave potential.  
	Recall that $\cU_{v_L}[f]$ and $\mathcal{I}_{v_L}[F]$ denote the homogeneous and inhomogeneous solution operators, 
	respectively, to the linear Schr\"odinger equation with potential $v_L:= e^{\imu t|\nabla|} Y_0$. 
	
	\begin{lemma}[Linear Schr\"odinger with free wave potential II]
		\label{lem:LinSchrPotSmallTimeLocSmo}
		Let $0\leq s\leq l+2$ and $l\geq \frac{d-4}{2}$ with $(s,l)\neq (\frac{d}{2},\frac{d-4}{2})$.
		Then, for any interval $I \subset \R$, $t_0 \in I$, $f \in H_x^s$, and $F\in L_t^2 L_x^{2_*}$ with $F \in N^{s,0,0}(I)$, we have
		\begin{align*}
			\|\cU_{v_L}[f]\|_{\Z_\varepsilon^{s,0}(I)} \lesssim \|f\|_{H^s_x}, \qquad \|\cI_{v_L}[F]\|_{\Z_\varepsilon^{s,0}(I)} \lesssim \|F\|_{N^{s,0,0}(I)},
		\end{align*}
		and
		\begin{align*}
			\|u\|_{L^2 (I ; L_x^{2^*})} \lesssim  \|f\|_{L_x^2} + \|F\|_{L^2 (I ; L_x^{2_*})},
		\end{align*}
		where the implicit constants depend on $v_L$.
	\end{lemma}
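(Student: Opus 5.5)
The plan is to reduce everything to the free Schrödinger flow via a perturbative argument on short subintervals, mirroring the proof of Lemma~\ref{lem:LinSchrPotSmallTime} (i.e. \cite[Theorem~7.1]{CHN23}) but with $\X^{s,a}$ replaced by $\Z^{s,0}_\varepsilon$. First, I use Lemma~\ref{lem:SmallnessWavePotential} to split $\R$ into finitely many overlapping open intervals $I_j$ on each of which $\|v_L\|_{W^{l,0,\beta}(I_j)+L^2(I_j;W_x^{s,d})}<\gamma$ for a small $\gamma>0$ to be fixed; here $\beta=s-\frac12$ and $a=0$. The number of intervals depends on $v_L$, which is why the implicit constants depend on $v_L$. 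On each $I_j$, with initial time $t_j\in I_j$, I write $u=\cU_{v_L}[f]$ (resp. $\cI_{v_L}[F]$) in Duhamel form relative to the free flow:
\[
u = e^{\imu (t-t_j)\Delta}u(t_j) + \cI_0\big[\Re(v_L)u + F\big].
\]
Lemma~\ref{lem:LinSchroedingerZnorm} with $b=0$ then gives
\[
\|u\|_{\Z^{s,0}_\varepsilon(I_j)}\lesssim \|u(t_j)\|_{H^s_x}+\|\Re(v_L)u\|_{N^{s,0,0}(I_j)}+\|F\|_{N^{s,0,0}(I_j)}.
\]

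The key bilinear input is
\[
\|\Re(V)u\|_{N^{s,0,0}}\lesssim \|V\|_{W^{l,0,\beta}+L^2_tW^{s,d}_x}\,\|u\|_{S^{s,0,0}},
\]
which is Lemma~\ref{lem:BilinearEstimates}~\ref{it:BilinearEstNonendpointbNzero} (the estimate already used in Lemma~\ref{lem:LinSchrPotSmallTime}), valid in the stated range of $(s,l)$ with $a=0$. Since $\|u\|_{S^{s,0,0}}\le\|u\|_{\Z^{s,0}_\varepsilon}$ by definition of the $\Z$-norm, the potential term is bounded by $C\gamma\|u\|_{\Z^{s,0}_\varepsilon(I_j)}$. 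Choosing $\gamma$ small lets me absorb it, provided $\|u\|_{\Z^{s,0}_\varepsilon(I_j)}<\infty$ a priori. Finiteness can be obtained either by running the fixed point argument directly in $\Z^{s,0}_\varepsilon(I_j)$ (same contraction estimates), or, since $u$ is already known to exist in $\X^{s,0}$ by Remark~\ref{rem:PropagOPWellDef}, by observing that the right-hand side above is finite. Either way I get
\[
\|u\|_{\Z^{s,0}_\varepsilon(I_j)}\lesssim \|u(t_j)\|_{H^s_x}+\|F\|_{N^{s,0,0}(I_j)}.
\]

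Next I chain the intervals. The $H^s$-norm at the next initial time is bounded by the $L^\infty_tH^s_x$ part of the previous $\Z$-norm, so finitely many steps give a bound on each $I_j$ in terms of $\|f\|_{H^s_x}+\|F\|_{N^{s,0,0}(I)}$. I also need that $N^{s,0,0}(I_j)\lesssim N^{s,0,0}(I)$ under restriction; this holds by the infimum definition. Finally I glue the pieces into $\Z^{s,0}_\varepsilon(I)$ using a decomposability statement analogous to Lemma~\ref{lem:DecompX}. For the extra lateral $L^{p_\epsilon,q_\epsilon}$ and $L^{q_\epsilon,p_\epsilon}$ components this should be straightforward via smooth time cutoffs, since these are $L^p$-type norms in $t$ and cutoffs commute with them up to controlled modulation errors. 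For the $C_{\le}$-projected term I would reuse the argument of \cite[Lemma~A.1]{HRSZ24}.

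The $L^2_tL^{2^*}_x$ bound follows exactly as in Lemma~\ref{lem:LinSchrPotSmallTime}. Strichartz gives
\[
\|u\|_{L^2L^{2^*}}\lesssim\|u(t_j)\|_{L^2}+\|v_L\|_{L^\infty L^{d/2}}\|u\|_{L^2L^{2^*}}+\|F\|_{L^2L^{2_*}},
\]
but $\|v_L\|_{L^\infty L^{d/2}}$ is not small. So I instead absorb via smallness of the $L^2_tW^{s,d}_x$ piece on subintervals, as in \cite{CHN23}, combined with mass control on each interval and iteration. Equivalently, I can simply quote the $L^2L^{2^*}$ estimate of Lemma~\ref{lem:LinSchrPotSmallTime} on each $I_j$ and chain.

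I expect the main obstacle to be the gluing step for the $\Z$-norm. The low-modulation projection $C_{\le(\lambda/2^8)^2}$ inside the local smoothing component is nonlocal in time, so restricting and patching requires controlling commutators of $C_{\le}$ with time cutoffs in $L^{p_\epsilon,q_\epsilon}_{\vece_j}$. I would handle this as in Lemma~\ref{lem:DecompX}, bounding the commutator by high-modulation pieces estimated through the $S^{s,0,0}$ component and Bernstein, exactly as in \eqref{eq:EstMaxFctCompHighHighMod}.
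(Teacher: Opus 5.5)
Your outline is the paper's own argument. You take small-potential subintervals from Lemma~\ref{lem:SmallnessWavePotential}, rerun the proof of Lemma~\ref{lem:LinSchrPotSmallTime} with $\X^{s,a}$ replaced by $\Z^{s,0}_\varepsilon$ and Lemma~\ref{lem:LinSchroedingerZnorm} in place of Lemma~\ref{lem:LinEstimates}, and then chain and glue as in Remark~\ref{rem:PropagOPWellDef}. The $L^2_tL^{2^*}_x$ bound does not involve $\Z^{s,0}_\varepsilon$ at all; it is literally the one already provided by Remark~\ref{rem:PropagOPWellDef}. (The sum-space product estimate you invoke is Lemma~\ref{it:BilinearEstNonendpoint}, not Lemma~\ref{lem:BilinearEstimates}~\ref{it:BilinearEstNonendpointbNzero} alone.)

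One claim is false, and the paper's sketch shares it. With $a=b=0$ that product estimate requires $s-l\le 1$ and $(s,l)\neq(\frac{d-2}{2},\frac{d-4}{2})$. It also requires $\beta\ge\max\{s-1,\frac{d-4}{2}\}$, but that is harmless, since $\beta$ can be taken large for a free wave. So the estimate is not ``valid in the stated range'' $s\le l+2$. For $s-l>1$ an $a=0$ space such as $\Z^{s,0}_\varepsilon$ should not be expected to work at all. Its $S^{s,0,0}$-component demands $\Re(v_L)u\in L^2_tH^{s-1}_x$, while the high--low interaction $P_\lambda v_L\cdot P_{\ll\lambda}u$ is only of size $\|P_\lambda Y_0\|_{L^2_x}$. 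For $Y_0\in H^l_x$ the sum $\sum_\lambda\lambda^{2(s-1)}\|P_\lambda Y_0\|_{L^2_x}^2$ may then diverge, which is exactly why $a^*(s,l)>0$ there. The paper silently passes from $a=a^*(s,l)$ in Lemma~\ref{lem:LinSchrPotSmallTime} to $a=0$. This is harmless only because the lemma is applied solely with $s-l\le\frac12$.

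In that range you can bypass entirely the step you single out as the main obstacle, namely gluing in the $\Z$-norm with its commutators of $C_{\le(\lambda/2^8)^2}$ and time cutoffs. Your own remark about $\X^{s,0}$ already does the job.
\begin{itemize}
\item If $s-l<1$, then $a^*(s,l)=0$. Remark~\ref{rem:PropagOPWellDef} together with Lemma~\ref{lem:LinEstimates} then gives, on the whole interval, $\|u\|_{S^{s,0,0}(I)}\le\|u\|_{\X^{s,0}(I)}\lesssim_{v_L}\|f\|_{H^s_x}+\|F\|_{N^{s,0,0}(I)}$.
\item By Lemma~\ref{lem:LinearEstimateHalfWave}, $\|v_L\|_{W^{l,0,\beta}(\R)}\lesssim\|Y_0\|_{H^l_x}$.
\item Apply Lemma~\ref{lem:LinSchroedingerZnorm} once on $I$ to $u=e^{\imu(t-t_0)\Delta}f+\cI_0[\Re(v_L)u+F]$. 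Bound $\|\Re(v_L)u\|_{N^{s,0,0}(I)}\lesssim\|Y_0\|_{H^l_x}\|u\|_{S^{s,0,0}(I)}$ by Lemma~\ref{lem:BilinearEstimates}~\ref{it:BilinearEstNonendpointbNzero}.
\end{itemize}
This yields the $\Z^{s,0}_\varepsilon(I)$-estimate directly. No smallness, absorption, chaining, or decomposability of $\Z^{s,0}_\varepsilon$ is needed. The only gluing used is the $\X$-gluing of Lemma~\ref{lem:DecompX}, which Remark~\ref{rem:PropagOPWellDef} already relies on.
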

	\begin{proof}
		The proof proceeds analogously to the argument in Remark~\ref{rem:PropagOPWellDef}. It relies on decomposing into time intervals where the wave potential is small (Lemma~\ref{lem:SmallnessWavePotential}) and a small free wave potential estimate corresponding to Lemma~\ref{lem:LinSchrPotSmallTime}. This estimate is established by following the proof of Lemma~\ref{lem:LinSchrPotSmallTime}, but replacing the $\X^{s,a}$-norm with the $\Z^{s,0}_{\epsilon}$-norm and invoking Lemma~\ref{lem:LinSchroedingerZnorm} instead of Lemma~\ref{lem:LinEstimates}.
	\end{proof}

	\section{Global control of geometric Brownian motions in Besov spaces}\label{Sec:GRCGBM}
	In this section, we establish the  global-in-time control of the geometric Brownian motion 
	(GBM for short)
	\begin{align}  \label{h-gBM} 
		h_c(t):= e^{-2c\beta(t)-2c^2t} 
	\end{align} 
	in Besov spaces, where $\beta$ is a one dimensional Brownian motion.
	
	We start with proving the global-in-time $C^{\frac{1}{2}-}(\R_+)$-H\"older regularity of GBM. With a slight abuse of notation, for any time interval $I$, we do not interpret $\| \cdot \|_{C^\alpha(I)}$-norm as a restriction norm but set
	\begin{align}
		\label{eq:DefHoelderNorm}
		\| g \|_{C^\alpha(I)} := \| g \|_{L^\infty(I)} + \|g \|_{\dot{C}^\alpha(I)}, \qquad \|g\|_{\dot{C}^\alpha(I)}:= \sup_{\substack{s\neq t\\ s,t\in I}} \frac{|g(t)-g(s)|}{|t-s|^\alpha}.
	\end{align}
	
	\begin{lemma}[Global-in-time $C^{\frac12-}$-regularity of GBM]\label{prop:GeoBMinHolder}
		Let $c \in \R$. For any $0<\alpha<\frac12$,  we have $h_c \in C^\alpha(\R_+)$ with probability one.
	\end{lemma}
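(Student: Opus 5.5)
The case $c=0$ is trivial since $h_0\equiv 1$, so assume $c\neq 0$. The plan is to split $\R_+$ into unit intervals $[n,n+1]$, $n\in\N_0$. On each of them we will obtain a local Hölder bound for $h_c$ that carries an exponentially decaying prefactor. Summing these, or rather taking a supremum, will give the global statement. The key structural point is that on a bounded interval the Hölder seminorm of $h_c$ is controlled by $\sup_{[n,n+1]} h_c$ times a Hölder norm of the exponent. The supremum of $h_c$ over $[n,n+1]$ decays like $e^{-c^2 n}$ almost surely, by the law of the iterated logarithm. The increments of the exponent on unit intervals grow at most like $\sqrt{\log n}$.

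\textbf{Step 1: local bound.} Write $h_c=e^{X}$ with $X(t)=-2c\beta(t)-2c^2t$. Since $|e^x-e^y|\le e^{\max\{x,y\}}|x-y|$, we get for $s,t\in[n,n+1]$
\begin{align*}
|h_c(t)-h_c(s)|\le \sup_{[n,n+1]}h_c\,\big(2|c|\,\|\beta\|_{\dot C^\alpha([n,n+1])}+2c^2\big)|t-s|^\alpha .
\end{align*}
If $|t-s|\ge 1$, we simply bound the difference quotient by $2\|h_c\|_{L^\infty(\R_+)}$. So it suffices to show that $\|h_c\|_{L^\infty(\R_+)}<\infty$ and that
\begin{align*}
\sup_n \sup_{[n,n+1]}h_c\,\|\beta\|_{\dot C^\alpha([n,n+1])}<\infty \quad\text{almost surely.}
\end{align*}
For $s<t$ lying in adjacent intervals we split at the integer point between them, which costs only a factor $2$.

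\textbf{Step 2: growth of local Hölder norms.} The increments $\beta(n+\cdot)-\beta(n)$ are standard Brownian motions on $[0,1]$. By Kolmogorov's criterion, or equivalently Fernique-type Gaussian tails for the $C^\alpha$ norm via Borell–TIS, we have $\mathbb{P}(\|\beta\|_{\dot C^\alpha([n,n+1])}>K)\le Ce^{-\kappa K^2}$. Choosing $K_n=\sqrt{(2/\kappa)\log(n+2)}$, these probabilities are summable in $n$. Borel–Cantelli then gives $\|\beta\|_{\dot C^\alpha([n,n+1])}\lesssim_\omega \sqrt{\log(n+2)}$ for all $n$.

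\textbf{Step 3: decay of $h_c$.} By the law of the iterated logarithm, $|\beta(t)|\le 2\sqrt{t\log\log t}$ for $t\ge t_0(\omega)$. This gives $h_c(t)\le e^{4|c|\sqrt{t\log\log t}-2c^2 t}\lesssim_\omega e^{-c^2 t}$ for large $t$. Continuity of $h_c$ on compact sets handles small $t$, so $\|h_c\|_{L^\infty(\R_+)}<\infty$ and $\sup_{[n,n+1]}h_c\lesssim_\omega e^{-c^2 n}$. Multiplying by the bound from Step 2, the product $e^{-c^2n}\sqrt{\log(n+2)}$ is bounded in $n$, which finishes the argument.

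The only mildly delicate point is Step 2, namely the uniform-in-$n$ control of the local Hölder norms. The plan is to use the Gaussian tail together with Borel–Cantelli. Because of the exponential decay from Step 3, even a crude polynomial growth bound on these norms would suffice, for instance one obtained from the moment version of Kolmogorov's criterion with a $p$-th moment and Chebyshev.
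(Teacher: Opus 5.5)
Your proof is correct and follows essentially the same route as the paper. Both use a mean-value bound $|h_c(t)-h_c(s)|\lesssim \sup_{[n,n+1]} h_c\,(|\beta(t)-\beta(s)|+|t-s|)$ on unit intervals, split at an integer point for nearby times in adjacent intervals, get exponential decay of $h_c$ from the law of the iterated logarithm, and control the growth of $\|\beta\|_{\dot C^\alpha([n,n+1])}$. The differences are minor: you prove the growth bound yourself via Gaussian tails and Borel--Cantelli (getting $\sqrt{\log n}$), whereas the paper imports the much cruder bound $e^{n/16}$ from \cite{HRSZ24}, which still suffices against the decay $e^{-\frac{15}{8}n}$; and you treat general $c\neq 0$ directly instead of reducing to $c=1$.
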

	\begin{proof}
		Without loss of generality we assume that $c=1$ and write $h = h_1$. From the law of the iterated logarithm of Brownian motion
		\begin{align}\label{prop:lawitlog}
			\liminf_{t\to\infty} \frac{\beta(t)}{\sqrt{2t\log\log t}}=-1,\qquad \limsup_{t\to\infty} \frac{\beta(t)}{\sqrt{2t\log\log t}}=1, \quad \PP\text{-}a.s.,
		\end{align}
		and the continuity of its paths, we infer $\|h\|_{L^\infty(\R_+)}<\infty$ $\PP$-a.s. Hence, it suffices to show that the homogeneous H\"older norm satisfies
		\begin{align}\label{prop:hCa}
			\|h\|_{\dot{C}^\alpha(\R_+)}<\infty, \quad \PP\text{-}a.s.
		\end{align}
		
		For this purpose,
		we first note that in the proof of  Proposition $7.3$ in \cite{HRSZ24} it was established
		that there exists a full probability set $\Omega'$ such that for any $\omega\in \Omega'$ there is $T=T(\omega)>2$,
		such that for any $t\geq T-2$
		and for any $n\geq T-2$,
		\begin{align}   \label{h-Can-expdecay}
			\|\beta(\cdot,\omega)\|_{\dot{C}^\alpha([n,n+1])}\leq e^{\frac{n}{16}} ,
			\quad |\beta(t,\omega)|\leq 2\sqrt{2 t \log(\log(t))} \leq \frac{t}{16}.
		\end{align}
		Below we fix such an $\omega \in \Omega'$ and the corresponding $T = T(\omega)$, and consider the inner temporal regime $[0,T]$
		and the outer temporal regime $[T-1,\infty)$ separately. The dependence on $\omega$ is omitted for the ease of notation.
		
		(i) The inner temporal regime $[0,T]$: First we note that by the mean value theorem,
		\begin{align}\label{prop:MeanValueGBM}
			|h(t)-h(s)|\leq 2e^{2\xi} (|\beta(t)-\beta(s)|+|t-s|)
		\end{align}
		for some $\xi$ between $-\beta(s)-s$ and $-\beta(t)-t$. Let $M_T:=\sup_{t\in [0,T]}|\beta(t)+t| $, which is finite due to the continuity of Brownian motions. We then have
		\begin{align*}
			|h(t)-h(s)|&\leq 2e^{2M_T}(|\beta(t)-\beta(s)|+|t-s|)\\
			&\leq 2e^{2M_T}(|\beta(t)-\beta(s)|+ (2T)^{1-\alpha} |t-s|^\alpha).
		\end{align*}
		It follows that
		\begin{align}  \label{h-Ca-T}
			\sup_{\substack{s\neq t\\ s,t\in [0,T]}} \frac{|h(t)-h(s)|}{|t-s|^\alpha}\leq 2e^{2M_T} (\|\beta\|_{\dot{C}^\alpha{([0,T])}} +(2T)^{1-\alpha})<\infty.
		\end{align}
		
		(ii) The outer temporal regime $[T-1,\infty)$: We will consider the two cases $|t-s|\geq 10^{-2}$ and $|t-s|< 10^{-2}$. 
		
		For the former case, using \eqref{h-Can-expdecay} we have
		\begin{align*}
			\sup_{\substack{|s-t|\geq 10^{-2}\\ s,t\geq T-1}} \frac{|h(t)-h(s)|}{|t-s|^\alpha} \leq 10^{2\alpha} \cdot 2 \sup_{t\geq T-1} |h(t)|\lesssim  e^{-\frac{T}{3}}<\infty.
		\end{align*}
		
		For the latter case, we first estimate
		\begin{align}\label{prop:GBMO1}
			\sup_{\substack{0<|s-t|< 10^{-2}\\ s,t\geq T-1}} \frac{|h(t)-h(s)|}{|t-s|^\alpha}\leq \sup_{\substack{[t]\leq s<t<s+10^{-2}\\ s,t\geq T-1}} \frac{|h(t)-h(s)|}{|t-s|^\alpha} + \sup_{\substack{s<[t]\leq t<s+10^{-2}\\ s,t\geq T-1}} \frac{|h(t)-h(s)|}{|t-s|^\alpha},
		\end{align}
		where $[t]$ denotes the floor function of $t\in \R$.
		Using \eqref{h-Can-expdecay} and \eqref{prop:MeanValueGBM}, we derive that for any $n\geq T-2$,
		\begin{align*}
			\sup_{\substack{s\neq t\\ s,t\in [n,n+1]}} \frac{|h(t)-h(s)|}{|t-s|^\alpha}
			&\leq
			\sup_{\substack{s\neq t\\ s,t\in [n,n+1]}} \frac{2e^{2\xi} (|\beta(t)-\beta(s)|+|t-s|)}{|t-s|^\alpha} \\
			& \leq 2e^{-\frac{15}{8}n} ( \|\beta(\cdot)\|_{\dot{C}^\alpha([n,n+1])}+1) \notag \\
			& \leq 2e^{-\frac{15}{8}n}(1+e^{\frac{n}{16}})\lesssim e^{-\frac{n}{3}}.
		\end{align*}
		Thus, the first term on the right-hand side of \eqref{prop:GBMO1} is bounded by
		\begin{align*}
			\sup_{\substack{[t]\leq s<t<s+10^{-2}\\ s,t\geq T-1}} \frac{|h(t)-h(s)|}{|t-s|^\alpha} \lesssim e^{-\frac{T}{3}}.
		\end{align*}
		The second term on the right-hand side of \eqref{prop:GBMO1} is bounded by
		\begin{align*}
			\sup_{\substack{s<[t]\leq t<s+10^{-2}\\ s,t\geq T-1}} \frac{|h(t)-h(s)|}{|t-s|^\alpha} &\leq \sup_{\substack{s<[t]\leq t<s+10^{-2}\\ s,t\geq T-1}} 2\frac{e^{2\xi} (|\beta(t)-\beta(s)|+|t-s|)}{|t-s|^\alpha}  \\
			&\leq \sup_{\substack{s<[t]\leq t<s+10^{-2}\\ s,t\geq T-1}} 2e^{2\xi}\Big( \frac{|\beta(t)-\beta([t])|}{|t-[t]|^\alpha}+ \frac{|\beta([t])-\beta(s)|}{|[t]-s|^\alpha}+10^{-2(1-\alpha)}\Big)\\
			&\lesssim \sup_{n \geq T-2} 2e^{-\frac{15}{8}n} (e^{\frac{n}{16}}+e^{\frac{n}{16}}+10^{-2(1-\alpha)}) \lesssim e^{-\frac{n}{3}} .
		\end{align*}
		
		Combining the above estimates, we obtain
		\begin{align*}
			\sup_{\substack{s\neq t\\ s,t\geq T-1}}\frac{|h(t)-h(s)|}{|t-s|^\alpha}
			&\leq  \sup_{\substack{0<|s-t|< 10^{-2}\\ s,t\geq T-1}}\frac{|h(t)-h(s)|}{|t-s|^\alpha}
			+\sup_{\substack{|s-t|\geq 10^{-2}\\ s,t \geq T-1}}\frac{|h(t)-h(s)|}{|t-s|^\alpha} \notag \\
			&
			\lesssim e^{-\frac{T}{3}}+ e^{-\frac{T}{3}}\lesssim e^{-\frac{T}{3}}<\infty.
		\end{align*}
		
		(iii) The crossing case $s< T < t$ : We directly have
		\begin{align*}
			\sup_{\substack{s\neq t\\ s< T < t}}\frac{|h(t)-h(s)|}{|t-s|^\alpha}
			& \leq \sup_{\substack{s\neq t\\ s< T < t}}   \frac{|h(T)-h(s)|}{|T-s|^\alpha} + \sup_{\substack{s\neq t\\ s< T < t}} \frac{|h(t)-h(T)|}{|t-T|^\alpha},
		\end{align*}
		where the first term is bounded by case (i) and the second term is bounded by case (ii). This completes the proof of the lemma.
	\end{proof}
	\begin{remark}
		\label{rem:HoelderEstUnb}
		We note that we actually proved in step (ii) of the above proof that for $\PP$-a.s. $\omega$ there is $C(\omega) > 0$ and $T_0(\omega) > 0$ such that
		\begin{equation}
			\label{prop:fastdecayh}
			\| h(\cdot, \omega) \|_{\dot{C}^\alpha([t,\infty))} = \sup_{\substack{s'\neq t'\\ s',t' \geq t}}\frac{|h(t',\omega)-h(s',\omega)|}{|t'-s'|^\alpha} \leq C(\omega) e^{-\frac{t}{3}}
		\end{equation}
		for all $t \geq T_0(\omega)$, where $\| h(\cdot, \omega) \|_{\dot{C}^\alpha([t,\infty))}$ is still interpreted as in~\eqref{eq:DefHoelderNorm} and we wrote $h$ for $h_1$.
	\end{remark}

	We next extend this regularity statement in the Besov scale to all integrability exponents $p$ in $[1,\infty]$. This \emph{global} control in Besov norms is crucial 
	to solve the scaling issue explained in the introduction 
	and to derive the trilinear estimates in Theorem~\ref{Prop-trilinear} below.

	Since we localize function spaces from the real line to time intervals via restriction (see~\eqref{eq:DefRestrictionNorm}), we need to work with appropriate extensions. We define for any $t_0 \geq 0$ and $c > 0$
	\begin{align}
		\label{eq:DefExtensionhc}
		\psi_{c,t_0} \colon C([0,\infty)) \rightarrow C(\R), \quad \psi_{c,t_0}[g](t) = \begin{cases}
			e^{-2g(t) - 2c^2 t}, \quad &t \geq t_0, \\
			e^{-2g(t_0) - 2c^2 t_0}(c^2(t - t_0) + 1), &-\frac{1}{c^2} + t_0 \leq t < t_0, \\
			0, &t < -\frac{1}{c^2} + t_0,
		\end{cases}
	\end{align}
	which is measurable when $C([0,\infty))$ and $C(\R)$ are equipped with their respective Borel-$\sigma$-algebras. We note that $\psi_{c,t_0}[c\beta]$ is an extension of $h_c$ from $[t_0,\infty)$ to $\R$. We show in the next lemma that this extension belongs to $B^{\alpha}_{p,\infty}(\R)$ $\PP$-a.s., which in particular implies
	that $h_c \in B^{\alpha}_{p,\infty}(I)$, $\PP$-a.s., 
	for any interval $I \subseteq[0,\infty)$, $\alpha \in (0,\frac12)$ and $p \in [1,\infty]$. 
	
	\begin{lemma}[Global-in-time Besov regularity of GBM]\label{le:GBMGloBesov}
		Let $c > 0$ and $t_0 \geq 0$. 
		Then, for any $0<\alpha<\frac12$ and $p\in[1,\infty]$,  we have $\psi_{c,t_0}[c \beta] \in B^{\alpha}_{p,\infty}(\R)$ with probability one.
	\end{lemma}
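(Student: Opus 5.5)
The plan is to use the difference characterization of $B^{\alpha}_{p,\infty}(\R)$ for $0<\alpha<1$:
\begin{align*}
\|f\|_{B^\alpha_{p,\infty}(\R)} \sim \|f\|_{L^p(\R)} + \sup_{0<|\tau|\leq 1} |\tau|^{-\alpha}\|f(\cdot+\tau)-f\|_{L^p(\R)}.
\end{align*}
Write $f=\psi_{c,t_0}[c\beta]$, so that $f = h_c$ on $[t_0,\infty)$, $f$ is affine on $[t_0-c^{-2},t_0]$, and $f$ vanishes to the left. The cases $p=\infty$ and $p<\infty$ are then handled with the same pathwise decomposition of the real line.

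First, for $p=\infty$ we have $B^\alpha_{\infty,\infty}=C^\alpha$, so it suffices to show $f\in C^\alpha(\R)$ almost surely. On $[t_0,\infty)$ this is Lemma~\ref{prop:GeoBMinHolder}, applied with $c$ in place of $1$ and restricted to $[t_0,\infty)$. On $(-\infty,t_0]$ the function $f$ is Lipschitz with constant $c^2 h_c(t_0)$ and has compact transition support. Since $f$ is continuous at both gluing points $t_0-c^{-2}$ and $t_0$, the three pieces glue to a globally $\alpha$-Hölder function via the triangle inequality, splitting any increment across a gluing point as in case (iii) of the proof of Lemma~\ref{prop:GeoBMinHolder}. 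This also gives $f\in L^\infty$.

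Second, for $p<\infty$ I would prove an integrable decay bound. By Remark~\ref{rem:HoelderEstUnb} (rescaled from $h_1$ to $h_c$, e.g.\ through the scaling $h_c(t)\overset{d}{=}h_1(c^2t)$ pathwise, or simply by rerunning step (ii) with the law of the iterated logarithm for $c\beta$), there are random $C(\omega)$ and $T_0(\omega)$ with $\|h_c\|_{\dot C^\alpha([t,\infty))}\leq C e^{-\kappa t}$ for $t\geq T_0$, and likewise $|h_c(t)|\leq Ce^{-\kappa t}$, for some $\kappa=\kappa(c)>0$. Then for $|\tau|\leq1$:
\begin{align*}
|f(t+\tau)-f(t)| \leq |\tau|^\alpha \min\bigl\{\|f\|_{C^\alpha(\R)},\, Ce^{-\kappa (t-1)}\bigr\}.
\end{align*}
The bound $\|f\|_{C^\alpha(\R)}$ applies on the compact window $t\in[t_0-c^{-2}-1,\,T_0+1]$; the exponential bound applies for $t\geq T_0+1$; and the difference vanishes for $t< t_0-c^{-2}-1$. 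Taking the $L^p$-norm in $t$ yields $|\tau|^{-\alpha}\|f(\cdot+\tau)-f\|_{L^p}\lesssim \|f\|_{C^\alpha}(T_0+2+c^{-2})^{1/p} + C(p\kappa)^{-1/p}<\infty$, uniformly in $\tau$. The same splitting gives $f\in L^p$, which completes the proof for all $p\in[1,\infty]$.

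The main obstacle is the decay step: Remark~\ref{rem:HoelderEstUnb} is stated only for $h_1$, and one must check that the exponential decay of the tail Hölder seminorm holds for general $c$ with a (random) threshold. I expect a direct rerun of step (ii), replacing the constants $\frac{1}{16}$ by ones adapted to $c$, to work, since only the almost sure bounds $|c\beta(t)|\leq c^2t/2$ and $\|\beta\|_{\dot C^\alpha([n,n+1])}\leq e^{\epsilon n}$ for large $t,n$ are needed.
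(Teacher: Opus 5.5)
Your proof is correct, but for $p<\infty$ it takes a different route from the paper.

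\textbf{The paper's route.} It works directly with the Littlewood--Paley definition. It writes $P_\lambda h(t)=\int_\R \cF^{-1}(\eta_\lambda)(t')\,(h(t-t')-h(t))\dd t'$ and splits the integral into $|t'|<\frac{|t|}{2}$ and $|t'|\geq\frac{|t|}{2}$. This yields the pointwise bound
\[
\lambda^\alpha|P_\lambda h(t)|\lesssim \chi_{|t|\le 2T}\|h\|_{C^\alpha(\R)}+\chi_{|t|>2T}\bigl(e^{-\frac{|t|}{6}}+|t|^{-2}\bigr)\bigl(1+\|h\|_{L^\infty(\R)}\bigr),
\]
which lies in $L^p$ uniformly in $\lambda$.

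\textbf{Your route.} You use the first-difference characterization of $B^\alpha_{p,\infty}$ for $0<\alpha<1$. This costs a citation of a standard equivalence. Only the easy direction is needed, and its proof is essentially the paper's kernel computation with Minkowski's inequality applied in $L^p$ rather than pointwise. In exchange, the argument is more transparent. Since $f(\cdot+\tau)-f$ is local, it vanishes identically to the left of $t_0-c^{-2}-1$. You also avoid the nonlocal kernel tail, which is what produces the polynomially decaying term $|t|^{-2}$ in the paper.

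\textbf{Common input.} The probabilistic input is the same in both proofs: Lemma~\ref{prop:GeoBMinHolder} for the compact window, and the tail decay \eqref{prop:fastdecayh} for large $t$. In both you need the threshold to satisfy $T_0\ge t_0$, so that $f=h_c$ wherever the decay bound is used; the paper does this by taking $T=\max\{T_0,t_0\}$.

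\textbf{The step you flag as the main obstacle.} This is not a genuine difficulty. Set $\tilde\beta(s):=c\beta(s/c^2)$, which is again a standard Brownian motion on the same probability space. Then you have the exact pathwise identity $h_c(t)=\tilde h_1(c^2t)$, where $\tilde h_1(s)=e^{-2\tilde\beta(s)-2s}$. Applying Remark~\ref{rem:HoelderEstUnb} to $\tilde h_1$ gives
\[
\|h_c\|_{\dot C^\alpha([t,\infty))}=c^{2\alpha}\|\tilde h_1\|_{\dot C^\alpha([c^2t,\infty))}\le c^{2\alpha}C(\omega)e^{-\frac{c^2t}{3}} \qquad \text{for } c^2t\ge T_0(\omega).
\]
No equality-in-distribution argument is needed, and there is no need to rerun step (ii) with adapted constants. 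This is also what the paper's ``without loss of generality $c=1$'' amounts to.
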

	
	\begin{proof}
		Without loss of generality we assume $c=1$ and write $h = \psi_{1,t_0}[c \beta]$ for the ease of notation. Clearly, the chosen extension yields $h \in C^{\alpha}(\R) = B^\alpha_{\infty,\infty}(\R)$ $\PP$-a.s. in view of Lemma~\ref{prop:GeoBMinHolder}. It remains to treat the case $p \in [1,\infty)$.
		
		Recall the definition of Littlewood-Paley projectors in the Notation part. By Remark~\ref{rem:HoelderEstUnb}, we can find a full measure set $\Omega'$ such that for any $\omega\in \Omega'$ there exists $T=T(\omega) = \max\{T_0(\omega), t_0\}$, such that for any $t\geq T$ we have the estimate~\eqref{prop:fastdecayh}. We fix such an $\omega \in \Omega'$ and drop it for the ease of notation.
		
		Let $\lambda\in 2^{\N_0}$. 
		Then, 
		$\|P_1 h\|_{L^p} = \|\cF^{-1}({\eta}_{\leq 1})\ast h\|_{L^p}\lesssim \|h\|_{L^p}<\infty$ $\PP$-a.s. Hence, it suffices to consider $\lambda>1$ in the following.
		We write
		\begin{align*}
			P_\lambda h(t) &= \int_{\R} \cF^{-1}(\eta_{\lambda})(t') h(t-t') \dd t'\\
			&= \int_{\R} \cF^{-1}(\eta_{\lambda})(t') (h(t-t')-h(t)) \dd t'\\
			&= \int_{0<|t'|<\frac{|t|}{2}} \cF^{-1}(\eta_{\lambda})(t') (h(t-t')-h(t)) \dd t' + \int_{|t'|\geq \frac{|t|}{2}} \cF^{-1}(\eta_{\lambda})(t') (h(t-t')-h(t)) \dd t' =: I + II.
		\end{align*}
		
		If $|t|\leq 2T$, by Lemma~\ref{prop:GeoBMinHolder} and a change of variables, we directly estimate
		\begin{align*}
			|I|+|II| &\leq \int_{\R} |\cF^{-1}(\eta_{\lambda})(t')|  |h(t-t')-h(t)| \dd t'\\
			&\leq \int_{\R} |t'|^\alpha  |\cF^{-1}(\eta_{\lambda})(t')| \dd t' \|h\|_{C^\alpha(\R)}\\
			& = \lambda^{-\alpha} \int_{\R} |t'|^\alpha  |\cF^{-1}(\eta_{1})(t')|\dd t' \|h\|_{C^\alpha(\R)}\lesssim \lambda^{-\alpha} \|h\|_{C^\alpha(\R)}.
		\end{align*}
		
		If $|t|>2T$, we first estimate $I$ where we have $|t'| < \frac{|t|}{2}$. In the case $t<-2T$ we have $h(t-t')-h(t)=0$. Hence, we only need to focus on the case $t\geq 2T$, where $t-t'\in[\frac{t}{2},\infty)$. Using \eqref{prop:fastdecayh} we have
		\begin{align*}
			|I| \lesssim \int_{0<|t'|<\frac{|t|}{2}} |t'|^\alpha  |\cF^{-1}(\eta_{\lambda})(t')|\dd t' \sup_{0<|t'|<\frac{|t|}{2}} \frac{|h(t-t')-h(t)|}{|t'|^\alpha} \lesssim \lambda^{-\alpha} e^{-\frac{t}{6}}.
		\end{align*}
		In order to estimate $II$, where $|t'| \geq \frac{|t|}{2}$, we fix some $\beta>0$. Then
		\begin{align*}
			|II|&\leq \int_{|t'|\geq \frac{|t|}{2}} |t'|^\beta  |\cF^{-1}(\eta_{\lambda})(t')| \frac{|h(t-t')-t(t')|}{|t'|^\beta} \dd t'\\
			&\leq \lambda^{-\beta} \int_{\R} |t'|^\beta  |\cF^{-1}(\eta_{1})(t')| \dd t'  \frac{2\|h\|_{L^\infty(\R)}}{|t/2|^\beta}\lesssim \lambda^{-\beta}\|h\|_{L^\infty(\R)} |t|^{-\beta}.
		\end{align*}
		Choosing $\beta = 2$, since $\alpha\in (0,\frac12)$, we arrive at $|II|\lesssim \lambda^{-\alpha} \|h\|_{L^\infty(\R)} |t|^{-2}$.
		
		Therefore, 
		from the estimates above we conclude that for any $t\in \R$ 
		and any $\lambda\in 2^\N$, 
		\begin{align*}
			\lambda^\alpha |P_\lambda h(t)| \lesssim \chi_{|t|\leq 2T} \|h\|_{C^\alpha(\R)} + \chi_{|t|>2T} (e^{-\frac{|t|}{6}}+ |t|^{-2})(1+\|h\|_{L^\infty(\R)}),
		\end{align*}
		which yields $\lambda^\alpha \|P_\lambda h\|_{L^p(\R)}\leq C<\infty$ for some constant $C>0$, 
		thereby finishing the proof of the lemma.
	\end{proof}

	We conclude this section by showing that GBMs decay rapidly after short time intervals with high probability and that they are uniformly bounded (in $c$) in $B^{s}_{p,\infty}([0,\infty))$ if $s \leq \frac{1}{p}$. Both properties are 
	crucial to derive the noise-regularization-effect on scattering.

	\begin{lemma}\label{prop:GeoBMFastDecay}
		Let $0\leq s <\frac{1}{2}, 1\leq p\leq \infty$, and $c\geq1$. 
				
		\noindent$(i)$ Fast decay after short time ($c \rightarrow \infty$): For any $\varepsilon>0$, we have
		\begin{align}\label{prop:GBMNormFastDeacy}
			\PP( \|h_c\|_{B_{p,\infty}^s([c^{-1},\infty))}\geq \varepsilon)\longrightarrow 0\quad \text{as}\ c\rightarrow \infty.
		\end{align}
		
		\noindent$(ii)$ Uniform bound (in $c$): 
		If $s\leq \frac{1}{p}$, we have
		\begin{align}\label{prop:GBMNormUniformBound}
			\sup_{c\geq 1} \|h_c\|_{B^s_{p,\infty}([0,\infty))}<\infty,\ \ \mathbb{P}\text{-a.s.} 
		\end{align} 
		
		\noindent$(iii)$ Exponential decay of Besov norms ($c$ fixed): 
		If $p\in(1,\infty]$, then for $\mathbb{P}$-a.s{.} $\omega$
		there exist $C_1(c,\omega)>0$, $C_2 > 0$, and sufficiently large $T_0(\omega) > 0$ 
		such that for any interval $I = [t_0,\infty)$ with 
		$t_0 \geq T_0(\omega)$
		\begin{align}\label{prop:GBMNormExpoDeacy}
			\|h_c(\cdot, \omega)\|_{B^s_{p,\infty}(I)} \leq C_1(c,\omega) e^{-C_2 t_0}. 
		\end{align} 
	\end{lemma}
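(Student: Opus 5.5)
The plan is to reduce all three statements to the single process $h_1$ via Brownian scaling, and then to apply Lemma~\ref{le:GBMGloBesov} and Remark~\ref{rem:HoelderEstUnb} to the rescaled process. For $c>0$ we have $c\beta(t)\overset{d}{=}\beta(c^2t)$, so $h_c(t)\overset{d}{=}h_1(c^2t)$ as processes. Besov norms on half-lines are restriction norms, and we realize them with the extension $\psi_{c,t_0}$. Under the dilation $t\mapsto c^2t$ one has $\psi_{c,t_0}[c\beta](t)\overset{d}{=}\psi_{1,c^2t_0}[\beta](c^2t)$. Hence the Littlewood--Paley pieces satisfy $P_\lambda (g(c^2\cdot))=(P_{\lambda/c^2}g)(c^2\cdot)$, which gives
\begin{align*}
\|g(c^2\cdot)\|_{\dot B^s_{p,\infty}}=c^{2s-\frac2p}\|g\|_{\dot B^s_{p,\infty}}.
\end{align*}
This is the scaling \eqref{intro:ScaBesov}. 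Throughout, I will pass between inhomogeneous and homogeneous norms using the $L^p$ part and the $L^\infty$ bound on $h$.

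For (ii), I take $t_0=0$. The $L^p$ part satisfies $\|h_c\|_{L^p(\R_+)}=c^{-2/p}\|h_1\|_{L^p}$, which is bounded for $c\geq1$. For the high frequencies with $\lambda\geq 1$, I estimate $\lambda^s\|P_\lambda\psi_{c,0}[c\beta]\|_{L^p}$. After rescaling, this becomes $c^{2s-2/p}\sup_{\mu\geq c^{-2}}\mu^s\|P_\mu \tilde h\|_{L^p}$, where $\tilde h=\psi_{1,0}[\beta]$. The pieces with $\mu\geq1$ are bounded by Lemma~\ref{le:GBMGloBesov}. The low pieces with $c^{-2}\le\mu<1$ are bounded by $\mu^s\|\tilde h\|_{L^p}\le \|\tilde h\|_{L^p}$. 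Since $2s-2/p\le0$, both contributions are uniform in $c$. The statement is almost sure, not just in distribution, so I must be careful at this point: distributional equality gives a bound in law for each $c$, but not a pathwise supremum. To get the pathwise statement I would avoid scaling and instead work directly with $h_c(t)=h_1(c^2t)$ along the fixed path $\beta_c(t):=c^{-1}\beta(c^2t)$. Since this is not the same path for different $c$, I expect this to be the main obstacle. My first attempt would be a monotone domination argument. I would bound $\lambda^s\|P_\lambda h_c\|_{L^p}$ pointwise by $C(\omega)$ using a Hölder estimate of $\beta$ on unit intervals together with the law of the iterated logarithm, in the spirit of \eqref{h-Can-expdecay}. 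The aim is to get $|h_c(t)-h_c(t')|\lesssim e^{-c^2\min(t,t')}\,c\,(|\beta(t)-\beta(t')|+c|t-t'|)$ with $|\beta(t)|\le C(\omega)(1+t)$. I would then integrate this directly in the $L^p$-modulus of continuity characterization of $B^s_{p,\infty}$, checking that the gain $e^{-c^2 t}$ compensates the factor $c$ exactly when $s\le 1/p$.

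For (i), it suffices to prove the claim in probability, so the scaling argument is legitimate. I get $\|h_c\|_{B^s_{p,\infty}([c^{-1},\infty))}\overset{d}{\lesssim}c^{2s-2/p}(1+c^{2/p})\,\|h_1\|_{B^s_{p,\infty}([c,\infty))}$ plus lower order terms. Remark~\ref{rem:HoelderEstUnb} combined with the proof of Lemma~\ref{le:GBMGloBesov}, taken with $t_0=c\ge T_0(\omega)$, bounds $\|h_1\|_{B^s_{p,\infty}([c,\infty))}$ by $C(\omega)e^{-c/6}$. This is much smaller than any polynomial in $c$, so the probability tends to $0$.

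For (iii), I would repeat the proof of Lemma~\ref{le:GBMGloBesov} using the extension $\psi_{c,t_0}$, with $t_0\ge T_0$. The Hölder seminorm on $[t_0,\infty)$ is $\lesssim e^{-t_0/3}$ by \eqref{prop:fastdecayh}, and on $[t_0-c^{-2},t_0]$ the linear ramp has size $\lesssim h(t_0)\lesssim e^{-t_0}$. Splitting $P_\lambda h(t)$ into near and far parts as before, the near part gives $\lambda^{-\alpha}e^{-t_0/6}$ on the support. For the far part I would use $|h|\le e^{-t_0}$ instead of $\|h\|_{L^\infty}$, together with the decay $|t-t_0|^{-2}$. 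Since $p>1$, the tail $|t|^{-2}$ is integrable in $L^p$, and this yields \eqref{prop:GBMNormExpoDeacy}.
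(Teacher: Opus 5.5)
\textbf{Part (ii) has a genuine gap.} You are right that Brownian scaling gives, for each fixed $c$, only a bound in law, and that this says nothing about $\sup_{c\ge1}$ along a fixed path. The paper's proof of (ii) is exactly this scaling argument and makes precisely that leap. Your pathwise repair, however, cannot work.

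First, the bound $|h_c(t)-h_c(t')|\lesssim e^{-c^2\min(t,t')}\,c\,(|\beta(t)-\beta(t')|+c|t-t'|)$ is false near $t=0$. The mean value theorem produces the factor $e^{-2c\beta(\xi)-2c^2\xi}$, and $|\beta(t)|\le C(\omega)(1+t)$ bounds this near $t=0$ only by $e^{2cC(\omega)}$.

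Second, and more fundamentally, no $c$-independent pathwise bound exists. By the law of the iterated logarithm at $t=0$ there are $t_n\downarrow0$ with $-\beta(t_n)\ge\frac12\sqrt{2t_n\log\log(1/t_n)}$. Choose $c_n$ with $c_nt_n=\frac14\sqrt{2t_n\log\log(1/t_n)}$, so that $c_n\to\infty$. Then
\[
h_{c_n}(t_n)=e^{-2c_n\beta(t_n)-2c_n^2t_n}\ge e^{\frac14\log\log(1/t_n)}=(\log(1/t_n))^{1/4}\longrightarrow\infty .
\]
Hence $\sup_{c\ge1}\|h_c\|_{L^\infty([0,\infty))}=\infty$ almost surely. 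So the $L^\infty$-augmented form of \eqref{prop:GBMNormUniformBound} asserted in Remark~\ref{rem:BesovIntersection} is false as an almost sure statement. This is the form invoked in the proof of Theorem~\ref{thm:RegNoise}, and no pathwise argument can establish it.

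What is true, and all that is used later, is uniform boundedness in probability:
\[
\lim_{A\to\infty}\sup_{c\ge1}\PP\big(\|h_c\|_{B^s_{p,\infty}\cap L^\infty([0,\infty))}>A\big)=0 .
\]
Your scaling paragraph already proves this. For $s\le\frac1p$ and $c\ge1$, each $\|h_c\|_{B^s_{p,\infty}\cap L^\infty([0,\infty))}$ is dominated in law by $C\|\psi_{1,0}[\beta]\|_{B^s_{p,\infty}\cap L^\infty(\R)}$, which is almost surely finite by Lemma~\ref{le:GBMGloBesov}. The proof of Theorem~\ref{thm:RegNoise} only needs an $A$, independent of $\mathbf c$, with $\PP(\|h_{\mathbf c}\|\le A)\ge1-\frac\eta2$ for all large $\|\mathbf c\|$, and that is exactly this statement. (For different $\mathbf c$ the processes $h_{\mathbf c}$ are driven by different time-changed Brownian motions anyway, so a pathwise supremum over $\mathbf c$ is not the natural object.) So drop the pathwise attempt and state (ii) in this form.

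Parts (i) and (iii) are fine.

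Your (i) is the paper's argument. You rescale to $\psi_{1,c}[\beta]$, use the exponential smallness from Remark~\ref{rem:HoelderEstUnb} and the proof of Lemma~\ref{le:GBMGloBesov}, and pass from almost sure convergence to convergence in probability.

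For (iii) you take a different route. You rerun the near/far decomposition of Lemma~\ref{le:GBMGloBesov} for $\psi_{c,t_0}[c\beta]$ and track the $t_0$-dependence directly. The paper instead interpolates between $B^s_{1,\infty}$ and $C^s$ and draws the decay only from the $C^s$ factor, which is why it needs $p>1$. Your version has two advantages:
\begin{itemize}
\item It makes the uniformity of $C_1(c,\omega)$ in $t_0$ explicit; the paper only notes that $\|\psi_{c,t_0}[c\beta]\|_{B^s_{1,\infty}}$ is finite for each fixed $t_0$.
\item It also works for $p=1$.
\end{itemize}
You need the analogue of \eqref{prop:fastdecayh} for $h_c$ with fixed $c$; this follows from the proof of Lemma~\ref{prop:GeoBMinHolder} with $c$-dependent constants. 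The region $|t|\le 2t_0$ costs only a harmless factor $t_0^{1/p}$.
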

	
	\begin{proof}
		$(i)$ We start with the proof of \eqref{prop:GBMNormFastDeacy}. We use the extension $\psi_{c,c^{-1}}[c \beta]$ from~\eqref{eq:DefExtensionhc}, which coincides with $h_c$ on $[c^{-1}, \infty)$. In the case $p=\infty$, using $B^{s}_{\infty, \infty}(\R) = C^s(\R)$ with equivalent norms, we infer
		\begin{align*}
			\|h_c\|_{B^{s}_{\infty, \infty}([c^{-1},\infty))} \leq \| \psi_{c,c^{-1}}[c \beta] \|_{B^{s}_{\infty, \infty}(\R)} \leq C \| \psi_{c,c^{-1}}[c \beta] \|_{C^{s}(\R)}.
		\end{align*}
		Using the self-symmetry of Brownian motions
		$\PP\circ (c\beta(\cdot))^{-1}= \PP\circ(\beta (c^2 \cdot))$, we obtain
		\begin{align*}
			\PP (\|\psi_{c,c^{-1}}[c \beta]\|_{C^{s}(\R)} \geq \varepsilon )
			&= \PP ( \|\psi_{c,c^{-1}}[\beta(c^2 \cdot)] \|_{C^{s}(\R)} \geq  \varepsilon )
			\leq \PP ( c^{2s} \|\psi_{1,c}[\beta] \|_{C^{s}(\R)} \geq  \varepsilon),
		\end{align*}
		where we used the definition of the extension in~\eqref{eq:DefExtensionhc} to obtain $\psi_{1,c}[\beta]$ after scaling in the last step, as well as $c \geq 1$.  For the H{\"o}lder norm, by the definition of $\psi_{1,c}$ and using the interpretation~\eqref{eq:DefHoelderNorm}, we infer
		\begin{align*}
			\|\psi_{1,c}[\beta]\|_{C^s(\R)} \leq e^{-2\beta(c) - 2c} + \| h_1 \|_{C^s([c,\infty))}.
		\end{align*} 
		Employing the law of the iterated logarithm and Remark~\ref{rem:HoelderEstUnb}, we obtain that for $\PP$-a.s. $\omega$ there are sufficiently large $c_0(\omega) > 0$ and $C(\omega) > 0$ such that
		\begin{align}
		\label{eq:HoelderNormPsi1c}
			\|\psi_{1,c}[\beta]\|_{C^s(\R)} \leq C(\omega) e^{-\frac{c}{3}}
		\end{align}
		for all $c \geq c_0(\omega)$.
		Hence, we have $\PP$-a.s. for sufficiently large $c$
		\begin{align*}
			c^{2s} \|\psi_{1,c}[\beta] \|_{C^{s}(\R)} \leq C(\omega) c^{2s} e^{-\frac{c}{3}} \longrightarrow 0\quad \text{as}\ c \to \infty
		\end{align*}
		and thus
		\begin{align*}
			\PP (\|h_c\|_{B^{s}_{\infty, \infty}([c^{-1},\infty))} \geq \varepsilon )
			\leq  \PP ( c^{2s} \|\psi_{1,c}[\beta] \|_{C^{s}(\R)} \geq C^{-1} \varepsilon) \longrightarrow 0 \quad \text{as $c \rightarrow \infty$}.
		\end{align*}

		In the case $1\leq p <\infty$, we still use the extension $\psi_{c,c^{-1}}[c \beta]$ from~\eqref{eq:DefExtensionhc}, which coincides with $h_c$ on $[c^{-1}, \infty)$. By the definition of the restriction norm, we thus have
		\begin{align*}
			\|h_c\|_{B_{p,\infty}^s([c^{-1},\infty))} \leq \| \psi_{c,c^{-1}}[c \beta] \|_{B_{p,\infty}^s(\R)}.
		\end{align*}
		Using the self-symmetry of Brownian motions once more and the equivalence of $\| \cdot \|_{B^s_{p,\infty}(\R)}$ and $\|\cdot\|_{L^p(\R)} + \| \cdot \|_{\dot{B}_{p,\infty}^s(\R)}$, the scaling of the $\|\cdot\|_{L^p(\R)}$- and $\| \cdot \|_{\dot{B}_{p,\infty}^s(\R)}$-norms combined with $c \geq 1$ implies
		\begin{align*}
			\PP (\|h_c\|_{B_{p,\infty}^s([c^{-1},\infty))} \geq \varepsilon )
			&\leq \PP (\| \psi_{c,c^{-1}}[c \beta] \|_{B_{p,\infty}^s(\R)} \geq \varepsilon )
			= \PP ( \| \psi_{c,c^{-1}}[\beta(c^2 \cdot)] \|_{B_{p,\infty}^s(\R)} \geq \varepsilon ) \\
			&\leq \PP ( c^{2s-\frac{2}{p}}\|\psi_{1,c}[\beta]\|_{B_{p,\infty}^s(\R)} \geq C^{-1} \varepsilon),
		\end{align*} 
		where we also exploited the definition in~\eqref{eq:DefExtensionhc} to obtain $\psi_{1,c}[\beta]$ after scaling in the last step again.
		Note that by interpolation, we have
		\begin{align*}
			c^{2s-\frac{2}{p}}\|\psi_{1,c}[\beta]\|_{B_{p,\infty}^s(\R)} &\lesssim c^{2s-\frac{2}{p}} \|\psi_{1,c}[\beta]\|^{\frac{1}{p}}_{B_{1,\infty}^s(\R)} \|\psi_{1,c}[\beta]\|^{1-\frac{1}{p}}_{B^s_{\infty, \infty}(\R)} 
			&\lesssim c^{2s-\frac{2}{p}} \|\psi_{1,c}[\beta]\|^{\frac{1}{p}}_{B_{1,\infty}^s(\R)} \|\psi_{1,c}[\beta]\|^{1-\frac{1}{p}}_{C^s(\R)}.
		\end{align*}
		By Lemma~\ref{prop:GeoBMinHolder}, $\|\psi_{1,c}[\beta]\|_{B_{1,\infty}^s(\R)}$ is finite $\PP$-a.s.
		
		Employing~\eqref{eq:HoelderNormPsi1c} for the H{\"o}lder-norm again, we get $\PP$-a.s.
		\begin{align*}
			c^{2s-\frac{2}{p}}\|\psi_{1,c}[\beta]\|_{B_{p,\infty}^s([c,\infty))} 
			&\lesssim c^{2s-\frac{2}{p}} e^{-\frac{c}{3}(1-\frac{1}{p})} \longrightarrow 0\quad \text{as}\ c \to \infty, 
		\end{align*} 
		where we also used $s < \frac12$ in the case $p = 1$. We conclude that
		\begin{align*}
			\PP (\|h_c\|_{B_{p,\infty}^s([c^{-1},\infty))} \geq \varepsilon )
			\longrightarrow 0\quad \text{as}\ c \to \infty,
		\end{align*}  
		thereby finishing the proof of \eqref{prop:GBMNormFastDeacy}. 
		
		\vspace{4pt plus 2pt minus 2pt}
		$(ii)$ In order to prove~\eqref{prop:GBMNormUniformBound}, we argue as in the proof of \eqref{prop:GBMNormFastDeacy} via the self-symmetry of Brownian motions and the definition of the extensions in~\eqref{eq:DefExtensionhc} to infer
		\begin{align*}
			\|h_c\|_{B^s_{p,\infty}([0,\infty))} \leq \|\psi_{c,0}[c\beta]\|_{B^s_{p,\infty}(\R)} \overset{d}{=} \|\psi_{c,0}[\beta(c^2 \cdot)]\|_{B^s_{p,\infty}(\R)} \leq  c^{2(s-\frac{1}{p})}\|\psi_{1,0}[\beta]\|_{B^s_{p,\infty}(\R)} \leq \|\psi_{1,0}[\beta]\|_{B^s_{p,\infty}(\R)}
		\end{align*}
		where $\overset{d}{=}$ means 
		equality in distribution and where we used $\frac{1}{p}\geq s$ in the last step. Since $\psi_{1,0}[\beta] \in B^{s}_{p,\infty}(\R)$ $\PP$-a.s. by Lemma~\ref{le:GBMGloBesov}, we obtain~\eqref{prop:GBMNormUniformBound}.

		\vspace*{4pt plus 2pt minus 2pt}%
		$(iii)$ The proof of~\eqref{prop:GBMNormExpoDeacy} follows similar ideas as above. Using the extension $\psi_{c,t_0}[c \beta]$ of $h_c$ from $[t_0,\infty)$ to $\R$, we infer
		\begin{align} \label{esti-Bsp-Cs}
			\|h_c\|_{B_{p,\infty}^s([t_0,\infty))} 
			&\lesssim \| \psi_{c,t_0}[c \beta] \|_{B^s_{p,\infty}(\R)}
			\lesssim \|\psi_{c,t_0}[c \beta]\|^{\frac{1}{p}}_{B_{1,\infty}^s(\R)} \|\psi_{c,t_0}[c \beta]\|^{1-\frac{1}{p}}_{B^s_{\infty,\infty}(\R)} \nonumber \\
			&\lesssim  \|\psi_{c,t_0}[c \beta]\|^{\frac{1}{p}}_{B_{1,\infty}^s(\R)} \|\psi_{c,t_0}[c \beta]\|^{1-\frac{1}{p}}_{C^s(\R)}.
		\end{align} 
		Arguing as in the proof of \eqref{prop:GBMNormUniformBound}, exploiting the self-symmetry of Brownian motions and~\eqref{eq:DefExtensionhc}, we obtain
		\begin{align*}
			\|\psi_{c,t_0}[c \beta]\|_{B^s_{p,\infty}(\R)} \overset{d}{=} \|\psi_{c,t_0}[\beta(c^2 \cdot)]\|_{B^s_{p,\infty}(\R)} \leq  c^{2(s-\frac{1}{p})}\|\psi_{1,c^2 t_0}[\beta]\|_{B^s_{p,\infty}(\R)} < \infty \qquad \PP\text{-a.s.}
		\end{align*}
		by Lemma~\ref{le:GBMGloBesov}. In view of the definition of $\psi_{c,t_0}$, we directly estimate
		\begin{align*}
			\|\psi_{c,t_0}[c \beta]\|_{C^s(\R)} \leq c^2 e^{- 2c \beta(t_0) - 2c^2 t_0} + \| h_c \|_{C^s([t_0, \infty))},
		\end{align*}
		where we interpret $\| h_c \|_{C^s([t_0, \infty))}$ as in~\eqref{eq:DefHoelderNorm} again. An inspection of the proof of Lemma~\ref{le:GBMGloBesov} shows that
		\begin{align*}
			\| h_c \|_{C^s([t_0, \infty))} \lesssim c^2 e^{- \frac{c}{3} t_0}
		\end{align*}
		for all $t_0 \geq T_0$ $\PP$-a.s., where we also used $c \geq 1$.
		Combining the previous estimates with~\eqref{esti-Bsp-Cs}, we conclude that there exist $C_1(c,\omega) > 0$ and $C_2 > 0$ such that
		\begin{align*}
			\|h_c(\cdot, \omega)\|_{B_{p,\infty}^s([t_0,\infty))} \leq C_1(c,\omega) e^{-C_2 t_0}
		\end{align*}
		for $\PP$-a.s. $\omega$, finishing the proof of \eqref{prop:GBMNormExpoDeacy}.
	\end{proof}
	
	\begin{remark}
	\label{rem:BesovIntersection}
		Observe that in the proof of the preceding lemma, the extension operator is independent of the parameters $0 \leq s < \frac12$ and $1 \leq p \leq \infty $ across cases (i), (ii), and (iii). Consequently, the lemma's statements remain valid if the Besov spaces are replaced by intersections of admissible Besov spaces. In particular, owing to the embedding $B^{\frac{1}{4}}_{\infty,\infty} \hookrightarrow L^\infty$,  the estimates in \eqref{prop:GBMNormFastDeacy}, \eqref{prop:GBMNormUniformBound}, and~\eqref{prop:GBMNormExpoDeacy} hold when the Besov norm $\| h_c \|_{B^s_{p,\infty}}$ is upgraded to the intersection norm $\| h_c \|_{B^s_{p,\infty} \cap L^\infty}$ over the respective domains $[c^{-1},\infty)$, $[0,\infty)$, and $I$. These intersection spaces are precisely those utilized in the proof of Theorem~\ref{thm:RegNoise}.
	\end{remark}
	
	To apply the lemmas above to our system \eqref{eq:RanZakNoncons-intro}, we will use a time-change martingale theorem for Brownian motions (see, e.g. \cite[Theorem 19.4]{Ka21}) to extend previous properties to infinite dimensional noise.
	\begin{corollary}\label{prop:ExtendToInfiniteNoise}
		Let $\mathbf{c}:=\{c_k\}\in l^2, \|\mathbf{c}\|^2:= \sum_{k=1}^\infty |c_k|^2$ and $\{\beta_k\}$ be a sequence of independent standard one dimensional Brownian motions. Define the geometric Brownian motion with infinite dimensional noise that
		\begin{align*}
			\tilde{h}_{\mathbf{c}}(t):= e^{-2(\sum_{k=1}^\infty c_k \beta_k) - 2\|\mathbf{c}\|^2t}.
		\end{align*}
		Then there exists a one dimensional Brownian motion $\tilde{\beta}(t)$ such that
		\begin{align}\label{prop:TimeChangeBM}
			\tilde{h}_{\mathbf{c}}(t) = e^{-2\tilde{\beta}(\|\mathbf{c}\|^2 t)-2\|\mathbf{c}\|^2 t},\quad \PP\text{-a.s.}
		\end{align}
		In particular, replacing $c$ with $\|\mathbf{c}\|$, if we assume the same conditions hold for $\tilde{h}_{\mathbf{c}}(t)$ in Lemma \ref{eq:DefHoelderNorm}, Lemma \ref{le:GBMGloBesov}, and Lemma \ref{prop:GeoBMFastDecay}, then the same conclusion follows.
	\end{corollary}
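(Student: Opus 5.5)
The plan is to show that the $L^2(\Omega)$-convergent series $M(t):=\sum_{k=1}^\infty c_k\beta_k(t)$ is a continuous centered Gaussian martingale whose quadratic variation is deterministic, and then to apply the time-change theorem for continuous martingales (Dambis--Dubins--Schwarz, e.g.\ \cite[Theorem 19.4]{Ka21}).

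First, since $\mathbf{c}\in l^2$ and the $\beta_k$ are independent, the partial sums $M_N=\sum_{k\le N}c_k\beta_k$ are continuous $L^2$-martingales. Doob's maximal inequality gives
\begin{align*}
\mathbb{E}\sup_{t\le T}|M_N(t)-M_{N'}(t)|^2\le 4T\sum_{k=N'+1}^N c_k^2 .
\end{align*}
Hence $M_N\to M$ uniformly on compacts almost surely along a subsequence. The limit $M$ is a continuous square-integrable martingale with $\langle M\rangle_t=\lim_N\sum_{k\le N}c_k^2 t=\|\mathbf{c}\|^2t$. This limit uses the independence of the $\beta_k$, so the cross-variations vanish, together with the continuity of the bracket under $L^2$-convergence of martingales.

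Because $\langle M\rangle_\infty=\infty$ (we have $\|\mathbf{c}\|>0$ in the relevant setting; the case $\mathbf{c}=0$ is trivial), the time-change theorem yields a standard Brownian motion $\tilde\beta$ with $M(t)=\tilde\beta(\langle M\rangle_t)=\tilde\beta(\|\mathbf{c}\|^2t)$ almost surely. Alternatively, one can set $\tilde\beta(r):=M(r/\|\mathbf{c}\|^2)$ and check directly via L\'evy's characterization that it is a continuous martingale with bracket $r$. This makes the time change deterministic and trivial, which avoids any subtlety. Substituting into the definition of $\tilde h_{\mathbf{c}}$ gives \eqref{prop:TimeChangeBM}.

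For the ``in particular'' part, write $c:=\|\mathbf{c}\|$. Then $\tilde\beta(c^2\cdot)\overset{d}{=}c\,\beta'(\cdot)$ for a standard Brownian motion $\beta'$, so $\tilde h_{\mathbf{c}}$ has the same law as $h_c$ in \eqref{h-gBM}, as a $C([0,\infty))$-valued random variable. The statements of Lemma~\ref{prop:GeoBMinHolder}, Lemma~\ref{le:GBMGloBesov} and Lemma~\ref{prop:GeoBMFastDecay} are of two kinds. Some are almost sure properties of measurable functionals of the path, namely Hölder and Besov norms of the measurable extensions $\psi_{c,t_0}$. The others are probabilities of such events, or distributional identities. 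All of them depend only on the law, so they transfer.

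The one point needing care is (ii) and (iii) of Lemma~\ref{prop:GeoBMFastDecay}, which are almost sure statements involving a supremum over $c$ or an $\omega$-dependent constant. For these, instead of arguing through the law, I would apply the lemma directly to the Brownian motion $\beta'(r):=\tilde\beta(c^2 r)/c$. Then $\tilde h_{\mathbf{c}}=e^{-2c\beta'-2c^2t}$ pathwise, so the pathwise conclusions hold verbatim. I expect this measurability and pathwise-identification bookkeeping to be the only real obstacle.
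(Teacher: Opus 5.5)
Your proposal is correct and follows essentially the paper's route. You identify $M=\sum_k c_k\beta_k$ with $\tilde\beta(\|\mathbf{c}\|^2\cdot)$ and transfer the lemmas through the scaling identity $\tilde h_{\mathbf{c}}\overset{d}{=}h_{\|\mathbf{c}\|}$. Your pathwise choice $\beta'=M/\|\mathbf{c}\|$ is a harmless sharpening that, via L\'evy's characterization, even makes the Dambis--Dubins--Schwarz theorem unnecessary.

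One caveat concerns Lemma~\ref{prop:GeoBMFastDecay}~(ii). Because $\beta'$ depends on the direction $\mathbf{c}/\|\mathbf{c}\|$, your pathwise version only covers $\mathbf{c}$ moving along a fixed ray. For arbitrary $\mathbf{c}$ with $\|\mathbf{c}\|\to\infty$, as in Theorem~\ref{thm:RegNoise}, what transfers is the per-$\mathbf{c}$ distributional domination, i.e.\ tightness of $\|\tilde h_{\mathbf{c}}\|$ uniformly in $\mathbf{c}$. That tightness is also what the proof of Theorem~\ref{thm:RegNoise} actually uses.
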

	
	\begin{proof}
		The equality \eqref{prop:TimeChangeBM} is a direct consequence of the time-change martingale theorem for Brownian motions. For the remaining of the corollary, by the self-symmetry of Brownian motions we have
		\begin{align*}
			e^{-2\tilde{\beta}(\|\mathbf{c}\|^2 t)-2\|\mathbf{c}\|^2 t} \overset{d}{=} e^{-2\|\mathbf{c}\|\tilde{\beta}( t)-2\|\mathbf{c}\|^2 t},
		\end{align*}
		where $\overset{d}{=}$ means equality in distribution. Then all the conclusions in Lemma \ref{eq:DefHoelderNorm}, Lemma \ref{le:GBMGloBesov}, and Lemma \ref{prop:GeoBMFastDecay} hold with $c$ replaced by $\|\mathbf{c}\|$.
	\end{proof}

	\section{Trilinear estimates for stochastic wave nonlinearity}

	This section establishes the crucial trilinear estimates for the stochastic wave nonlinearity. Our analysis relies on the Besov regularity $B^{\frac{1}{2}-}_{p,\infty}$ of the geometric Brownian motion, multilinear Fourier restriction estimates for the interaction between the noise and two Schr{\"o}dinger solutions, and the local smoothing effect of the Schr\"odinger propagation.  
	In particular, by incorporating maximal function norms into our functional framework, we are able to derive trilinear estimates which ultimately allow us to reach the regularity threshold for the regularization by noise phenomenon dictated by the regularity of the noise. 
	
	We recall that the inhomogeneous half-wave propagator $\cJ_0$ is defined in \eqref{eq:DefPropOp}.

	\begin{theorem}[Trilinear estimates involving geometric Brownian motion]\label{Prop-trilinear}
		
		Let $d\geq 4$, $s,l,\beta \geq 0$,  and
		$0\leq a,b \leq 1$. 
		Then the following holds: 
		
		\vspace*{4pt plus 2pt minus 2pt}%
		\paragraph{$(i)$	{\bf Noise-regularization regime I}} 
		If the conditions 
		\begin{align*}
				\begin{aligned}
					&2s-l-\tfrac{d-2}{2}\geq \max\{ 3a-b, 2a\},\quad
					&s-l &\geq  \min\{2a-b, \tfrac{7}{4}-b\},  \\
					&2s-l-\tfrac{d-2}{2} > \max\{a, 2 a - b\},  &s - l &> a,
				\end{aligned}
		\end{align*}
		\begin{align*}
			(s,l) \neq  (\tfrac{d-2}{2}+a, \tfrac{d-2}{2}-a+b ),
		\end{align*}
		and
		\begin{align*}
			\min \{ s, 2s-\tfrac{d-2}{2}-a \}\geq \beta ,\quad(s,\beta)\neq (\tfrac{d-2}{2}+a,\tfrac{d-2}{2}+a)
		\end{align*}
		are satisfied, then 
		there exists $\alpha \in (0,\frac12)$ close to $\frac12$ and $C>0$ such that for any interval $I \subseteq \R$,
		$\varphi, \psi \in S^{s,a,b}(I)$, and $h\in B_{2,\infty}^\alpha\cap L^\infty(I)$, 
		\begin{align}  \label{triesti-improve}
			\|\cJ_0[h|\nabla|(\overline{\varphi}\psi)]\|_{W^{l,a,\beta}(I)}\leq C \|h\|_{B_{2,\infty}^\alpha \cap L^\infty(I)}\|\varphi\|_{S^{s,a,b}(I)}\|\psi\|_{S^{s,a,b}(I)}.
		\end{align}

		Moreover, if the conditions
		\begin{align*}
				2s -l -\tfrac{d-2}{2}> 3a, \quad  s-l> \min\{2a, \tfrac{7}{4}\},\quad \min \{ s, 2s-\tfrac{d-2}{2}-a \}> \beta
		\end{align*}
		are satisfied, 
		then there exist $\alpha \in (0, \frac12)$ close to $\frac12$, $\theta \in (0,1)$, and $C>0$ such that for any interval $I \subseteq \R$, $\varphi, \psi \in S^{s,a,0}(I)$, and $h\in B^\alpha_{2,\infty}\cap L^\infty(I)$, we have
		the following estimate including the dispersive endpoint Strichartz norm
		\begin{align}\label{prop:triest2}
			\|\cJ_0[h|\nabla|(\overline{\varphi}\psi)]\|_{W^{l, a, \beta}(I)}
			\leq C \|h\|_{B_{2,\infty}^\alpha \cap L^\infty(I)} (\|\varphi\|_{L^2 (I; L_x^{2^*})} \|\psi\|_{L^2 (I; L_x^{2^*})})^\theta ( \|\varphi\|_{S^{s,a,0}(I)}\|\psi\|_{S^{s,a,0}(I)})^{1-\theta}.
		\end{align}

		\vspace*{4pt plus 2pt minus 2pt}%
		\paragraph{	$(ii)$
			{\bf Noise-regularization regime II} } 
		If the conditions 
		\begin{align*}
			s = \tfrac{l}{2}+\tfrac{d-2}{4} \quad \text{and} \quad \tfrac{d-4}{2}\leq l< \tfrac{d-2}{2}
		\end{align*}
		are satisfied, then there exist $p \in (2,\infty)$ close to $2$, $\theta\in(0, 1)$, and $C > 0$ such that for any interval $I \subseteq \R$, $\varphi, \psi \in S^{s,0,0}(I)$, and $h\in B^{\frac{1}{p}}_{p,\infty}\cap L^\infty(I)$,
		\begin{align}
			\|\cJ_0[h|\nabla|(\overline{\varphi}\psi)]\|_{W^{l, 0, l}(I)}
			\leq C \|h\|_{B_{p,\infty}^{\frac{1}{p}}\cap L^\infty(I)} (\|\varphi\|_{L^2 (I; W_x^{s,2^*})} \|\psi\|_{L^2 (I; W_x^{s,2^*})})^{\theta} (\|\varphi\|_{S^{s,0,0}(I)}\|\psi\|_{S^{s,0,0}(I)})^{1-\theta}.
			\label{prop:triestiend}
		\end{align}
		
		\paragraph{	$(iii)$
			{\bf Noise-regularization regime III}  }
		If the conditions 
		\begin{align*}
			s-l\geq -b,\quad
			2s-l-\tfrac{d-2}{2}\geq 0,\quad (s,l)\neq (\tfrac{d}{2}, \tfrac{d+2}{2}), (\tfrac{d-2}{2},\tfrac{d-2}{2}+b),
		\end{align*}
		\begin{align*}
			\min\{s, 2s-\tfrac{d-2}{2} \} \geq \beta,\quad (s,\beta)\neq (\tfrac{d-2}{2}, \tfrac{d-2}{2}),
		\end{align*}
		and 
		\begin{align*}
			s - l > - \tfrac{1}{2}
		\end{align*}
		are satisfied, then there exist $p \in (2,\infty)$ close to $2$, $0 < \varepsilon \ll 1$, and $C>0$ such that for any interval $ I \subseteq \R$,
		$\varphi, \psi \in \Z_\varepsilon^{s,b}(I)$, and $h\in B^{\frac{1}{p}}_{p,\infty}\cap L^\infty(I)$,
		\begin{align}  \label{triesti-LocSmo}
			\|\cJ_0[h|\nabla|(\overline{\varphi}\psi)]\|_{W^{l,0,\beta}(I)}\leq C  \|h\|_{B^{\frac{1}{p}}_{p,\infty}\cap L^\infty(I)}\|\varphi\|_{\Z_\varepsilon^{s,b}(I)}\|\psi\|_{\Z_\varepsilon^{s,b}(I)}.
		\end{align}		
		
	\end{theorem}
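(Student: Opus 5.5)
The plan is to follow the structure of the proof of Lemma~\ref{le:Bilinearnablauv}. After extending $h,\varphi,\psi$ from $I$ to $\R$, the energy inequality (Lemma~2.6 of~\cite{CHN23}) reduces \eqref{triesti-improve}, \eqref{prop:triest2}, \eqref{prop:triestiend} and \eqref{triesti-LocSmo} to the four dyadic bounds \eqref{prop:temesti1}--\eqref{prop:temesti4}, with $\overline{\varphi}\psi$ replaced by $h\overline{\varphi}\psi$.

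The first three of these bounds do not see the temporal structure. We therefore estimate $h$ simply in $L^\infty_t$ and quote the bilinear estimates of Lemma~\ref{le:Bilinearnablauv} and Lemma~\ref{lem:BilinearEstimates} (resp.~\cite[Thm.~4.1]{CHN23}). This uses only $\|h\|_{L^\infty}$, which is why the intersection norm $B\cap L^\infty$ appears. All the difficulty sits in \eqref{prop:temesti4}, i.e.\ in $P^{(t)}_{\ll\lambda^2}P_\lambda(h\overline{\varphi}\psi)$ measured in $L^1_tL^2_x$ with weight $\lambda^{l-a+1}(\lambda+|\partial_t|)^a$.

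We decompose into HL, HH and LH paraproduct pieces. The HH piece is handled with $h\in L^\infty$ as in \cite{CHN23}, since there is no resonance loss there. For the HL piece (LH is symmetric) we split $\varphi_\mu=C_{<\mu^2/2^3}\varphi_\mu+C_{\geq\mu^2/2^3}\varphi_\mu$.
\begin{itemize}
\item The high modulation part is treated exactly as $A^{HL}_{HM}$, with $h$ in $L^\infty$.
\item In the low modulation part, $\overline{C_{<\mu^2/2^3}\varphi_\mu}$ has temporal frequency $\sim\mu^2$. The non-resonance identity then forces $P^{(t)}_{\ll\mu^2}$ of the product to need either $P^{(t)}_{\sim\mu^2}\psi_{\ll\mu}$ or $P^{(t)}_{\gtrsim\mu^2}h$; this is the decomposition called \eqref{HLLM-decom} in the introduction.
\item The first case is estimated as $A^{HL}_{LM}$, gaining two derivatives from $(\imu\partial_t+\Delta)\psi$.
\item In the second case we use Bernstein in time, $\|P^{(t)}_{\gtrsim\mu^2}h\|_{L^2_t}\lesssim\mu^{-2\alpha}\|h\|_{B^\alpha_{2,\infty}}$. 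We put $h$ in $L^2_t$, $\varphi_\mu$ in $L^\infty_tL^2_x$ and $\psi_{\ll\mu}$ in $L^\infty_tL^\infty_x$ via Bernstein. For regime~I this yields the factor $\mu^{l+1+a-s-2\alpha}\cdot\nu^{d/2-s}$.
\end{itemize}
The summability conditions $s-l>a$ and $2s-l-\frac{d-2}{2}>\max\{a,2a-b\}$ are exactly what makes $\alpha<\frac12$ close to $\frac12$ admissible. The alternative bound with $\psi\in L^2_tL^{2^*}_x$ and $h\in L^\infty$, combined with $\min\{2a-b,\frac74-b\}$, covers the remaining configurations. The version \eqref{prop:triest2} follows by keeping a small power of $L^2_tL^{2^*}_x$ on the low-frequency factor. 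This is possible because all conditions are strict: we interpolate each dyadic bound with the crude estimate $\|\varphi_\mu\|_{L^2L^{2^*}}\|\psi\|_{L^2L^{2^*}}$.

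For regime~II the $B^\alpha_{2,\infty}$ gain is insufficient at the borderline. Instead we use $\|P^{(t)}_{\gtrsim\mu^2}h\|_{L^p_t}\lesssim\mu^{-2/p}\|h\|_{B^{1/p}_{p,\infty}}$ with $p>2$ close to $2$. We place the two Schrödinger factors in $L^{q}_t$ with $\frac1q=\frac12-\frac1{2p}$ each, obtained by interpolating $L^\infty_tL^2_x$ and the endpoint $L^2_tW^{s,2^*}_x$; this gives exactly the $\theta$-powers in \eqref{prop:triestiend}. Using $s=\frac l2+\frac{d-2}4$ and $a=b=0$, the exponents balance to $\mu^0$. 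The $\ell^2$ summation is then recovered from the off-diagonal decay in the low frequency $\nu$, which holds since $l<\frac{d-2}{2}$. I expect this scale-invariant balancing and the summation to be the delicate point.

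For regime~III, with $s\le l$, the $\mu^{-1+}$ gain from $h$ lacks half a derivative. In the case $P^{(t)}_{\gtrsim\mu^2}h$ we therefore place $C_{<\mu^2}\varphi_\mu$ after the angular decomposition \eqref{eq:DecompositionAngular} in the local smoothing norm $L^{p_\epsilon,q_\epsilon}_{\vece_j}$, gaining $\mu^{-\frac12+\epsilon}$. The low-frequency $\psi_\nu$ goes in the maximal norm $L^{q_\epsilon,p_\epsilon}_{\vece_j}$ at cost $\nu^{\frac{d-1}2-\epsilon}$, and $h\in L^p_t$, $p$ near $2$, uses Hölder in the lateral mixed norms (the exponents match since $\frac1{p_\epsilon}+\frac1{q_\epsilon}=\frac12$ and $\frac1p+\frac12\approx1$, up to the $L^1_tL^2_x$ target after choosing $p$, $\epsilon$ compatibly). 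The resulting factor is $\mu^{l-s+\frac12-2/p+\epsilon}$, summable iff $s-l>-\frac12$ for $p\to2$, $\epsilon\to0$. The other terms are controlled by the $S^{s,0,b}$ part of the $\Z^{s,b}_\epsilon$ norm as in regime~I. The main obstacle I anticipate is this exponent bookkeeping in the lateral Hölder step.
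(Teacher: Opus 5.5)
The proposal has genuine gaps, even though your energy-inequality reduction and the regime~I decomposition (paraproducts, modulation split, the non-resonance identity \eqref{HLLM-decom}) match the paper.

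\textbf{Regime~II: the missing idea.} On the line $2s-l-\frac{d-2}{2}=0$ there is no slack, so the regime~I device (lower $s$, then interpolate with a crude $\lambda^N$ bound) is unavailable. The bilinear estimates you want to quote for the pieces with $h\in L^\infty_t$ give only $\|\varphi\|_{S^{s,0,0}}\|\psi\|_{S^{s,0,0}}$. Indeed, \cite[Thm.~4.1]{CHN23} has no dispersive factor, Lemma~\ref{lem:BilinearEstimates}~\ref{it:BilinearEstNonendpointW} needs strict inequalities, and Lemma~\ref{lem:BilinearEstimates}~\ref{it:BilinearEstEndpoint2} only covers $s=l+\frac12$. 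Moreover, the $L^\infty_tL^2_x$ component of \cite[Lemma~2.6]{CHN23} cannot be bounded by any $L^2_t$-based norm.

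Your $L^q_t$ placement does produce the $\theta$-structure, with decay $m^{-\varepsilon_1}$ in the frequency ratio, for the one piece where $h$ carries temporal frequency $\sim\mu^2$. The high-modulation piece and the non-resonant piece with $P^{(t)}_{\sim\mu^2}\psi$, however, come with mixed products such as $\|\varphi\|_{S^{s,0,0}}\|\psi\|_{L^2_tW^{s,2^*}_x}$. These are not bounded termwise by the right-hand side of \eqref{prop:triestiend}.

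The paper instead decomposes by frequency ratio, $\overline{\varphi}\psi=\sum_m\sum_{\lambda\ge m}\overline{\varphi}_\lambda\psi_{\lambda/m}+\dots$, and proves two bounds for each $m$ using the crude energy inequality \eqref{prop:triend12}:
\begin{itemize}
\item a purely dispersive bound $m^{\varepsilon_2}\|h\|_{L^\infty}\|\varphi\|_{L^2W^{s,2^*}}\|\psi\|_{L^2W^{s,2^*}}$ (up to a term with $\frac12$-powers), using only $h\in L^\infty$ and $W^{s,2^*}\hookrightarrow W^{l+1-s,d}$;
\item a purely adapted bound $m^{-\varepsilon_1}\|h\|_{B^{1/p}_{p,\infty}\cap L^\infty}\|\varphi\|_{S^{s,0,0}}\|\psi\|_{S^{s,0,0}}$.
\end{itemize}
Summing the first over $m<M$ and the second over $m\ge M$, then optimizing $M$, yields the $\theta$-power.

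\textbf{Regimes~I and~III: H\"older steps that do not close in $L^1_tL^2_x$.} In regime~I, for $P^{(t)}_{\gtrsim\mu^2}h$ against $C_{\ll\mu^2}\varphi_\mu$, the product $L^2_t\cdot L^\infty_tL^2_x\cdot L^\infty_tL^\infty_x$ lies in $L^2_t$, not $L^1_t$. The factor $\nu^{\frac d2-s}$ would also force roughly $2s-l-\frac{d-2}{2}\ge 1+a$. You need $C_{\ll\mu^2}\varphi_\mu\in L^2_tL^{2^*}_x$ and $\psi_{\ll\mu}\in L^\infty_tL^d_x$, which gives exactly $s-l,\ 2s-l-\frac{d-2}{2}\ge a+2\varepsilon$.

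In regime~III, $h\in L^p_t$ with $p>2$ times $\overline{\varphi}\psi\in L^2_{t,x}$ (from $L^{p_\epsilon,q_\epsilon}_{\vece_j}\times L^{q_\epsilon,p_\epsilon}_{\vece_j}$) lands only in $L^r_t$ with $\frac1r=\frac1p+\frac12<1$. No choice of $(p_\epsilon,q_\epsilon)$ fixes this, since the lateral norms couple $t$ with $y$. You also dropped the maximal-function factor $\nu^{\frac{d-1}{2}-s-\epsilon}$. Its sum over $\nu\ll\mu$ would force $2s-l-\frac{d-2}{2}\ge 1-\frac2p>0$, losing the lower boundary of regime~III. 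The fix is to put the product in $L^{p'}_tL^2_x$ and interpolate, with weight $\frac2p$, between your lateral $L^2_{t,x}$ bound and the Strichartz bound $\|\overline{C_{\ll\mu^2}\varphi_\mu}\psi_\nu\|_{L^1_tL^2_x}\lesssim\mu^{-s}\nu^{\frac{d-4}{2}-s}\|\varphi_\mu\|_{S^{s,0,0}_\mu}\|\psi_\nu\|_{S^{s,0,0}_\nu}$. This gives $\mu^{l+1-s-\frac{3-2\varepsilon}{p}}\nu^{\frac{d-4}{2}-s+\frac{3-2\varepsilon}{p}}$.

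\textbf{The $\frac74-b$ branch.} The high-modulation HL term cannot be treated ``exactly as $A^{HL}_{HM}$''. The theorem uses $S^{s,a,b}$ rather than $\tilde S^{s,a,b}$, which costs $\mu^a$ and requires $s-l\ge 2a-b$. That leaves no strict slack for \eqref{prop:triest2} on the line $s=l+2$. The $\frac74-b$ condition comes from splitting $h$ into $P^{(t)}_{\ll\mu}h$, handled by Lemma~\ref{le:27zak4}, and $P^{(t)}_{\gtrsim\mu}P^{(t)}_{\ll\mu^2}h$, which gains $\mu^{-\alpha}$ from $B^\alpha_{2,\infty}$.
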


	\begin{remark}
		Theorem $7.4$ of \cite{HRSZ24} established the trilinear estimate for the specific case $(s,l)=(1,0)$, relying on the global $V^p$-regularity ($p > 2$) of the geometric Brownian motion. 
		In Theorem~\ref{Prop-trilinear} we extend the trilinear estimates to the significantly wider range of regularities $(s,l)$ described in Theorem~\ref{thm:RegNoise}.
		
		In fact, we cover the entire local well-posedness regime, except for a strip where the wave regularity exceeds the Schr{\"o}dinger regularity. This restriction arises because the trilinear estimate is ultimately limited by the low regularity of the noise. In contrast, the bilinear interaction governing the local well-posedness theory recovers a higher amount of regularity.
		
		To derive the trilinear estimates, we exploit both the Besov regularity of the geometric Brownian motion (established in Section~\ref{Sec:GRCGBM}) and the local smoothing effect of the Schr{\"o}dinger equation. We perform a careful analysis of the trilinear interactions between the noise and the Schr{\"o}dinger waves in space-time. Crucially, this analysis allows us to convert temporal regularity of the noise into spatial regularity.
		
		One key feature here is that our trilinear estimates only employ
		Besov norms which respect the scaling of the geometric Brownian motions. 
		This choice is crucial for establishing uniform bounds in the subsequent global existence and scattering arguments. In particular, for regime II and regime III, we rely solely on scale-invariant Besov norms, which enables us to cover the endpoint line $s = \frac{l}{2} + \frac{d-2}{4}$.
	\end{remark}

	\subsection{Trilinear estimates in noise-regularization regime I} 
	\label{subsec:TrilinearRegimeI}
	We start with proving the trilinear estimates \eqref{triesti-improve} and \eqref{prop:triest2} 
	in the noise-regularization regime I. We note that there is some freedom in the choice of the parameter $\frac{7}{4}$ here. Its purpose is to ensure that all the assumptions on the parameters are satisfied in regime~I. In particular, on the boundary line $s = l + 2$ where $s - l = 2 = 2 a$, it guarantees that the assumptions for~\eqref{prop:triest2} hold true.

	\vspace*{4pt plus 2pt minus 2pt}%
	\paragraph{\bf Proof of \eqref{triesti-improve}}  
	The conditions in regime I imply that there exists $0 < \varepsilon \ll 1$ such that
	\begin{align*}
			2s-l-\tfrac{d-2}{2}\geq \max\{ 3a-b, a+2\varepsilon, 2a, 2a-b+2\varepsilon\},\quad
			s-l\geq \max\{a+2\varepsilon, \min\{2a-b, \tfrac{7}{4}-b\}\},
	\end{align*}
	\begin{align*}
		(s,l) \neq  (\tfrac{d-2}{2}+a, \tfrac{d-2}{2}-a+b )
	\end{align*}
	and
	\begin{align*}
		\min \{ s, 2s-\tfrac{d-2}{2}-a \}\geq \beta ,\quad(s,\beta)\neq (\tfrac{d-2}{2}+a,\tfrac{d-2}{2}+a)
	\end{align*}
	are satisfied. We set $\alpha = \frac12 - \varepsilon$.
	
	In view of the definition of the restriction norms, it is sufficient to consider arbitrary extensions from $I$ to $\R$ of $h, \varphi$, and $\psi$ and prove the estimate on $\R$. Moreover, by Lemma $2.6$ of \cite{CHN23}, it suffices to prove 
	the estimates 
	\begin{align}
		\Big( \sum_{\lambda\in 2^\N} \lambda^{2(l-a+1)}\|(\lambda+|\partial_t|)^a P^{(t)}_{\ll \lambda^2} P_{\lambda}(h\overline{\varphi}\psi)\|_{L_t^1L_x^2}^2\Big)^{\frac12}&\lesssim \|h\|_{B^{\alpha}_{2,\infty} \cap L^\infty_t} \|\varphi\|_{S^{s,a,b}}\|\psi\|_{S^{s,a,b}}, \label{eq:NoiseRegE41}\\
		\Big( \sum_{\lambda\in 2^\N} \lambda^{2(l-1)}\|P_\lambda(h\overline{\varphi}\psi)\|_{L_t^\infty L_x^2}^2 \Big)^{\frac12}&\lesssim \|h\|_{L_t^\infty} \|\varphi\|_{S^{s,a,b}}\|\psi\|_{S^{s,a,b}},   \label{eq:NoiseRegE42}\\
		\Big( \sum_{\lambda\in 2^\N} \lambda^{2\beta}\|P_\lambda (h\overline{\varphi}\psi)\|_{L_{t,x}^2}^2\Big)^{\frac12}&\lesssim \|h\|_{L_t^\infty} \|\varphi\|_{S^{s,a,b}}\|\psi\|_{S^{s,a,b}},   \label{eq:NoiseRegE43}\\
		\|P_{\leq 2^{16}} (h\overline{\varphi}\psi)\|_{L_t^1 L_x^2}&\lesssim \|h\|_{L_t^\infty} \|\varphi\|_{S^{s,a,b}}\|\psi\|_{S^{s,a,b}}.    \label{eq:NoiseRegE44}
	\end{align}
	
	For the last three estimates \eqref{eq:NoiseRegE42}, \eqref{eq:NoiseRegE43}, and \eqref{eq:NoiseRegE44}, since $h$ is independent of the spatial variable, we have  $P_\lambda(h\overline{\varphi}\psi) = hP_\lambda(\overline{\varphi}\psi)$ and $P_{\leq 2^{16}}(h\overline{\varphi}\psi) = hP_{\leq 2^{16}}(\overline{\varphi}\psi)$.
	Bounding $h$ on the left-hand side of \eqref{eq:NoiseRegE42}-\eqref{eq:NoiseRegE44} by $\|h\|_{L_t^\infty}$,
	the estimates follow from the proof of Theorem~$4.1$ in~\cite{CHN23}, 
	provided that 
	\begin{align}
		&s+1\geq l,\quad 2s\geq l+\tfrac{d-2}{2},\quad (s,l)\neq(\tfrac{d}{2},\tfrac{d+2}{2})\notag\\
		& s\geq a,\quad s\geq \beta,\quad 2s-\beta-\tfrac{d-2}{2}\geq a,\quad (s,\beta) \neq (\tfrac{d-2}{2}+a,\tfrac{d-2}{2}+a).\label{con:wavetri234}
	\end{align}
	Note that $(s,l)\neq(\tfrac{d}{2},\tfrac{d+2}{2})$ is satisfied as $s > l$ in regime I.
	Below we prove the remaining estimate  \eqref{eq:NoiseRegE41}.
	For this purpose,
	we use the paraproduct decomposition 
	\begin{align}\label{prop:ParaproductDecom}
		P_\lambda(\overline{\varphi}\psi)=\sum_{\lambda/2\leq\mu\leq 2\lambda} P_\lambda(\overline{\varphi}_\mu \psi_{\ll \mu}) +\sum_{\lambda_1\sim \lambda_2\gtrsim \lambda} P_\lambda(\overline{\varphi}_{\lambda_1}\psi_{\lambda_2})+\sum_{\lambda/2\leq\mu\leq 2\lambda} P_\lambda(\overline{\varphi}_{\ll \mu} \psi_\mu) .
	\end{align}
	
	Since the left-hand side of~\eqref{eq:NoiseRegE41} is invariant under complex conjugation, it is sufficient to estimate the high-low and the high-high interaction.

	\vspace*{4pt plus 2pt minus 2pt}%
	\paragraph{\bf $(i)$ High-low interaction:}
	Let $\mu\sim \lambda$ and further decompose by modulation
	\begin{align*}
		P^{(t)}_{\ll \lambda^2}(h \overline{\varphi}_\mu \psi_{\ll\mu})= 
		P^{(t)}_{\ll \lambda^2}(h \overline{C_{\gtrsim \mu^2}\varphi_\mu}\psi_{\ll\mu})
		+ P^{(t)}_{\ll \lambda^2}(h \overline{C_{\ll \mu^2}\varphi_\mu}\psi_{\ll\mu}) 
		=:  I^{HL}_{HM} + I^{HL}_{LM}.
	\end{align*}
	
	\vspace*{4pt plus 2pt minus 2pt}%
	\subparagraph{\bf $(i.1)$ High modulation 
		contribution $I^{HL}_{HM}$:} We further decompose by temporal frequency to get
	\begin{align} \label{HLHM-decom}
		I^{HL}_{HM}= P^{(t)}_{\ll \lambda^2}(P^{(t)}_{\gtrsim\mu^2}h \overline{C_{\gtrsim \mu^2}\varphi_\mu}\psi_{\ll\mu}) +P^{(t)}_{\ll \lambda^2}(P^{(t)}_{\ll\mu^2}h \overline{C_{\gtrsim \mu^2}\varphi_\mu}\psi_{\ll\mu})
		=: I^{HL}_{HM,HT} + I^{HL}_{HM,LT}.
	\end{align}
	
	First we use the H\"older and Bernstein inequalities to estimate
	\begin{align*}
		&\mu^{l-a+1}\|(\lambda+|\partial_t|)^a I^{HL}_{HM,HT}\|_{L_t^1 L_x^2}\\
		&\lesssim  \mu^{l+a+1} \|P^{(t)}_{\gtrsim \mu^2}h \|_{L_t^2} \|C_{\gtrsim \mu^2}\varphi_\mu \|_{L_{t,x}^2} \|\psi_{\ll\mu}\|_{L^\infty_t L_x^\infty}\\
		&\lesssim \mu^{l+a+1}\mu^{-2\alpha} \|h\|_{B^\alpha_{2,\infty}} \mu^{-2+a} \mu^{1-s-b} \|\varphi_\mu\|_{S^{s,a,b}_{\mu}}  \sum_{\nu \ll \mu} \nu^{\frac{d}{2}-s} \nu^s \|\psi_{\nu}\|_{L^\infty_t L^2_x}\\
		&\lesssim \mu^{-s+l+2a-2\alpha-b} \|h\|_{B_{2,\infty}^\alpha} \|\varphi_\mu\|_{S^{s,a,b}_{\mu}} \sum_{\nu \ll \mu }  \nu^{\frac{d}{2}-s} \|\psi_\nu\|_{S^{s,a,b}_\nu},
	\end{align*}
	which is $l^2$-summable and gives
	\begin{align}
		\Big(\sum_{\lambda\in 2^\N} \lambda^{2(l-a+1)} \|(\lambda+|\partial_t|)^a \sum_{\mu\sim\lambda} P_\lambda I_{HM,HT}^{HL}\|^2_{L_t^1 L_x^2}\Big)^{\frac12} \lesssim \|h\|_{B_{2,\infty}^\alpha\cap L_t^\infty} \|\varphi\|_{S^{s,a,b}} \|\psi\|_{S^{s,a,b}}, \label{prop:triwaveestHLHMHT}
	\end{align}
	provided that
	\begin{align*}
		s-l\geq 2a-1-b+2\varepsilon,\quad 2s-l-\tfrac{d-2}{2}\geq 2a-b+2\varepsilon, \quad (s,l) \neq (\tfrac{d}{2}, \tfrac{d+2}{2} - 2a + b - 2 \varepsilon).
	\end{align*}
	Note that the last condition is satisfied as $s - l \geq 2 a - b$ in regime I.
	
	We next estimate the low temporal contribution $I^{HL}_{HM,LT}$. We first consider the case $a\leq\tfrac{7}{8}$, where $\min\{2a - b, \frac{7}{4} - b\} = 2 a - b$.
	We infer
	\begin{align*}
		&\mu^{l-a+1}\|(\lambda+|\partial_t|)^a I^{HL}_{HM,LT} \|_{L_t^1 L_x^2}\\
		&\lesssim \mu^{l+a+1} \|h\|_{L_t^\infty} \|C_{\gtrsim \mu^2} \varphi_\mu\|_{L_{t,x}^2} \|\psi_{\ll \mu}\|_{L_t^2 L_x^\infty}\\
		&\lesssim \mu^{l+a+1} \|h\|_{L_t^\infty} \mu^{-2+a+1-s-b} \|\varphi_\mu\|_{S_\mu^{s,a,b}} \sum_{\nu\ll\mu} \nu^{\frac{d-2}{2}-s+a} \nu^{s-2a} \|(\nu+|\partial_t|)^a \psi_\nu\|_{L_t^2 L_x^{2^*}}\\
		&\lesssim \mu^{l+2a-s-b}\|h\|_{L_t^\infty} \|\varphi_\mu\|_{S_\mu^{s,a,b}} \sum_{\nu\ll\mu} \nu^{\frac{d-2}{2}-s+a}  \|\psi_\nu\|_{S_\nu^{s,a,b}},
	\end{align*}
	which is $l^2$-summable provided that
	\begin{align*}
		s-l\geq 2a-b, \quad 2s-l-\tfrac{d-2}{2}\geq 3a-b,\quad (s,l)\neq (\tfrac{d-2}{2}+a, \tfrac{d-2}{2}-a+b).
	\end{align*}
	It follows that
	\begin{align}
		\Big(\sum_{\lambda\in 2^\N} \lambda^{2(l-a+1)} \|(\lambda+|\partial_t|)^a  \sum_{\mu\sim\lambda} P_\lambda I_{HM,LT}^{HL} \|^2_{L_t^1 L_x^2} \Big)^{\frac12} \lesssim \|h\|_{B_{2,\infty}^\alpha\cap L_t^\infty} \|\varphi\|_{S^{s,a,b}} \|\psi\|_{S^{s,a,b}}. 
		\label{prop:triwaveestHLHMLT}
	\end{align} 
	
	In the case $a>\tfrac{7}{8}$, we have $\min\{2a - b, \frac{7}{4} - b\} = \frac{7}{4} - b$. Here we further split
		\begin{align*}
			I_{HM,LT}^{HL} = P^{(t)}_{\ll \lambda^2}(P^{(t)}_{\ll\mu} h \overline{C_{\gtrsim \mu^2}\varphi_\mu}\psi_{\ll\mu}) + P^{(t)}_{\ll \lambda^2}(P^{(t)}_{\gtrsim\mu} P^{(t)}_{\ll\mu^2} h P^{(t)}_{\ll\mu^2}( \overline{C_{\gtrsim \mu^2}\varphi_\mu}\psi_{\ll\mu})) =: I_{HM,LT.1}^{HL} + I_{HM,LT.2}^{HL}.
	\end{align*}
	
	For the first term $I_{HM,LT.1}^{HL}$, we use Lemma \ref{le:27zak4}, Bernstein estimates, and the relation $\mu\sim\lambda$ to derive
		\begin{align*}
			&\mu^{l-a+1} \|(\lambda+|\partial_t|)^a I_{HM,LT.1}^{HL} \|_{L_t^1 L_x^2}\\
			&\lesssim \mu^{l-3a+1} \|(\mu+|\partial_t|)^a P^{(t)}_{\ll\mu}h\|_{L^\infty} \|(\mu+|\partial_t|)^a C_{\gtrsim \mu^2}\varphi_\mu\|_{L_{t,x}^2} \|(\mu+|\partial_t|)^a\psi_{\ll\mu}\|_{L_t^2 L_x^\infty}\\
			&\lesssim \mu^{l-1}\|h\|_{L_t^\infty} \Big\|\Big(\frac{\mu+|\partial_t|}{\mu^2+|\partial_t|}\Big)^a (\imu \partial_t+\Delta) \varphi_\mu \Big\|_{L_{t,x}^2} \sum_{\nu\ll\mu} \Big(\frac{\mu}{\nu}\Big)^a \nu^{\frac{d-2}{2}-s} \nu^s \|(\nu+|\partial_t|)^a \psi_\nu\|_{L_t^2 L_x^{2^*}}\\
			&\lesssim \mu^{l-s-b+a} \|h\|_{L_t^\infty} \|\varphi_\mu\|_{S_\mu^{s,a,b}} \sum_{\nu\ll\mu}\nu^{\frac{d-2}{2}+a-s} \|\psi_\nu\|_{S_\nu^{s,a,b}},
		\end{align*}
		which is $l^2$-summable provided that 
		\begin{align*}
			s-l\geq a-b,\quad 2s-l-\tfrac{d-2}{2}\geq 2a-b,\quad (s,l)\neq (\tfrac{d-2}{2}+a, \tfrac{d-2}{2}+b).
	\end{align*}
	
	For the second term $I_{HM,LT.2}^{HL}$, we argue similar as for the $I^{HL}_{HM,HT}$ contribution to get
		\begin{align*}
			&\mu^{l-a+1} \|(\lambda+|\partial_t|)^a I_{HM,LT.2}^{HL} \|_{L_t^1 L_x^2}\\
			&\lesssim \mu^{l+a+1}  \| P^{(t)}_{\gtrsim \mu}P^{(t)}_{\ll\mu^2} h\|_{L_t^2} \|  \overline{C_{\gtrsim \mu^2}\varphi_\mu} \|_{L_{t,x}^2} \|\psi_{\ll\mu}\|_{L_{t,x}^\infty}\\
			&\lesssim \mu^{l+a+1-\alpha} \|h\|_{B_{2,\infty}^\alpha} \mu^{a-s-1-b}\|\varphi_\mu\|_{S_\mu^{s,a,b}} \sum_{\nu\ll\mu} \nu^{\frac{d}{2}-s} \|\psi_\nu\|_{S_\nu^{s,a,b}}\\
			&\lesssim \mu^{l+2a-s-b-\alpha} \|h\|_{B_{2,\infty}^\alpha} \|\varphi_\mu\|_{S_\mu^{s,a,b}} \sum_{\nu\ll\mu} \nu^{\frac{d}{2}-s} \|\psi_\nu\|_{S_\nu^{s,a,b}},
		\end{align*}
		which is $l^2$-summable provided that
		\begin{align*}
			s-l\geq 2a - b - \tfrac{1}{2} + \epsilon,\quad 2s-l-\tfrac{d-2}{2}\geq 2 a - b + \tfrac{1}{2} + \epsilon, \quad (s,l) \neq (\tfrac{d}{2}, \tfrac{d}{2} - 2a + b + \tfrac{1}{2} - \epsilon).
	\end{align*}
	Using that $\frac{1}{2} + \epsilon < a \leq 1$, we note that $2 a - b + \frac{1}{2} + \epsilon < 3 a - b$ and $2 a - b - \frac{1}{2} + \epsilon < \frac{7}{4} - b$. The right-hand sides of the bounds for $I^{HL}_{HM,LT.1}$ and $I^{HL}_{HM, LT.2}$ are thus $l^2$-summable.
	
	Hence, estimates \eqref{prop:triwaveestHLHMHT} and \eqref{prop:triwaveestHLHMLT} yield \eqref{eq:NoiseRegE41} for the high modulation part $I^{HL}_{HM}$.

	\subparagraph{\bf $(i.2)$ 
		Low modulation contribution $I^{HL}_{LM}$:}
	We turn to the low modulation part and  further decompose  $I^{HL}_{LM}$
	into 
	\begin{align} \label{HLLM-decom}
		I^{HL}_{LM}
		=P^{(t)}_{\ll \lambda^2}( P^{(t)}_{\ll \mu^2} h \overline{C_{\ll \mu^2}\varphi_\mu}P^{(t)}_{\sim \mu^2}\psi_{\ll\mu})+ P^{(t)}_{\ll \lambda^2}( P^{(t)}_{\gtrsim \mu^2}h \overline{C_{\ll \mu^2}\varphi_\mu}\psi_{\ll\mu})  =: I^{HL}_{LM,LT} + I^{HL}_{LM,HT},
	\end{align}
	where we exploited a \textit{non-resonant identity} in the first summand, which shows that the temporal frequency of $\psi_{\ll \mu}$ must be localized at $\mu^2$.
	
	To estimate $I^{HL}_{LM,LT}$, by H\"older's inequality and Sobolev's embedding $H_x^{\frac{d-2}{2}}\hookrightarrow L^d$, we have
	\begin{align*}
		\mu^{l-a+1} \|(\lambda +|\partial_t|)^a I^{HL}_{LM,LT} \|_{L_t^1 L_x^2} 
		\lesssim~& \mu^{l+1+a} \|h\|_{L_t^\infty} \| C_{\ll \mu^2} \varphi_\mu\|_{L_t^2 L_x^{2^*}} \|P^{(t)}_{\sim \mu^2}\psi_{\ll\mu}\|_{L_t^2 H_x^{\frac{d-2}{2}}}\\
		\lesssim~& \mu^{l-s-1+a+(\frac{d}{2}-s)_+} \|h\|_{L_t^\infty}\mu^{s-2a}  \| (\mu+|\partial_t|)^a  \varphi_{\mu}\|_{L_t^2 L_x^{2^*}} \|\psi\|_{S^{s,a,0}},
	\end{align*}
	which is $l^2$- summable provided that
	$$
	s-l\geq a-1, \quad 2s-l-\tfrac{d-2}{2}\geq a.
	$$ 
	Moreover, for the $I^{HL}_{LM,HT}$ contribution, we have
	\begin{align*}
		\mu^{l-a+1} \|(\lambda+|\partial_t|)^a I^{HL}_{LM,HT} \|_{L_t^1 L_x^2}  
		\lesssim~&  \mu^{l+1+a}\|P^{(t)}_{\gtrsim \mu^2}h \|_{L_t^2} \|C_{\ll \mu^2}\varphi_\mu \|_{L_t^2 L_x^{2^*}} \| \psi_{\ll\mu} \|_{L_t^\infty L_x^d}\\
		\lesssim~& \mu^{l+1+a-2\alpha} \|h\|_{B_{2,\infty}^\alpha} \mu^{-s}  \|\varphi_\mu\|_{S_\mu^{s,a,0}}  \mu^{(\frac{d-2}{2}-s)_+} \|\psi_{\ll\mu}\|_{L^\infty_t H^s_x},
	\end{align*}
	which is $l^2$-summable provided that 
	$$
	s-l\geq a+2\varepsilon,\quad 2s-l-\tfrac{d-2}{2}\geq a+2\varepsilon.
	$$
	Thus, 
	combining the estimates above, we obtain \eqref{eq:NoiseRegE41} 
	for the low modulation part 
	$I^{HL}_{LM}$, 
	and hence the high-low interaction part in \eqref{prop:ParaproductDecom}. 
	
	\vspace*{4pt plus 2pt minus 2pt}%
	It remains to estimate the high-high interaction.
	\vspace*{4pt plus 2pt minus 2pt}%
	
	\paragraph{\bf $(ii)$ High-high interaction:} We first decompose further in temporal frequency
	\begin{align*}
		P^{(t)}_{\ll \lambda^2}(h P_\lambda(\overline{\varphi}_{\lambda_1}\psi_{\lambda_2}) )
		=
		P^{(t)}_{\ll \lambda^2} ( P^{(t)}_{\ll \lambda}h P_\lambda(\overline{\varphi}_{\lambda_1}\psi_{\lambda_2}))
		+ 
		P^{(t)}_{\ll \lambda^2} (  P^{(t)}_{\gtrsim \lambda} h P_\lambda(\overline{\varphi}_{\lambda_1}\psi_{\lambda_2})) 
		=:
		I^{HH}_{LT} + I^{HH}_{HT} .
	\end{align*}
	
	For the low temporal contribution  $I^{HH}_{LT}$, 
	using Lemma \ref{le:27zak4}, $\lambda \lesssim \lambda_1\sim\lambda_2$, and $l-a+1\geq 0$, we have 
	\begin{align*} 
		&\lambda^{l-a+1}  \|(\lambda+|\partial_t|)^a  I^{HH}_{LT} \|_{L_t^1 L_x^2}\\
		&\lesssim  \lambda^{l-2a+1} \|(\lambda+|\partial_t|)^a P^{(t)}_{\ll\lambda} h\|_{L_t^\infty} \|(\lambda_1+|\partial_t|)^a  P_\lambda(\overline{\varphi_{\lambda_1}} \psi_{\lambda_2})\|_{L_t^1 L_x^2}\\
		&\lesssim  \lambda^{l-a+1} \|h\|_{L_t^\infty} \lambda_1^{-a} \|(\lambda_1+|\partial_t|)^a \varphi_{\lambda_1}\|_{L_t^2 L_x^{2^*}} \lambda^{\frac{d-4}{2}} \|(\lambda_2+|\partial_t|)^a \psi_{\lambda_2}\|_{L_t^2 L_x^{2^*}}\\
		&\lesssim  \lambda_1^{l-2s+2a+\frac{d-2}{2}} \|h\|_{L_t^\infty} \|\varphi_{\lambda_1}\|_{S_{\lambda_1}^{s,a,0}} \|\psi_{\lambda_2}\|_{S_{\lambda_2}^{s,a,0}}. 
	\end{align*}
	Then, 
	by the Minkowski inequality, 
	we have 
	\begin{align*}
		\Big(\sum_{\lambda\in 2^\N} \lambda^{2(l-a+1)} \big \|(\lambda+|\partial_t|)^a 
		\sum_{\lambda_1\sim\lambda_2\gtrsim \lambda}
		I^{HH}_{LT}
		\big \|_{L_t^1 L_x^2}^2\Big)^{\frac12}
		&\lesssim  \sum_{\lambda_1\sim\lambda_2} \Big( \sum_{\lambda \lesssim \lambda_1} \lambda^{2(l-a+1)} \Big\|(\lambda+|\partial_t|)^a I^{HH}_{LT} \Big\|^2_{L_t^1 L_x^2}\Big)^{\frac12}\\
		&\lesssim  \sum_{\lambda_1\sim\lambda_2} \lambda_1^{l-2s+2a+\frac{d-2}{2}} \|h\|_{L_t^\infty} \|\varphi_{\lambda_1}\|_{S_{\lambda_1}^{s,a,0}} \|\psi_{\lambda_2}\|_{S_{\lambda_2}^{s,a,0}}\\
		&\lesssim \|h\|_{L_t^\infty} \|\varphi\|_{S^{s,a,0}} \|\psi\|_{S^{s,a,0}},
	\end{align*}
	provided that $2s-l-\tfrac{d-2}{2}\geq 2a$.
	
	Regarding the high temporal contribution $I^{HH}_{HT}$, 
	we further decompose
	\begin{align} \label{HHHT-decom}
		I^{HH}_{HT} 
		&= P^{(t)}_{\ll \lambda^2} (P^{(t)}_{\gtrsim \lambda}h P_\lambda(\overline{C_{\gtrsim \lambda_1^2} \varphi_{\lambda_1}} \psi_{\lambda_2})) 
		+  P^{(t)}_{\ll \lambda^2} ( P^{(t)}_{\ll \lambda_1^2}P^{(t)}_{\gtrsim \lambda}h P_\lambda(\overline{C_{\ll \lambda_1^2} \varphi_{\lambda_1}} P^{(t)}_{\sim \lambda_2^2} \psi_{\lambda_2})) \notag \\
		&\qquad + P^{(t)}_{\ll \lambda^2} ( P^{(t)}_{\gtrsim \lambda_1^2}h P_\lambda(\overline{C_{\ll \lambda_1^2} \varphi_{\lambda_1}} \psi_{\lambda_2})) \notag  \\ 
		&=: I^{HH}_{HT.1} 
		+ I^{HH}_{HT.2}
		+ I^{HH}_{HT.3},
	\end{align}
	where we again exploited a \emph{non-resonant identity} in the second term. 
	For the high-modulation part $I^{HH}_{HT.1}$, as $l+a+1>0$, 
	we derive 
	\begin{align*} 
		\lambda^{l-a+1} \|(\lambda+|\partial_t|)^a I^{HH}_{HT.1} \|_{L_t^1 L_x^2} 
		\lesssim ~& \lambda^{l+a+1} \|h\|_{L_t^\infty} \|C_{\gtrsim \lambda_1^2} \varphi_{\lambda_1}\|_{L_{t,x}^2} \|\psi_{\lambda_2}\|_{L_t^2 L_x^\infty}\\
		\lesssim ~& \lambda_1^{l+a+1} \|h\|_{L_t^\infty} \lambda_1^{-2+a-s+1-b} \|\varphi_{\lambda_1}\|_{S_{\lambda_1}^{s,a,b}} \lambda_2^{\frac{d-2}{2}-s+a} \|\psi_{\lambda_2}\|_{S_{\lambda_2}^{s,a,b}}\\
		\lesssim ~& \lambda_1^{l-2s+3a-b+\frac{d-2}{2}} \|h\|_{L_t^\infty}\|\varphi_{\lambda_1}\|_{S_{\lambda_1}^{s,a,b}} \|\psi_{\lambda_2}\|_{S_{\lambda_2}^{s,a,b}}.
	\end{align*} 
	Then, estimating as in the proof of   $I^{HH}_{LT}$ 
	we obtain the desired bound 
	$\|h\|_{L_t^\infty}\|\varphi\|_{S^{s,a,b}} \|\psi\|_{S^{s,a,b}}$ 
	in \eqref{eq:NoiseRegE41} 
	for the high-modulation part 
	$I^{HH}_{HT.1}$,
	provided that $2s-l-\tfrac{d-2}{2}\geq 3a-b$.
	
	Moreover, 
	for the non-resonant part $I^{HH}_{HT.2}$, as $l+a+1>0$, 
	we have 
	\begin{align*}
		\lambda^{l-a+1} \|(\lambda+|\partial_t|)^a I^{HH}_{HT.2} \|_{L_t^1 L_x^2} 
		&\lesssim  \lambda^{l+a+1} \|h\|_{L_t^\infty}  \|C_{\ll \lambda_1^2} \varphi_{\lambda_1}\|_{L_t^2 L_x^{2^*}} \|P^{(t)}_{\sim \lambda_2^2}\psi_{\lambda_2}\|_{L_t^2 L_x^d}\\
		&\lesssim \lambda_1^{l+a+1} \|h\|_{L_t^\infty} \lambda_1^{-s} \|\varphi_{\lambda_1}\|_{S_{\lambda_1}^{s,a,b}} \lambda_2^{-s+\frac{d-4}{2}} \|\psi_{\lambda_2}\|_{S_{\lambda_2}^{s,a,b}}, 
	\end{align*}
	which yields the bound in~\eqref{eq:NoiseRegE41} for $I^{HH}_{HT.2}$,  
	provided that $2s-l-\tfrac{d-2}{2}\geq a$.
	
	At last, 
	for the remaining part $I^{HH}_{HT.3}$, we estimate  
	\begin{align*} 
		\lambda^{l-a+1} \|(\lambda+|\partial_t|)^a I^{HH}_{HT.3} \|_{L_t^1 L_x^2}
		&\lesssim  \lambda^{l+a+1} \|P_{\gtrsim \lambda_1^2}^{(t)} h \|_{L_t^2} \|C_{\ll \lambda_1^2} \varphi_{\lambda_1}\|_{L_t^2 L_x^{2^*}} \|\psi_{\lambda_2}\|_{L_t^\infty L_x^d}\\
		&\lesssim \lambda_1^{l+a+1-2\alpha} \|h\|_{B_{2,\infty}^\alpha} \lambda_1^{-s} \|\varphi_{\lambda_1}\|_{S_{\lambda_1}^{s,a,b}} \lambda_2^{-s+\frac{d-2}{2}} \|\psi_{\lambda_2}\|_{S_{\lambda_2}^{s,a,b}},
	\end{align*} 
	which gives the bound in~\eqref{eq:NoiseRegE41} for $I^{HH}_{HT.3}$, 
	provided that  $2s-l-\tfrac{d-2}{2}\geq a+2\varepsilon$.
	
	Therefore, combining the above estimates altogether,
	we obtain estimate \eqref{eq:NoiseRegE41}.

	\paragraph{\bf 
		Proof of \eqref{prop:triest2}} 
	We start again by noting that the assumptions imply the existence of $0 < \varepsilon \ll 1$ such that
	\begin{align*}
			2s -l -\tfrac{d-2}{2}> \max\{3a, 2a+2\varepsilon\}, \quad  s-l> \max\{a+2\varepsilon, \min\{2a, \tfrac{7}{4}\}\},\quad \min \{ s, 2s-\tfrac{d-2}{2}-a \}> \beta.
	\end{align*}
	We set $\alpha = \frac12 - \varepsilon$. It suffices to prove that there exist 
	$N, \delta>0$ such that for any $\lambda_1, \lambda_2 \in 2^{\N_0}$,
	\begin{align}
		\|\cJ_0[h|\nabla|(\overline{\varphi}_{\lambda_1}\psi_{\lambda_2})]\|_{W^{l, a, \beta}(I)}&\lesssim \|h\|_{B_{2,\infty}^\alpha\cap L^\infty(I)}\max\{\lambda_1, \lambda_2\}^{-\delta} \|\psi\|_{S^{s,a,0}(I)} \|\varphi\|_{S^{s,a,0}(I)},   \label{prop:trinon1}\\
		\|\cJ_0[h|\nabla|(\overline{\varphi}_{\lambda_1}\psi_{\lambda_2})]\|_{W^{l, a, \beta}(I)} &\lesssim \|h\|_{L^\infty(I)} \max\{\lambda_1, \lambda_2\}^{N} (\|\psi\|_{S^{s,a,0}(I)} \|\varphi\|_{S^{s,a,0}(I)}\|\psi\|_{L^2(I;L_x^{2^*})} \|\varphi\|_{L^2(I;L_x^{2^*})})^{\frac12}.    \label{prop:trinon2}
	\end{align} 
	Assuming the above estimates to hold, 
	we can use interpolation to 
	infer that 
	for some $\theta, \theta'>0$,  
	\begin{align*}
		&\|\cJ_0[h|\nabla|(\overline{\varphi}_{\lambda_1}\psi_{\lambda_2})]\|_{W^{l, a, \beta}(I)} \\
		&\lesssim \max\{\lambda_1, \lambda_2\}^{-\theta'} \|h\|_{B_{2,\infty}^\alpha \cap L^\infty(I)} ( \|\varphi\|_{S^{s,a,0}(I)}\|\psi\|_{S^{s,a,0}(I)})^{1-\theta} (\|\varphi\|_{L^2 (I; L_x^{2^*})} \|\psi\|_{L^2 (I; L_x^{2^*})})^\theta,
	\end{align*} 
	which yields \eqref{prop:triest2} by  summing over $\lambda_1$ and  $\lambda_2$  
	since  
	$\sum_{\lambda_1,\lambda_2} \max\{\lambda_1, \lambda_2\}^{-\theta'} <\infty$.

	\vspace*{4pt plus 2pt minus 2pt}%
	Below we prove \eqref{prop:trinon1} 
	and \eqref{prop:trinon2}. 
	For this purpose, 
	we choose $s'<s$ such that
	\begin{align*}
			2s'-l-\tfrac{d-2}{2}> \max\{3a, 2a+2\varepsilon\}, \quad s'-l>\max\{a+2\varepsilon, \min\{2a,\tfrac{7}{4}\}\}, \quad \min \{s', 2s'-\tfrac{d-2}{2}-a \}> \beta,
	\end{align*}
	and derive from  \eqref{triesti-improve} that
	\begin{align*}
		\|\cJ_0[h|\nabla|(\overline{\varphi}_{\lambda_1}\psi_{\lambda_2})]\|_{W^{l, a, \beta}(I)}&\lesssim \|h\|_{B_{2,\infty}^\alpha\cap L^\infty(I)}  \|\varphi_{\lambda_1}\|_{S^{s',a,0}(I)} \|\psi_{\lambda_2}\|_{S^{s',a,0}(I)}\\
		&\lesssim 
		(\lambda_1 \lambda_2)^{s'-s} \|h\|_{B_{2,\infty}^\alpha\cap L^\infty(I)} \|\varphi_{\lambda_1}\|_{S^{s,a,0}(I)} \|\psi_{\lambda_2}\|_{S^{s,a,0}(I)},
	\end{align*}
	which yields estimate \eqref{prop:trinon1}.
	
	Regarding estimate \eqref{prop:trinon2},
	we let $F_\lambda= \chi_I h|\nabla| P_{ \lambda}(\overline{\varphi}_{\lambda_1}\psi_{\lambda_2})$
	and 
	use the Bernstein inequality to derive 
	\begin{align}\label{prop:BernsteinWaveEstimate}
		\|\cJ_0[F_\lambda]\|_{W^{l,a,\beta}_\lambda}
		&\lesssim \lambda^{l+a} \|\cJ_0[F_\lambda]\|_{L_t^\infty L_x^2} +\lambda^{\beta-1} \|F_\lambda\|_{L_{t,x}^2}\notag\\
		&\lesssim \lambda^{l+a+\beta +\frac{d}{4}} ( \|F_\lambda\|_{L_t^1 L_x^2} + \|F_\lambda\|_{L_t^1 L_x^2}^{\frac12}
		\|F_\lambda\|_{L_t^\infty L_x^1}^{\frac12}).
	\end{align}
	Since $\lambda\lesssim \max\{\lambda_1,\lambda_2\}$ for all non-vanishing terms, 
	we obtain 
	\begin{align}\label{prop:BernsteinWaveEstimate2}
		&\|\cJ_0[h|\nabla|(\overline{\varphi}_{\lambda_1}\psi_{\lambda_2})]\|_{W^{l, a, \beta}(I)}
		\lesssim 	\|\cJ_0[\chi_I h|\nabla|(\overline{\varphi}_{\lambda_1}\psi_{\lambda_2})]\|_{W^{l, a, \beta}} \nonumber \\
		&\lesssim (\max\{\lambda_1,\lambda_2\})^{l+a+\beta +\frac{d}{4}}  
		( \|\chi_I h \overline{\varphi}_{\lambda_1}\psi_{\lambda_2}\|_{L_t^1 L_x^2}+ \|\chi_I h \overline{\varphi}_{\lambda_1}\psi_{\lambda_2}\|_{L_t^1 L_x^2}^{\frac12} \|\chi_I h \overline{\varphi}_{\lambda_1}\psi_{\lambda_2}\|_{L_t^\infty L_x^1}^{\frac12}) \notag\\
		&\lesssim (\max\{\lambda_1,\lambda_2\})^{l+a+\beta +\frac{d}{4}}
		\|h\|_{L^\infty(I)} ( \|\overline{\varphi}_{\lambda_1}\psi_{\lambda_2}\|_{L^1(I; L_x^2)}+ \|\overline{\varphi}_{\lambda_1}\psi_{\lambda_2}\|_{L^1(I; L_x^2)}^{\frac12} \|\overline{\varphi}_{\lambda_1}\psi_{\lambda_2}\|_{L^\infty(I; L_x^1)}^{\frac12}) \notag\\
		&\lesssim \|h\|_{L^\infty(I)} (\max\{\lambda_1, \lambda_2\})^{N} (\|\psi\|_{S^{s,a,0}(I)} \|\varphi\|_{S^{s,a,0}(I)}\|\psi\|_{L^2(I; L_x^{2^*})} \|\varphi\|_{L^2(I; L_x^{2^*})})^{\frac12} 
	\end{align} 
	for some $N>0$, where we used Bernstein estimates and the fact that $d \geq 4$ in the last step.
	This yields estimate \eqref{prop:trinon2}, 
	finishing the proof of~\eqref{prop:triest2}.  
	Therefore, 
	the proof of the trilinear estimates 
	in noise-regularization regime I is complete. 
	\qed

	\subsection{Trilinear estimate in noise-regularization regime II} 
	\label{subsec:TrilinRegimeII}
	We next prove the trilinear estimate 
	\eqref{prop:triestiend} 
	in the noise-regularization regime II. 
	As in the proof of Proposition $6.2$ in \cite{CHN23}, 
	we decompose 
	\begin{align*}
		\overline{\varphi}\psi = \sum_{m\gg 1} \sum_{\lambda\geq m}  \overline{\varphi}_\lambda  \psi_{\frac{\lambda}{m}} +  \sum_\lambda  \overline{\varphi}_\lambda  \psi_{\sim \lambda} + \sum_{m\gg  1}\sum_{\lambda\geq m}  \overline{\varphi}_{\frac{\lambda}{m}}\psi_\lambda. 
	\end{align*} 
	Take $p\in (2,\infty)$ close to $2$ such that $\varepsilon_1: =  -s+\frac{d-4}{2}+\frac{2}{p} \in (0,\frac12)$ and set $\varepsilon_2 := s- \frac{d-4}{2}\in [\frac12, 1)$. We will prove the following estimates
	\begin{align}
		\Big\| \sum_{\lambda\geq m}  \cJ_0( h |\nabla| (\overline{\varphi}_{\lambda} \psi_{\frac{\lambda}{m}})) \Big\|_{W^{l,0,l}(I)}
		&\lesssim m^{\varepsilon_2} \|h\|_{L^\infty(I)} \|\varphi\|_{L^2 (I;W_x^{s,2^*})} \|\psi\|_{L^2(I; W_x^{s,2^*})}\notag\\
		&\quad +m^{-\varepsilon_1} \|h\|_{L^\infty(I)} ( \|\varphi\|_{L^2 (I;W_x^{s,2^*})} \|\psi\|_{L^2(I; W_x^{s,2^*})} \|\varphi\|_{S^{s,0,0}(I)} \|\psi\|_{S^{s,0,0}(I)} )^{\frac12}, \label{prop:triendine3}\\
		\Big\|\sum_{\lambda\in 2^{\N_0}}  \cJ_0( h |\nabla| (\overline{\varphi}_{\lambda} \psi_{\sim \lambda})) \Big\|_{W^{l,0,l}(I)}
		&\lesssim \|h\|_{L^\infty(I)}( \|\varphi\|_{L^2 (I;W_x^{s,2^*})} \|\psi\|_{L^2(I; W_x^{s,2^*})} \|\varphi\|_{S^{s,0,0}(I)} \|\psi\|_{S^{s,0,0}(I)} )^{\frac12}, \label{prop:triendine2}\\
		\Big\| \sum_{\lambda\geq m}  \cJ_0( h |\nabla| (\overline{\varphi}_{\lambda} \psi_{\frac{\lambda}{m}})) \Big\|_{W^{l,0,l}(I)}
		&\lesssim m^{-\varepsilon_1} \|h\|_{B_{p,\infty}^{\frac{1}{p}}\cap L^\infty(I)} \|\varphi\|_{S^{s,0,0}(I)} \|\psi\|_{S^{s,0,0}(I)}.  \label{prop:triendine1} 
	\end{align}
	The invariance by complex conjugation then yields \eqref{prop:triendine3} and  \eqref{prop:triendine1} for the low-high interaction $\sum_{\lambda\geq m} \overline{\varphi}_{\frac{\lambda}{m}}\psi_\lambda$.
	
	Assuming these estimates to hold, 
	letting $M\gg 1$ and applying  \eqref{prop:triendine3} for $m<M$, \eqref{prop:triendine1} for the high-low and low-high interactions when $m\geq M$, and \eqref{prop:triendine2} for the high-high interaction, we get
	\begin{align*}
		&\|\cJ_0[h|\nabla|(\overline{\varphi}\psi)]\|_{W^{l, 0, l}(I)}\\
		&\lesssim \sum_{m\gg 1} \Big( \Big\| \sum_{\lambda\geq m} \cJ_0[h|\nabla|(\overline{\varphi}_\lambda \psi_{\frac{\lambda}{m}})] \Big\|_{W^{l,0,l}(I)} + \Big\| \sum_{\lambda\geq m} \cJ_0[h|\nabla|(\overline{\varphi}_{\frac{\lambda}{m}} \psi_{\lambda})]
		\Big\|_{W^{l,0,l}(I)} \Big) \\
		&\qquad + \Big\| \sum_\lambda \cJ_0[h|\nabla|(\overline{\varphi}_\lambda \psi_{\sim \lambda})] \Big\|_{W^{l,0,l}(I)}\\
		&\lesssim M^{\varepsilon_2} \|h\|_{L^\infty(I)} \|\varphi\|_{L^2(I; W_x^{s,2^*})} \|\psi\|_{L^2(I; W_x^{s,2^*})} +  M^{-\varepsilon_1}  \|h\|_{B_{p,\infty}^{\frac{1}{p}} \cap L^\infty(I)}\|\varphi\|_{S^{s,0,0}(I)} \|\psi\|_{S^{s,0,0}(I)}\\
		&\qquad + \|h\|_{L^\infty(I)} ( \|\varphi\|_{L^2(I; W_x^{s,2^*})} \|\psi\|_{L^2(I; W_x^{s,2^*})}\|\varphi\|_{S^{s,0,0}(I)} \|\psi\|_{S^{s,0,0}(I)})^{\frac12}.
	\end{align*}
	Then, choosing $M$ as $(\|\varphi\|_{L^2(I; W_x^{s,2^*})} \|\psi\|_{L^2(I; W_x^{s,2^*})})^{-1}\|\varphi\|_{S^{s,0,0}(I)} \|\psi\|_{S^{s,0,0}(I)}$ multiplied with a large constant (to ensure $M \gg 1$), we obtain the desired estimate \eqref{prop:triestiend}.
	
	\vspace*{3pt plus 2pt minus 2pt}
	Below we turn to the proof of estimates  \eqref{prop:triendine3},  \eqref{prop:triendine2}, and  \eqref{prop:triendine1}. 
	Note  that estimate \eqref{prop:triendine2} follows directly from  \eqref{prop:triendine3} with $m\sim 1$. 
	Hence, 
	we only need to prove \eqref{prop:triendine3} for any $m \in 2^{\N_0}$
	and \eqref{prop:triendine1}. 
	
	For this purpose, we set 
	$G_\lambda:= h \overline{\varphi}_\lambda 
	\psi_{\frac{\lambda}{m}}$ 
	and use the energy inequality to get for any $m \in 2^{\N_0}$
	\begin{align}\label{prop:triend12}
		\Big\|\sum_{\lambda \geq m} \cJ_0[|\na| G_\lambda] \Big\|_{W^{l,0,l}(I)}
		&\lesssim \Big\| \sum_{\lambda \geq m} \cJ_0[\chi_I |\na| G_\lambda] \Big\|_{W^{l,0,l}} \nonumber \\ 
		&\lesssim \Big(\sum_{\lambda \geq m} \lambda^{2l}\|\cJ_0[\chi_I |\na| G_\lambda]\|^2_{L_t^\infty L_x^2} 
		+ \lambda^{2(l-1)}\|\chi_I |\na| G_\lambda\|_{L_{t,x}^2}^2 \Big)^{\frac12}\notag\\
		&\lesssim \Big(\sum_{\lambda \geq m} \lambda^{2{l+2}}
		\|G_\lambda\|^2_{L^1(I; L_x^2)} + \lambda^{2l} 
		\|G_\lambda\|_{L^2(I; L_x^2)} \Big)^{\frac12}.
	\end{align}
	
	To prove the estimate \eqref{prop:triendine3},  
	by the 
	Minkowski inequality and the Littlewood-Paley theorem, 
	we have
	\begin{align*}
		\Big(\sum_{\lambda\geq m} \lambda^{2(l+1)} \|G_\lambda\|^2_{L^1(I; L_x^2)} \Big)^{\frac12}
		&\lesssim m^{l+1-s} \|h\|_{L^\infty(I)} \Big\| \Big(\sum_{\lambda\in 2^{\N_0}} \lambda^{2s} |\varphi_\lambda|^2\Big)^{\frac12} \Big(\sum_{\lambda\in 2^{\N_0}} \lambda^{2(l+1-s)} |\psi_\lambda|^2\Big)^{\frac12} \Big\|_{L^1(I; L_x^2)}\\
		&\lesssim m^{l+1-s} \|h\|_{L^\infty(I)} \|\varphi\|_{L^2(I; W_x^{s,2^*})} \|\psi\|_{L^2(I; W_x^{l+1-s,d})}\\
		&\lesssim  m^{\varepsilon_2} \|h\|_{L^\infty(I)} \|\varphi\|_{L^2(I; W_x^{s,2^*})} \|\psi\|_{L^2(I; W_x^{s,2^*})},
	\end{align*}
	where we used $l+1-s+\frac{d-4}{2}=s$ and $s-\frac{d-4}{2}=\varepsilon_2$.
	Moreover, 
	for  the $L_{t,x}^2$-norm 
	on the right-hand side of \eqref{prop:triend12}, 
	we estimate 
	\begin{align}\label{prop:Ltx2Esti}
		&\Big( \sum_{\lambda\geq m}  \lambda^{2l} \|G_\lambda \|^2_{L^2(I; L_x^2)}\Big)^{\frac12} \lesssim m^{l-s} \|h\|_{L^\infty(I)} \Big\| \Big(\sum_{\lambda\in 2^{\N_0}} \lambda^{2s} |\varphi_\lambda|^2\Big)^{\frac12} \sup_{\lambda\in 2^{\N_0}} \lambda^{l-s}|\psi_\lambda| \Big\|_{L^2(I; L_x^2)}\notag\\
		&\lesssim m^{l-s} \|h\|_{L^\infty(I)} ( \|\varphi\|_{L^2(I; W_x^{s,2^*})} \|\psi\|_{L^\infty(I; W_x^{l-s,d})} )^{\frac12} 
		( \|\varphi\|_{L^\infty(I;H_x^s)} \|\sup_{\lambda\in 2^{\N_0}} \lambda^{l-s}|\psi_\lambda| \|_{L^2(I; L_x^\infty)} )^{\frac12}\notag \\
		&\lesssim   m^{s-\frac{d-2}{2}} \|h\|_{L^\infty(I)}  ( \|\varphi\|_{L^2(I; W_x^{s,2^*})} \|\psi\|_{L^2(I; W_x^{s,2^*})}\|\varphi\|_{S^{s,0,0}(I)} \|\psi\|_{S^{s,0,0}(I)})^{\frac12},
	\end{align}
	where we used $l-s+\frac{d-4}{2}=s-1<s$ and $l-s + \frac{d-2}{2} = s$. 
	Thus,  
	\eqref{prop:triendine3} follows from the estimates above and that  $s-\frac{d-2}{2}<s-\frac{d-4}{2}-\frac{2}{p} = -\varepsilon_1$.

	Regarding estimate \eqref{prop:triendine1}, we consider arbitrary extensions from $I$ to $\R$ of $h, \varphi$, and $\psi$. Then we decompose by high and low temporal frequency. 
	In view of the estimate (see \cite[Page 3190]{CHN23})
	\begin{align*}
		\|\cJ_0(P^{(t)}_{\gtrsim \lambda^2} |\na|  G_\lambda)\|_{L_t^\infty L_x^2} \lesssim \lambda \sum_{\nu\gtrsim \lambda^2} \nu^{-1} \|P^{(t)}_{\nu}  G_\lambda\|_{L_t^\infty L_x^2} \lesssim   \|G_\lambda\|_{L_{t,x}^2}
	\end{align*}
	for any $\lambda \gg 1$, the $L_t^\infty L_x^2$-component in \eqref{prop:triend12} (with the chosen extensions of $h$, $\varphi$, and $\psi$ instead of the zero extension now) can be bounded by the $L_{t,x}^2$-component, where we assume $m \gg 1$ now.
	Taking into account estimate  \eqref{prop:Ltx2Esti},
	we thus have the bound in \eqref{prop:triendine1} 
	for the high temporal frequency component 
	$\cJ_0(P^{(t)}_{\gtrsim \lambda^2} |\na| G_\lambda)$. 
	
	It remains to prove   \eqref{prop:triendine1}
	for the low temporal component 
	$\cJ_0(P^{(t)}_{\ll \lambda^2} |\na| G_\lambda)$. 
	In view of \eqref{prop:triend12} and \eqref{prop:Ltx2Esti}, it suffices to control the corresponding $L_t^1 L_x^2$-norm in \eqref{prop:triend12}. 
	
	For this purpose, we further decompose
	$$
	P^{(t)}_{\ll \lambda^2} 
	G_\lambda 
	= P^{(t)}_{\ll \lambda^2}(h C_{\gtrsim \lambda^2}\overline{\varphi}_\lambda \psi_{\frac{\lambda}{m}})+ P^{(t)}_{\ll \lambda^2}(h C_{\ll \lambda^2}\overline{\varphi}_\lambda \psi_{\frac{\lambda}{m}})
	=: I_{HM}+ I_{LM}.
	$$
	For the high modulation contribution $I_{HM}$, 
	since $l-s =s-\frac{d-2}{2}<-\varepsilon_1$ and $l-s+\frac{d-2}{2}=s$, we have
	\begin{align}\label{prop:estiILMLT3}
		\big( \sum_{\lambda \geq m} \lambda^{2(l+1)} \| I_{HM} \|^2_{L_t^1 L_x^2} \big)^{\frac12} 
		\lesssim~& \|h\|_{L_t^\infty} m^{l-s} \Big( \sum_{\lambda\geq m} \lambda^{2(s+1)} \|C_{\gtrsim \lambda^2}\overline{\varphi}_\lambda \|^2_{L_{t,x}^2} \Big)^{\frac12}\sup_\lambda \lambda^{l-s} \|\psi_\lambda\|_{L_t^2 L_x^\infty}\notag\\
		\lesssim~& m^{-\varepsilon_1} \|h\|_{L_t^\infty} \|\varphi\|_{S^{s,0,0}} \|\psi\|_{L_t^2 W_x^{s,2^*}}.
	\end{align} 
	Regarding the low modulation $I_{LM}$, we further decompose and exploit a \textit{non-resonant identity} again  
	to obtain
	\begin{align*}
		I_{LM}= P^{(t)}_{\ll \lambda^2}(h C_{\ll \lambda^2}\overline{\varphi}_\lambda P^{(t)}_{\geq (\frac{\lambda}{2^4})^2}\psi_{\frac{\lambda}{m}})+ P^{(t)}_{\ll \lambda^2}( P^{(t)}_{\sim \lambda^2}h C_{\ll \lambda^2}\overline{\varphi}_\lambda P^{(t)}_{< (\frac{\lambda}{2^4})^2}\psi_{\frac{\lambda}{m}})=: I_{LM,HT}+ I_{LM,LT}.
	\end{align*}
	For the first term $I_{LM,HT}$, 
	by H\"older's inequality, 
	\begin{align}\label{prop:estiILMLT2}
		\Big( \sum_{\lambda \geq m} \lambda^{2(l+1)} \| I_{LM,HT} \|^2_{L_t^1 L_x^2} \Big)^{\frac12} &\lesssim \|h\|_{L_t^\infty} \Big( \sum_{\lambda \geq m} \lambda^{2s} \| C_{\ll \lambda^2}\overline{\varphi}_\lambda\|_{L_t^2 L_x^{2^*}}^2 \lambda^{2(l+1-s)}\|P^{(t)}_{\geq (\frac{\lambda}{2^4})^2}\psi_{\frac{\lambda}{m}} \|^2_{L_t^2 L_x^d} \Big)^{\frac12}.
	\end{align}
	Note that $P^{(t)}_{\geq (\frac{\lambda}{2^4})^2} \psi_{\frac{\lambda}{m}} = C_{\gtrsim \lambda^2} P^{(t)}_{\geq (\frac{\lambda}{2^4})^2} \psi_{\frac{\lambda}{m}}$ as $m \gg 1$. Using Bernstein's estimate and $l-s+\frac{d-2}{2}=s$ once again, we deduce
	\begin{align*}
		\lambda^{l+1-s} \|P^{(t)}_{\geq (\frac{\lambda}{2^4})^2} \psi_{\frac{\lambda}{m}}\|_{L_t^2 L_x^d} 
		&\lesssim \lambda^{l+1-s} (\tfrac{\lambda}{m})^{\frac{d-2}{2}}\|C_{\gtrsim \lambda^2} P^{(t)}_{\geq (\frac{\lambda}{2^4})^2} \psi_{\frac{\lambda}{m}}\|_{L_t^2 L_x^2}\\
		&\lesssim \lambda^{s-1} m^{-\frac{d-2}{2}} \|(\imu \partial_t+\Delta) \psi_{\frac{\lambda}{m}}\|_{L_t^2 L_x^2} 
		\lesssim m^{s-1-\frac{d-2}{2}} \|(\imu \partial_t+\Delta) \psi_{\frac{\lambda}{m}}\|_{L_t^2 H_x^{s-1}}.
	\end{align*}  
	In combination with \eqref{prop:estiILMLT2}, we obtain
	\begin{align}
		\label{eq:EstRegIIILMHT}
		\Big( \sum_{\lambda \geq m} \lambda^{2(l+1)} \| I_{LM,HT} \|^2_{L_t^1 L_x^2} \Big)^{\frac12} &\lesssim m^{s-1-\frac{d-2}{2}} \|h\|_{L_t^\infty} \sup_{\lambda} \lambda^s \|C_{\ll \lambda^2}\varphi_\lambda\|_{L_t^2 L_x^{2^*}} \Big( \sum_{\lambda \geq m}   \|(\imu \partial_t+\Delta) \psi_{\frac{\lambda}{m}}\|^2_{L_t^2 H_x^{s-1}} \Big)^{\frac12} \nonumber\\
		&\lesssim m^{-1-\varepsilon_1} \|h\|_{L_t^\infty} \|\varphi\|_{L_t^2 W_x^{s, 2^*}} \|\psi\|_{S^{s,0,0}}.
	\end{align}
	For the second term  $I_{LM,LT}$, we choose $q \in (2,\infty)$ and $\delta \in (0,1)$ such that $\frac{1}{p}+ \frac{1}{q}= \frac12$ and $\frac{1}{q}= \frac{\delta}{2}+ \frac{1-\delta}{\infty}$.
	We then get 
	\begin{align} \label{prop:estiILMLT1} 
		\Big( \sum_{\lambda\geq m} \lambda^{2(l+1)}\|I_{LM,LT}\|_{L_t^1 L_x^2}^2  \Big)^{\frac12}
		&\lesssim \sup_{\lambda} \lambda^s\|\varphi_\lambda\|_{L_t^2L_x^{2^*}}\Big(\sum_{\lambda\geq m}\lambda^{2(l+1-s-\frac{2}{p})}\||\partial_t|^{\frac{1}{p}}P^{(t)}_{\sim \lambda^2} h\|_{L_t^p}^2 \|\psi_{\frac{\lambda}{m}}\|^2_{L_t^q L_x^d}\Big)^{\frac12}\notag\\
		&\lesssim  \|h\|_{B_{p,\infty}^{\frac{1}{p}}} \|\varphi\|_{L_t^2 W_x^{s, 2^*}}  m^{l+1-s-\frac{2}{p}} \Big( \sum_{\lambda\geq 1} \lambda^{2(l+1-s-\frac{2}{p})} \| \psi_{\lambda}\|_{L^q_t L_x^d}^2\Big)^{\frac12} \notag\\
		&\lesssim \|h\|_{B_{p,\infty}^{\frac{1}{p}}} \|\varphi\|_{L_t^2 W_x^{s, 2^*}}  m^{l+1-s-\frac{2}{p}} \Big( \sum_{\lambda\geq 1} \lambda^{2(l+1-s-\frac{2}{p})} \| \psi_{\lambda}\|^{2\delta}_{L^2_t L_x^d} \| \psi_{\lambda}\|^{2-2\delta}_{L^\infty_t L_x^d}\Big)^{\frac12} \notag\\
		&\lesssim  m^{-\varepsilon_1} \|h\|_{B_{p,\infty}^{\frac{1}{p}}} \|\varphi\|_{L_t^2 W_x^{s, 2^*}}  \|\psi\|_{S^{s,0,0}},
	\end{align}		
	where in the last step we 
	also used the estimate
	\begin{align*}
		\| \psi_{\lambda}\|^{2\delta}_{L^2_t L_x^d} \| \psi_{\lambda}\|^{2-2\delta}_{L^\infty_t L_x^d} &\lesssim \Big(\lambda^{\frac{d-4}{2}} \|\psi_\lambda\|_{L_t^2 L_x^{2^*}}\Big)^{2\delta} \Big(\lambda^{\frac{d-2}{2}} \|\psi_\lambda\|_{L_t^\infty L_x^{2}}\Big)^{2-2\delta} \\
		&\lesssim \lambda^{2\delta(\frac{d-4}{2}-s)+(2-2\delta)(\frac{d-2}{2}-s)} \|\psi_\lambda\|^2_{S_\lambda^{s,0,0}},
	\end{align*}
	and the equality 
	\begin{align*}
		2(l+1-s-\tfrac{2}{p})+2\delta(\tfrac{d-4}{2}-s)+ (2-2\delta)(\tfrac{d-2}{2}-s) =0
	\end{align*}
	as $l + 1 - s = s - \frac{d-4}{2}$ in noise-regularization regime II,
	and the definition of $\varepsilon_1$. 
	
	Finally, 
	combining estimates 
	\eqref{prop:estiILMLT3}, \eqref{eq:EstRegIIILMHT}, and \eqref{prop:estiILMLT1}, 
	we control the $L^1_tL^2_x$-norm 
	of the low temporal component 
	in \eqref{prop:triend12} 
	by the right-hand side of~\eqref{prop:triendine1}. 
	
	Therefore, the proof of the trilinear estimate 
	in noise-regularization regime II is complete. 
	\qed
	
	\subsection{Trilinear estimate in noise-regularization regime III}
	It remains to prove the 
	trilinear estimate  \eqref{triesti-LocSmo} in the noise-regularization regime III. We first note that due to the condition $s - l > - \frac12$ there exists $p \in (2,\infty)$ close to $2$ such that
	\begin{align*}
		s - l > 1 - \tfrac{3}{p}.
	\end{align*}
	Then we fix $0 < \varepsilon \ll 1$ such that 
	\begin{equation}
		\label{eq:RegIIIpeps}
		s - l > 1 - \tfrac{3 - 2 \varepsilon}{p}.
	\end{equation}
	Note that the restriction $p > 2$ dictated by the regularity of Brownian motion excludes the endpoint line $s = l - \frac12$ already in the first step of this construction. The $\varepsilon$-loss incurred from using the near-endpoint local smoothing and maximal function spaces (compared to the endpoint spaces) therefore does not restrict the validity of the trilinear estimate -- and thus Theorem~\ref{thm:RegNoise}.
	
	As in the proof of \eqref{triesti-improve}, we consider arbitrary extensions of $h$, $\varphi$, and $\psi$ from $I$ to $\R$ and prove the estimate on $\R$. As in noise-regularization regime I, an application of Lemma~2.6 from~\cite{CHN23} reduces the estimate to proving 
	\eqref{eq:NoiseRegE41}-\eqref{eq:NoiseRegE44} with $a = 0$, the $\Z_\varepsilon^{s,b}$-norm on the right-hand side, and $B^\alpha_{2,\infty}$ replaced with $B^{\frac{1}{p}}_{p,\infty}$. Since $\Z_\varepsilon^{s,b}\hookrightarrow S^{s,0,b}$ in the case $a = 0$, we obtain from the proof of estimate~\eqref{triesti-improve} 
	that \eqref{eq:NoiseRegE42}-\eqref{eq:NoiseRegE44} hold true
	provided condition~\eqref{con:wavetri234} with $a = 0$ is valid, i.e.,
	\begin{align*}
		&s+1\geq l,\quad 2s\geq l+\tfrac{d-2}{2},\quad (s,l)\neq(\tfrac{d}{2},\tfrac{d+2}{2})\notag\\
		& s\geq 0,\quad s\geq \beta,\quad 2s-\beta-\tfrac{d-2}{2}\geq 0,\quad (s,\beta) \neq (\tfrac{d-2}{2},\tfrac{d-2}{2}).
	\end{align*}
	
	Hence, we only need to prove the counterpart of estimate \eqref{eq:NoiseRegE41},  
	i.e., 
	\begin{align}
		\Big( \sum_{\lambda\in 2^\N} \lambda^{2(l+1)}\| P^{(t)}_{\ll \lambda^2} P_{\lambda}(h\overline{\varphi}\psi)\|_{L_t^1L_x^2}^2\Big)^{\frac12}&\lesssim \|h\|_{B^{\frac{1}{p}}_{p,\infty}\cap L_t^\infty} \|\varphi\|_{\Z_\varepsilon^{s,b}}\|\psi\|_{\Z_\varepsilon^{s,b}}.  \label{eq:NoiseRegELocSmo41}
	\end{align}

	For this purpose, 
	we use the same decomposition as in the proof of \eqref{triesti-improve} for the high-low  and high-high interactions  
	(the estimate for the low-high interaction again follows from the invariance of the estimate under complex conjugation):  
	\begin{align*}
		I^{HL} = I^{HL}_{HM} + I^{HL}_{LM} 
		= I^{HL}_{HM,HT} + I^{HL}_{HM,LT} + I^{HL}_{LM,LT} + I^{HL}_{LM,HT},
	\end{align*}
	and 
	\begin{align*}
		I^{HH}=  I^{HH}_{LT} + I^{HH}_{HT} = I^{HH}_{LT} + I^{HH}_{HT.1}+ I^{HH}_{HT.2}+ I^{HH}_{HT.3}.
	\end{align*}
	Since $\Z_\varepsilon^{s,b}\hookrightarrow S^{s,0,b}$, the estimates established in the proof of~\eqref{triesti-improve} immediately yield~\eqref{eq:NoiseRegELocSmo41} for those components which were estimated using only the $L^\infty_t$-norm of $h$.
	Collecting the corresponding conditions with $a = 0$,
	we thus obtain estimate~\eqref{eq:NoiseRegELocSmo41} for 
	all components of the above decompositions of $I^{HL}$ and $I^{HH}$
	except $I^{HL}_{HM,HT}$, $I^{HL}_{LM,HT}$, and $I^{HH}_{HT.3}$, 
	provided that
	\begin{align*}
		s-l\geq -b, \quad 2s-l-\tfrac{d-2}{2}\geq 0,\quad (s,l) \neq 
		(\tfrac{d-2}{2}, \tfrac{d-2}{2}+b ).
	\end{align*}
	It only remains to estimate the three components $I^{HL}_{HM,HT}$, 
	$I^{HL}_{LM,HT}$, and $I^{HH}_{HT.3}$.

	For $I^{HL}_{HM,HT}$ defined in \eqref{HLHM-decom}, 
	we choose $q \in (2,\infty)$ and $\theta \in (0,1)$ such that $\frac{1}{p}+ \frac{1}{q}=\frac{1}{2}$ and $\frac{1}{q} = \frac{\theta}{2} + \frac{1-\theta}{\infty}$. By interpolation, we then infer
	\begin{align*}
		&\mu^{l+1} \| P^{(t)}_{\ll \lambda^2} ( P^{(t)}_{\gtrsim \mu^2} h \overline{C_{\gtrsim \mu^2}\varphi_\mu} \psi_{\ll\mu} )\|_{L_t^1 L_x^2}\\
		&\lesssim  \mu^{l+1} \| P^{(t)}_{\gtrsim \mu^2} h\|_{L_t^p} \|C_{\gtrsim \mu^2}\varphi_\mu\|_{L_{t,x}^2} \|\psi_{\ll\mu}\|_{L_t^q L_x^\infty}\\
		&\lesssim \mu^{l+1-\frac{2}{p}} \|h\|_{B^{\frac{1}{p}}_{p,\infty}} \mu^{-2-s+1-b} \|\varphi_\mu\|_{S_\mu^{s,0,b}} \sum_{\nu \ll \mu} \|\psi_{\nu}\|^\theta_{L_t^2 L_x^\infty} \|\psi_{\nu}\|^{1-\theta}_{L_t^\infty L_x^\infty} \\
		&\lesssim  \mu^{l-\frac{2}{p}-s-b} \|h\|_{B^{\frac{1}{p}}_{p,\infty}} \|\varphi_\mu\|_{\Z^{s,b}_{\varepsilon,\mu}} \sum_{\nu \ll \mu} \nu^{\theta(\frac{d-2}{2}-s)}\|\psi_{\nu}\|_{L_t^2 W_x^{s, 2^*}}^\theta \nu^{(1-\theta)(\frac{d}{2}-s)}\|\psi_{\nu}\|_{L_t^\infty H_x^{s}}^{1-\theta} \\
		&\lesssim  \mu^{l-\frac{2}{p}-s-b} \|h\|_{B^{\frac{1}{p}}_{p,\infty}} \|\varphi_\mu\|_{\Z^{s,b}_{\varepsilon,\mu}} \sum_{\nu \ll \mu}  \nu^{\frac{d}{2}-s - \theta}\|\psi_{\nu}\|_{S^{s,0,b}_\nu},
	\end{align*}
	which is $l^2$-summable and yields estimate \eqref{eq:NoiseRegELocSmo41} for the $I^{HL}_{HM,HT}$ component, provided that
	\begin{align*}
		s - l \geq - b - \tfrac{2}{p}, \quad 2 s - l - \tfrac{d-2}{2} \geq - b, \quad (s,l) \neq (\tfrac{d-2}{2} + \tfrac{2}{p}, \tfrac{d-2}{2} + \tfrac{4}{p} + b),
	\end{align*}
	where we also used $\theta = 1 - \frac{2}{p}$. Note that the last condition is satisfied as $s - l \geq - b$ in regime III.

	Regarding $I^{HL}_{LM,HT}$
	defined in \eqref{HLLM-decom}, 
	recall the definition of $p_\varepsilon$ and $q_\varepsilon$ in Section \ref{Sec-MaximalFunctionEstimate}, 
	and choose $r \in (2,\infty)$ and $\theta \in (0,1)$ such that $\frac{1}{p}+ \frac{1}{r}=1$ and $\frac{1}{r} = \frac{\theta}{2} + \frac{1-\theta}{1}$. We already note that this implies $\theta = \frac{2}{p}$.
	By interpolation, we infer 
	\begin{align}
		\label{eq:RegIIIEstIHLLMHT}
		\mu^{l+1} \| P^{(t)}_{\ll \lambda^2} ( P^{(t)}_{\gtrsim \mu^2} h \overline{C_{\ll\mu^2}\varphi_\mu} \psi_{\ll\mu} )\|_{L_t^1 L_x^2}
		&\lesssim \mu^{l+1} \|P^{(t)}_{\gtrsim \mu^2} h\|_{L_t^p} \sum_{\nu \ll \mu} \| \overline{C_{\ll\mu^2}\varphi_\mu} \psi_{\nu} \|_{L_t^r L_x^2} \nonumber\\
		&\lesssim  \mu^{l+1-\frac{2}{p}} \|h\|_{B^{\frac{1}{p}}_{p,\infty}} \sum_{\nu \ll \mu} \| \overline{C_{\ll\mu^2}\varphi_\mu} \psi_{\nu} \|_{L_t^2 L_x^2}^\theta \| \overline{C_{\ll\mu^2}\varphi_\mu} \psi_{\nu} \|_{L_t^1 L_x^2}^{1-\theta}.
	\end{align}
	We next estimate the two terms we got from interpolating. Using the definition of the $\Z^{s,b}_{\epsilon}$-norm, we get
	\begin{align*}
		\| \overline{C_{\ll\mu^2}\varphi_\mu} \psi_{\nu} \|_{L_t^2 L_x^2} 
		\lesssim \sum_{j = 1}^d \|P_{\mu,\vece_j} C_{\ll\mu^2} \varphi_\mu\|_{L_{\vece_j}^{p_\varepsilon, q_\varepsilon}}  \|\psi_\nu\|_{L_{\vece_j}^{q_\varepsilon,p_\varepsilon}}
		\lesssim \mu^{-s - \frac{1}{2} + \varepsilon} \| \varphi_\mu \|_{\Z^{s,0}_{\epsilon,\mu}}     \nu^{-s + \frac{d-1}{2} - \varepsilon} \| \psi_\nu \|_{\Z^{s,0}_{\epsilon, \nu}}
	\end{align*}
	for the first term and
	\begin{align*}
		\| \overline{C_{\ll\mu^2}\varphi_\mu} \psi_{\nu} \|_{L_t^1 L_x^2}
		\lesssim \| \phi_\mu \|_{L^2_t L^{2^*}_x} \| \psi_\nu \|_{L^2_t L^d_x}
		\lesssim \mu^{-s} \| \varphi_\mu \|_{S^{s,0,0}_\mu} \nu^{\frac{d-4}{2}} \| \psi_\nu \|_{L^2_t L^{2^*}_x} \lesssim \mu^{-s} \| \varphi_\mu \|_{S^{s,0,0}_\mu} \nu^{\frac{d-4}{2} - s} \| \psi_\nu \|_{S^{s,0,0}_\nu}
	\end{align*}
	for the second term. Plugging the last two estimates into~\eqref{eq:RegIIIEstIHLLMHT} and employing the embeddings $\Z^{s,b}_\varepsilon \hookrightarrow S^{s,0,b} \hookrightarrow S^{s,0,0}$, we arrive at
	\begin{align*}
		&\mu^{l+1} \| P^{(t)}_{\ll \lambda^2} ( P^{(t)}_{\gtrsim \mu^2} h \overline{C_{\ll\mu^2}\varphi_\mu} \psi_{\ll\mu} )\|_{L_t^1 L_x^2} \\
		&\lesssim \mu^{l+1-\frac{2}{p}} \|h\|_{B^{\frac{1}{p}}_{p,\infty}} \mu^{(-s - \frac12 + \varepsilon)\theta} \mu^{-s(1-\theta)} \| \varphi_\mu \|_{\Z^{s,b}_{\epsilon,\mu}} \sum_{\nu \ll \mu} \nu^{(-s + \frac{d-1}{2} - \varepsilon) \theta} \nu^{(\frac{d-4}{2} - s)(1-\theta)}  \| \psi_\nu \|_{\Z^{s,b}_{\varepsilon, \nu}} \\
		&\lesssim \mu^{l + 1 - s - \frac{3 - 2 \varepsilon}{p}} \|h\|_{B^{\frac{1}{p}}_{p,\infty}} \| \varphi_\mu \|_{\Z^{s,b}_{\epsilon,\mu}} \sum_{\nu \ll \mu} \nu^{\frac{d-4}{2} - s + \frac{3 - 2 \varepsilon}{p}} \| \psi_\nu \|_{\Z^{s,b}_{\varepsilon, \nu}}.
	\end{align*}
	The above right-hand side is $l^2$-summable and yields estimate \eqref{eq:NoiseRegELocSmo41} for the $I^{HL}_{LM,HT}$ component provided that
	\begin{align*}
		s - l \geq 1 - \tfrac{3 - 2 \varepsilon}{p}, \quad 2 s - l - \tfrac{d-2}{2} \geq 0, \quad (s,l) \neq (\tfrac{d-4}{2} + \tfrac{3 - 2 \varepsilon}{p}, \tfrac{d-6}{2} + \tfrac{6 - 4 \varepsilon}{p})
	\end{align*}
	Note that the last condition is satisfied as $s - l > 1 - \frac{3 - 2 \varepsilon}{p}$ by~\eqref{eq:RegIIIpeps}.

	For the remaining $I^{HH}_{HT.3}$ term, defined in \eqref{HHHT-decom}, 
	we recall that $\lambda_1\sim\lambda_2\gtrsim \lambda$ in this high-high interaction. 
	Let $q \in (2,\infty)$ and $\theta \in (0,1)$ such that $\frac{1}{p} + \frac{1}{q}=\frac{1}{2}$ and $\frac{1}{q} = \frac{\theta}{2} + \frac{1-\theta}{\infty}$. Since $l+1>0$, we use the interpolation and the embedding $\Z_{\varepsilon, \lambda}^{s,b}\hookrightarrow S_{\lambda}^{s,0,b}$ again to  get
	\begin{align*}
		&\lambda^{l+1} \| P^{(t)}_{\ll \lambda^2} ( P^{(t)}_{\gtrsim \lambda_1^2} h P_\lambda (\overline{C_{\ll \lambda_1^2}\varphi_{\lambda_1}} \psi_{\lambda_2}) )\|_{L_t^1 L_x^2}\\
		&\lesssim  \lambda^{l+1} \|P^{(t)}_{\gtrsim \lambda_1^2} h\|_{L_t^p} \|C_{\ll \lambda_1^2}\varphi_{\lambda_1}\|_{L_t^2 L_x^{2^*}} \|\psi_{\lambda_2}\|_{L_t^q L_x^d}\\
		&\lesssim  \lambda_1^{l+1-\frac{2}{p}} \|h\|_{B^{\frac{1}{p}}_{p,\infty}} \lambda_1^{-s} \|\varphi_{\lambda_1}\|_{S_{\lambda_1}^{s,0,0}} \|\psi_{\lambda_2}\|^\theta_{L_t^2 L_x^d} \|\psi_{\lambda_2}\|^{1-\theta}_{L_t^\infty L_x^d}\\
		&\lesssim  \lambda_1^{l+1-\frac{2}{p}} \|h\|_{B^{\frac{1}{p}}_{p,\infty}} \lambda_1^{-s} \|\varphi_{\lambda_1}\|_{\Z^{s,b}_{\varepsilon,\lambda_1}} \lambda_2^{\theta(\frac{d-4}{2}-s)+(1-\theta)(\frac{d-2}{2}-s)} \|\psi_{\lambda_2}\|_{\Z^{s,b}_{\varepsilon,\lambda_2}},
	\end{align*}
	which is $l^2$-summable and gives estimate \eqref{eq:NoiseRegELocSmo41} for the $I^{HH}_{HT.3}$ component, provided that
	\begin{align*}
		2s-l-\tfrac{d-2}{2}\geq 0.
	\end{align*}
	
	We thus showed \eqref{eq:NoiseRegELocSmo41}
	for the remaining terms $I^{HL}_{LM,HL}$,  $I^{HL}_{HM,HT}$, and $I^{HH}_{HT.3}$, completing the proof of~\eqref{eq:NoiseRegELocSmo41}. 
	This concludes the proof of Theorem \ref{Prop-trilinear}. 
	\qed

	\section{Noise-regularization on scattering}
	\label{Sec-Noise-Regular}
	
	In this section we prove the 
	noise-regularization-effect on scattering in Theorem~\ref{thm:RegNoise}. We recall that by the rescaling transform the stochastic Zakharov system~\eqref{eq:StoZak} with nonconservative noise is equivalent to the random system~\eqref{eq:RanZakNoncons-intro}. To prove Theorem~1.5, we thus show that the probability of the event
	\begin{align}
		\Upsilon_{(\Im \phi^{(1)}_k)} := \big\{\omega \in \Omega \colon &(z,v) \text{ is the solution of~\eqref{eq:RanZakNoncons-intro} on } [0,\infty) \text{ and there is } (z_+, v_+) \in H_x^s \times H_x^l \text{ s.t. } \nonumber\\
		&\lim_{t \rightarrow \infty} \|e^{-\imu t \Delta} z(t) - z_+\|_{H_x^s} = 0 \text{ and } \lim_{t \rightarrow \infty} \|e^{-\imu t |\nabla|} v(t) - v_+\|_{H_x^l} = 0 \big\} \label{eq:DefUpsilon}
	\end{align}
	converges to one as $\| \Im \phi_k^{(1)} \| \rightarrow \infty$. Note that this event depends on $(\Im \phi_k^{(1)})$, i.e., on the coefficients of the noise in the Schr\"odinger component.
	To ease the notation, we set $\mathbf{c} := (\Im \phi_k^{(1)})$ in the following.
	
	We will use the following estimate for the Schr\"odinger equation with a free wave potential, which follows from combining Remark \ref{rem:PropagOPWellDef} with the embedding $\X^{s,a} \hookrightarrow S^{s,a,0}$. 
	Recall that $\cU_{v_L}[f]$ and $\mathcal{I}_{v_L}[F]$, respectively, 
	denote the homogeneous and inhomogeneous solutions to equation \eqref{eq:SchrPotentialIntro} with potential $V=v_L$ and initial time $t_0$.

	\begin{lemma}\label{lem:LinSchrPotSmallTimeSsa0}
		Let $0\leq s\leq l+2$, $l\geq \frac{d-4}{2}$ with $(s,l)\neq (\frac{d}{2},\frac{d-4}{2})$, and $a=a^*(s,l)$ given by \eqref{con:ab}.
		Then, there exists $C(v_L) \geq 1$ such that for any interval $I \subset \R$, $t_0 \in I$, $f \in H_x^s$ with $F \in N^{s,a,0}(I)$,   
		\begin{align*}
			\|\cU_{v_L}[f]\|_{S^{s,a,0}(I)} 
			\leq C(v_L) \|f\|_{H^s_x}, 
			\qquad \|\cI_{v_L}[F]\|_{S^{s,a,0}(I)} \leq C(v_L) \|F\|_{N^{s,a,0}(I)},
		\end{align*}
		and  
		\begin{align*}
			\| \cU_{v_L}[u_0]\|_{L^2(I;  L_x^{2^*})}\leq C(v_L) \|u_0\|_{L_x^2},\quad \|\mathcal{I}_{v_L}[F]\|_{L^2(I;  L_x^{2^*})} \leq C(v_L) \|F\|_{L^2(I; L_x^{2_*})}.
		\end{align*}
	\end{lemma}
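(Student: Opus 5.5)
The statement is essentially a repackaging of Remark~\ref{rem:PropagOPWellDef} in the weaker $S^{s,a,0}$-norm. The plan is therefore to first establish the small-potential version on a single interval and then glue. Concretely, on any interval $J$ with $t_0\in J$ on which
\begin{align*}
\|v_L\|_{W^{l,a,s-\frac12}(J)+L^2(J;W^{s,d}_x)}<\varepsilon,
\end{align*}
Lemma~\ref{lem:LinSchrPotSmallTime} (with $\beta=s-\frac12$ and $a=a^*(s,l)$) gives
\begin{align*}
\|u\|_{\X^{s,a}(J)}\le C\big(\|f\|_{H^s_x}+\|\cI_0[F]\|_{\X^{s,a}(J)}\big).
\end{align*}
For $F\in N^{s,a,0}(J)$ we bound $\|\cI_0[F]\|_{\X^{s,a}(J)}\lesssim\|F\|_{\G^{s,a}(J)}\le\|F\|_{N^{s,a,0}(J)}$. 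The first inequality is Lemma~\ref{lem:LinEstimates}. The second follows from the definition of $\G^{s,a}$ by choosing the trivial decomposition $F_1=F$, $F_2=0$. The embedding $\X^{s,a}\hookrightarrow S^{s,a,0}$, immediate from the definition of $\X^{s,a}_\lambda$, then yields both $S^{s,a,0}$ bounds on $J$. The $L^2_tL^{2^*}_x$ bounds on $J$ are stated verbatim in Lemma~\ref{lem:LinSchrPotSmallTime}, applied once with $F=0$ and once with $f=0$.

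To pass to an arbitrary interval $I$, we invoke Lemma~\ref{lem:SmallnessWavePotential} for the free wave $v_L$. This covers $\R$ by finitely many open intervals $I_1,\dots,I_N$, each with smallness $<\varepsilon$, with consecutive overlaps of positive length. The number $N$ and the overlap lengths depend only on $v_L$.

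We then propagate from the interval containing $t_0$ in both directions. On each subsequent $I_j$ we restart the linear problem at a time $t_j\in I_{j-1}\cap I_j$. The new datum $u(t_j)$ is controlled by the previous $\X^{s,a}$-norm, which dominates $L^\infty_tH^s_x$, and the same forcing $F$ is restricted to $I_j$. The restriction norm is monotone in the interval, so $\|F\|_{N^{s,a,0}(I_j\cap I)}\le\|F\|_{N^{s,a,0}(I)}$. Uniqueness in $C(I;H^s_x)\cap L^2(I;L^{2^*}_x)$ ensures that the pieces agree on the overlaps. Lemma~\ref{lem:DecompX} then glues the $\X^{s,a}$-bounds, with constant $\prod_j C(1+|I_j\cap I_{j+1}|^{-1/2})$, which depends only on $v_L$. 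After embedding into $S^{s,a,0}$ this gives $C(v_L)$.

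The $L^2_tL^{2^*}_x$ bounds glue trivially, since that norm is subadditive over a finite cover. The datum at each restart is bounded in $L^2_x$ by the previous piece, because Lemma~\ref{lem:LinSchrPotSmallTime} also controls $L^\infty_tL^2_x$ via the $\X^{s,a}$-norm at $s=0$ or through mass-type estimates. Here we would rather note that Lemma~\ref{lem:LinSchrPotSmallTime} applied at regularity $0$ gives $L^\infty L^2$ control.

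The main subtlety is that the intervals $I_j\cap I$ may be very short or degenerate, where the overlap factor blows up. This is handled by only gluing those $I_j$ that meet $I$ in a nontrivial overlap with their neighbors, or by replacing short intersections with the full $I_j$ via extension. Since $N$ is finite and independent of $I$, the constant stays uniform.
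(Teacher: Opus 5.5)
Your proposal is correct and follows the paper's route: the paper obtains this lemma directly from Remark~\ref{rem:PropagOPWellDef} (Lemma~\ref{lem:LinSchrPotSmallTime} on the finite cover of Lemma~\ref{lem:SmallnessWavePotential}, glued via Lemma~\ref{lem:DecompX}) together with the embedding $\X^{s,a}\hookrightarrow S^{s,a,0}$. Your additional details simply make explicit what the paper leaves implicit: the bound $\|\cI_0[F]\|_{\X^{s,a}}\lesssim\|F\|_{\G^{s,a}}\le\|F\|_{N^{s,a,0}}$; restarting the $L^2_x$ datum, where your mass-identity option is the cleanest since $\Re(v_L)$ is real; and passing to a minimal subcover of $I$, whose consecutive overlaps are entire intersections of members of the fixed cover and hence uniformly long.
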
 
	
	We also use the notation $C(v_L)$ 
	for the constants appearing in the 
	estimates in Lemma~\ref{lem:LinSchrPotSmallTimeLocSmo}
	involving the maximal function spaces, i.e., 
	\begin{align*}
		\| \cU_{v_L}[u_0]\|_{\Z_\varepsilon^{s,0}(I)}\leq C(v_L) \|u_0\|_{H_x^s},\quad \|\mathcal{I}_{v_L}[F]\|_{\Z_\varepsilon^{s,0} (I)} \leq C(v_L) \|F\|_{N^{s,0,0}(I)}. 
	\end{align*}

	\subsection{Short-time regime}
	In this subsection, we  consider the following system on $I = [0, T]$,
	\begin{equation}\label{eq:RegulationZak}
		\left\{\aligned
		(\imu \partial_t +\Delta -\Re(v_L))z &= \Re({\rho}) z, &z(0)&=X_0,\\
		(\imu \partial_t +|\nabla|)\rho &=  -h|\nabla||z|^2, &\rho(0)&=0,
		\endaligned
		\right.
	\end{equation}
	where $v_L= e^{\imu t|\nabla|} Y_0$ with $Y_0\in H_x^l$, $X_0\in H_x^s$, and $0<T<\infty$.
	
	We note that solving~\eqref{eq:RegulationZak} is equivalent to solving
	\begin{equation}
		\label{eq:RegNoiseSchrPot}
		(\imu \partial_t +\Delta -\Re(v_L))z = -\Re(\cJ_0(h|\nabla| |z|^2)) z, \qquad z(0) = X_0
	\end{equation}
	and then setting $\rho = - \cJ_0(h|\nabla| |z|^2)$.
	
	In the next proposition, we consider the noise-regularization regimes {\rm I}, {\rm II}, and {\rm III} separately. We recall that these regularity regimes were defined in Table~\ref{table:regularity-regimes}. A direct calculation shows that in the noise-regularization regimes I, II, and III, the assumptions of Theorem \ref{Prop-trilinear} (i), (ii), and (iii), respectively, are all satisfied. Here we take $\beta = s - \frac{1}{2}$ in regimes I and III and $\beta = l$ in regime II. Thus, there exist parameters $\alpha\in (0,\frac{1}{2})$ close to $\frac{1}{2}, \theta\in (0,1), p>2$ close to $2$, and $0<\varepsilon\ll 1$ such that the estimates in Theorem \ref{Prop-trilinear} (i), (ii), and (iii), respectively, are valid. We further note that for every pair $(s,l)$ in regime III, the pairs $(s, s-\frac{1}{2})$ and $(s,\kappa)$ lie in regime I, where $\kappa = s - \frac12 (1 - b)$.
	
	\begin{proposition}[Well-posedness in short-time regime]\label{prop:WPShortTime}
		Let $(s,l)$ satisfy the assumptions of Theorem~\ref{thm:RegNoise}, i.e., condition~\eqref{IniReg-conditionNoiseReg}. Let $(a,b)$ be as in \eqref{con:ab}, $0<T<\infty$, $I = [0,T]$. Let $C \geq 1$ denote a large universal constant and $C(v_L)$ the constant from Lemma~\ref{lem:LinSchrPotSmallTimeSsa0} and Lemma~\ref{lem:LinSchrPotSmallTimeLocSmo} depending on the free wave potential $v_L$. We consider the conditions (i), (ii), and (iii) in noise-regularization regimes I, II, and III separately.
		
		\vspace*{4pt plus 2pt minus 2pt}%
		\paragraph{$(i)$ \textbf{Noise-regularization regime I}} 
		Let $(s,l)$ lie in noise-regularization regime I.
		Let $\alpha \in (0,\frac12)$ and $\theta\in (0,1)$ be as in Theorem~\ref{Prop-trilinear}~(i). Take $A \geq 1$ and $ R>0$ such that
		\begin{align} \label{A-R-I}
			\|h\|_{B^\alpha_{2,\infty}\cap L^\infty(I)}\leq A,
			\qquad
			2CC(v_L) \|X_0\|_{H_x^s} \leq R. 
		\end{align}  
		Let $0 < \delta < R$ be sufficiently small such that
		\begin{align} \label{prop:smallness2}
			 8CC(v_L) A R^{2-\theta} \delta^\theta \leq 1,
		\end{align}
		and $T$ be sufficiently small such that 
		\begin{align}\label{prop:smallnessT1}
			2CC(v_L)(1+\|Y_0\|_{H_x^l})\|e^{\imu t \Delta }X_0\|_{L^2 (0, T; L_x^{2^*})}\leq \delta.
		\end{align}
		
		\paragraph{$(ii)$ \textbf{Noise-regularization regime II}} 
		Let $(s,l)$ lie in noise-regularization regime II of Theorem \ref{thm:RegNoise}. Let $p \in (2,\infty)$ close to $2$ and $\theta\in(0,1)$ be as in Theorem~\ref{Prop-trilinear}~(ii). Take $A\in (0,\infty)$ and $R\geq 1$
		such that
		\begin{align}\label{prop:boundedness1}
			\|h\|_{B_{p,\infty}^{\frac{1}{p}}\cap L^\infty(I)}\leq A, \quad 2C\|X_0\|_{H_x^s}\leq R.
		\end{align}
		Let $0 < \delta < R$ be sufficiently small such that
		\begin{align} \label{prop:smallness1.1}
			8 CC(v_L) AR^{2-\theta} \delta^{\theta}\leq 1,
		\end{align}
		and $T$ small enough such that 
		\begin{align}  \label{prop:smallness1.2}
			4CC(v_L) \|v_L\|_{W^{l,0,l}(I)+L^2(I; W_x^{s,d})}(1+\|X_0\|_{H_x^s}) \leq \delta, \quad 2C\|e^{\imu t\Delta} X_0\|_{L^2(I; W_x^{s,2^*}) }^{\frac{2\theta}{3}} \|X_0\|_{H_x^s}^{1-\frac{2\theta}{3}}\leq\delta. 
		\end{align}  
		
		\paragraph{$(iii)$ \textbf{Noise-regularization regime III}} 
		Let $(s,l)$ lie in noise-regularization regime III. Let $p \in (2,\infty)$ close to $2$ and $0<\varepsilon\ll 1$ be as in Theorem~\ref{Prop-trilinear}~(iii). Take
		$\alpha \in (0, \frac12)$ and $\theta\in (0,1)$ such that estimates~\eqref{triesti-improve} and~\eqref{prop:triest2} in regime I hold for the pairs $(s, s - \frac{1}{2})$ and $(s, \kappa)$, where $\kappa = s - \frac12 (1-b)$. Take $A, R>0$ such that
		\begin{align*}
			\|h\|_{B_{2,\infty}^\alpha\cap L^\infty(I)} + \|h\|_{B_{p,\infty}^{\frac{1}{p}}\cap L^\infty(I)}\leq A,
			\qquad
			2CC(v_L) \|X_0\|_{H_x^s} \leq R. 
		\end{align*}
		Moreover, let $0 < \delta < R$ and $T$ be sufficiently small such
		\begin{align*}
			8CC(v_L) A R^{2-\theta} \delta^\theta \leq 1 \qquad \text{and} \qquad 2CC(v_L)(1+\|Y_0\|_{H_x^l})\|e^{\imu t \Delta }X_0\|_{L^2 (0, T; L_x^{2^*})}\leq \delta.
		\end{align*}
		
		\vspace*{4pt plus 2pt minus 2pt}%
		Then, under the above conditions, 
		system \eqref{eq:RegulationZak} has a unique solution
		$ (z, \rho) \in C([0, T], H_x^s\times H_x^l)$, where uniqueness holds in 
		$S^{s,a,0}(I)\times L^\infty(I; H_x^l), S^{s,0,0}(I)\times L^\infty(I; H_x^l)$, and $\Z_\epsilon^{s,0}(I)\times L^\infty(I; H_x^l)$ in the noise-regularization regimes I, II, and III, respectively. Moreover, the solution satisfies the estimates
		\begin{align*}
			\| z \|_{S^{s,a,0}(I)} \leq R, \qquad \| z \|_{S^{s,0,0}(I)} \leq R, \qquad  \| z \|_{\Z^{s,0}_\varepsilon(I)} \leq R, \ \|z\|_{\Z^{s,b}_\varepsilon(I)}<\infty
		\end{align*}
		and
		\begin{align*}
			\| z \|_{L^2(I; L_x^{2^*})} \leq \delta, \qquad \| z \|_{L^2(I; W_x^{s, 2^*})} \leq \delta, \qquad  \|z\|_{L^2(I; L_x^{2^*})} \leq \delta,
		\end{align*}
		in noise-regularization regimes I, II, and III, respectively.
		
	\end{proposition}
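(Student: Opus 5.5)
We solve the equivalent single equation \eqref{eq:RegNoiseSchrPot} by a contraction argument for the map
\begin{align*}
\Phi(z) := \cU_{v_L}[X_0] - \cI_{v_L}\big[\Re(\cJ_0(h|\nabla||z|^2))\, z\big],
\end{align*}
and recover $\rho = -\cJ_0(h|\nabla||z|^2)$ afterwards. The argument is the one in Subsection~\ref{Subsec-LWPNEP} (regimes I, III) and Subsection~\ref{Subsec-LWP-Endpt} (regime II). The only changes are that the noise terms are absent and the bilinear wave estimate is replaced by the trilinear estimates of Theorem~\ref{Prop-trilinear}. The constants $A$, $R$, $\delta$ then play the roles previously played by $W^*$ and the stopping times.

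\textbf{Regime I.} We work on
\begin{align*}
B := \{z \colon \|z\|_{S^{s,a,0}(I)} \le R,\ \|z\|_{L^2(I;L_x^{2^*})} \le \delta\}.
\end{align*}
By Lemma~\ref{lem:LinSchrPotSmallTimeSsa0}, the linear part is bounded by $C(v_L)\|X_0\|_{H^s_x} \le R/2$.

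For the nonlinear part we first apply the bilinear estimate (Lemma~\ref{lem:BilinearEstimates}), which gives
\begin{align*}
\|\Re(w)z\|_{N^{s,a,0}} \lesssim \|w\|_{W^{l,a,s-\frac12}} \|z\|_{S^{s,a,0}}.
\end{align*}
We then take $w = \cJ_0(h|\nabla||z|^2)$ and bound it by \eqref{prop:triest2} with $\beta = s - \tfrac12$. This yields the cubic bound $CC(v_L) A \delta^{2\theta} R^{3-2\theta}$, which \eqref{prop:smallness2} keeps below $R/2$.

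For the $L^2_tL^{2^*}_x$ component we use the representation \eqref{UvL-express} of the linear propagator together with Lemma~\ref{lem:LinSchrPotSmallTimeSsa0}. The linear term is then bounded by \eqref{prop:smallnessT1}. The nonlinear term is bounded by $\|\cJ_0(\cdots)\|_{L^\infty_t L^{d/2}_x}\, \|z\|_{L^2 L^{2^*}} \lesssim A R^{2-2\theta}\delta^{2\theta+1}$, which is at most $\delta/2$.

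The difference estimate follows by the same trilinear splitting used in \eqref{eq:EstFixedPointOpDiff}, giving a contraction constant $\lesssim AR^{2-\theta}\delta^\theta \le 1/8$.

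Continuity of $\rho$ in $H^l_x$ follows from the $W^{l,a,\beta}$-bound. The $b\neq 0$ case with $l>s-\tfrac12$ is one point to check: Theorem~\ref{Prop-trilinear}(i) with $a=0$ requires $s>l$, so regime I has $b=0$, and $W^{l,0,\beta}\hookrightarrow C_tH^l_x$ suffices.

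\textbf{Regime II.} We work in $S^{s,0,0}$ with smallness measured in $L^2_tW^{s,2^*}_x$, using \eqref{prop:triestiend} and the endpoint estimate \eqref{eq:BilinvuEndpoint}. The linear $L^2W^{s,2^*}$ term is controlled by the wave-smallness and free-flow conditions in \eqref{prop:smallness1.2}, as in \eqref{eq:EstFixedPointOp3EP}. The interpolation exponent $\tfrac{2\theta}{3}$ appearing there comes from balancing the powers produced by \eqref{prop:triestiend} combined with the half-power weak-norm estimate.

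\textbf{Regime III.} This is the main obstacle, because the smallness norm and the regularity $l$ do not fit a single estimate. The plan proceeds in three steps.

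\emph{Step 1.} Run the regime I argument for the pair $(s, s-\tfrac12)$, which lies in regime I, solving in $S^{s,0,0}$ with $L^2L^{2^*}$-smallness via \eqref{prop:triest2}. Then upgrade this solution to $\Z^{s,0}_\varepsilon$ using Lemma~\ref{lem:LinSchrPotSmallTimeLocSmo} in place of Lemma~\ref{lem:LinSchrPotSmallTimeSsa0}. Uniqueness in $\Z^{s,0}_\varepsilon$ follows from $\Z^{s,0}_\varepsilon\hookrightarrow S^{s,0,0}$.

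\emph{Step 2.} Show $\|z\|_{\Z^{s,b}_\varepsilon}<\infty$ by a persistence argument in the spirit of Lemma~\ref{le:PersisReg}. Since $(s,\kappa)$ lies in regime I, \eqref{triesti-improve} gives $\rho\in W^{\kappa,0,s-\frac12}$. The bilinear estimate then puts $\Re(v_L+\rho)z$ in $N^{s,0,b}$, and Lemma~\ref{lem:LinSchroedingerZnorm} gives $z\in\Z^{s,b}_\varepsilon$. The potential $v_L$ is handled through the decomposition into intervals from Lemma~\ref{lem:SmallnessWavePotential}.

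\emph{Step 3.} Apply \eqref{triesti-LocSmo} to obtain $\rho\in W^{l,0,s-\frac12}$, and hence $\rho\in C(I;H^l_x)$.
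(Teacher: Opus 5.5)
Your treatment of regimes I and III is sound. Regime I is the paper's argument. In regime III, you contract first in $S^{s,0,0}$ for the pair $(s,s-\frac12)$ and then upgrade a posteriori to $\Z^{s,0}_\varepsilon$, with the bound $\le R$ coming from the same smallness. You then reach $\Z^{s,b}_\varepsilon$ directly via Lemma~\ref{lem:LinSchroedingerZnorm}, treating $v_L$ as a forcing term. This is a harmless variant of the paper's direct contraction in $\Z^{s,0}_\varepsilon$, and no interval decomposition of $v_L$ is needed in your Step~2.

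Regime II, however, does not close as you describe it. On the ball $\{\|z\|_{S^{s,0,0}(I)}\le R,\ \|z\|_{L^2(I;W^{s,2^*}_x)}\le\delta\}$ you must show that the nonlinear Duhamel term has $L^2_tW^{s,2^*}_x$-norm at most $\delta/2$. The only available route passes through $S^{s,0,0}$. Combining Lemma~\ref{lem:LinSchrPotSmallTimeSsa0}, the non-endpoint bilinear estimate (Lemma~\ref{it:BilinearEstNonendpoint} with $a=0$, $\beta=l$) and \eqref{prop:triestiend} gives the bound $CC(v_L)A\,\delta^{2\theta}R^{3-2\theta}$. On regime II one has $2\theta<1$: the proof of \eqref{prop:triestiend} only yields $\theta\le\varepsilon_1<\frac{d-2}{2}-s\le\frac12$, and this tends to $0$ as $l\to\frac{d-2}{2}$. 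Hence the bound is not $\le\delta$ for small $\delta$ unless $AR^{3-2\theta}$ is small, so your scheme only handles small data. The extra half power from \eqref{eq:BilinvuEndpoint} cannot rescue this. That estimate is available only at the single point $(s,l)=(\frac{d-3}{2},\frac{d-4}{2})$ of the segment. Even there you would need $2\theta+\frac12>1$, which Theorem~\ref{Prop-trilinear}~(ii) does not guarantee.

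The missing idea is to make an \emph{interpolated} quantity small instead of the dispersive norm itself. The cubic bound equals $A\big(\|z\|_{L^2W^{s,2^*}}^{2\theta/3}\|z\|_{S^{s,0,0}}^{1-2\theta/3}\big)^3$, a full cube of such a quantity. This, not a ``half-power weak-norm estimate'', is the origin of the exponent $\frac{2\theta}{3}$ in \eqref{prop:smallness1.2}.

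The paper implements this as follows. It sets $\Lambda=\|e^{\imu t\Delta}X_0\|_{L^2(I;W^{s,2^*}_x)}\|X_0\|_{H^s_x}^{-1}$ and defines $\|z\|_{\Y_\gamma}=\inf_{z=z_1+z_2}\{\Lambda^\gamma\|z_1\|_{S^{s,0,0}}+\Lambda^{\gamma-1}\|z_1\|_{L^2W^{s,2^*}}+\|z_2\|_{S^{s,0,0}}\}$ and $\|z\|_{\Y}=\|z\|_{\Y_\theta}+\|z\|_{\Y_{2\theta/3}}$. Young's inequality $x^{1-\gamma}y^\gamma\le\Lambda^\gamma x+\Lambda^{\gamma-1}y$ then gives three facts:
\begin{enumerate}
\item the nonlinear term is bounded in $S^{s,0,0}$ by $CC(v_L)A\|z\|_{\Y}^3$;
\item the free flow satisfies $\|e^{\imu t\Delta}X_0\|_{\Y}\lesssim\|e^{\imu t\Delta}X_0\|_{L^2W^{s,2^*}}^{2\theta/3}\|X_0\|_{H^s}^{1-2\theta/3}$;
\item $\|z\|_{L^2W^{s,2^*}}\le\|z\|_{\Y}$.
\end{enumerate}
On the ball $\{\|z\|_{S^{s,0,0}}\le R,\ \|z\|_{\Y}\le\delta\}$ the nonlinear contribution to the small norm is $O(A\delta^3)\ll\delta$, and the contraction follows using $\Y_\theta$. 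The stated bound $\|z\|_{L^2(I;W^{s,2^*}_x)}\le\delta$ is then inherited from the $\Y$-bound.
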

	
	\begin{proof}[Proof of Propostion \ref{prop:WPShortTime}]  
		In the analysis below all the space-time norms are taken over $[0,T]\times \R^d$. 
		For the ease of notation, 
		we drop
		the dependence on $[0,T]\times \R^d$. 
		
		\vspace*{4pt plus 2pt minus 2pt}%
		$(i)$ We start with noise-regularization regime I.   
		We define the fixed point map 
		\begin{align}\label{eq:FixedPointShortTime}
			\Psi(X_0; z):= \cU_{v_L}[X_0] - \mathcal{I}_{v_L}[\Re(\cJ_0 (h|\nabla| |z|^2))z]
		\end{align}
		on the complete metric space
		\begin{align*}
			B_{R, \delta}:=\{ z\in S^{s,a,0}(I):  \|z\|_{S^{s,a,0}(I)}\leq R,\  \|z\|_{L^2(I; L_x^{2^*}) }\leq \delta \} 
		\end{align*}
		equipped with the metric induced by the norm $\| \cdot \|_{S^{s,a,0}(I)}$.
		
		Using Lemma~\ref{lem:LinSchrPotSmallTimeSsa0}, Lemma \ref{lem:BilinearEstimates} \ref{it:BilinearEstNonendpointbNzero}, and the trilinear estimate \eqref{prop:triest2}, we have
		\begin{align}\label{prop:SelfmapSNEP}
			\|\Psi(X_0; z)\|_{S^{s,a,0}} &\leq C(v_L) \|X_0\|_{H_x^s}+ CC(v_L) \| \cJ_0(h |\nabla||z|^2)\|_{W^{l, a, s-\frac12}} \|z\|_{S^{s,a,0}}  \notag \\
			& \leq C(v_L) \|X_0\|_{H_x^s}+ CC(v_L) \|h\|_{B^\alpha_{2,\infty}\cap L_t^\infty} \|z\|_{S^{s,a,0}}^{3-2\theta} \|z\|_{L_t^2 L_x^{2^*}}^{2\theta}.
		\end{align}

		To estimate the $L_t^2 L_x^{2^*}$-norm, we write
		\begin{align*}
			\cU_{v_L}[X_0](t)= e^{\imu t\Delta}X_0 + \mathcal{I}_{v_L}[\Re(v_L)e^{\imu(\cdot)\Delta}X_0].
		\end{align*}
		Since $l \geq \frac{d-4}{2}$, by Lemma~\ref{lem:LinSchrPotSmallTimeSsa0} and the Sobolev embedding $H_x^{\frac{d-4}{2}}\hookrightarrow L_x^{\frac{d}{2}}$, 
		we have
		\begin{align}
			\|\cU_{v_L}[X_0]\|_{L_t^2 L_x^{2^*}}&\leq \|e^{\imu t\Delta}X_0\|_{L_t^2 L_x^{2^*}} + C(v_L)\|\Re(v_L) e^{\imu t\Delta} X_0\|_{L_t^2 L_x^{2_*}}\notag\\
			&\leq \|e^{\imu t\Delta}X_0\|_{L_t^2 L_x^{2^*}} +C(v_L) \|v_L\|_{L_t^\infty L_x^{\frac{d}{2}}} \|e^{\imu t\Delta} X_0\|_{L_t^2 L_x^{2^*}}\notag\\
			&\leq CC(v_L) (1+ \|Y_0\|_{H_x^{l}}) \|e^{\imu t\Delta} X_0\|_{L_t^2 L_x^{2^*}}.\label{eq:HomoDisperEstiPertByFreeWave}
		\end{align}
		Moreover, using the Strichartz estimate and the H\"older inequality, we infer
		\begin{align}
			\|\mathcal{I}_{v_L}[\Re(\cJ_0(h |\nabla||z|^2))z] \|_{L_t^2 L_x^{2^*}}
			&\leq C(v_L) \|\Re(\cJ_0(h |\nabla||z|^2))z\|_{L_t^2 L_x^{2_*}}\notag\\
			&\leq C(v_L) \|\cJ_0(h |\nabla||z|^2)\|_{L_t^\infty L_x^{\frac{d}{2}}} \|z\|_{L_t^2 L_x^{2^*}} \notag\\
			&\leq CC(v_L) \|h\|_{B^\alpha_{2,\infty}\cap L_t^\infty} \|z\|_{S^{s,a,0}}^{2-2\theta} \|z\|_{L_t^2 L_x^{2^*}}^{1+2\theta}\label{eq:inhomoDisperEstiPertByFreeWave},
		\end{align}
		where we also used the embeddings  $W^{l,a,s-\frac12} \hookrightarrow L_t^\infty H_x^{l}\hookrightarrow L_t^\infty L_x^{\frac{d}{2}}$ and 
		the trilinear estimate \eqref{prop:triest2} in the last inequality.
		It follows that
		\begin{align}\label{prop:SelfmapLtLxNEP}
			\| \Psi( X_0; z)\|_{L_t^2 L_x^{2^*}}
			\leq CC(v_L) (1+ \|Y_0\|_{H_x^{l}}) \|e^{\imu t\Delta} X_0\|_{L_t^2 L_x^{2^*}}+ CC(v_L)\|h\|_{B^\alpha_{2,\infty}\cap L_t^\infty} \|z\|_{S^{s,a,0}}^{2-2\theta} \|z\|_{L_t^2 L_x^{2^*}}^{1+2\theta}.
		\end{align}
		
		Hence, combining estimates \eqref{prop:SelfmapSNEP} and \eqref{prop:SelfmapLtLxNEP}, we obtain that for any $z\in B_{R,\delta}$,
		\begin{align*}
			&\| \Psi(X_0 ; z)\|_{S^{s,a,0}}\leq \frac{R}{2}+ CC(v_L) A R^{3-2\theta} \delta^{2\theta} \leq R, \\
			& \| \Psi( X_0; z)\|_{L_t^2 L_x^{2^*}} \leq CC(v_L) (1+ \|Y_0\|_{H_x^l}) \|e^{\imu t\Delta} X_0\|_{L_t^2 L_x^{2^*}}+ CC(v_L) A R^{2-2\theta} \delta^{1+2\theta} \leq \delta, 
		\end{align*}
		which yields that $\Psi(X_0 ; \cdot)$ is a self-mapping on $B_{R,\delta}$.
		
		Regarding the contraction property of the map $\Psi$, 
		we apply Lemma~\ref{lem:LinSchrPotSmallTimeSsa0}, Lemma \ref{lem:BilinearEstimates} \ref{it:BilinearEstNonendpointbNzero}, and the trilinear estimate \eqref{prop:triest2} to obtain
		\begin{align}
			&\|\Psi(X_0; z)- \Psi(X_0; w)\|_{S^{s,a,0}}\notag\\
			&\leq  C(v_L) \|\Re(\cJ_0 [h|\nabla||z|^2])z - \Re(\cJ_0[h|\nabla||w|^2])w\|_{N^{s,a,0}}\notag\\
			&\leq CC(v_L) \|\Re(\cJ_0 [h|\nabla||z|^2])\|_{W^{l,a, s-\frac{1}{2}}}  \|z-w\|_{S^{s,a,0}} \notag\\
			&\qquad + CC(v_L) \| \Re(\cJ_0 [h|\nabla|( (w-z)\overline{w} + z(\overline{w}-\overline{z}))]) \|_{W^{l,a,s-\frac{1}{2}}} \| w \|_{S^{s,a,0}}\notag\\
			&\leq  CC(v_L)  \|h\|_{B^\alpha_{2,\infty}\cap L_t^\infty}\|z\|_{L_t^2 L_x^{2^*}}^{2\theta} \|z\|_{S^{s,a,0}}^{2-2\theta} \|z-w\|_{S^{s,a,0}}\notag\\
			&\qquad+ CC(v_L)  \|h\|_{B^\alpha_{2,\infty}\cap L_t^\infty}\|z-w\|_{L_t^2 L_x^{2^*}}^{\theta} \|w\|_{L_t^2 L_x^{2^*}}^{\theta} \|z-w\|_{S^{s,a,0}}^{1-\theta} \|w\|_{S^{s,a,0}}^{1-\theta} \|w\|_{S^{s,a,0}}\notag\\
			&\qquad+ CC(v_L)  \|h\|_{B^\alpha_{2,\infty}\cap L_t^\infty} \|z\|_{L_t^2 L_x^{2^*}}^{\theta} \|w-z\|_{L_t^2 L_x^{2^*}}^{\theta} \|z\|_{S^{s,a,0}}^{1-\theta} \|z-w\|_{S^{s,a,0}}^{1-\theta} \|w\|_{S^{s,a,0}}.\label{prop:NRGContra1}
		\end{align}
		Then, the embedding $S^{s,a,0}\hookrightarrow L_t^2 L_x^{2^*}$ and the Young inequality   $x^{1-\theta}y\lesssim_{\theta} x^{2-\theta}+ y^{2-\theta}$ yield that
		\begin{align}
			\text{R.H.S. of } \eqref{prop:NRGContra1} &\leq CC(v_L) \|h\|_{ B^\alpha_{2,\infty}\cap L_t^\infty}\|z-w\|_{S^{s,a,0}} (\|z\|_{L_t^2 L_x^{2^*}}^\theta \|z\|_{S^{s,a,0}}^{2-\theta} + \|w\|_{L_t^2 L_x^{2^*}}^\theta \|w\|_{S^{s,a,0}}^{2-\theta}\notag\\
			&\hspace{16em}+\|z\|_{L_t^2 L_x^{2^*}}^\theta \|z\|_{S^{s,a,0}}^{1-\theta}\|w\|_{S^{s,a,0}})\notag\\
			&\leq CC(v_L) \|h\|_{ B^\alpha_{2,\infty}\cap L_t^\infty}\|z-w\|_{S^{s,a,0}}(\|z\|_{L_t^2 L_x^{2^*}}^\theta + \|w\|_{L_t^2 L_x^{2^*}}^\theta)(\|z\|_{S^{s,a,0}}^{2-\theta} + \|w\|_{S^{s,a,0}}^{2-\theta}).\label{prop:NRGContra2}
		\end{align}
		Hence, from the estimates \eqref{prop:NRGContra1}, \eqref{prop:NRGContra2} and the smallness condition \eqref{prop:smallness2} 
		we derive 
		\begin{align*}
			\|\Psi(X_0; z)- \Psi(X_0; w)\|_{S^{s,a,0}}
			\leq CC(v_L)A  2 \delta^\theta  2 R^{2 - \theta} \|z-w\|_{S^{s,a,0}} \leq \frac12 \|z-w\|_{S^{s,a,0}}.
		\end{align*}
		This yields that $\Psi(X_0,\cdot)$ is a contraction on $B_{R, \delta}$. 
		Hence,  there exists a  unique 
		fixed point $z\in S^{s,a,0}(I)$ 
		such that 
		$z= \Psi(X_0; z)$, 
		which implies that 
		$z$ solves  \eqref{eq:RegNoiseSchrPot}. The uniqueness in $S^{s,a,0}(I)$ follows by standard methods.
		
		Regarding the wave component 
		$\rho := - \cJ_0[h|\nabla||z|^2]$, the trilinear estimate \eqref{triesti-improve} implies that
		\begin{align*}
			\|\rho\|_{W^{l,a,s-\frac12}} \lesssim  \|h\|_{B^\alpha_{2,\infty}\cap L_t^\infty} \|z\|_{S^{s,a,0}}^2<\infty,
		\end{align*}
		which yields that $\rho \in C([0,T];H_x^{l})$. Uniqueness of the wave component simply follows from the uniqueness of the Schr{\"o}dinger component.
		
		This concludes the proof of  Propostion \ref{prop:WPShortTime} in noise-regularization regime I.
		
		\smallskip
		$(ii)$ In noise-regularization regime II, we cannot recover a full power of the dispersive norm. Instead, we employ an interpolation type intermediate norm which was also used in the proof of Theorem~7.6 in (the corrigendum of) \cite{CHN23}. We first define the constant $\Lambda:= \|e^{\imu t\Delta}X_0\|_{L^2(I; W_x^{s,2^*})} \|X_0\|_{H_x^s}^{-1}$ if $X_0\neq 0$ and $\Lambda:=1$ otherwise. For any $\gamma>0$, we then define the weighted norm
		\begin{align*}
			\|z\|_{\Y_{\gamma}}:= \inf_{z=z_1+z_2} \{\Lambda^\gamma\|z_1\|_{S^{s,0,0}(I)} + \Lambda^{\gamma-1} \|z_1\|_{L^2(I; W_x^{s,2^*})} + \|z_2\|_{S^{s,0,0}(I)}\},
		\end{align*}
		and 
		\begin{align*}
			\|z\|_{\Y}:= \|z\|_{\Y_{\theta}} + \|z\|_{\Y_{\frac{2\theta}{3}}}.
		\end{align*}
		We note that $(1 + \Lambda^{-\gamma})^{-1} \| z \|_{S^{s,0,0}(I)} \leq \| z \|_{\Y_\gamma} \leq \| z \|_{S^{s,0,0}(I)}$ so that $\| \cdot \|_{\Y}$ defines an equivalent norm to $\|\cdot\|_{S^{s,0,0}(I)}$. Moreover, an application of Young's inequality yields
		\begin{equation}
		\label{eq:InterpolNormYoung}
			x^{1-\gamma} y^\gamma \leq \Lambda^\gamma x + \Lambda^{\gamma - 1} y
		\end{equation}
		for any $x,y \geq 0$ and $\gamma \in (0,1)$.
		By the embedding $S^{s,0,0}\hookrightarrow L_t^2 W_x^{s,2^*}$, we see that $\| \cdot \|_{\Y_\gamma}$ also controls the dispersive norm $\| \cdot \|_{L^2(I; W_x^{s,2^*})}$. Collecting the estimates we will employ below, we have
		\begin{align}\label{prop:YnormEsti}
			 \|z\|_{L^2(I; W_x^{s,2^*})} \leq \|z\|_{\Y} \leq 2  \|z\|_{S^{s,0,0}(I)}.
		\end{align}  
		
		We rewrite the fixed point operator $\Psi$ as
		\begin{align*}
			\Psi(X_0; z) = e^{\imu t \Delta} X_0+ \mathcal{I}_{v_L} [\Re(v_L)e^{\imu t\Delta }X_0] - \mathcal{I}_{v_L}[\Re(\cJ_0(h|\nabla||z|^2))z]
		\end{align*}
		by expanding the linear propagator $\cU_{v_L}$ and then consider $\Psi$ as a map
		on the closed ball
		\begin{align*}
			B_{R,\delta}:=\{z\in S^{s,0,0}(I): \|z\|_{S^{s,0,0}(I)}\leq R,\  \|z\|_{\Y}\leq \delta\}
		\end{align*}
		equipped with the metric induced by $\| \cdot \|_{S^{s,0,0}(I)}$.
		For the ease of notation, we omit the interval $I$ in the following.

		Applying Lemma~\ref{lem:LinSchrPotSmallTimeSsa0}, the bilinear estimate in Lemma \ref{it:BilinearEstNonendpoint} with $\beta = l$, and the trilinear estimate \eqref{prop:triestiend}, we obtain
		\begin{align}
			\|\mathcal{I}_{v_L}[\Re(\cJ_0(h|\nabla||z|^2))z]\|_{S^{s,0,0}} &\leq CC(v_L)\|h\|_{B_{p,\infty}^{\frac{1}{p}}\cap L_t^\infty} \|z\|_{S^{s,0,0}}^{3-2\theta} \|z\|_{L_t^2W_x^{s,2^*}}^{2\theta},\label{prop:TriEsti}\\
			\|\cI_{v_L}[\Re(v_L) e^{\imu t\Delta}X_0]\|_{S^{s,0,0}} &	\leq CC(v_L)\|v_L\|_{W^{l,0,l}+L_t^2 W_x^{s,d}} \|X_0\|_{H_x^s}.\label{prop:BilEsti}
		\end{align}
		Applying~\eqref{eq:InterpolNormYoung} with $\gamma = \frac{2\theta}{3}$, we deduce from~\eqref{prop:TriEsti} that
		\begin{align}\label{prop:TriEsti2}
			\|\mathcal{I}_{v_L}[\Re(\cJ_0(h|\nabla||z|^2))z]\|_{S^{s,0,0}} \leq CC(v_L)\|h\|_{B_{p,\infty}^{\frac{1}{p}}\cap L_t^\infty} \|z\|_{\Y_{\frac{2\theta}{3}}}^3\leq CC(v_L)\|h\|_{B_{p,\infty}^{\frac{1}{p}}\cap L_t^\infty} \|z\|_{\Y}^3.
		\end{align}
		Hence, combining Strichartz estimates for the linear term with~\eqref{prop:TriEsti} and~\eqref{prop:BilEsti}, we infer
		\begin{align*}
			\| \Psi(X_0; z)\|_{S^{s,0,0}} 
			&\leq C\|X_0\|_{H_x^s} + CC(v_L) (\|v_L\|_{W^{l,0,l}+L_t^2W_x^{s,d}} \|X_0\|_{H_x^s} + \|h\|_{B_{p,\infty}^{\frac{1}{p}}\cap L_t^\infty} \|z\|^3_{\Y})\\
			&\leq \frac{R}{2}+ \frac{R}{4} + \frac{R}{8} \leq R, 
		\end{align*}
		where we used conditions \eqref{prop:boundedness1}, \eqref{prop:smallness1.1}, and \eqref{prop:smallness1.2}.
		
		Concerning the $\Y$-norm, we first observe that the definition of the norm and $\Lambda$ combined with Lemma~\ref{lem:LinEstimates} imply that
		\begin{align*}
			\|e^{\imu t\Delta}X_0\|_{\Y}&\leq C\|e^{\imu t\Delta}X_0\|^{\frac{2\theta}{3}}_{L_t^2 W_x^{s,2^*}}\|X_0\|^{1-\frac{2\theta}{3}}_{H_x^s}.
		\end{align*}
		Here we also used that Strichartz estimates allow us to bound the contribution from $\gamma = \theta$ by the one from $\gamma = \frac{2\theta}{3}$.
		
		Combining the previous estimate with \eqref{prop:BilEsti}, \eqref{prop:TriEsti2}, and the embedding $S^{s,0,0}\hookrightarrow \Y$, we get 
		\begin{align*}
			\| \Psi(X_0; z)\|_{\Y} &\leq C\|e^{\imu t\Delta}X_0\|^{\frac{2\theta}{3}}_{L_t^2 W_x^{s,2^*}}\|X_0\|^{1-\frac{2\theta}{3}}_{H_x^s} +CC(v_L)(\|v_L\|_{W^{l,0,l}+L_t^2 W_x^{s,d}} \|X_0\|_{H_x^s} +\|h\|_{B_{p,\infty}^{\frac{1}{p}}\cap L_t^\infty} \|z\|_{\Y}^3)\\
			&\leq \frac{\delta}{2}+ \frac{\delta}{4}+ \frac{\delta}{8} =\delta,
		\end{align*}
		where we used the conditions \eqref{prop:boundedness1}, \eqref{prop:smallness1.1}, and \eqref{prop:smallness1.2}. We conclude that $\Psi$ is a self-map on $B_{R,\delta}$.
	 
		Moreover, using  Lemma~\ref{lem:LinSchrPotSmallTimeSsa0}, the bilinear estimate in Lemma~\ref{lem:BilinearEstimates} \ref{it:BilinearEstNonendpointbNzero} with $\beta = l$, the trilinear estimate \eqref{prop:triestiend}, and the embedding $S^{s,0,0}\hookrightarrow L_t^2W_x^{s,2^*}$, 
		we get 
		\begin{align*}
			&\| \Psi(X_0; z_1)- \Psi(X_0; z_2) \|_{S^{s,0,0}} \\
			&\leq C(v_L) \|\Re(\cJ_0(h|\nabla||z_1|^2)) (z_1-z_2) + \cJ_0(h|\nabla|(\bar{z}_1(z_1-z_2)+z_2(\bar{z}_1-\bar{z}_2)))z_2\|_{N^{s,0,0}}\\
			&\leq C C(v_L) \|\Re(\cJ_0(h|\nabla||z_1|^2)) \|_{W^{l,0,l}} \| z_1-z_2 \|_{S^{s,0,0}} + \|\cJ_0(h|\nabla|(\bar{z}_1(z_1-z_2)+z_2(\bar{z}_1-\bar{z}_2))) \|_{W^{l,0,l}} \| z_2\|_{S^{s,0,0}}\\
			&\leq CC(v_L) \|h\|_{B^{\frac{1}{p}}_{p,\infty} \cap L^\infty_t} \|z_1-z_2\|_{S^{s,0,0}}(\|z_1\|^{2-2\theta}_{S^{s,0,0}}\|z_1\|^{2\theta}_{L_t^2 W_x^{s,2^*}}   + \|z_1\|^{1-\theta}_{S^{s,0,0}}\|z_2\|_{S^{s,0,0}}\|z_1\|_{L_t^2 W_x^{s,2^*}}^{\theta} \\
			&\hspace{18em} + \|z_2\|^{2-\theta}_{S^{s,0,0}}\|z_2\|^{\theta}_{L_t^2 W_x^{s,2^*}})\\
			&\leq CC(v_L) \|h\|_{B^{\frac{1}{p}}_{p,\infty} \cap L^\infty_t} \|z_1-z_2\|_{S^{s,0,0}} (\|z_1\|^{\theta}_{L_t^2 W_x^{s,2^*}}\|z_1\|^{1-\theta}_{S^{s,0,0}} + \|z_2\|^{\theta}_{L_t^2 W_x^{s,2^*}} \|z_2\|^{1-\theta}_{S^{s,0,0}}) (\|z_1\|_{S^{s,0,0}} + \|z_2\|_{S^{s,0,0}}) .
		\end{align*}
		Applying~\eqref{eq:InterpolNormYoung} with $\gamma = \theta$ and the embedding $\Y_{\theta}\hookrightarrow \Y$, we thus arrive at
		\begin{align*}
			\| \Psi(X_0; z_1)- \Psi(X_0; z_2) \|_{S^{s,0,0}} &\leq CC(v_L) \|h\|_{B^{\frac{1}{p}}_{p,\infty} \cap L^\infty_t} \|z_1-z_2\|_{S^{s,0,0}} (\|z_1\|_{S^{s,0,0}} + \|z_2\|_{S^{s,0,0}}) (\|z_1\|_{\Y} + \|z_2\|_{\Y})\\
			&\leq \frac{1}{2}\|z_1-z_2\|_{S^{s,0,0}},
		\end{align*}
		where we used the conditions \eqref{prop:boundedness1} and \eqref{prop:smallness1.1}. Hence, $\Psi(X_0;\cdot)$ is a contractive self-mapping on $B_{R,\delta}$ and there exists a unique solution $z\in B_{R,\delta}$ to~\eqref{eq:RegNoiseSchrPot}. Uniqueness in $S^{s,0,0}(I)$ follows again by standard methods. Moreover, by estimate \eqref{prop:YnormEsti} we have  
		\begin{align*}
			\|z\|_{L_t^2 W_x^{s,2^*}}\leq \|z\|_{\Y}\leq\delta.
		\end{align*}
		
		Regarding the wave component,  
		using the trilinear estimate \eqref{prop:triestiend}  again, 
		we obtain
		\begin{align*}
			\|\rho\|_{L_t^\infty H_x^l}\leq \|\rho\|_{W^{l,0,l}}\leq CC(v_L) A \|z\|^{2-2\theta}_{S^{s,0,0}} \|z\|_{L_t^2 W_x^{s,2^*}}^{2\theta}<\infty.
		\end{align*}
		Uniqueness of the wave component follows from the uniqueness of the Schr{\"o}dinger component.
		Hence, the Proposition is proved in noise-regularization regime II.

		$(iii)$ Regarding noise-regularization regime III,  
		we first note that
		$a=0$ 
		in this regime due to \eqref{con:ab}. 
		Consider the fixed point operator $\Psi$ from  \eqref{eq:FixedPointShortTime}, 
		but now on the complete metric space
		\begin{align*}
			{B}_{R, \delta} :=\{ z\in \Z_{\varepsilon}^{s,0}(I):  \|z\|_{\Z_{\varepsilon}^{s,0}(I)}\leq R,\  \|z\|_{L^2(I; L_x^{2^*}) }\leq \delta \} 
		\end{align*}
		equipped with the metric induced by the norm $\| \cdot \|_{\Z_{\varepsilon}^{s,b}(I)}$.  
		
		We first use Lemma~\ref{lem:LinSchrPotSmallTimeLocSmo}, Lemma~\ref{lem:BilinearEstimates} \ref{it:BilinearEstNonendpointbNzero}, the trilinear estimate~\eqref{prop:triest2} with pair $(s, s-\frac12)$ in regime I, and the embedding $\Z_\varepsilon^{s,0}\hookrightarrow S^{s,0,0}$ 
		to infer 
		\begin{align}\label{prop:SelfmapSNEPLoc}
			\|\Psi(X_0; z)\|_{\Z_\varepsilon^{s,0}} &\leq C(v_L) \|X_0\|_{H_x^s}+  C(v_L) \| \Re(\cJ_0(h|\nabla||z|^2))z\|_{N^{s, 0, 0}} \notag \\
			&\leq C(v_L) \|X_0\|_{H_x^s}+ C C(v_L) \| \cJ_0(h|\nabla||z|^2)\|_{W^{s-\frac{1}{2}, 0, s-\frac{1}{2}}} \|z\|_{\Z_\varepsilon^{s,0}}  \notag \\
			& \leq C(v_L) \|X_0\|_{H_x^s}+ C C(v_L) \|h\|_{B^\alpha_{2,\infty}\cap L_t^\infty} \|z\|_{\Z_\varepsilon^{s,0}}^{3-2\theta} \|z\|_{L_t^2 L_x^{2^*}}^{2\theta}.
		\end{align}
		Regarding the $L_t^2 L_x^{2^*}$-norm, we proceed as in \eqref{eq:HomoDisperEstiPertByFreeWave} and \eqref{eq:inhomoDisperEstiPertByFreeWave} to get
		\begin{align}
			\|\cU_{v_L}[X_0]\|_{L_t^2 L_x^{2^*}}&\leq C C(v_L) (1+ \|Y_0\|_{H_x^{s-\frac{1}{2}}}) \|e^{\imu t\Delta} X_0\|_{L_t^2 L_x^{2^*}},\label{eq:HomoDisperEstiPertByFreeWaveLoc}\\
			\|\mathcal{I}_{v_L}[\Re(\cJ_0(h |\nabla||z|^2))z] \|_{L_t^2 L_x^{2^*}}&\leq C C(v_L) \|h\|_{B^\alpha_{2,\infty}\cap L_t^\infty} \|z\|_{S^{s,0,0}}^{2-2\theta} \|z\|_{L_t^2 L_x^{2^*}}^{1+2\theta}\label{eq:inhomoDisperEstiPertByFreeWaveLoc}, 
		\end{align}
		which together with the embedding $\Z_\varepsilon^{s,0}\hookrightarrow S^{s,0,0}$ 
		yields that 
		\begin{align}\label{prop:SelfmapLtLxNEPLoc}
			\| \Psi( X_0; z)\|_{L_t^2 L_x^{2^*}}
			\leq C  C(v_L)(1+ \|Y_0\|_{H_x^{s-\frac{1}{2}}}) \|e^{\imu t\Delta} X_0\|_{L_t^2 L_x^{2^*}}+ C C(v_L)\|h\|_{B^\alpha_{2,\infty}\cap L_t^\infty} \|z\|_{\Z_\varepsilon^{s,0}}^{2-2\theta} \|z\|_{L_t^2 L_x^{2^*}}^{1+2\theta}.
		\end{align} 
		Hence, combining estimates \eqref{prop:SelfmapSNEPLoc} and \eqref{prop:SelfmapLtLxNEPLoc},
		we obtain that for any $z\in B_{R,\delta}$
		\begin{align*}
			&\| \Psi(X_0 ; z)\|_{\Z_\varepsilon^{s,0}}\leq \frac{R}{2}+ C C(v_L) A R^{3-2\theta} \delta^{2\theta} \leq R, \\
			& \| \Psi( X_0; z)\|_{L_t^2 L_x^{2^*}} \leq C C(v_L) (1+ \|Y_0\|_{H_x^l}) \|e^{\imu t\Delta} X_0\|_{L_t^2 L_x^{2^*}}+ C C(v_L) A R^{2-2\theta} \delta^{1+2\theta} \leq \delta,
		\end{align*}
		which yields that $\Psi(X_0 ; \cdot)$ is a self-mapping on $B_{R,\delta}$.
		
		Furthermore, 
		treating the difference as in the previous regimes and combining Lemma~\ref{lem:LinSchrPotSmallTimeLocSmo}, Lemma~\ref{lem:BilinearEstimates}~\ref{it:BilinearEstNonendpointbNzero}, the trilinear estimate~\eqref{prop:triest2} with $(s,s-\frac12)$,
		and the embedding $\Z_\varepsilon^{s,0}\hookrightarrow S^{s,0,0}$ as above, we obtain
		\begin{align*}
			\|\Psi(X_0; z)- \Psi(X_0; w)\|_{\Z_\varepsilon^{s,0}}
			&\leq  C(v_L)  \|h\|_{B^\alpha_{2,\infty}\cap L_t^\infty}\|z\|_{L_t^2 L_x^{2^*}}^{2\theta} \|z\|_{\Z_\varepsilon^{s,0}}^{2-2\theta} \|z-w\|_{\Z_\varepsilon^{s,0}}\notag\\
			&\qquad + C(v_L)  \|h\|_{B^\alpha_{2,\infty}\cap L_t^\infty}\|z-w\|_{L_t^2 L_x^{2^*}}^{\theta} \|w\|_{L_t^2 L_x^{2^*}}^{\theta} \|z-w\|_{\Z_\varepsilon^{s,0}}^{1-\theta} \|w\|_{\Z_\varepsilon^{s,0}}^{1-\theta} \|w\|_{\Z_\varepsilon^{s,0}}\notag\\
			&\qquad + C(v_L)  \|h\|_{B^\alpha_{2,\infty}\cap L_t^\infty} \|z\|_{L_t^2 L_x^{2^*}}^{\theta} \|w-z\|_{L_t^2 L_x^{2^*}}^{\theta} \|z\|_{\Z_\varepsilon^{s,0}}^{1-\theta} \|z-w\|_{\Z_\varepsilon^{s,0}}^{1-\theta} \|w\|_{\Z_\varepsilon^{s,0}},
		\end{align*}
		and hence, by the smallness condition \eqref{prop:smallness2} and the argument in~\eqref{prop:NRGContra2},  
		\begin{align*}
			\|\Psi(X_0; z)- \Psi(X_0; w)\|_{\Z_\varepsilon^{s,0}} &\leq CC(v_L) \|h\|_{ B^\alpha_{2,\infty}\cap L_t^\infty}\|z-w\|_{\Z_\varepsilon^{s,0}}(\|z\|_{L_t^2 L_x^{2^*}}^\theta + \|w\|_{L_t^2 L_x^{2^*}}^\theta)(\|z\|_{\Z_\varepsilon^{s,0}}^{2-\theta} + \|w\|_{\Z_\varepsilon^{s,0}}^{2-\theta})\notag\\
			&\leq \frac12 \|z-w\|_{\Z_\varepsilon^{s,0}}.
		\end{align*} 
		Hence,  $\Psi(z;X_0)$ is a contractive self-map on $B_{R, \delta}$, 
		which yields a unique solution $z\in \Z_\varepsilon^{s,0}$ 
		of~\eqref{eq:RegNoiseSchrPot}.

		For the wave component, 
		using the trilinear estimate \eqref{triesti-improve} for the pair $(s, s-\frac12)$, 
		we get 
		\begin{align*}
			\|\rho \|_{W^{s-\frac12,0,s-\frac12}} \lesssim C \|h\|_{B^\alpha_{2,\infty}\cap L_t^\infty} \|z\|_{\Z_\varepsilon^{s,0}}^2<\infty, 
		\end{align*}
		implying that $\rho \in {W^{s-\frac12,0,s-\frac12}}$.
		
		We are left to improve the wave regularity to $H_x^l$. 
		
		For this purpose, 
		we first show that 
		$z\in \Z_\varepsilon^{s,b}$. Let $\kappa = s-\frac{1}{2} (1-b)$ 
		and rewrite system \eqref{eq:RegulationZak} as
		\begin{align*}
			z(t)= e^{\imu t\Delta}X_0 + \mathcal{I}_0[\Re(e^{\imu t|\nabla|}Y_0)z] -\mathcal{I}_0[\Re(\cJ_0(h|\nabla||z|^2))z].
		\end{align*}
		Then, 
		applying Lemma \ref{lem:LinFlowAdaptedSpaces}, the bilinear estimate in Lemma \ref{lem:BilinearEstimates} \ref{it:BilinearEstNonendpointbNzero}, the trilinear estimate \eqref{triesti-improve} with pair $(s,\kappa)$ in regime I, and the embedding $\Z_\varepsilon^{s,0}\hookrightarrow S^{s,0,0}$, we get
		\begin{align}
			\| \mathcal{I}_0(\Re(\cJ_0[h|\nabla||z|^2])z)\|_{S^{s,0,b}}
			\lesssim\|\cJ_0(h|\nabla||z|^2)\|_{W^{\kappa,0,s-\frac12}} \|z\|_{S^{s,0,0}} \lesssim \|h\|_{B^\alpha_{2,\infty}\cap L_t^\infty} \|z\|_{\Z_\varepsilon^{s,0}}^3. \label{prop:DuhamelNonWavez}
		\end{align}
		Similarly, since $l\geq \kappa$, we have
		\begin{align}
			\|\cI_0[\Re(e^{\imu t|\nabla|}Y_0)z]\|_{S^{s,0,b}}&\lesssim \|\Re(e^{\imu t|\nabla|}Y_0)z\|_{N^{s,0,b}}\notag\\
			&\lesssim \|e^{\imu t|\nabla|}Y_0\|_{W^{\kappa,0,s-\frac12}} \|z\|_{S^{s,0,0}}\lesssim \|Y_0\|_{H_x^l} \|z\|_{\Z_\varepsilon^{s,0}}.\label{prop:DuhamelFreeWavez}
		\end{align}
		Hence, Lemma \ref{lem:LinEstimates},  estimates \eqref{prop:DuhamelNonWavez} and \eqref{prop:DuhamelFreeWavez} imply that
		\begin{align}\label{prop:ZInSszerob}
			\|z\|_{S^{s,0,b}}\lesssim \|X_0\|_{H_x^s} + \|Y_0\|_{H_x^l} \|z\|_{\Z_\varepsilon^{s,0}} + \|h\|_{B^\alpha_{2,\infty}\cap L_t^\infty} \|z\|_{\Z_\varepsilon^{s,0}}^3<\infty,
		\end{align}
		which yields that $z\in S^{s,0,b}$. Since we already showed $z \in \Z_\varepsilon^{s,0}$, 
		we obtain that 
		$z\in \Z_\varepsilon^{s,b}$ by the definition \eqref{def:Z-norm}, 
		as claimed. 
		
		Finally, we apply the trilinear estimate \eqref{triesti-LocSmo} to obtain
		\begin{align*}
			\|\rho\|_{L_t^\infty H_x^l} \leq \|\cJ_0[h|\nabla||z|^2]\|_{W^{l,0,s-\frac12}} \lesssim   \|h\|_{B^{\frac{1}{p}}_{p,\infty}\cap L_t^\infty}\|z\|^2_{\Z_\varepsilon^{s,b}} <\infty,
		\end{align*}
		which yields that $\rho\in L_t^\infty H_x^l$. The uniqueness of the wave component is again a consequence of the uniqueness of $z$.
		
		The proof of Proposition \ref{prop:WPShortTime} is complete. 
	\end{proof}

	\subsection{Large-time regime}
	In this subsection, for any given $0<T<\infty$, we consider the system in the large time regime $I=[T,\infty)$,
	\begin{equation}\label{eq:RegulationZakGWP}
		\left\{\aligned
		(\imu \partial_t +\Delta -\Re(v_L))z &= \Re(V) z + \Re({\rho}) z, &z(T)&=z_T,\\
		(\imu \partial_t +|\nabla|)\rho &=  -h|\nabla||z|^2, &\rho(T)&=0,
		\endaligned
		\right.
	\end{equation}
	where $z_T\in H_x^s$, $V$ is a potential, and $v_L= e^{\imu t|\nabla|} Y_0$ is the same free wave potential as in the short-time regime. The reason to introduce the potential $V$ here is that for solving~\eqref{eq:RegulationZak} on the time interval $[T,\infty)$ with initial data $(z(T), v(T))$, the right free wave potential to extract is $v_{L,T} = e^{\imu (t-T)|\nabla|} v(T)$. If one works with $v_{L,T}$, however, the constant $C(v_{L,T})$ resulting from applications of Lemma~\ref{lem:LinSchrPotSmallTimeSsa0} and Lemma~\ref{lem:LinSchrPotSmallTimeLocSmo}, depend on $T$ and we do not have any uniform control of these constants in $T$. We overcome this problem by keeping the wave potential $v_L$ fixed, absorbing the error in the potential $V$, and controlling $V$ in suitable norms.
	
	We will show global well-posedness and scattering for system \eqref{eq:RegulationZakGWP} provided the noise term $h$ and the potential $V$ are sufficiently small. We recall that~\eqref{eq:RegulationZakGWP} is again equivalent to~\eqref{eq:RegNoiseSchrPot} with adjusted initial condition, i.e.
	\begin{equation}
		\label{eq:RegNoiseSchrPotLongTime}
		(\imu \partial_t +\Delta -\Re(v_L))z = \Re(V) z -\Re(\cJ_0(h|\nabla| |z|^2)) z, \qquad z(T) = z_T.
	\end{equation}
	
	We also recall that the initial time $t_0$ is implicit in our notation for the Duhamel operators $\cU_{v_L}$, $\cI_{v_L}$, $\cJ_0$, etc. In what follows, we take $t_0 = T$. We also recall that the assumptions of Theorem~\ref{Prop-trilinear} are satisfied in the corresponding noise-regularization regimes, see the comment in front of Proposition~\ref{prop:WPShortTime}.
	
	\vspace*{4pt plus 2pt minus 2pt}%
	
	\begin{proposition}[Well-posedness in  large-time regime and scattering]\label{prop:WPLargeTime}
		Let $(s,l)$ be as in Theorem \ref{thm:RegNoise}, $(a,b)$ as in \eqref{con:ab},  $0<T<\infty$, and $I = [T,\infty)$. Let $C\geq 1$ denote a large universal constant and $C(v_L)$ the constant from Lemma~\ref{lem:LinSchrPotSmallTimeSsa0} and Lemma~\ref{lem:LinSchrPotSmallTimeLocSmo} depending on the free wave potential $v_L$.
		
		Take $E,\delta_0>0$ such that
		\begin{align}\label{prop:smallness3}
			2CC(v_L) \|z_T\|_{H_x^s}\leq E, \quad
			16CC(v_L)E^2 \delta_0 \leq 1. 
		\end{align}
		We assume the following conditions to hold, 
		where $C>0$ is a large universal constant: 
		
		\vspace*{4pt plus 2pt minus 2pt}%
		\paragraph{$(i)$ \textbf{Noise-regularization regime I}} 
		Let $(s,l)$ lie in noise-regularization regime I of Theorem \ref{thm:RegNoise}. Let $\alpha \in (0,\frac12)$ be as in Theorem~\ref{Prop-trilinear}~(i).
		Assume that $h$ and $V$ satisfy 
		\begin{align*}
			\|h\|_{B^\alpha_{2,\infty}\cap L^\infty(I)}\leq \delta_0, \qquad 
			4 C C(v_L) \|V\|_{W^{l,a,s - \frac12}(I)} \leq 1.
		\end{align*}
		where $p>2$ is as in \eqref{prop:triestiend}. 
		
		\vspace*{4pt plus 2pt minus 2pt}%
		\paragraph{$(ii)$ \textbf{Noise-regularization regime II}} 
		Let $(s,l)$ lie in noise-regularization regime II of Theorem \ref{thm:RegNoise}. Let $p \in (2,\infty)$ be as in Theorem~\ref{Prop-trilinear}~(ii).
		Assume that $h$ and $V$ satisfy
		\begin{align*}
			\|h\|_{B^{\frac{1}{p}}_{p,\infty}\cap L^\infty(I)}\leq \delta_0, \qquad 4 C C(v_L) \|V\|_{W^{l,0,l}(I)} \leq 1.
		\end{align*}
		
		\vspace*{4pt plus 2pt minus 2pt}%
		\paragraph{$(iii)$ \textbf{Noise-regularization regime III}} 
		Let $(s,l)$ lie in noise-regularization regime III of Theorem \ref{thm:RegNoise} and $s\leq l$. Let $p>2$ close to $2$ and $0<\varepsilon\ll 1$ be as in Theorem~\ref{Prop-trilinear}~(iii). Take
		$\alpha \in (0, \frac12)$ such that estimate~\eqref{triesti-improve} in regime I holds for the pairs $(s, s - \frac{1}{2})$ and $(s, \kappa)$, where $\kappa = s - \frac12 (1-b)$.
		
		Assume that 
		$h$ and $V$ satisfy the following: There are 
		$A, C'>0$ and  $T' \in [T,\infty)$ large enough such that
		\begin{align}\label{prop:exponentialDeacy}
			\|h\|_{B^\alpha_{2,\infty}\cap L^\infty(I)} \leq \delta_0,\quad \|h\|_{B^{\frac{1}{p}}_{p,\infty}\cap L^\infty([t_1,t_2])}\lesssim e^{-C't_1},\quad \|h\|_{B^{\frac{1}{p}}_{p,\infty}\cap L^\infty(I)}\leq A,
		\end{align}
		for all $T'< t_1<t_2$, and
		\begin{align}\label{prop:SmallPotentialV}
			4 C C(v_L) \| V \|_{W^{s-\frac12,0,s-\frac12}(I)} \leq 1 \qquad \text{and} \qquad \| V \|_{W^{l,0,s-\frac12}(I)} < \infty.
		\end{align}
		
		\vspace*{3pt plus 1pt minus 1pt} 
		Then
		system \eqref{eq:RegulationZakGWP} has a unique solution $(z, \rho) \in C(I, H_x^s\times H_x^l)$, where the uniqueness holds in 
		$S^{s,a,0}(I)\times L^\infty(I; H_x^l), S^{s,0,0}(I)\times L^\infty(I; H_x^l)$, and $\Z^{s,0}_\varepsilon(I)\times L^\infty(I; H_x^l)$ in the noise-regularization regimes I, II, and III, respectively. Moreover, $(z,\rho)$ scatters at infinity 
		in the sense that there exists some $(z_+, \rho_+) \in H_x^s\times H_x^l$ such that
		\begin{align}  \label{zV-scatter}
			\lim_{t\to\infty} \|e^{-\imu t\Delta}z(t)-z_+\|_{H_x^s} = 0 \qquad \text{and} \qquad  \lim_{t \rightarrow \infty}\|e^{-\imu t |\nabla|} \rho(t) - \rho_+\|_{H_x^l} =0.
		\end{align}	
	\end{proposition}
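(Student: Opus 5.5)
The plan is to run a fixed point argument on the whole half-line $I=[T,\infty)$ for the Schr\"odinger equation~\eqref{eq:RegNoiseSchrPotLongTime}, mirroring Proposition~\ref{prop:WPShortTime}. The map is
\[
\Psi(z_T;z):=\cU_{v_L}[z_T]-\cI_{v_L}\big[\Re(V)z\big]-\cI_{v_L}\big[\Re(\cJ_0(h|\nabla||z|^2))z\big].
\]
The key difference from the short-time regime is the source of smallness: it now comes from the smallness of $h$ (of size $\delta_0$), not from a small dispersive norm. So I will work on a plain ball
\[
B_E:=\{z:\|z\|_{S^{s,a,0}(I)}\le E\},
\]
with $S^{s,a,0}$ replaced by $S^{s,0,0}$ in regime II and by $\Z^{s,0}_\varepsilon$ in regime III.

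The linear part is controlled by Lemma~\ref{lem:LinSchrPotSmallTimeSsa0}, resp.\ Lemma~\ref{lem:LinSchrPotSmallTimeLocSmo}. These constants $C(v_L)$ are uniform on any interval, because $v_L$ is the fixed free wave.

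\begin{itemize}
\item The potential term is handled by the bilinear estimate of Lemma~\ref{lem:BilinearEstimates}: it is bounded by $CC(v_L)\|V\|_{W}\|z\|\le \tfrac14\|z\|$, using the assumed smallness of $V$.
\item The wave nonlinearity is handled by the same bilinear estimate combined with the trilinear estimates of Theorem~\ref{Prop-trilinear}, namely \eqref{triesti-improve} in regime I, \eqref{prop:triestiend} (bounding the dispersive factor simply by the $S^{s,0,0}$ norm) in regime II, and \eqref{triesti-improve} for the pair $(s,s-\frac12)$ in regime III. This gives the bound $CC(v_L)\delta_0\|z\|^3\le CC(v_L)\delta_0E^3$.
\end{itemize}

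By \eqref{prop:smallness3}, this yields $\|\Psi\|\le \tfrac E2+\tfrac E4+\tfrac E{16}\le E$. The difference estimate is identical: it has the Lipschitz constant $\tfrac14+3CC(v_L)\delta_0E^2<1$.

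Uniqueness in the stated spaces follows by the standard argument. The wave part $\rho=-\cJ_0(h|\nabla||z|^2)$ then lies in $W^{l,\cdot,\cdot}(I)$ by the trilinear estimate, hence in $C(I,H^l_x)$. In regime III it lies at first only in $W^{s-\frac12,0,s-\frac12}$.

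For the upgrade in regime III I will copy the end of Proposition~\ref{prop:WPShortTime}(iii). First I show $z\in S^{s,0,b}$ via Lemma~\ref{lem:LinFlowAdaptedSpaces}. The free-wave contribution needs $v_L\in W^{\kappa,0,s-\frac12}$ only locally, while $V$ is handled by the assumed finiteness $\|V\|_{W^{l,0,s-\frac12}}<\infty$ (note $l\ge\kappa$). The term $\cI_0[\Re(v_L)z]$ is the delicate one on an infinite interval. Here I plan to avoid $\cI_0$ and instead apply the $\tilde S$-type estimate through $\cI_{v_L}$ after a finite decomposition of $I$ (Lemma~\ref{lem:SmallnessWavePotential}), gluing finitely many pieces. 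Then $z\in\Z^{s,b}_\varepsilon$, and \eqref{triesti-LocSmo} gives $\rho\in W^{l,0,s-\frac12}$. I expect the obstacle that \eqref{triesti-LocSmo} only sees $\|h\|_{B^{1/p}_{p,\infty}}\le A$, which is not small. Finiteness, however, suffices for regularity. The decay in \eqref{prop:exponentialDeacy} will be used to make the tail $[t_1,\infty)$ contribution small, which is needed for scattering in $H^l$.

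For scattering of $z$, I write
\[
e^{-\imu t\Delta}z(t)=z_T'+\text{(Duhamel integrals from $T$ to $t$)}
\]
and prove the Cauchy property as $t_1,t_2\to\infty$. To do so I bound the pieces over $[t_1,t_2]$ in $N^{s,a,0}([t_1,t_2])$, with the potential $v_L$ and $V$ absorbed as forcing. These tails tend to zero because $z$ has finite global $S$-norm: the $l^2$ sums restricted to shrinking intervals vanish by dominated convergence, as in the continuity Lemma~\ref{it:ContSumY}. The same argument applied to $e^{-\imu t|\nabla|}\rho(t)$ uses the energy inequality and the trilinear bound on $[t_1,t_2]$. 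In regime III the tail of $h$ is controlled by $e^{-C't_1}$, which gives the $H^l_x$ limit $\rho_+$.

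The main obstacle I anticipate is the scattering of the Schr\"odinger part with the non-decaying potential $\Re(v_L)$. I would handle it by scattering for $\cU_{v_L}$ itself, known from \cite{CHN23}, via Remark~\ref{rem:PropagOPWellDef}.
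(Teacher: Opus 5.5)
Your fixed-point construction (same map, same balls in $S^{s,a,0}$, $S^{s,0,0}$, $\Z^{s,0}_\varepsilon$, same bilinear and trilinear estimates) matches the paper, and so does the regime~III upgrade via \eqref{prop:DuhamelNonWavez}--\eqref{prop:DuhamelFreeWavez}. The gap is in the scattering step.

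\textbf{Wave part in regimes I and II.} You argue that the tails over $[t_1,t_2]$ vanish ``because $z$ has finite global $S$-norm''. This is false. We have $\|z\|_{S^{s,a,0}([t_1,t_2])}\gtrsim \|z\|_{L^\infty([t_1,t_2];H^s_x)}\ge \|z(t_1)\|_{H^s_x}$, and the right-hand side tends to $\|z_+\|_{H^s_x}$, not to $0$. Likewise $\|v_L\|_{W([t_1,t_2])}$ does not vanish. The trilinear bounds you use, \eqref{triesti-improve} and \eqref{prop:triestiend} with the dispersive factor absorbed into $S^{s,0,0}$, therefore give on $[t_1,t_2]$ only a bound of size $\delta_0 E^2$. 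In regimes I and II the hypotheses provide no decay of $h$, only $\|h\|\le\delta_0$, so your argument does not close. The missing idea is to keep a positive power of the dispersive norm, using \eqref{prop:triest2} in regime I and \eqref{prop:triestiend} in regime II. These give
\[
\|e^{-\imu t_1|\nabla|}\rho(t_1)-e^{-\imu t_2|\nabla|}\rho(t_2)\|_{H^l_x}\lesssim \delta_0\,\|z\|_{S^{s,a,0}(I)}^{2-2\theta}\,\|z\|_{L^2(t_1,t_2;L^{2^*}_x)}^{2\theta}
\]
in regime I, and the same with $L^2(t_1,t_2;W^{s,2^*}_x)$ in regime II. The last factor does vanish, because $S^{s,a,0}\hookrightarrow L^2_tL^{2^*}_x$ (resp.\ $S^{s,0,0}\hookrightarrow L^2_tW^{s,2^*}_x$) puts $z$ in these $L^2_t$-based spaces globally on $I$. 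Only in regime III is the decay $e^{-C't_1}$ of $h$ available, and there your argument is fine.

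\textbf{Schr\"odinger part.} The conclusion is correct, but the mechanism is again not tails of $\|z\|_S$. Nor does Lemma~\ref{it:ContSumY}, which concerns finite intervals, supply it. The correct route is to observe that $F:=\Re(v_L)z+\Re(V)z+\Re(\rho)z$ is a fixed element of $N^{s,a,0}(I)$ by the bilinear estimate, since $v_L$, $V$, $\rho$ are all globally in $W$. Lemma~2.5 of \cite{CHN23} then yields the Cauchy property of $e^{-\imu t\Delta}z(t)$, as in \eqref{prop:z_scatter}.

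\textbf{Other remarks.} The non-decaying potential $\Re(v_L)$ is not an obstacle. Lemma~\ref{lem:LinearEstimateHalfWave} bounds the free wave $e^{\imu t|\nabla|}Y_0$ in $W^{l,a,s-\frac12}(\R)$ globally, so it is simply part of $F$. You need no scattering theory for $\cU_{v_L}$, which Remark~\ref{rem:PropagOPWellDef} does not provide anyway. For the same reason, $\cI_0[\Re(v_L)z]$ in the regime~III upgrade is not delicate: \eqref{prop:DuhamelFreeWavez} holds on all of $I$ without any smallness, since $z$ is already known to lie in $S^{s,0,0}(I)$. Your detour through $\cI_{v_L}$, with an $S^{s,0,b}$-estimate for it that the paper does not have, is unnecessary.
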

	
	\begin{remark}
		We note that the exponential decay of $h$ in \eqref{prop:exponentialDeacy} is used to derive scattering of the wave component in noise-regularization regime III. In noise-regularization regime I and II, the dispersive norm provided by the trilinear estimates \eqref{prop:triest2} and \eqref{prop:triestiend}, respectively, suffices to obtain the scattering of the wave part, see \eqref{prop:CauchyWaveI} and \eqref{prop:CauchyWaveII} below.
	\end{remark}

\begin{proof}[Proof of Proposition \ref{prop:WPLargeTime}] 
		As before, 
		we omit the dependence on $I\times \R^d$ to 
		ease the notation. 
		
		$(i)$ and $(ii)$
		As in the short time regime, we first consider $(s,l)$ in noise-regularization regime I and solve system \eqref{eq:RegulationZakGWP} with $(z_T, \rho_T)\in H_x^s\times H_x^{l}$. 
		Define the map
		\begin{align}\label{eq:FixedPointLargeTime}
			\Phi(z_T; z):= \cU_{v_L}[z_T] + \cI_{v_L}[\Re(V)z] - \cI_{v_L}[\Re(\cJ_0( h|\nabla||z|^2))z]
		\end{align}
		on the ball
		\begin{align*}
			B_E:=\{z\in S^{s,a,0}(I) \colon \|z\|_{S^{s,a,0}(I)}\leq E\}.
		\end{align*}
		
		From Lemma~\ref{lem:LinSchrPotSmallTimeSsa0}, Lemma \ref{lem:BilinearEstimates} \ref{it:BilinearEstNonendpointbNzero} and the trilinear estimate \eqref{triesti-improve}, we infer  that for any $z\in B_E$,
		\begin{align}
			\|\cI_{v_L}[\Re(\cJ_0( h|\nabla||z|^2))z]\|_{S^{s,a,0}}&\leq C(v_L) \|\Re(\cJ_0( h|\nabla||z|^2)z]\|_{N^{s,a,0}}\leq CC(v_L)  \|\cJ_0( h|\nabla||z|^2)\|_{W^{l,a,s-\frac12}} \|z\|_{S^{s,a,0}}\notag\\
			&\leq CC(v_L) \|h\|_{B^\alpha_{2,\infty}\cap L_t^\infty} \|z\|_{S^{s,a,0}}^3.\label{prop:largetimeDuhamelSchr}
		\end{align}
		
		Analogously, we get
		\begin{align}\label{prop:largetimeDuhamelSchr2}
			\|\cI_{v_L}[\Re(V)z]\|_{S^{s,a,0}}\leq C(v_L) \|\Re(V)z\|_{N^{s,a,0}} \leq CC(v_L)\|V\|_{W^{l,a,s-\frac12}} \|z\|_{S^{s,a,0}}.
		\end{align}
		
		Combining the estimates above, Lemma~\ref{lem:LinSchrPotSmallTimeSsa0} and the smallness condition \eqref{prop:smallness3}, 	we obtain that
		\begin{align*}
			\|\Phi(z_T;z)\|_{S^{s,a,0}}&\leq CC(v_L) \|z_T\|_{H_x^s} + CC(v_L) \|h\|_{B^\alpha_{2,\infty}\cap L_t^\infty} \|z\|^3_{S^{s,a,0}} + CC(v_L)\|V\|_{W^{l,a,s-\frac12}} \|z\|_{S^{s,a,0}}\\
			&\leq \frac{E}{2} + \frac{E}{16} +\frac{E}{4}\leq E,
		\end{align*}
		which implies that $\Phi(z_T;z)$ is a self-map on $B_E$. Moreover, 
		arguing as in~\eqref{prop:largetimeDuhamelSchr} and \eqref{prop:largetimeDuhamelSchr2} again, we infer
		\begin{align*}
			&\|\Phi(z_T;z)-\Phi(z_T;w)\|_{S^{s,a,0}} \\
			&\leq CC(v_L) \|h\|_{B^\alpha_{2,\infty}\cap L_t^\infty} (\|z\|_{S^{s,a,0}}+ \|w\|_{S^{s,a,0}})^2 \|z-w\|_{S^{s,a,0}}+ CC(v_L)\|V\|_{W^{l,a,s-\frac{1}{2}}}\|z-w\|_{S^{s,a,0}}\\
			&\leq \frac14 \|z-w\|_{S^{s,a,0}} + \frac14 \|z-w\|_{S^{s,a,0}}  = \frac12 \|z-w\|_{S^{s,a,0}},
		\end{align*}
		where we also used the smallness condition \eqref{prop:smallness3} in the last step. Hence, $\Phi(z_T;\cdot)$ is a contractive self-map on $B_E$,
		which implies that there exists a unique fixed point $z\in B_E$. Uniqueness in $S^{s,a,0}$ then follows via standard arguments.
		
		For the wave component  
		$\rho = - \Re(\cJ_0[h|\nabla||z|^2])$, 
		by the trilinear estimate \eqref{triesti-improve}, we have
		\begin{align}\label{prop:waveWtildel}
			\|\rho\|_{W^{l,a,s-\frac12}}\lesssim  \|h\|_{B^\alpha_{2,\infty}\cap L_t^\infty}  \|z\|_{S^{s,a,0}}^2<\infty.
		\end{align}

		In noise-regularization regime II, we argue again as in~\eqref{prop:largetimeDuhamelSchr} but use \eqref{prop:triestiend} instead of~\eqref{triesti-improve} and the embedding $S^{s,0,0} \hookrightarrow L_t^2 W_x^{s,2^*}$, to derive
		\begin{align*}
			\|\cI_{v_L}[\Re(V)z]\|_{S^{s,0,0}}&\leq C(v_L) \|\Re(V)z\|_{N^{s,0,0}}\leq CC(v_L)\|V\|_{W^{l,0,l}}\|z\|_{S^{s,0,0}},\\
			\|\mathcal{I}_{v_L}[\Re(\cJ_0(h|\nabla||z|^2))z]\|_{S^{s,0,0}}&\leq C(v_L) \|\Re(\cJ_0(h|\nabla||z|^2))z\|_{N^{s,0,0}}\\
			&\leq CC(v_L)  \|\cJ_0( h|\nabla||z|^2)\|_{W^{l,0,l}} \|z\|_{S^{s,0,0}}
			\leq CC(v_L) \|h\|_{B^{\frac{1}{p}}_{p,\infty}\cap L_t^\infty} \|z\|_{S^{s,0,0}}^3.
		\end{align*}
		Hence, we obtain 
		\begin{align*}
			\|\Phi(z_T;z)\|_{S^{s,0,0}}\leq CC(v_L) \|z_T\|_{H_x^s} + CC(v_L) \|h\|_{B^{\frac{1}{p}}_{p,\infty}\cap L_t^\infty} \|z\|^3_{S^{s,0,0}}+ CC(v_L)\|V\|_{W^{l,0,l}}\|z\|_{S^{s,0,0}}.
		\end{align*}
		Moreover, we have
		\begin{align*}
			&\|\Phi(z_T;z)-\Phi(z_T;w)\|_{S^{s,0,0}} \\
			&\leq CC(v_L) \|h\|_{B^{\frac{1}{p}}_{p,\infty}\cap L_t^\infty} (\|z\|_{S^{s,0,0}}+ \|w\|_{S^{s,0,0}})^2 \|z-w\|_{S^{s,0,0}}+ CC(v_L)\|V\|_{W^{l,0,l}}\|z-w\|_{S^{s,0,0}}.
		\end{align*}
		Using~\eqref{prop:smallness3}, we conclude that $\Phi(z_T;\cdot)$ is again a contractive self-map on $B_E$ (recall that $a = 0$ in regime~II),
		which yields a unique fixed point $z\in B_E$. Uniqueness in $S^{s,0,0}$ follows from standard arguments. For the wave component we get
		\begin{align}\label{prop:waveWtildel2}
			\|\rho\|_{L_t^\infty H_x^l} \leq \|\Re(\cJ_0[h|\nabla||z|^2])\|_{W^{l,0,l}} \lesssim \|h\|_{B^{\frac{1}{p}}_{p,\infty}\cap L_t^\infty} \|z\|^2_{S^{s,0,0}}<\infty,
		\end{align}
		which shows that $\rho\in L_t^\infty H_x^l$. 
		
		Uniqueness of the wave component in $L_t^\infty H_x^{l}$ follows from the uniqueness of the Schr{\"o}dinger component both in regime I and II.
		
		Regarding the scattering property of the Schr\"odinger component $z$, in noise-regularization regime I, using Lemma \ref{lem:BilinearEstimates} \ref{it:BilinearEstNonendpointbNzero} and Lemma \ref{lem:LinearEstimateHalfWave} we get
		\begin{align*}
			\|\Re(e^{\imu (t-T)|\nabla|}v(T))z\|_{N^{s,a,0}} &\lesssim \|e^{\imu (t-T)|\nabla|}v(T)\|_{W^{l,a,s-\frac12}} \|z\|_{S^{s,a,0}} \lesssim \|v(T)\|_{H_x^l} \|z\|_{S^{s,a,0}}<\infty,\\
			\|\Re(V)z\|_{N^{s,a,0}} &\lesssim \|V\|_{W^{l,a,s-\frac12}} \|z\|_{S^{s,a,0}}<\infty.
		\end{align*}
		Moreover, Lemma \ref{lem:BilinearEstimates} \ref{it:BilinearEstNonendpointbNzero} and \eqref{prop:waveWtildel} imply that
		\begin{align*}
			\|\Re(\rho)z\|_{N^{s,a,0}}\lesssim \|\rho\|_{W^{l,a,s-\frac12}} \|z\|_{S^{s,a,0}}
			\lesssim \|h\|_{B^\alpha_{2,\infty}\cap L_t^\infty} \|z\|_{S^{s,a,0}}^3<\infty.
		\end{align*}
		
		Similarly, in noise-regularization regime II we have
		\begin{align*}
			\|\Re(e^{\imu (t-T)|\nabla|}v(T))z\|_{N^{s,0,0}} &\lesssim \|e^{\imu (t-T)|\nabla|}v(T)\|_{W^{l,0,l}} \|z\|_{S^{s,0,0}} \lesssim \|v(T)\|_{H_x^l} \|z\|_{S^{s,0,0}}<\infty,\\
			\|\Re(V)z\|_{N^{s,0,0}} &\lesssim \|V\|_{W^{l,0,l}} \|z\|_{S^{s,0,0}}<\infty.
		\end{align*}
		Moreover, Lemma \ref{lem:BilinearEstimates} \ref{it:BilinearEstNonendpointbNzero} and \eqref{prop:waveWtildel2} imply that
		\begin{align*}
			\|\Re(\rho)z\|_{N^{s,0,0}}\lesssim \|\rho\|_{W^{l,0,l}} \|z\|_{S^{s,0,0}}
			\lesssim \|h\|_{B^{\frac{1}{p}}_{p,\infty}\cap L_t^\infty} \|z\|_{S^{s,0,0}}^3<\infty.
		\end{align*}
		
		For any $t_1,t_2\in I$ and $t_1<t_2$, 
		applying Lemma $2.5$ from \cite{CHN23},  we thus obtain both in regime I and II 
		\begin{align}\label{prop:z_scatter}
			\|e^{-\imu t_2\Delta}z(t_2) - e^{-\imu t_1\Delta}z(t_1)\|_{H_x^s} = \Big\|\int_{t_1}^{t_2} e^{-\imu s\Delta} (\Re(v_L)z+ \Re(V)z-\Re(\cJ_0[h|\nabla||z|^2])z) \dd s \Big\|_{H_x^s}\longrightarrow 0
		\end{align}
		as $t_1, t_2\to \infty$, which yields that $\{ e^{-\imu t \Delta} z(t)\}$ is Cauchy in $H^s_x$ as $t\to\infty$.

		It remains to prove the scattering property of the wave component. In noise-regularization regime I, for any $T<t_1<t_2<\infty$ large enough, an application of \eqref{prop:triest2} yields that for some $\theta \in (0,1)$
		\begin{align}\label{prop:CauchyWaveI}
			\|e^{-\imu t_1|\nabla|}\rho(t_1)-e^{-\imu t_2|\nabla|}\rho(t_2)\|_{H_x^l}&\lesssim  \|\Re(\cJ_0(h|\nabla||z|^2))\|_{W^{l,a,s-\frac12}([t_1,t_2])}\notag\\
			&\lesssim  \delta_0\|z\|_{S^{s,a,0}}^{2-2\theta} \|z\|^{2\theta}_{L^2(t_1,t_2; L_x^{2^*})} \longrightarrow 0\quad \text{as } t_1, t_2 \to \infty,
		\end{align}
		where we exploited the global bound $\|z\|_{L^2(I;  L_x^{2^*})}\lesssim \|z\|_{S^{s,a,0}(I)}<\infty$ in the last step. 
		Similarly, 
		in noise-regularization regime II, we use \eqref{prop:triestiend} to get 
		that for some $\theta \in (0,1)$ 
		\begin{align}\label{prop:CauchyWaveII}
			\|e^{-\imu t_1|\nabla|}\rho(t_1)-e^{-\imu t_2|\nabla|}\rho(t_2)\|_{H_x^l}&\lesssim  \|\Re(\cJ_0(h|\nabla||z|^2))\|_{W^{l,0,l}([t_1,t_2])}\notag\\
			&\lesssim  \delta_0 \|z\|_{S^{s,0,0}}^{2-2\theta} \|z\|_{L^2(t_1,t_2; W_x^{s,2^*})}^{2\theta}   \longrightarrow 0\quad \text{as } t_1, t_2 \to\infty,
		\end{align}
		exploiting
		the global bound $\|z\|_{L^2(I; W_x^{s,2^*})}\lesssim \|z\|_{S^{s,0,0}(I)}<\infty$ in the last step.
		
		Hence, in both noise-regularization regimes I and II, 
		$\{e^{-\imu t|\nabla|}\rho(t)\}$ 
		is Cauchy in $H_x^l$, and thus the wave component 
		scatters at infinity. 
		The statement of Proposition 
		\ref{prop:WPLargeTime} is proved in noise-regularization regimes I and II.

		$(iii)$ 
		We first solve system \eqref{eq:RegulationZakGWP} in $H_x^s\times H_x^{s-\frac{1}{2}}$ and recall that the pair $(s,s-\frac12)$ lies in regime I.
		Consider the solution map $\Phi$ as in \eqref{eq:FixedPointLargeTime} on the closed ball
		\begin{align*}
			B_E:=\{z\in \Z_\varepsilon^{s,0}(I): \|z\|_{\Z_\varepsilon^{s,0}(I)}\leq E\}.
		\end{align*} 
		Applying  Lemma \ref{lem:LinSchrPotSmallTimeLocSmo}, Lemma \ref{lem:BilinearEstimates} \ref{it:BilinearEstNonendpointbNzero}, and the trilinear estimate \eqref{triesti-improve} for the pair $(s, s-\frac12)$, we derive that
		\begin{align*}
			\|\mathcal{I}_{v_L}[\Re(\cJ_0( h|\nabla||z|^2))z]\|_{\Z_\varepsilon^{s,0}}&\leq C(v_L) \|\Re(\cJ_0( h|\nabla||z|^2))z\|_{N^{s,0,0}} \notag\\
			&\leq CC(v_L)  \|\Re(\cJ_0( h|\nabla||z|^2))\|_{W^{s-\frac{1}{2},0,s-\frac12}} \|z\|_{\Z_\varepsilon^{s,0}} \leq CC(v_L) \|h\|_{B^\alpha_{2,\infty}\cap L_t^\infty} \|z\|_{\Z_\varepsilon^{s,0}}^3
		\end{align*}
		and analogously
		\begin{align*}
			\|\cI_{v_L}[\Re(V)z]\|_{\Z_\varepsilon^{s,0}} \leq C(v_L) \|\Re(V)z\|_{N^{s,0,0}}\leq CC(v_L) \|V\|_{W^{s-\frac12,0,s-\frac12}} \|z\|_{S^{s,0,0}}.
		\end{align*}
		The combination of this estimate with Lemma~\ref{lem:LinSchrPotSmallTimeLocSmo} for the linear propagator and the smallness condition in \eqref{prop:exponentialDeacy} and \eqref{prop:SmallPotentialV} implies
		\begin{align*}
			\|\Phi(z_T;z)\|_{\Z_\varepsilon^{s,0}}&\leq CC(v_L) \|z_T\|_{H_x^s} + CC(v_L) \|h\|_{B^\alpha_{2,\infty}\cap L_t^\infty} \|z\|^3_{\Z_\varepsilon^{s,0}} + CC(v_L) \|V\|_{W^{s-\frac12,0,s-\frac12}} \|z\|_{S^{s,0,0}}\\
			&\leq \frac{E}{2} + \frac{E}{16} + \frac{E}{4}\leq E
		\end{align*}
		for any $z \in B_E$,
		which shows that $\Phi(z_T;\cdot)$ is a self-map on $B_E$. The same arguments also yield
		\begin{align*}
			&\|\Phi(z_T;z)-\Phi(z_T;w)\|_{\Z_\varepsilon^{s,0}} \\
			&\leq CC(v_L) \|h\|_{B^\alpha_{2,\infty}\cap L_t^\infty} (\|z\|_{\Z_\varepsilon^{s,0}}+ \|w\|_{\Z_\varepsilon^{s,0}})^2 \|z-w\|_{\Z_\varepsilon^{s,0}} + CC(v_L)\|V\|_{W^{s-\frac12,0,s-\frac12}} \|z-w\|_{\Z_\varepsilon^{s,0}}\\
			&\leq \frac14 \|z-w\|_{\Z_\varepsilon^{s,0}} + \frac14 \|z-w\|_{\Z_\varepsilon^{s,0}} =\frac12 \|z-w\|_{\Z_\varepsilon^{s,0}}
		\end{align*}
		for any $z,w \in B_E$.
		Hence, $\Phi(z_T;\cdot)$ is a contractive self-map on $B_E$ and we deduce that there exists a unique solution $z\in \Z_\varepsilon^{s,0}$ 
		of~\eqref{eq:RegNoiseSchrPotLongTime}. 
		
		For the wave component $\rho$, 
		we note that the estimate \eqref{triesti-improve} implies that $\rho \in W^{s-\frac{1}{2},0,s-\frac12}$. We next improve the wave regularity to $H^l_x$. 
		As in the proof of \eqref{prop:ZInSszerob}, we use the estimates \eqref{prop:DuhamelNonWavez}, \eqref{prop:DuhamelFreeWavez}, and 
		\begin{align*}
			\|\cI_0[\Re(V)z]\|_{S^{s,0,b}}\lesssim \|V\|_{W^{\kappa,0,s-\frac12}} \|z\|_{S^{s,0,0}}\lesssim \|V\|_{W^{l,0,s-\frac12}} \|z\|_{S^{s,0,0}}<\infty,
		\end{align*}
		together with the condition \eqref{prop:SmallPotentialV} to derive that $z\in S^{s,0,b}$ and hence $z\in \Z_\varepsilon^{s,b}$. 
		Combining this with \eqref{triesti-LocSmo} we derive that
		\begin{align*}
			\|\rho\|_{L_t^\infty H_x^l} \lesssim  \|\cJ_0(h|\nabla||z|^2)\|_{W^{l,0,s-\frac12}} \lesssim \| h \|_{B^{\frac{1}{p}}_{p,\infty} \cap L^\infty_t} \|z\|^2_{\Z_\varepsilon^{s,b}} \lesssim A \|z\|^2_{\Z_\varepsilon^{s,b}} <\infty,
		\end{align*}
		which shows that $\rho \in L_t^\infty H_x^l$. The uniqueness of $\rho$ in $L_t^\infty H_x^{l}$ follows from the uniqueness of the Schr{\"o}dinger component.
		
		\smallskip
		
		The scattering property of the Schr\"odinger component $z$  
		can be proved analogously as in noise-regularization regime I. Replacing the $S^{s,a,0}$-norm on the right-hand side with the $\Z_\varepsilon^{s,0}$-norm, one shows that $\Re(v_L) z, \Re(\rho) z$ and $\Re(V)z$ belong to $N^{s,0,0}$. Applying Lemma~2.5 from~\cite{CHN23} as in~\eqref{prop:z_scatter}, we obtain that $\{e^{- \imu t \Delta} z(t)\}$ is Cauchy as $t \rightarrow \infty$. Regarding the scattering property of the wave component, for any $T<t_1<t_2<\infty$ large enough, an application of the trilinear estimate \eqref{triesti-LocSmo} and the exponential decay \eqref{prop:exponentialDeacy} yield that
		\begin{align*}
			\|e^{-\imu t_1|\nabla|}\rho(t_1)-e^{-\imu t_2|\nabla|}\rho(t_2)\|_{H_x^l}&\lesssim  \|\cJ_0(h|\nabla||z|^2)\|_{W^{l,0,s-\frac12}([t_1,t_2])}\\
			&\lesssim   \|h\|_{B^{\frac{1}{p}}_{p,\infty}\cap L^\infty([t_1,t_2])} \|z\|_{\Z_\varepsilon^{s,b}}^2 \\
			&\lesssim  e^{-C't_1} \|z\|_{\Z_\varepsilon^{s,b}}^2	\longrightarrow 0\quad \text{as } t_1, t_2 \to\infty.
		\end{align*}
		Hence, we  infer that  $\{e^{-\imu t|\nabla|}\rho(t)\}$ is Cauchy in $H_x^l$, implying the scattering of the wave component.
		
		Consequently, 
		the proof of Proposition \ref{prop:WPLargeTime} is complete. 
	\end{proof}

	\subsection{Proof of Theorem \ref{thm:RegNoise}}
		Now, we are ready to prove Theorem \ref{thm:RegNoise}. Recall the random system \eqref{eq:RanZakNoncons-intro} and the definition of $h_{\mathbf{c}}$ in \eqref{h-W1-def}. Thanks to Corollary \ref{prop:ExtendToInfiniteNoise}, we can apply Lemma \ref{eq:DefHoelderNorm}, Lemma \ref{le:GBMGloBesov}, and Lemma \ref{prop:GeoBMFastDecay} for $h_{\mathbf{c}}$ with $c$ replaced by $\|\mathbf{c}\|$. 
	
	We first consider noise-regularization regime I. Take $\alpha \in (0, \frac12)$ as in Theorem~\ref{Prop-trilinear}~(i). 
	Lemma \ref{prop:GeoBMFastDecay}~(ii) and Remark~\ref{rem:BesovIntersection} show that $\sup_{\|\mathbf{c}\| \geq 1}\|h_{\mathbf{c}}\|_{B_{2,\infty}^\alpha\cap L^\infty([0,\infty)}  <\infty$ $\PP$-a.s., implying that
	\begin{align*}
		\PP\Big(\bigcup_{n \in \N} \{\|h_{\mathbf{c}}(\cdot, \omega)\|_{B_{2,\infty}^\alpha\cap L^\infty([0,\infty))} \leq n \}  \Big) = 1.
	\end{align*}
	Hence, for every $\eta > 0$ there exists $A > 0$ such that
	\begin{align*}
		\PP( \{\omega \in \Omega \colon \|h_{\mathbf{c}}(\cdot, \omega)\|_{B_{2,\infty}^\alpha\cap L^\infty([0,\infty))} \leq A \} ) \geq 1 - \frac{\eta}{2}.
	\end{align*}
	Let $R= 2 CC(v_L)\|X_0\|_{H_x^s}$ and $\delta>0$ and $C(v_L)$ as in noise-regularization regime I of Proposition \ref{prop:WPShortTime}, where $v_L = e^{\imu t |\nabla|} Y_0$. 
	Then take $E\geq 2CC(v_L) R$
	and $\delta_0>0$ as in Proposition~\ref{prop:WPLargeTime}.
	By Lemma \ref{prop:GeoBMFastDecay}~(i) and Remark~\ref{rem:BesovIntersection}, for any $\eta>0$, there exists $c_0 > 0$ such that
	\begin{align*}
		\PP(\{\omega \in \Omega \colon \|h_{\mathbf{c}}(\cdot, \omega)\|_{B_{2,\infty}^\alpha\cap L^\infty([\|\mathbf{c}\|^{-1},\infty))} > \delta_0 \}) \leq \frac{\eta}{2}
	\end{align*}
	for all $\| \mathbf{c} \| \geq c_0$. Consequently, the event
	\begin{align*}
		\Omega_{\mathbf{c}} := \{\omega \in \Omega \colon \|h_{\mathbf{c}}(\cdot, \omega)\|_{B_{2,\infty}^\alpha\cap L^\infty([0,\infty))} \leq A\} \cap \{ \|h_{\mathbf{c}}(\cdot, \omega)\|_{B_{2,\infty}^\alpha\cap L^\infty([\|\mathbf{c}\|^{-1},\infty))} \leq \delta_0\}
	\end{align*}
	has high probability
	\begin{align*}
		\PP(\Omega_{\mathbf{c}}) \geq 1 - \eta
	\end{align*}
	for all $\| \mathbf{c} \| \geq c_0$. Fix $\omega \in \Omega$ in the following.
	
	Since $\|e^{\imu (\cdot)\Delta}X_0\|_{L^2(0,t;L_x^{2^*})}\to 0 $ as $t\to 0^+$, we can choose $c_1 \geq c_0$ such that for all $\|\mathbf{c}\| \geq c_1$
	\begin{align*}
		2CC(v_L)(1+\|Y_0\|_{H_x^l})\|e^{\imu t \Delta }X_0\|_{L^2 (0, 2 \|\mathbf{c}\|^{-1}; L_x^{2^*})}\leq \delta.
	\end{align*}
	Hence, the assumptions of Proposition~\ref{prop:WPShortTime} in noise-regularization regime I with $h = h_{\mathbf{c}}(\cdot, \omega)$ are satisfied on the interval $I = [0, 2 \|\mathbf{c}\|^{-1}]$ so that Proposition~\ref{prop:WPShortTime} yields a unique solution $(z_{1,\mathbf{c}}(\cdot, \omega), v_{1,\mathbf{c}}(\cdot, \omega))$ in $C([0,2|\mathbf{c}\|^{-1}]; H^s_x \times H_x^l)$ to system \eqref{eq:RegulationZak} with 
	\begin{align}
		\label{eq:Propertiesz1c}
		\|z_{1,\mathbf{c}}(\cdot, \omega)\|_{S^{s,a,0}([0,2\|\mathbf{c}\|^{-1}])}\leq R \qquad \text{and} \qquad \|z_{1,\mathbf{c}}(\cdot, \omega)\|_{L^2([0,2\|\mathbf{c}\|^{-1}]; L^{2^*}_x)}\leq \delta
	\end{align}	
	for all $\|\mathbf{c}\| \geq c_1$. In particular, $\|z_{1,\mathbf{c}}(\|\mathbf{c}\|^{-1},\omega)\|_{H_x^s}\leq R$. We only consider $\|\mathbf{c}\| \geq c_1$ in the following.
	
	Next, we extend the solution beyond $\|\mathbf{c}\|^{-1}$ (solving on the extended interval $[0,2\|\mathbf{c}\|^{-1}]$ provides the necessary overlap to ensure the combined solution belongs to $S^{s,a,0}([0,\infty))$, cf. the proof of Theorem~\ref{thm:LocalWP}). To that purpose, we have to solve \eqref{eq:RegulationZak} on $[\|\mathbf{c}\|^{-1}, \infty)$ with initial data $z_{1,\mathbf{c}}(\|\mathbf{c}\|^{-1}, \omega)$ and free wave potential $v_{L, \mathbf{c}} := e^{\imu (t - \|\mathbf{c}\|^{-1}) |\nabla|} v_{1,\mathbf{c}}(\|\mathbf{c}\|^{-1}, \omega)$. The problem with this approach is that the constant $C(v_{L,\mathbf{c}})$ depending on the free wave potential, arising in Lemma~\ref{lem:LinSchrPotSmallTimeSsa0}, now depends on $\mathbf{c}$ and we do not have any uniform control over it. To overcome this problem, we write
	\begin{align*}
		v_{L, \mathbf{c}} = v_L + V_{\mathbf{c}} \qquad \text{with } V_{\mathbf{c}} = v_{L,\mathbf{c}} - v_{L} = e^{\imu (t - \|\mathbf{c}\|^{-1}) |\nabla|} v_{1,\mathbf{c}}(\|\mathbf{c}\|^{-1},\omega) - e^{\imu t |\nabla|} Y_0
	\end{align*}
	with the same free wave potential $v_L = e^{\imu t |\nabla|} Y_0$ as in the short-time regime and exploit that $V_{\mathbf{c}}$ is small in appropriate norms if $\|\mathbf{c}\|$ is sufficiently large. Consequently, we have to solve
	\begin{equation*}
		\left\{\aligned
		(\imu \partial_t +\Delta -\Re(v_L))z &= \Re(V_{\mathbf{c}}(\cdot, \omega)) z + \Re({\rho}) z, &z(\|\mathbf{c}\|^{-1})&=z_{1, \mathbf{c}}(\|\mathbf{c}\|^{-1},\omega),\\
		(\imu \partial_t +|\nabla|)\rho &=  -h_{\mathbf{c}}(\cdot, \omega)|\nabla||z|^2, &\rho(\|\mathbf{c}\|^{-1})&=0,
		\endaligned
		\right.
	\end{equation*}
	i.e.~\eqref{eq:RegulationZakGWP} with $T = \|\mathbf{c}\|^{-1}$, $h = h_{\mathbf{c}}(\cdot, \omega)$, and potential $V = V_{\mathbf{c}}(\cdot, \omega)$.
	
	Since $\omega \in \Omega_{\mathbf{c}}$, the condition $\|h_{\mathbf{c}}(\cdot, \omega)\|_{B_{2,\infty}^\alpha\cap L^\infty([\|\mathbf{c}\|^{-1},\infty))} \leq \delta_0$ of Proposition~\ref{prop:WPLargeTime}~(i) is satisfied. To check the assumption on $V_{\mathbf{c}}(\cdot, \omega)$ we note (dropping the $\omega$ for the ease of notation) that
	\begin{align*}
		V_{\mathbf{c}} = e^{\imu (t - \|\mathbf{c}\|^{-1}) |\nabla|} (e^{\imu \|\mathbf{c}\|^{-1} |\nabla|}Y_0 - \cJ_0[h_{\mathbf{c}} |\nabla| |z_{1,\mathbf{c}}|^2](\|\mathbf{c}\|^{-1}) - e^{\imu t |\nabla|} Y_0 = \imu e^{\imu t |\nabla|} \int_0^{\|\mathbf{c}\|^{-1}} e^{- \imu t' |\nabla|} (h_{\mathbf{c}} |\nabla| |z_{1,\mathbf{c}}|^2) \dd t'.
	\end{align*}
	Using Lemma~\ref{lem:LinearEstimateHalfWave} and the embedding $W^{l,a,s-\frac12} \hookrightarrow L^\infty_t H^l_x$, we infer
	\begin{align*}
		\| V_{\mathbf{c}} \|_{W^{l,a,s-\frac12}([\|\mathbf{c}\|^{-1},\infty))} &\leq C \Big\|  \int_0^{\|\mathbf{c}\|^{-1}} e^{- \imu t' |\nabla|} (h_{\mathbf{c}} |\nabla| |z_{1,\mathbf{c}}|^2) \dd t' \Big\|_{H^l_x} \leq C \| \cJ_0 [h_{\mathbf{c}} |\nabla| |z_{1,\mathbf{c}}|^2]\|_{L^\infty([0,\|\mathbf{c}\|^{-1}]; H^l_x)} \\
		& \leq C \|\cJ_0 [h_{\mathbf{c}} |\nabla| |z_{1,\mathbf{c}}|^2]\|_{W^{l,a,s-\frac12}([0,\|\mathbf{c}\|^{-1}])}.
	\end{align*}
	An application of Proposition~\ref{Prop-trilinear} in noise-regularization regime I thus yields
	\begin{align*}
		\| V_{\mathbf{c}} \|_{W^{l,a,s-\frac12}([\|\mathbf{c}\|^{-1},\infty))} &\leq C \|h_{\mathbf{c}}\|_{B^\alpha_{2,\infty} \cap L^\infty([0,\|\mathbf{c}\|^{-1}])} \| z_{1,\mathbf{c}} \|_{L^2([0,\|\mathbf{c}\|^{-1}];L^{2^*}_x)}^{2\theta} \| z_{1,\mathbf{c}} \|_{S^{s,a,0}([0,\|\mathbf{c}\|^{-1}])}^{2 - 2\theta} \\
		&\leq C A \delta^{2\theta} R^{2 - 2\theta},
	\end{align*}
	where we used that $\omega \in \Omega_{\mathbf{c}}$ and the bounds for $z_{1,\mathbf{c}}$ in~\eqref{eq:Propertiesz1c} in the last step. Using condition~\eqref{prop:smallness2} on $\delta$ in noise-regularization regime I, we thus arrive at
	\begin{align*}
		4 C C(v_L) \| V_{\mathbf{c}}(\cdot, \omega) \|_{W^{l,a,s-\frac12}([\|\mathbf{c}\|^{-1},\infty))} \leq 4 C C(v_L) A R^{2 - 2\theta} \delta^{2 \theta} \leq 1.
	\end{align*}
	
	Hence, the conditions of Proposition~\ref{prop:WPLargeTime} in regime I are satisfied so that Proposition~\ref{prop:WPLargeTime} yields a unique solution $(z_{2,\mathbf{c}}(\cdot, \omega), v_{2,\mathbf{c}}(\cdot, \omega))\in C([\|\mathbf{c}\|^{-1},\infty)); H^s_x \times H_x^l)$ to system \eqref{eq:RegulationZakGWP} $T = \|\mathbf{c}\|^{-1}$, $h = h_{\mathbf{c}}(\cdot, \omega)$, and potential $V = V_{\mathbf{c}}(\cdot, \omega)$. Moreover, $(z_{2,\mathbf{c}}(\cdot, \omega), v_{2,\mathbf{c}}(\cdot, \omega))$ scatters at infinity by~\eqref{zV-scatter}.
	
	Finally, we concatenate the solutions from the short-time and the large-time regimes by setting
	\begin{align*}
		z_{\mathbf{c}}(t,\omega)&= \chi_{[0,\|\mathbf{c}\|^{-1}]}(t)z_{1,\mathbf{c}}(t,\omega)+ \chi_{(\|\mathbf{c}\|^{-1},\infty)}(t)z_{2,\mathbf{c}}(t,\omega), \\
		v_{\mathbf{c}}(t,\omega)&= \chi_{[0,\|\mathbf{c}\|^{-1}]}(t)v_{1,\mathbf{c}}(t,\omega)+ \chi_{(\|\mathbf{c}\|^{-1},\infty)}(t)v_{2,\mathbf{c}}(t,\omega)
	\end{align*}
	Then $(z_{\mathbf{c}},v_{\mathbf{c}})$ solves the system \eqref{eq:RanZakNoncons-intro} in $C([0,\infty); H_x^s\times H_x^l)$ and scatters at infinity. Moreover, since $z_{1,\mathbf{c}}$ and $z_{2,\mathbf{c}}$ both solve~\eqref{eq:RegNoiseSchrPotLongTime} with the same potential and initial value on $[\|\mathbf{c}\|^{-1}, 2 \|\mathbf{c}\|^{-1}]$, the uniqueness part of Proposition~\ref{prop:WPLargeTime} implies that $z_{1, \mathbf{c}}$ and $z_{2, \mathbf{c}}$ coincide on that interval. In view of Lemma~2.8 in~\cite{CHN23} (the analogue of the decomposability result in Lemma~\ref{lem:DecompX} for the $S^{s,a,b}$-spaces), we also have that $z_{\mathbf{c}} \in S^{s,a,0}([0,\infty))$. Combining the uniqueness results for $z_{1,\mathbf{c}}$ and $z_{2,\mathbf{c}}$, we conclude that $(z_{\mathbf{c}},v_{\mathbf{c}})$ is the unique solution of \eqref{eq:RanZakNoncons-intro} in $S^{s,a,0}([0,\infty)) \times L^\infty([0,\infty);H^l_x)$.
	
	Since $\omega \in \Omega_{\mathbf{c}}$ was arbitrary, we infer that 	
	the probability of the event $\Upsilon_{\mathbf{c}}$ defined in \eqref{eq:DefUpsilon} satisfies
	\begin{align*}
		\PP(\Upsilon_{\mathbf{c}})\geq \PP(\Omega_{\mathbf{c}})\geq 1-\eta
	\end{align*}
	for all $\| \mathbf{c} \| \geq c_1$. As $\eta > 0$ was arbitrary, we conclude that
	\begin{align*}
		\PP(\Upsilon_{\mathbf{c}}) \longrightarrow 1
	\end{align*}
	as $\| \mathbf{c} \| \rightarrow \infty$. In view of the rescaling transform, this proves Theorem~\ref{thm:RegNoise} in noise-regularization regime I.
	
	The proof in regimes II and III proceeds analogously, requiring only straightforward adaptations. We define $\Omega_{\mathbf{c}}$ to accommodate the assumptions on $h$ in Propositions~\ref{prop:WPShortTime} and~\ref{prop:WPLargeTime}. Note that Lemma~\ref{prop:GeoBMFastDecay} guarantees that for any $\eta > 0$, the event $\Omega_\mathbf{c}$ has high probability $\PP(\Omega_{\mathbf{c}}) \geq 1 - \eta$ for sufficiently large $\| \mathbf{c} \|$ in regimes II and III as well. The assumptions of Proposition~\ref{prop:WPShortTime} are then satisfied for any $\omega \in \Omega_{\mathbf{c}}$ if we choose $\|c\|$ large enough. To apply Proposition~\ref{prop:WPLargeTime}, it remains to check the assumptions for $V_{\mathbf{c}}$. As in regime I, we first reduce this to the corresponding estimate for the Duhamel integral on $[0,\|\mathbf{c}\|^{-1}]$. Combining the trilinear estimates from Theorem~\ref{Prop-trilinear} with the bounds on $z_{1,\mathbf{c}}$ from Proposition~\ref{prop:WPShortTime} in regimes II and III, respectively, shows that these assumptions are satisfied. Applying Proposition~\ref{prop:WPLargeTime} and proceeding as in regime I, we obtain the unique global scattering solution of \eqref{eq:RanZakNoncons-intro} for these remaining regimes. In view of the rescaling transform, this completes the proof of Theorem~\ref{thm:RegNoise}.	\qed

	\appendix
	
	\section{Refined rescaling transforms} \label{Sec-Rescaling}
	
	In this subsection,
	we recall the refined rescaling transforms
	which are important to solve the stochastic Zakharov system
	up to the maximal existence time.
	They were first introduced in \cite{Zh20} to solve critical stochastic Schr\"odinger equations,
	and later in \cite{HRSZ24, CHN23}
	to address the global well-posedness below the ground state
	for the stochastic Zakharov sytstem in dimensions three and four.

	\begin{proposition} [Refined rescaling transformations] \label{prop:RefinedRescaling}
		Let $\sigma, \tau \colon \Omega \rightarrow [0,T]$ be such that $\sigma+\tau\leq T$.
		\begin{enumerate}
			\item \label{it:RefResc0ToSig}
			Let $(u_\sigma, v_\sigma) \in C([0,\tau], H_x^s \times H_x^l)$
			be an analytically weak solution of the system
			\begin{equation}   \label{eq:RanZakSigma}
				\left\{\aligned
				\partial_t u_\sigma(t) &=  \imu e^{-W_{1,\sigma}(t)} \Delta (e^{W_{1,\sigma}(t)} u_\sigma(t))
				-   \imu  \Re (v_\sigma(t))\, u_\sigma(t)
				- \imu \Re (\cT_{\sigma+t, \sigma}(W_2)) u_\sigma(t) ,   \\
				\partial_t v_\sigma(t) &=   \imu  |\na| v_\sigma(t) + \imu |\na| |u_\sigma(t)|^2,
				\endaligned
				\right.
			\end{equation}
			as equations in $H_x^{s-2} \times H_x^{l-1}$,
			where the increments of noise $W_{1,\sigma}$ and $ \cT_{\sigma+t, \sigma} (W_2)$ are defined by
			\begin{align}
				& W_{1,\sigma}(t):= W_1(\sigma+t) - W_1(\sigma),   \label{eq:DefWsigma}  \\
				& \cT_{\sigma+t, \sigma} (W_2)
				:= -\imu \int_\sigma^{\sigma+t} e^{\imu (\sigma+t-s)|\na|} \dd W_2(s)  \label{eq:DefTsigmaW2}
			\end{align}
			for all $t\in [0,\tau]$.
			For any $t\in [\sigma, \sigma + \tau]$, we set
			\begin{align}
				u(t)
				&:= e^{-W_1(\sigma)} u _\sigma(t-\sigma),    \label{eq:DefusigmaRescal} \\
				v(t) &:= v_\sigma(t-\sigma)- e^{\imu (t-\sigma) |\na|}\cT_\sigma(W_2).    \label{eq:DefvsigmaRescal}
			\end{align}
			Then, $(u, v)$ is an analytically weak solution of system \eqref{eq:RanZakW1W2}
			on $[\sigma, \sigma+\tau]$
			with
			\begin{align}
				& u(\sigma) =  e^{-W_1(\sigma)} u_\sigma(0),   \label{usigma-vsigma-initial.1}\\
				& v(\sigma) =   v_\sigma(0)-\cT_\sigma(W_2).    \label{usigma-vsigma-initial.2}
			\end{align}
			
			\item \label{it:RefRescSigTo0} If $(u,v) \in C([\sigma, \sigma+\tau], H_x^s\times H_x^l)$
			is an analytically weak solution of system \eqref{eq:RanZakW1W2} on $[\sigma, \sigma+\tau]$
			as equations in $H_x^{s-2}\times H_x^{l-1}$,
			then
			\begin{align}
				& u_\sigma(t)
				:=  e^{W_1(\sigma)} u(\sigma+t),    \label{eq:u-sigma-v-sigma-rescal.1} \\
				& v_\sigma(t) := v(\sigma+t) + e^{\imu t|\na|}\cT_\sigma(W_2)),   \label{eq:u-sigma-v-sigma-rescal.2}
				\ \ t\in [0,\tau],
			\end{align}
			is an analytically weak solution of system \eqref{eq:RanZakSigma} on $[0,\tau]$.
		\end{enumerate}
	\end{proposition}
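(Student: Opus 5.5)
This is a direct computation: pathwise, Itô-free, since \eqref{eq:RanZakW1W2} and \eqref{eq:RanZakSigma} are random PDEs with continuous-in-time coefficients. For part (i), I fix $\omega$, take $(u_\sigma,v_\sigma)$ and define $(u,v)$ by \eqref{eq:DefusigmaRescal}--\eqref{eq:DefvsigmaRescal}. The key algebraic identity is
\begin{align*}
e^{-W_1(t)}\Delta(e^{W_1(t)}u(t)) = e^{-W_1(\sigma)} e^{-W_{1,\sigma}(t-\sigma)}\Delta\big(e^{W_{1,\sigma}(t-\sigma)}u_\sigma(t-\sigma)\big),
\end{align*}
which holds because $W_1(t)=W_1(\sigma)+W_{1,\sigma}(t-\sigma)$ and $e^{\mp W_1(\sigma)}$ is a fixed (in $t$) multiplier. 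The needed smoothness of $e^{\pm W_1(\sigma)}$ comes from Hypothesis (H) via Lemma~\ref{lem:PropNoise}, which puts $W_1(\sigma)\in H^{\frac d2+2+(s-1)_+}_x\subset W^{2,\infty}_x$. So multiplication by $e^{\pm W_1(\sigma)}$ is bounded on $H^s_x$ and $H^{s-2}_x$, and the Schrödinger equation of \eqref{eq:RanZakW1W2} transfers by multiplying the weak formulation of \eqref{eq:RanZakSigma} by $e^{-W_1(\sigma)}$.

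For the wave component and the $\cT$ terms I will use the cocycle identity
\begin{align*}
\cT_t(W_2) = e^{\imu(t-\sigma)|\na|}\cT_\sigma(W_2) + \cT_{t,\sigma}(W_2),\qquad t\ge\sigma,
\end{align*}
which follows by splitting the stochastic integral $\int_0^t=\int_0^\sigma+\int_\sigma^t$ and using the group property of $e^{\imu t|\na|}$. With it, $\Re(v(t))+\Re(\cT_t(W_2)) = \Re(v_\sigma(t-\sigma))+\Re(\cT_{t,\sigma}(W_2))$. Hence the potential in the Schrödinger equation matches. Moreover, $|u|^2=|e^{-W_1(\sigma)}|^2|u_\sigma|^2$. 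Here I must use that $W_1$ is purely imaginary in the conservative setting of \eqref{Noise}: $W_1=\imu\sum\phi_k\beta_k$ with real $\phi_k$, so $|e^{-W_1(\sigma)}|=1$ and $|u|^2=|u_\sigma|^2$.

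Since $e^{\imu(t-\sigma)|\na|}\cT_\sigma(W_2)$ is a free half-wave solution, it is annihilated by $\partial_t-\imu|\na|$ in $H^{l-1}_x$. Therefore $v$ satisfies the wave equation of \eqref{eq:RanZakW1W2}. The initial values \eqref{usigma-vsigma-initial.1}--\eqref{usigma-vsigma-initial.2} follow by evaluating at $t=\sigma$.

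Part (ii) is the inverse transform and is proved by the same identities read backwards. Continuity in $H^s_x\times H^l_x$ is preserved because the transformations are bounded multipliers and free flows.

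The main obstacle is the stochastic cocycle identity for $\cT$ when $\sigma$ is random rather than a deterministic time. I would first prove it for deterministic $\sigma$ as an identity of Itô integrals valued in $H^{\frac d2+s-1}_x$. I would then extend it to all $\sigma$ simultaneously by taking a version continuous in $(\sigma,t)$, using the Hölder continuity from Lemma~\ref{lem:PropNoise} together with a density argument over rational $\sigma$. This makes every identity above hold pathwise for arbitrary measurable $\sigma$.
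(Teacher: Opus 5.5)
Your proposal is correct and is the standard direct pathwise verification; the paper states this proposition without proof and relies on \cite{HRSZ25,HRSZ24}. The ingredients are exactly the ones you use: the multiplicative identity for $e^{-W_1(t)}\Delta(e^{W_1(t)}\cdot)$, the cocycle identity for $\cT$, and $|e^{-W_1(\sigma)}|=1$ in the conservative case. One small correction: boundedness of multiplication by $e^{\pm W_1(\sigma)}$ on $H^s_x$ and $H^{s-2}_x$ should be justified by $e^{\pm W_1(\sigma)}-1\in H^{\frac d2+2+(s-1)_+}_x$ and the algebra property, not by $W^{2,\infty}_x$.

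Your announced main obstacle is not really one. Write $\cT_t(W_2)=-\imu e^{\imu t|\nabla|}M(t)$, where $M(t)=\int_0^t e^{-\imu s|\nabla|}\dd W_2(s)$ is the continuous process from Lemma~\ref{lem:PropNoise}, and take $\cT_{\sigma+t,\sigma}(W_2)=-\imu e^{\imu(\sigma+t)|\nabla|}(M(\sigma+t)-M(\sigma))$, which is the only sensible meaning for a random $\sigma$. The cocycle identity then holds identically in $(\sigma,t)$ for every path, so no It\^o calculus and no density argument over rational $\sigma$ is needed.
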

	
	\begin{remark}
		\label{rem:RanZakRefinedRescaling}
		Letting
		\begin{align}
			\label{eq:Defbsigmacsigma}
			b_\sigma := 2 \nabla W_{1,\sigma}, \qquad
			c_\sigma := |\nabla W_{1,\sigma}|^2 + \Delta W_{1,\sigma},
		\end{align}
		in the setting of Proposition \ref{prop:RefinedRescaling},
		we note that~\eqref{eq:RanZakSigma} is equivalent to
		\begin{equation}   \label{eq:RanZakbsigmacsigma}
			\left\{\aligned
			\imu \partial_t u_\sigma + \Delta u_\sigma
			&= \Re(v_\sigma) u_\sigma - b_\sigma \cdot \nabla u_\sigma - c_\sigma u_\sigma + \Re(\cT_{\sigma + \cdot, \sigma}(W_2)) u_\sigma,  \\
			\imu \partial_t v_\sigma + |\nabla |v_\sigma  &= - |\nabla||u_\sigma|^2.
			\endaligned
			\right.
		\end{equation}
	\end{remark}

	The following result allows to glue two solutions on different intervals.
	It can be proved in an analogous manner as Proposition~3.3 in \cite{HRSZ25}.
	
	\begin{proposition} [Gluing solutions] \label{prop:GluingSolutions}
		Let $(u_1,v_1)\in C([0,\sigma], H_x^s \times H_x^l)$
		be an analytically weak solution of~\eqref{eq:RanZakW1W2} on $[0,\sigma]$,
		and let $(u_\sigma,v_\sigma)\in C([0,\tau], H_x^s \times H_x^l)$
		be an analytically weak solution of the refined Zakharov system
		\eqref{eq:RanZakSigma} on $[0,\tau]$
		with the initial condition
		\begin{align*}
			(u_\sigma(0), v_\sigma(0))
			:= (e^{W_1(\sigma)} u_1(\sigma), v_1(\sigma) + \cT_\sigma(W_2)).
		\end{align*}
		For every $t\in [0,\sigma+\tau]$,
		we set
		\begin{align*}
			u(t):= \begin{cases}
				u_1(t), \quad &\text{if } t \in [0,\sigma), \\
				e^{-W_1(\sigma)}u_\sigma(t-\sigma), &\text{if } t \in [\sigma, \sigma + \tau], 						\end{cases} \qquad
			v(t):= \begin{cases}
				v_1(t), \quad &\text{if } t \in [0,\sigma), \\
				v_\sigma(t-\sigma) - e^{\imu (t-\sigma)|\na|}\cT_\sigma(W_2)), &\text{if } t \in [\sigma, \sigma + \tau].
			\end{cases}
		\end{align*}
		Then, $(u,v) \in C([0,\sigma+\tau], H_x^s\times H_x^l)$
		is an analytically weak solution of  \eqref{eq:RanZakW1W2} on the larger
		interval $[0,\sigma+\tau]$.
	\end{proposition}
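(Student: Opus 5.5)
The plan is to argue pathwise, fixing $\omega$, so that $\sigma$ and $\tau$ are simply numbers with $\sigma+\tau\le T$. I take ``analytically weak solution on an interval $[t_1,t_2]$'' to mean the integral identity
\[
u(t)=u(t_1)+\int_{t_1}^{t}\Big(\imu e^{-W_1}\Delta(e^{W_1}u)-\imu\Re(v)u-\imu\Re(\cT_{r}(W_2))u\Big)(r)\dd r
\]
in $H_x^{s-2}$, together with the analogous identity for $v$ in $H_x^{l-1}$, for all $t\in[t_1,t_2]$. The proof has three steps: transfer the solution on $[\sigma,\sigma+\tau]$ back via Proposition~\ref{prop:RefinedRescaling}, check continuity at $\sigma$, and concatenate the two integral identities.

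First, by part~\ref{it:RefResc0ToSig} of Proposition~\ref{prop:RefinedRescaling}, the pair $(u,v)$ restricted to $[\sigma,\sigma+\tau]$ is an analytically weak solution of \eqref{eq:RanZakW1W2} there. Here $(u,v)$ on $[\sigma,\sigma+\tau]$ is given exactly by \eqref{eq:DefusigmaRescal}--\eqref{eq:DefvsigmaRescal}. Behind this lies the algebra I would recheck briefly:
\begin{itemize}
\item the identity $e^{-W_1(\sigma+t)}\Delta e^{W_1(\sigma+t)}=e^{-W_{1,\sigma}(t)}\Delta e^{W_{1,\sigma}(t)}$ after multiplying by the time-independent factor $e^{-W_1(\sigma)}$, which converts the Schrödinger part of \eqref{eq:RanZakSigma} into that of \eqref{eq:RanZakW1W2};
\item the cocycle identity $\cT_{\sigma+t}(W_2)=e^{\imu t|\na|}\cT_\sigma(W_2)+\cT_{\sigma+t,\sigma}(W_2)$, which follows from splitting the stochastic integral in \eqref{rescal.2} at $\sigma$. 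It gives $\Re(v_\sigma)+\Re(\cT_{\sigma+t,\sigma}(W_2))=\Re(v(\sigma+t))+\Re(\cT_{\sigma+t}(W_2))$.
\end{itemize}
The wave equation is preserved because $e^{\imu(t-\sigma)|\na|}\cT_\sigma(W_2)$ is a free half-wave. Also $|u_\sigma|^2=|u|^2$, since $W_1$ is purely imaginary in the conservative setting, where $\phi^{(1)}_k$ are real and so $|e^{\pm W_1}|=1$.

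Second, I check continuity at $\sigma$. By \eqref{usigma-vsigma-initial.1}--\eqref{usigma-vsigma-initial.2} and the prescribed initial data $(u_\sigma(0),v_\sigma(0))$, we get
\[
u(\sigma)=e^{-W_1(\sigma)}e^{W_1(\sigma)}u_1(\sigma)=u_1(\sigma),\qquad v(\sigma)=v_1(\sigma)+\cT_\sigma(W_2)-\cT_\sigma(W_2)=v_1(\sigma).
\]
Since $(u_1,v_1)$ is continuous on $[0,\sigma]$ and $(u_\sigma,v_\sigma)$ is continuous on $[0,\tau]$, the glued pair lies in $C([0,\sigma+\tau],H_x^s\times H_x^l)$. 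Multiplication by the smooth factor $e^{-W_1(\sigma)}$ is bounded on $H^s_x$ by Hypothesis~(H), and $e^{\imu t|\na|}\cT_\sigma(W_2)$ is continuous in $H^l_x$, so the transformed pieces are indeed continuous.

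Third, I concatenate. For $t\le\sigma$ the integral identity is the one for $(u_1,v_1)$. For $t\in(\sigma,\sigma+\tau]$ I write
\[
u(t)=u(\sigma)+\int_\sigma^t(\cdots)\dd r=u(0)+\int_0^\sigma(\cdots)\dd r+\int_\sigma^t(\cdots)\dd r,
\]
using $u(\sigma)=u_1(\sigma)$ and the identity on $[0,\sigma]$; the same is done for $v$. The integrands are in $L^1_t H^{s-2}_x$ (resp.\ $L^1_tH^{l-1}_x$) on each piece, hence on the union, so the combined identity holds on $[0,\sigma+\tau]$.

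The step I expect to need the most care is the first, namely matching the noise terms exactly. This means verifying the cocycle identity for $\cT$ and the commutation of the rescaling with $\Delta$, in the same way as Proposition~3.3 of \cite{HRSZ25}. Everything else is bookkeeping.
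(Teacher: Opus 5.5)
Your proposal is correct and is essentially the argument the paper intends when it defers to Proposition~3.3 of \cite{HRSZ25}. You pull the second piece back to \eqref{eq:RanZakW1W2} on $[\sigma,\sigma+\tau]$ via part~\ref{it:RefResc0ToSig} of Proposition~\ref{prop:RefinedRescaling}, check that $(u,v)(\sigma)=(u_1,v_1)(\sigma)$, and concatenate the integral identities; the points you single out (the cocycle identity for $\cT$, $|e^{\pm W_1(\sigma)}|=1$ for real $\phi^{(1)}_k$, and continuity of $e^{\imu (t-\sigma)|\nabla|}\cT_\sigma(W_2)$, which uses $\cT_\sigma(W_2)\in H^{\frac d2+s-1}_x\subseteq H^l_x$ since $l\le s+1\le \frac d2+s-1$ for $d\ge 4$) are exactly what needs checking.
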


	We also have the following product estimate for rescaling transforms,
	which shows the important compatability between
	the function space $\XS(I)$
	and the rescaling transforms. This estimate can be proved in an analogous manner as \cite[Lemma 4.1]{HRSZ24}.
	
	\begin{lemma}   [Product estimate for rescaling transforms]
		\label{lem:ProductNoiseInX}
		Let $ s\in \R, 0\leq a\leq 1, \sigma \in [0,\infty)$, $I \subseteq \R$ be a bounded interval, and $u \in \X^{s,a}(I)$.
		Then, $e^{\pm W_1(\sigma)} u$ belongs to $\X^{s,a}(I)$ and we have
		\begin{align}  \label{Esti-prod-res}
			\|e^{\pm W_1(\sigma)} u\|_{\XS(I)}
			\lesssim (1 +  \|e^{\pm W_1(\sigma)} - 1\|_{H_x^{\frac{d}{2}+2+(s-1)_+}}(1 + |I|^{\frac{1}{2}})) \|u\|_{\X^{s,a}(I)}.
		\end{align}
	\end{lemma}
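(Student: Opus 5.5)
The plan is to prove \eqref{Esti-prod-res} frequency piece by frequency piece, using that $e^{\pm W_1(\sigma)}$ is time independent. Write $e^{\pm W_1(\sigma)} u = u + g\,u$ with $g := e^{\pm W_1(\sigma)} - 1 \in H_x^{\frac d2+2+(s-1)_+}$, which is a fixed spatial function; by Sobolev embedding $g$ and $\nabla g$ lie in $L^\infty_x$. Since the $\XS(I)$-norm is a restriction norm, I take an extension $u'$ of $u$ to $\R$ with $\|u'\|_{\XS(\R)} \leq 2\|u\|_{\XS(I)}$ and work on $\R$, cutting $g u'$ off to a neighbourhood of $I$ when time localisation is needed. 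The identity term is trivial, so everything reduces to the bound $\|g u'\|_{\XS} \lesssim \|g\|_{H_x^{\frac d2+2+(s-1)_+}}(1+|I|^{\frac12})\|u'\|_{\XS}$.

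For $g u'$ I would use the paraproduct decomposition $(gu')_{HL}+(gu')_{HH}+(gu')_{LH}$. Multiplication by $g$ commutes with $\partial_t$, so the weights $(\lambda+|\partial_t|)^a$ pass straight through. The only point needing care is $(\imu\partial_t+\Delta)(g u') = g(\imu\partial_t+\Delta)u' + 2\nabla g\cdot\nabla u' + (\Delta g) u'$, which means I do not treat $gu'$ as a general function but use the Duhamel description: by Lemma~\ref{lem:LinEstimates} it suffices to bound $\|g u'(t_0)\|_{H^s_x}$ and the $\Gs$-norm of $F := g(\imu\partial_t+\Delta)u' + 2\nabla g\cdot\nabla u' + (\Delta g) u'$. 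The commutator terms are exactly of the form handled in Lemma~\ref{lem:BilinearLowerOrder} and Lemma~\ref{lem:BilinearLowerOrder2}, with the time independent coefficient $b = 2\nabla g$ (so the $C^{\frac14}_t$ contributions vanish, which is why no $\beta_k$-dependence appears) and $c=\Delta g$. They give the factor $(1+|I|^{\frac12})\|g\|_{H_x^{\frac d2+2+(s-1)_+}}$; the lateral coefficient $\|\nabla g\|_{L^{1,\infty}_{\vece_j}}$ is controlled by the Sobolev norm up to a harmless localisation. For the term $g(\imu\partial_t+\Delta)u'$ I split $(\imu\partial_t+\Delta)u'$ into its $\Nsa$-part and its lateral $L^{1,2}_{\vece_j}$-part, as in the definition of $\Gs$. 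Both are stable under multiplication by a smooth bounded $g$ modulo frequency interactions: in the low-high interaction $P_{\ll\lambda}g$ preserves dyadic localisation and is bounded on $L^{1,2}_{\vece_j}$ and $L^2_tL^{2_*}_x$, while the high-low and high-high interactions gain from the high Sobolev regularity of $g$ via Bernstein estimates, as in \eqref{eq:bnablauNl}.

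The main obstacle is the low-high piece, because the modulation and temporal projectors in $\Nsa_\lambda$ (the $P^{(t)}_{\leq(\lambda/2^8)^2}$ and $C_{\leq(\lambda/2^8)^2}$ localisations) do not commute with multiplication by $P_{\ll\lambda}g$. The problem is modulation shift: the frequencies $\xi$ and $\xi+\eta$ with $|\eta|\ll\lambda$ differ on the paraboloid by $O(\lambda|\eta|)$, which can exceed $(\lambda/2^8)^2$ only when $|\eta|\gtrsim\lambda$, i.e. outside the low-high regime, up to adjusting constants. I would first try to absorb this by replacing the projectors with fattened ones and estimating the resulting errors via the high-modulation bounds already used in \eqref{eq:IHMLLT1}--\eqref{eq:IHMLLT3}. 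As a fallback, I would follow \cite[Lemma 4.1]{HRSZ24} directly. Summing the dyadic bounds in $\ell^2$ then gives \eqref{Esti-prod-res}.
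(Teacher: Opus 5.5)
Your reduction to the $\Gs$-norm of the forcing is where the argument fails, and it fails in two independent ways.

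First, the estimate of Lemma~\ref{lem:LinEstimates} has no converse. For a general $u'\in\XS$, the $\XS$-norm controls $(\imu\partial_t+\Delta)u'_\lambda$ only in $L^2_{t,x}$ with weight $\lambda^{s-1}$. By contrast, $\Gs$ measures low-modulation forcing at regularity $s$ in $L^2_tL^{2_*}_x$, or at regularity $s-\frac12$ in $L^{1,2}_{\vece_j}$. So there is no ``$\Nsa$-part and lateral part'' of $(\imu\partial_t+\Delta)u'$ for you to multiply by $g$. Concretely, take $u'=\chi(t)e^{\imu t\Delta}f_\lambda$, where $\chi$ is a smooth unit-scale cutoff and $f_\lambda$ is a sum of $N=M^d$ unit wave packets of frequency $\lambda\vece_1$ on a widely spaced grid. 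Then $\|u'\|_{\XS}\lesssim\lambda^sN^{\frac12}$. Pairing $\imu\chi'e^{\imu t\Delta}f_\lambda$ with $\chi'e^{\imu t\Delta}f_\lambda$, however, shows that its $\Gs$-norm is $\gtrsim\lambda^sN^{\frac12+\frac1{2d}}$.

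Second, for $2\nabla g\cdot\nabla u'$, the low--high, low-modulation piece in Lemma~\ref{lem:BilinearLowerOrder} is recovered only through $\|\nabla g\|_{L^{1,\infty}_{\tilde\vece_j}}$ (see \eqref{eq:bnablauLS1}). No Sobolev norm controls this quantity. For example, $g(x)=\langle x_1\rangle^{-1}\psi(x_2,\dots,x_d)$ with $\psi$ Schwartz lies in every $H^N_x$, yet $\int_\R\sup_y|\partial_2g(r\vece_1+y)|\dd r=\infty$. The obstruction is the spatial tail of $g$, so no localisation is harmless here. Even if the first point could be repaired, you would at best obtain a weaker bound with an extra lateral factor. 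That factor is finite a.s.\ under Hypothesis (H), but the result is not \eqref{Esti-prod-res}. Your modulation-shift count is also off: for $|\eta|\le\lambda/2^8$, the shift $\lambda|\eta|$ can be of size $\lambda^2/2^8\gg(\lambda/2^8)^2$.

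The paper only refers to \cite[Lemma~4.1]{HRSZ24}. The form of the bound, with only a Sobolev norm of $g:=e^{\pm W_1(\sigma)}-1$ and the factor $|I|^{\frac12}$, is what one gets by estimating the components of the $\XS$-norm of $gu$ directly, with no Duhamel step. Work on a time-localised extension, obtained with a smooth unit-scale cutoff around $I$ rather than $\chi_I$.

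Since $g$ is time independent, $(\lambda+|\partial_t|)^a$ and $\big(\frac{\lambda+|\partial_t|}{\lambda^2+|\partial_t|}\big)^a$ commute with multiplication by $g$. The energy, weighted Strichartz and $g(\imu\partial_t+\Delta)u$ contributions then follow from paraproducts and Bernstein, with the high frequencies of $g$ absorbed by its $H^{\frac d2+2+(s-1)_+}_x$-regularity. The terms $2\nabla g\cdot\nabla u+(\Delta g)u$ enter only the component with weight $\lambda^{s-1}$. There the extra derivative costs nothing, and $L^2_t\lesssim|I|^{\frac12}L^\infty_t$ produces the factor $|I|^{\frac12}$.

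For the local smoothing component, use two facts. First, $L^{\infty,2}_\vece$ is a lattice norm, so $\|P_{\ll\lambda}g\,f\|_{L^{\infty,2}_\vece}\le\|g\|_{L^\infty_x}\|f\|_{L^{\infty,2}_\vece}$. Second, $Q_\lambda:=P_{\lambda,\vece}C_{\le(\frac{\lambda}{2^8})^2}P_\lambda$ has kernel $\lambda^{d+2}K(\lambda^2t,\lambda x)$ with $K$ Schwartz. Hence the commutator $Q_\lambda(P_{\ll\lambda}g\,\tilde P_\lambda u)-P_{\ll\lambda}g\,Q_\lambda u$ is frequency localised at $\lambda$ and bounded in $L^2_{t,x}$ by $\lambda^{-1}\|\nabla g\|_{L^\infty_x}\|\tilde P_\lambda u\|_{L^2_{t,x}}$. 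Bernstein in the $\vece$-direction, $\|f_\lambda\|_{L^{\infty,2}_\vece}\lesssim\lambda^{\frac12}\|f_\lambda\|_{L^2_{t,x}}$, then bounds $\lambda^{s+\frac12}$ times its $L^{\infty,2}_\vece$-norm by $\|\nabla g\|_{L^\infty_x}\lambda^s\|\tilde P_\lambda u\|_{L^2_{t,x}}$, which again yields $|I|^{\frac12}$. The remaining part $g-P_{\ll\lambda}g$ is handled by the same crude Bernstein bound and the Sobolev decay of $g$. This argument uses neither lateral norms of $g$ nor any modulation bookkeeping.
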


	As a consequence of Lemma~\ref{lem:ProductNoiseInX},
	the following result shows that the $\X^{s,a}$-space is invariant under the refined rescaling transforms.
	
	\begin{corollary}
		\label{cor:RefinedRescalingInX}
		Let $\sigma \geq 0$ and $\tau > 0$.
		\begin{enumerate}
			\item \label{it:RefinedRescalingInX1} If $u_\sigma \in \XS([0,\tau])$, then $u$ defined by $u(t) = e^{-W_1(\sigma)} u_\sigma(t - \sigma)$ for $t \in [\sigma, \sigma + \tau]$ belongs to \linebreak $\XS([\sigma, \sigma + \tau])$.
			\item \label{it:RefinedRescalingInX2} If $u \in \XS([\sigma, \sigma + \tau])$, then $u_\sigma$ defined by $u_\sigma(t) = e^{W_1(\sigma)}u(t + \sigma)$ for $t \in [0,\tau]$ belongs to $\XS([0,\tau])$.
		\end{enumerate}
		The above statements remain true if we replace $[0,\tau]$ and $[\sigma, \sigma + \tau]$ by $[0,\tau)$ and $[\sigma, \sigma + \tau)$, respectively.
	\end{corollary}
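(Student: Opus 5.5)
The plan is to reduce the corollary directly to Lemma~\ref{lem:ProductNoiseInX} combined with the time-translation invariance of the $\XS$-norm. For item~\ref{it:RefinedRescalingInX1}, let $u_\sigma \in \XS([0,\tau])$ and take an arbitrary extension $u'_\sigma \in \XS(\R)$ with $u'_\sigma|_{[0,\tau]} = u_\sigma$. Since $W_1(\sigma)$ is a fixed, time-independent random function of $x$ for each $\omega$, the function $e^{-W_1(\sigma)} u'_\sigma(\cdot - \sigma)$ is an extension of $u$ from $[\sigma,\sigma+\tau]$ to $\R$.

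Next I would check that $\|w(\cdot-\sigma)\|_{\XS(\R)} = \|w\|_{\XS(\R)}$. Every component of the norm is translation invariant in time. The $L^\infty_tL^2_x$, $L^2_tL^{2^*}_x$ and lateral $L^{\infty,2}_{\vece_j}$ norms are, and so are the Fourier multipliers $P_\lambda$, $P_{\lambda,\vece_j}$, $C_{\leq(\lambda/2^8)^2}$, $(\lambda+|\partial_t|)^a$ and $\big(\frac{\lambda+|\partial_t|}{\lambda^2+|\partial_t|}\big)^a(\imu\partial_t+\Delta)$, since all of them commute with time translation. Hence $\|u\|_{\XS([\sigma,\sigma+\tau])}\le \|e^{-W_1(\sigma)}u'_\sigma\|_{\XS(I')}$ for any bounded $I'\supseteq[0,\tau]$, after translating back.

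To apply Lemma~\ref{lem:ProductNoiseInX}, which is stated on a bounded interval, I would apply it on $I=[0,\tau]$ directly. This gives $\|e^{-W_1(\sigma)}u_\sigma\|_{\XS([0,\tau])}\lesssim (1+\|e^{-W_1(\sigma)}-1\|_{H_x^{\frac d2+2+(s-1)_+}}(1+\tau^{1/2}))\|u_\sigma\|_{\XS([0,\tau])}$. The right-hand side is finite a.s.: by Hypothesis~(H) and Lemma~\ref{lem:PropNoise}, $W_1(\sigma)\in H_x^{\frac d2+2+(s-1)_+}$, which is an algebra, so $e^{-W_1(\sigma)}-1$ lies in this space via the power series. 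Translating the resulting function by $\sigma$ and using the invariance above yields $u\in\XS([\sigma,\sigma+\tau])$. Item~\ref{it:RefinedRescalingInX2} is identical with $e^{+W_1(\sigma)}$ and a translation by $-\sigma$.

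For the half-open intervals, membership in $\XS([0,\tau))$ is again defined by restriction. The same argument applies verbatim on the bounded interval $[0,\tau)$, since Lemma~\ref{lem:ProductNoiseInX} only needs boundedness of $I$. The only point I expect to need care is the translation invariance of the restriction norm, in particular that the modulation cutoff $C_{\le(\lambda/2^8)^2}$ inside the lateral component commutes with the shift. This is immediate because it is a space-time Fourier multiplier, so no genuine obstacle arises.
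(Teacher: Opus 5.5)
Your proposal is correct and takes essentially the same route as the paper, which states the corollary as a direct consequence of Lemma~\ref{lem:ProductNoiseInX} together with the (implicit) invariance of the restriction norm $\|\cdot\|_{\XS(I)}$ under time translation; you check that invariance correctly component by component. Your extra check that $e^{\mp W_1(\sigma)}-1\in H_x^{\frac d2+2+(s-1)_+}$, via the algebra property and Lemma~\ref{lem:PropNoise}, makes explicit the finiteness of the constant in that lemma, which the paper leaves implicit.
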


	\section{Estimates in Fourier restriction norms}
	
	We first collect several estimates in Fourier restriction norms from~\cite{CHN23}. We begin with the lemma containing the control of the linear Schr\"odinger flow in the adapted spaces $S^{s,a,b}$.
	\begin{lemma} [Lemma $2.4$, \cite{CHN23}] 	\label{lem:LinFlowAdaptedSpaces}
		Let $s\in \R$, $0\leq a, b\leq 1$. For any $\lambda \in 2^{\N_0}$ we have
		\begin{equation*}
			\|e^{\imu t \Delta} f_\lambda\|_{S^{s,a,b}_\lambda} \lesssim \lambda^s \|f_\lambda\|_{L^2_x}, \qquad
			\Big\|\int_{t_0}^t e^{\imu (t-t') \Delta} g_\lambda(t') \dd t' \Big\|_{S^{s,a,b}_\lambda} \lesssim \|g_\lambda\|_{N^{s,a,b}_\lambda}.
		\end{equation*}
	\end{lemma}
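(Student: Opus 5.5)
The statement at the end is Lemma~\ref{lem:LinFlowAdaptedSpaces}, i.e.\ Lemma~2.4 of \cite{CHN23}: the homogeneous and Duhamel bounds for the free Schr\"odinger flow in the adapted spaces $S^{s,a,b}_\lambda$. I would prove both estimates at a fixed dyadic frequency $\lambda$ by checking each of the three components of the $S^{s,a,b}_\lambda$-norm in \eqref{eq:DefSsablambda}.

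\emph{Homogeneous estimate.} For $u=e^{\imu t\Delta}f_\lambda$ the last component vanishes, since $(\imu\partial_t+\Delta)u=0$. The energy component is exactly $\lambda^s\|f_\lambda\|_{L^2}$. For the middle component I would use the characterization \eqref{eq:CharSsablambda}, so it suffices to control $\lambda^s\|C_{\le(\lambda/2^8)^2}u\|_{L^2_tL^{2^*}_x}$. Here $C_{\le(\lambda/2^8)^2}u=u$ because the space-time Fourier transform of $u$ lives on the paraboloid, and the endpoint Strichartz estimate then gives the bound. A technical point is that $e^{\imu t\Delta}f_\lambda$ is not in $L^2_t$ globally. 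The weight $(\lambda+|\partial_t|)^a$ is harmless, because the temporal frequency is $\sim\lambda^2$ for $\lambda>1$; this is the Bernstein step behind \eqref{eq:CharSsablambda}.

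\emph{Inhomogeneous estimate.} Let $u=\cI_0[g_\lambda]$, so $(\imu\partial_t+\Delta)u=g_\lambda$ (up to sign). The third component of $S^{s,a,b}_\lambda$ is then literally the third component of $N^{s,a,b}_\lambda$. For the energy and Strichartz components I would split $g_\lambda=C_{\le(\lambda/2^8)^2}g_\lambda+C_{>(\lambda/2^8)^2}g_\lambda$.

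For the low-modulation part, the endpoint inhomogeneous Strichartz estimate bounds $\lambda^s\|\cdot\|_{L^\infty L^2\cap L^2L^{2^*}}$ by $\lambda^s\|C_{\le}g_\lambda\|_{L^2_tL^{2_*}_x}$, which is the second component of the $N$-norm. Any remaining $(\lambda+|\partial_t|)^a$ weight I would absorb via Bernstein in time on low temporal frequency.

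For the high-modulation part $G=C_{>(\lambda/2^8)^2}g_\lambda$, I would split $\cI_0[G]$ into $C_{\le(\lambda/2^9)^2}\cI_0[G]$ and $C_{>(\lambda/2^9)^2}\cI_0[G]$.
\begin{itemize}
\item The piece $C_{>(\lambda/2^9)^2}\cI_0[G]$ equals $(\imu\partial_t+\Delta)^{-1}G$ up to a free solution. Its $L^2_{t,x}$ norm gains a factor $\lambda^{-2}$, its $L^\infty_tL^2_x$ norm is controlled by Bernstein in time, and the $L^2_tL^{2^*}_x$ norm is controlled by Bernstein in space. All of these reduce to the third component $\lambda^{s-1+b}\|((\lambda+|\partial_t|)/(\lambda^2+|\partial_t|))^aG\|_{L^2}$. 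This uses $b\ge0$; more precisely, the $L^\infty L^2$ control at temporal frequency $\nu$ costs $\nu^{1/2}$ but gains $\nu^{-1}$ from $(\imu\partial_t+\Delta)^{-1}$.
\item The piece $C_{\le(\lambda/2^9)^2}\cI_0[G]$ I would handle by the commutation trick already used in the proofs above. It reduces to $-e^{\imu t\Delta}H(t_0)$ with $H=\partial_t^{-1}P^{(t)}_{>(\lambda/2^8)^2}(e^{-\imu t\Delta}G)$, and $\|H\|_{L^\infty L^2}\lesssim\lambda^{-2}\|\cdot\|_{L^\infty L^2}$. The low-temporal-frequency component of $G$ is then measured by the first component $\lambda^{s-2}\|P^{(t)}_{\le(\lambda/2^8)^2}F\|_{L^\infty_tL^2_x}$ of the $N$-norm. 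The high-temporal-frequency component is controlled by the $L^2$ component after Bernstein, as in the estimates \eqref{eq:EstMaxFctLoHighModLowTemp}--\eqref{eq:EstMaxFctLoHighModHiTemp}.
\end{itemize}

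I expect the main obstacle to be the bookkeeping of the $a$-weights. One must check that the weight $((\lambda+|\partial_t|)/(\lambda^2+|\partial_t|))^a$ on the $N$ side dominates the loss from $(\lambda+|\partial_t|)^a$ in the Strichartz component at high temporal frequency $|\partial_t|\gg\lambda^2$. There $u$ is at modulation $\sim|\partial_t|$, so Bernstein in $x$ together with the factor $|\partial_t|^{-1}$ gives $|\partial_t|^{a}|\partial_t|^{-1}\lambda^{d/2-d/2^*}\lesssim\lambda^{-2+a}\cdot\lambda^{1-a}$, which closes since $a\le1$. The case $\lambda=1$ is handled identically with the modulation cutoffs trivial.
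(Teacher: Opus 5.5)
The paper does not prove this lemma; it is quoted from \cite{CHN23}, so I am judging your argument on its own merits. The homogeneous estimate, the low-modulation forcing, and the piece $C_{\le(\lambda/2^9)^2}\cI_0[G]$ treated via $H$ are all fine. The gap is in the other piece, $C_{>(\lambda/2^9)^2}\cI_0[G]=C_{>(\lambda/2^9)^2}(\imu\partial_t+\Delta)^{-1}G$. You claim its $L^\infty_tL^2_x$ norm is controlled by the third component of $N^{s,a,b}_\lambda$ alone. Your heuristic (``Bernstein costs $\nu^{1/2}$, $(\imu\partial_t+\Delta)^{-1}$ gains $\nu^{-1}$'') is only valid for temporal frequencies $\nu\gtrsim\lambda^2$. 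For $\nu\ll\lambda^2$ and $|\xi|\sim\lambda$ the modulation is automatically $\sim\lambda^2$. So $C_{>(\lambda/2^9)^2}$ acts as the identity, and the entire contribution $(\imu\partial_t+\Delta)^{-1}P^{(t)}_{\ll\lambda^2}G$ sits in this piece. There the gain is only $\lambda^{-2}$, while the weight $\big(\frac{\lambda+\nu}{\lambda^2+\nu}\big)^a$ degenerates to $\sim\lambda^{-a}$ for $\nu\lesssim\lambda$. Summing over $\nu\lesssim\lambda$ leaves a loss $\lambda^{a-b-\frac12}$. This is unbounded whenever $a>b+\frac12$, e.g.\ $b=0$ and $a=a^*(s,l)>\frac12$ (i.e.\ $s-l>\frac43$), which the paper uses.

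Here is a concrete example. Let $g_\lambda(t,x)=\chi(\lambda t)\phi_\lambda(x)$ with $\operatorname{supp}\hat\chi\subseteq[-1,1]$ and $\|\phi_\lambda\|_{L^2}=1$. Then $(\imu\partial_t+\Delta)^{-1}g_\lambda$ equals $\chi(\lambda t)\Delta^{-1}\phi_\lambda$ up to a relative error $O(\lambda^{-1})$. Its contribution to the $S$-norm is therefore $\sim\lambda^{s-2}$. On the other side, the third component of $\|g_\lambda\|_{N^{s,a,b}_\lambda}$ is $\sim\lambda^{s-\frac32+b-a}$ and the second component vanishes. Taking $t_0$ with $\chi(\lambda t_0)\approx 0$ shows that $\cI_0[g_\lambda]$ itself has this size, so the loss is not an artifact of your splitting.

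The missing step is to use the first component $\lambda^{s-2}\|P^{(t)}_{\le(\lambda/2^8)^2}F\|_{L^\infty_tL^2_x}$ here as well, not only in the $H$-piece. Inside this piece, split $G$ in temporal frequency.
\begin{itemize}
\item For $\lambda>1$ one has $P^{(t)}_{\le(\lambda/2^8)^2}C_{>(\lambda/2^8)^2}g_\lambda=P^{(t)}_{\le(\lambda/2^8)^2}g_\lambda$.
\item For $|\xi|\sim\lambda$, the operator $(\imu\partial_t+\Delta)^{-1}P^{(t)}_{\le(\lambda/2^8)^2}$ is convolution in $t$ with kernels dominated, uniformly in $\xi$, by a function of $L^1_t$-norm $\lesssim\lambda^{-2}$. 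Hence this part is bounded in $L^\infty_tL^2_x$ by $\lambda^{-2}\|P^{(t)}_{\le(\lambda/2^8)^2}g_\lambda\|_{L^\infty_tL^2_x}$.
\item On the complement the temporal frequency is $\gtrsim\lambda^2$ and the $a$-weight is $\sim 1$. There your Bernstein-in-time argument closes using $b\ge 0$.
\end{itemize}
Your Strichartz-component bound via Bernstein in $x$ is fine throughout (the ratio is $\lambda^{-b}$). It is cleaner to decompose $g_\lambda=P^{(t)}_{\le(\lambda/2^8)^2}g_\lambda+C_{\le(\lambda/2^8)^2}g_\lambda+(\text{rest})$ from the start, matching the three components of $N^{s,a,b}_\lambda$; the first two have disjoint supports for $\lambda>1$. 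Relatedly, the obstacle you single out at the end (high temporal frequencies) is harmless. The real constraint, and the reason the first component of the $N$-norm exists, is this low-temporal-frequency $L^\infty_tL^2_x$ bound for high-modulation forcing.
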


	The next lemma gives a product estimate for weighted fractional derivatives,
	which is useful in the bilinear and trilinear estimates.
	
	\begin{lemma}[Lemma $2.7$, \cite{CHN23}]
		\label{le:27zak4} 	
		Let $a\in \R, \mu>0$, and $1\leq \tilde{p}, \tilde{q}, \tilde{r}, p, q, r\leq \infty$ with $\frac{1}{p}=\frac{1}{q}+\frac{1}{r}$ and $\frac{1}{\tilde{p}}= \frac{1}{\tilde{q}} + \frac{1}{\tilde{r}}$. Then
		\begin{align*}
			\|(\mu+|\partial_t|)^a(vu)\|_{L_t^{\tilde{p}}L_x^p} \lesssim \mu^{-|a|} \|(\mu+|\partial_t|)^{|a|} v\|_{L_t^{\tilde{r}}L_x^r} \|(\mu+|\partial_t|)^a u\|_{L_t^{\tilde{q}}L_x^q}.
		\end{align*}
	\end{lemma}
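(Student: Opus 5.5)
The plan is to reduce to $\mu=1$ by scaling and then to prove the estimate through a time-frequency paraproduct decomposition, with the frequency weights controlled by a Peetre-type inequality. First the scaling: set $\tilde v(t)=v(t/\mu)$ and $\tilde u(t)=u(t/\mu)$. Then $(\mu+|\partial_t|)^a$ becomes $\mu^a(1+|\partial_t|)^a$, and $L^{\tilde p}_t$ norms pick up $\mu^{-1/\tilde p}=\mu^{-1/\tilde q}\mu^{-1/\tilde r}$ on both sides. The powers of $\mu$ match exactly, including the prefactor $\mu^{-|a|}$, so it suffices to treat $\mu=1$. Writing $\langle\tau\rangle:=1+|\tau|$, the guiding pointwise fact is
\[
\langle \tau_1+\tau_2\rangle^{a}\le \langle\tau_1\rangle^{|a|}\langle\tau_2\rangle^{a}\qquad(a\in\R).
\]
For $a\ge0$ it follows from $1+|\tau_1+\tau_2|\le(1+|\tau_1|)(1+|\tau_2|)$. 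For $a<0$ it follows from the same inequality with the roles of $\tau_2$ and $\tau_1+\tau_2$ exchanged. This pointwise bound does not by itself give an $L^{\tilde p}_tL^p_x$ estimate, so it has to be realised at the level of operators.

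Next I will split $v=\sum_{N}P^{(t)}_Nv$ and $u=\sum_N P^{(t)}_N u$ into inhomogeneous temporal dyadic blocks and use the paraproduct $vu=\sum_N P^{(t)}_Nv\,P^{(t)}_{\ll N}u+\sum_{N_1\sim N_2}P^{(t)}_{N_1}vP^{(t)}_{N_2}u+\sum_N P^{(t)}_{\ll N}v\,P^{(t)}_Nu$. On each piece, the operator $(1+|\partial_t|)^a$ applied to a function with temporal frequency $\lesssim N$ is $\langle N\rangle^{a}$ times a multiplier whose symbol is smooth at scale $N$. Its kernel is bounded in $L^1_t$ uniformly in $N$, so it acts boundedly on $L^{\tilde p}_tL^p_x$ for all exponents, including $1$ and $\infty$, because it is a convolution in $t$ with an $L^1$ kernel acting on an $L^p_x$-valued function.

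\emph{High--low piece ($v$ at $N$, $u$ at $\ll N$).} I will put the weight $\langle N\rangle^{|a|}$ on $P_Nv$, which is comparable to $(1+|\partial_t|)^{|a|}P_Nv$. The remaining factor is $\langle N\rangle^{a-|a|}(1+|\partial_t|)^{-a}P_{\ll N}$ applied to $U:=(1+|\partial_t|)^au$. This is an $L^1$-kernel operator uniformly in $N$: for $a\ge0$ it is a Bessel potential, and for $a<0$ the symbol $\langle N\rangle^{2a}\langle\tau\rangle^{-a}\le\langle N\rangle^{a}\le1$ on $|\tau|\ll N$ and is symbol-like.

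\emph{Low--high piece.} I will put the full weight on $u$: $(1+|\partial_t|)^aP_Nu$, multiplied by $P_{\ll N}v$. Here $\|P_{\ll N}v\|\lesssim\|(1+|\partial_t|)^{|a|}v\|$ because $(1+|\partial_t|)^{-|a|}$ is a Bessel potential.

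\emph{High--high piece.} The output frequency is $\lesssim N_1$. For $a\ge0$ the weight is $\lesssim\langle N_1\rangle^{a}\le\langle N_1\rangle^{|a|}$, and I will place it on $v$. For $a<0$ I will instead write $(1+|\partial_t|)^{a}(vu)$ and use that $\langle\tau\rangle^{a}\le1$, with $\langle N_2\rangle^{a}\langle N_1\rangle^{|a|}\sim1$ for the block.

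The main obstacle is summing the dyadic pieces over $N$. The exponents may be endpoint ($1$ or $\infty$), and the target $L^p_x$ may be non-UMD, so I cannot use square-function (Littlewood--Paley) arguments in $t$. A crude triangle inequality over $N$ loses a logarithm. My first attempt will be to avoid the dyadic sum in the two paraproduct pieces. I will reassemble them as $\bigl((1+|\partial_t|)^{|a|}v\bigr)\cdot T_1U$ plus a commutator-type remainder, where $T_1$ is a single bounded Bessel-type operator. Concretely, I will represent $(1+|\partial_t|)^{a}(v\,(1+|\partial_t|)^{-a}U)$ through the bilinear symbol $\langle\tau_1+\tau_2\rangle^a\langle\tau_1\rangle^{-|a|}\langle\tau_2\rangle^{-a}$, which is bounded by the Peetre inequality above. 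I then aim to show that this symbol is a Coifman--Meyer-type (or at least $L^1$-kernel, tensor-decomposable) bilinear multiplier in the time variable only. Bilinear operators given by integrable kernels in $t$ act boundedly on $L^{\tilde r}_tL^r_x\times L^{\tilde q}_tL^q_x\to L^{\tilde p}_tL^p_x$ for all exponents by Minkowski's and H\"older's inequalities. If the full symbol is not integrable in this sense, I will fall back on proving integrability separately on the three paraproduct regions. On those regions the symbol is smooth at the dyadic scale and decays, so an expansion in Fourier series on dyadic annuli gives rapidly decaying coefficients that can be summed absolutely.
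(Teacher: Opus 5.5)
Your scaling reduction, the Peetre-type pointwise bound and the single-block estimates are fine. However, the proposal stops exactly where the content of the lemma lies, and the fallback you describe rests on a false premise.

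The dyadic blocks of $m(\tau_1,\tau_2)=\langle\tau_1+\tau_2\rangle^{a}\langle\tau_1\rangle^{-|a|}\langle\tau_2\rangle^{-a}$ do not decay in the high frequency on every region:
\begin{itemize}
\item for $a>0$, one has $m\sim M^{-a}$ on $|\tau_1|\sim N$, $|\tau_2|\sim M\ll N$, uniformly in $N$ (and symmetrically on the low--high region);
\item for $a<0$, one has $m\sim M^{-|a|}$ on the low--high blocks, and $m\sim L^{-|a|}$ on the high--high blocks $|\tau_1|\sim|\tau_2|\sim N$ with output frequency $|\tau_1+\tau_2|\sim L$, again uniformly in $N$.
\end{itemize}
Accordingly, your own block bounds carry no decay for the high--low piece ($a>0$), the low--high piece (both signs) and the high--high piece ($a<0$). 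Summing kernel norms over $N$ therefore diverges outright; it does not merely lose a logarithm. It is true that the full symbol has an integrable bilinear kernel, but that is precisely what has to be proved. The bound $|m|\le 1$ says nothing about $L^{\tilde p}_tL^p_x$ bounds, and Coifman--Meyer theory would not reach the endpoint exponents or $L^1_x$/$L^\infty_x$-valued functions.

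The missing idea is to undo the paraproduct cutoff on the non-decaying pieces, keeping a commutator that gains a ratio of frequencies. Your ``reassembling'' remark points this way, but you never estimate the remainder. For $a>0$, set $V=(1+|\partial_t|)^a v$ and $U=(1+|\partial_t|)^a u$, and write
\[
(1+|\partial_t|)^a\bigl(P^{(t)}_N v\,P^{(t)}_{\ll N}u\bigr)=P^{(t)}_N V\cdot P^{(t)}_{\ll N}u+\bigl[(1+|\partial_t|)^a,P^{(t)}_{\ll N}u\bigr]P^{(t)}_N v .
\]
\begin{itemize}
\item The main terms resum exactly: $\sum_N P^{(t)}_NV\,P^{(t)}_{\ll N}u=\bigl((I-P^{(t)}_{\lesssim 1})V\bigr)u-\sum_N P^{(t)}_NV\,(I-P^{(t)}_{\ll N})u$. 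Since $\|(I-P^{(t)}_{\ll N})u\|\lesssim N^{-a}\|U\|$, the last sum converges absolutely.
\item The commutator, with $u$ localized at frequency $M\ll N$, has a bilinear symbol of size $(M/N)M^{-a}$ that is smooth at scales $(N,M)$. Hence it has an integrable kernel of that norm, summable in $M$ and $N$.
\item The high--high blocks with output frequency $L\lesssim N$ have size $L^{a}N^{-2a}$, which is also summable.
\item The low--high region is symmetric.
\end{itemize}
For $a<0$ the same resummation is needed in the low--high region and, for each output frequency $L$, in the high--high region. It is simpler to argue by duality. Pair with $w\in L^{\tilde p'}_tL^{p'}_x$, write $u=(1+|\partial_t|)^{|a|}U$, and move $(1+|\partial_t|)^{|a|}$ onto $\bar v\,(1+|\partial_t|)^{-|a|}w$. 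This reduces the claim to the case $|a|>0$ with exponents $(\tilde q',\tilde r,\tilde p')$ and $(q',r,p')$, which satisfy the required H\"older relations.

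The paper itself only quotes this lemma from \cite{CHN23}, so there is no in-paper proof to compare with.
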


	Next, we provide the bilinear estimates for the nonlinearities of the Zakharov system.

	\begin{lemma} [Bilinear estimates] 	\label{lem:BilinearEstimates}
		\begin{enumerate}
			\item[]
			
			\item (Theorem $3.1$,  \cite{CHN23}) \label{it:BilinearEstNonendpointbNzero}  Let $d\geq 4, 0\leq s\leq l+2, \beta\geq 0$ and $0\leq a,b\leq 1$ such that
			\begin{align*}
				l\geq b+\tfrac{d-4}{2},\quad s-l\leq a+1-b,\quad s+l\geq 2a, \quad \beta \geq \max\{s-1, \tfrac{d-4}{2}+b\}
			\end{align*}
			and
			\begin{align*}
				(s,l)\neq (\tfrac{d-2}{2}+a, \tfrac{d-4}{2}+b),\quad (\beta,b) \neq (\tfrac{d-2}{2}, 1).
			\end{align*}
			Then
			\begin{align}\label{BilinearEstNonendpointbNzero}
				\|\Re(v)u\|_{N^{s,a,b}}\lesssim \|v\|_{W^{l,a,\beta}} \|u\|_{S^{s,a,0}}.
			\end{align}
			
			\item (Corollary $4.2$, \cite{CHN23}) \label{it:BilinearEstNonendpointW} Let $d\geq 4, s,l,\beta\geq 0$, and $0\leq a\leq 1$ satisfy
			\begin{align*}
				\beta<\min \{s, 2s-\tfrac{d-2}{2}-a \},\quad 2a<2s-l-\tfrac{d-2}{2},\quad a<s-l.
			\end{align*}
			There exist $\theta \in (0,1)$ and $C > 0$ such that for any interval $I \subseteq \R$ we have
			\begin{align}
				\|\cJ_0[|\nabla| ({u} w)]\|_{W^{l,a,\beta}(I)}
				&\leq C (\|u\|_{\SOne(I)} \|w\|_{\SOne(I)})^{1-\theta} (\|u\|_{L^2 (I ; L_x^{2^*})} \|w\|_{L^2 (I ; L_x^{2^*})})^{\theta}. \label{eq:Bilinnablauu}
			\end{align}
			\item \label{it:BilinearEstEndpoint1}(Proposition $6.1$, \cite{CHN23}) For $d\geq 4, (s,l)=(\frac{d-3}{2},\frac{d-4}{2})$, there exists a constant $C > 0$ such that for any interval $ I \subseteq \R$ we have
			\begin{align}
				\|v u\|_{N^{s,0,0}(I)} &\leq C \| v \|_{W_w^{l,0,l}(I)} \|u\|_{L^2(I; W_x^{s,2^*})}^{\frac{1}{2}} \|u\|_{S_w^{s,0,0}(I)}^{\frac{1}{2}}. \label{eq:BilinvuEndpoint}
			\end{align}
			
			\item \label{it:BilinearEstEndpoint2} (Proposition $6.2$, \cite{CHN23}) For $d\geq 4, (s,l)=(\frac{d-3}{2},\frac{d-4}{2})$, there exists a constant $C > 0$ such that for any interval $I \subseteq \R$ we have
			\begin{align}
				\|\cJ_0[|\nabla| (u w)]\|_{W^{l,0,l}(I)} &\leq C ( \|u\|_{L^2(I; W^{s, 2^*}_x)} \|w\|_{L^2(I; W^{s, 2^*}_x)} )^{\frac{1}{2}} ( \|u\|_{S_w^{s,0,0}(I)} \|w\|_{S_w^{s,0,0}(I)} )^{\frac{1}{2}}. \label{eq:BilinnablauwEndpoint}
			\end{align}
		\end{enumerate}
	\end{lemma}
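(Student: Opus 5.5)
The four estimates of Lemma~\ref{lem:BilinearEstimates} are taken from~\cite{CHN23}, so the plan is to reduce each one to the corresponding global-in-time statement there: Theorem~3.1 for part~\ref{it:BilinearEstNonendpointbNzero}, Corollary~4.2 for part~\ref{it:BilinearEstNonendpointW}, and Propositions~6.1 and~6.2 for parts~\ref{it:BilinearEstEndpoint1} and~\ref{it:BilinearEstEndpoint2}. The first task is to check that the norms used here coincide with theirs. The $S^{s,a,b}_\lambda$, $N^{s,a,b}_\lambda$ and $W^{l,\alpha,\beta}_\lambda$ norms in Section~\ref{sec:FunctFramework} are copied from~\cite{CHN23}, and so are the weak norms \eqref{eq:WeakNormS} and \eqref{eq:WeakNormW} and the choice $\beta=s-\frac12$. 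I will also check that the parameter conditions listed in each part are exactly the hypotheses in~\cite{CHN23}. For part~\ref{it:BilinearEstNonendpointbNzero} the only extra point is that we allow general $\beta\ge\max\{s-1,\frac{d-4}{2}+b\}$; the $W^{l,a,\beta}$ norm is monotone in $\beta$ only through the factor $\lambda^{\beta-1}$. I therefore expect to run their dyadic argument with $\beta$ kept as a free parameter and to check where $\beta$ enters, namely only in the $L^2_{t,x}$ component of $v$ at high modulation.

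Next, the global estimates on $\R$ have to be passed to an arbitrary interval $I$ using the restriction norms \eqref{eq:DefRestrictionNorm}. For part~\ref{it:BilinearEstNonendpointbNzero} this is immediate. Take near-optimal extensions $v'$ and $u'$ of $v$ and $u$ from $I$ to $\R$. Then $\Re(v')u'$ extends $\Re(v)u$, so $\|\Re(v)u\|_{N^{s,a,b}(I)}\le\|\Re(v')u'\|_{N^{s,a,b}}$, and the global bound applies. Parts~\ref{it:BilinearEstNonendpointW}--\ref{it:BilinearEstEndpoint2} also contain the non-restriction quantities $\|u\|_{L^2(I;L^{2^*}_x)}$ and $\|u\|_{L^2(I;W^{s,2^*}_x)}$. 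Here I will replace the inputs by $\chi_I u$ and $\chi_I w$: the Duhamel operator $\cJ_0$ started at $t_0\in I$ only sees the forcing on $I$, and $\cJ_0[|\nabla|(\chi_I u\,\chi_I w)]$ extends the left-hand side. I will then bound it by the energy inequality (Lemma~2.6 of~\cite{CHN23}), which reduces everything to mixed Lebesgue norms of $\chi_I u\,\chi_I w$. Those norms are controlled by the dispersive norms on $I$ and by the $S$-norms of arbitrary extensions, since for $L^p_tL^q_x$-type quantities the cutoff costs nothing.

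The step I expect to be delicate is the interpolation structure in parts~\ref{it:BilinearEstNonendpointW} and~\ref{it:BilinearEstEndpoint2}. The $S$-norm cannot be evaluated on $\chi_I u$, because the sharp cutoff destroys the modulation control. My first approach is to follow~\cite{CHN23} and split into dyadic pieces. Each dyadic piece would get two bounds: one with a gain $\max\{\lambda_1,\lambda_2\}^{-\delta}$ in $S$-norms of extensions, and one with a loss $\max\{\lambda_1,\lambda_2\}^{N}$ involving only $L^\infty_tL^2_x$ and $L^2_tL^{2^*}_x$ on $I$, obtained via Bernstein as in \eqref{prop:BernsteinWaveEstimate}. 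Interpolating the two and summing in the frequencies then gives \eqref{eq:Bilinnablauu}. For the endpoint estimates \eqref{eq:BilinvuEndpoint} and \eqref{eq:BilinnablauwEndpoint}, the weak norms $S_w$ and $W_w$ are built from $L^p_tL^q_x$ and $L^2_tH^{\sigma}_x$ pieces and so are compatible with sharp cutoffs up to the $(\imu\partial_t+\Delta)$ term. I will handle that term by extending $u$ by free Schr\"odinger flows outside $I$, so that $(\imu\partial_t+\Delta)u$ is supported in $I$. With that choice the interval statements follow from the global ones in~\cite{CHN23}.
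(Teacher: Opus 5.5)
Your overall route matches the paper's. The paper proves nothing here and simply quotes the four estimates from \cite{CHN23}, with parts (2)--(4) already in their interval form. Your extension argument for (1) is correct. Your scheme for (2) is also sound: a gain $\max\{\lambda_1,\lambda_2\}^{-\delta}$ from the global estimate at a slightly lower regularity $s'<s$ applied to extensions, a polynomial loss via Bernstein for the sharply cut-off forcing, then interpolation and summation. This is exactly how the paper proves \eqref{prop:triest2}.

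The step that fails is your localization of the endpoint estimates (3)--(4). Extend $u$ from $I=[t_-,t_+]$ by free Schr\"odinger flows. This extension $u'$ does satisfy $\|u'\|_{S_w^{s,0,0}(\R)}\lesssim \|u\|_{S_w^{s,0,0}(I)}$. However, $\|u'\|_{L^2(\R;W^{s,2^*}_x)}$ contains the free tails $\|e^{\imu(t-t_\pm)\Delta}u(t_\pm)\|_{L^2_tW^{s,2^*}_x}$. These are controlled only by $\|u(t_\pm)\|_{H^s_x}$, not by $\|u\|_{L^2(I;W^{s,2^*}_x)}$. Feeding $u'$ into the global estimates therefore yields \eqref{eq:BilinvuEndpoint} and \eqref{eq:BilinnablauwEndpoint} with $\|u\|_{L^2(I;W^{s,2^*}_x)}$ replaced by $\|u\|_{L^2(I;W^{s,2^*}_x)}+\|u\|_{L^\infty(I;H^s_x)}$. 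That loses exactly the factor which becomes small on short intervals, and this factor is the whole point of these estimates in Section~\ref{Sec-LWPEP}: the contraction there runs on $\|u\|_{L^2_tW^{s,2^*}_x}\le\delta$ while $\|u\|_{\X^{s,0}}\le R$ is large. A sharp cutoff $\chi_I u$ is no alternative, as you noted yourself, because it puts Dirac masses into $(\imu\partial_t+\Delta)(\chi_I u)$.

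So the interval versions of (3)--(4) cannot be obtained from the global statements used as black boxes; the localization must be done inside the proof. First split the interaction by the spatial frequency ratio $m$, as in the proof of \eqref{prop:triestiend}; this split is defined on $I$. For the pieces bounded through the dispersive norm (with a power loss in $m$), put the cutoff on the \emph{forcing}, $\chi_I(uw)$ or $\chi_I(vu)$. The target norms tolerate this:
\begin{itemize}
\item by the energy inequality \eqref{prop:triend12}, the $W^{l,0,l}$-norm of $\cJ_0[\chi_I|\nabla| G]$ only involves $L^1(I;L^2_x)$ and $L^2(I;L^2_x)$ norms of $G$;
\item since $P^{(t)}_{\le\mu}$ and $C_{\le\mu}P_\lambda$ are bounded on $L^\infty_tL^2_x$ and $L^2_tL^{2_*}_x$, and Bernstein handles the $L^2_{t,x}$ part, the $N^{s,0,0}$-norm of $\chi_I F$ is bounded by Lebesgue norms of $F$ on $I$.
\end{itemize}
Consequently the factor $u$ enters these pieces only through $\|u\|_{L^2(I;W^{s,2^*}_x)}$ and $\|u\|_{L^\infty(I;H^s_x)}$. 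Use extensions (free flows are fine there) only for the pieces with a power gain in $m$, which involve $S_w$- and $W_w$-norms alone. Then choose the threshold $M$ to balance the two contributions. Alternatively, quote the interval statements from \cite{CHN23} directly, as the paper does.
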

	
	\begin{remark}\label{re:extendsl}
		The proof of Proposition $6.2$ from \cite{CHN23} shows that estimate \eqref{eq:BilinnablauwEndpoint} actually holds for 
		\begin{align*}
			s=l+\tfrac{1}{2}\geq \tfrac{d-3}{2}.
		\end{align*}
	\end{remark}

	The next bilinear estimate requires control of the wave component only in the sum space $W^{l,a,\beta}(I)+L^2(I; W_x^{s,d})$, which allows to obtain smallness on short time intervals.
	
	\begin{lemma}
		\label{it:BilinearEstNonendpoint} Let $d\geq 4$. Assume that $\beta\geq \max\{\frac{d-4}{2}, s-1\}$ and
		\begin{align*}
			0\leq a\leq 1,\quad 0\leq s\leq l+2,\quad l\geq \tfrac{d-4}{2},\quad s-l\leq a+1,\quad s+l\geq 2a, 
		\end{align*}
		with $(s,l)\neq (\frac{d-2}{2}+a,\frac{d-4}{2})$. There exists $C>0$ such that for any interval $I\subseteq \R$ we have
		\begin{align}\label{prop:BilinearEstNonendpoint}
			\|\Re{(v)}u\|_{N^{s,a,0}(I)}\leq C\|v\|_{W^{l,a,\beta}(I)+L^2(I; W_x^{s,d})}\|u\|_{S^{s,a,0}(I)}.
		\end{align}
	\end{lemma}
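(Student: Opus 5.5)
The plan is to follow the proof of Theorem~7.1 in \cite{CHN23} (the deterministic analogue), replacing the free-wave potential by the rough potential $V=v+w$, with $v\in W^{l,a,\beta}(I)$ and $w\in L^2(I;W^{s,d}_x)$, and to treat the two summands separately. Since both sides are restriction norms, we fix extensions of $u$ and of the decomposition of $V$ to $\R$ whose norms are close to optimal. By the triangle inequality it then suffices to prove
\begin{align*}
\|\Re(v)u\|_{N^{s,a,0}}\lesssim \|v\|_{W^{l,a,\beta}}\|u\|_{S^{s,a,0}},\qquad \|\Re(w)u\|_{N^{s,a,0}}\lesssim \|w\|_{L^2_tW^{s,d}_x}\|u\|_{S^{s,a,0}}.
\end{align*}
The products are localized to $I$ by multiplying with $\chi_I$; this does no harm because the $N$-norm is a restriction norm and the right-hand sides only increase under the chosen extensions.

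For the first bound we invoke Lemma~\ref{lem:BilinearEstimates}~\ref{it:BilinearEstNonendpointbNzero} with $b=0$. Its hypotheses become $l\geq\frac{d-4}{2}$, $s-l\leq a+1$, $s+l\geq 2a$, $\beta\geq\max\{s-1,\frac{d-4}{2}\}$ and $(s,l)\neq(\frac{d-2}{2}+a,\frac{d-4}{2})$. These are exactly the assumptions of the present lemma. The exclusion $(\beta,b)\neq(\frac{d-2}{2},1)$ is automatic since $b=0$.

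For the second bound we work directly with the dyadic definition of $N^{s,a,0}_\lambda$. By Bernstein, for $a\in[0,1]$ each of the three pieces of $\|F_\lambda\|_{N^{s,a,0}_\lambda}$ is controlled by $\lambda^s\|F_\lambda\|_{L^2_tL^{2_*}_x}$, as already used in the proof of Lemma~\ref{lem:BilinearLowerOrder}. Thus it suffices to show
\begin{align*}
\Big(\sum_\lambda\lambda^{2s}\|P_\lambda(wu)\|_{L^2_tL^{2_*}_x}^2\Big)^{1/2}\lesssim\|w\|_{L^2_tW^{s,d}_x}\|u\|_{L^\infty_tH^s_x},
\end{align*}
which is a fractional Leibniz estimate in $L^2_tW^{s,2_*}_x$ with exponents $\frac1{2_*}=\frac1d+\frac12$. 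We split $wu$ by a paraproduct. In the high-low and high-high pieces the derivatives fall on $w$, and we pair $\|P_\mu w\|_{L^2_tL^d_x}$ with $\|u\|_{L^\infty_tL^2_x}$. In the low-high piece we pair $\|P_{\ll\mu}w\|_{L^2_tL^\infty_x}$ with $\mu^s\|u_\mu\|_{L^\infty_tL^2_x}$. The factor $\|P_{\ll\mu}w\|_{L^2_tL^\infty_x}$ is bounded by $\|w\|_{L^2_tW^{s,d}_x}$ only when $s>1$; otherwise we pair instead $\|w\|_{L^2_tL^d_x}$ with $\|u_\mu\|_{L^\infty_tL^2_x}$. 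Since $\|u\|_{L^\infty_tH^s_x}\leq\|u\|_{S^{s,a,0}}$, the second bound follows.

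The main obstacle is the low-high term when $s$ is small, in particular at $s=0$. In that case the $L^2_{t,x}$ component of $N^{s,a,0}$ carries $\lambda^{s-1}$ and may need the spatial derivative to land on $w$, and summability in $\lambda$ must be checked. I expect this to work because the low-frequency factor of $w$ lies in $L^2_tL^\infty_x$ by Bernstein, at the cost of $\mu^{1-s}$, which is absorbed by the $\lambda^{s-1}$ weight. If the direct Leibniz rule fails at the endpoints, I would fall back on the argument of \cite[Lemma~7.3]{CHN23}, where exactly this sum-space bound appears.
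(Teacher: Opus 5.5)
Your decomposition is the same as the paper's. The $W^{l,a,\beta}$ part is Lemma~\ref{lem:BilinearEstimates}~\ref{it:BilinearEstNonendpointbNzero} with $b=0$, and your check of its hypotheses is correct. The $L^2(I;W^{s,d}_x)$ part is reduced via Bernstein to $\lambda^s\|F_\lambda\|_{L^2_tL^{2_*}_x}$ and then to a classical product estimate, which the paper simply cites from the proof of Corollary~3.2 in \cite{CHN23}.

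The step that does not hold as written is your own dyadic proof of that product estimate. In the high-low piece, and in the high-high piece if you put the derivatives on $w$, you pair $\|P_\mu w\|_{L^2_tL^d_x}$ with $\|u\|_{L^\infty_tL^2_x}$ and square-sum. This produces $\big(\sum_\mu\mu^{2s}\|P_\mu w\|_{L^2_tL^d_x}^2\big)^{1/2}=\|w\|_{L^2_tB^s_{d,2}}$, which is not controlled by $\|w\|_{L^2_tW^{s,d}_x}$: for $d>2$ one only has $W^{s,d}\hookrightarrow B^s_{d,d}$, not $B^s_{d,2}$. The fix is to keep the Littlewood--Paley square function of $w$ intact. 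You can do this in either of two ways:
\begin{itemize}
\item invoke the Kato--Ponce estimate $\|wu\|_{W^{s,2_*}}\lesssim\|w\|_{W^{s,d}}\|u\|_{H^s}$, together with $\big(\sum_\lambda\lambda^{2s}\|P_\lambda F\|_{L^{2_*}}^2\big)^{1/2}\lesssim\|F\|_{W^{s,2_*}}$, which holds by Minkowski since $2_*<2$;
\item in the high-low term, bound $|P_{\ll\mu}u|\lesssim Mu$ pointwise and use $\big(\sum_\lambda\|f_\lambda\|_{L^{2_*}}^2\big)^{1/2}\le\|(\sum_\lambda|f_\lambda|^2)^{1/2}\|_{L^{2_*}}$ before applying H\"older.
\end{itemize}

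Two smaller points. First, discard the $s>1$ low-high pairing $L^2_tL^\infty_x\times L^\infty_tL^2_x$: it lands in $L^2_{t,x}$ rather than $L^2_tL^{2_*}_x$. The $L^2_tL^d_x\times L^\infty_tL^2_x$ pairing already works for every $s\geq 0$, and $\sum_\mu\mu^{2s}\|u_\mu\|_{L^\infty_tL^2_x}^2\leq\|u\|_{S^{s,a,0}}^2$. Second, the obstacle you anticipate at small $s$ does not exist. After your Bernstein reduction, the $\lambda^{s-1}L^2_{t,x}$ component of $N^{s,a,0}_\lambda$ is already dominated by $\lambda^s\|F_\lambda\|_{L^2_tL^{2_*}_x}$, so no separate summability check is needed.
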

	
	\begin{proof}
		By Bernstein and classical product estimates, we have
		\begin{align*}
			\|\Re(v)u\|_{N^{s,a,0}(I)} \lesssim \|\Re(v)u\|_{L^2(I; W_x^{s,d})} \lesssim \|v\|_{L^2(I; W_x^{s,d})} \|u\|_{L_t^\infty H_x^s},
		\end{align*}
		see the proof of Corollary~3.2 in~\cite{CHN23}.
		Then \eqref{prop:BilinearEstNonendpoint} follows from this estimate and  Lemma \ref{lem:BilinearEstimates} \ref{it:BilinearEstNonendpointbNzero}.
	\end{proof}
	
	We finally collect Strichartz and local smoothing estimates for the Schr{\"o}dinger flow.
	\begin{lemma} [Strichartz and local smoothing estimates \cite{HRSZ24}]  \label{lem:StrichartzLocalSmooth}
		Let $\lambda, \mu \in 2^{\N_0}$ with $|\log_2 (\mu/\lambda)| \leq 4$, $\vece \in \Sp^{d-1}$, and $(q,p)$, $(\tilde{q}, \tilde{p})$ be Schr\"odinger admissible, i.e., $\frac{2}{q} + \frac{d}{p} = \frac{d}{2} = \frac{2}{\tilde{q}} + \frac{d}{\tilde{p}}$. We then have the following estimates.
		\begin{enumerate}
			\item \label{it:HomStrichartz} Homogeneous Strichartz estimate:
			\begin{align*}
				\|e^{\imu t \Delta}  f_\lambda\|_{L^q_t L^p_x} \lesssim \|f_\lambda\|_{L^2_x}.
			\end{align*}
			\item \label{it:HomLocalSmooth} Homogeneous local smoothing estimate:
			\begin{align*}
				\|e^{\imu t \Delta} P_{\mu,\vece} f\|_{L^{\infty,2}_{\vece}} \lesssim \mu^{-\frac{1}{2}} \|f\|_{L^2_x}, \qquad \mu > 1.
			\end{align*}
			\item \label{it:InhomStrichartz} Inhomogeneous Strichartz estimate:
			\begin{align*}
				\Big\| \int_{t' < t} e^{\imu (t-t') \Delta} g_\lambda(t') \dd t' \Big\|_{L^q_t L^p_x} \lesssim \|g_\lambda\|_{L^{\tilde{q}'}_t L^{\tilde{p}'}_x},
			\end{align*}
			where $\tilde{p}'$ and $\tilde{q}'$ are the conjugate numbers of $\tilde{p}$ and $\tilde{q}$, respectively, i.e., $\frac{1}{\tilde{p}'} + \frac{1}{\tilde{p}}=1$ and $\frac{1}{\tilde{q}'} + \frac{1}{\tilde{q}}=1$.
			
			\item \label{it:InhomLocalSmooth} Inhomogeneous local smoothing estimate:
			\begin{align*}
				\Big\| \int_{t' < t} e^{\imu (t-t') \Delta} P_\lambda P_{\mu,\vece} g(t') \dd t' \Big\|_{L^{\infty,2}_\vece} \lesssim \lambda^{-1} \|g\|_{L^{1,2}_\vece}, \qquad \mu,\lambda > 1.
			\end{align*}
			\item \label{it:InhomStrichartzLocalSmooth} Inhomogeneous Strichartz to local smoothing estimate:
			\begin{align*}
				\Big\| \int_{t' < t} e^{\imu (t-t') \Delta} P_\lambda P_{\mu,\vece} g(t') \dd t' \Big\|_{L^q_t L^p_x} \lesssim \lambda^{-\frac{1}{2}} \|g\|_{L^{1,2}_\vece}, \qquad \mu,\lambda > 1.
			\end{align*}
			\item \label{it:InhomLocalSmoothStrichartz} Inhomogeneous local smoothing to Strichartz estimate:
			\begin{align*}
				\Big\| \int_{t' < t} e^{\imu (t-t') \Delta} P_\lambda P_{\mu,\vece} g(t') \dd t' \Big\| _{L^{\infty,2}_\vece} \lesssim \lambda^{-\frac{1}{2}} \|g\|_{L^{\tilde{q}'}_t L^{\tilde{p}'}_x}, \qquad \mu,\lambda > 1.
			\end{align*}
		\end{enumerate}
		\vspace*{-\topsep}
	\end{lemma}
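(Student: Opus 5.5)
The plan is to reduce everything to two model estimates for the free flow: the Strichartz estimate~\ref{it:HomStrichartz} and the homogeneous local smoothing estimate~\ref{it:HomLocalSmooth}. The four inhomogeneous statements then follow by duality/$TT^*$ arguments plus a treatment of the time truncation $t'<t$.

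\emph{The homogeneous estimates.} For \ref{it:HomStrichartz} and \ref{it:InhomStrichartz} I would simply invoke the Keel--Tao theorem; it covers the endpoint $(2,2^*)$ since $d\geq 4$, and the frequency localization $P_\lambda$ is harmless. For \ref{it:HomLocalSmooth}, rotate so that $\vece=\vece_1$ and write $\xi=(\xi_1,\xi')$. Fix $r$ and consider the restriction $x_1=r$:
\begin{align*}
e^{\imu t\Delta}P_{\mu,\vece}f(t,r,y)=\int e^{\imu(r\xi_1+y\cdot\xi'-t|\xi|^2)}\phi_\mu(\xi_1)\hat f(\xi)\dd\xi .
\end{align*}
On each half-line $\pm\xi_1>0$, change variables $\xi_1\mapsto\tau=|\xi|^2$, which has Jacobian $\dd\tau=2|\xi_1|\dd\xi_1$. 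Plancherel in $(t,y)$ then gives
\begin{align*}
\|\cdot\|_{L^2_{t,y}}^2\lesssim\int|\hat f|^2\frac{\phi_\mu(\xi_1)^2}{|\xi_1|}\dd\xi\lesssim\mu^{-1}\|f\|_{L^2}^2,
\end{align*}
uniformly in $r$, because $|\xi_1|\sim\mu$ on the support of $\phi_\mu$.

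\emph{Untruncated inhomogeneous versions.} Let $T_\mu f=e^{\imu t\Delta}P_{\mu,\vece}f$. By \ref{it:HomLocalSmooth}, $T_\mu\colon L^2_x\to L^{\infty,2}_\vece$ with norm $\lesssim\mu^{-1/2}$, so its adjoint maps $L^{1,2}_\vece\to L^2_x$ with the same bound. Composing with the Strichartz map, respectively with $T_\mu$ itself, yields the full-line versions of \ref{it:InhomStrichartzLocalSmooth}, \ref{it:InhomLocalSmoothStrichartz} and \ref{it:InhomLocalSmooth}, with $\int_\R$ in place of $\int_{t'<t}$. Here $\lambda\sim\mu$ converts $\mu^{-1/2}$ into $\lambda^{-1/2}$, and for \ref{it:InhomLocalSmooth} we obtain $\mu^{-1/2}\mu^{-1/2}\sim\lambda^{-1}$.

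\emph{Restoring the truncation.} This is the main obstacle, because the $L^{1,2}_\vece$ and $L^{\infty,2}_\vece$ norms place time inside the spatial integral, so Christ--Kiselev does not apply verbatim.
\begin{itemize}
\item For \ref{it:InhomLocalSmooth} I would argue directly on the Fourier side. The retarded Duhamel operator has symbol $(\tau+|\xi|^2-\imu0)^{-1}$. For fixed $(\tau,\xi')$ this is a one-dimensional resolvent in $\xi_1$, with kernel $e^{\imu k|x_1-x_1'|}/(2\imu k)$, where $k^2=\tau+|\xi'|^2$. The projector $P_\lambda P_{\mu,\vece}$ localizes to $|\xi_1|\sim\lambda$, so the singular part of the symbol only sees $|k|\sim\lambda$. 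There the kernel is bounded by $\lambda^{-1}$, giving an $L^1_{x_1}\to L^\infty_{x_1}$ bound $\lambda^{-1}$ pointwise in $(\tau,\xi')$. Plancherel in $(t,y)$ together with Minkowski then gives $\lambda^{-1}\|g\|_{L^{1,2}_\vece}$. The non-singular part, where $|\tau+|\xi|^2|\gtrsim\lambda^2$, is bounded by $\lambda^{-2}$ times a smooth multiplier, which is acceptable.
\item For \ref{it:InhomStrichartzLocalSmooth}, write $g=\int g\,\chi_{\{x\cdot\vece=r\}}\dd r$ as a superposition of hyperplane data $g_r\in L^2_{t,y}$. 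By Minkowski in $r$ it suffices to bound each slice uniformly. For a single slice the input norm is $L^2_t$-first, so Christ--Kiselev applies whenever $q>2$. For the endpoint $q=2$ I would instead bound the truncated operator by $TT^*$ and the retarded-resolvent representation above, as in BIKT, or cite the analogous argument in \cite{HRSZ24}.
\item Estimate \ref{it:InhomLocalSmoothStrichartz} is the dual of \ref{it:InhomStrichartzLocalSmooth} under time reversal, so it follows from that case.
\end{itemize}
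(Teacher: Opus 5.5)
Your arguments are sound for items~\ref{it:HomStrichartz}--\ref{it:InhomLocalSmooth}. Item~\ref{it:InhomStrichartzLocalSmooth} is also handled correctly for $q>2$: you slice $g$ into hyperplane pieces, use Minkowski in $r$, and apply Christ--Kiselev from $L^2_tL^2_y$ into $L^q_tL^p_x$. The reduction of item~\ref{it:InhomLocalSmoothStrichartz} to item~\ref{it:InhomStrichartzLocalSmooth} by duality and time reversal is fine too. (The paper gives no proof of this lemma; it imports it from \cite{HRSZ24}.)

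\textbf{The gap: the endpoint.} What is missing is the endpoint $q=2$ of item~\ref{it:InhomStrichartzLocalSmooth}, and hence $\tilde q=2$ in item~\ref{it:InhomLocalSmoothStrichartz}. This is not a marginal case. It is exactly what the paper uses, e.g.\ in the proof of \eqref{eq:BlowUpEstNEP}, where $L^{1,2}_{\vece_j}$ data are mapped into $L^2_tL^{2^*}_x$, and for the $L^2_tL^{2^*}_x$ component of $S^{s,a,0}$ in Lemma~\ref{lem:LinEstimates}. None of the tools you name closes it:
\begin{itemize}
\item Christ--Kiselev fails because input and output are both $L^2$ in time.
\item A $TT^*$ argument only reproduces the untruncated bound and does not see the cutoff $t'<t$.
\item The resolvent argument you used for item~\ref{it:InhomLocalSmooth} only gives a uniform bound on the $x_1$-kernel. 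That controls $L^{\infty,2}_\vece$ output and says nothing about $L^2_tL^{2^*}_x$.
\end{itemize}

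\textbf{The missing idea.} You need to exploit the form of the outgoing kernel, not just its size. Take one slice $G=h(t,y)\,\delta(x_1)$, with $x_1=x\cdot\vece$, $k^2=-\tau-|\xi'|^2$, and $\tilde{\ }$ the Fourier transform in $(t,y)$. Let $m$ be the symbol of $P_\lambda P_{\mu,\vece}$, and split with a cutoff $\psi(\tau,\xi')$ localizing $k^2\in[c\lambda^2,C\lambda^2]$, where $c\ll1\ll C$.
\begin{itemize}
\item \emph{Away from this region.} The symbol $m(\xi)/(\tau+|\xi|^2)$ is $O(\lambda^{-2})$ on a $\xi_1$-interval of length $\lesssim\lambda$. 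So this part has $L^2_{t,x}$-norm $\lesssim\lambda^{-3/2}\|h\|_{L^2_{t,y}}$, and Bernstein ($|\xi|\sim\lambda$) gives $\lambda^{-1/2}\|h\|_{L^2_{t,y}}$ in $L^2_tL^{2^*}_x$.
\item \emph{On this region.} The solution equals $P_\lambda P_{\mu,\vece}w$ with $\tilde w(\tau,x_1,\xi')=c_0\,k^{-1}\psi\,\tilde h\,e^{\imu k|x_1|}$. The factor $e^{\imu k|x_1|}$ splits into $e^{\pm\imu kx_1}$ on $\pm x_1>0$, so
\begin{align*}
w=\chi_{\{x_1>0\}}e^{\imu t\Delta}f_{+}+\chi_{\{x_1<0\}}e^{\imu t\Delta}f_{-},\qquad \|f_\pm\|_{L^2_x}\lesssim\lambda^{-\frac12}\|h\|_{L^2_{t,y}}.
\end{align*}
The bound on $f_\pm$ comes from the Jacobian $\dd\xi_1=\dd\tau/(2k)$ with $k\sim\lambda$.
\end{itemize}

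\textbf{Closing the argument.} Apply the homogeneous endpoint Strichartz estimate to $f_\pm$. Multiplication by $\chi_{\{\pm x_1>0\}}$ and the operator $P_\lambda P_{\mu,\vece}$ are both bounded on $L^2_tL^{2^*}_x$, so this gives the slice bound, and Minkowski in $r$ finishes. Without this, or an equivalent argument, your proposal does not prove the endpoint case, which is the one the paper actually needs.
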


	\section*{Acknowledgements} 
	Funded by the Deutsche Forschungsgemeinschaft (DFG, German Research Foundation) -- Project-ID 317210226 -- SFB 1283. 
	Part of this research was performed while Deng Zhang was visiting the Simons Laufer Mathematical Sciences Institute in Berkeley, California, during the fall semester of 2025, supported by the National Science Foundation under Grant No. DMS-1928930. 
	D.\ Zhang is also grateful for the NSFC grants (No. 12271352, 12322108)
	and Shanghai Frontiers Science Center of Modern Analysis.

	\bibliographystyle{abbrv}
	\bibliography{StZakharovdgeq4}
	
\end{document}